\providecommand{\U}[1]{\protect\rule{.1in}{.1in}}
\newtheorem{theorem}{Theorem}[section]
\newtheorem{assumption}[theorem]{Assumption}
\newtheorem{corollary}[theorem]{Corollary}
\newtheorem{definition}[theorem]{Definition}
\newtheorem{lemma}[theorem]{Lemma}
\newtheorem{proposition}[theorem]{Proposition}
\newtheorem{remark}[theorem]{Remark}
\makeatletter\@addtoreset{equation}{section}\makeatother
\newenvironment{proof}[1][Proof]{\noindent\textbf{#1.} }{\ \rule{0.5em}{0.5em}}
\newdimen\dummy
\newcommand{\ned}{\boldsymbol{\mathcal N}^I}
\newcommand{\poly}{{\mathcal P}}
\newcommand{\RT}{\mathbf{RT}}
\newcommand{\boldpoly}{\boldsymbol{\mathcal{P}}}
\newcommand{\Picurlcom}{\Pi^{curl,c}_p}
\newcommand{\hatPicurlcom}{\widehat \Pi^{curl,c}_p}
\newcommand{\Picurls}{\Pi^{curl,s}_p}
\newcommand{\hatPicurls}{\widehat\Pi^{curl,s}_p}
\newcommand{\Pigradcom}{\Pi^{grad,c}_{p+1}}
\newcommand{\Pidivcom}{\Pi^{div,c}_{p}}
\begin{document}

\title{Wavenumber-explicit $hp$-FEM analysis for Maxwell's equations with transparent
boundary conditions}
\author{J.M. Melenk\thanks{(melenk@tuwien.ac.at), Institut f\"{u}r Analysis und
Scientific Computing, Technische Universit\"{a}t Wien, Wiedner Hauptstrasse
8-10, A-1040 Wien, Austria}
\and S.A. Sauter\thanks{(stas@math.uzh.ch), Institut f\"{u}r Mathematik,
Universit\"{a}t Z\"{u}rich, Winterthurerstr.~{190}, CH-8057 Z\"{u}rich,
Switzerland}}
\maketitle

\begin{abstract}
The time-harmonic Maxwell equations at high wavenumber $k$ are discretized by
edge elements of degree $p$ on a mesh of width $h$. For the case of a ball as
the computational domain and exact, transparent boundary conditions, we show
quasi-optimality of the Galerkin method under the $k$-explicit scale
resolution condition that a) $kh/p$ is sufficient small and b) $p/\log k$ is
bounded from below. 

\end{abstract}
\tableofcontents

%---------------------------------------
%---------------------------------------
\newpage%

%TCIMACRO{\QSubDoc{Include glossary}{\input{glossary.tex}}}%
%BeginExpansion

%\section*{Notation}
%\setcounter{section}{-1}
\section*{Glossary and Notation}
\addcontentsline{toc}{section}{\protect\numberline{}Glossary and Notation}%
\newcommand{\fs}{3mm}
\begin{longtable}[c]{lp{0.8\textwidth}}
%---
{\bf general} & \\ \hline \\
%---
$k \ge 1 > 0$
& wavenumber  \\
$\operatorname*{i} $
& imaginary unit $\sqrt{-1}$ \\
$A\lesssim B $
& there exists $C$ independent of $k$, $h$, $p$, and
independent of the functions which,\\
& possibly, appear in $A$ and $B$ so that
$A\leq CB$ holds, see Rem. \ref{Remsmalltilde}\\
%---
{\bf geometry} & \\ \hline \\
%---
$B_1(0)$
& unit ball in ${\mathbb R}^3$ \\
$B_r^+$
& half-balls in ${\mathbb R}^3$ \\
$\Omega$
& domain in ${\mathbb R}^3$ or unit ball $B_1(0)$ in ${\mathbb R}^3$ \\
$\Omega^+$
& ${\mathbb R}^3 \setminus \overline{\Omega}$ \\
$\Gamma = \partial\Omega$
& boundary of $\Omega$ \\
$\mathbf{n}$
& unit normal vector on $\Gamma$ pointing into $\Omega^+$ \\
$\mathbf{n}^{\ast}$
&constant extension of ${\mathbf n}$ into tubular neighborhood of $\Gamma$\\[\fs]
%---------
{\bf spaces} & \\ \hline \\
%---------
${\mathbf X}:= {\mathbf H}(\Omega,\operatorname{curl})$
& (\ref{defXHcurl}) \\
${\mathbf X}_{0}:= {\mathbf H}_0(\Omega,\operatorname{curl})$
& (\ref{eq:defX0}) \\
${\mathbf H}(\Omega,\operatorname{curl})$, ${\mathbf H}(\Omega,\operatorname{div})$
& (\ref{defXHcurl}), (\ref{DefHOmegadiv}) \\
${\mathbf L}^2(\Omega)$
& space of vector-valued $L^2$-functions \\ 
$H^s(\Omega)$, $H^s(\Gamma)$ 
& scalar-valued Sobolev spaces on $\Omega$, $\Gamma$, Sec.~\ref{SecSobolevOmega}, (\ref{DefHsGammaNorm}) \\
${\mathbf H}^s(\Omega)$
& vector-valued Sobolev spaces on $\Omega$ \\
${\mathbf L}^2_T(\Gamma)$, ${\mathbf H}^s_T(\Gamma)$
& Sobolev space of tangential fields on $\Gamma$, (\ref{DefL2t}), (\ref{DefHsGammaTNorm})\\
${\mathbf H}_{\operatorname{div}}^{-1/2}(\Gamma)$ %= {\mathbf H}^{-1/2}(\operatorname{div}_\Gamma,\Gamma)$,
& (\ref{m1/2curldiv}) \\
${\mathbf H}_{\operatorname{curl}}^{-1/2}(\Gamma)$ %= {\mathbf H}^{-1/2}(\operatorname{curl}_\Gamma,\Gamma)$,  &foo\\
& (\ref{m1/2curldiv}) \\
%${\mathbf H}_{\operatorname{curl},0}^{-1/2}(\Gamma)$,  ${\mathbf H}_{\operatorname{curl},\perp}^{-1/2}(\Gamma)$, \\
${\mathbf V}_0 $, ${\mathbf V}_0^\ast$
& spaces of divergence-free functions, see (\ref{defVoa}), (\ref{defVob}) \\
${\mathcal A}(C k^\alpha,\gamma,D)$,
${\mathcal A}^\infty(C k^\alpha,\gamma,D)$, & \\
 ${\mathcal A}(C k^\alpha,\gamma,\Gamma)$
& classes of analytic fcts., Def.~\ref{DefClAnFct}; $C$, $\gamma$, $\alpha$ are independent of $k$ \\[\fs]
%---------
{\bf functions} & \\ \hline \\
%---------
${\mathbf E}$, 
${\mathbf H}$, 
${\mathbf E}^+$, 
${\mathbf H}^+$ 
& electric and magnetic fields in $\Omega$ and in $\Omega^+$ \\
$Y^m_\ell$, $\lambda_\ell$ 
& eigenfunctions of Laplace-Beltrami, (\ref{eigvalabsLapBelt}); \\
$\tilde Y^m_\ell$
& analytic extension of $Y^m_\ell$ into tubular neighborhood ${\mathcal U}$ of $\Gamma$, (\ref{estgradYtilde}) \\
& for $\Gamma = \partial B_1(0)$, the spherical harmonics \\
${\mathbf T}^m_\ell:=\overrightarrow{\operatorname*{curl}_\Gamma} Y^m_\ell$
& ${\mathbf T}^m_\ell$, $\nabla_\Gamma Y^{m'}_{\ell'}$ are $L^2_T(\Gamma)$-orthgonal basis, 
  cf. \cite[Thm.~{2.4.8}]{Nedelec01} \\
$\iota_\ell$ & index set of indices for eigenvalue 
$\lambda_\ell$, (\ref{eigvalabsLapBelt}); \\
& for the unit sphere, $\iota_\ell = \{-\ell,-\ell+1,\ldots,\ell-1,\ell\}$ \\
$g_{k}$
& Helmholtz fundamental solution, (\ref{fundsol}) \\
${\mathbf G}_{k}$
& Maxwell fundamental solution, (\ref{fundsol}) \\[\fs]
%---------
{\bf sesquilinear forms, norms} & \\ \hline \\
%---------
$(\cdot,\cdot)$, $(\cdot,\cdot)_{\Gamma}$
& $L^2(\Omega)$-inner prod. and $L^2(\Gamma)$-inner prod. (or duality pairing) \\
$a_k$, $A_k$, $b_k$
&  sesquilinear forms associated with \\
 &Maxwell's equations, (\ref{GalDis}), (\ref{Akcom2scprod}) \\
$b_k^{\operatorname{low}}$, 
$b_k^{\operatorname{high}}$, 
&  low- and high-frequency parts of bilinear form $b_k$, (\ref{defblowbhigh}) \\
$ \left(\kern-.1em \left( \kern-.1em \cdot,\cdot \kern-.1em \right) \kern-.1em \right)
$ &
$ \left(\kern-.1em \left( \kern-.1em \cdot,\cdot \kern-.1em \right) \kern-.1em \right) =
k^2 (\cdot,\cdot)_{L^2(\Omega)} + \operatorname*{i} k b_k((\cdot)^\nabla,(\cdot)^\nabla)$ \\
&$ 
=
k^2 (\cdot,\cdot)_{L^2(\Omega)} + \operatorname*{i} k (T_k (\cdot)_T,(\cdot)_T)_\Gamma$;
see (\ref{2scprod}) \\
$(\cdot,\cdot)_{\operatorname{curl},\Omega,k}$
& $(\operatorname{curl} \cdot, \operatorname{curl} \cdot) + k^2 (\cdot,\cdot)$;  see (\ref{indexedcurl}), \\
$\|\cdot\|_{-1/2,\operatorname{curl}_\Gamma}$, 
$\|\cdot\|_{-1/2,\operatorname{div}_\Gamma}$ 
&  norms on ${\mathbf H}^{-1/2}_{\operatorname{curl}}(\Gamma)$, 
on ${\mathbf H}^{-1/2}_{\operatorname{div}}(\Gamma)$, (\ref{m1/2curldiv}) \\
%$\|\cdot\|_{k,+}$
%  & $\operatorname{Re} \sqrt{((\cdot,\cdot))}$, see (\ref{def2scprodnorm}) \\
$\|\cdot\|_{\operatorname{curl},\Omega,k,\lambda}$
& see (\ref{eq:normcurllambda}) \\
$\|\cdot\|_{{\mathcal H},\omega}$ & 
$\|\cdot\|^2_{{\mathcal H},\omega} = \|\nabla \cdot \|^2_{L^2(\omega)} + k^2 \|\cdot\|^2_{L^2(\omega)}$ \\
$\left\langle \cdot,\cdot\right\rangle $, $\left\vert \cdot\right\vert $
& Euclidean scalar product in $\mathbb{C}^{3}$ (conjugation on
second argument), Eucl. norm\\
$N^\prime_{R,p}$, 
$N^\prime_{R,p,q}$, & \\
$M^\prime_{R,p}$, 
$M^\prime_{R,p,q}$, 
$H_{R,p}$
& seminorms to control high order derivatives, (\ref{eq:Nprime}), (\ref{eq:Mprime}), (\ref{eq:Mprimepq})
\\[\fs]
%---
{\bf discrete spaces, meshes} & \\ \hline \\
%---
$\widehat K$ 
& reference tetrahedron \\
${\mathcal T}_h$, $F_K$, $F_K$, $A_K$
& triangulation, element maps, Sec.~\ref{SecCurlConfFEM}, Ass.~\ref{def:element-maps} \\
$S_h $
&  (discrete) subspace of $H^1(\Omega)$; \\
& we require $\nabla S_h \subset {\mathbf X}_h$
and exact seq. property (\ref{eq:intro-exact-sequence}), (\ref{esp}) \\
${\mathbf X}_h $
& (discrete) subspace of ${\mathbf H}(\Omega,\operatorname{curl})$ \\
$h$, $h_{K}$, $p$
& global and local meshwidth (Thm.~\ref{thm:hpFEM-quasioptimality}, (\ref{defhkloc})),
polyn. deg. $p$ \\
$\poly_p$, $\boldpoly_p$
& space of ${\mathbb R}$-valued and ${\mathbb R}^3$-valued polynomials of degree $p$, (\ref{eq:Pp}) \\
$\ned_p(\widehat K)$
& N\'ed\'elec type I space on reference tetrahedron $\widehat K$, (\ref{eq:Np})  \\
$\RT_p(\widehat K)$
& Raviart-Thomas elements on reference tetrahedron $\widehat K$, (\ref{eq:RTp}) \\
$ S_{p+1}({\mathcal T}_h)$, 
$\ned_p({\mathcal T}_h)$,  
&  \\
$\RT_p({\mathcal T}_h)$, 
$Z_p({\mathcal T}_h)$ 
& polyn. spaces on ${\mathcal T}_h$: $H^1(\Omega)$-, ${\mathbf H}(\operatorname{curl},\Omega)$-,  
${\mathbf H}(\operatorname{div},\Omega)$-, and $L^2(\Omega)$-conforming \\[\fs]
%---
{\bf operators} & \\ \hline \\
%---
$\operatorname*{curl}$, $\operatorname*{div}$
& 3D curl and divergence operators \\
$\operatorname*{curl}_\Gamma$, $\operatorname*{div}_\Gamma$
& 2D scalar curl and divergence operators on the surface $\Gamma$, (\ref{sccounttangcurl}) \\
$\overrightarrow{\operatorname*{curl}_\Gamma}$,
$\nabla_\Gamma$,
& 2D vectorial curl and surface gradient operators on $\Gamma$, (\ref{curlvec}) \\
%$\nabla_\Gamma$ & surface gradient on $\Gamma$ \\
$\Delta_\Gamma$
& surface Laplace-Beltrami operator, (\ref{defLaplBelt}) \\
%$\Delta_\Gamma$ & scalar Laplace-Beltrami operator on $\Gamma$\\
%${\boldsymbol\Delta}_\Gamma$ & vectorial Laplace-Beltrami/Hodge operator on $\Gamma$\\
%$\operatorname{div}_\Gamma$ & surface divergence on $\Gamma$ \\
$T_k $
&  (Maxwell) capacity operator \\
$T_k^{\operatorname{low}}$, 
$T_k^{\operatorname{high}}$ 
&  low- and high-frequency part of capacity operator, (\ref{defblowbhigh}) \\
${\mathcal E}_{\operatorname{curl}}$, ${\mathcal E}_{\operatorname{div}}$,
&  lifting operators (see Thm.~\ref{traceTHM1}) \\
$\Pi_T$, $\Pi_T^+$, $\gamma_T$, $\gamma_T^+$
& trace operators for $\Omega$ and $\Omega^+$, (\ref{eq:trace-operators}), Thm.~\ref{traceTHM1} \\
$(\cdot)_T$
& subscript $T$ indicates tangential trace: ${\mathbf u}_T = \Pi_\tau {\mathbf u}$ \\
$(\cdot)^{\operatorname{high}}$, 
$(\cdot)^{\operatorname{low}}$ 
& ${\mathbf v}^{\operatorname{high}} = H_\Omega {\mathbf v}$, 
 ${\mathbf v}^{\operatorname{low}} = L_\Omega {\mathbf v}$,  \\
$(\cdot)^\nabla$
& gradient part of functions on $\Gamma$, (\ref{utbcHdecomp})\\
$(\cdot)^{\operatorname{curl}}$
& curl part of functions on $\Gamma$, (\ref{utbcHdecomp})\\
$\left[ \cdot\right]_{0,\Gamma}$, $\left[ \cdot\right]_{1,\Gamma}$
& jump operators across $\Gamma$, (\ref{defjumps})\\
$L_\Omega$, $H_\Omega = \operatorname{I} - L_\Omega$, & \\
$L_\Gamma$, $H_\Gamma=\operatorname{I} -L_\Gamma$
& high and low frequency operators with \\
& cut-off parameter $\lambda > 1$, Def.~\ref{DefFreqSplit}, (\ref{eq:defLOmega-strong}) \\
& for the case $\Omega = B_1(0)$, one has $\|L_\Omega\|_{\operatorname{curl},\Omega,k} \leq 1$, 
and $\|H_\Omega\|_{\operatorname{curl},\Omega,k} \leq 2$, (\ref{lowhighuest}) \\
$\mathcal{S}_{-k}^{\operatorname*{Hh}}$
& Helmholtz single layer operator, (\ref{eq:helmholtz-single-layer}) \\
$\mathcal{N}_{-k}^{\operatorname*{Hh}}$
& Helmholtz Newton potential, (\ref{eq:helmholtz-newton-potential}) \\
$\mathcal{S}_{-k}^{\operatorname*{Mw}}$
& Maxwell single layer operator, (\ref{eq:maxwell-single-layer}) \\
$T_\Delta$
& Laplace Dirichlet-to-Neumann operator for $B_1(0)$; Sec.~\ref{SolFormType3})\\
$\Pi^E_h$
& abstract form of $\Picurlcom$ \\
$\Pi^F_h$
& abstract form of $\Pidivcom$ \\
$\Picurlcom$
& ${\mathbf H}(\operatorname{curl})$-conf. commuting diagram projector (matches $\Pigradcom$) \\
$\hatPicurlcom$
& the operator $\Picurlcom$ on the reference tetrahedron \\
$\Picurls$
& ${\mathbf H}(\operatorname{curl})$-conforming approx. operator, \\
& optimal $p$-rates
\emph{simultaneously} in $L^2$ and ${\mathbf H}(\operatorname{curl})$ \\
$\hatPicurls$
& the operator $\Picurls$ on the reference tetrahedron \\
$\Pigradcom$
& $H^1$-conf. commuting diagram projector (matches $\Picurlcom$) \\
$\Pi^{\nabla}$, $\Pi^{\nabla,*}$, $\Pi^{\nabla}_V$, $\Pi^{\nabla,*}_V$
& projection onto $\nabla H^1(\Omega)$
or $V$ w.r.t. $((\cdot,\cdot))$ (Lemma~\ref{Lem2prodwp}) \\
$\Pi^{\nabla}_h$, $\Pi^{\nabla,*}_h$
& projection onto $\nabla S_h$ w.r.t. $((\cdot,\cdot))$ (Lemma~\ref{Lem2prodwp}) \\
$\Pi^{\operatorname*{curl}}$, $\Pi^{\operatorname*{curl},\ast}$
& $\operatorname{I} - \Pi^{\nabla}$ and $\operatorname{I} - \Pi^{\nabla,\ast}$,
see Def.~\ref{DefHelmDecomp}\\
$\Pi^{\operatorname*{curl}}_h$, $\Pi^{\operatorname*{curl},\ast}_h$
& $\operatorname{I} - \Pi^{\nabla}_h$ and $\operatorname{I} - \Pi^{\nabla,\ast}_h$,
see Def.~\ref{DefHelmDecomp}\\
$\Pi^{\operatorname*{comp},\ast}:=L_{\Omega}+\Pi^{\operatorname*{curl},\ast}H_{\Omega}$
& see Def.~\ref{DefHelmDecomp}\\
$\Pi_{h}^{\operatorname*{comp},\ast}:=L_{\Omega}+\Pi_{h}^{\operatorname*{curl},\ast}H_{\Omega}$
& see Def.~\ref{DefHelmDecomp}\\[\fs]
%---
{\bf constants} & \\ \hline \\
%---
$C_{\operatorname*{affine}}$, $C_{\operatorname{metric}}$
&constants measuring the quality of the mesh (Assumption \ref{def:element-maps}) \\
$C_\Gamma$
& continuity of tangential trace operator (\ref{eq:norm-of-Pi_tau}) \\
& (bounded uniformly in $k$) \\
$C_{k}^{L,\Omega}$,\ $C_{k}^{H,\Omega}$
& $\|L_\Omega\|_{\operatorname*{curl},\Omega,k}$, $\|H_\Omega\|_{\operatorname*{curl},\Omega,k}$,
see (\ref{defCkLHOmega}); \\
& for general domains, $C_k^{L,\Omega}$, $C_k^{H,\Omega} = O(k)$; \\
&  for $\Omega = B_1$,  $C_{k}^{L,\Omega}$,$C_{k}^{H,\Omega} = O(1)$ (cf. Cor.~\ref{CorConstantsSphere}) \\
$C_{b,k}^{\nabla,high}$ & continuity of constant of $b_k^{\operatorname{high}}$, see (\ref{defCbkhigh});  \\
& for $\Omega = B_1$: $C_{b,k}^{\nabla,high} = O(1)$ by Cor.~\ref{CorConstantsSphere}\\
$C_{b,k}^{\operatorname{curl},high}$
& continuity const. of $b_k^{\operatorname{curl},\operatorname{high}}$, see (\ref{defCbkhigh}); \\
& for $\Omega = B_1$: $C_{b,k}^{\operatorname{curl},high} = O(1)$ by Cor.~\ref{CorConstantsSphere}\\
$C_{\operatorname{DtN},k}$
& norm of capacity operator $T_k$, (\ref{DefCDtNk}); \\
& for $\Omega = B_1$: $C_{\operatorname{DtN},k} = O(k^2)$ by Cor.~\ref{CorConstantsSphere} \\
$C_{\operatorname{cont},k}$
& cont. const. of $A_k$ and of $((\cdot,\cdot))$, see (\ref{contAkbk}); \\
$C^{\operatorname{high}}_{\operatorname{cont},k}$
& cont. const. of $A_k(H_\Omega \cdot,\cdot)$ , see (\ref{contAkhigh}); \\
& for $\Omega = B_1$: $C_{\operatorname{cont},k} = O(k^3)$ by Cor.~\ref{CorConstantsSphere}\\
$C^{\operatorname{high}}_{b,k}$
& cont. const. of $((\cdot,H_\Omega\cdot))$ and $((H_\Omega\cdot,\cdot))$ , see (\ref{DefCconthighk}); \\
&  for $\Omega = B_1$: $C^{\operatorname{high}}_{b,k} = O(1)$ by Cor.~\ref{CorConstantsSphere}\\
$C_{\Omega,k}$ 
& the embedding constant ${\mathbf V}_0 \subset {\mathbf H}^1(\Omega)$, see (\ref{DefCOmegak});  \\
& for $\Omega = B_1(0)$, $C_{\Omega,k} = 1$ by Lemma~\ref{Lemembedspec} \\
$C^{I}_{k}$ 
& constant in fundamental approximation result, (\ref{bserest}), (\ref{defrhokneu}) \\
$C_{r,k}$ 
& see (\ref{defCrk}), \\
$C_{\#,k}$ 
& see (\ref{defCrk}), \\
$C_{\#\#,k}$ 
& see (\ref{est2ndtermsplit}), \\
$\alpha_j$, $C_{{\mathcal A},j}$, $\gamma_{{\mathcal A},j}$, 
& constants characterizing $k$-dependence in analyticity classes, (\ref{eq:analyticity-classes-d})  \\
$C_{b}$, $C_{b}^\prime$, 
& $O(1)$ constants related to the bilinear form $b_k$; see Props.~\ref{PropFrequbest}, \ref{Propkbk}  \\
$\widetilde C_{b}$ & $O(1)$ cont. const. of $(\!(\cdot,\cdot)\!)$ for $B_1$, if one argument is a 
high frequency, cf.~Prop.~\ref{PropStabCSU}, \\
$C_{\operatorname{rough}}$ & an $O(1)$ constant associated with adjoint solution operator ${\mathcal N}_2$, Prop.~\ref{PropN2Arough}
\\[\fs]
%---
{\bf dual problems and} & \\ 
{\bf approximation } & \\ 
{\bf properties} & \\ \hline \\
%%---
$\widehat {\mathcal N}$ 
& (\ref{defNmother}), (\ref{solformdualprob}) \\
${\mathcal N}_1^{\mathcal A}$ 
& solution of an adjoint problem with analytic data, (\ref{adjproblm0}) \\
$\tilde{\eta}_1^{\operatorname{exp}}$
& approximation property related to ${\mathcal N}_1^{\mathcal A}$, (\ref{defetatildeexp}) \\
${\mathcal N}_2$ & adjoint sol. operator, right-hand sides finite regularity (\ref{adjoint3b})\\
${\mathcal N}_3^{\mathcal A}$,  ${\mathcal N}_4^{\mathcal A}$,  & adjoint sol. operator, analytic data, (\ref{smoothdualproblemd}), (\ref{graddoppelKlammer})\\
$\tilde{\eta}_i^{\operatorname{alg}}$, 
$\tilde{\eta}_i^{\operatorname{exp}}$, 
${\eta}_i^{\operatorname{alg}}$, ${\eta}_i^{\operatorname{exp}}$  
& see (\ref{defetatildealg})---(\ref{Defeta7}), (\ref{defeta1exp})---(\ref{etashort}),  \\
 & a tilde indicates that an adjoint sol. operator ${\mathcal N}$ is involved; \\
& $\eta$ indicates a pure approximation property, \\
&  superscript ``$\operatorname{exp}$'' indicates that exponential convergence of $hp$-FEM is expected;  \\
& superscript ``$\operatorname{alg}$'' indicates that algebraic convergence of $hp$-FEM is expected 
\end{longtable}

%EndExpansion

\section{Introduction\label{sec:intro}}

High-frequency electromagnetic scattering problems are often modelled by the
time-harmonic Maxwell equations (\ref{Maxwellfullspace}), and
%---for the physical background we refer to general textbooks such as \cite{Jackson02engl}.
the high-frequency case is characterized by a large wavenumber $k>0$. The
solution is then highly oscillatory, and its numerical resolution requires
fine meshes. Besides this natural condition on the discretization, a second,
more subtle issue arises in the high-frequency regime, namely, the difficulty
of Galerkin discretizations to control dispersion errors. That is, in fixed
order methods the discrepancy between the best approximation from the discrete
space and the Galerkin error widens as the wavenumber $k$ increases. It is the
purpose of the present paper to show for a model problem that high order
methods are able to control these dispersion errors and can lead to
quasi-optimality for a fixed (but sufficiently large) number of degrees of
freedom per wavelength.

For the related, simpler case of high-frequency acoustic scattering, which is
modelled by the Helmholtz equation, substantial progress in the understanding
of the dispersive properties of low order and high order methods has been made
in the last decades. We mention the dispersion analyses on regular grids for
fixed order Galerkin methods
\cite{Ihlenburg,Frankp,Ihlenburgbook,BabuskaSauter}, the works
\cite{Ainsworth2004,ainsworth-wajid09,ainsworth-wajid10}, for high order
methods and \cite{ainsworth-monk-muniz06} for a non-conforming discretization
and refer the reader to \cite{MelenkHelmStab2010,mm_stas_helm2} for a more
detailed discussion. These analyses on regular grids give strong arguments for
the numerical observation that high order discretizations are much better
suited to control dispersion errors than low-order methods. For general
meshes, a rigorous argument in favor of high order (conforming and
non-conforming) methods is put forward in the works
\cite{MelenkDiss,MelenkSauterMathComp,mm_stas_helm2,MelenkHelmStab2010,MPS13},
where stability and convergence analyses that are explicit in the mesh size
$h$, the approximation order $p$, and the wavenumber $k$ are provided for
several classes of Helmholtz problems. The underlying principles in these
works are not restricted to FEM discretizations; indeed,
\cite{MelenkLoehndorf} applies these techniques in a Helmholtz BEM context.

The numerical analysis focussing on the dispersive properties of high order
methods for the time-harmonic Maxwell equations is to date significantly less
developed. An analysis on regular grids that is explicit in the polynomial
degree $p$ is available in \cite{ainsworth04b}. A convergence analysis for a
Maxwell problem on general grids that is explicit in the mesh size $h$, the
polynomial degree $p$, and the wavenumber $k$ is the purpose of the present
work. To fix ideas we consider as a model problem the time-harmonic Maxwell
equations (\ref{Maxwellfullspace}) in full space ${\mathbb{R}}^{3}$. Since a
(high order) finite element method (FEM) is our goal, we consider the
equivalent reformulation of the full space problem as a problem in the unit
ball $\Omega=B_{1}(0)$ complemented with transparent boundary conditions on
$\Gamma=\partial\Omega$ (cf. (\ref{electricMWEqOmega})). As we study
conforming Galerkin discretizations, the starting point for the discretization
is the variational formulation (\ref{GalDis}). For this model problem, our
main result is Theorem~\ref{thm:hpFEM-quasioptimality}, which establishes
quasi-optimality of the Galerkin method based on N\'{e}d\'{e}lec type I
elements of degree $p$ under the scale resolution conditions
\begin{equation}
\mbox{ ${kh}/{p} \leq c_1$ }\qquad\mbox{ and }\qquad p\geq c_2\log k
\label{eq:intro-scale-resolution};
\end{equation}
here, $c_2 > 0$ may be chosen arbitrarily and $c_1 > 0$ is 
sufficiently small but independent of $h$, $k$, and $p$. 
%for some constants $c$, $C > 0$ independent of $h$, $k$, and $p$.

We focus here on a conforming Galerkin discretization, which will require the
scale resolution condition (\ref{eq:intro-scale-resolution}) to ensure
existence of the discrete solution. It is worth pointing out that alternatives
to conforming Galerkin methods have been proposed in the literature. Without
attempting completeness and restricting ourselves to approaches based on
higher order polynomials, we mention stabilized methods for Helmholtz
\cite{feng-wu09,feng-wu11,feng-xing13,zhu-wu13} and Maxwell
\cite{feng-wu14,lu-chen-qiu17} problems; hybridizable methods
\cite{chen-lu-xu13}; least-squares type methods \cite{chen-qiu17} and
Discontinous Petrov Galerkin methods,
\cite{petrides-demkowicz17,demkowicz-gopalakrishnan-muga-zitelli12}. In convex
domains or domains with a smooth boundary, $H^{1}$-conforming discretizations
for Maxwell problems can be employed instead of ${\mathbf{H}}%
(\operatorname{curl})$-conforming ones; see \cite{nicaise-tomezyk17},
{\cite{nicaise-tomezyk19}} for a $k$-explicit theory.

We close this introduction by emphasizing that, as in the case of the
Helmholtz equation, the techniques employed in the present work are not
restricted to the model problem under consideration here; in the forthcoming
\cite{MelenkSauterMaxwell_II}, we apply the techniques developed here to
Maxwell's equations equipped with impedance boundary conditions. Finally, a
general note on notation is warranted: as we aim at a $k$-explicit theory, we
indicate constants that (possibly) depend on the wavenumber $k$ by a subscript
$k$.

%-------------------------------------

\subsection{Road Map: Setting}

\label{sec:setting}
%-------------------------------------
Our $k$-explicit convergence analysis of high order FEM for Maxwell's
equations requires a variety of tools including compactness arguments,
$k$-explicit regularity based on decomposing the solution into parts with
finite regularity and analytic parts as developed for the Helmholtz equation,
%$k$-explicit analytic regularity for various dual problems
and commuting diagram operators that are explicit in the polynomial degree
$p$. It may therefore be useful to provide here an outline of the key steps.

The reformulation of the original full space problem (\ref{Maxwellfullspace})
as the problem (\ref{electricMWEqOmega}) in a bounded domain $\Omega
\subset{\mathbb{R}}^{3}$ uses transparent boundary conditions, which are
expressed in terms of the capacity operator $T_{k}$ (see
Section~\ref{SecReformMax} and (\ref{DefTk}) for its explicit series
representation in the case of the unit ball $\Omega= B_{1}(0)$). The pertinent
sesquilinear form that we consider in this work is then
\[
A_{k}({\mathbf{u}},{\mathbf{v}})=(\operatorname{curl}{\mathbf{u}%
},\operatorname{curl}{\mathbf{v}})-k^{2}({\mathbf{u}},{\mathbf{v}%
})-\operatorname*{i}k(T_{k}{\mathbf{u}}_{T},{\mathbf{v}}_{T})_{\Gamma}.
\]
Here, $\left(  \cdot,\cdot\right)  $ is the $L^{2}(\Omega)$ inner product and
$(\cdot,\cdot)_{\Gamma}$ the $L^{2}\left(  \Gamma\right)  $ inner product with
$\Gamma=\partial\Omega$. The subscript $T$ indicates that the tangential
component of the trace is considered. For $\Omega=B_{1}(0)$, our analysis will
be explicit in the wavenumber $k$ and we therefore focus on this case in this
introduction.
%------------------------------------

\subsection{Road Map: the Maxwell Aspect}

\label{sec:maxwell-aspect}
%------------------------------------
Let us first discuss the key issues that are specific to discretizations of
Maxwell's equations; in the following Section~\ref{sec:k-explicit-aspect}, we
will focus on the additional difficulties arising from making the error
analysis explicit in $k$. The arguments that we highlight in the current
Section~\ref{sec:maxwell-aspect} are essentially those of
\cite{Monk03,hiptmair-acta,Buffa2005,caorsi-fernandes-raffetto00} and
\cite[Sec.~{7.2}]{Monkbook}.

To understand the Galerkin error for Maxwell's equations, it is imperative to
decompose the various fields in gradient fields and solenoidal fields, both in
$\Omega$ and on the surface $\Gamma$. The tangential field ${\mathbf{u}}_{T}$
is decomposed as a gradient part ${\mathbf{u}}^{\nabla}$ and a (surface)
divergence-free part ${\mathbf{u}}^{\operatorname{curl}}$. The decomposition
${\mathbf{u}}_{T}={\mathbf{u}}^{\nabla}+{\mathbf{u}}^{\operatorname{curl}}$
leads to the decomposition of the sesquilinear form $A_{k}$ as (cf.
(\ref{Akcom2scprod}))%
\[
A_{k}({\mathbf{u}},{\mathbf{v}})=\left(  \operatorname{curl}{\mathbf{u}%
},\operatorname{curl}{\mathbf{v}}\right)  -\operatorname*{i}k\left(
T_{k}{\mathbf{u}}^{\operatorname{curl}},{\mathbf{v}}^{\operatorname{curl}%
}\right)  _{\Gamma}-\underbrace{\left(  k^{2}\left(  {\mathbf{u}},{\mathbf{v}%
}\right)  +\operatorname*{i}k\left(  T_{k}{\mathbf{u}}^{\nabla},{\mathbf{v}%
}^{\nabla}\right)  _{\Gamma}\right)  }_{=:\left(
%TCIMACRO{\TeXButton{l}{\kern-.1em}}%
%BeginExpansion
\kern-.1em%
%EndExpansion
\left(
%TCIMACRO{\TeXButton{l}{\kern-.1em}}%
%BeginExpansion
\kern-.1em%
%EndExpansion
\mathbf{u},\mathbf{v}%
%TCIMACRO{\TeXButton{l}{\kern-.1em}}%
%BeginExpansion
\kern-.1em%
%EndExpansion
\right)
%TCIMACRO{\TeXButton{l}{\kern-.1em}}%
%BeginExpansion
\kern-.1em%
%EndExpansion
\right)  }%
\]
By \cite[Thm.~{5.3.6}]{Nedelec01}, we have for $\Omega=B_{1}(0)$ sign
properties of the expressions $\operatorname*{i}k(T_{k}{\mathbf{u}%
}^{\operatorname{curl}},{\mathbf{u}}^{\operatorname{curl}})_{\Gamma}$ and
$\left(
%TCIMACRO{\TeXButton{l}{\kern-.1em}}%
%BeginExpansion
\kern-.1em%
%EndExpansion
\left(
%TCIMACRO{\TeXButton{l}{\kern-.1em}}%
%BeginExpansion
\kern-.1em%
%EndExpansion
\mathbf{u},\mathbf{u}%
%TCIMACRO{\TeXButton{l}{\kern-.1em}}%
%BeginExpansion
\kern-.1em%
%EndExpansion
\right)
%TCIMACRO{\TeXButton{l}{\kern-.1em}}%
%BeginExpansion
\kern-.1em%
%EndExpansion
\right)  $. Furthermore, the curl-part ${\mathbf{u}}^{\operatorname{curl}}$ of
the tangential trace ${\mathbf{u}}_{T}$ vanishes for gradient fields
${\mathbf{u}}=\nabla\varphi$, $\varphi\in H^{1}(\Omega)$. Collecting these
observations, we have:

\begin{enumerate}
[(I)]

\item \label{item:intro-I} \raggedright
$\operatorname*{Re}\left(  (\operatorname*{curl}{\mathbf{u}}%
,\operatorname*{curl}{\mathbf{u}})-\operatorname*{i}k(T_{k}{\mathbf{u}%
}^{\operatorname{curl}},{\mathbf{u}}^{\operatorname{curl}})_{\Gamma}\right)
\geq\Vert\operatorname*{curl}{\mathbf{u}}\Vert^{2}$\quad$\forall\mathbf{u}%
\in\mathbf{X}:={\mathbf{H}}(\Omega,\operatorname{curl})$, \linebreak%
(cf.~\cite[Thm.~{5.3.6}]{Nedelec01}, Lemma~\ref{Lembsplit});

\item \label{item:intro-II} $\operatorname*{Re}\left(
%TCIMACRO{\TeXButton{l}{\kern-.1em}}%
%BeginExpansion
\kern-.1em%
%EndExpansion
\left(
%TCIMACRO{\TeXButton{l}{\kern-.1em}}%
%BeginExpansion
\kern-.1em%
%EndExpansion
\nabla\varphi,\nabla\varphi%
%TCIMACRO{\TeXButton{l}{\kern-.1em}}%
%BeginExpansion
\kern-.1em%
%EndExpansion
\right)
%TCIMACRO{\TeXButton{l}{\kern-.1em}}%
%BeginExpansion
\kern-.1em%
%EndExpansion
\right)  \geq\left(  k\Vert\nabla\varphi\Vert\right)  ^{2}$\quad
$\forall\varphi\in H^{1}\left(  \Omega\right)  $,$\quad$
(cf.~(\ref{k+normdblescest}));

\item \label{item:intro-III} $A_{k}({\mathbf{u}},\nabla\varphi)=-\left(
%TCIMACRO{\TeXButton{l}{\kern-.1em}}%
%BeginExpansion
\kern-.1em%
%EndExpansion
\left(
%TCIMACRO{\TeXButton{l}{\kern-.1em}}%
%BeginExpansion
\kern-.1em%
%EndExpansion
\mathbf{u},\nabla\varphi%
%TCIMACRO{\TeXButton{l}{\kern-.1em}}%
%BeginExpansion
\kern-.1em%
%EndExpansion
\right)
%TCIMACRO{\TeXButton{l}{\kern-.1em}}%
%BeginExpansion
\kern-.1em%
%EndExpansion
\right)  \quad\forall\varphi\in H^{1}(\Omega),\ {\mathbf{u}}\in{\mathbf{H}%
}(\Omega,\operatorname*{curl})$,\quad(cf. (\ref{Akcom2scprod}) in conjunction
with Rem.~\ref{rem:decomposition-of-nabla-phi}).
\end{enumerate}

Let ${\mathbf{u}}\in{\mathbf{X}}={\mathbf{H}}(\Omega,\operatorname*{curl})$
and ${\mathbf{u}}_{h}\in{\mathbf{X}}_{h}\subset{\mathbf{X}}$ be its Galerkin
approximation. Then, for arbitrary ${\mathbf{w}}_{h}\in{\mathbf{X}}_{h}$ we
get for the Galerkin error ${\mathbf{e}}_{h}:={\mathbf{u}}-{\mathbf{u}}_{h}$
\begin{align}
\Vert{\mathbf{e}}_{h}\Vert_{\operatorname{curl},\Omega,k}^{2}  &
:=\Vert\operatorname{curl}{\mathbf{e}}_{h}\Vert^{2}+k^{2}\Vert{\mathbf{e}}%
_{h}\Vert^{2}\leq\operatorname*{Re}A_{k}({\mathbf{e}}_{h},{\mathbf{e}}%
_{h})+2\operatorname{Re}\left(
%TCIMACRO{\TeXButton{l}{\kern-.1em}}%
%BeginExpansion
\kern-.1em%
%EndExpansion
\left(
%TCIMACRO{\TeXButton{l}{\kern-.1em}}%
%BeginExpansion
\kern-.1em%
%EndExpansion
\mathbf{e}_{h},\mathbf{e}_{h}%
%TCIMACRO{\TeXButton{l}{\kern-.1em}}%
%BeginExpansion
\kern-.1em%
%EndExpansion
\right)
%TCIMACRO{\TeXButton{l}{\kern-.1em}}%
%BeginExpansion
\kern-.1em%
%EndExpansion
\right) \label{eq:def-Hcurl-k-intro}\\
&  =\operatorname*{Re}A_{k}({\mathbf{e}}_{h},{\mathbf{u}}-{\mathbf{w}}%
_{h})+2\operatorname{Re}\left(
%TCIMACRO{\TeXButton{l}{\kern-.1em}}%
%BeginExpansion
\kern-.1em%
%EndExpansion
\left(
%TCIMACRO{\TeXButton{l}{\kern-.1em}}%
%BeginExpansion
\kern-.1em%
%EndExpansion
{\mathbf{e}}_{h},{\mathbf{u}}-{\mathbf{w}}_{h}%
%TCIMACRO{\TeXButton{l}{\kern-.1em}}%
%BeginExpansion
\kern-.1em%
%EndExpansion
\right)
%TCIMACRO{\TeXButton{l}{\kern-.1em}}%
%BeginExpansion
\kern-.1em%
%EndExpansion
\right)  +2\operatorname{Re}\left(
%TCIMACRO{\TeXButton{l}{\kern-.1em}}%
%BeginExpansion
\kern-.1em%
%EndExpansion
\left(
%TCIMACRO{\TeXButton{l}{\kern-.1em}}%
%BeginExpansion
\kern-.1em%
%EndExpansion
{\mathbf{e}}_{h},{\mathbf{w}}_{h}-{\mathbf{u}}_{h}%
%TCIMACRO{\TeXButton{l}{\kern-.1em}}%
%BeginExpansion
\kern-.1em%
%EndExpansion
\right)
%TCIMACRO{\TeXButton{l}{\kern-.1em}}%
%BeginExpansion
\kern-.1em%
%EndExpansion
\right) \nonumber\\
&  \leq\underbrace{\operatorname*{Re}\left(  A_{k}\left(  {\mathbf{e}}%
_{h},{\mathbf{u}}-{\mathbf{w}}_{h}\right)  +2\left(
%TCIMACRO{\TeXButton{l}{\kern-.1em}}%
%BeginExpansion
\kern-.1em%
%EndExpansion
\left(
%TCIMACRO{\TeXButton{l}{\kern-.1em}}%
%BeginExpansion
\kern-.1em%
%EndExpansion
{\mathbf{e}}_{h},{\mathbf{u}}-{\mathbf{w}}_{h}%
%TCIMACRO{\TeXButton{l}{\kern-.1em}}%
%BeginExpansion
\kern-.1em%
%EndExpansion
\right)
%TCIMACRO{\TeXButton{l}{\kern-.1em}}%
%BeginExpansion
\kern-.1em%
%EndExpansion
\right)  \right)  }_{=:T_{1}}+2\sup_{{\mathbf{v}}_{h}\in{\mathbf{X}}%
_{h}\backslash\left\{  0\right\}  }\frac{\operatorname{Re}\left(
%TCIMACRO{\TeXButton{l}{\kern-.1em}}%
%BeginExpansion
\kern-.1em%
%EndExpansion
\left(
%TCIMACRO{\TeXButton{l}{\kern-.1em}}%
%BeginExpansion
\kern-.1em%
%EndExpansion
{\mathbf{e}}_{h},{\mathbf{v}}_{h}%
%TCIMACRO{\TeXButton{l}{\kern-.1em}}%
%BeginExpansion
\kern-.1em%
%EndExpansion
\right)
%TCIMACRO{\TeXButton{l}{\kern-.1em}}%
%BeginExpansion
\kern-.1em%
%EndExpansion
\right)  }{\Vert{\mathbf{v}}_{h}\Vert_{\operatorname{curl},\Omega,k}%
}\underbrace{\Vert{\mathbf{u}}_{h}-{\mathbf{w}}_{h}\Vert_{\operatorname{curl}%
,\Omega,k}}_{\leq\Vert{\mathbf{e}}_{h}\Vert_{\operatorname{curl},\Omega
,k}+\Vert{\mathbf{u}}-{\mathbf{w}}_{h}\Vert_{\operatorname{curl},\Omega,k}}.
\label{eq:intro-20}%
\end{align}
Assuming continuity of $A_{k}$ and $\left(
%TCIMACRO{\TeXButton{l}{\kern-.1em}}%
%BeginExpansion
\kern-.1em%
%EndExpansion
\left(
%TCIMACRO{\TeXButton{l}{\kern-.1em}}%
%BeginExpansion
\kern-.1em%
%EndExpansion
\cdot,\cdot%
%TCIMACRO{\TeXButton{l}{\kern-.1em}}%
%BeginExpansion
\kern-.1em%
%EndExpansion
\right)
%TCIMACRO{\TeXButton{l}{\kern-.1em}}%
%BeginExpansion
\kern-.1em%
%EndExpansion
\right)  $ with respect to the norm $\Vert\cdot\Vert_{\operatorname{curl}%
,\Omega,k}$ (defined in (\ref{eq:def-Hcurl-k-intro})) this analysis shows that
quasi-optimality of the Galerkin method can be achieved \emph{provided} one
can ensure
\begin{equation}
2\sup_{{\mathbf{v}}_{h}\in{\mathbf{X}}_{h}\backslash\left\{  0\right\}  }%
\frac{\operatorname{Re}\left(
%TCIMACRO{\TeXButton{l}{\kern-.1em}}%
%BeginExpansion
\kern-.1em%
%EndExpansion
\left(
%TCIMACRO{\TeXButton{l}{\kern-.1em}}%
%BeginExpansion
\kern-.1em%
%EndExpansion
{\mathbf{e}}_{h},{\mathbf{v}}_{h}%
%TCIMACRO{\TeXButton{l}{\kern-.1em}}%
%BeginExpansion
\kern-.1em%
%EndExpansion
\right)
%TCIMACRO{\TeXButton{l}{\kern-.1em}}%
%BeginExpansion
\kern-.1em%
%EndExpansion
\right)  }{\Vert{\mathbf{v}}_{h}\Vert_{\operatorname{curl},\Omega,k}%
\Vert{\mathbf{e}}_{h}\Vert_{\operatorname{curl},\Omega,k}}<1.
\label{eq:duality-1}%
\end{equation}
It is tempting to treat this term by a duality argument. However, the duality
argument cannot be applied directly since the map ${\mathbf{X}}\ni{\mathbf{v}%
}\mapsto\left(
%TCIMACRO{\TeXButton{l}{\kern-.1em}}%
%BeginExpansion
\kern-.1em%
%EndExpansion
\left(
%TCIMACRO{\TeXButton{l}{\kern-.1em}}%
%BeginExpansion
\kern-.1em%
%EndExpansion
\cdot,{\mathbf{v}}%
%TCIMACRO{\TeXButton{l}{\kern-.1em}}%
%BeginExpansion
\kern-.1em%
%EndExpansion
\right)
%TCIMACRO{\TeXButton{l}{\kern-.1em}}%
%BeginExpansion
\kern-.1em%
%EndExpansion
\right)  \in{\mathbf{X}}^{\prime}$ is not necessarily compact. In the
numerical analysis of Maxwell's equations, this lack of compactness is
addressed by suitable \textquotedblleft continuous\textquotedblright\ and
\textquotedblleft discrete\textquotedblright\ Helmholtz decompositions,
thereby exploiting that ${\mathbf{v}}_{h}$ is from the discrete space
${\mathbf{X}}_{h}$. Specifically, we decompose ${\mathbf{v}}_{h}\in
{\mathbf{X}}_{h}$ in two ways (\textquotedblleft continuous Helmholtz
decomposition\textquotedblright\ and \textquotedblleft discrete Helmholtz
decomposition\textquotedblright) into a divergence-free part and a gradient
part:
\begin{align}
{\mathbf{v}}_{h}  &  =\Pi^{\operatorname{curl},\ast}{\mathbf{v}}_{h}%
+\Pi^{\nabla,\ast}{\mathbf{v}}_{h}\qquad
\mbox{ (with ``continuous'' $\Pi^{\operatorname{curl},\ast} {\mathbf v}_h \in {\mathbf X}$, \
$\Pi^{\nabla,\ast} {\mathbf v}_h\in{\mathbf{X}}\cap \nabla H^1(\Omega)$;
see (\ref{item:intro-IV}))},\label{eq:intro-continuous-helmholtz}\\
{\mathbf{v}}_{h}  &  =\Pi_{h}^{\operatorname{curl},\ast}{\mathbf{v}}_{h}%
+\Pi_{h}^{\nabla,\ast}{\mathbf{v}}_{h}\qquad
\mbox{ (with ``discrete'' $\Pi^{\operatorname{curl},\ast}_h {\mathbf v}_h \in {\mathbf X}_h$, \
$\Pi^{\nabla,\ast}_h {\mathbf v}_h\in{\mathbf{X}_h} \cap \nabla H^1(\Omega)$;
see (\ref{item:intro-V}))}. \label{eq:intro-discrete-helmholtz}%
\end{align}
Since, by construction, $\Pi_{h}^{\nabla,\ast}{\mathbf{v}}_{h}\in{\mathbf{X}%
}_{h}$ is a gradient, the Galerkin orthogonality and the observation
(\ref{item:intro-III}) imply $\left(
%TCIMACRO{\TeXButton{l}{\kern-.1em}}%
%BeginExpansion
\kern-.1em%
%EndExpansion
\left(
%TCIMACRO{\TeXButton{l}{\kern-.1em}}%
%BeginExpansion
\kern-.1em%
%EndExpansion
{\mathbf{e}}_{h},\Pi_{h}^{\nabla,\ast}{\mathbf{v}}_{h}%
%TCIMACRO{\TeXButton{l}{\kern-.1em}}%
%BeginExpansion
\kern-.1em%
%EndExpansion
\right)
%TCIMACRO{\TeXButton{l}{\kern-.1em}}%
%BeginExpansion
\kern-.1em%
%EndExpansion
\right)  =0.$ Hence, we can write using both decompositions
(\ref{eq:intro-continuous-helmholtz}), (\ref{eq:intro-discrete-helmholtz})
\begin{equation}
\left(
%TCIMACRO{\TeXButton{l}{\kern-.1em}}%
%BeginExpansion
\kern-.1em%
%EndExpansion
\left(
%TCIMACRO{\TeXButton{l}{\kern-.1em}}%
%BeginExpansion
\kern-.1em%
%EndExpansion
{\mathbf{e}}_{h},{\mathbf{v}}_{h}%
%TCIMACRO{\TeXButton{l}{\kern-.1em}}%
%BeginExpansion
\kern-.1em%
%EndExpansion
\right)
%TCIMACRO{\TeXButton{l}{\kern-.1em}}%
%BeginExpansion
\kern-.1em%
%EndExpansion
\right)  =\left(
%TCIMACRO{\TeXButton{l}{\kern-.1em}}%
%BeginExpansion
\kern-.1em%
%EndExpansion
\left(
%TCIMACRO{\TeXButton{l}{\kern-.1em}}%
%BeginExpansion
\kern-.1em%
%EndExpansion
{\mathbf{e}}_{h},\Pi^{\operatorname{curl},\ast}{\mathbf{v}}_{h}%
%TCIMACRO{\TeXButton{l}{\kern-.1em}}%
%BeginExpansion
\kern-.1em%
%EndExpansion
\right)
%TCIMACRO{\TeXButton{l}{\kern-.1em}}%
%BeginExpansion
\kern-.1em%
%EndExpansion
\right)  +\left(
%TCIMACRO{\TeXButton{l}{\kern-.1em}}%
%BeginExpansion
\kern-.1em%
%EndExpansion
\left(
%TCIMACRO{\TeXButton{l}{\kern-.1em}}%
%BeginExpansion
\kern-.1em%
%EndExpansion
{\mathbf{e}}_{h},\Pi_{h}^{\operatorname{curl},\ast}{\mathbf{v}}_{h}%
-\Pi^{\operatorname{curl},\ast}{\mathbf{v}}_{h}%
%TCIMACRO{\TeXButton{l}{\kern-.1em}}%
%BeginExpansion
\kern-.1em%
%EndExpansion
\right)
%TCIMACRO{\TeXButton{l}{\kern-.1em}}%
%BeginExpansion
\kern-.1em%
%EndExpansion
\right)  =:T_{2}+T_{3}. \label{eq:intro-decomposition}%
\end{equation}
The convergence analysis based on this decomposition then relies on a) the
fact that the term $T_{2}=\left(
%TCIMACRO{\TeXButton{l}{\kern-.1em}}%
%BeginExpansion
\kern-.1em%
%EndExpansion
\left(
%TCIMACRO{\TeXButton{l}{\kern-.1em}}%
%BeginExpansion
\kern-.1em%
%EndExpansion
{\mathbf{e}}_{h},\Pi^{\operatorname{curl},\ast}{\mathbf{v}}_{h}%
%TCIMACRO{\TeXButton{l}{\kern-.1em}}%
%BeginExpansion
\kern-.1em%
%EndExpansion
\right)
%TCIMACRO{\TeXButton{l}{\kern-.1em}}%
%BeginExpansion
\kern-.1em%
%EndExpansion
\right)  $ can be estimated with a duality argument and b) that ${\Pi
}^{\operatorname{curl},\ast}{\mathbf{v}}_{h}-\Pi_{h}^{\operatorname{curl}%
,\ast}{\mathbf{v}}_{h}$ is shown to be small.

The continuous and discrete Helmholtz decompositions
(\ref{eq:intro-continuous-helmholtz}), (\ref{eq:intro-discrete-helmholtz}) are
defined as follows:

\begin{enumerate}
[(I)] \setcounter{enumi}{3}

\item (decomposition in gradient part and divergence-free part)
\label{item:intro-IV} The gradient part $\Pi^{\nabla,\ast}{\mathbf{v}}%
\in\nabla H^{1}\left(  \Omega\right)  $ is defined by the \textquotedblleft
orthogonality\textquotedblright\ condition
\[
\left(
%TCIMACRO{\TeXButton{l}{\kern-.1em}}%
%BeginExpansion
\kern-.1em%
%EndExpansion
\left(
%TCIMACRO{\TeXButton{l}{\kern-.1em}}%
%BeginExpansion
\kern-.1em%
%EndExpansion
\nabla\psi,\Pi^{\nabla,\ast}{\mathbf{v}}%
%TCIMACRO{\TeXButton{l}{\kern-.1em}}%
%BeginExpansion
\kern-.1em%
%EndExpansion
\right)
%TCIMACRO{\TeXButton{l}{\kern-.1em}}%
%BeginExpansion
\kern-.1em%
%EndExpansion
\right)  =\left(
%TCIMACRO{\TeXButton{l}{\kern-.1em}}%
%BeginExpansion
\kern-.1em%
%EndExpansion
\left(
%TCIMACRO{\TeXButton{l}{\kern-.1em}}%
%BeginExpansion
\kern-.1em%
%EndExpansion
\nabla\psi,{\mathbf{v}}%
%TCIMACRO{\TeXButton{l}{\kern-.1em}}%
%BeginExpansion
\kern-.1em%
%EndExpansion
\right)
%TCIMACRO{\TeXButton{l}{\kern-.1em}}%
%BeginExpansion
\kern-.1em%
%EndExpansion
\right)  \qquad\forall\psi\in H^{1}(\Omega),
\]
which is well posed by (\ref{item:intro-II}). We set $\Pi
^{\operatorname*{curl},\ast}:=I-\Pi^{\nabla,\ast}$ and denote its range by
${\mathbf{V}}_{0}^{\ast}$. We note that the operators $\Pi^{\nabla,\ast}$ and
$\Pi^{\operatorname*{curl}}$ effect a stable decomposition of the direct sum
${\mathbf{X}}={\mathbf{V}}_{0}^{\ast}\oplus\nabla H^{1}(\Omega)$. The above
mentioned duality argument for $T_{2}$ relies on the compactness of
${\mathbf{X}}\ni{\mathbf{v}}\mapsto\left(
%TCIMACRO{\TeXButton{l}{\kern-.1em}}%
%BeginExpansion
\kern-.1em%
%EndExpansion
\left(
%TCIMACRO{\TeXButton{l}{\kern-.1em}}%
%BeginExpansion
\kern-.1em%
%EndExpansion
\cdot,\Pi^{\operatorname{curl},\ast} {\mathbf{v}}
%TCIMACRO{\TeXButton{l}{\kern-.1em}}%
%BeginExpansion
\kern-.1em%
%EndExpansion
\right)
%TCIMACRO{\TeXButton{l}{\kern-.1em}}%
%BeginExpansion
\kern-.1em%
%EndExpansion
\right)  \in{\mathbf{X}}^{\prime}$, which is shown in Lemma~\ref{Lemembed} and
ultimately relies on the embedding ${\mathbf{V}}_{0}^{\ast} \subset
{\mathbf{H}}^{1}(\Omega)$.

\item (decomposition of discrete functions in gradient part and discrete
divergence-free part) \label{item:intro-V} Let $S_{h}\subset H^{1}(\Omega)$ be
defined by the requirement that the following (discrete) \emph{exact sequence}
property holds:
\begin{equation}
\begin{CD} S_h @> \nabla >> {\mathbf X}_h @> \operatorname*{curl} >> \operatorname*{curl} {\mathbf X}_h \end{CD} \label{eq:intro-exact-sequence}%
\end{equation}
(cf.~(\ref{exdiscseq_rm}) for the specific example of $hp$-FEM). We define the
discrete version $\Pi_{h}^{\nabla,\ast}:\mathbf{X}\rightarrow\nabla S_{h}$ of
$\Pi^{\nabla,\ast}$ by the \textquotedblleft orthogonality\textquotedblright%
\ condition%
\[
\left(
%TCIMACRO{\TeXButton{l}{\kern-.1em}}%
%BeginExpansion
\kern-.1em%
%EndExpansion
\left(
%TCIMACRO{\TeXButton{l}{\kern-.1em}}%
%BeginExpansion
\kern-.1em%
%EndExpansion
\nabla\psi,\Pi_{h}^{\nabla,\ast}{\mathbf{v}}%
%TCIMACRO{\TeXButton{l}{\kern-.1em}}%
%BeginExpansion
\kern-.1em%
%EndExpansion
\right)
%TCIMACRO{\TeXButton{l}{\kern-.1em}}%
%BeginExpansion
\kern-.1em%
%EndExpansion
\right)  =\left(
%TCIMACRO{\TeXButton{l}{\kern-.1em}}%
%BeginExpansion
\kern-.1em%
%EndExpansion
\left(
%TCIMACRO{\TeXButton{l}{\kern-.1em}}%
%BeginExpansion
\kern-.1em%
%EndExpansion
\nabla\psi,{\mathbf{v}}%
%TCIMACRO{\TeXButton{l}{\kern-.1em}}%
%BeginExpansion
\kern-.1em%
%EndExpansion
\right)
%TCIMACRO{\TeXButton{l}{\kern-.1em}}%
%BeginExpansion
\kern-.1em%
%EndExpansion
\right)  \qquad\forall\psi\in S_{h}%
\]
and set $\Pi_{h}^{\operatorname*{curl},\ast}:=I-\Pi_{h}^{\nabla,\ast}$.
\end{enumerate}

While the term $T_{2}$ in (\ref{eq:intro-decomposition}) is treated by a
duality argument, control of the term $T_{3}$ in (\ref{eq:intro-decomposition}%
) relies on the existence of an interpolating projector $\Pi_{h}^{E}$ (and a
companion operator $\Pi_{h}^{F}$) with a commuting diagram property:

\begin{enumerate}
[(I)] \setcounter{enumi}{5}

\item (commuting diagram projector) \label{item:intro-VI} Define ${\mathbf{V}%
}_{0,h}^{\ast}:=\{{\mathbf{v}}\in{\mathbf{V}}_{0}^{\ast}%
\,|\,\operatorname*{curl}{\mathbf{v}}\in\operatorname*{curl}{\mathbf{X}}%
_{h}\}$. We require the existence of an operator $\Pi_{h}^{E}:\mathbf{V}%
_{0,h}^{\ast}+\mathbf{X}_{h}\rightarrow{\mathbf{X}}_{h}$ with the following properties:

\begin{enumerate}
\item \label{item:intro-VI-a} $\Pi_{h}^{E}$ is a projector.

\item \label{item:intro-VI-b} There is a companion operator $\Pi_{h}^{F}$
defined on $\operatorname*{curl}{\mathbf{X}}_{h}$ with the commuting diagram
property $\operatorname*{curl}\Pi_{h}^{E}=\Pi_{h}^{F}\operatorname*{curl}$.

\item \label{item:intro-VI-c} $\Pi_{h}^{E}$ has some approximation properties
in $L^{2}(\Omega)$:
\begin{equation}
k \Vert{\mathbf{v}}-\Pi_{h}^{E}{\mathbf{v}}\Vert\leq\eta_{6}%
^{\operatorname{alg}}\Vert{\mathbf{v}}\Vert_{\operatorname*{curl},\Omega
,k}\qquad\forall{\mathbf{v}}\in{\mathbf{V}}_{0,h}^{\ast}, \label{eq:intro-100}%
\end{equation}
where the parameter $\eta_{6}^{\operatorname{alg}}$ quantifies certain the
approximation properties of ${\mathbf{X}}_{h}$ (e.g., in terms of the mesh
size $h$ and polynomial degree $p$).
\end{enumerate}
\end{enumerate}

\begin{remark}
In the case of $hp$-FEM, the operators $\Pi_{h}^{E}$ and $\Pi_{h}^{F}$ will be
constructed in an element-by-element fashion
(cf.~Def.~\ref{def:element-by-element}) from the operators $\widehat{\Pi}%
_{p}^{\operatorname*{curl},c}$ and $\widehat{\Pi}_{p}^{\operatorname*{div},c}$
(cf.~Theorem~\ref{thm:projection-based-interpolation}) that are defined on the
reference tetrahedron $\widehat{K}$. In the $hp$-FEM setting, the quantity
$\eta_{6}^{\operatorname*{alg}}$ in (\ref{eq:intro-100}) is estimated via
Lemma~\ref{lemma:Picurlcom-approximation},
(\ref{item:lemma:Picurlcom-approximation-iii}) by\footnote{$A\lesssim B$ is
shorthand for $A \leq C B$ for some $C>0$ that is independent of the
wavenumber $k$, the mesh size $h$, the polynomial degree $p$, as well as
functions appearing in $A$ and $B$.} $\eta_{6}^{\operatorname*{alg}}\lesssim
kh/p$; see (\ref{eta6algest}). \hbox{}\hfill\rule{0.8ex}{0.8ex}
\end{remark}

\begin{remark}
\label{Remsmalltilde}Various approximation properties $\eta_{\ell}$ will
appear in our analysis, which depend on the subspace $\mathbf{X}_{h}$. In the
context of $hp$-finite elements, these quantities $\eta_{\ell}$ will depend on
the mesh width $h$, the polynomial order $p$ of approximation, and the
regularity of the functions involved. Given that we focus on high order FEM
with the potential of exponential convergence, we employ the following
notational convention: If some $\eta_{\ell}$ is (generically) algebraically
small in $p$, we employ the superscript \textquotedblleft\thinspace
$\operatorname{alg}$\textquotedblright\ while we use the superscript
\textquotedblleft\thinspace$\operatorname{exp}$\textquotedblright\ if the
quantity is exponentially small.\hbox{}\hfill\rule{0.8ex}{0.8ex}
\end{remark}

The use of the properties of $\Pi_{h}^{E}$ required in (\ref{item:intro-VI})
become apparent if we observe the following arguments for estimating $T_{3}$:

\begin{enumerate}
[(i)]

\item \label{item:intro-step-2} The definition of $\Pi^{\operatorname*{curl}%
,\ast}$ and $\Pi_{h}^{\operatorname*{curl},\ast}$ implies the
\textquotedblleft orthogonality\textquotedblright\
\begin{equation}
\left(
%TCIMACRO{\TeXButton{l}{\kern-.1em}}%
%BeginExpansion
\kern-.1em%
%EndExpansion
\left(
%TCIMACRO{\TeXButton{l}{\kern-.1em}}%
%BeginExpansion
\kern-.1em%
%EndExpansion
\nabla\widetilde{\psi}_{h},\left(  \Pi^{\operatorname*{curl},\ast}-\Pi
_{h}^{\operatorname*{curl},\ast}\right)  \mathbf{v}_{h}%
%TCIMACRO{\TeXButton{l}{\kern-.1em}}%
%BeginExpansion
\kern-.1em%
%EndExpansion
\right)
%TCIMACRO{\TeXButton{l}{\kern-.1em}}%
%BeginExpansion
\kern-.1em%
%EndExpansion
\right)  =0\qquad\forall\widetilde{\psi}_{h}\in S_{h}. \label{eq:intro-200}%
\end{equation}

\item \label{item:intro-step-1} {}From $\operatorname*{curl}\Pi
^{\operatorname*{curl},\ast}=\operatorname*{curl}\Pi_{h}^{\operatorname*{curl}%
,\ast}=\operatorname*{curl}$ on $\mathbf{X}_{h}$ by
(\ref{eq:intro-continuous-helmholtz}), (\ref{eq:intro-discrete-helmholtz}) we
get for any $\mathbf{v}_{h}\in\mathbf{X}_{h}$%
\begin{align}
\operatorname*{curl}\left(  \Pi_{h}^{\operatorname*{curl},\ast}{\mathbf{v}%
}_{h}-\Pi_{h}^{E}\Pi^{\operatorname*{curl},\ast}{\mathbf{v}}_{h}\right)   &
\overset{\text{(\ref{item:intro-VI-b})}}{=}\operatorname*{curl}\left(  \Pi
_{h}^{\operatorname*{curl},\ast}{\mathbf{v}}_{h}\right)  -\Pi_{h}%
^{F}\operatorname*{curl}\left(  \Pi^{\operatorname*{curl},\ast}{\mathbf{v}%
}_{h}\right)  =\operatorname*{curl}{\mathbf{v}}_{h}-\Pi_{h}^{F}%
\operatorname*{curl}{\mathbf{v}}_{h}\nonumber\\
&  \overset{\text{(\ref{item:intro-VI-b})}}{=}\operatorname*{curl}{\mathbf{v}%
}_{h}-\operatorname*{curl}\Pi_{h}^{E}{\mathbf{v}}_{h}%
\overset{\text{(\ref{item:intro-VI-a})}}{=}\operatorname*{curl}\left(
{\mathbf{v}}_{h}-{\mathbf{v}}_{h}\right)  =0. \label{eq:intro-2000}%
\end{align}

\item \label{item:intro-step-3} By the exact sequence property, the
observation (\ref{eq:intro-2000}) implies that $\Pi_{h}^{\operatorname*{curl}%
,\ast}\mathbf{v}_{h}-\Pi_{h}^{E}\Pi^{\operatorname*{curl},\ast}\mathbf{v}_{h}$
is the gradient of an element of $S_{h}$, i.e., $\Pi_{h}^{\operatorname*{curl}%
,\ast}\mathbf{v}_{h}-\Pi_{h}^{E}\Pi^{\operatorname*{curl},\ast}\mathbf{v}%
_{h}=\nabla\psi_{h}$ for some $\psi_{h}\in S_{h}$.

\item \label{item:intro-step-4} Combining (\ref{item:intro-II}),
(\ref{item:intro-step-3}), (\ref{eq:intro-200}) yields
\begin{align}
k^{2}\left\Vert \left(  \Pi^{\operatorname*{curl},\ast}-\Pi_{h}%
^{\operatorname*{curl},\ast}\right)  \mathbf{v}_{h}\right\Vert ^{2}  &
\overset{\text{(\ref{item:intro-II})}}{\leq}\operatorname*{Re}\left(
%TCIMACRO{\TeXButton{l}{\kern-.1em}}%
%BeginExpansion
\kern-.1em%
%EndExpansion
\left(
%TCIMACRO{\TeXButton{l}{\kern-.1em}}%
%BeginExpansion
\kern-.1em%
%EndExpansion
\, \left(  \Pi^{\operatorname*{curl},\ast}-\Pi_{h}^{\operatorname*{curl},\ast
}\right)  \mathbf{v}_{h},\left(  \Pi^{\operatorname*{curl},\ast}-\Pi
_{h}^{\operatorname*{curl},\ast}\right)  \mathbf{v}_{h}%
%TCIMACRO{\TeXButton{l}{\kern-.1em}}%
%BeginExpansion
\kern-.1em%
%EndExpansion
\right)
%TCIMACRO{\TeXButton{l}{\kern-.1em}}%
%BeginExpansion
\kern-.1em%
%EndExpansion
\right) \nonumber\\
&  \overset{\text{(\ref{eq:intro-200}), (\ref{item:intro-step-3})}%
}{=}\operatorname*{Re}\left(
%TCIMACRO{\TeXButton{l}{\kern-.1em}}%
%BeginExpansion
\kern-.1em%
%EndExpansion
\left(
%TCIMACRO{\TeXButton{l}{\kern-.1em}}%
%BeginExpansion
\kern-.1em%
%EndExpansion
\left(  I-\Pi_{h}^{E}\right)  \Pi^{\operatorname*{curl},\ast}\mathbf{v}%
_{h},\left(  \Pi^{\operatorname*{curl},\ast}-\Pi_{h}^{\operatorname*{curl}%
,\ast}\right)  \mathbf{v}_{h}%
%TCIMACRO{\TeXButton{l}{\kern-.1em}}%
%BeginExpansion
\kern-.1em%
%EndExpansion
\right)
%TCIMACRO{\TeXButton{l}{\kern-.1em}}%
%BeginExpansion
\kern-.1em%
%EndExpansion
\right)  . \label{eq:intro-2010}%
\end{align}

\item \label{item:intro-step-5} The continuity of $\left(
%TCIMACRO{\TeXButton{l}{\kern-.1em}}%
%BeginExpansion
\kern-.1em%
%EndExpansion
\left(
%TCIMACRO{\TeXButton{l}{\kern-.1em}}%
%BeginExpansion
\kern-.1em%
%EndExpansion
\cdot,\cdot%
%TCIMACRO{\TeXButton{l}{\kern-.1em}}%
%BeginExpansion
\kern-.1em%
%EndExpansion
\right)
%TCIMACRO{\TeXButton{l}{\kern-.1em}}%
%BeginExpansion
\kern-.1em%
%EndExpansion
\right)  $ (cf.~(\ref{Cbk}), Prop.~\ref{PropStabCSU}) and using
$\operatorname*{curl}\left(  \left(  I-\Pi_{h}^{E}\right)  \Pi
^{\operatorname*{curl},\ast}\mathbf{v}_{h}\right)  =0=\operatorname*{curl}%
\left(  \Pi^{\operatorname*{curl},\ast}-\Pi_{h}^{\operatorname*{curl},\ast
}\right)  \mathbf{v}_{h}$ (as a consequence of the above calculation), gives
$\left\Vert \left(  I-\Pi_{h}^{E}\right)  \Pi^{\operatorname*{curl},\ast
}\mathbf{v}_{h}\right\Vert _{\operatorname*{curl},\Omega,k}=k\left\Vert
\left(  I-\Pi_{h}^{E}\right)  \Pi^{\operatorname*{curl},\ast}\mathbf{v}%
_{h}\right\Vert $ so that we may continue the estimate (\ref{eq:intro-2010}):
\begin{align*}
k^{2}\left\Vert \left(  \Pi^{\operatorname*{curl},\ast}-\Pi_{h}%
^{\operatorname*{curl},\ast}\right)  \mathbf{v}_{h}\right\Vert ^{2}  &  \leq
C_{\operatorname*{cont},k}\left\Vert \left(  \Pi^{\operatorname*{curl},\ast
}-\Pi_{h}^{\operatorname*{curl},\ast}\right)  \mathbf{v}_{h}\right\Vert
_{\operatorname*{curl},\Omega,k}\left\Vert \left(  I-\Pi_{h}^{E}\right)
\Pi^{\operatorname*{curl},\ast}\mathbf{v}_{h}\right\Vert
_{\operatorname*{curl},\Omega,k}\\
&  =C_{\operatorname*{cont},k}\left(  k\left\Vert \left(  \Pi
^{\operatorname*{curl},\ast}-\Pi_{h}^{\operatorname*{curl},\ast}\right)
\mathbf{v}_{h}\right\Vert \right)  \left(  k\left\Vert \left(  I-\Pi_{h}%
^{E}\right)  \Pi^{\operatorname*{curl},\ast}\mathbf{v}_{h}\right\Vert \right)
.
\end{align*}
Here, the constant $C_{\operatorname*{cont},k}$ could depend on $k$.

\item \label{item:intro-step-6} The final step in treating $T_{3}$ uses the
continuity of $\left(
%TCIMACRO{\TeXButton{l}{\kern-.1em}}%
%BeginExpansion
\kern-.1em%
%EndExpansion
\left(
%TCIMACRO{\TeXButton{l}{\kern-.1em}}%
%BeginExpansion
\kern-.1em%
%EndExpansion
\cdot,\cdot%
%TCIMACRO{\TeXButton{l}{\kern-.1em}}%
%BeginExpansion
\kern-.1em%
%EndExpansion
\right)
%TCIMACRO{\TeXButton{l}{\kern-.1em}}%
%BeginExpansion
\kern-.1em%
%EndExpansion
\right)  $, the above steps, and the stability of the map ${\mathbf{v}}%
_{h}\mapsto\Pi^{\operatorname{curl},\ast}{\mathbf{v}}_{h}$:
\begin{align*}
|T_{3}|  &  =\left\vert \left(
%TCIMACRO{\TeXButton{l}{\kern-.1em}}%
%BeginExpansion
\kern-.1em%
%EndExpansion
\left(
%TCIMACRO{\TeXButton{l}{\kern-.1em}}%
%BeginExpansion
\kern-.1em%
%EndExpansion
{\mathbf{e}}_{h},\left(  \Pi^{\operatorname*{curl},\ast}-\Pi_{h}%
^{\operatorname*{curl},\ast}\right)  \mathbf{v}_{h}%
%TCIMACRO{\TeXButton{l}{\kern-.1em}}%
%BeginExpansion
\kern-.1em%
%EndExpansion
\right)
%TCIMACRO{\TeXButton{l}{\kern-.1em}}%
%BeginExpansion
\kern-.1em%
%EndExpansion
\right)  \right\vert \leq C_{\operatorname*{cont},k}\Vert{\mathbf{e}}_{h}%
\Vert_{\operatorname*{curl},\Omega,k}\left\Vert \left(  \Pi
^{\operatorname*{curl},\ast}-\Pi_{h}^{\operatorname*{curl},\ast}\right)
\mathbf{v}_{h}\right\Vert _{\operatorname*{curl},\Omega,k}\\
&  \leq C_{k}\Vert{\mathbf{e}}_{h}\Vert_{\operatorname*{curl},\Omega,k}\left(
k\left\Vert \left(  I-\Pi_{h}^{E}\right)  \Pi^{\operatorname*{curl},\ast
}\mathbf{v}_{h}\right\Vert \right)  \leq C_{k}\eta_{6}^{\operatorname{alg}%
}\Vert{\mathbf{e}}_{h}\Vert_{\operatorname*{curl},\Omega,k}\left\Vert
\Pi^{\operatorname*{curl},\ast}\mathbf{v}_{h}\right\Vert
_{\operatorname*{curl},\Omega,k}\\
&  \leq C_{k}\eta_{6}^{\operatorname{alg}}\Vert{\mathbf{e}}_{h}\Vert
_{\operatorname*{curl},\Omega,k}\Vert{\mathbf{v}}_{h}\Vert
_{\operatorname*{curl},\Omega,k}.
\end{align*}
Here, the constant $C_{k}$ may depend on $k$ (and is, of course, different in
each occurrence). Recalling our starting point (\ref{eq:duality-1}), we
discover that the approximation space ${\mathbf{X}}_{h}$ and the operator
$\Pi_{h}^{E}$ should be such that $\eta_{6}^{\operatorname{alg}}$ can be made
sufficiently small (see (\ref{eta6algest})).
\end{enumerate}

A few more comments concerning the above procedure are in order:

\begin{remark}
\begin{enumerate}
[(a)]

\item The basic estimate (\ref{eq:intro-20}) is formulated in such a way that
one is led to study $\left(
%TCIMACRO{\TeXButton{l}{\kern-.1em}}%
%BeginExpansion
\kern-.1em%
%EndExpansion
\left(
%TCIMACRO{\TeXButton{l}{\kern-.1em}}%
%BeginExpansion
\kern-.1em%
%EndExpansion
{\mathbf{e}}_{h},{\mathbf{v}}_{h}%
%TCIMACRO{\TeXButton{l}{\kern-.1em}}%
%BeginExpansion
\kern-.1em%
%EndExpansion
\right)
%TCIMACRO{\TeXButton{l}{\kern-.1em}}%
%BeginExpansion
\kern-.1em%
%EndExpansion
\right)  $ with ${\mathbf{v}}_{h}\in{\mathbf{X}}_{h}$ in the \emph{discrete}
space ${\mathbf{X}}_{h}$. This seemingly innocuous choice has far reaching
ramifications. First, one has $\operatorname*{curl}\Pi^{\operatorname*{curl}%
,\ast}\mathbf{v}_{h}=\operatorname*{curl}{\mathbf{v}}_{h}=\operatorname*{curl}%
\Pi_{h}^{\operatorname*{curl},\ast}\mathbf{v}_{h}$, which allows one to
replace the stronger $\Vert\cdot\Vert_{\operatorname*{curl},\Omega,k}$ norm by
the weaker $L^{2}$-norm in the estimates of Step~(\ref{item:intro-step-5}):
$\left\Vert \left(  \Pi^{\operatorname*{curl},\ast}-\Pi_{h}%
^{\operatorname*{curl},\ast}\right)  \mathbf{v}_{h}\right\Vert
_{\operatorname*{curl},\Omega,k}=k\left\Vert \left(  \Pi^{\operatorname*{curl}%
,\ast}-\Pi_{h}^{\operatorname*{curl},\ast}\right)  \mathbf{v}_{h}\right\Vert $
and $\left\Vert \left(  I-\Pi_{h}^{E}\right)  \Pi^{\operatorname*{curl},\ast
}\mathbf{v}_{h}\right\Vert _{\operatorname*{curl},\Omega,k}=k\left\Vert
\left(  I-\Pi_{h}^{E}\right)  \Pi^{\operatorname*{curl},\ast}\mathbf{v}%
_{h}\right\Vert $. Second, the commuting diagram property of $\Pi_{h}^{E}$ and
the (discrete) exact sequence property (\ref{eq:intro-exact-sequence}) are
responsible for the \textquotedblleft orthogonality\textquotedblright%
\ (\ref{eq:intro-200}) (cf. Steps~(\ref{item:intro-step-2}%
)---(\ref{item:intro-step-3})).

\item The $L^{2}$-approximation properties of $\Pi_{h}^{E}$ stipulated in
(\ref{item:intro-VI-c}) can be met because of the special structure of the
space ${\mathbf{V}}_{0,h}^{\ast}$: first, as we discovered in
(\ref{item:intro-IV}), functions from ${\mathbf{V}}_{0}^{\ast}$ are in fact in
${\mathbf{H}}^{1}(\Omega)$. Second, for functions ${\mathbf{v}}\in{\mathbf{V}%
}_{0,h}^{\ast}$ one has that $\operatorname*{curl}{\mathbf{v}}\in
\operatorname*{curl}{\mathbf{X}}_{h}$ is a discrete object. For the specific
case of N\'{e}d\'{e}lec Type I elements of degree $p$, an operator $\Pi
_{h}^{E}$ is constructed on the reference tetrahedron in
Theorem~\ref{thm:projection-based-interpolation} (called $\widehat{\Pi}%
_{p}^{\operatorname*{curl},c}$ there) that exploits these properties and leads
to the quantitative estimate $\eta_{6}^{\operatorname{alg}} = O(hk/p)$. We
flag at this point that, while the space ${\mathbf{V}}_{0}^{\ast}$ is a space
of divergence-free functions, the operator $\widehat{\Pi}_{p}%
^{\operatorname*{curl},c}$ is additionally defined for (elementwise) smooth
(actually, elementwise ${\mathbf{H}}^{1}(\operatorname*{curl})$) functions.
This property will be needed in Section~\ref{sec:k-explicit-aspect} below to
argue the benefits of high order methods. \hbox{}\hfill\rule{0.8ex}{0.8ex}
%We mention in passing that this approximation property
%in fact relies on the ``full'' deRham diagram (\ref{eq:commuting-diagram}), i.e., the $H^1$-conforming approximation
%operator is additionally invoked.

\end{enumerate}
\end{remark}

\subsection{Road Map: $k$-explicit Estimates\label{sec:k-explicit-aspect}}

The argument outlined above does not take into account how the wavenumber $k$
enters the estimates, which occurs in various places, for example, in the
continuity of $A_{k}$ and $\left(
%TCIMACRO{\TeXButton{l}{\kern-.1em}}%
%BeginExpansion
\kern-.1em%
%EndExpansion
\left(
%TCIMACRO{\TeXButton{l}{\kern-.1em}}%
%BeginExpansion
\kern-.1em%
%EndExpansion
\cdot,\cdot%
%TCIMACRO{\TeXButton{l}{\kern-.1em}}%
%BeginExpansion
\kern-.1em%
%EndExpansion
\right)
%TCIMACRO{\TeXButton{l}{\kern-.1em}}%
%BeginExpansion
\kern-.1em%
%EndExpansion
\right)  $, the stability of the map $\Pi^{\operatorname*{curl},\ast}$, and
the regularity properties of the solution $\mathbf{z}$ of the dual problem
$A_{k}(\cdot,\mathbf{z})=\left(
%TCIMACRO{\TeXButton{l}{\kern-.1em}}%
%BeginExpansion
\kern-.1em%
%EndExpansion
\left(
%TCIMACRO{\TeXButton{l}{\kern-.1em}}%
%BeginExpansion
\kern-.1em%
%EndExpansion
\cdot,\Pi^{\operatorname*{curl},\ast}\mathbf{v}_{h}%
%TCIMACRO{\TeXButton{l}{\kern-.1em}}%
%BeginExpansion
\kern-.1em%
%EndExpansion
\right)
%TCIMACRO{\TeXButton{l}{\kern-.1em}}%
%BeginExpansion
\kern-.1em%
%EndExpansion
\right)  $. Indeed, care is required as we only have the $k$-dependent
continuity bounds (cf.~Cor.~\ref{CorConstantsSphere})
\begin{equation}
\left\vert \left(
%TCIMACRO{\TeXButton{l}{\kern-.1em}}%
%BeginExpansion
\kern-.1em%
%EndExpansion
\left(
%TCIMACRO{\TeXButton{l}{\kern-.1em}}%
%BeginExpansion
\kern-.1em%
%EndExpansion
{\mathbf{v}},{\mathbf{w}}%
%TCIMACRO{\TeXButton{l}{\kern-.1em}}%
%BeginExpansion
\kern-.1em%
%EndExpansion
\right)
%TCIMACRO{\TeXButton{l}{\kern-.1em}}%
%BeginExpansion
\kern-.1em%
%EndExpansion
\right)  \right\vert +\left\vert A_{k}\left(  {\mathbf{v}},{\mathbf{w}%
}\right)  \right\vert \leq Ck^{3}\Vert{\mathbf{v}}\Vert_{\operatorname*{curl}%
,\Omega,k}\Vert{\mathbf{w}}\Vert_{\operatorname*{curl},\Omega,k}.
\label{eq:intro-continuity}%
\end{equation}
%\begin{enumerate}
%\item
%continuity of $A_k$;
%\item
%continuity of $\innerprod{\cdot,\cdot}$;
%\item
%stability of the map ${\mathbf v}_h \mapsto \Vcont$
%\item
%regularity properties of solution $\psi$ of the dual problem $A_k(\cdot,\psi) = \innerprod{\cdot,\Vcont}$
%\end{enumerate}

%------------------------------------

\subsubsection{Continuity of $A_{k}$, $\left(
%TCIMACRO{\TeXButton{l}{\kern-.1em}}%
%BeginExpansion
\kern-.1em%
%EndExpansion
\left(
%TCIMACRO{\TeXButton{l}{\kern-.1em}}%
%BeginExpansion
\kern-.1em%
%EndExpansion
\cdot,\cdot%
%TCIMACRO{\TeXButton{l}{\kern-.1em}}%
%BeginExpansion
\kern-.1em%
%EndExpansion
\right)
%TCIMACRO{\TeXButton{l}{\kern-.1em}}%
%BeginExpansion
\kern-.1em%
%EndExpansion
\right)  $ and Treatment of $T_{1}$\label{sec:intro-T1T2}}

The fundamental ingredient for $k$-explicit bounds that are useful for the
analysis of high-order FEM is the ability to decompose functions ${\mathbf{u}%
}\in{\mathbf{X}}$ into \textquotedblleft high-frequency\textquotedblright%
\ parts $H_{\Omega}{\mathbf{u}}$ and \textquotedblleft
low-frequency\textquotedblright\ parts $L_{\Omega}{\mathbf{u}}$. An
overarching theme of the present work is that the high-frequency component
$H_{\Omega}{\mathbf{u}}$ leads to estimates \emph{uniform} in $k$ in the
expected Sobolev norms; the low-frequency component $L_{\Omega}{\mathbf{u}}$
involves $k$-dependencies, but is smooth (even analytic), which can be
exploited by high order approximation spaces. We note that such decompositions
${\mathbf{u}}=H_{\Omega}{\mathbf{u}}+L_{\Omega}{\mathbf{u}}$ of functions
entail corresponding decompositions of sesquilinear forms such as $A_{k}$ and
$\left(
%TCIMACRO{\TeXButton{l}{\kern-.1em}}%
%BeginExpansion
\kern-.1em%
%EndExpansion
\left(
%TCIMACRO{\TeXButton{l}{\kern-.1em}}%
%BeginExpansion
\kern-.1em%
%EndExpansion
\cdot,\cdot%
%TCIMACRO{\TeXButton{l}{\kern-.1em}}%
%BeginExpansion
\kern-.1em%
%EndExpansion
\right)
%TCIMACRO{\TeXButton{l}{\kern-.1em}}%
%BeginExpansion
\kern-.1em%
%EndExpansion
\right)  $.

The frequency splitting operators $L_{\Omega}$ and $H_{\Omega}$ are motivated
by an analysis of the $k$-dependence of the continuity constants of $A_{k}$
and $\left(
%TCIMACRO{\TeXButton{l}{\kern-.1em}}%
%BeginExpansion
\kern-.1em%
%EndExpansion
\left(
%TCIMACRO{\TeXButton{l}{\kern-.1em}}%
%BeginExpansion
\kern-.1em%
%EndExpansion
\cdot,\cdot%
%TCIMACRO{\TeXButton{l}{\kern-.1em}}%
%BeginExpansion
\kern-.1em%
%EndExpansion
\right)
%TCIMACRO{\TeXButton{l}{\kern-.1em}}%
%BeginExpansion
\kern-.1em%
%EndExpansion
\right)  $, e.g., in the bound $\left\vert A_{k}\left(  {\mathbf{u}%
},{\mathbf{v}}\right)  \right\vert \leq C_{\operatorname*{cont},k}%
\Vert{\mathbf{u}}\Vert_{\operatorname{curl},\Omega,k}\Vert{\mathbf{v}}%
\Vert_{\operatorname{curl},\Omega,k}$. One discovers that it is the capacity
operator $T_{k}$ that introduces a $k$-dependence in $C_{\operatorname*{cont}%
,k}$. Inspection of the series expansion of $T_{k}$ in (\ref{DefTk}) (see in
particular Lemma~\ref{Lemzlest}, which gives sharp bounds for the symbol of
the operator $T_{k}$) shows that the $k$-dependence is due to the
low-frequency parts of ${\mathbf{u}}_{T}$. Having identified these components
as the culprits for unfavorable $k$-dependencies, we introduce in
Definition~\ref{DefFreqSplit} the low-frequency operator $L_{\Omega
}:{\mathbf{X}}\rightarrow{\mathbf{X}}$ and the high-frequency operator
$H_{\Omega}=I-L_{\Omega}$ that have, for the case $\Omega=B_{1}(0)$ considered
here, the following properties:

\begin{enumerate}
[(I)] \setcounter{enumi}{6}

\item (stability) \label{item:intro-VII} $\|L_{\Omega}{\mathbf{u}%
}\|_{\operatorname{curl},\Omega,k} \leq\|{\mathbf{u}}\|_{\operatorname{curl}%
,\Omega,k}$ and $\|H_{\Omega}{\mathbf{u}}\|_{\operatorname{curl},\Omega,k}
\leq2 \|{\mathbf{u}}\|_{\operatorname{curl},\Omega,k}$ (cf.~(\ref{lowhighuest})).

\item (smoothness) \label{item:intro-VIII} $L_{\Omega}{\mathbf{u}}$ is
analytic. Specifically, there are $C$, $\alpha$, $\gamma> 0$ independent of
$k$ and ${\mathbf{u}}$ such that $L_{\Omega}{\mathbf{u}} \in{\mathcal{A}}(C
k^{\alpha}\|{\mathbf{u}}\|_{\operatorname{curl},\Omega,k},\gamma,\Omega)$ with
the analyticity class ${\mathcal{A}}$ given by Def.~\ref{DefClAnFct}
(cf.~Theorem~\ref{TheoLau}).
%\item ($k$-explicit continuity): $\|T_k (H_\Omega {\mathbf u})_T\|_{H^{-1/2}(curl,\Gamma)} \leq C \|{\mathbf u}\|_{\operatorname*{curl},\Omega,k}$

\item ($k$-uniform continuity at the expense of a compact perturbation)
\label{item:intro-IX} For some $C > 0$ independent of $k$
(cf.~Prop.~\ref{PropStabCSU} and Lemma~\ref{LemCDtN} in conjunction with
Cor.~\ref{CorConstantsSphere}):
\begin{align}
\label{eq:intro-refined-continuity-(())}\left\vert \left(
%TCIMACRO{\TeXButton{l}{\kern-.1em}}%
%BeginExpansion
\kern-.1em%
%EndExpansion
\left(
%TCIMACRO{\TeXButton{l}{\kern-.1em}}%
%BeginExpansion
\kern-.1em%
%EndExpansion
H_{\Omega}{\mathbf{u}},{\mathbf{v}}%
%TCIMACRO{\TeXButton{l}{\kern-.1em}}%
%BeginExpansion
\kern-.1em%
%EndExpansion
\right)
%TCIMACRO{\TeXButton{l}{\kern-.1em}}%
%BeginExpansion
\kern-.1em%
%EndExpansion
\right)  \right\vert +\left\vert \left(
%TCIMACRO{\TeXButton{l}{\kern-.1em}}%
%BeginExpansion
\kern-.1em%
%EndExpansion
\left(
%TCIMACRO{\TeXButton{l}{\kern-.1em}}%
%BeginExpansion
\kern-.1em%
%EndExpansion
{\mathbf{v}},H_{\Omega}{\mathbf{u}}%
%TCIMACRO{\TeXButton{l}{\kern-.1em}}%
%BeginExpansion
\kern-.1em%
%EndExpansion
\right)
%TCIMACRO{\TeXButton{l}{\kern-.1em}}%
%BeginExpansion
\kern-.1em%
%EndExpansion
\right)  \right\vert  &  \leq C\Vert{\mathbf{u}}\Vert_{\operatorname*{curl}%
,\Omega,k}\Vert{\mathbf{v}}\Vert_{\operatorname*{curl},\Omega,k},\\
\left\vert A_{k}\left(  {H_{\Omega}{\mathbf{u}},{\mathbf{v}}}\right)
\right\vert +\left\vert A_{k}\left(  {\mathbf{v}},H_{\Omega}{\mathbf{u}%
}\right)  \right\vert  &  \leq C\Vert{\mathbf{u}}\Vert_{\operatorname*{curl}%
,\Omega,k}\Vert{\mathbf{v}}\Vert_{\operatorname*{curl},\Omega,k} .
\end{align}

\end{enumerate}

The refined continuity properties of $A_{k}$ and $\left(
%TCIMACRO{\TeXButton{l}{\kern-.1em}}%
%BeginExpansion
\kern-.1em%
%EndExpansion
\left(
%TCIMACRO{\TeXButton{l}{\kern-.1em}}%
%BeginExpansion
\kern-.1em%
%EndExpansion
\cdot,\cdot%
%TCIMACRO{\TeXButton{l}{\kern-.1em}}%
%BeginExpansion
\kern-.1em%
%EndExpansion
\right)
%TCIMACRO{\TeXButton{l}{\kern-.1em}}%
%BeginExpansion
\kern-.1em%
%EndExpansion
\right)  $ given in (\ref{item:intro-IX}) allow us to estimate the terms
$T_{1}$ in the basic error estimate (\ref{eq:intro-20}) explicitly in $k$.
Abbreviating $\mathbf{v}:=\mathbf{u}-\mathbf{w}_{h}$ and decomposing
$\mathbf{v}^{\operatorname*{low}}:=L_{\Omega}\mathbf{v}$ and $\mathbf{v}%
^{\operatorname*{high}}:=H_{\Omega}\mathbf{v}$ we write%
\begin{align*}
T_{1}  &  =\operatorname{Re}\left(  A_{k}({\mathbf{e}}_{h},{\mathbf{v}%
})+2\left(
%TCIMACRO{\TeXButton{l}{\kern-.1em}}%
%BeginExpansion
\kern-.1em%
%EndExpansion
\left(
%TCIMACRO{\TeXButton{l}{\kern-.1em}}%
%BeginExpansion
\kern-.1em%
%EndExpansion
{\mathbf{e}}_{h},{\mathbf{v}}%
%TCIMACRO{\TeXButton{l}{\kern-.1em}}%
%BeginExpansion
\kern-.1em%
%EndExpansion
\right)
%TCIMACRO{\TeXButton{l}{\kern-.1em}}%
%BeginExpansion
\kern-.1em%
%EndExpansion
\right)  \right)  =\operatorname{Re}\left(  A_{k}({\mathbf{e}}_{h},H_{\Omega
}{\mathbf{v}})+2\left(
%TCIMACRO{\TeXButton{l}{\kern-.1em}}%
%BeginExpansion
\kern-.1em%
%EndExpansion
\left(
%TCIMACRO{\TeXButton{l}{\kern-.1em}}%
%BeginExpansion
\kern-.1em%
%EndExpansion
{\mathbf{e}}_{h},H_{\Omega}{\mathbf{v}}%
%TCIMACRO{\TeXButton{l}{\kern-.1em}}%
%BeginExpansion
\kern-.1em%
%EndExpansion
\right)
%TCIMACRO{\TeXButton{l}{\kern-.1em}}%
%BeginExpansion
\kern-.1em%
%EndExpansion
\right)  \right)  +\operatorname{Re}\left(  A_{k}({\mathbf{e}}_{h},L_{\Omega
}{\mathbf{v}})+2\left(
%TCIMACRO{\TeXButton{l}{\kern-.1em}}%
%BeginExpansion
\kern-.1em%
%EndExpansion
\left(
%TCIMACRO{\TeXButton{l}{\kern-.1em}}%
%BeginExpansion
\kern-.1em%
%EndExpansion
{\mathbf{e}}_{h},L_{\Omega}{\mathbf{v}}%
%TCIMACRO{\TeXButton{l}{\kern-.1em}}%
%BeginExpansion
\kern-.1em%
%EndExpansion
\right)
%TCIMACRO{\TeXButton{l}{\kern-.1em}}%
%BeginExpansion
\kern-.1em%
%EndExpansion
\right)  \right) \\
&  =\underbrace{\operatorname{Re}\left(  A_{k}({\mathbf{e}}_{h},H_{\Omega
}{\mathbf{v}})+2\left(
%TCIMACRO{\TeXButton{l}{\kern-.1em}}%
%BeginExpansion
\kern-.1em%
%EndExpansion
\left(
%TCIMACRO{\TeXButton{l}{\kern-.1em}}%
%BeginExpansion
\kern-.1em%
%EndExpansion
{\mathbf{e}}_{h},H_{\Omega}{\mathbf{v}}%
%TCIMACRO{\TeXButton{l}{\kern-.1em}}%
%BeginExpansion
\kern-.1em%
%EndExpansion
\right)
%TCIMACRO{\TeXButton{l}{\kern-.1em}}%
%BeginExpansion
\kern-.1em%
%EndExpansion
\right)  \,\right)  }_{=:T_{1,1}}+\underbrace{\operatorname{Re}%
(\operatorname{curl}{\mathbf{e}}_{h},\operatorname{curl}L_{\Omega}{\mathbf{v}%
})}_{=:T_{1,2}}+\operatorname{Re}\Bigl(\underset{=:T_{1,3}\left(  {\mathbf{e}%
}_{h},L_{\Omega}\mathbf{v}\right)  }{\underbrace{-\operatorname*{i}k\left(
T_{k}{\mathbf{e}}_{h}^{\operatorname{curl}},\left(  L_{\Omega}{\mathbf{v}%
}\right)  ^{\operatorname{curl}}\right)  _{\Gamma}+\left(
%TCIMACRO{\TeXButton{l}{\kern-.1em}}%
%BeginExpansion
\kern-.1em%
%EndExpansion
\left(
%TCIMACRO{\TeXButton{l}{\kern-.1em}}%
%BeginExpansion
\kern-.1em%
%EndExpansion
{\mathbf{e}}_{h},L_{\Omega}{\mathbf{v}}%
%TCIMACRO{\TeXButton{l}{\kern-.1em}}%
%BeginExpansion
\kern-.1em%
%EndExpansion
\right)
%TCIMACRO{\TeXButton{l}{\kern-.1em}}%
%BeginExpansion
\kern-.1em%
%EndExpansion
\right)  }}\Bigr).
\end{align*}
The sesquilinear forms in $T_{1,1}$ and $T_{1,2}$ have good continuity
properties (cf.~(\ref{item:intro-IX}) and (\ref{item:intro-VII}) respectively)
and can be estimated with $k$-independent constants. The term $T_{1,3}$ is
amenable to a treatment by a duality argument: Let $%
%TCIMACRO{\TeXButton{boldpsi}{\mbox{\boldmath$ \psi$}}}%
%BeginExpansion
\mbox{\boldmath$ \psi$}%
%EndExpansion
\in{\mathbf{X}}$ solve $A_{k}(\cdot,%
%TCIMACRO{\TeXButton{boldpsi}{\mbox{\boldmath$ \psi$}}}%
%BeginExpansion
\mbox{\boldmath$ \psi$}%
%EndExpansion
)=T_{1,3}\left(  \cdot,L_{\Omega}\mathbf{v}\right)  $. By Galerkin
orthogonality satisfied by ${\mathbf{e}}_{h}$ and the stability estimate
(\ref{eq:intro-continuity}), one arrives at
\begin{equation}
\left\vert T_{1,3}\left(  L_{\Omega} {\mathbf{e}}_{h} ,{\mathbf{u}%
}-{\mathbf{w}}_{h}\right)  \right\vert =|A_{k}({\mathbf{e}}_{h},%
%TCIMACRO{\TeXButton{boldpsi}{\mbox{\boldmath$ \psi$}}}%
%BeginExpansion
\mbox{\boldmath$ \psi$}%
%EndExpansion
)|\leq Ck^{3}\Vert{\mathbf{e}}_{h}\Vert_{\operatorname*{curl},\Omega,k}\inf_{%
%TCIMACRO{\TeXButton{boldpsi}{\mbox{\boldmath$ \psi$}}}%
%BeginExpansion
\mbox{\boldmath$ \psi$}%
%EndExpansion
_{h}\in{\mathbf{X}}_{h}}\Vert%
%TCIMACRO{\TeXButton{boldpsi}{\mbox{\boldmath$ \psi$}}}%
%BeginExpansion
\mbox{\boldmath$ \psi$}%
%EndExpansion
-%
%TCIMACRO{\TeXButton{boldpsi}{\mbox{\boldmath$ \psi$}}}%
%BeginExpansion
\mbox{\boldmath$ \psi$}%
%EndExpansion
_{h}\Vert_{\operatorname*{curl},\Omega,k}. \label{eq:intro-300}%
\end{equation}
Since $L_{\Omega}({\mathbf{u}}-{\mathbf{w}}_{h})$ is an analytic function by
(\ref{item:intro-VIII}) and the geometry $\Gamma=\partial B_{1}(0)$ is
analytic so is the dual solution $%
%TCIMACRO{\TeXButton{boldpsi}{\mbox{\boldmath$ \psi$}}}%
%BeginExpansion
\mbox{\boldmath$ \psi$}%
%EndExpansion
$. As discussed in Proposition \ref{PropRegN1A}, one has the following
analytic regularity assertion:

\begin{enumerate}
[(I)] \setcounter{enumi}{9}

\item \label{item:intro-X} Given ${\mathbf{r}}\in{\mathbf{X}}$, the solutions
$%
%TCIMACRO{\TeXButton{boldpsi}{\mbox{\boldmath$ \psi$}}}%
%BeginExpansion
\mbox{\boldmath$ \psi$}%
%EndExpansion
_{1}$, $%
%TCIMACRO{\TeXButton{boldpsi}{\mbox{\boldmath$ \psi$}}}%
%BeginExpansion
\mbox{\boldmath$ \psi$}%
%EndExpansion
_{2}\in{\mathbf{X}}$ of $A_{k}(\cdot,%
%TCIMACRO{\TeXButton{boldpsi}{\mbox{\boldmath$ \psi$}}}%
%BeginExpansion
\mbox{\boldmath$ \psi$}%
%EndExpansion
_{1})=T_{1,3}\left(  \cdot,L_{\Omega}{\mathbf{r}}\right)  $ and $A_{k}(\cdot,%
%TCIMACRO{\TeXButton{boldpsi}{\mbox{\boldmath$ \psi$}}}%
%BeginExpansion
\mbox{\boldmath$ \psi$}%
%EndExpansion
_{2})=\left(
%TCIMACRO{\TeXButton{l}{\kern-.1em}}%
%BeginExpansion
\kern-.1em%
%EndExpansion
\left(
%TCIMACRO{\TeXButton{l}{\kern-.1em}}%
%BeginExpansion
\kern-.1em%
%EndExpansion
\cdot,L_{\Omega}{\mathbf{r}}%
%TCIMACRO{\TeXButton{l}{\kern-.1em}}%
%BeginExpansion
\kern-.1em%
%EndExpansion
\right)
%TCIMACRO{\TeXButton{l}{\kern-.1em}}%
%BeginExpansion
\kern-.1em%
%EndExpansion
\right)  $ are analytic in $\overline{\Omega}$ and satisfy $%
%TCIMACRO{\TeXButton{boldpsi}{\mbox{\boldmath$ \psi$}}}%
%BeginExpansion
\mbox{\boldmath$ \psi$}%
%EndExpansion
_{1}$, $%
%TCIMACRO{\TeXButton{boldpsi}{\mbox{\boldmath$ \psi$}}}%
%BeginExpansion
\mbox{\boldmath$ \psi$}%
%EndExpansion
_{2}\in{\mathcal{A}}(Ck^{\alpha}\Vert{\mathbf{r}}\Vert_{\operatorname*{curl}%
,\Omega,k},\gamma)$ for some $C$, $\gamma$, $\alpha\geq0$ independent of $k$
and ${\mathbf{r}}$. The analyticity classes ${\mathcal{A}}$ are introduced in
Def.~\ref{DefClAnFct}.
\end{enumerate}

Since, by (\ref{item:intro-X}), the solution $%
%TCIMACRO{\TeXButton{boldpsi}{\mbox{\boldmath$ \psi$}}}%
%BeginExpansion
\mbox{\boldmath$ \psi$}%
%EndExpansion
$ in (\ref{eq:intro-300}) is analytic, exponential approximation properties of
$hp$-FEM spaces will be able to offset the algebraic factor $k^{3}$ in
(\ref{eq:intro-300}). Indeed, we will show in
Lemma~\ref{lemma:Picurls-approximation},
(\ref{item:lemma:Picurls-approximation-ii}) for N\'{e}d\'{e}lec elements of
degree $p$ that the infimum in (\ref{eq:intro-300}) decays exponentially in
$p$ (provided that $kh/p$ is sufficiently small).

\subsubsection{Treatment of $T_{2}$: the $k$-explicit Duality Argument for
$\Pi^{\operatorname*{curl},\ast}\mathbf{v}_{h}$\label{sec:intro-T4}}

The analysis of the terms $T_{2}=\left(
%TCIMACRO{\TeXButton{l}{\kern-.1em}}%
%BeginExpansion
\kern-.1em%
%EndExpansion
\left(
%TCIMACRO{\TeXButton{l}{\kern-.1em}}%
%BeginExpansion
\kern-.1em%
%EndExpansion
{\mathbf{e}}_{h},\Pi^{\operatorname{curl},\ast}{\mathbf{v}}_{h}%
%TCIMACRO{\TeXButton{l}{\kern-.1em}}%
%BeginExpansion
\kern-.1em%
%EndExpansion
\right)
%TCIMACRO{\TeXButton{l}{\kern-.1em}}%
%BeginExpansion
\kern-.1em%
%EndExpansion
\right)  $ and $T_{3}=\left(
%TCIMACRO{\TeXButton{l}{\kern-.1em}}%
%BeginExpansion
\kern-.1em%
%EndExpansion
\left(
%TCIMACRO{\TeXButton{l}{\kern-.1em}}%
%BeginExpansion
\kern-.1em%
%EndExpansion
{\mathbf{e}}_{h},\Pi^{\operatorname{curl},\ast}{\mathbf{v}}_{h}-\Pi
_{h}^{\operatorname{curl},\ast}{\mathbf{v}}_{h}%
%TCIMACRO{\TeXButton{l}{\kern-.1em}}%
%BeginExpansion
\kern-.1em%
%EndExpansion
\right)
%TCIMACRO{\TeXButton{l}{\kern-.1em}}%
%BeginExpansion
\kern-.1em%
%EndExpansion
\right)  $ and arising in (\ref{eq:intro-decomposition}) requires us to make
the decompositions ${\mathbf{v}}_{h}=\Pi^{\operatorname{curl},\ast}%
{\mathbf{v}}_{h}+\Pi^{\nabla,\ast}{\mathbf{v}}_{h}=\Pi_{h}%
^{\operatorname{curl},\ast}{\mathbf{v}}_{h}+\Pi_{h}^{\nabla,\ast}{\mathbf{v}%
}_{h}$ in a more careful, $k$-dependent way. The stability property
(\ref{item:intro-IX}) implies $\Vert\Pi^{\nabla,\ast}H_{\Omega}{\mathbf{v}%
}\Vert_{\operatorname*{curl},\Omega,k}\leq C\Vert H_{\Omega}{\mathbf{v}}%
\Vert_{\operatorname*{curl},\Omega,k}\leq C\Vert{\mathbf{v}}\Vert
_{\operatorname*{curl},\Omega,k}$ with $C>0$ independent of $k$ so that
\begin{equation}
\Vert\Pi^{\operatorname*{curl},\ast}H_{\Omega}{\mathbf{v}}\Vert
_{\operatorname*{curl},\Omega,k}\leq C\Vert{\mathbf{v}}\Vert
_{\operatorname*{curl},\Omega,k}, \label{eq:intro-stability-of-Picurl}%
\end{equation}
again with $C>0$ independent of $k$ (cf.\ also (\ref{stabnablacurlcurl})).
These favorable estimates for $H_{\Omega}{\mathbf{v}}$ instead of
${\mathbf{v}}$ directly suggest that we should study, for ${\mathbf{v}}_{h}%
\in{\mathbf{X}}_{h}$, the following decompositions instead of
(\ref{eq:intro-continuous-helmholtz}) (\ref{eq:intro-discrete-helmholtz}):
\begin{align}
{\mathbf{v}}_{h}  &  =\Pi^{\operatorname*{comp},\ast}\mathbf{v}_{h}%
+\Pi^{\nabla,\ast}H_{\Omega}{\mathbf{v}}_{h}\quad\text{with\quad}%
\Pi^{\operatorname*{comp},\ast}:=L_{\Omega}+\Pi^{\operatorname*{curl},\ast
}H_{\Omega},\label{eq:intro-continuous-helmholtz-split}\\
{\mathbf{v}}_{h}  &  =\Pi_{h}^{\operatorname*{comp},\ast}\mathbf{v}_{h}%
+\Pi_{h}^{\nabla,\ast}H_{\Omega}{\mathbf{v}}_{h}\quad\text{with\quad}\Pi
_{h}^{\operatorname*{comp},\ast}:=L_{\Omega}+\Pi_{h}^{\operatorname*{curl}%
,\ast}H_{\Omega}. \label{eq:intro-discrete-helmholtz-split}%
\end{align}
and consequently replacing $T_{2}$ and $T_{3}$ by $\widetilde{T}_{2}=\left(
%TCIMACRO{\TeXButton{l}{\kern-.1em}}%
%BeginExpansion
\kern-.1em%
%EndExpansion
\left(
%TCIMACRO{\TeXButton{l}{\kern-.1em}}%
%BeginExpansion
\kern-.1em%
%EndExpansion
{\mathbf{e}}_{h},\Pi^{\operatorname{comp},\ast}{\mathbf{v}}_{h}%
%TCIMACRO{\TeXButton{l}{\kern-.1em}}%
%BeginExpansion
\kern-.1em%
%EndExpansion
\right)
%TCIMACRO{\TeXButton{l}{\kern-.1em}}%
%BeginExpansion
\kern-.1em%
%EndExpansion
\right)  $ and $\widetilde{T}_{3}=\left(
%TCIMACRO{\TeXButton{l}{\kern-.1em}}%
%BeginExpansion
\kern-.1em%
%EndExpansion
\left(
%TCIMACRO{\TeXButton{l}{\kern-.1em}}%
%BeginExpansion
\kern-.1em%
%EndExpansion
{\mathbf{e}}_{h},\Pi^{\operatorname{comp},\ast}{\mathbf{v}}_{h}-\Pi
_{h}^{\operatorname{comp},\ast}{\mathbf{v}}_{h}%
%TCIMACRO{\TeXButton{l}{\kern-.1em}}%
%BeginExpansion
\kern-.1em%
%EndExpansion
\right)
%TCIMACRO{\TeXButton{l}{\kern-.1em}}%
%BeginExpansion
\kern-.1em%
%EndExpansion
\right)  .$
%end jmm
The duality argument for ${\widetilde{T}}_{2}=\left(
%TCIMACRO{\TeXButton{l}{\kern-.1em}}%
%BeginExpansion
\kern-.1em%
%EndExpansion
\left(
%TCIMACRO{\TeXButton{l}{\kern-.1em}}%
%BeginExpansion
\kern-.1em%
%EndExpansion
{\mathbf{e}}_{h},\Pi^{\operatorname*{comp},\ast}\mathbf{v}_{h}%
%TCIMACRO{\TeXButton{l}{\kern-.1em}}%
%BeginExpansion
\kern-.1em%
%EndExpansion
\right)
%TCIMACRO{\TeXButton{l}{\kern-.1em}}%
%BeginExpansion
\kern-.1em%
%EndExpansion
\right)  =\left(
%TCIMACRO{\TeXButton{l}{\kern-.1em}}%
%BeginExpansion
\kern-.1em%
%EndExpansion
\left(
%TCIMACRO{\TeXButton{l}{\kern-.1em}}%
%BeginExpansion
\kern-.1em%
%EndExpansion
{\mathbf{e}}_{h},L_{\Omega}{\mathbf{v}}_{h}%
%TCIMACRO{\TeXButton{l}{\kern-.1em}}%
%BeginExpansion
\kern-.1em%
%EndExpansion
\right)
%TCIMACRO{\TeXButton{l}{\kern-.1em}}%
%BeginExpansion
\kern-.1em%
%EndExpansion
\right)  +\left(
%TCIMACRO{\TeXButton{l}{\kern-.1em}}%
%BeginExpansion
\kern-.1em%
%EndExpansion
\left(
%TCIMACRO{\TeXButton{l}{\kern-.1em}}%
%BeginExpansion
\kern-.1em%
%EndExpansion
{\mathbf{e}}_{h},\Pi^{\operatorname*{curl},\ast}H_{\Omega}{\mathbf{v}}_{h}%
%TCIMACRO{\TeXButton{l}{\kern-.1em}}%
%BeginExpansion
\kern-.1em%
%EndExpansion
\right)
%TCIMACRO{\TeXButton{l}{\kern-.1em}}%
%BeginExpansion
\kern-.1em%
%EndExpansion
\right)  $ is split into two duality arguments. For the first term, one
observes again that $L_{\Omega}{\mathbf{v}}_{h}$ is analytic and so will be
the appropriate dual solution by (\ref{item:intro-X}), which in turn means
that exponential approximability of $hp$-FEM space can be brought to bear. For
the second term, the duality argument requires much more care since
$\Pi^{\operatorname*{curl},\ast}H_{\Omega}{\mathbf{v}}_{h}$ has only finite
regularity. We have (cf.\ Prop.~\ref{PropN2Arough}):

\begin{enumerate}
[(I)] \setcounter{enumi}{10}

\item \label{item:intro-XI} The solution $%
%TCIMACRO{\TeXButton{boldpsi}{\mbox{\boldmath$ \psi$}}}%
%BeginExpansion
\mbox{\boldmath$ \psi$}%
%EndExpansion
$ of $A_{k}(\cdot,%
%TCIMACRO{\TeXButton{boldpsi}{\mbox{\boldmath$ \psi$}}}%
%BeginExpansion
\mbox{\boldmath$ \psi$}%
%EndExpansion
)=\left(
%TCIMACRO{\TeXButton{l}{\kern-.1em}}%
%BeginExpansion
\kern-.1em%
%EndExpansion
\left(
%TCIMACRO{\TeXButton{l}{\kern-.1em}}%
%BeginExpansion
\kern-.1em%
%EndExpansion
\cdot,\Pi^{\operatorname*{curl},\ast}H_{\Omega}{\mathbf{v}}_{h}%
%TCIMACRO{\TeXButton{l}{\kern-.1em}}%
%BeginExpansion
\kern-.1em%
%EndExpansion
\right)
%TCIMACRO{\TeXButton{l}{\kern-.1em}}%
%BeginExpansion
\kern-.1em%
%EndExpansion
\right)  $ can be decomposed as $%
%TCIMACRO{\TeXButton{boldpsi}{\mbox{\boldmath$ \psi$}}}%
%BeginExpansion
\mbox{\boldmath$ \psi$}%
%EndExpansion
=%
%TCIMACRO{\TeXButton{boldpsi}{\mbox{\boldmath$ \psi$}}}%
%BeginExpansion
\mbox{\boldmath$ \psi$}%
%EndExpansion
_{H^{2}}+%
%TCIMACRO{\TeXButton{boldpsi}{\mbox{\boldmath$ \psi$}}}%
%BeginExpansion
\mbox{\boldmath$ \psi$}%
%EndExpansion
_{\mathcal{A}}$ with $k^{2}\Vert%
%TCIMACRO{\TeXButton{boldpsi}{\mbox{\boldmath$ \psi$}}}%
%BeginExpansion
\mbox{\boldmath$ \psi$}%
%EndExpansion
_{H^{2}}\Vert+\Vert%
%TCIMACRO{\TeXButton{boldpsi}{\mbox{\boldmath$ \psi$}}}%
%BeginExpansion
\mbox{\boldmath$ \psi$}%
%EndExpansion
_{H^{2}}\Vert_{{\mathbf{H}}^{2}(\Omega)}\leq C\Vert H_{\Omega}{\mathbf{v}}%
_{h}\Vert_{\operatorname{curl},\Omega,k}\leq C\Vert{\mathbf{v}}_{h}%
\Vert_{\operatorname{curl},\Omega,k}$ and $%
%TCIMACRO{\TeXButton{boldpsi}{\mbox{\boldmath$ \psi$}}}%
%BeginExpansion
\mbox{\boldmath$ \psi$}%
%EndExpansion
_{\mathcal{A}}\in{\mathcal{A}}(Ck^{\alpha}\Vert H_{\Omega}{\mathbf{v}}%
_{h}\Vert_{\operatorname{curl},\Omega,k},\gamma,\Omega)$ for some $C$,
$\gamma$, $\alpha\geq0$ independent of $k$ (cf. Def.~\ref{DefClAnFct} for the
definition of the analyticity class ${\mathcal{A}}$).
\end{enumerate}

The decomposition of (\ref{item:intro-XI}) into a part $%
%TCIMACRO{\TeXButton{boldpsi}{\mbox{\boldmath$ \psi$}}}%
%BeginExpansion
\mbox{\boldmath$ \psi$}%
%EndExpansion
_{H^{2}}$ with finite regularity in conjunction with $k$-uniform control of
the second derivatives and an analytic part $%
%TCIMACRO{\TeXButton{boldpsi}{\mbox{\boldmath$ \psi$}}}%
%BeginExpansion
\mbox{\boldmath$ \psi$}%
%EndExpansion
_{\mathcal{A}}$ is shown in Section~\ref{SecRegDualSol}; it relies on a
solution formula based on Green's function for the Helmholtz equation and the
decomposition is then inferred from the one developed in
\cite{MelenkSauterMathComp}.
%These duality arguments are done similarly to the Helmholtz case
%with the aid of appropriate splittings in solution formulas as shown in \cite{MelenkSauterMathComp}.
%An explicit solution formula for the solution $\psi$ of the dual problem
%$A_k(\cdot,\psi) = \innerprod{\cdot,\Pidivnullstar H_\Omega {\mathbf v}_h}$ is available
%in Section~\ref{SecSolForm}. This solution formula is based on the Green's function for the Helmholtz equation
%so that the suitable regularity theory for $\psi$ can be inferred from the $k$-explicit regularity analysis
%developed in \cite{MelenkSauterMathComp}, which splits the solution into a part with finite regularity in conjunction
%with $k$-uniform control of the pertinent derivatives and an analytic part, which is amenable to an approximation
%by high-order polynomials.
%Of course, the application of the duality argument involves using the continuity
%for $A_k$ to estimate $|A_k({\mathbf e}_h, \psi - \psi_h)|$; this is achieved similarly to the
%procedure in (\ref{eq:intro-40}) using the estimates of (\ref{item:intro-IX})
%(and again an appropriate dual problem with analytic right-hand side).
%-----------------------------------------------------

\subsubsection{Treatment of $\protect\widetilde{T}_{3}$: Estimating $\left(
\Pi^{\operatorname*{comp},\ast}-\Pi_{h}^{\operatorname*{comp},\ast}\right)
\mathbf{v}_{h}$}

\label{sec:intro-T3}
%-----------------------------------------------------
For the final term, $\widetilde{T}_{3}=\left(
%TCIMACRO{\TeXButton{l}{\kern-.1em}}%
%BeginExpansion
\kern-.1em%
%EndExpansion
\left(
%TCIMACRO{\TeXButton{l}{\kern-.1em}}%
%BeginExpansion
\kern-.1em%
%EndExpansion
{\mathbf{e}}_{h},\left(  \Pi^{\operatorname*{comp},\ast}-\Pi_{h}%
^{\operatorname*{comp},\ast}\right)  \mathbf{v}_{h}%
%TCIMACRO{\TeXButton{l}{\kern-.1em}}%
%BeginExpansion
\kern-.1em%
%EndExpansion
\right)
%TCIMACRO{\TeXButton{l}{\kern-.1em}}%
%BeginExpansion
\kern-.1em%
%EndExpansion
\right)  $, a new type of duality argument appears. We start by writing%
\begin{align*}
\widetilde{T}_{3}  &  =\left(
%TCIMACRO{\TeXButton{l}{\kern-.1em}}%
%BeginExpansion
\kern-.1em%
%EndExpansion
\left(
%TCIMACRO{\TeXButton{l}{\kern-.1em}}%
%BeginExpansion
\kern-.1em%
%EndExpansion
{\mathbf{e}}_{h},\left(  \Pi^{\operatorname*{comp},\ast}-\Pi_{h}%
^{\operatorname*{comp},\ast}\right)  \mathbf{v}_{h}%
%TCIMACRO{\TeXButton{l}{\kern-.1em}}%
%BeginExpansion
\kern-.1em%
%EndExpansion
\right)
%TCIMACRO{\TeXButton{l}{\kern-.1em}}%
%BeginExpansion
\kern-.1em%
%EndExpansion
\right)  =\left(
%TCIMACRO{\TeXButton{l}{\kern-.1em}}%
%BeginExpansion
\kern-.1em%
%EndExpansion
\left(
%TCIMACRO{\TeXButton{l}{\kern-.1em}}%
%BeginExpansion
\kern-.1em%
%EndExpansion
{\mathbf{e}}_{h},L_{\Omega}\left(  \Pi^{\operatorname*{comp},\ast}-\Pi
_{h}^{\operatorname*{comp},\ast}\right)  \mathbf{v}_{h}%
%TCIMACRO{\TeXButton{l}{\kern-.1em}}%
%BeginExpansion
\kern-.1em%
%EndExpansion
\right)
%TCIMACRO{\TeXButton{l}{\kern-.1em}}%
%BeginExpansion
\kern-.1em%
%EndExpansion
\right)  +\left(
%TCIMACRO{\TeXButton{l}{\kern-.1em}}%
%BeginExpansion
\kern-.1em%
%EndExpansion
\left(
%TCIMACRO{\TeXButton{l}{\kern-.1em}}%
%BeginExpansion
\kern-.1em%
%EndExpansion
{\mathbf{e}}_{h},H_{\Omega}\left(  \Pi^{\operatorname*{comp},\ast}-\Pi
_{h}^{\operatorname*{comp},\ast}\right)  \mathbf{v}_{h}%
%TCIMACRO{\TeXButton{l}{\kern-.1em}}%
%BeginExpansion
\kern-.1em%
%EndExpansion
\right)
%TCIMACRO{\TeXButton{l}{\kern-.1em}}%
%BeginExpansion
\kern-.1em%
%EndExpansion
\right) \\
&  =:\widetilde{T}_{3,1}+\widetilde{T}_{3,2}.
\end{align*}
Exploiting the analyticity of $L_{\Omega}\left(  \Pi^{\operatorname*{comp}%
,\ast}-\Pi_{h}^{\operatorname*{comp},\ast}\right)  \mathbf{v}_{h}$, the first
term, $\widetilde{T}_{3,1}$ can be treated by a duality argument as in
Section~\ref{sec:intro-T1T2}. For the second term, $\widetilde{T}_{3,2}$, we
use (\ref{item:intro-IX}) to estimate
\begin{align*}
|\widetilde{T}_{3,2}|  &  =\left\vert \left(
%TCIMACRO{\TeXButton{l}{\kern-.1em}}%
%BeginExpansion
\kern-.1em%
%EndExpansion
\left(
%TCIMACRO{\TeXButton{l}{\kern-.1em}}%
%BeginExpansion
\kern-.1em%
%EndExpansion
{\mathbf{e}}_{h},H_{\Omega}\left(  \Pi^{\operatorname*{comp},\ast}-\Pi
_{h}^{\operatorname*{comp},\ast}\right)  \mathbf{v}_{h}%
%TCIMACRO{\TeXButton{l}{\kern-.1em}}%
%BeginExpansion
\kern-.1em%
%EndExpansion
\right)
%TCIMACRO{\TeXButton{l}{\kern-.1em}}%
%BeginExpansion
\kern-.1em%
%EndExpansion
\right)  \right\vert \overset{(\ref{eq:intro-refined-continuity-(())})}{\leq
}C\Vert{\mathbf{e}}_{h}\Vert_{\operatorname*{curl},\Omega,k}\Vert\left(
\Pi^{\operatorname*{comp},\ast}-\Pi_{h}^{\operatorname*{comp},\ast}\right)
\mathbf{v}_{h}\Vert_{\operatorname*{curl},\Omega,k}\\
&  =C\Vert{\mathbf{e}}_{h}\Vert_{\operatorname*{curl},\Omega,k}\left(
k\left\Vert \left(  \Pi^{\operatorname*{comp},\ast}-\Pi_{h}%
^{\operatorname*{comp},\ast}\right)  \mathbf{v}_{h}\right\Vert \right)  ,
\end{align*}
where, in the last step, we used $\operatorname*{curl}\left(  \Pi
^{\operatorname*{comp},\ast}-\Pi_{h}^{\operatorname*{comp},\ast}\right)
\mathbf{v}_{h}=0$. The term $k\left\Vert \left(  \Pi^{\operatorname*{comp}%
,\ast}-\Pi_{h}^{\operatorname*{comp},\ast}\right)  \mathbf{v}_{h}\right\Vert $
is estimated by%
\begin{align*}
\left(  k\left\Vert \left(  \Pi^{\operatorname*{comp},\ast}-\Pi_{h}%
^{\operatorname*{comp},\ast}\right)  \mathbf{v}_{h}\right\Vert \right)
^{2}\overset{\text{(\ref{item:intro-II})}}{\leq}  &  \operatorname*{Re}\left(
%
%TCIMACRO{\TeXButton{l}{\kern-.1em}}%
%BeginExpansion
\kern-.1em%
%EndExpansion
\left(
%TCIMACRO{\TeXButton{l}{\kern-.1em}}%
%BeginExpansion
\kern-.1em%
%EndExpansion
\left(  \Pi^{\operatorname*{comp},\ast}-\Pi_{h}^{\operatorname*{comp},\ast
}\right)  \mathbf{v}_{h},\left(  \Pi^{\operatorname*{comp},\ast}-\Pi
_{h}^{\operatorname*{comp},\ast}\right)  \mathbf{v}_{h}%
%TCIMACRO{\TeXButton{l}{\kern-.1em}}%
%BeginExpansion
\kern-.1em%
%EndExpansion
\right)
%TCIMACRO{\TeXButton{l}{\kern-.1em}}%
%BeginExpansion
\kern-.1em%
%EndExpansion
\right) \\
\overset{\text{(\ref{eq:intro-200})}}{=}  &  \operatorname*{Re}\left(
%TCIMACRO{\TeXButton{l}{\kern-.1em}}%
%BeginExpansion
\kern-.1em%
%EndExpansion
\left(
%TCIMACRO{\TeXButton{l}{\kern-.1em}}%
%BeginExpansion
\kern-.1em%
%EndExpansion
\left(  I-\Pi_{h}^{E}\right)  \Pi^{\operatorname*{comp},\ast}\mathbf{v}%
_{h},\left(  \Pi^{\operatorname*{comp},\ast}-\Pi_{h}^{\operatorname*{comp}%
,\ast}\right)  \mathbf{v}_{h}%
%TCIMACRO{\TeXButton{l}{\kern-.1em}}%
%BeginExpansion
\kern-.1em%
%EndExpansion
\right)
%TCIMACRO{\TeXButton{l}{\kern-.1em}}%
%BeginExpansion
\kern-.1em%
%EndExpansion
\right) \\
=  &  \operatorname*{Re}\left(
%TCIMACRO{\TeXButton{l}{\kern-.1em}}%
%BeginExpansion
\kern-.1em%
%EndExpansion
\left(
%TCIMACRO{\TeXButton{l}{\kern-.1em}}%
%BeginExpansion
\kern-.1em%
%EndExpansion
H_{\Omega}\left(  I-\Pi_{h}^{E}\right)  \Pi^{\operatorname*{comp},\ast
}\mathbf{v}_{h},\left(  \Pi^{\operatorname*{comp},\ast}-\Pi_{h}%
^{\operatorname*{comp},\ast}\right)  \mathbf{v}_{h}%
%TCIMACRO{\TeXButton{l}{\kern-.1em}}%
%BeginExpansion
\kern-.1em%
%EndExpansion
\right)
%TCIMACRO{\TeXButton{l}{\kern-.1em}}%
%BeginExpansion
\kern-.1em%
%EndExpansion
\right) \\
&  +\operatorname*{Re}\left(
%TCIMACRO{\TeXButton{l}{\kern-.1em}}%
%BeginExpansion
\kern-.1em%
%EndExpansion
\left(
%TCIMACRO{\TeXButton{l}{\kern-.1em}}%
%BeginExpansion
\kern-.1em%
%EndExpansion
L_{\Omega}\left(  I-\Pi_{h}^{E}\right)  \Pi^{\operatorname*{comp},\ast
}\mathbf{v}_{h},\left(  \Pi^{\operatorname*{comp},\ast}-\Pi_{h}%
^{\operatorname*{comp},\ast}\right)  \mathbf{v}_{h}%
%TCIMACRO{\TeXButton{l}{\kern-.1em}}%
%BeginExpansion
\kern-.1em%
%EndExpansion
\right)
%TCIMACRO{\TeXButton{l}{\kern-.1em}}%
%BeginExpansion
\kern-.1em%
%EndExpansion
\right)  =:\widetilde{T}_{4,1}+\widetilde{T}_{4,2}.
\end{align*}
{}From (\ref{eq:intro-refined-continuity-(())}) in (\ref{item:intro-IX}), we
get $\left\vert \widetilde{T}_{4,1}\right\vert \leq C\left(  k\Vert\left(
\Pi^{\operatorname*{comp},\ast}-\Pi_{h}^{\operatorname*{comp},\ast}\right)
\mathbf{v}_{h}\Vert\right)  \left(  k\Vert\left(  I-\Pi_{h}^{E}\right)
\Pi^{\operatorname*{comp},\ast}\mathbf{v}_{h}\Vert\right)  $. We remark that
the above argument glossed over a minor point: In view of the modified
definition of the decomposition (\ref{eq:intro-continuous-helmholtz-split}),
(\ref{eq:intro-discrete-helmholtz-split}), we have to require that the
operator $\Pi_{h}^{E}$ be additionally defined on the space of smooth
functions (in (\ref{item:intro-VI}), the operator $\Pi_{h}^{E}$ is only
defined on ${\mathbf{V}}_{0,h}^{\ast}+{\mathbf{X}}_{h}$) and satisfy some
appropriate stability properties.
%Hence, we require
%instead of (\ref{item:intro-VI})
%\begin{enumerate}
%\item[(VI')] $\Pi^E_h$ is defined on ${\mathbf V}^{*,h} + {\mathbf X}_h + (C^\infty(\overline{\Omega}))^3$,
%satisfies the condition formulated in (\ref{item:intro-VI}), and additionally the stability
%\begin{equation}
%\label{eq:item:intro-VI-c'}
%\|\Pi^E_h {\mathbf v}\|_{L^2(\Omega)} \leq C \|{\mathbf v}\|_{{\mathbf H}^1(\Omega,\operatorname*{curl})}.
%\end{equation}
%\end{enumerate}
The term $\left\Vert \left(  I-\Pi_{h}^{E}\right)  \Pi^{\operatorname*{comp}%
,\ast}\mathbf{v}_{h}\right\Vert $ can be estimated as follows in view of the
definition (\ref{eq:intro-continuous-helmholtz-split}):
%(\ref{eq:intro-discrete-helmholtz-split}):%
\[
\left\Vert \left(  I-\Pi_{h}^{E}\right)  \Pi^{\operatorname*{comp},\ast
}\mathbf{v}_{h}\right\Vert \leq\left\Vert \left(  I-\Pi_{h}^{E}\right)
L_{\Omega}{\mathbf{v}}_{h}\right\Vert +\left\Vert \left(  I-\Pi_{h}%
^{E}\right)  \Pi^{\operatorname*{curl},\ast}H_{\Omega}{\mathbf{v}}%
_{h}\right\Vert =:\widetilde{T}_{5,1}+\widetilde{T}_{5,2}.
\]
For N\'{e}d\'{e}lec elements of degree $p$,
Theorem~\ref{thm:projection-based-interpolation} provides an operator $\Pi
_{h}^{E}$ (its restriction to the reference element $\widehat{K}$ is denoted
there $\widehat{\Pi}_{p}^{\operatorname*{curl},c}$) that is also defined on
(elementwise) smooth functions and has good polynomial approximation
properties.
%By the projection properties of $\Pi^E_h$, the stability (\ref{eq:item:intro-VI-c'}), and the analyticity
In particular, by the analyticity of $L_{\Omega}{\mathbf{v}}_{h}$, the term
$\widetilde{T}_{5,1}$ is exponentially small in the polynomial degree $p$ for
N\'{e}d\'{e}lec elements. The term $\widetilde{T}_{5,2}$ can be controlled by
the assumption (\ref{item:intro-VI-c}) and the stability bound
(\ref{eq:intro-stability-of-Picurl}) as
\[
\widetilde{T}_{5,2}\overset{\text{(\ref{item:intro-VI-c})}}{\leq}\eta
_{6}^{\operatorname{alg}}\Vert\Pi^{\operatorname*{curl},\ast}H_{\Omega
}{\mathbf{v}}_{h}\Vert_{\operatorname{curl},\Omega,k}%
\overset{(\ref{eq:intro-stability-of-Picurl})}{\leq}\eta_{6}%
^{\operatorname{alg}}C\Vert{\mathbf{v}}_{h}\Vert_{\operatorname*{curl}%
,\Omega,k}.
\]
The term $\widetilde{T}_{4,2}$ requires a duality argument that exploits the
orthogonality property (\ref{eq:intro-200}). Specifically, the dual problem is
to find $\psi\in H^{1}(\Omega)$ such that
\begin{equation}
\left(
%TCIMACRO{\TeXButton{l}{\kern-.1em}}%
%BeginExpansion
\kern-.1em%
%EndExpansion
\left(
%TCIMACRO{\TeXButton{l}{\kern-.1em}}%
%BeginExpansion
\kern-.1em%
%EndExpansion
\nabla\psi,\nabla\widetilde{\psi}%
%TCIMACRO{\TeXButton{l}{\kern-.1em}}%
%BeginExpansion
\kern-.1em%
%EndExpansion
\right)
%TCIMACRO{\TeXButton{l}{\kern-.1em}}%
%BeginExpansion
\kern-.1em%
%EndExpansion
\right)  =\left(
%TCIMACRO{\TeXButton{l}{\kern-.1em}}%
%BeginExpansion
\kern-.1em%
%EndExpansion
\left(
%TCIMACRO{\TeXButton{l}{\kern-.1em}}%
%BeginExpansion
\kern-.1em%
%EndExpansion
L_{\Omega}\left(  I-\Pi_{h}^{E}\right)  \Pi^{\operatorname*{comp},\ast
}\mathbf{v}_{h},\nabla\widetilde{\psi}%
%TCIMACRO{\TeXButton{l}{\kern-.1em}}%
%BeginExpansion
\kern-.1em%
%EndExpansion
\right)
%TCIMACRO{\TeXButton{l}{\kern-.1em}}%
%BeginExpansion
\kern-.1em%
%EndExpansion
\right)  \qquad\forall\widetilde{\psi}\in H^{1}(\Omega).
\label{eq:intro-last-dual-problem}%
\end{equation}
Solvability is ensured by (\ref{item:intro-II}). The analyticity of
$L_{\Omega}\left(  I-\Pi_{h}^{E}\right)  \Pi^{\operatorname*{comp},\ast
}\mathbf{v}_{h}$ and $\partial\Omega$ give that $\psi$ is analytic; we have by
Proposition~\ref{PropN4A} (problem (\ref{eq:intro-last-dual-problem}) is of
Type 2 discussed in Sec.~\ref{SecSolForm}):

\begin{enumerate}
[(I)] \setcounter{enumi}{11}

\item The solution $\psi$ of the problem (\ref{eq:intro-last-dual-problem})
belongs to an analyticity class $\psi\in{\mathcal{A}}(C k^{\alpha
}\|{\mathbf{v}}_{h}\|_{\operatorname{curl},\Omega,k}, \gamma,\Omega)$ for some
$C$, $\alpha$, $\gamma\ge0$ independent of $k$
\end{enumerate}

We obtain, noting that $(\Pi^{\operatorname{comp},\ast} - \Pi
^{\operatorname{comp},\ast}_{h}){\mathbf{v}}_{h}$ satisfies the same
orthogonality condition (\ref{eq:intro-200}) as the difference $(\Pi
^{\operatorname{curl},\ast} - \Pi^{\operatorname{curl},\ast}_{h}){\mathbf{v}%
}_{h}$,
\begin{align*}
T_{4,2}  &  = \left(
%TCIMACRO{\TeXButton{l}{\kern-.1em}}%
%BeginExpansion
\kern-.1em%
%EndExpansion
\left(
%TCIMACRO{\TeXButton{l}{\kern-.1em}}%
%BeginExpansion
\kern-.1em%
%EndExpansion
\nabla\psi,(\Pi^{\operatorname{comp},\ast} - \Pi^{\operatorname{comp},\ast
}_{h}){\mathbf{v}}_{h}
%TCIMACRO{\TeXButton{l}{\kern-.1em}}%
%BeginExpansion
\kern-.1em%
%EndExpansion
\right)
%TCIMACRO{\TeXButton{l}{\kern-.1em}}%
%BeginExpansion
\kern-.1em%
%EndExpansion
\right)  \overset{(\ref{eq:intro-200})}{=} \inf_{\psi_{h} \in S_{h}} \left(
%TCIMACRO{\TeXButton{l}{\kern-.1em}}%
%BeginExpansion
\kern-.1em%
%EndExpansion
\left(
%TCIMACRO{\TeXButton{l}{\kern-.1em}}%
%BeginExpansion
\kern-.1em%
%EndExpansion
\nabla(\psi- \psi_{h}),(\Pi^{\operatorname{comp},\ast} - \Pi
^{\operatorname{comp},\ast}_{h}){\mathbf{v}}_{h}
%TCIMACRO{\TeXButton{l}{\kern-.1em}}%
%BeginExpansion
\kern-.1em%
%EndExpansion
\right)
%TCIMACRO{\TeXButton{l}{\kern-.1em}}%
%BeginExpansion
\kern-.1em%
%EndExpansion
\right) \\
&  \overset{(\ref{eq:intro-continuity})}{\leq} C k^{3} \inf_{\psi_{h}
\in{S_{h}} } \|\nabla(\psi- \psi_{h})\|_{\operatorname{curl},\Omega,k}
\|(\Pi^{\operatorname{comp},\ast} - \Pi^{\operatorname{comp},\ast}%
_{h}){\mathbf{v}}_{h}\|_{\operatorname{curl},\Omega,k}\\
&  = C k^{3} \inf_{\psi_{h} \in S_{h}} (k \|\nabla(\psi- \psi_{h})\|) (k
\|(\Pi^{\operatorname{comp},\ast} - \Pi^{\operatorname{comp},\ast}%
_{h}){\mathbf{v}}_{h}\|);
\end{align*}
a more detailed argument can be found in the proof of
Prop.~\ref{PropHOmegaSplit}.
%as discussed for
%the problems of \textquotedblleft Type 2\textquotedblright\ in
%Section~\ref{SecSolForm}, Proposition~\ref{PropN4A} in combination with the
%estimate (\ref{PropStokes3}).

\bigskip The main result of the present work is quasi-optimality of the
${\mathbf{H}}(\Omega,\operatorname{curl})$-conforming discretization: In
Theorem~\ref{thm:quasi-optimal}, we present a fairly abstract convergence
result (which is not fully explicit in $k$). In
Theorem~\ref{thm:hpFEM-quasioptimality} we consider high order N\'ed\'elec
elements and the specific situation of the unit ball $B_{1}(0)$ and show
quasi-optimality of the Galerkin discretization under the scale resolution
condition (\ref{eq:intro-scale-resolution}).

%-------------------------------------------

\section{Maxwell's Equations\label{SecME}}

%-------------------------------------------

In Sections~\ref{SecMwEqFS} and \ref{SecReformMax} we introduce the strong
form of the Maxwell problem first in the full space ${\mathbb{R}}^{3}$ and
then in an equivalent way on a bounded domain. At this stage we are vague
concerning the precise function spaces and mapping properties of trace
operators. The variational formulation of the problem in a bounded domain is
given in Section \ref{SecMaxwVarForm}, where also the appropriate function
spaces are introduced.

\subsection{Maxwell's Equations in the Full Space $\mathbb{R}^{3}%
$\label{SecMwEqFS}}

We consider the solution of the Maxwell equations in the full space
${\mathbb{R}}^{3}$ with Silver-M\"{u}ller radiation conditions at infinity.
The angular frequency is denoted by $\omega$, the electric permittivity by
$\varepsilon$, and the magnetic permeability by $\mu$. We formulate the
problem in terms of the wavenumber $k=\omega\sqrt{\varepsilon\mu}$, the scaled
magnetic field $\mathbf{\tilde{H}}=\sqrt{\frac{\mu}{\varepsilon}}\mathbf{H}$,
and the scaled electric charge density $\mathbf{\tilde{\boldsymbol{\jmath}}%
=}\sqrt{\mu/\varepsilon}\mathbf{j}$: Find the electric field $\mathbf{E}$ and
the scaled magnetic field $\mathbf{\tilde{H}}$ such that%
\begin{equation}%
\begin{array}
[c]{cccl}%
\operatorname{curl}\mathbf{E}-\operatorname*{i}k\mathbf{\tilde{H}}=\mathbf{0}
& \text{and} & \operatorname{curl}\mathbf{\tilde{H}}+\operatorname*{i}%
k\mathbf{E}=\mathbf{\tilde{\boldsymbol{\jmath}}} & \text{in }\mathbb{R}^{3},\\
&  &  & \\
\left\vert \mathbf{E}-\mathbf{\tilde{H}}\times\dfrac{\mathbf{x}}{r}\right\vert
\leq\dfrac{c}{r^{2}} & \text{and} & \left\vert \mathbf{E}\times\dfrac
{\mathbf{x}}{r}+\mathbf{\tilde{H}}\right\vert \leq\dfrac{c}{r^{2}} & \text{for
}r=\left\Vert \mathbf{x}\right\Vert \rightarrow\infty
\end{array}
\label{Maxwellfullspace}%
\end{equation}
is satisfied in a weak sense. Throughout the paper we assume that the data
$\mathbf{\tilde{\boldsymbol{\jmath}}}$ satisfies the following
Assumption~\ref{AssumptionData}a which is sufficient to prove quasi-optimality
of the Galerkin discretization (cf.~Theorems~\ref{thm:quasi-optimal},
\ref{thm:hpFEM-quasioptimality}) while further assumptions on $\mathbf{\tilde
{\boldsymbol{\jmath}}}$ are needed to obtain convergence \textit{rates}
(cf.~Corollary~\ref{CorConvRates}).

\begin{assumption}
[$\Omega$, $\Gamma$, right-hand side $\mathbf{\tilde{\boldsymbol{\jmath}}}$%
]\label{AssumptionData}a) The scaled electric charge density $\mathbf{\tilde
{\boldsymbol{\jmath}}}$ is a compactly supported distribution (functional on
the space $\mathbf{H}_{\operatorname*{loc}}\left(  \operatorname*{curl}%
,\mathbb{R}^{3}\right)  $ defined in Section~\ref{sec:spaces}) \textbf{ }in
the sense that there exists a bounded, smooth Lipschitz domain $\Omega
\subset\mathbb{R}^{3}$ with simply connected boundary $\Gamma:=\partial\Omega$
that satisfies $\operatorname*{supp}\mathbf{\tilde{\boldsymbol{\jmath}}%
}\subset\Omega$. We denote by $\mathbf{n}$ the unit normal vector on the
boundary $\Gamma$ oriented such that it points into the unbounded exterior
$\Omega^{+}:=\mathbb{R}^{3}\backslash\overline{\Omega}$.

b) The wavenumber $k$ is considered as a real parameter in the
range\footnote{The condition $k\geq1$ can be replaced by $k\geq k_{0}>0$. Our
estimates remain valid for all choices of $k_{0}>0$. The constants in the
estimates are uniform for all $k\geq k_{0}$ while they depend continuously on
$k_{0}$ and, possibly, become large as $k_{0}\rightarrow0$. We use
(\ref{loweromega}) simply to reduce technicalities.}%
\begin{equation}
k\geq1. \label{loweromega}%
\end{equation}

\end{assumption}

\subsection{Reformulation on a Bounded Domain\label{SecReformMax}}

Assumption \ref{AssumptionData} allows us to formulate problem
(\ref{Maxwellfullspace}) in an equivalent way as a transmission problem. For
this we have to introduce in (\ref{eq:trace-operators}) the trace operators
$\Pi_{T}$ and $\gamma_{T}$, which map sufficiently smooth functions
$\mathbf{u}$ in $\overline{\Omega}$ to tangential fields on the surface
$\Gamma$ while the trace operators $\Pi_{T}^{+}$ and $\gamma_{T}^{+}$ denote
the corresponding traces for function $\mathbf{u}^{+}$ in the exterior domain
$\overline{\Omega^{+}}$:%
\begin{equation}%
\begin{array}
[c]{ll}%
\Pi_{T}:\mathbf{u}\mapsto\mathbf{n}\times\left(  \mathbf{u}|_{\Gamma}%
\times\mathbf{n}\right)  , & \gamma_{T}:\mathbf{u}\mapsto\mathbf{u}|_{\Gamma
}\times\mathbf{n},\\
\Pi_{T}^{+}:\mathbf{u}^{+}\mapsto\mathbf{n}\times\left(  \mathbf{u}%
^{+}|_{\Gamma}\times\mathbf{n}\right)  , & \gamma_{T}^{+}:\mathbf{u}%
^{+}\mapsto\mathbf{u}^{+}|_{\Gamma}\times\mathbf{n}.
\end{array}
\label{eq:trace-operators}%
\end{equation}
This allows us to define the jumps for sufficiently smooth functions
$\mathbf{w}$ in the interior and $\mathbf{w}^{+}$ in the exterior domain:%
\begin{equation}
\left[  \left(  \mathbf{w},\mathbf{w}^{+}\right)  \right]  _{0,\Gamma}%
:=\gamma_{T}\mathbf{w-}\gamma_{T}^{+}\mathbf{w}^{+}\mathbf{,\quad}\left[
\left(  \mathbf{w},\mathbf{w}^{+}\right)  \right]  _{1,\Gamma}:=\gamma
_{T}\operatorname{curl}\mathbf{w-}\gamma_{T}^{+}\operatorname{curl}%
\mathbf{w}^{+}. \label{defjumps}%
\end{equation}
With this notation, the problem (\ref{Maxwellfullspace}) takes the form: Find
$\mathbf{E}$, $\mathbf{E}^{+}$, $\mathbf{\tilde{H}}$, $\mathbf{\tilde{H}}^{+}$
such that
%
%subequationsjmm%
\begin{subequations}
\label{eq:fullspace}%
\begin{align}
\operatorname{curl}\mathbf{E}-\operatorname*{i}k\mathbf{\tilde{H}}  &
=\mathbf{0}, & \operatorname{curl}\mathbf{\tilde{H}}+\operatorname*{i}%
k\mathbf{E}  &  =\mathbf{\tilde{\boldsymbol{\jmath}}}\qquad\text{in }%
\Omega\text{,}\label{eq:fullspace-a}\\
\operatorname{curl}\mathbf{E}^{+}-\operatorname*{i}k\mathbf{\tilde{H}}^{+}  &
=\mathbf{0}, & \operatorname{curl}\mathbf{\tilde{H}}^{+}+\operatorname*{i}%
k\mathbf{E}^{+}  &  =\mathbf{0}\qquad\text{in }\Omega^{+}%
,\label{eq:fullspace-b}\\
\left[  \left(  \mathbf{E},\mathbf{E}^{+}\right)  \right]  _{0,\Gamma}  &  =0,
& \left[  \left(  \mathbf{E},\mathbf{E}^{+}\right)  \right]  _{1,\Gamma}  &
=\mathbf{0},\label{eq:fullspace-c}\\
&  &  & \\
\left\vert \mathbf{E}^{+}-\mathbf{\tilde{H}}^{+}\times\dfrac{\mathbf{x}}%
{r}\right\vert  &  \leq\dfrac{c}{r^{2}}, & \left\vert \mathbf{E}^{+}%
\times\dfrac{\mathbf{x}}{r}+\mathbf{\tilde{H}}^{+}\right\vert  &  \leq
\dfrac{c}{r^{2}}\qquad\text{for }r=\left\Vert \mathbf{x}\right\Vert
\rightarrow\infty. \label{eq:fullspace-d}%
\end{align}
The key role for formulating this problem as an equation on the bounded domain
$\Omega$ is played by the capacity operator $T_{k}$. This operator associates
to $\mathbf{g}_{T}\in\mathbf{H}_{\operatorname*{curl}}^{-1/2}\left(
\Gamma\right)  $ the value of $\gamma_{T}^{+}\mathbf{\tilde{H}}^{+}$ on
$\Gamma$ where $\left(  \mathbf{E}^{+},\mathbf{\tilde{H}}^{+}\right)  $ solves
the homogeneous Maxwell problem in the exterior domain $\Omega^{+}$ with
Silver-M\"{u}ller radiation conditions at $\infty$ (i.e.,
(\ref{eq:fullspace-b}), (\ref{eq:fullspace-d})) together with Dirichlet
boundary conditions $\gamma_{T}^{+}\mathbf{E}^{+}=\mathbf{g}_{T}%
\times\mathbf{n}$. That is, $T_{k}\mathbf{g}_{T}:=\gamma_{T}^{+}%
\mathbf{\tilde{H}}^{+}$.
\end{subequations}
\begin{remark}
\label{ExCapOp}From \cite[Lemma~{5.4.3}, Thm.~{5.4.6}]{Nedelec01}\footnote{The
function spaces appearing in these statements will be introduced in
Section~\ref{sec:spaces}.} we conclude that the exterior homogeneous Maxwell
equations with given Dirichlet data $\mathbf{g}\in\mathbf{H}%
_{\operatorname*{div}}^{-1/2}\left(  \Gamma\right)  $, i.e., $\gamma_{T}%
^{+}\mathbf{E}^{+}=\mathbf{g}$ on $\Gamma$, for the electric field has a weak
solution $\mathbf{E}^{+}\in\mathbf{H}_{\operatorname*{loc}}\left(
\operatorname*{curl},\Omega^{+}\right)  $, which is unique and satisfies%
\[
\left\Vert \mathbf{E}^{+}\right\Vert _{\operatorname*{curl},B_{R}\left(
0\right)  \cap\Omega^{+},1}\leq C_{R,\Omega}\left\Vert \mathbf{g}\right\Vert
_{\mathbf{H}_{\operatorname*{div}}^{-1/2}\left(  \Gamma\right)  },
\]
where $B_{R}\left(  0\right)  $ is a ball with radius $R$ centered at $0$ such
that $\overline{\Omega}\subset B_{R}\left(  0\right)  $ and $C_{R,\Omega}$ is
a constant which only depends on $\Omega$ and $R$.

This implies that the capacity operator $T_{k}:\mathbf{H}%
_{\operatorname*{curl}}^{-1/2}\left(  \Gamma\right)  \rightarrow
\mathbf{H}_{\operatorname*{div}}^{-1/2}\left(  \Gamma\right)  $ is continuous.
\hbox{}\hfill\rule{0.8ex}{0.8ex}
\end{remark}

The Maxwell equations on the bounded domain are given by%
\[%
\begin{array}
[c]{lll}%
\operatorname{curl}\mathbf{E}-\operatorname*{i}k\mathbf{\tilde{H}}%
=\mathbf{0}, & \operatorname{curl}\mathbf{\tilde{H}}+\operatorname*{i}%
k\mathbf{E}=\mathbf{\tilde{\boldsymbol{\jmath}}} & \text{in }\Omega\text{,}\\
\gamma_{T}\operatorname{curl}\mathbf{E}-\operatorname*{i}kT_{k}\Pi
_{T}\mathbf{E}=\mathbf{0} &  & \text{on }\Gamma.
\end{array}
\]
Eliminating $\mathbf{\tilde{H}}$ from these equations we arrive at the Maxwell
equations for the electric field on a bounded domain $\Omega$%
\begin{equation}%
\begin{array}
[c]{ll}%
\operatorname*{curl}\operatorname{curl}\mathbf{E}-k^{2}\mathbf{E}%
=\operatorname*{i}k\mathbf{\tilde{\boldsymbol{\jmath}}} & \text{in }\Omega
\text{,}\\
\gamma_{T}\operatorname{curl}\mathbf{E}-\operatorname*{i}kT_{k}\Pi
_{T}\mathbf{E}=\mathbf{0} & \text{on }\Gamma.
\end{array}
\label{electricMWEqOmega}%
\end{equation}

%---------------------------------------------------------------

\subsection{Sobolev Spaces in $\Omega$ and on $\Gamma$\label{sec:spaces}}

%---------------------------------------------------------------
We introduce the pertinent function spaces.
%---------------------------------------------------------------

\subsubsection{Sobolev Spaces in $\Omega$\label{SecSobolevOmega}}

%---------------------------------------------------------------
By $H^{s}\left(  \Omega\right)  $ we denote the usual Sobolev spaces of index
$s\geq0$ with norm $\left\Vert \cdot\right\Vert _{H^{s}\left(  \Omega\right)
}$. The closure of $C_{0}^{\infty}\left(  \Omega\right)  $ functions with
respect to $\left\Vert \cdot\right\Vert _{H^{s}\left(  \Omega\right)  }$ is
denoted by $H_{0}^{s}\left(  \Omega\right)  $. For $s\geq0$, the dual space of
$H_{0}^{s}\left(  \Omega\right)  $ is denoted by $H^{-s}\left(  \Omega\right)
$. If the functions are vector-valued we indicate this by writing
$\mathbf{H}^{s}\left(  \Omega\right)  $, $\mathbf{H}_{0}^{s}\left(
\Omega\right)  $. For details we refer to \cite{Adams_new}.

The \textit{energy space} for the electric field is given by%
\begin{equation}
\mathbf{X}:=\mathbf{H}\left(  \Omega,\operatorname{curl}\right)  :=\left\{
\mathbf{u}\in\mathbf{L}^{2}\left(  \Omega\right)  \mid\operatorname{curl}%
\mathbf{u}\in\mathbf{L}^{2}\left(  \Omega\right)  \right\}  \label{defXHcurl}%
\end{equation}
equipped with the indexed scalar product and norm%
\begin{equation}
\left(  \mathbf{f},\mathbf{g}\right)  _{\operatorname{curl},\Omega,k}:=\left(
\operatorname{curl}\mathbf{f},\operatorname{curl}\mathbf{g}\right)
+k^{2}\left(  \mathbf{f},\mathbf{g}\right)  \quad\text{and\quad}\left\Vert
\mathbf{f}\right\Vert _{\operatorname{curl},\Omega,k}:=\left(  \mathbf{f}%
,\mathbf{f}\right)  _{\operatorname{curl},\Omega,k}^{1/2}, \label{indexedcurl}%
\end{equation}
where $\left(  \cdot,\cdot\right)  $ denotes the $\mathbf{L}^{2}\left(
\Omega\right)  $-scalar product%
\begin{equation}
\left(  \mathbf{f},\mathbf{g}\right)  :=\int_{\Omega}\left\langle
\mathbf{f},\mathbf{g}\right\rangle . \label{L2Euklid}%
\end{equation}
Here, $\left\langle \cdot,\cdot\right\rangle $ is the Euclidean scalar product
in $\mathbb{C}^{3}$ (with complex conjugation in the second argument). 
The dual space of ${\mathbf X}$ (i.e., the space of continuous 
linear functionals on ${\mathbf X}$) is denoted by ${\mathbf X}^\prime$
and the anti-dual space ${\mathbf X}$ (i.e., the space continuous
anti-linear functionals on ${\mathbf X}$) is denoted by ${\mathbf X}^\times$.
We also
introduce the space%
\begin{equation}
\mathbf{H}\left(  \Omega,\operatorname{div}\right)  :=\left\{  \mathbf{u}%
\in\mathbf{L}^{2}\left(  \Omega\right)  \mid\operatorname{div}\mathbf{u}\in
L^{2}\left(  \Omega\right)  \right\}  . \label{DefHOmegadiv}%
\end{equation}
For \emph{unbounded} domains $D\subset\mathbb{R}^{3}$ we denote
%, we denote the Schwartz space by
%\begin{equation}
%{C}_{\operatorname*{comp}}^{\infty}\left(  D\right)  :=\left\{  \left.
%\mathbf{u}\right\vert _{D}\mid\mathbf{u}\in{C}^{\infty}\left(
%\mathbb{R}^{3}\right)  :\operatorname*{supp}\mathbf{u}\text{ is compact}%
%\right\}  . \label{Cinfcomp}%
%\end{equation}
$\mathbf{H}_{\operatorname*{loc}}\left(  D,\operatorname*{curl}\right)  $ the
space of all distributions ${\mathbf{f}}$ with the property that
$\varphi\mathbf{f}\in\mathbf{H}\left(  D,\operatorname*{curl}\right)  $ for
all smooth, compactly supported functions $\varphi\in{C}_{0}^{\infty
}({\mathbb{R}}^{3})$.
%-----------------------------------------------------------------

\subsubsection{Sobolev Spaces on $\Gamma$}

%-----------------------------------------------------------------
The Sobolev spaces on the boundary $\Gamma$ are denoted by $H^{s}\left(
\Gamma\right)  $ for scalar-valued functions and by $\mathbf{H}^{s}\left(
\Gamma\right)  $ for vector-valued functions. The range of differentiability
$s$ depends on the smoothness of $\Gamma$. To avoid such technicalities, we
assume throughout the paper that the boundary $\Gamma$ is sufficiently smooth
so that the Sobolev spaces $H^{s}\left(  \Gamma\right)  $, $H^{s}\left(
\Gamma\right)  $ are well defined. A formal definition may be found in
\cite{Mclean00}; however, below and throughout this work, we will use the
characterization in terms of expansions via eigenfunctions of the
Laplace--Beltrami operator. We will need the space $\mathbf{L}_{T}^{2}%
(\Gamma)$ of tangential vector fields given by
\begin{equation}
\mathbf{L}_{T}^{2}\left(  \Gamma\right)  :=\{\mathbf{v}\in\mathbf{L}%
^{2}(\Gamma)|\left\langle \mathbf{n},\mathbf{v}\right\rangle =0\text{ on
}\Gamma\}. \label{DefL2t}%
\end{equation}

For a sufficiently smooth scalar-valued function $u$ and vector-valued
function $\mathbf{v}$ on $\Gamma$, the constant (along the normal direction)
extensions into a sufficiently small three-dimensional neighborhood
${\mathcal{U}}$ of $\Gamma$ is denoted by $u^{\ast}$ and $\mathbf{v}^{\ast}$.
The \textit{surface gradient} $\nabla_{\Gamma}$, the \textit{tangential curl}
$\overrightarrow{\operatorname*{curl}\nolimits_{\Gamma}}$, and the
\textit{surface divergence} $\operatorname*{div}_{\Gamma}$ are defined by
(cf., e.g., \cite{Nedelec01}, \cite{BuffaCostabelSheen})%
\begin{equation}
\nabla_{\Gamma}u:=\left.  \left(  \nabla u^{\star}\right)  \right\vert
_{\Gamma}\text{,\quad}\overrightarrow{\operatorname*{curl}\nolimits_{\Gamma}%
}u:=\nabla_{\Gamma}u\times\mathbf{n}\text{,\quad and\quad}\operatorname*{div}%
\nolimits_{\Gamma}\mathbf{v}=\left.  \left(  \operatorname*{div}%
\mathbf{v}^{\ast}\right)  \right\vert _{\Gamma}\qquad\text{on }\Gamma\text{.}
\label{curlvec}%
\end{equation}
The scalar counterpart of the tangential curl is the \textit{surface curl}%
\begin{equation}
\operatorname*{curl}\nolimits_{\Gamma}\mathbf{v}:=\langle\left.  \left(
\operatorname*{curl}\mathbf{v}^{\ast}\right)  \right\vert _{\Gamma}%
,\mathbf{n}\rangle\qquad\text{on }\Gamma. \label{sccounttangcurl}%
\end{equation}
The composition of the surface divergence and surface gradient leads to the
\textit{scalar Laplace-Beltrami operator}%
\begin{equation}
\Delta_{\Gamma}u=\operatorname*{div}\nolimits_{\Gamma}\nabla_{\Gamma}u.
\label{defLaplBelt}%
\end{equation}
{}From \cite[(2.5.197)]{Nedelec01} we have the relation%
\begin{equation}
\operatorname*{div}\nolimits_{\Gamma}\left(  \mathbf{v}\times\mathbf{n}%
\right)  =\operatorname*{curl}\nolimits_{\Gamma}\mathbf{v}. \label{divcurlrel}%
\end{equation}
%%%%%%%%%%%%%%%%%%%%%%%%%%%%%%%%%%%%%%%%%%5
%begin of old version defining surface spaces from \cite{BuffaCostabelSheen}
%%%%%%%%%%%%%%%%%%%%%%%%%%%%%%%%%%%%%%%%%%5
%We are now ready, see \cite{BuffaCostabelSheen}, to introduce the following
%Hilbert spaces on $\Gamma$:
%\begin{align*}
%\mathbf{H}_{\operatorname*{div}}^{-1/2}\left(  \Gamma\right) :={\mathbf H}(\operatorname{div}_\Gamma,\Gamma)  &
%:=\{\mathbf{v}\in\mathbf{V}_{\gamma}^{\prime}\;|\;\operatorname{div}_{\Gamma
%}\mathbf{v}\in H^{-1/2}(\Gamma)\},\\
%\mathbf{H}_{\operatorname*{curl}}^{-1/2}\left(  \Gamma\right) :={\mathbf H}(\operatorname{curl}_\Gamma,\Gamma)  &
%:=\{\mathbf{v}\in\mathbf{V}_{\Pi}^{\prime}\;|\;\operatorname{curl}_{\Gamma
%}\mathbf{v}\in H^{-1/2}(\Gamma)\}
%\end{align*}
%with norms defined by
%\begin{equation}%
%\begin{split}
%\left\Vert \mathbf{v}\right\Vert _{-1/2,\operatorname{curl}_{\Gamma}}  &
%:=\left\{  \left\Vert \mathbf{v}\right\Vert _{\mathbf{V}_{\gamma}^{\prime}%
%}^{2}+\left\Vert \operatorname{curl}_{\Gamma}\mathbf{v}\right\Vert
%_{H^{-1/2}(\Gamma)}^{2}\right\}  ^{1/2},\\
%\left\Vert \mathbf{v}\right\Vert _{-1/2,\operatorname{div}_{\Gamma}}  &
%:=\left\{  \left\Vert \mathbf{v}\right\Vert _{\mathbf{V}_{\Pi}^{\prime}}%
%^{2}+\left\Vert \operatorname{div}_{\Gamma}\mathbf{v}\right\Vert
%_{H^{-1/2}(\Gamma)}^{2}\right\}  ^{1/2}.
%\end{split}
%\label{def-1/2div}%
%\end{equation}
%%%%%%%%%%%%%%%%%%%%%%%%%%%%%%%%%%%%%%%%%%5
%end of old version defining surface spaces from \cite{BuffaCostabelSheen}
%%%%%%%%%%%%%%%%%%%%%%%%%%%%%%%%%%%%%%%%%%5

The operator $\Delta_{\Gamma}$ is self-adjoint with respect to the
$L^{2}\left(  \Gamma\right)  $ scalar product $\left(  \cdot,\cdot\right)
_{\Gamma}$ and positive semidefinite. It admits a countable sequence of
eigenfunctions in $L^{2}\left(  \Gamma\right)  $ denoted by $Y_{\ell}^{m}$
such that%
\begin{equation}
-\Delta_{\Gamma}Y_{\ell}^{m}=\lambda_{\ell}Y_{\ell}^{m}\qquad\text{\ for }%
\ell=0,1,\ldots.\text{and }m\in\iota_{\ell}. \label{eigvalabsLapBelt}%
\end{equation}
We choose the normalization such that $\left(  Y_{\ell}^{m},Y_{\ell^{\prime}%
}^{m^{\prime}}\right)  _{\Gamma}=\delta_{m,m^{\prime}}\delta_{\ell
,\ell^{\prime}}$ holds. Here, $\iota_{\ell}$ is a finite index set whose
cardinality equals the multiplicity of the eigenvalue $\lambda_{\ell}$, and we
always assume that the eigenvalues $\lambda_{\ell}$ are distinct and ordered
increasingly. We have $\lambda_{0}=0$ and for $\ell\geq1$, they are real and
positive and accumulate at infinity. By Assumption~\ref{AssumptionData} the
surface $\Gamma$ is simply connected so that $\lambda_{0}=0$ is a simple
eigenvalue. {}From \cite[Sec. 5.4]{Nedelec01} we know that any distribution
$w$, defined on the surface $\Gamma$, can formally be expanded with respect to
the basis $Y_{\ell}^{m}$ as%
\[
w=\sum_{\ell=0}^{\infty}\sum_{m\in\iota_{\ell}}w_{\ell}^{m}Y_{\ell}^{m}.
\]
The space $H^{s}\left(  \Gamma\right)  $ can be characterized by%
\begin{equation}
H^{s}\left(  \Gamma\right)  =\left\{  w\in\left(  C^{\infty}\left(
\Gamma\right)  \right)  ^{\prime}\mid\left\Vert w\right\Vert _{H^{s}\left(
\Gamma\right)  }^{2}:=\sum_{\ell=0}^{\infty}\left(  \delta_{\ell,0}%
+\lambda_{\ell}\right)  ^{s}\sum_{m\in\iota_{\ell}}\left\vert w_{\ell}%
^{m}\right\vert ^{2}<\infty\right\}  \label{DefHsGammaNorm}%
\end{equation}
with Kronecker's $\delta_{m,\ell}$. A norm on $H^{s}\left(  \Gamma\right)  $
is given by $\left\Vert \cdot\right\Vert _{H^{s}\left(  \Gamma\right)  }$.

Next, we define spaces of vector-valued functions. By \cite[Sec.~{5.4.1}%
]{Nedelec01}, every function $\mathbf{v}_{T}\in\mathbf{L}_{T}^{2}\left(
\Gamma\right)  $ can be written in the form
\begin{equation}
\mathbf{v}_{T}=\sum_{\ell=1}^{\infty}\sum_{m\in\iota_{\ell}}\left(  v_{\ell
}^{m}\overrightarrow{\operatorname*{curl}\nolimits_{\Gamma}}Y_{\ell}%
^{m}+V_{\ell}^{m}\nabla_{\Gamma}Y_{\ell}^{m}\right)  , \label{vFExp}%
\end{equation}
where the coefficients satisfy $\sum_{\ell=1}^{\infty}\lambda_{\ell}\sum
_{m\in\iota_{\ell}}\left(  \left\vert v_{\ell}^{m}\right\vert ^{2}+\left\vert
V_{\ell}^{m}\right\vert ^{2}\right)  <\infty$. We set%
\begin{equation}
\left\Vert \mathbf{v}_{T}\right\Vert _{\mathbf{H}_{T}^{s}\left(
\Gamma\right)  }^{2}:=\sum_{\ell=1}^{\infty}\lambda_{\ell}^{s+1}\sum
_{m\in\iota_{\ell}}\left(  \left\vert v_{\ell}^{m}\right\vert ^{2}+\left\vert
V_{\ell}^{m}\right\vert ^{2}\right)  . \label{DefHsGammaTNorm}%
\end{equation}
A tangential vector field ${\mathbf{v}}_{T}$ can be decomposed into a surface
gradient and a surface curl part as $\mathbf{v}_{T}=\mathbf{v}%
^{\operatorname*{curl}} +\mathbf{v}^{\nabla}$, where
(cf.~\ref{vFExp})

\begin{equation}%
\begin{array}
[c]{clll}
& \mathbf{v}^{\nabla}:=\sum_{\ell=1}^{\infty}\sum_{m\in\iota_{\ell}}V_{\ell
}^{m}\nabla_{\Gamma}Y_{\ell}^{m} & \text{and} & \mathbf{v}%
^{\operatorname*{curl}}:=\sum_{\ell=1}^{\infty}\sum_{m\in\iota_{\ell}}v_{\ell
}^{m}\overrightarrow{\operatorname*{curl}\nolimits_{\Gamma}}Y_{\ell}^{m}\\
\quad &  &  &
\end{array}
\label{utbcHdecomp}%
\end{equation}

\begin{remark}
\label{rem:decomposition-of-nabla-phi}{For gradient fields $\nabla\varphi$ we
have $(\Pi_{T}\nabla\varphi)^{\operatorname*{curl}}=0$ and $(\Pi_{T}%
\nabla\varphi)^{\nabla}=\nabla_{\Gamma}\varphi$. } \hbox{}\hfill
\rule{0.8ex}{0.8ex}
\end{remark}

The decomposition (\ref{utbcHdecomp}) allows us to express the operators
$\operatorname*{curl}_{\Gamma}$ and $\operatorname*{div}_{\Gamma}$ and the
corresponding norms in terms of the Fourier coefficients: for a tangential
field $\mathbf{v}_{T}$ of the form (\ref{vFExp}), the surface divergence and
surface gradient are defined (formally) as in \cite[(5.4.18)-(5.4.21)]%
{Nedelec01}%
\begin{equation}
\operatorname*{div}\nolimits_{\Gamma}\mathbf{v}_{T}=\sum_{\ell=1}^{\infty
}\lambda_{\ell}\sum_{m\in\iota_{\ell}}V_{\ell}^{m}Y_{\ell}^{m}\quad
\text{and\quad}\operatorname*{curl}\nolimits_{\Gamma}\mathbf{v}_{T}=\sum
_{\ell=1}^{\infty}\lambda_{\ell}\sum_{m\in\iota_{\ell}}v_{\ell}^{m}Y_{\ell
}^{m}. \label{divsucurl=0}%
\end{equation}
The $H^{s}\left(  \Gamma\right)  $ norm (cf.\ (\ref{DefHsGammaNorm})) of
$\operatorname*{curl}_{\Gamma}\left(  \cdot\right)  $ and $\operatorname*{div}%
_{\Gamma}\left(  \cdot\right)  $ can accordingly be expressed in terms of the
Fourier expansions:
\begin{equation}
\left\Vert \operatorname*{curl}\nolimits_{\Gamma}\mathbf{v}_{T}\right\Vert
_{H^{s}\left(  \Gamma\right)  }^{2}=\sum_{\ell=1}^{\infty}\lambda_{\ell}%
^{s+2}\sum_{m\in\iota_{\ell}}\left\vert v_{\ell}^{m}\right\vert ^{2}%
\quad\text{and\quad}\left\Vert \operatorname*{div}\nolimits_{\Gamma}%
\mathbf{v}_{T}\right\Vert _{H^{s}\left(  \Gamma\right)  }^{2}=\sum_{\ell
=1}^{\infty}\lambda_{\ell}^{s+2}\sum_{m\in\iota_{\ell}}\left\vert V_{\ell}%
^{m}\right\vert ^{2}. \label{divscalednorm}%
\end{equation}
{We define}
%subequationsjmm%
%TCIMACRO{\TeXButton{m1/2curldiv}{\begin{subequations}
%\label{m1/2curldiv}
%\end{subequations}}}%
%BeginExpansion
\begin{subequations}
\label{m1/2curldiv}
\end{subequations}%
%EndExpansion%
\begin{align}
\left\Vert \mathbf{v}_{T}\right\Vert _{-1/2,\operatorname*{curl}_{\Gamma}%
}^{2}  &  =\sum_{\ell=1}^{\infty}\lambda_{\ell}^{1/2}\sum_{m\in\iota_{\ell}%
}\left(  \left(  1+\lambda_{\ell}\right)  \left\vert v_{\ell}^{m}\right\vert
^{2}+\left\vert V_{\ell}^{m}\right\vert ^{2}\right)  ,\tag{%
%TCIMACRO{\TeXButton{m1/2curldiv}{\ref{m1/2curldiv}}}%
%BeginExpansion
\ref{m1/2curldiv}%
%EndExpansion
a}\label{m1/2curldiva}\\
\left\Vert \mathbf{v}_{T}\right\Vert _{-1/2,\operatorname*{div}_{\Gamma}}^{2}
&  =\sum_{\ell=1}^{\infty}\lambda_{\ell}^{1/2}\sum_{m\in\iota_{\ell}}\left(
\left\vert v_{\ell}^{m}\right\vert ^{2}+\left(  1+\lambda_{\ell}\right)
\left\vert V_{\ell}^{m}\right\vert ^{2}\right)  . \tag{%
%TCIMACRO{\TeXButton{m1/2curldiv}{\ref{m1/2curldiv}}}%
%BeginExpansion
\ref{m1/2curldiv}%
%EndExpansion
b}\label{m1/2curldivb}%
\end{align}

The spaces $\mathbf{H}_{\operatorname*{curl}}^{-1/2}\left(  \Gamma\right)  $
and $\mathbf{H}_{\operatorname*{div}}^{-1/2}\left(  \Gamma\right)  $ allow for
orthogonal decompositions on the surface $\Gamma$. From \cite[(5.4.20),
(5.4.21)]{Nedelec01} we conclude that%
\[%
\begin{array}
[c]{ccc}%
\mathbf{v}_{T}\in\mathbf{H}_{\operatorname*{div}}^{-1/2}\left(  \Gamma\right)
& \iff & \mathbf{v}_{T}\text{ is of the form (\ref{vFExp}) and }\left\Vert
\mathbf{v}_{T}\right\Vert _{-1/2,\operatorname*{div}_{\Gamma}}<\infty,\\
\mathbf{v}_{T}\in\mathbf{H}_{\operatorname*{curl}}^{-1/2}\left(  \Gamma\right)
& \iff & \mathbf{v}_{T}\text{ is of the form (\ref{vFExp}) and }\left\Vert
\mathbf{v}_{T}\right\Vert _{-1/2,\operatorname*{curl}_{\Gamma}}<\infty
\end{array}
\]
holds. The system $\left\{  \nabla_{\Gamma}Y_{\ell}^{m}%
,\overrightarrow{\operatorname*{curl}\nolimits_{\Gamma}}Y_{\ell}^{m}\right\}
$ forms an orthogonal basis in $\mathbf{L}_{T}^{2}\left(  \Gamma\right)  $
(cf. \cite[\S \ after (5.4.12)]{Nedelec01}) so that
\begin{equation}
\left(  \mathbf{v}^{\nabla},\mathbf{v}^{\operatorname*{curl}}\right)
_{\mathbf{L}^{2}_{T}\left(  \Gamma\right)  }=0\qquad\forall\mathbf{v}%
\in\mathbf{L}_{T}^{2}\left(  \Gamma\right)  . \label{orthogonality}%
\end{equation}

The following theorem shows that $\mathbf{H}^{-1/2}(\operatorname*{div}%
_{\Gamma},\Gamma)$ and $\mathbf{H}^{-1/2}(\operatorname{curl}_{\Gamma}%
,\Gamma)$ are the correct spaces to define continuous trace operators.

\begin{theorem}
\label{traceTHM1}The trace mappings%
\[
\Pi_{T}:\mathbf{X}\rightarrow\mathbf{H}_{\operatorname*{curl}}^{-1/2}\left(
\Gamma\right)  ,\gamma_{T}:\mathbf{X}\rightarrow\mathbf{H}%
_{\operatorname*{div}}^{-1/2}\left(  \Gamma\right)
\]
are continuous and surjective. Moreover, there exist continuous liftings
$\mathcal{E}_{\operatorname*{curl}}$,$:\mathbf{H}_{\operatorname*{curl}%
}^{-1/2}\left(  \Gamma\right)  \rightarrow\mathbf{X}$ and $\mathcal{E}%
_{\operatorname*{div}}$,$:\mathbf{H}_{\operatorname*{div}}^{-1/2}\left(
\Gamma\right)  \rightarrow\mathbf{X}$ for these trace spaces which are divergence-free.
\end{theorem}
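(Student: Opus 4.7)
The plan is to split the argument into (i) continuity of the trace maps, (ii) surjectivity via construction of a continuous lifting, and then (iii) upgrading the lifting to be divergence-free. The first two parts are essentially classical and will follow by invoking the Green's formula together with trace results from \cite{Nedelec01, BuffaCostabelSheen}; the novelty is really in carrying out (iii) while preserving continuity.

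For continuity, I would start from the integration-by-parts identity
\[
(\operatorname{curl}\mathbf{u},\mathbf{v}) - (\mathbf{u},\operatorname{curl}\mathbf{v}) = (\gamma_T \mathbf{u}, \Pi_T \mathbf{v})_\Gamma
\qquad \forall \mathbf{u},\mathbf{v} \in C^\infty(\overline\Omega),
\]
observe that $\mathbf{H}^{-1/2}_{\operatorname{div}}(\Gamma)$ and $\mathbf{H}^{-1/2}_{\operatorname{curl}}(\Gamma)$ are mutually dual under this pairing (this is the point of the definitions in (\ref{m1/2curldiv}); see \cite[Sec.~5.4.1]{Nedelec01}), and use density of $C^\infty(\overline\Omega)$ in $\mathbf{X}$ to conclude that $\Pi_T$ maps continuously into $\mathbf{H}^{-1/2}_{\operatorname{curl}}(\Gamma)$ and $\gamma_T$ into $\mathbf{H}^{-1/2}_{\operatorname{div}}(\Gamma)$. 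Surjectivity reduces to producing, for any datum $\mathbf{g}\in\mathbf{H}^{-1/2}_{\operatorname{div}}(\Gamma)$ (resp.\ $\mathbf{H}^{-1/2}_{\operatorname{curl}}(\Gamma)$), a continuous lifting into $\mathbf{X}$. The cleanest route is to use the Hodge-type decomposition (\ref{utbcHdecomp}) writing $\mathbf{g} = \mathbf{g}^\nabla + \mathbf{g}^{\operatorname{curl}}$ with coefficients $V_\ell^m, v_\ell^m$ and to lift each basis tangential field $\nabla_\Gamma Y_\ell^m$ and $\overrightarrow{\operatorname{curl}_\Gamma} Y_\ell^m$ individually by appropriate extensions; alternatively one quotes the existence of a continuous right inverse directly from \cite[Thm.~5.4.2]{Nedelec01} or \cite{BuffaCostabelSheen}. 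In either case one obtains $\mathbf{u}_0 \in \mathbf{X}$ with $\|\mathbf{u}_0\|_{\operatorname{curl},\Omega,k} \lesssim \|\mathbf{g}\|_{-1/2,\operatorname{div}_\Gamma}$ (and analogously for $\Pi_T$).

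To upgrade $\mathbf{u}_0$ to a divergence-free lifting, I correct by a gradient: since $\mathbf{u}_0 \in \mathbf{L}^2(\Omega)$, we have $\operatorname{div} \mathbf{u}_0 \in H^{-1}(\Omega)$, so Lax--Milgram produces a unique $\phi \in H^1_0(\Omega)$ with $-\Delta \phi = \operatorname{div} \mathbf{u}_0$, and $\|\nabla\phi\|_{L^2(\Omega)} \lesssim \|\mathbf{u}_0\|_{L^2(\Omega)}$. Setting $\mathbf{u} := \mathbf{u}_0 + \nabla\phi$ gives $\operatorname{div}\mathbf{u} = 0$; moreover $\operatorname{curl}\mathbf{u} = \operatorname{curl}\mathbf{u}_0$ so $\mathbf{u}\in \mathbf{X}$ with norm control. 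The key observation is that $\phi|_\Gamma = 0$ forces $\nabla\phi|_\Gamma$ to be purely normal, so both $\Pi_T\nabla\phi = \nabla_\Gamma(\phi|_\Gamma) = 0$ and $\gamma_T\nabla\phi = (\nabla\phi)\times\mathbf{n}|_\Gamma = 0$, hence $\gamma_T \mathbf{u} = \gamma_T \mathbf{u}_0 = \mathbf{g}$ (resp.\ $\Pi_T \mathbf{u} = \mathbf{g}$). Defining $\mathcal{E}_{\operatorname{div}}\mathbf{g}:=\mathbf{u}$ and analogously $\mathcal{E}_{\operatorname{curl}}$, the composition of the two continuous maps yields the claimed continuous divergence-free liftings.

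The main obstacle is the low boundary regularity: one must check that every step is stable in the $H^{-1/2}$ norm rather than in the easier $H^{1/2}$ setting. The potentially delicate point is justifying the trace identity $\gamma_T \nabla\phi = 0$ when $\phi$ is only in $H^1_0(\Omega)$ and $\nabla\phi$ is not a priori in $\mathbf{X}$ (its curl vanishes distributionally, so it is, and its tangential trace is well defined as an element of $\mathbf{H}^{-1/2}_{\operatorname{div}}(\Gamma)$). This can be handled by a density argument approximating $\phi$ by $C_0^\infty(\Omega)$ functions, for which the vanishing trace is obvious, and passing to the limit in the continuous trace map from part (i).
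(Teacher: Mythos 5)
Your argument is correct, and it is worth noting that the paper itself gives no proof of Theorem~\ref{traceTHM1} at all: it simply cites \cite{Cessenat_book} and \cite[Thm.~{5.4.2}]{Nedelec01}. Where your proposal overlaps with those references (continuity via the Green's formula/duality pairing and surjectivity via a continuous right inverse) you are invoking exactly the classical results the paper points to, so there is no divergence of method there. The genuine added value of your writeup is part (iii): the references are often quoted only for the existence of \emph{some} continuous lifting, and you make the divergence-free upgrade explicit and correct — solve $-\Delta\phi=\operatorname{div}\mathbf{u}_0$ with $\phi\in H^1_0(\Omega)$, set $\mathbf{u}=\mathbf{u}_0+\nabla\phi$, note $\operatorname{curl}\nabla\phi=0$ so the $\mathbf{H}(\Omega,\operatorname{curl})$ norm is controlled, and verify $\gamma_T\nabla\phi=\Pi_T\nabla\phi=0$ by approximating $\phi$ in $H^1_0(\Omega)$ by $C_0^\infty(\Omega)$ functions and using the continuity of the traces from part (i). That density argument is exactly the right way to handle the low regularity, and it closes the one delicate point you identified.

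Two small cautions. First, in part (ii) you write $\Vert\mathbf{u}_0\Vert_{\operatorname{curl},\Omega,k}\lesssim\Vert\mathbf{g}\Vert_{-1/2,\operatorname{div}_\Gamma}$ with an implicitly $k$-independent constant; the theorem is a $k$-independent statement only in the unweighted norm $\Vert\cdot\Vert_{\operatorname{curl},\Omega,1}$ (compare (\ref{eq:norm-of-Pi_tau}) and the remark following Definition~\ref{DefFreqSplit}, where using ${\mathcal E}_{\operatorname{curl}}$ in the $k$-weighted norm costs a factor $Ck$). Second, deducing continuity of $\Pi_T$ into $\mathbf{H}^{-1/2}_{\operatorname{curl}}(\Gamma)$ purely from the Green's formula requires first identifying the relevant dual space with $\mathbf{H}^{-1/2}_{\operatorname{curl}}(\Gamma)$, which presupposes the surjectivity of the other trace; this mild circularity is resolved in the cited references by characterizing the trace spaces directly through the expansions (\ref{vFExp})--(\ref{m1/2curldiv}), so your appeal to them is legitimate, but a fully self-contained proof would need to argue via that characterization rather than via duality alone.
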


For a proof we refer to \cite{Cessenat_book}, \cite[Thm.~{5.4.2}]{Nedelec01}.
For a vector field $\mathbf{u}\in\mathbf{X}$, we will employ frequently the
notation $\mathbf{u}_{T}:=\Pi_{T}\mathbf{u}$. The continuity constant of
$\Pi_{T}$ is%
\begin{equation}
C_{\Gamma}:=\sup_{\mathbf{v}\in\mathbf{X}\backslash\left\{  0\right\}  }%
\frac{\left\Vert \Pi_{T}\mathbf{v}\right\Vert _{-1/2,\operatorname*{curl}%
_{\Gamma}}}{\left\Vert \mathbf{v}\right\Vert _{\operatorname*{curl},\Omega,1}%
}. \label{eq:norm-of-Pi_tau}%
\end{equation}
The spaces $H_{\operatorname*{curl}}^{-1/2}(\Gamma)$ and
$H_{\operatorname*{div}}^{-1/2}(\Gamma)$ are in duality with respect to
$L_{T}^{2}(\Gamma)$, i.e., $(\cdot,\cdot)_{\Gamma}$ extends continuously to
the duality pairing $H_{\operatorname*{curl}}^{-1/2}(\Gamma)\times
H_{\operatorname*{div}}^{-1/2}(\Gamma)$ and, cf.~\cite[Lemmas~{5.3.1},
{5.4.1}]{Nedelec01},
\begin{equation}
|({\mathbf{u}}_{T},{\mathbf{v}}_{T})_{\Gamma}|\leq\Vert{\mathbf{u}}_{T}%
\Vert_{-1/2,\operatorname{curl}_{\Gamma}}\Vert{\mathbf{v}}_{T}\Vert
_{-1/2,\operatorname{div}_{\Gamma}}\qquad\forall{\mathbf{u}}_{T}\in
{\mathbf{H}}_{\operatorname*{curl}}^{-1/2}(\Gamma),\quad{\mathbf{v}}_{T}%
\in{\mathbf{H}}_{\operatorname*{curl}}^{-1/2}(\Gamma). \label{eq:duality}%
\end{equation}

%------------------------------------------------------------------

\subsubsection{The Analyticity Classes ${\mathcal{A}}$}

\label{sec:analyticity-classes}
%------------------------------------------------------------------
We introduce classes of analytic functions whose growth of the derivatives (as
the order of differentiation grows) is controlled explicitly in terms of the
wavenumber $k$. For smooth tensor-valued functions ${\mathbf{u}} =
({\mathbf{u}}_{{\mathbf{i}}})_{{\mathbf{i}} \in{\mathbf{I}}}$ on a open
$\omega\subset{\mathbb{R}}^{d}$, where ${\mathbf{I}}$ is a suitable finite
index set and using the usual multi-index conventions
$\mbox{\boldmath$ \alpha$}=\left(  \alpha_{s}\right)  _{s=1}^{d}$, we set
$\left\vert \mbox{\boldmath$ \alpha$}\right\vert =\alpha_{1}+\ldots+\alpha
_{d},$ and abbreviate
\begin{equation}
\label{defLaplhochn}\left\vert \nabla^{n}{\mathbf{u}}\left(  x\right)
\right\vert ^{2}=\sum_{\substack{\mbox{\boldmath$ \alpha$}\in\mathbb{N}%
_{0}^{d}\\\left\vert \mbox{\boldmath$ \alpha$}\right\vert =n}} \sum
_{{\mathbf{i}} \in{\mathbf{I}}} \dbinom{n}{\mbox{\boldmath$ \alpha$} }
\left\vert D^{\mbox{\boldmath$\alpha$}} {\mathbf{u}}_{{\mathbf{i}}} \left(
x\right)  \right\vert ^{2}, \qquad\qquad\dbinom{n}{\mbox{\boldmath$ \alpha$}}%
=\frac{n!}{\alpha_{1}!\cdots\alpha_{d}!},\quad D^{\mbox{\boldmath$ \alpha$}}%
=\partial_{1}^{\alpha_{1}}\partial_{2}^{\alpha_{2}}\ldots\partial_{d}%
^{\alpha_{d}}.
\end{equation}
We then define:

\begin{definition}
\label{DefClAnFct}For an open set $\omega\subset\mathbb{R}^{d}$ and constants
$C_{1}$, $\gamma_{1}>0$, and wavenumber $k\geq1$ (cf.~(\ref{loweromega})), we
set%
\begin{align*}
\mathcal{A}\left(  C_{1},\gamma_{1},\omega\right)   &  :=\left\{  {\mathbf{u}}
\in L^{2}\left(  \omega\right)  \mid\left\Vert \nabla^{n}{\mathbf{u}%
}\right\Vert _{L^{2}\left(  \omega\right)  }\leq C_{1}\gamma_{1}^{n}%
\max\left\{  n+1,k\right\}  ^{n}\;\forall n\in\mathbb{N}_{0}\right\}  ,\\
\mathcal{A}^{\infty}\left(  C_{1},\gamma_{1},\omega\right)   &  :=\left\{
{\mathbf{u}} \in L^{\infty}\left(  \omega\right)  \mid\left\Vert \nabla
^{n}{\mathbf{u}}\right\Vert _{L^{\infty}\left(  \omega\right)  }\leq
C_{1}\gamma_{1}^{n}n!\quad\forall n\in\mathbb{N}_{0}\right\}  .
\end{align*}

For the unit sphere $\Gamma$ in $\mathbb{R}^{3}$ and constants $C_{1}$,
$\gamma_{1}$, and the wavenumber $k\geq1$, we set%
\[
\mathcal{A}\left(  C_{1},\gamma_{1},\Gamma\right)  :=\left\{  \mathbf{f}%
\in\mathbf{L}_{T}^{2}\left(  \Gamma\right)  \mid\left\Vert \nabla_{\Gamma}%
^{n}\mathbf{f}\right\Vert _{L^{2}\left(  \Gamma\right)  }\leq C_{1}\gamma
_{1}^{n}\max\left\{  n+1,k\right\}  ^{n}\;\forall n\in\mathbb{N}_{0}\right\}
,
\]
where $\nabla_{\Gamma}$ denotes the surface gradient as in (\ref{curlvec}) and
the application of $\nabla_{\Gamma}^{n}$ to $\mathbf{f}$ is defined componentwise.
\end{definition}

Membership in the analyticity class ${\mathcal{A}}$ is invariant under
analytic changes of variables and multiplication by analytic functions:

\begin{lemma}
\label{lemma:A-invariant} Let $d\in{\mathbb{N}}$ and $\omega_{1}$, $\omega
_{2}\subset{\mathbb{R}}^{d}$ be bounded, open sets. Let $g:\omega
_{1}\rightarrow\omega_{2}$ be a bijection and analytic on the closure
$\overline{\omega_{1}}$, i.e., there are constants $C_{g}$, $C_{g,\operatorname{inv}%
}$, $\gamma_{g}$ such that%
\[
g\in{\mathcal{A}}^{\infty}(C_{g},\gamma_{g},\omega_{1})\quad\mbox{ and }\quad
\Vert(g^{\prime})^{-1}\Vert_{L^{\infty}(\omega_{1})}\leq
C_{g,\operatorname{inv}}.
\]
Let $f$ be analytic on the closure $\overline{\omega_{2}}$, i.e.,
$f\in{\mathcal{A}}^{\infty}(C_{f},\gamma_{f},\omega_{2})$ for some $C_{f}$,
$\gamma_{f}$. Let ${\mathbf{u}}\in{\mathcal{A}}(C_{\mathbf{u}},\gamma
_{\mathbf{u}},\omega_{2})$ for some $C_{\mathbf{u}}$, $\gamma_{\mathbf{u}}$.
Then there are constants $C^{\prime}$, $\gamma^{\prime}>0$ depending solely on
$C_{g}$, $\gamma_{g}$, $C_{g,\operatorname{inv}}$, $\gamma_{\mathbf{u}}$,
$\gamma_{f}$, and $d$, such that $\widetilde{\mathbf{u}}:=f\cdot({\mathbf{u}%
}\circ g)$ satisfies $\widetilde{\mathbf{u}}\in{\mathcal{A}}(C^{\prime}%
C_{f}C_{\mathbf{u}},\gamma^{\prime},\omega_{1})$ .
\end{lemma}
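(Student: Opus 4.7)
The plan is to decompose $\tilde{\mathbf{u}} = f\cdot(\mathbf{u}\circ g) = (f\mathbf{u})\circ g$ into two independent operations: first the pointwise product $\mathbf{u}\mapsto f\mathbf{u}$ on $\omega_2$, then the pullback $v\mapsto v\circ g$ from $\omega_2$ to $\omega_1$. Each operation is analyzed separately, and the resulting $\mathcal{A}$-bounds are composed at the end.

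For the product step, the Leibniz rule gives componentwise
\[
\|\nabla^n(f\mathbf{u})\|_{L^2(\omega_2)}
\leq \sum_{\ell=0}^n \binom{n}{\ell}\|\nabla^\ell f\|_{L^\infty(\omega_2)}\|\nabla^{n-\ell}\mathbf{u}\|_{L^2(\omega_2)}
\leq C_fC_{\mathbf{u}}\sum_{\ell=0}^n \binom{n}{\ell}\gamma_f^\ell\ell!\,\gamma_{\mathbf{u}}^{n-\ell}\max\{n-\ell+1,k\}^{n-\ell}.
\]
The bounds $\ell!\leq \max\{n+1,k\}^\ell$ and $\max\{n-\ell+1,k\}\leq\max\{n+1,k\}$ factor a common $\max\{n+1,k\}^n$ out of every summand, and the remaining geometric sum is controlled by $(2\max(\gamma_f,\gamma_{\mathbf{u}}))^n$; this yields $f\mathbf{u}\in\mathcal{A}(C_1C_fC_{\mathbf{u}},\gamma_1,\omega_2)$ for constants depending only on $\gamma_f,\gamma_{\mathbf{u}}$ and $d$.

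For the pullback step, set $v:=f\mathbf{u}\in\mathcal{A}(C_v,\gamma_v,\omega_2)$ and apply Fa\`a di Bruno's formula componentwise: pointwise on $\omega_1$,
\[
|\nabla^n(v\circ g)(x)|
\leq \sum_{r=1}^n |(\nabla^rv)(g(x))|\,B_{n,r}\bigl(|\nabla g(x)|,\ldots,|\nabla^{n-r+1}g(x)|\bigr),
\]
where $B_{n,r}$ is the partial Bell polynomial. The hypothesis $g\in\mathcal{A}^\infty(C_g,\gamma_g,\omega_1)$ gives $|\nabla^j g(x)|\leq C_g\gamma_g^j j!$, and, together with the algebraic identity $B_{n,r}(1!,2!,3!,\ldots)=\binom{n-1}{r-1}n!/r!$, yields the pointwise bound $B_{n,r}\leq C_g^r\gamma_g^n\binom{n-1}{r-1}n!/r!$. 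Taking the $L^2$-norm over $\omega_1$ and changing variables $y=g(x)$ (whose Jacobian $|\det g'(x)|^{-1}$ is bounded by a constant depending only on $d$ and $C_{g,\operatorname{inv}}$, since $\|(g')^{-1}\|_{L^\infty}\leq C_{g,\operatorname{inv}}$) converts $\|(\nabla^rv)\circ g\|_{L^2(\omega_1)}$ into at most $c\,C_v\gamma_v^r\max\{r+1,k\}^r$.

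The technical heart of the proof is then the combinatorial estimate
\[
\sum_{r=1}^n(C_g\gamma_v)^r\binom{n-1}{r-1}\frac{n!}{r!}\max\{r+1,k\}^r
\leq C_2(\gamma_2)^n\max\{n+1,k\}^n,
\]
which I verify by distinguishing two regimes of the $\max$. If $n<k$, then $\max\{r+1,k\}=k=\max\{n+1,k\}$ for all $r\leq n$ and $n!/r!\leq n^{n-r}\leq k^{n-r}$, so each summand is $\leq 2^n(C_g\gamma_v)^r k^n$. If $n\geq k$, then $\max\{n+1,k\}=n+1$, $\max\{r+1,k\}^r\leq (n+1)^r$, and $n!/r!\leq (n+1)^{n-r}$ combines with $(n+1)^r$ to give $(n+1)^n$, so each summand is $\leq 2^n(C_g\gamma_v)^r(n+1)^n$. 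Summing the geometric series in $r$ in either regime, then composing with the product step, produces the claimed membership $\tilde{\mathbf{u}}\in\mathcal{A}(C'C_fC_{\mathbf{u}},\gamma',\omega_1)$ with $C',\gamma'$ depending only on $C_g,\gamma_g,C_{g,\operatorname{inv}},\gamma_{\mathbf{u}},\gamma_f$ and $d$. The principal obstacle is precisely this case-split combinatorial estimate, because the growth rate $\max\{n+1,k\}^n$ built into $\mathcal{A}$ behaves qualitatively differently in the two regimes; every other ingredient is a routine application of Leibniz's rule, Fa\`a di Bruno's formula, and the change-of-variables theorem.
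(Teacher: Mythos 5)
Your argument is correct in substance, but note that the paper does not actually write out a proof of this lemma: it disposes of it by citing \cite[Lemma~{4.3.1}]{MelenkHabil} for $d=2$ and remarking that the argument generalizes to arbitrary $d$. Your Leibniz-plus-Fa\`a-di-Bruno route, with the Lah-number identity $B_{n,r}(1!,2!,\dots)=\binom{n-1}{r-1}\,n!/r!$ and the change of variables controlled by $\Vert(g')^{-1}\Vert_{L^\infty}\le C_{g,\operatorname{inv}}$, is precisely the standard technique behind the cited reference, so you have supplied the missing details rather than found a genuinely different proof; that is a useful service here. Two small repairs. First, since the hypothesis places $f$ in ${\mathcal A}^\infty(C_f,\gamma_f,\omega_2)$, your reading $f\cdot({\mathbf u}\circ g)=(f{\mathbf u})\circ g$ is the literal one, but in the paper's applications (e.g.\ the covariant pullbacks $\widehat{\mathbf f}$ in the proof of Theorem~\ref{TheoLau}) the multiplier lives on $\omega_1$; your two building blocks cover that case too by composing first and multiplying second, so nothing is lost, but it is worth saying explicitly. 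Second, in your case split the claim that $n<k$ forces $\max\{r+1,k\}=k=\max\{n+1,k\}$ fails for non-integer $k$ (take $n=r=3$, $k=3.5$); the case distinction is unnecessary, since the uniform bound $\frac{n!}{r!}\max\{r+1,k\}^r\le\max\{n+1,k\}^{n-r}\max\{n+1,k\}^r=\max\{n+1,k\}^n$ holds for all $0\le r\le n$ and all $k\ge1$, after which the geometric sum in $r$ is absorbed into $\gamma'^{\,n}$ exactly as you say. With that cosmetic fix the proof yields the stated dependence of $C'$, $\gamma'$ on $C_g$, $\gamma_g$, $C_{g,\operatorname{inv}}$, $\gamma_f$, $\gamma_{\mathbf u}$, and $d$ only.
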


\begin{proof}
The case $d = 2$ is proved in \cite[Lemma~{4.3.1}]{MelenkHabil}. Inspection of
the proof shows, as was already observed in \cite[Lemma~{C.1}]%
{MelenkSauterMathComp}, that it generalizes to arbitrary $d \in{\mathbb{N}}$.
\end{proof}

%---------------------------------------------------------------

\subsection{Variational Formulation of the Electric Maxwell
Equations\label{SecMaxwVarForm}}

%---------------------------------------------------------------
We formulate (\ref{electricMWEqOmega}) as a variational problem. We introduce
the sesquilinear forms $a_{k}$, $b_{k}$, $A_{k}:\mathbf{X}\times
\mathbf{X}\rightarrow\mathbb{C}$ by
%subequationsjmm%
%TCIMACRO{\TeXButton{GalDis}{\begin{subequations}
%\label{GalDis}
%\end{subequations}}}%
%BeginExpansion
\begin{subequations}
\label{GalDis}
\end{subequations}%
%EndExpansion%
\begin{equation}%
\begin{array}
[c]{ll}%
a_{k}\left(  \mathbf{u},\mathbf{v}\right)  :=\left(  \operatorname{curl}%
\mathbf{u},\operatorname{curl}\mathbf{v}\right)  -k^{2}\left(  \mathbf{u}%
,\mathbf{v}\right)  , & b_{k}\left(  \mathbf{u}_{T},\mathbf{v}_{T}\right)
:=\left(  T_{k}\mathbf{u}_{T},\mathbf{v}_{T}\right)  _{\Gamma},\\
A_{k}\left(  \cdot,\cdot\right)  :=a_{k}\left(  \cdot,\cdot\right)
-\operatorname*{i}kb_{k}\left(  \Pi_{T}\cdot,\Pi_{T}\cdot\right)  . &
\end{array}
\tag{%
%TCIMACRO{\TeXButton{GalDis}{\ref{GalDis}}}%
%BeginExpansion
\ref{GalDis}%
%EndExpansion
a}\label{GalDisa}%
\end{equation}
Then, the weak form of the electric Maxwell equations on a bounded domain
$\Omega\subset\mathbb{R}^{3}$ with transparent boundary conditions reads:
\begin{equation}
\text{given }{F}\in\mathbf{X}^{\times}\quad\text{find }\mathbf{E}%
\in\mathbf{X}\quad\text{such that\quad}A_{k}\left(  \mathbf{E},\mathbf{v}%
\right)  =F\left(  \mathbf{v}\right)  \qquad\forall\mathbf{v}\in\mathbf{X}.
\tag{%
%TCIMACRO{\TeXButton{GalDis}{\ref{GalDis}}}%
%BeginExpansion
\ref{GalDis}%
%EndExpansion
b}\label{GalDisb}%
\end{equation}
Note that the strong formulation (\ref{electricMWEqOmega}) corresponds to the
choice $F(  \mathbf{v})  =\left(  \operatorname*{i}k \mathbf{\tilde
{\boldsymbol{\jmath}}},\mathbf{v}\right)  $ in (\ref{GalDisb}).

\begin{theorem}
\label{TheoExUniqCont}Let Assumption \ref{AssumptionData} be satisfied. Let
$A_{k}$ have the form (\ref{GalDisa}). Then, for every $F$ $\in\mathbf{X}%
^{\times}$, problem (\ref{GalDisb}) has a unique solution.
\end{theorem}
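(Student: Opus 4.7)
The plan is to apply the Fredholm alternative: first establish a Gårding-type coercivity so that the bounded operator $\mathcal{A}_{k}:\mathbf{X}\to\mathbf{X}'$ induced by $A_{k}$ is Fredholm of index zero, and then separately prove injectivity by reducing the homogeneous problem to the (well-posed) full-space Maxwell problem.

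For the Gårding inequality I would use a $T$-coercivity (sign-flip) argument. Every $\mathbf{u}\in\mathbf{X}$ admits a Helmholtz decomposition $\mathbf{u}=\mathbf{u}_{0}+\nabla\varphi$ with $\mathbf{u}_{0}\in\mathbf{V}_{0}$ (weakly divergence-free, vanishing normal trace) and $\varphi\in H^{1}(\Omega)$. Under Assumption~\ref{AssumptionData} the smoothness and simple connectedness of $\Gamma$ yield $\mathbf{V}_{0}\hookrightarrow\mathbf{H}^{1}(\Omega)$, hence $\mathbf{V}_{0}\Subset\mathbf{L}^{2}(\Omega)$. Define $T\mathbf{u}:=\mathbf{u}_{0}-\nabla\varphi$. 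Using $\operatorname{curl}\nabla\varphi=0$, the $\mathbf{L}^{2}$-orthogonality $(\mathbf{u}_{0},\nabla\varphi)=0$, and Remark~\ref{rem:decomposition-of-nabla-phi}, a short computation gives
\[
\operatorname{Re} A_{k}(\mathbf{u},T\mathbf{u})=\|\operatorname{curl}\mathbf{u}\|^{2}+k^{2}\|\nabla\varphi\|^{2}-k^{2}\|\mathbf{u}_{0}\|^{2}+\operatorname{Re}\bigl[-\mathrm{i}k(T_{k}\mathbf{u}_{T},(T\mathbf{u})_{T})_{\Gamma}\bigr].
\]
Expanding the boundary term via $\mathbf{u}_{T}=(\mathbf{u}_{0})_{T}+\nabla_{\Gamma}\varphi$ and $(T\mathbf{u})_{T}=(\mathbf{u}_{0})_{T}-\nabla_{\Gamma}\varphi$ yields two diagonal pieces of the favorable sign---the surface-curl contribution from \cite[Thm.~{5.3.6}]{Nedelec01}, the surface-gradient contribution from the analogue of item~(II) of Section~\ref{sec:maxwell-aspect}---plus cross terms that are compact since the trace of $\mathbf{u}_{0}\in\mathbf{H}^{1}(\Omega)$ lives in $\mathbf{H}^{1/2}(\Gamma)\Subset\mathbf{H}^{-1/2}_{\operatorname{curl}}(\Gamma)$. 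Combined with $\mathbf{V}_{0}\Subset\mathbf{L}^{2}(\Omega)$, this produces the Gårding inequality and shows that $\mathcal{A}_{k}$ is Fredholm of index zero.

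By Fredholm, it suffices to prove injectivity. Suppose $A_{k}(\mathbf{E},\mathbf{v})=0$ for all $\mathbf{v}\in\mathbf{X}$ and set $\tilde{\mathbf{H}}:=(\mathrm{i}k)^{-1}\operatorname{curl}\mathbf{E}$ in $\Omega$. Let $(\mathbf{E}^{+},\tilde{\mathbf{H}}^{+})$ be the unique radiating solution of the homogeneous exterior Maxwell problem with Dirichlet datum $\gamma_{T}^{+}\mathbf{E}^{+}=\gamma_{T}\mathbf{E}$ (cf.\ Remark~\ref{ExCapOp}). The strong-form boundary identity $\gamma_{T}\operatorname{curl}\mathbf{E}=\mathrm{i}kT_{k}\Pi_{T}\mathbf{E}$ encoded in $A_{k}$, combined with the definition $T_{k}\mathbf{g}=\gamma_{T}^{+}\tilde{\mathbf{H}}^{+}$, forces $\gamma_{T}\tilde{\mathbf{H}}=\gamma_{T}^{+}\tilde{\mathbf{H}}^{+}$. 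Hence the concatenated quadruple solves (\ref{Maxwellfullspace}) with zero source and Silver--M\"uller radiation, so classical uniqueness for full-space Maxwell (e.g.\ \cite[Thm.~{5.4.6}]{Nedelec01}) gives $\mathbf{E}\equiv 0$.

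The hard part will be the Gårding step, specifically extracting the correct sign from $-\mathrm{i}k(T_{k}\mathbf{u}_{T},(T\mathbf{u})_{T})_{\Gamma}$. Because the capacity operator $T_{k}$ need not respect the surface Hodge decomposition exactly on a general smooth domain, the off-diagonal coupling between the surface-gradient and surface-curl components must be shown to be compact; this is where the elliptic regularity $\mathbf{V}_{0}\subset\mathbf{H}^{1}(\Omega)$ (and hence the smoothness of $\Gamma$ from Assumption~\ref{AssumptionData}) is indispensable, as it lifts $(\mathbf{u}_{0})_{T}$ into $\mathbf{H}^{1/2}(\Gamma)$ and activates Rellich's compact embedding into the trace space.
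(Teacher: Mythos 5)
Your strategy is genuinely different from the paper's. The paper proves Theorem~\ref{TheoExUniqCont} without any G{\aa}rding inequality: it extends $F$ by zero to a larger ball $B_{R}(0)\supset\overline{\Omega}$, replaces $T_{k}$ by the capacity operator of $\partial B_{R}(0)$, and invokes the saddle-point/inf-sup well-posedness of \cite[Lem.~{5.4.4}, Thm.~{5.4.6}]{Nedelec01} for that auxiliary problem; existence and uniqueness for (\ref{GalDisb}) then follow by restriction and extension. Your Fredholm-plus-injectivity route is the classical alternative, and your injectivity step (concatenating $\mathbf{E}$ with the radiating exterior solution and invoking full-space uniqueness) is sound.

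The gap is in the G{\aa}rding step. First, $(\mathbf{u}_{0})_{T}+\nabla_{\Gamma}\varphi$ is \emph{not} the surface Hodge decomposition (\ref{utbcHdecomp}): $(\mathbf{u}_{0})_{T}$ has in general a nontrivial surface-gradient part, so expanding $-\operatorname{i}k(T_{k}\mathbf{u}_{T},(T\mathbf{u})_{T})_{\Gamma}$ does not produce ``two diagonal pieces of the favorable sign'' --- the diagonal piece $-\operatorname{i}k\bigl(T_{k}((\mathbf{u}_{0})_{T})^{\nabla},((\mathbf{u}_{0})_{T})^{\nabla}\bigr)_{\Gamma}$ survives with the \emph{unfavorable} sign and must itself be disposed of by compactness. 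Second, the compactness mechanism you name fails: the embedding $\mathbf{H}^{1/2}_{T}(\Gamma)\hookrightarrow\mathbf{H}^{-1/2}_{\operatorname{curl}}(\Gamma)$ is \emph{not} compact, because comparing (\ref{DefHsGammaTNorm}) with (\ref{m1/2curldiva}) the surface-curl coefficients $v_{\ell}^{m}$ carry the same weight $\sim\lambda_{\ell}^{3/2}$ in both norms. The repair is to invoke the orthogonality (\ref{orthobk}) first, so that every problematic term sees only the gradient components of the $\mathbf{H}^{1/2}$-trace, for which the embedding gains a full power of $\lambda_{\ell}$ and is compact; this is precisely the mechanism of the paper's Lemma~\ref{Lemembed}, which reduces $\bigl(T_{k}\mathbf{u}_{0}^{\nabla},\mathbf{v}^{\nabla}\bigr)_{\Gamma}$ to a pairing of $\langle\mathbf{u}_{0},\mathbf{n}\rangle\in H^{1/2}(\Gamma)\Subset H^{-1/2}(\Gamma)$ with a potential. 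Finally, the sign and orthogonality properties of $T_{k}$ that you use (Assumption~\ref{Assumpsdefb}, \cite[Thm.~{5.3.6}]{Nedelec01}) are verified in the paper only for the sphere (Remark~\ref{RemA41}), whereas the theorem is stated for a general smooth $\Gamma$; your route therefore additionally requires these properties of the exterior Calder\'on operator for general smooth boundaries, which the paper avoids entirely by its detour through the larger ball.
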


%

%TCIMACRO{\TeXButton{Proof}{\proof}}%
%BeginExpansion
\proof
%EndExpansion
Let $B_{R}(  0)  $ denote a ball centered at the origin with
sufficiently large radius $R$ such that $\overline{\Omega}\subset B_{R}\left(
0\right)  $. We consider the electric Maxwell equation in $B_{R}\left(
0\right)  $ of the form: Find $\mathbf{E}_{R}\in\mathbf{H}\left(  B_{R}\left(
0\right)  ,\operatorname*{curl}\right)  $ such that for all ${\mathbf{v}}%
\in{\mathbf{H}}(B_{R}(0),\operatorname{curl})$
\begin{equation}
\left(  \operatorname{curl}\mathbf{E}_{R},\operatorname{curl}\mathbf{v}%
\right)  _{L^{2}\left(  B_{R}\left(  0\right)  \right)  }-k^{2}\left(
\mathbf{E}_{R},\mathbf{v}\right)  _{L^{2}\left(  B_{R}\left(  0\right)
\right)  }-\operatorname*{i}k\left(  T_{k,R}\mathbf{E}_{R,T},\mathbf{v}%
_{T}\right)  _{L^{2}\left(  \partial B_{R}\left(  0\right)  \right)  }%
=F_{R}\left(  \mathbf{v}\right)  , \label{GalDisBR}%
\end{equation}
where $T_{k,R}$ is the capacity operator for the exterior domain
$\mathbb{R}^{3}\backslash\overline{B_{R}\left(  0\right)  }$ and $F_{R}$ is
the extension of $F$ by zero, i.e., $F_{R}\left(  \mathbf{v}\right)
:=F\left(  \left.  \mathbf{v}\right\vert _{\Omega}\right)  $. In
\cite[Lem.~{5.4.4} (with $\Gamma=\emptyset$ therein)]{Nedelec01} an ansatz
$\mathbf{E}_{R}\mathbf{=u}_{R}+\nabla p_{R}$ is employed, where $\mathbf{u}%
_{R}$ and $p_{R}$ are the solutions of a variational saddle point problem. In
\cite[Thm.~{5.4.6}]{Nedelec01}, an inf-sup condition is proved for this saddle
point problem which implies the well-posedness of (\ref{GalDisBR}). The
construction implies that $\mathbf{E}:=\left.  \mathbf{E}_{R}\right\vert
_{\Omega}$ then satisfies (\ref{GalDisb}). On the other hand, every solution
$\mathbf{E}$ of (\ref{GalDisb}) can be extended to a solution of
(\ref{GalDisBR}) by employing the well-posedness of the exterior Dirichlet
problem, \cite[Thm.~{5.4.6}]{Nedelec01}. Since (\ref{GalDisBR}) has a unique
solution also the solution of (\ref{GalDisb}) is unique.%
%TCIMACRO{\TeXButton{End Proof}{\endproof}}%
%BeginExpansion
\endproof
%EndExpansion

\section{Discretization\label{SecNumDisc}}

%-----------------------------------------------------------------

\subsection{Abstract Galerkin Discretization\label{AbsGalDisc}}

%In this section, we consider conforming Galerkin discretizations of the
%variational formulation of the electric Maxwell equations (\ref{GalDisb}). In
%Section~\ref{AbsGalDisc}, we introduce an abstract setting and formulate an
%assumption on the finite dimensional subspace $\mathbf{X}_{h}\subset
%\mathbf{X}$ which is employed for the Galerkin discretization. In Section
%\ref{SecCurlConfFEM} we recall standard $hp$ finite element space for the
%Maxwell equation.

Let $\mathbf{X}_{h}\subset\mathbf{X}$ denote a finite dimensional subspace.
The Galerkin discretization of (\ref{GalDis}) reads: Find $\mathbf{E}_{h}%
\in\mathbf{X}_{h}$ such that%
\begin{equation}
A_{k}\left(  \mathbf{E}_{h},\mathbf{v}_{h}\right)  =F\left(  \mathbf{v}%
_{h}\right)  \qquad\forall\mathbf{v}_{h}\in\mathbf{X}_{h}. \label{GalDiscMaxw}%
\end{equation}
For the error analysis we will impose an assumption (Assumption~\ref{AdiscSp})
on the space ${\mathbf{X}}_{h}$ by requiring the existence of a suitable
projection onto the space $\mathbf{X}_{h}$. Also for the error analysis we
will make use of a space $S_{h}$ such that the following \emph{exact sequence
property} holds:
\begin{equation}
\label{esp}\begin{CD} S_h @> \nabla >> {\mathbf X}_h @> \operatorname*{curl} >> \operatorname*{curl} {\mathbf X}_h \end{CD}
\end{equation}
In the next section we will introduce the N\'ed\'elec space
$\boldsymbol{\mathcal{N}}^{I}_{p}({\mathcal{T}}_{h})$; for the choice
${\mathbf{X}}_{h} = \boldsymbol{\mathcal{N}}^{I}_{p}({\mathcal{T}}_{h})$, we
will perform the error analysis explicitly in the wavenumber $k$, the mesh
width $h$ and the polynomial degree $p$.

%--------------------------------------------

\subsection{Curl-Conforming $hp$-Finite Element Spaces\label{SecCurlConfFEM}}

%--------------------------------------------
The classical example of curl-conforming FE spaces are the N\'{e}d\'{e}lec
elements, \cite{nedelec80}. We restrict our attention here to so-called
\textquotedblleft type I\textquotedblright\ elements (sometimes also referred
to as the N\'{e}d\'{e}lec-Raviart-Thomas element) on tetrahedra. These spaces
are based on a regular (no hanging), shape-regular triangulation ${\mathcal{T}}_{h}$ of
$\Omega\subset\mathbb{R}^{3}$. That is, ${\mathcal{T}}_{h}$ satisfies:

\begin{enumerate}
[(i)]

\item The (open) elements $K\in{\mathcal{T}}_{h}$ cover $\Omega$, i.e.,
$\overline{\Omega}=\cup_{K\in{\mathcal{T}}_{h}}\overline{K}$.

\item Associated with each element $K$ is the \emph{element map}, a $C^{1}%
$-diffeomorphism $F_{K}:\overline{\widehat{K}} \rightarrow\overline{K}$. The
set $\widehat{K}$ is the \emph{reference tetrahedron}.

\item Denoting $h_{K}=\operatorname*{diam}K$, there holds, with some
\emph{shape-regularity constant $\gamma$},
\begin{equation}
h_{K}^{-1}\Vert F_{K}^{\prime}\Vert_{L^{\infty}(\widehat{K})}+h_{K}\Vert
(F_{K}^{\prime})^{-1}\Vert_{L^{\infty}(\widehat{K})}\leq\gamma.
\label{defhkloc}%
\end{equation}

\item The intersection of two elements is only empty, a vertex, an edge, a
face, or they coincide (here, vertices, edges, and faces are the images of the
corresponding entities on the reference tetrahedron $\widehat{K}$). The
parametrization of common edges or faces are compatible. That is, if two
elements $K$, $K^{\prime}$ share an edge (i.e., $F_{K}(e)=F_{K^{\prime}%
}(e^{\prime})$ for edges $e$, $e^{\prime}$ of $\widehat{K}$) or a face (i.e.,
$F_{K}(f)=F_{K^{\prime}}(f^{\prime})$ for faces $f$, $f^{\prime}$ of
$\widehat{K}$), then $F_{K}^{-1}\circ F_{K^{\prime}}:f^{\prime}\rightarrow f$
is an affine isomorphism.
\end{enumerate}

The following assumption assumes that the element map $F_{K}$ can be
decomposed as a composition of an affine scaling with an $h$-independent
mapping. We adopt the setting of \cite[Sec.~{5}]{MelenkSauterMathComp} and
assume that the element maps $F_{K}$ of the regular, $\gamma$-shape regular
triangulation ${\mathcal{T}}_{h}$ satisfy the following additional requirements:

\begin{assumption}
[normalizable regular triangulation]\label{def:element-maps} Each element map
$F_{K}$ can be written as $F_{K}=R_{K}\circ A_{K}$, where $A_{K}$ is an
\emph{affine} map and the maps $R_{K}$ and $A_{K}$ satisfy for constants
$C_{\operatorname*{affine}}$, $C_{\operatorname{metric}}$, $\gamma>0$
independent of $K$:
\begin{align*}
&  \Vert A_{K}^{\prime}\Vert_{L^{\infty}(\widehat{K})}\leq
C_{\operatorname*{affine}}h_{K},\qquad\Vert(A_{K}^{\prime})^{-1}%
\Vert_{L^{\infty}(\widehat{K})}\leq C_{\operatorname*{affine}}h_{K}^{-1}\\
&  \Vert(R_{K}^{\prime})^{-1}\Vert_{L^{\infty}(\widetilde{K})}\leq
C_{\operatorname{metric}},\qquad\Vert\nabla^{n}R_{K}\Vert_{L^{\infty
}(\widetilde{K})}\leq C_{\operatorname{metric}}\gamma^{n}n!\qquad\forall
n\in{\mathbb{N}}_{0}.
\end{align*}
Here, $\widetilde{K}=A_{K}(\widehat{K})$ and $h_{K}>0$ is the element diameter.
\end{assumption}

\begin{remark}
A prime example of meshes that satisfy Assumption~\ref{def:element-maps} are
those patchwise structured meshes as described, for example, in
\cite[Ex.~{5.1}]{MelenkSauterMathComp} or \cite[Sec.~{3.3.2}]{MelenkHabil}.
These meshes are obtained by first fixing a macrotriangulation of $\Omega$;
the actual triangulation is then obtained as images of affine triangulations
of the reference element. \hbox{}\hfill\rule{0.8ex}{0.8ex}
\end{remark}

On the reference tetrahedron $\widehat{K}$ we introduce the classical
N\'{e}d\'{e}lec type I and Raviart-Thomas elements of degree $p\geq0$ (see,
e.g., \cite{Monkbook}):
\begin{align}
{\mathcal{P}}_{p}(\widehat{K})  &  :=\operatorname*{span}\{x^{\alpha
}\,|\,|\alpha|\leq p\},\label{eq:Pp}\\
\boldsymbol{\mathcal{N}}_{p}^{\operatorname*{I}}(\widehat{K})  &
:=\{\mathbf{p}(x)+x\times\mathbf{q}(x)\,|\,\mathbf{p},\mathbf{q}%
\in({\mathcal{P}}_{p}(\widehat{K}))^{3}\},\label{eq:Np}\\
\mathbf{RT}_{p}(\widehat{K})  &  :=\{\mathbf{p}(x)+x{q}(x)\,|\,\mathbf{p}%
\in({\mathcal{P}}_{p}(\widehat{K}))^{3},{q}\in{\mathcal{P}}_{p}(\widehat{K}%
)\}. \label{eq:RTp}%
\end{align}
%These spaces are invariant under affine changes of variables so that the precise choice of the reference tetrahedron is
%immaterial.
The spaces $S_{p+1}({\mathcal{T}}_{h})$, $\boldsymbol{\mathcal{N}}%
_{p}^{\operatorname*{I}}({\mathcal{T}}_{h})$, $\mathbf{RT}_{p}({\mathcal{T}%
}_{h})$, and $Z_{p}({\mathcal{T}}_{h})$ are then defined as in \cite[(3.76),
(3.77)]{Monkbook} by transforming covariantly the space
$\boldsymbol{\mathcal{N}}_{p}^{\operatorname*{I}}(\widehat{K})$ and with the
aid of the Piola transform the space $\mathbf{RT}_{p}(\widehat{K})$:
%
%subequationsjmm
\begin{subequations}
\label{eq:curl-conforming-hp-spaces}%
\begin{align}
S_{p+1}({\mathcal{T}}_{h})  &  :=\{u\in H^{1}(\Omega)\,|\,u|_{K}\circ F_{K}%
\in{\mathcal{P}}_{p+1}(\widehat{K})\},\\
\boldsymbol{\mathcal{N}}_{p}^{\operatorname*{I}}({\mathcal{T}}_{h})  &
:=\{\mathbf{u}\in\mathbf{H}(\Omega,\operatorname*{curl})\,|\,(F_{K}^{\prime
})^{\intercal}\mathbf{u}|_{K}\circ F_{K}\in\boldsymbol{\mathcal{N}}%
_{p}^{\operatorname*{I}}(\widehat{K})\},\\
\mathbf{RT}_{p}({\mathcal{T}}_{h})  &  :=\{\mathbf{u}\in\mathbf{H}%
(\Omega,\operatorname*{div})\,|\,({\operatorname*{det}F_{K}^{\prime}}%
)(F_{K}^{\prime})^{-1}\mathbf{u}|_{K}\circ F_{K}\in\mathbf{RT}_{p}%
(\widehat{K})\},\\
Z_{p}({\mathcal{T}}_{h})  &  :=\{u\in L^{2}(\Omega)\,|\,u|_{K}\circ F_{K}%
\in{\mathcal{P}}_{p}(\widehat{K})\}.
\end{align}
%The space of N\'{e}d\'{e}lec elements of type I and polynomial order $p$ is
%$\boldsymbol{\mathcal{N}}_{p}^{\operatorname*{I}}({\mathcal{T}}_{h})$ and
%chosen as our finite element space $\mathbf{X}_{h}$.
%The other spaces have the
%property that%
%\end{subequations}
%\begin{equation}
%\mathbb{R%
%\begin{tabular}
%[c]{c}%
%$%
%%TCIMACRO{\QATOP{\underrightarrow{\quad\ \quad}}{\quad}}%
%%BeginExpansion
%\genfrac{}{}{0pt}{}{\underrightarrow{\quad\ \quad}}{\quad}%
%%EndExpansion
%$%
%\end{tabular}
%\ \ \ \ }\mathcal{P}_{p+1}\left(  \widehat{K}\right)
%\begin{tabular}
%[c]{c}%
%$%
%%TCIMACRO{\QATOP{\underrightarrow{\quad\nabla\quad}}{\quad}}%
%%BeginExpansion
%\genfrac{}{}{0pt}{}{\underrightarrow{\quad\nabla\quad}}{\quad}%
%%EndExpansion
%$%
%\end{tabular}
%\ \ \ \ \boldsymbol{\mathcal{N}}_{p}^{\operatorname*{I}}(\widehat{K})%
%\begin{tabular}
%[c]{c}%
%$%
%%TCIMACRO{\QATOP{\underrightarrow{\quad\operatorname*{curl}\quad}}{\quad}}%
%%BeginExpansion
%\genfrac{}{}{0pt}{}{\underrightarrow{\quad\operatorname*{curl}\quad}}{\quad}%
%%EndExpansion
%$%
%\end{tabular}
%\ \ \ \ \mathbf{RT}_{p}(\widehat{K})%
%\begin{tabular}
%[c]{c}%
%$%
%%TCIMACRO{\QATOP{\underrightarrow{\quad\operatorname*{div}\quad}}{\quad}}%
%%BeginExpansion
%\genfrac{}{}{0pt}{}{\underrightarrow{\quad\operatorname*{div}\quad}}{\quad}%
%%EndExpansion
%$%
%\end{tabular}
%\ \ \ \ \mathcal{P}_{p}\left(  \widehat{K}\right)  \label{exdiscseq_rm}%
%\end{equation}
A key property of these spaces is that we have the following exact sequence,
\cite{Monkbook}:
\end{subequations}
\begin{equation}
\begin{CD} \mathbb{R} @> >> S_{p+1}({\mathcal T}_h) @>\nabla >> \boldsymbol{\mathcal N}^I_p({\mathcal T}_h) @>\operatorname{curl} >> \mathbf{RT}_p({\mathcal T}_p) @> \operatorname{div}>> Z_p({\mathcal T}_h) \end{CD}.
\label{exdiscseq_rm}%
\end{equation}

\section{Stability and Error Analysis\label{SecStabErrA}}

\subsection{The Basic Error Estimate\label{SecBasisErrEst}}

\subsubsection{Preliminaries}

The basic error estimates for curl-conforming Galerkin discretization involve
some $k$-dependent sesquilinear forms and corresponding $k$-dependent norms
which, in turn, are based on Helmholtz decompositions on the surface $\Gamma$.
With start this section with these preliminaries. For the proof of the basic
error estimate (Thm.~\ref{LemMonk}), we introduce the sesquilinear form
$\left(
%TCIMACRO{\TeXButton{l}{\kern-.1em}}%
%BeginExpansion
\kern-.1em%
%EndExpansion
\left(
%TCIMACRO{\TeXButton{l}{\kern-.1em}}%
%BeginExpansion
\kern-.1em%
%EndExpansion
\cdot,\cdot%
%TCIMACRO{\TeXButton{l}{\kern-.1em}}%
%BeginExpansion
\kern-.1em%
%EndExpansion
\right)
%TCIMACRO{\TeXButton{l}{\kern-.1em}}%
%BeginExpansion
\kern-.1em%
%EndExpansion
\right)  :\mathbf{X}\times\mathbf{X}\rightarrow\mathbb{C}$ by%
\begin{equation}
\left(
%TCIMACRO{\TeXButton{l}{\kern-.1em}}%
%BeginExpansion
\kern-.1em%
%EndExpansion
\left(
%TCIMACRO{\TeXButton{l}{\kern-.1em}}%
%BeginExpansion
\kern-.1em%
%EndExpansion
\mathbf{u},\mathbf{v}%
%TCIMACRO{\TeXButton{l}{\kern-.1em}}%
%BeginExpansion
\kern-.1em%
%EndExpansion
\right)
%TCIMACRO{\TeXButton{l}{\kern-.1em}}%
%BeginExpansion
\kern-.1em%
%EndExpansion
\right)  :=k^{2}\left(  \mathbf{u},\mathbf{v}\right)  +\operatorname*{i}%
kb_{k}\left(  \mathbf{u}^{\nabla},\mathbf{v}^{\nabla}\right)  .
\label{2scprod}%
\end{equation}
We need some definiteness assumptions for the sesquilinear form $b_{k}\left(
\cdot,\cdot\right)  $. Throughout the paper, we will assume:

\begin{assumption}
\label{Assumpsdefb}The sesquilinear form $b_{k}:\mathbf{X}\times
\mathbf{X}\rightarrow\mathbb{C}$ of (\ref{GalDisa}) satisfies
%subequationsjmm%
%TCIMACRO{\TeXButton{asdefb}{\begin{subequations}
%\label{asdefb}
%\end{subequations}}}%
%BeginExpansion
\begin{subequations}
\label{asdefb}
\end{subequations}%
%EndExpansion%
\begin{equation}%
\begin{array}
[c]{ll}%
\operatorname{Im}b_{k}\left(  \mathbf{u}^{\nabla},\mathbf{u}^{\nabla}\right)
\leq0\quad\text{and\quad}\operatorname{Im}b_{k}\left(  \mathbf{u}%
^{\operatorname*{curl}},\mathbf{u}^{\operatorname*{curl}}\right)  \geq0 &
\forall\mathbf{u}\in\mathbf{X},\\
\operatorname{Re}b_{k}\left(  \mathbf{v}_{T},\mathbf{v}_{T}\right)  >0 &
\forall\mathbf{v}\in\mathbf{X}\backslash\left\{  \mathbf{0}\right\}
\end{array}
\tag{%
%TCIMACRO{\TeXButton{asdefb}{\ref{asdefb}}}%
%BeginExpansion
\ref{asdefb}%
%EndExpansion
a}\label{asdefba}%
\end{equation}
and%
\begin{equation}
b_{k}\left(  \mathbf{u}^{\nabla},\mathbf{v}^{\operatorname*{curl}}\right)
=b_{k}\left(  \mathbf{u}^{\operatorname*{curl}},\mathbf{v}^{\nabla}\right)
=0\qquad\forall\mathbf{u},\mathbf{v}\in\mathbf{X}. \tag{%
%TCIMACRO{\TeXButton{asdefb}{\ref{asdefb}}}%
%BeginExpansion
\ref{asdefb}%
%EndExpansion
b}\label{orthobk}%
\end{equation}

\end{assumption}

For $\Omega$ being the unit ball, the statements in
Assumption~\ref{Assumpsdefb} are proved in \cite[Sec. 5.3.2]{Nedelec01}; see
also Rem. \ref{RemA41}. Assumption (\ref{asdefb}) implies in particular:%
\begin{align}
A_{k}\left(  \mathbf{u},\mathbf{v}\right)   &  =\left(  \operatorname{curl}%
\mathbf{u},\operatorname{curl}\mathbf{v}\right)  {-\operatorname*{i}%
kb_{k}\left(  \mathbf{u}^{\operatorname*{curl}},\mathbf{v}%
^{\operatorname*{curl}}\right)  }-\left(
%TCIMACRO{\TeXButton{l}{\kern-.1em}}%
%BeginExpansion
\kern-.1em%
%EndExpansion
\left(
%TCIMACRO{\TeXButton{l}{\kern-.1em}}%
%BeginExpansion
\kern-.1em%
%EndExpansion
\mathbf{u},\mathbf{v}%
%TCIMACRO{\TeXButton{l}{\kern-.1em}}%
%BeginExpansion
\kern-.1em%
%EndExpansion
\right)
%TCIMACRO{\TeXButton{l}{\kern-.1em}}%
%BeginExpansion
\kern-.1em%
%EndExpansion
\right)  {,}\label{Akcom2scprod}\\
A_{k}({\mathbf{u}},\nabla\varphi)  &  =-\left(
%TCIMACRO{\TeXButton{l}{\kern-.1em}}%
%BeginExpansion
\kern-.1em%
%EndExpansion
\left(
%TCIMACRO{\TeXButton{l}{\kern-.1em}}%
%BeginExpansion
\kern-.1em%
%EndExpansion
\mathbf{u},\nabla{\varphi}%
%TCIMACRO{\TeXButton{l}{\kern-.1em}}%
%BeginExpansion
\kern-.1em%
%EndExpansion
\right)
%TCIMACRO{\TeXButton{l}{\kern-.1em}}%
%BeginExpansion
\kern-.1em%
%EndExpansion
\right)  \qquad\forall{\mathbf{u}}\in{\mathbf{X}},\quad\varphi\in H^{1}%
(\Omega). \label{eq:Ak(unablaphi)}%
\end{align}

The stability and convergence analysis of the Galerkin discretization
(\ref{GalDiscMaxw}) involve a) some frequency splittings on the surface
$\Gamma$ and in the domain $\Omega$ as well as b), some Helmholtz
decomposition for the space $\mathbf{X}$. These splittings will be defined
next while their analysis (for the case of the unit ball) is postponed to
Section~\ref{sec:FreqSpOp}.
%Sections~\ref{SecSplitting} and \ref{SecApproxProp}.

\begin{definition}
[frequency splittings]\label{DefFreqSplit} Let $\lambda>1$ be a parameter. For
a tangential field with an expansion of the form (\ref{vFExp}), the
low-frequency operator $L_{\Gamma}$ and high-frequency operator $H_{\Gamma}$
are given by%
\[
L_{\Gamma}\mathbf{v}_{T}:=\sum_{1\leq\ell\leq\lambda k}\sum_{m\in\iota_{\ell}%
}\left(  v_{\ell}^{m}\overrightarrow{\operatorname*{curl}\nolimits_{\Gamma}%
}Y_{\ell}^{m}+V_{\ell}^{m}\nabla_{\Gamma}Y_{\ell}^{m}\right)  \quad
\text{and\quad}H_{\Gamma}:=I-L_{\Gamma}.
\]
The mapping $L_{\Omega}:\mathbf{X}\rightarrow\mathbf{X}$ is the solution
operator of the minimization problem:%
\begin{equation}
\left\Vert L_{\Omega}\mathbf{u}\right\Vert _{\operatorname*{curl},\Omega
,k}=\min_{\substack{\mathbf{v}\in\mathbf{X}\\\Pi_{T}\mathbf{v}=L_{\Gamma
}\mathbf{u}_{T}}}\left\Vert \mathbf{v}\right\Vert _{\operatorname*{curl}%
,\Omega,k}. \label{charctLaminproblem}%
\end{equation}
Set $H_{\Omega}:=I-L_{\Omega}$. We introduce the notation%
\begin{equation}
C_{k}^{L,\Omega}:=\sup_{\mathbf{u}\in\mathbf{X}\backslash\left\{  0\right\}
}\frac{\left\Vert L_{\Omega}\mathbf{u}\right\Vert _{\operatorname*{curl}%
,\Omega,k}}{\left\Vert \mathbf{u}\right\Vert _{\operatorname*{curl},\Omega,k}%
}\quad\text{and\quad}C_{k}^{H,\Omega}:=\sup_{\mathbf{u}\in\mathbf{X}%
\backslash\left\{  0\right\}  }\frac{\left\Vert H_{\Omega}\mathbf{u}%
\right\Vert _{\operatorname*{curl},\Omega,k}}{\left\Vert \mathbf{u}\right\Vert
_{\operatorname*{curl},\Omega,k}}. \label{defCkLHOmega}%
\end{equation}

\end{definition}

\begin{remark}
\label{rem:LOmega-weak} Since
\begin{equation}
\mathbf{X}_{0}:=\left\{  \mathbf{w}\in\mathbf{X}\mid\Pi_{T}\mathbf{w}%
=0\right\}  \label{eq:defX0}%
\end{equation}
is a Hilbert space with respect to $\Vert\cdot\Vert_{\operatorname{curl}%
,\Omega,k}$, the operator $L_{\Omega}:{\mathbf{X}}\rightarrow{\mathbf{X}}$ is
well-defined and bounded and linear (see also \cite{TerasseDiss} and
\cite[Lemma~{3.3}]{Veit}). The function $L_{\Omega}\mathbf{u}$ can be
characterized equivalently to (\ref{charctLaminproblem}) as the solution of
the following variational problem: Find $L_{\Omega}\mathbf{u}\in\mathbf{X}$
with $\Pi_{T}L_{\Omega}\mathbf{u}=L_{\Gamma}\mathbf{u}_{T}$ such that%
\begin{equation}
\left(  \operatorname*{curl}L_{\Omega}\mathbf{u},\operatorname*{curl}%
\mathbf{w}\right)  +k^{2}\left(  L_{\Omega}\mathbf{u},\mathbf{w}\right)
=0\qquad\forall\mathbf{w}\in\mathbf{X}_{0}. \label{eq:defLOmega-weak}%
\end{equation}
The strong formulation of (\ref{eq:defLOmega-weak}) is thus%
%TCIMACRO{\TeXButton{eq:defLOmega-strong}{\begin{subequations}
%\label{eq:defLOmega-strong}
%\end{subequations}}}%
%BeginExpansion
\begin{subequations}
\label{eq:defLOmega-strong}
\end{subequations}%
%EndExpansion%
\begin{align}
\operatorname*{curl}\operatorname*{curl}L_{\Omega}\mathbf{u}+k^{2}L_{\Omega
}\mathbf{u}=0\qquad &  \text{in }\Omega,\tag{%
%TCIMACRO{\TeXButton{eq:defLOmega-strong}{\ref{eq:defLOmega-strong}}}%
%BeginExpansion
\ref{eq:defLOmega-strong}%
%EndExpansion
a}\label{eq:defLOmega-strong-a}\\
\Pi_{T}L_{\Omega}\mathbf{u}=L_{\Gamma}\mathbf{u}_{T}\qquad &  \text{on
}\partial\Omega\text{.} \tag{%
%TCIMACRO{\TeXButton{eq:defLOmega-strong}{\ref{eq:defLOmega-strong}}}%
%BeginExpansion
\ref{eq:defLOmega-strong}%
%EndExpansion
b}\label{eq:defLOmega-strong-c}%
\end{align}
By applying the divergence operator to (\ref{eq:defLOmega-strong-a}) we get%
\begin{equation}
\operatorname{div}L_{\Omega}{\mathbf{u}}=0\qquad\mbox{in $\Omega$,} \tag{%
%TCIMACRO{\TeXButton{eq:defLOmega-strong}{\ref{eq:defLOmega-strong}}}%
%BeginExpansion
\ref{eq:defLOmega-strong}%
%EndExpansion
c}\label{eq:defLOmega-strong-b}%
\end{equation}
\hbox{}\hfill\rule{0.8ex}{0.8ex}
\end{remark}

%{\Huge brauchen wir das?---muesste noch ins glossar aufgenommen werden}
%\begin{remark}
%For later use, we define, for $1\leq\ell\leq\lambda k$, $m\in\iota_{\ell}$,
%the extensions $\left(  \mathbf{T}_{\ell}^{m}\right)  ^{\operatorname*{ext}%
%}\in\mathbf{X}$ and $\left(  \mathbf{G}_{\ell}^{m}\right)
%^{\operatorname*{ext}}:\mathbf{X}$ as the solution of the minimizing problems%
%\[
%\left(  \mathbf{T}_{\ell}^{m}\right)  ^{\operatorname*{ext}}:=\underset
%{\mathbf{v}\in\mathbf{X}:\Pi_{T}\mathbf{v}=\overrightarrow
%{\operatorname*{curl}\nolimits_{\Gamma}}Y_{\ell}^{m}}{\arg\min}\left\{
%\left\Vert \mathbf{v}\right\Vert _{\operatorname*{curl},\Omega,k}\right\}
%\quad\text{and\quad}\left(  \mathbf{G}_{\ell}^{m}\right)
%^{\operatorname*{ext}}:=\underset{\mathbf{v}\in\mathbf{X}:\Pi_{T}%
%\mathbf{v}=\nabla_{\Gamma}Y_{\ell}^{m}}{\arg\min}\left\{  \left\Vert
%\mathbf{v}\right\Vert _{\operatorname*{curl},\Omega,k}\right\}  .
%\]
%With this definition at hand, we define the low-frequency space%
%\begin{equation}
%\mathcal{A}_{\kappa}\left(  \Omega\right)  :=\operatorname*{span}\left\{
%\left(  \mathbf{T}_{\ell}^{m}\right)  ^{\operatorname*{ext}}:1\leq\ell
%\leq\kappa,m\in\iota_{\ell}\right\}  +\operatorname*{span}\left\{  \left(
%\mathbf{G}_{\ell}^{m}\right)  ^{\operatorname*{ext}}:1\leq\ell\leq\kappa
%,m\in\iota_{\ell}\right\}  . \label{defcalAkappa}%
%\end{equation}
%\end{remark}

Clearly the following commuting properties are valid%
\begin{equation}
\Pi_{T}L_{\Omega}=L_{\Gamma}\Pi_{T}\quad\text{and\quad}\Pi_{T}H_{\Omega}%
=\Pi_{T}-L_{\Gamma}\Pi_{T}=\left(  I-L_{\Gamma}\right)  \Pi_{T}=H_{\Gamma}%
\Pi_{T}. \label{commpropfreq}%
\end{equation}

\begin{remark}
For the special case of a ball $\Omega=B_{1}\left(  0\right)  $, we will
derive in Section~\ref{SecAnLOmega} $k$-independent estimates for the
continuity constants $C_{k}^{L,\Omega}$ and $C_{k}^{H,\Omega}$. In the general
case, one can show estimates of the form $C_{k}^{L,\Omega}\leq\tilde{C}k$ and
$C_{k}^{H,\Omega}\leq1+\tilde{C}k$ for some $\tilde{C}>0$ independent of $k$
by the following argument based on the ($k$-independent) lifting operator
${\mathcal{E}}_{\operatorname{curl}}:{\mathbf{H}}_{\operatorname{curl}}%
^{-1/2}(\Gamma)\rightarrow{\mathbf{H}}(\Omega,\operatorname{curl})$ provided
by Theorem~\ref{traceTHM1}: The ansatz $L_{\Omega}{\mathbf{u}}={\mathbf{U}%
}-{\mathbf{U}}_{0}$ with $\mathbf{U=}\mathcal{E}_{\operatorname*{curl}%
}L_{\Gamma}\mathbf{u}_{T}$ leads to the equation
\[
\operatorname*{curl}\operatorname*{curl}\mathbf{U}_{0}+k^{2}\mathbf{U}%
_{0}=\operatorname*{curl}\operatorname*{curl}\mathbf{U}+k^{2}\mathbf{U}%
\quad\text{in }\Omega,\Pi_{T}\mathbf{U}_{0}=0\quad\text{on }\partial\Omega.
\]
Hence,
\[
\left\Vert \mathbf{U}_{0}\right\Vert _{\operatorname*{curl},\Omega,k}%
\leq\left\Vert \mathbf{U}\right\Vert _{\operatorname*{curl},\Omega,k}\leq
Ck\left\Vert L_{\Gamma}\mathbf{u}_{T}\right\Vert _{-1/2,\operatorname{curl}%
_{\Gamma}}\leq Ck\left\Vert \mathbf{u}_{T}\right\Vert
_{-1/2,\operatorname{curl}_{\Gamma}}\leq CC_{\Gamma}k\left\Vert \mathbf{u}%
\right\Vert _{\operatorname*{curl},\Omega,1}\leq CC_{\Gamma}k\left\Vert
\mathbf{u}\right\Vert _{\operatorname*{curl},\Omega,k}%
\]
from which we get $\left\Vert L_{\Omega}\mathbf{u}\right\Vert
_{\operatorname*{curl},\Omega,k}\leq\tilde{C}k\left\Vert \mathbf{u}\right\Vert
_{\operatorname*{curl},\Omega,k}$, i.e., $C_{k}^{L,\Omega}\leq\tilde{C}k$. The
triangle inequality gives $C_{k}^{H,\Omega}\leq1+C_{k}^{L,\Omega}$.
\hbox{}\hfill\rule{0.8ex}{0.8ex}
\end{remark}

The operators $L_{\Gamma}$ and $L_{\Omega}$ map into low frequency modes which
correspond to smooth functions (since the eigenfunctions $Y_{\ell}^{m}$ are
smooth by the smoothness of $\Gamma$) and, hence, can be approximated well by
$hp$ finite elements. We also use the operators $L_{\Gamma}$ and $H_{\Gamma}$
to define the high- and low frequency parts of the sesquilinear form $b_{k}$.

\begin{definition}
\label{Deflowhigh}The low- and high-frequency parts of the capacity operator
and the sesquilinear form $b_{k}$ are given by%
\begin{equation}%
\begin{array}
[c]{ll}%
T_{k}^{\operatorname*{low}}:=T_{k}L_{\Gamma}, & T_{k}^{\operatorname*{high}%
}:=T_{k}H_{\Gamma},\\
b_{k}^{\operatorname*{low}}\left(  \cdot,\cdot\right)  :=b_{k}\left(
\cdot,L_{\Gamma}\cdot\right)  , & b_{k}^{\operatorname*{high}}\left(
\cdot,\cdot\right)  :=b_{k}\left(  \cdot,H_{\Gamma}\cdot\right)  .
\end{array}
\label{defblowbhigh}%
\end{equation}
The continuity constants of the high-frequency parts of $b_{k}$ are given by
%subequationsjmm%
%TCIMACRO{\TeXButton{Cbkhightot}{\begin{subequations}
%\label{Cbkhightot}
%\end{subequations}}}%
%BeginExpansion
\begin{subequations}
\label{Cbkhightot}
\end{subequations}%
%EndExpansion%
\begin{align}
C_{b,k}^{\nabla,\operatorname*{high}}  &  :=k\sup_{\mathbf{u},\mathbf{v}%
\in\mathbf{X}\backslash\left\{  0\right\}  }\frac{\max\left\{  \left\vert
b_{k}\left(  \mathbf{u}^{\nabla},\left(  H_{\Omega}\mathbf{v}\right)
^{\nabla}\right)  \right\vert ,\left\vert b_{k}\left(  \left(  H_{\Omega
}\mathbf{u}\right)  ^{\nabla},\mathbf{v}^{\nabla}\right)  \right\vert
\right\}  }{\left\Vert \mathbf{u}\right\Vert _{\operatorname*{curl},\Omega
,k}\left\Vert \mathbf{v}\right\Vert _{\operatorname*{curl},\Omega,k}},\tag{%
%TCIMACRO{\TeXButton{Cbkhightot}{\ref{Cbkhightot}}}%
%BeginExpansion
\ref{Cbkhightot}%
%EndExpansion
a}\label{defCbkhigh}\\
C_{b,k}^{\operatorname*{curl},\operatorname*{high}}  &  :=\sup_{\mathbf{u}%
,\mathbf{v}\in\mathbf{X}\backslash\left\{  0\right\}  }\frac{k\left\vert
b_{k}\left(  \mathbf{u}^{\operatorname*{curl}},\left(  H_{\Omega}%
\mathbf{v}\right)  ^{\operatorname*{curl}}\right)  \right\vert }{\left\Vert
\mathbf{u}\right\Vert _{\operatorname*{curl},\Omega,k}\left\Vert
\mathbf{v}\right\Vert _{\operatorname*{curl},\Omega,k}}. \tag{%
%TCIMACRO{\TeXButton{Cbkhightot}{\ref{Cbkhightot}}}%
%BeginExpansion
\ref{Cbkhightot}%
%EndExpansion
b}\label{defCbkhighcurl}%
\end{align}

\end{definition}

\begin{lemma}
\label{LemCDtN}The capacity operator $T_{k}:\mathbf{H}_{\operatorname*{curl}%
}^{-1/2}\left(  \Gamma\right)  \rightarrow\mathbf{H}_{\operatorname*{div}%
}^{-1/2}\left(  \Gamma\right)  $ is continuous with continuity constant%
\begin{equation}
C_{\operatorname*{DtN},k}:=\left\Vert T_{k}\right\Vert _{\mathbf{H}%
_{\operatorname*{div}}^{-1/2}\left(  \Gamma\right)  \leftarrow\mathbf{H}%
_{\operatorname*{curl}}^{-1/2}\left(  \Gamma\right)  }<\infty.
\label{DefCDtNk}%
\end{equation}
The sesquilinear form $A_{k}:\mathbf{X}\times\mathbf{X}\rightarrow\mathbb{C}$
is continuous. For all $\mathbf{u},\mathbf{v}\in\mathbf{X}$ it
holds\label{contAkbk}
\begin{align}
\max\left\{  \left\vert A_{k}\left(  \mathbf{u},\mathbf{v}\right)  \right\vert
\, ,\,\left\vert \left(
%TCIMACRO{\TeXButton{l}{\kern-.1em}}%
%BeginExpansion
\kern-.1em%
%EndExpansion
\left(
%TCIMACRO{\TeXButton{l}{\kern-.1em}}%
%BeginExpansion
\kern-.1em%
%EndExpansion
\mathbf{u},\mathbf{v}%
%TCIMACRO{\TeXButton{l}{\kern-.1em}}%
%BeginExpansion
\kern-.1em%
%EndExpansion
\right)
%TCIMACRO{\TeXButton{l}{\kern-.1em}}%
%BeginExpansion
\kern-.1em%
%EndExpansion
\right)  \right\vert \right\}   &  \leq C_{\operatorname*{cont},k}\left\Vert
\mathbf{u}\right\Vert _{\operatorname*{curl},\Omega,1}\left\Vert
\mathbf{v}\right\Vert _{\operatorname*{curl},\Omega,1}\quad\text{with
}C_{\operatorname*{cont},k}:=k^{2}+C_{\Gamma}^{2}C_{\operatorname*{DtN}%
,k}k,\label{Cbk}\\
\max\left\{  \left\vert \left(
%TCIMACRO{\TeXButton{l}{\kern-.1em}}%
%BeginExpansion
\kern-.1em%
%EndExpansion
\left(
%TCIMACRO{\TeXButton{l}{\kern-.1em}}%
%BeginExpansion
\kern-.1em%
%EndExpansion
\mathbf{u},H_{\Omega}\mathbf{v}%
%TCIMACRO{\TeXButton{l}{\kern-.1em}}%
%BeginExpansion
\kern-.1em%
%EndExpansion
\right)
%TCIMACRO{\TeXButton{l}{\kern-.1em}}%
%BeginExpansion
\kern-.1em%
%EndExpansion
\right)  \right\vert \,,\, \left\vert \left(
%TCIMACRO{\TeXButton{l}{\kern-.1em}}%
%BeginExpansion
\kern-.1em%
%EndExpansion
\left(
%TCIMACRO{\TeXButton{l}{\kern-.1em}}%
%BeginExpansion
\kern-.1em%
%EndExpansion
H_{\Omega}\mathbf{u},\mathbf{v}%
%TCIMACRO{\TeXButton{l}{\kern-.1em}}%
%BeginExpansion
\kern-.1em%
%EndExpansion
\right)
%TCIMACRO{\TeXButton{l}{\kern-.1em}}%
%BeginExpansion
\kern-.1em%
%EndExpansion
\right)  \right\vert \right\}   &  \leq C_{b,k}^{\operatorname*{high}%
}\left\Vert \mathbf{u}\right\Vert _{\operatorname*{curl},\Omega,k}\left\Vert
\mathbf{v}\right\Vert _{\operatorname*{curl},\Omega,k}\quad\;\;\text{with
}C_{b,k}^{\operatorname*{high}}:=C_{k}^{H,\Omega}+C_{b,k}^{\nabla
,\operatorname*{high}},\label{DefCconthighk}\\
\max\{|A_{k}(H_{\Omega}{\mathbf{u}},{\mathbf{v}})|, |A_{k}({\mathbf{u}%
},H_{\Omega}{\mathbf{v}})|\}  &  \leq C^{\operatorname{high}}%
_{\operatorname{cont},k} \|{\mathbf{u}}\|_{\operatorname{curl},\Omega,k}
\|{\mathbf{v}}\|_{\operatorname{curl},\Omega,k} \label{contAkhigh}%
\end{align}
with $C^{\operatorname{high}}_{\operatorname*{cont},k}:= C^{H,\Omega}_{k} +
C^{\operatorname{curl},\operatorname{high}}_{b,k} + C^{\nabla
,\operatorname{high}}_{b,k}$.
\end{lemma}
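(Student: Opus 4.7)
The plan is to treat the four assertions in turn; all of them reduce, after using the duality pairing (\ref{eq:duality}) between $\mathbf{H}^{-1/2}_{\operatorname{curl}}(\Gamma)$ and $\mathbf{H}^{-1/2}_{\operatorname{div}}(\Gamma)$, to Cauchy--Schwarz-type estimates and direct appeals to the definitions of the constants $C_{\operatorname{DtN},k}$, $C^{H,\Omega}_k$, $C^{\nabla,\operatorname{high}}_{b,k}$, $C^{\operatorname{curl},\operatorname{high}}_{b,k}$.

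First, for the continuity of $T_k$ itself, I would argue as follows. By definition, $T_k\mathbf{g}_T=\gamma_T^+\tilde{\mathbf{H}}^+$, where $(\mathbf{E}^+,\tilde{\mathbf{H}}^+)$ solves the exterior Maxwell problem with Silver--M\"uller radiation conditions and Dirichlet data $\gamma_T^+\mathbf{E}^+=\mathbf{g}_T\times\mathbf{n}$. The rotation $\mathbf{g}_T\mapsto \mathbf{g}_T\times\mathbf{n}$ is a bounded map $\mathbf{H}^{-1/2}_{\operatorname{curl}}(\Gamma)\to \mathbf{H}^{-1/2}_{\operatorname{div}}(\Gamma)$, as is transparent from the series characterizations in (\ref{m1/2curldiv}), where the rotation merely exchanges the two families of coefficients. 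Remark~\ref{ExCapOp} then gives unique solvability of $\mathbf{E}^+$ together with an estimate in $\mathbf{H}(B_R\cap\Omega^+,\operatorname{curl})$; since $\tilde{\mathbf{H}}^+=\frac{1}{\operatorname{i}k}\operatorname{curl}\mathbf{E}^+$ inherits the corresponding bound in $\mathbf{H}_{\operatorname{loc}}(\Omega^+,\operatorname{curl})$, Theorem~\ref{traceTHM1} (applied on a shell surrounding $\Gamma$ from the exterior) yields the finiteness of $C_{\operatorname{DtN},k}$ defined in (\ref{DefCDtNk}).

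Second, for (\ref{Cbk}) I would split $A_k(\mathbf{u},\mathbf{v})=(\operatorname{curl}\mathbf{u},\operatorname{curl}\mathbf{v})-k^2(\mathbf{u},\mathbf{v})-\operatorname{i}k(T_k\mathbf{u}_T,\mathbf{v}_T)_\Gamma$. The joint Cauchy--Schwarz estimate $|(\operatorname{curl}\mathbf{u},\operatorname{curl}\mathbf{v})-k^2(\mathbf{u},\mathbf{v})|\leq \|\mathbf{u}\|_{\operatorname{curl},\Omega,k}\|\mathbf{v}\|_{\operatorname{curl},\Omega,k}$, together with the trivial comparison $\|\cdot\|_{\operatorname{curl},\Omega,k}\leq k\|\cdot\|_{\operatorname{curl},\Omega,1}$ (which uses $k\geq 1$), absorbs the volume contribution into $k^2\|\mathbf{u}\|_{\operatorname{curl},\Omega,1}\|\mathbf{v}\|_{\operatorname{curl},\Omega,1}$. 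The boundary term is bounded via (\ref{eq:duality}), step one, and (\ref{eq:norm-of-Pi_tau}):
\[
k|(T_k\mathbf{u}_T,\mathbf{v}_T)_\Gamma|\leq kC_{\operatorname{DtN},k}\|\mathbf{u}_T\|_{-1/2,\operatorname{curl}_\Gamma}\|\mathbf{v}_T\|_{-1/2,\operatorname{curl}_\Gamma}\leq kC_{\operatorname{DtN},k}C_\Gamma^2\|\mathbf{u}\|_{\operatorname{curl},\Omega,1}\|\mathbf{v}\|_{\operatorname{curl},\Omega,1}.
\]
Adding the two estimates reproduces $C_{\operatorname{cont},k}$. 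The same decomposition handles $\left(\kern-.1em\left(\kern-.1em \mathbf{u},\mathbf{v}\kern-.1em\right)\kern-.1em\right)$ once one notes, from (\ref{m1/2curldiv}), that the projection $\mathbf{u}_T\mapsto\mathbf{u}^\nabla$ is a contraction in $\|\cdot\|_{-1/2,\operatorname{curl}_\Gamma}$.

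Third, the refined estimates (\ref{DefCconthighk}) and (\ref{contAkhigh}) exploit the self-improving character of the high-frequency operator $H_\Omega$. Writing
\[
\left(\kern-.1em\left(\kern-.1em \mathbf{u},H_\Omega\mathbf{v}\kern-.1em\right)\kern-.1em\right)=k^2(\mathbf{u},H_\Omega\mathbf{v})+\operatorname{i}k\, b_k(\mathbf{u}^\nabla,(H_\Omega\mathbf{v})^\nabla),
\]
the volume term is bounded by $k\|\mathbf{u}\|\cdot k\|H_\Omega\mathbf{v}\|\leq C^{H,\Omega}_k\|\mathbf{u}\|_{\operatorname{curl},\Omega,k}\|\mathbf{v}\|_{\operatorname{curl},\Omega,k}$ via (\ref{defCkLHOmega}), while the boundary term is controlled by the very definition (\ref{defCbkhigh}) of $C^{\nabla,\operatorname{high}}_{b,k}$. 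Summing yields the claimed $C^{\operatorname{high}}_{b,k}$; the bound with $H_\Omega$ on the first slot is symmetric. For $A_k(\mathbf{u},H_\Omega\mathbf{v})$ I would first use the orthogonality (\ref{orthobk}) to split $b_k(\mathbf{u}_T,(H_\Omega\mathbf{v})_T)=b_k(\mathbf{u}^{\operatorname{curl}},(H_\Omega\mathbf{v})^{\operatorname{curl}})+b_k(\mathbf{u}^\nabla,(H_\Omega\mathbf{v})^\nabla)$, which is the key step that prevents $C^{H,\Omega}_k$ from appearing twice in the final constant. Then
\[
A_k(\mathbf{u},H_\Omega\mathbf{v})=(\operatorname{curl}\mathbf{u},\operatorname{curl}(H_\Omega\mathbf{v}))-k^2(\mathbf{u},H_\Omega\mathbf{v})-\operatorname{i}k\,b_k(\mathbf{u}^{\operatorname{curl}},(H_\Omega\mathbf{v})^{\operatorname{curl}})-\operatorname{i}k\,b_k(\mathbf{u}^\nabla,(H_\Omega\mathbf{v})^\nabla),
\]
and joint Cauchy--Schwarz on the first two terms plus the defining bounds (\ref{defCbkhigh}) and (\ref{defCbkhighcurl}) deliver exactly $C^{H,\Omega}_k+C^{\operatorname{curl},\operatorname{high}}_{b,k}+C^{\nabla,\operatorname{high}}_{b,k}$. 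I do not expect a real technical obstacle; the two points that demand attention are (i) the rotation step identifying the correct trace space in the definition of $T_k$, and (ii) the use of (\ref{orthobk}) on $A_k(\mathbf{u},H_\Omega\mathbf{v})$, which avoids double-counting the stability constant $C^{H,\Omega}_k$ in (\ref{contAkhigh}).
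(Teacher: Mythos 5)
Your proposal is correct and follows essentially the same route as the paper: Cauchy--Schwarz on the volume terms, the duality pairing (\ref{eq:duality}) together with $C_{\operatorname{DtN},k}$ and $C_\Gamma$ for the boundary terms, and the defining bounds (\ref{defCkLHOmega}), (\ref{defCbkhigh}), (\ref{defCbkhighcurl}) for the high-frequency estimates. The only differences are in detail: the paper delegates the continuity of $T_k$ entirely to Remark~\ref{ExCapOp} and dispatches (\ref{contAkhigh}) with ``follows similarly,'' whereas you spell out both steps (correctly, including the use of (\ref{orthobk}) to avoid double-counting $C_k^{H,\Omega}$).
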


%

%TCIMACRO{\TeXButton{Proof}{\proof}}%
%BeginExpansion
\proof
%EndExpansion
The continuity of $T_{k}:\mathbf{H}_{\operatorname*{curl}}^{-1/2}\left(
\Gamma\right)  \rightarrow\mathbf{H}_{\operatorname*{div}}^{-1/2}\left(
\Gamma\right)  $ is asserted in Remark~\ref{ExCapOp}. For the sesquilinear
form $A_{k}$ we employ%
\[
\left\vert A_{k}\left(  \mathbf{u},\mathbf{v}\right)  \right\vert
\leq\left\Vert \mathbf{u}\right\Vert _{\operatorname*{curl},\Omega
,k}\left\Vert \mathbf{v}\right\Vert _{\operatorname*{curl},\Omega
,k}+k\left\vert b_{k}\left(  \mathbf{u}_{T},\mathbf{v}_{T}\right)  \right\vert
.
\]
For the last term, we obtain%
\begin{align}
&  k\left\vert b_{k}\left(  \mathbf{u}_{T},\mathbf{v}_{T}\right)  \right\vert
=k\left\vert \left(  T_{k}\mathbf{u}_{T},\mathbf{v}_{T}\right)  _{\Gamma
}\right\vert \overset{(\ref{eq:duality})}{\leq}k\left\Vert T_{k}\mathbf{u}%
_{T}\right\Vert _{-1/2,\operatorname*{div}_{\Gamma}}\left\Vert \mathbf{v}%
_{T}\right\Vert _{-1/2,\operatorname*{curl}_{\Gamma}}\nonumber\\
&  \qquad\leq C_{\operatorname*{DtN},k}k\left\Vert \mathbf{u}_{T}\right\Vert
_{-1/2,\operatorname*{curl}_{\Gamma}}\left\Vert \mathbf{v}_{T}\right\Vert
_{-1/2,\operatorname*{curl}_{\Gamma}}\overset{\text{(\ref{eq:norm-of-Pi_tau}%
)}}{\leq}C_{\Gamma}^{2}C_{\operatorname*{DtN},k}k\left\Vert \mathbf{u}%
\right\Vert _{\operatorname*{curl},\Omega,1}\left\Vert \mathbf{v}\right\Vert
_{\operatorname*{curl},\Omega,1}. \label{kbkestabs}%
\end{align}
For the continuity bound of the sesquilinear form $\left(
%TCIMACRO{\TeXButton{l}{\kern-.1em}}%
%BeginExpansion
\kern-.1em%
%EndExpansion
\left(
%TCIMACRO{\TeXButton{l}{\kern-.1em}}%
%BeginExpansion
\kern-.1em%
%EndExpansion
\cdot,\cdot%
%TCIMACRO{\TeXButton{l}{\kern-.1em}}%
%BeginExpansion
\kern-.1em%
%EndExpansion
\right)
%TCIMACRO{\TeXButton{l}{\kern-.1em}}%
%BeginExpansion
\kern-.1em%
%EndExpansion
\right)  $ we obtain similarly as before%
\begin{align*}
\left\vert \left(
%TCIMACRO{\TeXButton{l}{\kern-.1em}}%
%BeginExpansion
\kern-.1em%
%EndExpansion
\left(
%TCIMACRO{\TeXButton{l}{\kern-.1em}}%
%BeginExpansion
\kern-.1em%
%EndExpansion
\mathbf{u},\mathbf{v}%
%TCIMACRO{\TeXButton{l}{\kern-.1em}}%
%BeginExpansion
\kern-.1em%
%EndExpansion
\right)
%TCIMACRO{\TeXButton{l}{\kern-.1em}}%
%BeginExpansion
\kern-.1em%
%EndExpansion
\right)  \right\vert  &  \leq k^{2}\left\Vert \mathbf{u}\right\Vert
^{2}\left\Vert \mathbf{v}\right\Vert ^{2}+C_{\operatorname*{DtN},k}k\left\Vert
\mathbf{u}^{\nabla}\right\Vert _{-1/2,\operatorname*{curl}_{\Gamma}}\left\Vert
\mathbf{v}^{\nabla}\right\Vert _{-1/2,\operatorname*{curl}_{\Gamma}}\\
&  \overset{(\ref{m1/2curldiv})}{\leq}k^{2}\left\Vert \mathbf{u}\right\Vert
^{2}\left\Vert \mathbf{v}\right\Vert ^{2}+C_{\operatorname*{DtN},k}k\left\Vert
\mathbf{u}_{T}\right\Vert _{-1/2,\operatorname*{curl}_{\Gamma}}\left\Vert
\mathbf{v}_{T}\right\Vert _{-1/2,\operatorname*{curl}_{\Gamma}}\\
&  \leq C_{\operatorname*{cont},k}\left\Vert \mathbf{u}\right\Vert
_{\operatorname*{curl},\Omega,1}\left\Vert \mathbf{v}\right\Vert
_{\operatorname*{curl},\Omega,1}.
\end{align*}
For the high frequency estimate of $\left(
%TCIMACRO{\TeXButton{l}{\kern-.1em}}%
%BeginExpansion
\kern-.1em%
%EndExpansion
\left(
%TCIMACRO{\TeXButton{l}{\kern-.1em}}%
%BeginExpansion
\kern-.1em%
%EndExpansion
\cdot,\cdot%
%TCIMACRO{\TeXButton{l}{\kern-.1em}}%
%BeginExpansion
\kern-.1em%
%EndExpansion
\right)
%TCIMACRO{\TeXButton{l}{\kern-.1em}}%
%BeginExpansion
\kern-.1em%
%EndExpansion
\right)  $ we employ%
\begin{align*}
\left\vert \left(
%TCIMACRO{\TeXButton{l}{\kern-.1em}}%
%BeginExpansion
\kern-.1em%
%EndExpansion
\left(
%TCIMACRO{\TeXButton{l}{\kern-.1em}}%
%BeginExpansion
\kern-.1em%
%EndExpansion
\mathbf{u},H_{\Omega}\mathbf{v}%
%TCIMACRO{\TeXButton{l}{\kern-.1em}}%
%BeginExpansion
\kern-.1em%
%EndExpansion
\right)
%TCIMACRO{\TeXButton{l}{\kern-.1em}}%
%BeginExpansion
\kern-.1em%
%EndExpansion
\right)  \right\vert  &  \leq\left(  k\left\Vert \mathbf{u}\right\Vert
\right)  \left(  \left(  k\left\Vert H_{\Omega}\mathbf{v}\right\Vert \right)
\right)  +k\left\vert b_{k}\left(  \mathbf{u}^{\nabla},\left(  H_{\Omega
}\mathbf{v}\right)  ^{\nabla}\right)  \right\vert \\
&  \leq\left(  k\left\Vert \mathbf{u}\right\Vert \right)  C_{k}^{H,\Omega
}\left\Vert \mathbf{v}\right\Vert _{\operatorname*{curl},\Omega,k}%
+C_{b,k}^{\nabla,\operatorname*{high}}\left\Vert \mathbf{u}\right\Vert
_{\operatorname*{curl},\Omega,k}\left\Vert \mathbf{v}\right\Vert
_{\operatorname*{curl},\Omega,k}\\
&  \leq\left(  C_{k}^{H,\Omega}+C_{b,k}^{\nabla,\operatorname*{high}}\right)
\left\Vert \mathbf{u}\right\Vert _{\operatorname*{curl},\Omega,k}\left\Vert
\mathbf{v}\right\Vert _{\operatorname*{curl},\Omega,k}.
\end{align*}
The estimates with interchanged arguments follow along the same lines. The
bound (\ref{contAkhigh}) follows similarly.
%TCIMACRO{\TeXButton{End Proof}{\endproof}}%
%BeginExpansion
\endproof
%EndExpansion

Next, we introduce frequency-dependent Helmholtz decompositions for the space
$\mathbf{X}$. Let $V\subset H^{1}\left(  \Omega\right)  $ be a closed subspace
(the choice $V=H^{1}\left(  \Omega\right)  $ is allowed). Note that this
implies $\nabla V\subset\mathbf{X}$. Consider the problems%
\begin{align}
&  \text{Given }\mathbf{w}\in\mathbf{X}\text{, find }\Pi_{V}^{\nabla
}{\mathbf{w}}\in\nabla V\quad\text{s.t.\quad}\left(
%TCIMACRO{\TeXButton{l}{\kern-.1em}}%
%BeginExpansion
\kern-.1em%
%EndExpansion
\left(
%TCIMACRO{\TeXButton{l}{\kern-.1em}}%
%BeginExpansion
\kern-.1em%
%EndExpansion
\Pi_{V}^{\nabla}{\mathbf{w}},%
%TCIMACRO{\TeXButton{boldxi}{\mbox{\boldmath$ \xi$}}}%
%BeginExpansion
\mbox{\boldmath$ \xi$}%
%EndExpansion%
%TCIMACRO{\TeXButton{l}{\kern-.1em}}%
%BeginExpansion
\kern-.1em%
%EndExpansion
\right)
%TCIMACRO{\TeXButton{l}{\kern-.1em}}%
%BeginExpansion
\kern-.1em%
%EndExpansion
\right)  =\left(
%TCIMACRO{\TeXButton{l}{\kern-.1em}}%
%BeginExpansion
\kern-.1em%
%EndExpansion
\left(
%TCIMACRO{\TeXButton{l}{\kern-.1em}}%
%BeginExpansion
\kern-.1em%
%EndExpansion
\mathbf{w},%
%TCIMACRO{\TeXButton{boldxi}{\mbox{\boldmath$ \xi$}}}%
%BeginExpansion
\mbox{\boldmath$ \xi$}%
%EndExpansion%
%TCIMACRO{\TeXButton{l}{\kern-.1em}}%
%BeginExpansion
\kern-.1em%
%EndExpansion
\right)
%TCIMACRO{\TeXButton{l}{\kern-.1em}}%
%BeginExpansion
\kern-.1em%
%EndExpansion
\right)  \qquad\forall%
%TCIMACRO{\TeXButton{boldxi}{\mbox{\boldmath$ \xi$}}}%
%BeginExpansion
\mbox{\boldmath$ \xi$}%
%EndExpansion
\in\nabla V.\label{vargammaold}\\
&  \text{Given }\mathbf{w}\in\mathbf{X}\text{, find }\Pi_{V}^{\nabla,\ast
}{\mathbf{w}}\in\nabla V\quad\text{s.t.\quad}\left(
%TCIMACRO{\TeXButton{l}{\kern-.1em}}%
%BeginExpansion
\kern-.1em%
%EndExpansion
\left(
%TCIMACRO{\TeXButton{l}{\kern-.1em}}%
%BeginExpansion
\kern-.1em%
%EndExpansion%
%TCIMACRO{\TeXButton{boldxi}{\mbox{\boldmath$ \xi$}}}%
%BeginExpansion
\mbox{\boldmath$ \xi$}%
%EndExpansion
,\Pi_{V}^{\nabla,\ast}{\mathbf{w}}%
%TCIMACRO{\TeXButton{l}{\kern-.1em}}%
%BeginExpansion
\kern-.1em%
%EndExpansion
\right)
%TCIMACRO{\TeXButton{l}{\kern-.1em}}%
%BeginExpansion
\kern-.1em%
%EndExpansion
\right)  =\left(
%TCIMACRO{\TeXButton{l}{\kern-.1em}}%
%BeginExpansion
\kern-.1em%
%EndExpansion
\left(
%TCIMACRO{\TeXButton{l}{\kern-.1em}}%
%BeginExpansion
\kern-.1em%
%EndExpansion%
%TCIMACRO{\TeXButton{boldxi}{\mbox{\boldmath$ \xi$}}}%
%BeginExpansion
\mbox{\boldmath$ \xi$}%
%EndExpansion
,\mathbf{w}%
%TCIMACRO{\TeXButton{l}{\kern-.1em}}%
%BeginExpansion
\kern-.1em%
%EndExpansion
\right)
%TCIMACRO{\TeXButton{l}{\kern-.1em}}%
%BeginExpansion
\kern-.1em%
%EndExpansion
\right)  \qquad\forall%
%TCIMACRO{\TeXButton{boldxi}{\mbox{\boldmath$ \xi$}}}%
%BeginExpansion
\mbox{\boldmath$ \xi$}%
%EndExpansion
\in\nabla V. \label{vargammaoldadj}%
\end{align}

\begin{lemma}
\label{Lem2prodwp}Let assumption (\ref{asdefba}) be satisfied. Let $V\subset
H^{1}\left(  \Omega\right)  $ be a closed subspace. Then, problems
(\ref{vargammaold}) and (\ref{vargammaoldadj}) are both uniquely solvable.
Thus, the operators $\Pi_{V}^{\nabla}$ and $\Pi_{V}^{\nabla,\ast}$ are well defined.
\end{lemma}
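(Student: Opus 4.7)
The plan is to recognize that on the subspace $\nabla V \subset {\mathbf X}$ the sesquilinear form $(\!(\cdot,\cdot)\!)$ is coercive (with respect to the norm $\|\cdot\|_{\operatorname{curl},\Omega,k}$) and continuous, so that the complex Lax--Milgram lemma applies to each of problems~(\ref{vargammaold}) and~(\ref{vargammaoldadj}).

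First I would note that for $\nabla\varphi \in \nabla V$ Remark~\ref{rem:decomposition-of-nabla-phi} gives the decomposition $(\Pi_T \nabla\varphi)^{\operatorname*{curl}} = 0$ and $(\nabla\varphi)^\nabla = \nabla_\Gamma\varphi$, so that for $\varphi,\psi\in V$
\[
\left(\!\left(\nabla\varphi,\nabla\psi\right)\!\right) = k^2(\nabla\varphi,\nabla\psi) + \operatorname*{i} k\, b_k(\nabla_\Gamma\varphi,\nabla_\Gamma\psi).
\]
Taking $\psi = \varphi$ and exploiting the first sign condition in Assumption~(\ref{asdefba}), namely $\operatorname{Im} b_k(\mathbf{u}^\nabla,\mathbf{u}^\nabla)\leq 0$, yields
\[
\operatorname{Re}\left(\!\left(\nabla\varphi,\nabla\varphi\right)\!\right) = k^2 \|\nabla\varphi\|^2 - k\,\operatorname{Im} b_k(\nabla_\Gamma\varphi,\nabla_\Gamma\varphi) \geq k^2 \|\nabla\varphi\|^2 = \|\nabla\varphi\|^2_{\operatorname*{curl},\Omega,k},
\]
where in the last equality I used $\operatorname*{curl}\nabla\varphi = 0$, so that $\|\cdot\|_{\operatorname*{curl},\Omega,k}$ reduces to $k\|\cdot\|_{L^2(\Omega)}$ on $\nabla V$. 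This is the coercivity estimate on $\nabla V$ (with constant independent of the choice of $V$).

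Continuity of $(\!(\cdot,\cdot)\!)$ on ${\mathbf X}\times{\mathbf X}$ (and hence on $\nabla V\times \nabla V$) was already established in Lemma~\ref{LemCDtN}, cf.~(\ref{Cbk}). Since $\nabla V$ is closed in ${\mathbf X}$ (the gradient map is bounded below on the quotient by constants, so a closed $V\subset H^1(\Omega)$ maps to a closed subspace of ${\mathbf X}$), $\nabla V$ is a Hilbert space under $\|\cdot\|_{\operatorname*{curl},\Omega,k}$, the right-hand side functionals $


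\mbox{\boldmath$ \xi$}

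\mapsto(\!({\mathbf w},


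\mbox{\boldmath$ \xi$}

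)\!)$ and $


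\mbox{\boldmath$ \xi$}

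\mapsto(\!(


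\mbox{\boldmath$ \xi$}

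,{\mathbf w})\!)$ are antilinear/linear and bounded on $\nabla V$ by continuity of $(\!(\cdot,\cdot)\!)$, and coercivity holds in both arguments simultaneously (the coercivity estimate is symmetric in the two slots, both reducing to the same real quadratic form on the diagonal). The complex version of the Lax--Milgram lemma then yields unique solvability of~(\ref{vargammaold}) and~(\ref{vargammaoldadj}) and hence well-definedness of $\Pi^\nabla_V$ and $\Pi^{\nabla,\ast}_V$. The only mild point that needs verification is that $\nabla V$ is indeed closed in ${\mathbf X}$ whenever $V\subset H^1(\Omega)$ is closed; this follows from the Poincar\'e--Friedrichs inequality modulo constants together with the fact that the operator $\nabla$ has closed range from $H^1(\Omega)/\mathbb{R}$ into ${\mathbf L}^2(\Omega)$.
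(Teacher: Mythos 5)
Your proof is correct and follows essentially the same route as the paper: coercivity of $\left(\kern-.1em\left(\kern-.1em\cdot,\cdot\kern-.1em\right)\kern-.1em\right)$ on $\nabla V$ from the sign condition (\ref{asdefba}) (this is exactly (\ref{k+normdblescest})), continuity from (\ref{Cbk}), and then Lax--Milgram in both slots. The only addition is your explicit verification that $\nabla V$ is closed in ${\mathbf X}$, a point the paper leaves implicit; your justification of it is sound.
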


%

%TCIMACRO{\TeXButton{Proof}{\proof}}%
%BeginExpansion
\proof
%EndExpansion
The definiteness of $\operatorname*{Im}b_{k}\left(  \left(  \cdot\right)
^{\nabla},\left(  \cdot\right)  ^{\nabla}\right)  $ (cf. (\ref{asdefba}))
leads to%
\begin{equation}
\operatorname{Re}\left(
%TCIMACRO{\TeXButton{l}{\kern-.1em}}%
%BeginExpansion
\kern-.1em%
%EndExpansion
\left(
%TCIMACRO{\TeXButton{l}{\kern-.1em}}%
%BeginExpansion
\kern-.1em%
%EndExpansion
\nabla\xi,\nabla\xi%
%TCIMACRO{\TeXButton{l}{\kern-.1em}}%
%BeginExpansion
\kern-.1em%
%EndExpansion
\right)
%TCIMACRO{\TeXButton{l}{\kern-.1em}}%
%BeginExpansion
\kern-.1em%
%EndExpansion
\right)  =\left(  k\left\Vert \nabla\xi\right\Vert \right)  ^{2}%
-k\operatorname{Im}b_{k}\left(  \left(  \nabla\xi\right)  ^{\nabla},\left(
\nabla\xi\right)  ^{\nabla}\right)  \geq\left(  k\left\Vert \nabla
\xi\right\Vert \right)  ^{2} \quad\forall\xi\in H^{1}(\Omega).
\label{k+normdblescest}%
\end{equation}
{}From (\ref{Cbk}) we furthermore get $\left|  \left(
%TCIMACRO{\TeXButton{l}{\kern-.1em}}%
%BeginExpansion
\kern-.1em%
%EndExpansion
\left(
%TCIMACRO{\TeXButton{l}{\kern-.1em}}%
%BeginExpansion
\kern-.1em%
%EndExpansion
\mathbf{w},\nabla\xi%
%TCIMACRO{\TeXButton{l}{\kern-.1em}}%
%BeginExpansion
\kern-.1em%
%EndExpansion
\right)
%TCIMACRO{\TeXButton{l}{\kern-.1em}}%
%BeginExpansion
\kern-.1em%
%EndExpansion
\right)  \right|  \leq C_{\operatorname{cont},k} \|{\mathbf{w}}%
\|_{\operatorname{curl},\Omega,1} \|\nabla\xi\|_{\operatorname{curl},\Omega,1}
= C_{\operatorname{cont},k} \|{\mathbf{w}}\|_{\operatorname{curl},\Omega,1} (k
\|\nabla\xi\|)$, which shows the well-posedness of $\Pi^{\nabla}_{V}$. The
well-posedness of $\Pi^{\nabla,\ast}_{V}$ is shown analogously.
\endproof
%EndExpansion

%{}From Lemma \ref{Lem2prodwp} we conclude that each of the problems,
%(\ref{vargammaold}) and (\ref{vargammaoldadj}) has a unique solution in
%$V/\mathbb{R}$. The solution operator of (\ref{vargammaold}) is denoted by
%$\Pi_{V}^{\nabla}:\mathbf{X}\rightarrow V/\mathbb{R}$ and the solution
%operator of (\ref{vargammaoldadj}) by $\Pi_{V}^{\nabla,\ast}:\mathbf{X}%
%\rightarrow V/\mathbb{R}$.
For $V=S_{h}$, we abbreviate $\Pi_{S_{h}}^{\nabla}$ by $\Pi_{h}^{\nabla}$ and
$\Pi_{S_{h}}^{\nabla,\ast}$ by $\Pi_{h}^{\nabla,\ast}$ while for
$V=H^{1}\left(  \Omega\right)  $ we use the shorthands $\Pi^{\nabla}$ for
$\Pi_{H^{1}\left(  \Omega\right)  }^{\nabla}$ and $\Pi^{\nabla,\ast}$ for
$\Pi_{H^{1}\left(  \Omega\right)  }^{\nabla,\ast}$.

A central role in the analysis is played by the spaces (cf.~\cite[p.220]%
{Nedelec01})
%
%subequationsjmm%
\begin{subequations}
\label{defVo}%
\begin{align}
\mathbf{V}_{0}  &  :=\left\{  \mathbf{u}\in\mathbf{X}\mid\left(
%TCIMACRO{\TeXButton{l}{\kern-.1em}}%
%BeginExpansion
\kern-.1em%
%EndExpansion
\left(
%TCIMACRO{\TeXButton{l}{\kern-.1em}}%
%BeginExpansion
\kern-.1em%
%EndExpansion
\mathbf{u},\nabla\xi%
%TCIMACRO{\TeXButton{l}{\kern-.1em}}%
%BeginExpansion
\kern-.1em%
%EndExpansion
\right)
%TCIMACRO{\TeXButton{l}{\kern-.1em}}%
%BeginExpansion
\kern-.1em%
%EndExpansion
\right)  =0\qquad\forall\xi\in H^{1}\left(  \Omega\right)  \right\}
\overset{(\ref{eq:Ak(unablaphi)})}{=}\left\{  {\mathbf{u}}\in{\mathbf{X}%
}\,|\,A_{k}({\mathbf{u}},\nabla\xi)=0\quad\forall\xi\in H^{1}(\Omega)\right\}
,\label{defVoa}\\
\mathbf{V}_{0}^{\ast}  &  :=\left\{  \mathbf{u}\in\mathbf{X}\mid\left(
%TCIMACRO{\TeXButton{l}{\kern-.1em}}%
%BeginExpansion
\kern-.1em%
%EndExpansion
\left(
%TCIMACRO{\TeXButton{l}{\kern-.1em}}%
%BeginExpansion
\kern-.1em%
%EndExpansion
\nabla\xi,\mathbf{u}%
%TCIMACRO{\TeXButton{l}{\kern-.1em}}%
%BeginExpansion
\kern-.1em%
%EndExpansion
\right)
%TCIMACRO{\TeXButton{l}{\kern-.1em}}%
%BeginExpansion
\kern-.1em%
%EndExpansion
\right)  =0\quad\forall\xi\in H^{1}\left(  \Omega\right)  \right\}
\overset{(\ref{eq:Ak(unablaphi)})}{=}\left\{  {\mathbf{u}}\in{\mathbf{X}%
}\,|\,A_{k}(\nabla\xi,{\mathbf{u}})=0\quad\forall\xi\in H^{1}(\Omega)\right\}
. \label{defVob}%
\end{align}
These spaces of divergence-free functions are the ranges of the operators
$\Pi^{\operatorname*{curl}}$ and $\Pi^{\operatorname*{curl},\ast}$ given by
\end{subequations}
\begin{equation}
\Pi^{\operatorname*{curl}}:=I-\Pi^{\nabla},\quad\Pi^{\operatorname*{curl}%
,\ast}:=I-\Pi^{\nabla,\ast}. \label{eq:Pidivnull}%
\end{equation}

\begin{lemma}
\label{LemNablaCurl}Let Assumption~\ref{Assumpsdefb} be satisfied. There holds
for all $\mathbf{v}\in\mathbf{X}$
%and $\mathbf{v}^{\operatorname*{high}}:=H_{\Omega }\mathbf{v}$ it holds%
%subequationsjmm%
%TCIMACRO{\TeXButton{stabnablacurl}{\begin{subequations}
%\label{stabnablacurl}
%\end{subequations}}}%
%BeginExpansion
\begin{subequations}
\label{stabnablacurl}
\end{subequations}%
%EndExpansion%
\begin{align}
\left\Vert \Pi^{\nabla,\ast} H_{\Omega}\mathbf{v}\right\Vert
_{\operatorname*{curl},\Omega,k}  &  \leq C_{b,k}^{\operatorname*{high}%
}\left\Vert \mathbf{v}\right\Vert _{\operatorname*{curl},\Omega,k},\tag{%
%TCIMACRO{\TeXButton{stabnablacurl}{\ref{stabnablacurl}}}%
%BeginExpansion
\ref{stabnablacurl}%
%EndExpansion
a}\label{stabnablacurlnabla}\\
\left\Vert \Pi^{\operatorname*{curl},\ast} H_{\Omega}\mathbf{v} \right\Vert
_{\operatorname*{curl},\Omega,k}  &  \leq\left(  C_{k}^{H,\Omega}%
+C_{b,k}^{\nabla,\operatorname*{high}}\right)  \left\Vert \mathbf{v}%
\right\Vert _{\operatorname*{curl},\Omega,k}. \tag{%
%TCIMACRO{\TeXButton{stabnablacurl}{\ref{stabnablacurl}}}%
%BeginExpansion
\ref{stabnablacurl}%
%EndExpansion
b}\label{stabnablacurlcurl}%
\end{align}

\end{lemma}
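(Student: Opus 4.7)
The plan is to treat (a) first by a direct variational argument using the defining equation of $\Pi^{\nabla,\ast}$, and then to obtain (b) from (a) either by triangle inequality or by repeating the argument with the $\mathbf{V}_0^\ast$-orthogonality. Both estimates rely on three ingredients: the variational definition (\ref{vargammaoldadj}) of $\Pi^{\nabla,\ast}$, the coercivity (\ref{k+normdblescest}) of $((\cdot,\cdot))$ on gradient fields, and the refined high-frequency continuity (\ref{defCbkhigh}) of $b_k$ together with the stability (\ref{defCkLHOmega}) of $H_\Omega$.

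For (a), I would abbreviate $\mathbf{w}:=\Pi^{\nabla,\ast} H_\Omega \mathbf{v} = \nabla\psi \in \nabla H^1(\Omega)$, and test (\ref{vargammaoldadj}) with $\boldsymbol{\xi}=\mathbf{w}$ itself to obtain the identity
\[
((\mathbf{w},\mathbf{w}))=((\mathbf{w},H_\Omega\mathbf{v})).
\]
Taking real parts and invoking (\ref{k+normdblescest}) on the left, and using $\operatorname*{curl}\mathbf{w}=0$ (so $\|\mathbf{w}\|_{\operatorname*{curl},\Omega,k}=k\|\mathbf{w}\|$), one gets
\[
(k\|\mathbf{w}\|)^2 \leq \operatorname{Re}((\mathbf{w},H_\Omega\mathbf{v})) \leq k^2\bigl|(\mathbf{w},H_\Omega\mathbf{v})\bigr| + k\bigl|b_k(\mathbf{w}^\nabla,(H_\Omega\mathbf{v})^\nabla)\bigr|.
\]
The first summand is bounded by $k\|\mathbf{w}\|\cdot k\|H_\Omega\mathbf{v}\| \leq k\|\mathbf{w}\| \cdot C_k^{H,\Omega}\|\mathbf{v}\|_{\operatorname*{curl},\Omega,k}$ using Cauchy--Schwarz and (\ref{defCkLHOmega}); the second by $C_{b,k}^{\nabla,\operatorname*{high}} \|\mathbf{w}\|_{\operatorname*{curl},\Omega,k}\|\mathbf{v}\|_{\operatorname*{curl},\Omega,k} = C_{b,k}^{\nabla,\operatorname*{high}} k\|\mathbf{w}\|\,\|\mathbf{v}\|_{\operatorname*{curl},\Omega,k}$ via (\ref{defCbkhigh}). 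Dividing by $k\|\mathbf{w}\|$ yields (\ref{stabnablacurlnabla}) with the precise constant $C_{b,k}^{\operatorname*{high}}=C_k^{H,\Omega}+C_{b,k}^{\nabla,\operatorname*{high}}$.

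For (b), the most economical route is to write $\Pi^{\operatorname*{curl},\ast} H_\Omega\mathbf{v} = H_\Omega\mathbf{v} - \Pi^{\nabla,\ast} H_\Omega\mathbf{v}$, apply the triangle inequality in $\|\cdot\|_{\operatorname*{curl},\Omega,k}$, and use (\ref{defCkLHOmega}) for the first term and part (a) for the second. A sharper path, which avoids picking up $C_k^{H,\Omega}$ twice, observes that $\operatorname*{curl}(\Pi^{\operatorname*{curl},\ast} H_\Omega\mathbf{v}) = \operatorname*{curl}(H_\Omega\mathbf{v})$ (the gradient correction drops out), so that the $\|\operatorname*{curl}\cdot\|$ contribution is controlled directly by $C_k^{H,\Omega}\|\mathbf{v}\|_{\operatorname*{curl},\Omega,k}$; setting $\mathbf{u}:=\Pi^{\operatorname*{curl},\ast} H_\Omega\mathbf{v}\in\mathbf{V}_0^\ast$, the orthogonality $((\nabla\xi,\mathbf{u}))=0$ gives $((\mathbf{u},\mathbf{u}))=((H_\Omega\mathbf{v},\mathbf{u}))$, and the $k^2\|\mathbf{u}\|^2$ contribution is then estimated by the same two-term splitting as in (a).

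I do not anticipate a serious obstacle: the whole argument is an algebraic manipulation built from Cauchy--Schwarz, the coercivity (\ref{k+normdblescest}), the one-sided high-frequency continuity (\ref{defCbkhigh}), and the $\|\cdot\|_{\operatorname*{curl},\Omega,k}$-stability of $H_\Omega$. The only subtle point is to ensure that in every application of (\ref{defCbkhigh}) the factor $H_\Omega\mathbf{v}$ occupies the slot for which $b_k$ is controlled with the $k$-independent constant $C_{b,k}^{\nabla,\operatorname*{high}}$, rather than the $k$-dependent global constant $C_{\operatorname*{DtN},k}$; because $\mathbf{v}$ enters only through $H_\Omega\mathbf{v}$ from the outset, this is automatic.
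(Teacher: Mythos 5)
Your argument is correct and coincides with the paper's: for (\ref{stabnablacurlnabla}) the paper likewise tests (\ref{vargammaoldadj}) with $\Pi^{\nabla,\ast}H_{\Omega}\mathbf{v}$, uses the coercivity (\ref{k+normdblescest}) together with $\operatorname*{curl}\Pi^{\nabla,\ast}H_{\Omega}\mathbf{v}=0$, and bounds $\operatorname{Re}(\!(\Pi^{\nabla,\ast}H_{\Omega}\mathbf{v},H_{\Omega}\mathbf{v})\!)$ by the high-frequency continuity constant $C_{b,k}^{\operatorname{high}}=C_{k}^{H,\Omega}+C_{b,k}^{\nabla,\operatorname{high}}$ (you merely re-derive (\ref{DefCconthighk}) inline instead of citing it), and for (\ref{stabnablacurlcurl}) it uses exactly your primary route, the triangle inequality via $\Pi^{\operatorname*{curl},\ast}=I-\Pi^{\nabla,\ast}$. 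Your alternative "sharper" route for (b) is plausible but unnecessary, and the slight constant-bookkeeping looseness in the triangle-inequality step is already present in the paper itself.
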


%

%TCIMACRO{\TeXButton{Proof}{\proof}}%
%BeginExpansion
\proof
%EndExpansion
We employ (\ref{k+normdblescest}) and $\operatorname*{curl}\Pi^{\nabla,\ast
}H_{\Omega}\mathbf{v}=0$ to obtain%
\[
\left(  k\left\Vert \Pi^{\nabla,\ast}H_{\Omega}\mathbf{v}\right\Vert \right)
^{2}\leq\operatorname{Re}\left(
%TCIMACRO{\TeXButton{l}{\kern-.1em}}%
%BeginExpansion
\kern-.1em%
%EndExpansion
\left(
%TCIMACRO{\TeXButton{l}{\kern-.1em}}%
%BeginExpansion
\kern-.1em%
%EndExpansion
\Pi^{\nabla,\ast}H_{\Omega}\mathbf{v},\Pi^{\nabla,\ast}H_{\Omega}\mathbf{v}%
%TCIMACRO{\TeXButton{l}{\kern-.1em}}%
%BeginExpansion
\kern-.1em%
%EndExpansion
\right)
%TCIMACRO{\TeXButton{l}{\kern-.1em}}%
%BeginExpansion
\kern-.1em%
%EndExpansion
\right)  =\operatorname{Re}\left(
%TCIMACRO{\TeXButton{l}{\kern-.1em}}%
%BeginExpansion
\kern-.1em%
%EndExpansion
\left(
%TCIMACRO{\TeXButton{l}{\kern-.1em}}%
%BeginExpansion
\kern-.1em%
%EndExpansion
\Pi^{\nabla,\ast}H_{\Omega}\mathbf{v},H_{\Omega}\mathbf{v}%
%TCIMACRO{\TeXButton{l}{\kern-.1em}}%
%BeginExpansion
\kern-.1em%
%EndExpansion
\right)
%TCIMACRO{\TeXButton{l}{\kern-.1em}}%
%BeginExpansion
\kern-.1em%
%EndExpansion
\right)  \overset{(\ref{DefCconthighk})}{\leq}C_{b,k}^{\operatorname*{high}%
}k\left\Vert \Pi^{\nabla,\ast}H_{\Omega}\mathbf{v}\right\Vert \left\Vert
\mathbf{v}\right\Vert _{\operatorname*{curl},\Omega,k},
\]
so that (\ref{stabnablacurlnabla}) follows.
%The
%norm $\left\Vert \Pi^{\operatorname*{curl},\ast}\mathbf{v}_{h}%
%^{\operatorname*{high}}\right\Vert _{\operatorname*{curl},\Omega,k}$ can be
Estimate (\ref{stabnablacurlcurl}) is obtained from (\ref{stabnablacurlnabla})
and the triangle inequality using $\Pi^{\operatorname*{curl},\ast}%
=I-\Pi^{\nabla,\ast}$:%
\[
\left\Vert \Pi^{\operatorname*{curl},\ast}H_{\Omega}\mathbf{v}\right\Vert
_{\operatorname*{curl},\Omega,k}\leq\left\Vert H_{\Omega}\mathbf{v}\right\Vert
_{\operatorname*{curl},\Omega,k}+\left\Vert \Pi^{\nabla,\ast}H_{\Omega
}{\mathbf{v}}\right\Vert _{\operatorname*{curl},\Omega,k}%
\overset{\text{(\ref{defCkLHOmega}), (\ref{stabnablacurlnabla})}}{=}\left(
C_{k}^{H,\Omega}+C_{b,k}^{\nabla,\operatorname*{high}}\right)  \left\Vert
\mathbf{v}\right\Vert _{\operatorname*{curl},\Omega,k}.
\]%
%TCIMACRO{\TeXButton{End Proof}{\endproof}}%
%BeginExpansion
\endproof
%EndExpansion

It is finally convenient to introduce the discrete counterparts of these
operators:
\begin{equation}
\Pi_{h}^{\operatorname*{curl}}:=I-\Pi_{h}^{\nabla},\quad\Pi_{h}%
^{\operatorname*{curl},\ast}:=I-\Pi_{h}^{\nabla,\ast}. \label{eq:Pidivnullh}%
\end{equation}
The operators $\Pi^{\nabla}$ and $\Pi^{\operatorname*{curl}}$ (analogously
$\Pi^{\nabla,\ast}$ and $\Pi^{\operatorname*{curl},\ast}$) can be used to
define a Helmholtz decomposition of ${\mathbf{u}}\in{\mathbf{X}}$ into a
gradient part and a divergence-free part. Since favorable stability properties
of $\Pi^{\nabla}$ (and thus also of $\Pi^{\operatorname*{curl}}$) will only be
available for high-frequency functions, the decomposition
(\ref{HelmDecompCont}) below involves additionally the frequency-splitting
operators $H_{\Omega}$ and $L_{\Omega}$.

\begin{definition}
[Helmholtz decompositions]\label{DefHelmDecomp}
%The solution operators of
%(\ref{vargammaold}) and (\ref{vargammaoldadj}) in combination with $L_{\Omega
%}$, $H_{\Omega}$ induce a frequency-dependent Helmholtz decomposition of
%$\mathbf{X}$.
For $\mathbf{u}$, $\mathbf{v}\in\mathbf{X}$ we set
%subequationsjmm%
%TCIMACRO{\TeXButton{HelmDecompCont}{\begin{subequations}
%\label{HelmDecompCont}
%\end{subequations}}}%
%BeginExpansion
\begin{subequations}
\label{HelmDecompCont}
\end{subequations}%
%EndExpansion%
\begin{equation}%
\begin{array}
[c]{ll}%
\mathbf{u}=\Pi^{\operatorname*{comp}}\mathbf{u}+\Pi^{\nabla}H_{\Omega
}{\mathbf{u}} & \text{with }\Pi^{\operatorname*{comp}}:=L_{\Omega}%
+\Pi^{\operatorname*{curl}}H_{\Omega},
\end{array}
\tag{%
%TCIMACRO{\TeXButton{HelmDecompCont}{\ref{HelmDecompCont}}}%
%BeginExpansion
\ref{HelmDecompCont}%
%EndExpansion
a}\label{HelmDecompConta}%
\end{equation}
The adjoint splitting is based on the operator $\Pi^{\nabla,\ast}$ and given
by%
\begin{equation}%
\begin{array}
[c]{ll}%
\mathbf{v}=\Pi^{\operatorname*{comp},\ast}\mathbf{v}+\Pi^{\nabla,\ast
}H_{\Omega}{\mathbf{v}} & \text{with }\Pi^{\operatorname*{comp},\ast
}:=L_{\Omega}+\Pi^{\operatorname*{curl},\ast}H_{\Omega}.
\end{array}
\tag{%
%TCIMACRO{\TeXButton{HelmDecompCont}{\ref{HelmDecompCont}}}%
%BeginExpansion
\ref{HelmDecompCont}%
%EndExpansion
b}\label{HelmDecompContb}%
\end{equation}
The discrete counterparts of these splittings are%
\begin{equation}%
\begin{array}
[c]{ll}%
\mathbf{u}=\Pi_{h}^{\operatorname*{comp}}\mathbf{u}+\Pi_{h}^{\nabla}H_{\Omega
}{\mathbf{u}} & \text{with }\Pi_{h}^{\operatorname*{comp}}:=L_{\Omega}+\Pi
_{h}^{\operatorname*{curl}}H_{\Omega},\\
\mathbf{v}=\Pi_{h}^{\operatorname*{comp},\ast}\mathbf{v}+\Pi_{h}^{\nabla,\ast
}H_{\Omega}{\mathbf{v}} & \text{with }\Pi_{h}^{\operatorname*{comp},\ast
}:=L_{\Omega}+\Pi_{h}^{\operatorname*{curl},\ast}H_{\Omega}.
\end{array}
\label{HelmDecompDiscrete}%
\end{equation}

\end{definition}

The next lemma characterizes the spaces ${\mathbf{V}}_{0}$ and ${\mathbf{V}%
}^{\star}_{0}$ in terms of the capacity operator $T_{k}$:

\begin{lemma}
\label{LemEq2scprodstrong}Let Assumption~\ref{Assumpsdefb} be satisfied. Then:
$\mathbf{u}\in\mathbf{V}_{0}$ if and only if \label{ImpBed}%
\begin{equation}
\mathbf{u}\in\mathbf{X}\text{ satisfies\quad}\operatorname{div}\mathbf{u}%
=0\quad\text{in }L^{2}\left(  \Omega\right)  \quad\wedge\quad\operatorname*{i}%
k\left\langle \mathbf{u},\mathbf{n}\right\rangle +\operatorname{div}_{\Gamma
}T_{k}\Pi_{T}\mathbf{u}=0\quad\text{in }H^{-1/2}\left(  \Gamma\right)  .
\label{ImpBeda}%
\end{equation}
Furthermore, $\mathbf{v}\in\mathbf{V}_{0}^{\ast}$ if and only if%
\begin{equation}
\operatorname{div}\mathbf{v}=0\quad\text{in }L^{2}\left(  \Omega\right)
\quad\wedge\quad\operatorname*{i}k\left\langle \mathbf{v},\mathbf{n}%
\right\rangle -\operatorname*{div}\nolimits_{\Gamma}T_{-k}\Pi_{T}%
\mathbf{v}=0\quad\text{in }H^{-1/2}\left(  \Gamma\right)  . \label{ImpBedb}%
\end{equation}

\end{lemma}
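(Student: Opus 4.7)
The plan is a direct computation: unfold the definition of $((\cdot,\cdot))$, perform one integration by parts in $\Omega$ and one on the closed surface $\Gamma$, then vary the test function to read off the volume and boundary equations separately.

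I would first treat the characterization of $\mathbf{V}_0$. For $\mathbf{u}\in\mathbf{X}$ and $\xi\in H^1(\Omega)$, Remark~\ref{rem:decomposition-of-nabla-phi} gives $(\nabla\xi)^\nabla=\nabla_\Gamma\xi$ and $(\nabla\xi)^{\operatorname{curl}}=0$; together with the orthogonality (\ref{orthobk}) this yields
\begin{equation*}
((\mathbf{u},\nabla\xi)) = k^2(\mathbf{u},\nabla\xi) + \mathrm{i}k(T_k\mathbf{u}_T,\nabla_\Gamma\xi)_\Gamma.
\end{equation*}
Testing first with $\xi\in C_c^\infty(\Omega)$ kills the surface term and reduces the condition to $(\mathbf{u},\nabla\xi)=0$, forcing $\operatorname{div}\mathbf{u}=0$ in $\mathcal{D}'(\Omega)$, hence in $L^2(\Omega)$. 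Once this is known, $\mathbf{u}\in\mathbf{H}(\Omega,\operatorname{div})$, so $\langle\mathbf{u},\mathbf{n}\rangle\in H^{-1/2}(\Gamma)$ is defined in the classical sense and the volume Green's formula applies. Combined with the surface identity $(T_k\mathbf{u}_T,\nabla_\Gamma\xi)_\Gamma=-(\operatorname{div}_\Gamma T_k\mathbf{u}_T,\xi)_\Gamma$, valid in the $H^{-1/2}(\Gamma)$--$H^{1/2}(\Gamma)$ duality because $\Gamma$ has no boundary, and with the algebraic identity $k^2\langle\mathbf{u},\mathbf{n}\rangle-\mathrm{i}k\operatorname{div}_\Gamma T_k\mathbf{u}_T=-\mathrm{i}k\bigl(\mathrm{i}k\langle\mathbf{u},\mathbf{n}\rangle+\operatorname{div}_\Gamma T_k\mathbf{u}_T\bigr)$, this gives for every $\mathbf{u}\in\mathbf{H}(\Omega,\operatorname{div})\cap\mathbf{X}$
\begin{equation*}
((\mathbf{u},\nabla\xi)) = -k^2(\operatorname{div}\mathbf{u},\xi) - \mathrm{i}k\bigl(\mathrm{i}k\langle\mathbf{u},\mathbf{n}\rangle+\operatorname{div}_\Gamma T_k\mathbf{u}_T,\,\xi\bigr)_\Gamma.
\end{equation*}
Density of the traces $\{\xi|_\Gamma \mid \xi\in H^1(\Omega)\}$ in $H^{1/2}(\Gamma)$ then forces the boundary equation of (\ref{ImpBeda}); the converse is immediate by reversing the chain.

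The characterization of $\mathbf{V}_0^\ast$ is structurally identical, but now $T_k$ must be pushed off of the test function. The extra ingredient is the sesquilinear adjoint identity
\begin{equation*}
(T_k\mathbf{f},\mathbf{g})_\Gamma = (\mathbf{f},T_{-k}\mathbf{g})_\Gamma \qquad \forall\, \mathbf{f},\mathbf{g}\in\mathbf{H}_{\operatorname{curl}}^{-1/2}(\Gamma).
\end{equation*}
Once this is invoked, the same two integrations by parts, followed by the analogous algebraic factorisation $k^2\langle\mathbf{v},\mathbf{n}\rangle+\mathrm{i}k\operatorname{div}_\Gamma T_{-k}\mathbf{v}_T=-\mathrm{i}k\bigl(\mathrm{i}k\langle\mathbf{v},\mathbf{n}\rangle-\operatorname{div}_\Gamma T_{-k}\mathbf{v}_T\bigr)$, yield
\begin{equation*}
((\nabla\xi,\mathbf{v})) = -k^2(\xi,\operatorname{div}\mathbf{v}) - \mathrm{i}k\bigl(\xi,\,\mathrm{i}k\langle\mathbf{v},\mathbf{n}\rangle-\operatorname{div}_\Gamma T_{-k}\mathbf{v}_T\bigr)_\Gamma,
\end{equation*}
and separating volume and boundary contributions as before delivers (\ref{ImpBedb}).

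The main obstacle is the adjoint identity $T_k^\ast=T_{-k}$; the volume Green's formula and the surface divergence--gradient duality are routine. For $\Omega=B_1(0)$ the identity is transparent from the explicit series (\ref{DefTk}) together with the Hankel relation $\overline{H_\ell^{(1)}(z)}=H_\ell^{(2)}(z)$ for real $z$; for a general analytic $\Gamma$ it reflects the reciprocity of the exterior homogeneous Maxwell problem under complex conjugation, which swaps the Silver--M\"uller outgoing condition at wavenumber $k$ with the outgoing condition at wavenumber $-k$.
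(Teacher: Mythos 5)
Your proposal is correct and follows essentially the same route as the paper's proof: integration by parts in $\Omega$, the surface divergence--gradient duality on the closed surface $\Gamma$, and the adjoint identity $T_k^\ast=T_{-k}$ (which the paper also invokes, justified via the potential-theoretic representation of $T_k$ rather than the series/reciprocity arguments you sketch). Your two-stage testing (interior test functions first to obtain $\operatorname{div}\mathbf{u}=0$, then general $\xi$ for the boundary condition) is a slightly more careful organization of the same computation.
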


%

%TCIMACRO{\TeXButton{Proof}{\proof}}%
%BeginExpansion
\proof
%EndExpansion
We only show the equivalence (\ref{ImpBedb}), since (\ref{ImpBeda}) follows by
the same reasoning. Integration by parts applied to the condition $\left(
%TCIMACRO{\TeXButton{l}{\kern-.1em}}%
%BeginExpansion
\kern-.1em%
%EndExpansion
\left(
%TCIMACRO{\TeXButton{l}{\kern-.1em}}%
%BeginExpansion
\kern-.1em%
%EndExpansion
\nabla\xi,\mathbf{v}%
%TCIMACRO{\TeXButton{l}{\kern-.1em}}%
%BeginExpansion
\kern-.1em%
%EndExpansion
\right)
%TCIMACRO{\TeXButton{l}{\kern-.1em}}%
%BeginExpansion
\kern-.1em%
%EndExpansion
\right)  =0$ yields, for all $\xi\in H^{1}\left(  \Omega\right)  $,%
\begin{align*}
0  &  =\left(
%TCIMACRO{\TeXButton{l}{\kern-.1em}}%
%BeginExpansion
\kern-.1em%
%EndExpansion
\left(
%TCIMACRO{\TeXButton{l}{\kern-.1em}}%
%BeginExpansion
\kern-.1em%
%EndExpansion
\nabla\xi,\mathbf{v}%
%TCIMACRO{\TeXButton{l}{\kern-.1em}}%
%BeginExpansion
\kern-.1em%
%EndExpansion
\right)
%TCIMACRO{\TeXButton{l}{\kern-.1em}}%
%BeginExpansion
\kern-.1em%
%EndExpansion
\right)  =k^{2}\left(  \nabla\xi,\mathbf{v}\right)  +\operatorname*{i}%
kb_{k}\left(  \left(  \nabla\xi\right)  ^{\nabla},\mathbf{v}^{\nabla}\right)
=-k^{2}\left(  \xi,\operatorname*{div}\mathbf{v}\right)  +k^{2}\left(
\xi,\left\langle \mathbf{v},\mathbf{n}\right\rangle \right)  _{\Gamma
}+\operatorname*{i}k\left(  T_{k}\left(  \nabla\xi\right)  _{T},\mathbf{v}%
_{T}\right)  _{\Gamma}\\
&  =-k^{2}\left(  \xi,\operatorname*{div}\mathbf{v}\right)  +k^{2}\left(
\xi,\left\langle \mathbf{v},\mathbf{n}\right\rangle \right)  _{\Gamma
}+\operatorname*{i}k\left(  \left(  \nabla\xi\right)  _{T},T_{k}^{\ast
}\mathbf{v}_{T}\right)  _{\Gamma}\\
&  =-k^{2}\left(  \xi,\operatorname*{div}\mathbf{v}\right)  +\operatorname*{i}%
k\left(  \xi,\operatorname*{i}k\left\langle \mathbf{v},\mathbf{n}\right\rangle
-\operatorname*{div}\nolimits_{\Gamma}T_{k}^{\ast}\Pi_{T}\mathbf{v}\right)
_{\Gamma},
\end{align*}
where $T_{k}^{\ast}$ is the adjoint of $T_{k}$ given by%
\begin{equation}
\left(  T_{k}%
%TCIMACRO{\TeXButton{boldphi}{\mbox{\boldmath$ \phi$}}}%
%BeginExpansion
\mbox{\boldmath$ \phi$}%
%EndExpansion
,%
%TCIMACRO{\TeXButton{boldpsi}{\mbox{\boldmath$ \psi$}}}%
%BeginExpansion
\mbox{\boldmath$ \psi$}%
%EndExpansion
\right)  _{\Gamma}=\left(
%TCIMACRO{\TeXButton{boldphi}{\mbox{\boldmath$ \phi$}}}%
%BeginExpansion
\mbox{\boldmath$ \phi$}%
%EndExpansion
,T_{k}^{\ast}%
%TCIMACRO{\TeXButton{boldpsi}{\mbox{\boldmath$ \psi$}}}%
%BeginExpansion
\mbox{\boldmath$ \psi$}%
%EndExpansion
\right)  _{\Gamma}\qquad\forall%
%TCIMACRO{\TeXButton{boldphi}{\mbox{\boldmath$ \phi$}}}%
%BeginExpansion
\mbox{\boldmath$ \phi$}%
%EndExpansion
,%
%TCIMACRO{\TeXButton{boldpsi}{\mbox{\boldmath$ \psi$}}}%
%BeginExpansion
\mbox{\boldmath$ \psi$}%
%EndExpansion
\in\mathbf{H}_{\operatorname*{curl}}^{-1/2}(\Gamma). \label{defTomegaast}%
\end{equation}
Since\footnote{This follows by representing $T_{k}$ by trace operators and
boundary/volume potentials for the electric Maxwell equation as, e.g.,
explained in \cite{Buffa_Hipt_Bem_Maxwell}, and by applying the rules for
computing the adjoint of composite operators.}$\left(  \operatorname*{i}%
kT_{k}\right)  ^{\ast}=-\operatorname*{i}kT_{k}^{\ast}=-\operatorname*{i}%
kT_{-k}$,this is equivalent to (\ref{ImpBedb}).%
%TCIMACRO{\TeXButton{End Proof}{\endproof}}%
%BeginExpansion
\endproof
%EndExpansion

\begin{corollary}
Let the right-hand side in (\ref{GalDisb}) be defined by $F\left(
\mathbf{v}\right)  =\left(  \operatorname*{i}k\mathbf{\tilde{\boldsymbol{\jmath}}%
},\mathbf{v}\right)  $ for some $\mathbf{\tilde{\boldsymbol{\jmath}}\in H}\left(
\Omega,\operatorname*{div}\right)  $ with $\operatorname*{div}\mathbf{\tilde
{\boldsymbol{\jmath}}}=0$ and $\mathbf{\tilde{\boldsymbol{\jmath}}} \cdot{\mathbf{n}}
= 0$ on $\Gamma$. Then the solution $\mathbf{E}$ satisfies $\mathbf{E}%
\in\mathbf{V}_{0}$.
\end{corollary}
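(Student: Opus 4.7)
The plan is straightforward: use the alternative characterization $\mathbf{V}_0 = \{\mathbf{u}\in\mathbf{X}\,|\,A_k(\mathbf{u},\nabla\xi) = 0\ \forall\xi\in H^1(\Omega)\}$ given in (\ref{defVoa}), and simply test the Galerkin equation (\ref{GalDisb}) against gradient fields $\mathbf{v} = \nabla\xi$ for $\xi\in H^1(\Omega)$. Since $\xi\in H^1(\Omega)$ implies $\nabla\xi\in\mathbf{L}^2(\Omega)$ and $\operatorname{curl}\nabla\xi = 0$, we have $\nabla\xi\in\mathbf{X}$, so this is a legitimate test function.

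Plugging $\mathbf{v}=\nabla\xi$ into (\ref{GalDisb}) and using the specific form of $F$, we get
\[
A_k(\mathbf{E},\nabla\xi) = F(\nabla\xi) = \bigl(\operatorname{i}k\tilde{\boldsymbol{\jmath}},\nabla\xi\bigr).
\]
It remains to show that the right-hand side vanishes under the hypotheses on $\tilde{\boldsymbol{\jmath}}$. Since $\tilde{\boldsymbol{\jmath}}\in\mathbf{H}(\Omega,\operatorname{div})$, the normal trace $\tilde{\boldsymbol{\jmath}}\cdot\mathbf{n}$ is well-defined in $H^{-1/2}(\Gamma)$, and the standard integration by parts formula
\[
(\tilde{\boldsymbol{\jmath}},\nabla\xi) = -\bigl(\operatorname{div}\tilde{\boldsymbol{\jmath}},\xi\bigr) + \langle \tilde{\boldsymbol{\jmath}}\cdot\mathbf{n},\xi\rangle_{H^{-1/2}(\Gamma)\times H^{1/2}(\Gamma)}
\]
applies for every $\xi\in H^1(\Omega)$. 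By the two assumptions $\operatorname{div}\tilde{\boldsymbol{\jmath}} = 0$ in $\Omega$ and $\tilde{\boldsymbol{\jmath}}\cdot\mathbf{n} = 0$ on $\Gamma$, both terms on the right vanish identically.

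Combining these two observations, $A_k(\mathbf{E},\nabla\xi) = 0$ for every $\xi\in H^1(\Omega)$, which by (\ref{defVoa}) is precisely the statement $\mathbf{E}\in\mathbf{V}_0$. There is no real obstacle here; the only point to verify carefully is the duality/integration-by-parts pairing for functions of limited regularity, but that is the standard $\mathbf{H}(\operatorname{div})$ trace theorem and requires no further work.
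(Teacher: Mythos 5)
Your proposal is correct and is essentially identical to the paper's own proof: both test (\ref{GalDisb}) with $\nabla\xi$, use $\operatorname{div}\mathbf{\tilde{\boldsymbol{\j}}}=0$ and $\mathbf{\tilde{\boldsymbol{\j}}}\cdot\mathbf{n}=0$ to conclude $(\mathbf{\tilde{\boldsymbol{\j}}},\nabla\xi)=0$, and then invoke the characterization (\ref{defVoa}). The only difference is that you spell out the $\mathbf{H}(\Omega,\operatorname{div})$ integration-by-parts/trace argument explicitly, which the paper leaves implicit.
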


%

%TCIMACRO{\TeXButton{Proof}{\proof}}%
%BeginExpansion
\proof
%EndExpansion
The conditions $\operatorname{div}\tilde{\boldsymbol{\jmath}}=0$ and
$\tilde{\boldsymbol{\jmath}}\cdot {\mathbf n}=0$ imply $(\tilde{\boldsymbol{\jmath}%
},\nabla p)=0$ for all $p\in H^{1}(\Omega)$. Hence, $A_{k}(\mathbf{E},\nabla
p)=F(\nabla p)=0$ for all $p\in H^{1}(\Omega)$. By (\ref{defVoa}) 
we get $\mathbf{E}\in
V_{0}$.
%For any $p\in H_{0}^{1}\left(  \Omega\right)  $ we observe that $\nabla
%p\in\mathbf{X}$. Plugging this into (\ref{GalDis}) as a test functions yields%
%\[
%-k^{2}\left(  \mathbf{E},\nabla p\right)  -\operatorname*{i}k\left(  T_{k}%
%\Pi_{T}\mathbf{E},\Pi_{T}\nabla p\right)  _{\Gamma}=\left(  \operatorname*{i}%
%k\mathbf{\tilde{j}},\nabla p\right)  \qquad\forall p\in H_{0}^{1}\left(
%\Omega\right)  .
%\]
%Next, we observe that $\Pi_{T}\nabla p=0$ since $p=0$ on $\Gamma$. Integration
%by parts yields $\left(  \operatorname*{i}k\mathbf{\tilde{j}},\nabla p\right)
%=0$ since $\operatorname*{div}\mathbf{\tilde{j}}=0$. This implies $\left(
%\mathbf{E},\nabla p\right)  =0$ and integration by parts leads to
%$\operatorname*{div}\mathbf{E}=0$. With this information in hand, we test
%(\ref{GalDis}) with $\nabla p$ for any $p\in H^{1}\left(  \Omega\right)  $.
%This produces, after integration by parts on the surface,
%\[
%\int_{\Gamma}\left(  \operatorname*{i}k\left\langle \mathbf{E},\mathbf{n}%
%\right\rangle +\operatorname*{div}\nolimits_{\Gamma}T_{k}\Pi_{T}%
%\mathbf{E}\right)  \overline{p}=0\qquad\forall p\in H^{1}\left(
%\Omega\right)  .
%\]
%This proves $\mathbf{E}\in\mathbf{V}_{0}$.
\endproof

Next, we will prove that the spaces $\mathbf{V}_{0}$ and $\mathbf{V}_{0}%
^{\ast}$ are subspaces of $\mathbf{H}^{1}\left(  \Omega\right)  $. For the
special case of $\Gamma$ being the unit sphere, the constants in the norm
equivalences can be determined explicitly -- these details can be found in
Lemma~\ref{Lemembedspec}.

\begin{lemma}
\label{Lemembed}
Let Assumption~\ref{Assumpsdefb} be valid. 
Let $\mathbf{V}_{0}$, $\mathbf{V}_{0}^{\ast}$ be defined as in
(\ref{defVo}). Then,
\begin{equation}
\mathbf{V}_{0}\cup\mathbf{V}_{0}^{\ast}\subset\mathbf{H}^{1}\left(
\Omega\right)  . \label{uinV0uinV0starH1}%
\end{equation}
There exists a constant $C_{\Omega,k}>0$ such that%
\begin{equation}
\left\Vert \mathbf{u}\right\Vert _{H^{1}\left(  \Omega\right)  }\leq
C_{\Omega,k}\left\Vert \mathbf{u}\right\Vert _{\operatorname*{curl},\Omega
,1}\qquad\forall\mathbf{u}\in\mathbf{V}_{0}\cup\mathbf{V}_{0}^{\ast}.
\label{DefCOmegak}%
\end{equation}
Moreover, 
%for any $\mathbf{v}_{0}^{\ast
%}\in\mathbf{V}_{0}^{\ast}$ and $\mathbf{u}_{0}\in\mathbf{V}_{0}$, 
the mappings
$\mathbf{X}\ni\mathbf{u}\mapsto\left(
%TCIMACRO{\TeXButton{l}{\kern-.1em}}%
%BeginExpansion
\kern-.1em%
%EndExpansion
\left(
%TCIMACRO{\TeXButton{l}{\kern-.1em}}%
%BeginExpansion
\kern-.1em%
%EndExpansion
\Pi^{\operatorname*{curl}}\mathbf{u},\cdot%
%TCIMACRO{\TeXButton{l}{\kern-.1em}}%
%BeginExpansion
\kern-.1em%
%EndExpansion
\right)
%TCIMACRO{\TeXButton{l}{\kern-.1em}}%
%BeginExpansion
\kern-.1em%
%EndExpansion
\right)  \in\mathbf{X}^{\times}$ and $\mathbf{X}\ni\mathbf{v}\mapsto\left(
%TCIMACRO{\TeXButton{l}{\kern-.1em}}%
%BeginExpansion
\kern-.1em%
%EndExpansion
\left(
%TCIMACRO{\TeXButton{l}{\kern-.1em}}%
%BeginExpansion
\kern-.1em%
%EndExpansion
\cdot,\Pi^{\operatorname*{curl},\ast}\mathbf{v}%
%TCIMACRO{\TeXButton{l}{\kern-.1em}}%
%BeginExpansion
\kern-.1em%
%EndExpansion
\right)
%TCIMACRO{\TeXButton{l}{\kern-.1em}}%
%BeginExpansion
\kern-.1em%
%EndExpansion
\right) \in {\mathbf X}^\prime  $ are compact.
\end{lemma}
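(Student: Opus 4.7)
Using Lemma~\ref{LemEq2scprodstrong}, I first translate the variational definitions of $\mathbf{V}_{0}$ and $\mathbf{V}_{0}^{\ast}$ into strong form: every $\mathbf{u}$ in either space satisfies $\operatorname{div}\mathbf{u}=0$ in $\Omega$ together with a boundary identity of the form $\operatorname*{i}k\langle\mathbf{u},\mathbf{n}\rangle = \mp\operatorname{div}_{\Gamma}T_{\pm k}\Pi_{T}\mathbf{u}$ on $\Gamma$. Combined with $\operatorname{curl}\mathbf{u}\in\mathbf{L}^{2}(\Omega)$, this places $\mathbf{u}$ in $\mathbf{H}(\Omega,\operatorname{curl})\cap\mathbf{H}(\Omega,\operatorname{div})$, and the membership $\mathbf{u}\in\mathbf{H}^{1}(\Omega)$ reduces to proving that the normal trace $\langle\mathbf{u},\mathbf{n}\rangle$ lies in $H^{1/2}(\Gamma)$. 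Assuming that step, I would then split $\mathbf{u}=\nabla\phi+\mathbf{w}$, where $\phi\in H^{2}(\Omega)$ solves $\Delta\phi=0$ with $\partial_{n}\phi=\langle\mathbf{u},\mathbf{n}\rangle$ (which is solvable since $\operatorname{div}\mathbf{u}=0$ enforces the zero-mean compatibility), apply elliptic regularity to obtain $\nabla\phi\in\mathbf{H}^{1}(\Omega)$, and then invoke the classical smooth-boundary embedding $\mathbf{H}(\operatorname{curl})\cap\mathbf{H}(\operatorname{div})\cap\{\mathbf{w}\cdot\mathbf{n}=0\}\hookrightarrow\mathbf{H}^{1}(\Omega)$ to conclude that $\mathbf{w}$, and hence $\mathbf{u}$, is in $\mathbf{H}^{1}(\Omega)$, with a continuity estimate that defines $C_{\Omega,k}$.

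\textbf{Main obstacle.} The crux is proving the $H^{1/2}(\Gamma)$-regularity of $\langle\mathbf{u},\mathbf{n}\rangle$ from the boundary identity. A one-shot application of the mapping property $T_{\pm k}:\mathbf{H}^{-1/2}_{\operatorname{curl}}(\Gamma)\to\mathbf{H}^{-1/2}_{\operatorname{div}}(\Gamma)$ from Lemma~\ref{LemCDtN} only delivers $\operatorname{div}_{\Gamma}T_{\pm k}\Pi_{T}\mathbf{u}\in H^{-1/2}(\Gamma)$, which is exactly the regularity we already know from $\mathbf{u}\in\mathbf{H}(\Omega,\operatorname{div})$ and thus insufficient. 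The plan is to exploit the fact that, for the analytic surface $\Gamma$ fixed here, $T_{\pm k}$ is a classical pseudodifferential operator of order at most $1$ on the Hodge decomposition of tangential fields; consequently its mapping properties extend to the full Sobolev scale $\mathbf{H}^{s}_{\operatorname{curl}}(\Gamma)\to\mathbf{H}^{s}_{\operatorname{div}}(\Gamma)$. One then bootstraps: each increment of interior regularity of $\mathbf{u}$ (via the elliptic decomposition above) produces, through the trace theorem, a corresponding gain for $\Pi_{T}\mathbf{u}$, which the boundary identity converts into a gain for $\langle\mathbf{u},\mathbf{n}\rangle$, and the iteration reaches $H^{1/2}(\Gamma)$ after finitely many steps. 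For $\Omega=B_{1}(0)$ this bootstrap becomes entirely concrete through the eigenseries representation of $T_{k}$ in (\ref{DefTk}), where componentwise the symbols $z_{\ell}(\pm k)$ act as explicit multipliers with controlled $\ell$-asymptotics; this is the route that in fact produces the sharp constant $C_{\Omega,k}=1$ for the ball recorded in Lemma~\ref{Lemembedspec}.

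\textbf{Compactness.} For the final assertion, the $\mathbf{H}^{1}$-regularity just established yields continuous mappings $\Pi^{\operatorname{curl}}:\mathbf{X}\to\mathbf{V}_{0}\hookrightarrow\mathbf{H}^{1}(\Omega)$ and $\Pi^{\operatorname{curl},\ast}:\mathbf{X}\to\mathbf{V}_{0}^{\ast}\hookrightarrow\mathbf{H}^{1}(\Omega)$. Writing the sesquilinear form as in (\ref{2scprod}), the mapping $\mathbf{v}\mapsto((\cdot,\Pi^{\operatorname{curl},\ast}\mathbf{v}))$ splits into the $L^{2}$-piece $\mathbf{u}\mapsto k^{2}(\mathbf{u},\Pi^{\operatorname{curl},\ast}\mathbf{v})$, which factors through the compact Rellich embedding $\mathbf{H}^{1}(\Omega)\hookrightarrow\mathbf{L}^{2}(\Omega)$ applied in the second slot, and the boundary piece $\mathbf{u}\mapsto\operatorname*{i}k\,b_{k}(\mathbf{u}^{\nabla},(\Pi^{\operatorname{curl},\ast}\mathbf{v})^{\nabla})$, which factors through the compact trace embedding $\mathbf{H}^{1}(\Omega)\to\mathbf{H}^{1/2}(\Gamma)\hookrightarrow\mathbf{H}^{-1/2}_{\operatorname{curl}}(\Gamma)$ composed with the bounded $T_{k}$ from Lemma~\ref{LemCDtN} and tested against the trace of $\mathbf{u}$ via the duality (\ref{eq:duality}). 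A sum of compact operators being compact, the first claimed mapping is compact from $\mathbf{X}$ into $\mathbf{X}^{\prime}$; the argument for $\mathbf{u}\mapsto((\Pi^{\operatorname{curl}}\mathbf{u},\cdot))$ is symmetric, now using $\mathbf{V}_{0}\subset\mathbf{H}^{1}(\Omega)$ and swapping the roles of the two slots.
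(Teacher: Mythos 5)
Your reduction of the first assertion to showing $\langle\mathbf{u},\mathbf{n}\rangle\in H^{1/2}(\Gamma)$ is correct, but the bootstrap you propose to establish that fact is circular and never gains regularity. Track the orders on the Hodge components: by (\ref{DefTk}) and Lemma~\ref{Lemzlest}, the symbol of $T_{k}$ on the gradient part is $\operatorname*{i}k/(z_{\ell}(k)+1)=O(k/\ell)$, so $\mathbf{u}_{T}\mapsto\tfrac{1}{k}\operatorname{div}_{\Gamma}T_{k}\mathbf{u}_{T}^{\nabla}$ sends coefficients $U_{\ell}^{m}$ to quantities of size $\sim\ell\,U_{\ell}^{m}$. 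Starting from the only available information $\mathbf{u}_{T}\in\mathbf{H}^{-1/2}_{\operatorname{curl}}(\Gamma)$, i.e.\ $\sum\lambda_{\ell}^{1/2}|U_{\ell}^{m}|^{2}<\infty$, the boundary identity of Lemma~\ref{LemEq2scprodstrong} therefore delivers exactly $\langle\mathbf{u},\mathbf{n}\rangle\in H^{-1/2}(\Gamma)$ --- which is what $\mathbf{u}\in\mathbf{H}(\Omega,\operatorname{div})$ already gives for free. To reach $H^{1/2}(\Gamma)$ you would need $\mathbf{u}_{T}\in\mathbf{H}^{1/2}_{T}(\Gamma)$, which by the trace theorem requires $\mathbf{u}\in\mathbf{H}^{1}(\Omega)$, i.e.\ the conclusion. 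Your Neumann-problem decomposition $\mathbf{u}=\nabla\phi+\mathbf{w}$ does not break this loop either: at the base step it only returns $\nabla\phi\in\mathbf{L}^{2}$, so no increment of interior regularity is ever produced and the iteration is stuck at step zero.

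The idea that actually closes the argument (and is the paper's) is to leave $\Omega$ altogether: extend $\mathbf{u}$ by the exterior Maxwell solution $\mathbf{u}^{+}$ used to define $T_{k}$, and set $\mathbf{U}=\mathbf{u}$ in $\Omega$, $\mathbf{U}=\mathbf{u}^{+}$ in $\Omega^{+}$. The tangential traces match by construction of $T_{k}$, so $\mathbf{U}\in\mathbf{H}_{\operatorname{loc}}(\mathbb{R}^{3},\operatorname{curl})$; and testing the glued weak formulation with $\nabla\varphi$, $\varphi\in C_{0}^{\infty}(\mathbb{R}^{3})$, the defining orthogonality of $\mathbf{V}_{0}$ to gradients shows $\operatorname{div}\mathbf{U}=0$ distributionally across $\Gamma$, i.e.\ the normal traces match as well. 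Then the elementary Fourier identity $|\boldsymbol{\xi}|^{2}|\widehat{\mathbf{v}}|^{2}=|\langle\boldsymbol{\xi},\widehat{\mathbf{v}}\rangle|^{2}+|\boldsymbol{\xi}\times\widehat{\mathbf{v}}|^{2}$ applied to a cut-off of $\mathbf{U}$ gives $\mathbf{U}\in\mathbf{H}^{1}_{\operatorname{loc}}(\mathbb{R}^{3})$ with no boundary regularity of traces ever being needed; the bound (\ref{DefCOmegak}) then follows from the a priori estimate for the exterior problem. Your compactness argument is essentially sound once $\mathbf{V}_{0}\cup\mathbf{V}_{0}^{\ast}\subset\mathbf{H}^{1}(\Omega)$ is available, with one caveat: the embedding $\mathbf{H}^{1/2}_{T}(\Gamma)\hookrightarrow\mathbf{H}^{-1/2}_{\operatorname{curl}}(\Gamma)$ is \emph{not} compact on the $\overrightarrow{\operatorname*{curl}_{\Gamma}}$-component (the weights $\lambda_{\ell}^{1/2}(1+\lambda_{\ell})$ and $\lambda_{\ell}^{3/2}$ are comparable); it is compact only on the gradient component, so you must invoke (\ref{orthobk}) to restrict to $(\cdot)^{\nabla}$ before factoring through that embedding.
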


%

%TCIMACRO{\TeXButton{Proof}{\proof}}%
%BeginExpansion
\proof
%EndExpansion
Let $\mathbf{u}\in\mathbf{V}_{0}$. The function $T_{k}\Pi_{T}\mathbf{u}$ is
computed by first solving the exterior problem (cf. Remark \ref{ExCapOp})%
\begin{equation}%
\begin{array}
[c]{ll}%
\operatorname*{curl}\operatorname{curl}\mathbf{u}^{+}-k^{2}\mathbf{u}^{+}=0 &
\text{in }\Omega^{+},\\
\left[  \left(  \mathbf{u},\mathbf{u}^{+}\right)  \right]  _{0,\Gamma}=0 &
\text{on }\Gamma
\end{array}
\label{extV0probl}%
\end{equation}
with Silver-M\"{u}ller radiation conditions and then setting $T_{k}\Pi
_{T}\mathbf{u=}\frac{1}{\operatorname*{i}k}\gamma_{T}^{+}\operatorname*{curl}%
\mathbf{u}^{+}$. Since the tangential components of $\mathbf{u}$ and
$\mathbf{u}^{+}$ coincide on $\Gamma$, the function $\mathbf{U}:\mathbb{R}%
^{3}\rightarrow\mathbb{C}$ defined by $\left.  \mathbf{U}\right\vert _{\Omega
}=\mathbf{u}$ and $\left.  \mathbf{U}\right\vert _{\Omega^{+}}=\mathbf{u}^{+}$
(and $\Gamma$ considered as a set of measure zero) is in $\mathbf{H}%
_{\operatorname*{loc}}\left(  \operatorname*{curl},\mathbb{R}^{3}\right)  $.
Then, for all $\mathbf{v}\in\mathbf{C}_{0}^{\infty}\left(  \mathbb{R}%
^{3}\right)  $ it holds%
\begin{align}
&  \left(  \operatorname*{curl}\mathbf{U},\operatorname*{curl}\mathbf{v}%
\right)  _{\mathbf{L}^{2}\left(  \mathbb{R}^{3}\backslash\Gamma\right)
}-k^{2}\left(  \mathbf{U},\mathbf{v}\right)  _{\mathbf{L}^{2}\left(
\mathbb{R}^{3}\right)  }=a_{k}\left(  \mathbf{u},\mathbf{v}\right)  +\left(
\operatorname*{curl}\mathbf{u}^{+},\operatorname*{curl}\mathbf{v}\right)
_{\mathbf{L}^{2}\left(  \Omega^{+}\right)  }-k^{2}\left(  \mathbf{u}%
^{+},\mathbf{v}\right)  _{\mathbf{L}^{2}\left(  \Omega^{+}\right)
}\nonumber\\
&  \text{\qquad\qquad}=a_{k}\left(  \mathbf{u},\mathbf{v}\right)  +\left(
\operatorname*{curl}\operatorname*{curl}\mathbf{u}^{+}-k^{2}\mathbf{u}%
^{+},\mathbf{v}\right)  _{\mathbf{L}^{2}\left(  \Omega^{+}\right)  }-\left(
\gamma_{T}\operatorname*{curl}\mathbf{u}^{+},\mathbf{v}\right)  _{\Gamma
}\nonumber\\
&  \text{\qquad\qquad}=a_{k}\left(  \mathbf{u},\mathbf{v}\right)  -\left(
\gamma_{T}\operatorname*{curl}\mathbf{u}^{+},\mathbf{v}\right)  _{\Gamma
}\nonumber\\
&  \text{\qquad\qquad}=a_{k}\left(  \mathbf{u},\mathbf{v}\right)
-\operatorname*{i}k\left(  T_{k}\mathbf{u}_{T},\mathbf{v}_{T}\right)
_{\Gamma}=a_{k}\left(  \mathbf{u},\mathbf{v}\right)  -\operatorname*{i}%
kb_{k}\left(  \mathbf{u}_{T},\mathbf{v}_{T}\right)  . \label{fulllocal}%
\end{align}
If we test with gradients $\mathbf{v}=\nabla\varphi$ for $\varphi\in
C_{0}^{\infty}\left(  \mathbb{R}^{3}\right)  $ we obtain%
\begin{align*}
\left(  \operatorname*{curl}\mathbf{U},\operatorname*{curl}\nabla
\varphi\right)  _{\mathbf{L}^{2}\left(  \mathbb{R}^{3}\backslash\Gamma\right)
}-k^{2}\left(  \mathbf{U},\nabla\varphi\right)  _{\mathbf{L}^{2}\left(
\mathbb{R}^{3}\right)  }  &  =-k^{2}\left(  \mathbf{U},\nabla\varphi\right)
_{\mathbf{L}^{2}\left(  \mathbb{R}^{3}\right)  }=k^{2}\left(
\operatorname*{div}\mathbf{U},\varphi\right)  _{L^{2}\left(  \mathbb{R}%
^{3}\right)  },\\
\left(  \operatorname*{curl}\mathbf{U},\operatorname*{curl}\nabla
\varphi\right)  _{\mathbf{L}^{2}\left(  \mathbb{R}^{3}\backslash\Gamma\right)
}-k^{2}\left(  \mathbf{U},\nabla\varphi\right)  _{\mathbf{L}^{2}\left(
\mathbb{R}^{3}\right)  }  &  \overset{\text{(\ref{fulllocal})}}{=}a_{k}\left(
\mathbf{u},\nabla\varphi\right)  -\operatorname*{i}kb_{k}\left(
\mathbf{u}_{T},\left(  \nabla\varphi\right)  _{T}\right)
\overset{\text{Rem.~\ref{rem:decomposition-of-nabla-phi}}}{=}-\left(
%TCIMACRO{\TeXButton{l}{\kern-.1em}}%
%BeginExpansion
\kern-.1em%
%EndExpansion
\left(
%TCIMACRO{\TeXButton{l}{\kern-.1em}}%
%BeginExpansion
\kern-.1em%
%EndExpansion
\mathbf{u},\nabla\varphi%
%TCIMACRO{\TeXButton{l}{\kern-.1em}}%
%BeginExpansion
\kern-.1em%
%EndExpansion
\right)
%TCIMACRO{\TeXButton{l}{\kern-.1em}}%
%BeginExpansion
\kern-.1em%
%EndExpansion
\right)  .
\end{align*}
Since $\mathbf{u}\in\mathbf{V}_{0}$ implies $\left(
%TCIMACRO{\TeXButton{l}{\kern-.1em}}%
%BeginExpansion
\kern-.1em%
%EndExpansion
\left(
%TCIMACRO{\TeXButton{l}{\kern-.1em}}%
%BeginExpansion
\kern-.1em%
%EndExpansion
\mathbf{u},\nabla\varphi%
%TCIMACRO{\TeXButton{l}{\kern-.1em}}%
%BeginExpansion
\kern-.1em%
%EndExpansion
\right)
%TCIMACRO{\TeXButton{l}{\kern-.1em}}%
%BeginExpansion
\kern-.1em%
%EndExpansion
\right)  =0$, the combination of the previous two equations leads to
$\operatorname*{div}\mathbf{U}=0$ in $\mathbb{R}^{3}$. Hence%
\begin{equation}
\mathbf{U}\in\mathbf{H}_{\operatorname*{loc}}\left(  \mathbb{R}^{3}%
,\operatorname*{div}\right)  \cap\mathbf{H}_{\operatorname*{loc}}\left(
\mathbb{R}^{3},\operatorname*{curl}\right)  . \label{bigUincurldiv}%
\end{equation}
Let $B_{R}\left(  0\right)  $ denote the ball with radius $0<R<\infty$ and
centered at $0$ such that $\overline{\Omega}\subset B_{R}\left(  0\right)  $.

Next, we show $\mathbf{U}\in\mathbf{H}^{1}\left(  B_{R}\left(  0\right)
\right)  $. From (\ref{bigUincurldiv}) we conclude that $\mathbf{v}%
:=\chi\mathbf{U}\in\mathbf{H}_{\operatorname*{loc}}\left(  \mathbb{R}%
^{3},\operatorname*{div}\right)  \cap\mathbf{H}_{\operatorname*{loc}}\left(
\mathbb{R}^{3},\operatorname*{curl}\right)  $ for any smooth cut-off function
$\chi$; in particular we choose $\chi$ such that $\left.  \chi\right\vert
_{B_{R}\left(  0\right)  }=1$. The Fourier transform $\mathbf{\hat{v}}$ then
satisfies $\left\langle
%TCIMACRO{\TeXButton{boldxi}{\mbox{\boldmath$ \xi$}}}%
%BeginExpansion
\mbox{\boldmath$ \xi$}%
%EndExpansion
,\mathbf{\hat{v}}\right\rangle \in L^{2}\left(  \mathbb{R}^{3}\right)  $ as
well as $%
%TCIMACRO{\TeXButton{boldxi}{\mbox{\boldmath$ \xi$}}}%
%BeginExpansion
\mbox{\boldmath$ \xi$}%
%EndExpansion
\times\mathbf{\hat{v}}\in\mathbf{L}^{2}\left(  \mathbb{R}^{3}\right)  $. From
$\left\vert
%TCIMACRO{\TeXButton{boldxi}{\mbox{\boldmath$ \xi$}}}%
%BeginExpansion
\mbox{\boldmath$ \xi$}%
%EndExpansion
\right\vert ^{2}\left\vert \mathbf{\hat{v}}\left(
%TCIMACRO{\TeXButton{boldxi}{\mbox{\boldmath$ \xi$}}}%
%BeginExpansion
\mbox{\boldmath$ \xi$}%
%EndExpansion
\right)  \right\vert ^{2}=\left\langle
%TCIMACRO{\TeXButton{boldxi}{\mbox{\boldmath$ \xi$}}}%
%BeginExpansion
\mbox{\boldmath$ \xi$}%
%EndExpansion
,\mathbf{\hat{v}}\right\rangle ^{2}+\left|
%TCIMACRO{\TeXButton{boldxi}{\mbox{\boldmath$ \xi$}}}%
%BeginExpansion
\mbox{\boldmath$ \xi$}%
%EndExpansion
\times\mathbf{\hat{v}}\right|  ^{2}$ we infer $\nabla\mathbf{v\in L}%
^{2}\left(  \mathbb{R}^{3}\right)  $ and, in turn, $\nabla\mathbf{U}%
\in\mathbf{L}^{2}\left(  B_{R}\left(  0\right)  \right)  $. Since
(\ref{bigUincurldiv}) directly implies $\mathbf{U}\in\mathbf{L}^{2}\left(
B_{R}\left(  0\right)  \right)  $ we have proved $\mathbf{U}\in\mathbf{H}%
^{1}\left(  B_{R}\left(  0\right)  \right)  $ with 
%endfootnote%
\[
\left\Vert \mathbf{u}\right\Vert _{H^{1}\left(  \Omega\right)  }\leq\left\Vert
\mathbf{U}\right\Vert _{\mathbf{H}^{1}\left(  B_{R}\left(  0\right)  \right)
}\leq C_{R}\left(  \left\Vert \mathbf{U}\right\Vert _{\operatorname*{curl}%
,B_{R}\left(  0\right)  ,1}+\left\Vert \operatorname*{div}\mathbf{U}%
\right\Vert _{\mathbf{L}^{2}\left(  B_{R}\left(  0\right)  \right)  }\right)
.
\]
We already know that $\operatorname*{div}\mathbf{U}=0$ in $\mathbb{R}^{3}$ so
that%
\begin{equation}
\left\Vert \mathbf{u}\right\Vert _{H^{1}\left(  \Omega\right)  }\leq
C_{R}\left(  \left\Vert \mathbf{u}\right\Vert _{\operatorname*{curl},\Omega
,1}+\left\Vert \mathbf{u}^{+}\right\Vert _{\operatorname*{curl},\Omega^{+}\cap
B_{R}\left(  0\right)  ,1}\right)  . \label{uH1uU}%
\end{equation}

An inspection of the proof of \cite[Thm.~{5.4.6}]{Nedelec01} implies that%
\[
\left\Vert \mathbf{u}_{+}\right\Vert _{\operatorname*{curl},\Omega^{+}\cap
B_{R}\left(  0\right)  ,1}\leq C_{k}\left\Vert \gamma_{\tau}^{+}\mathbf{u}%
_{+}\right\Vert _{H_{\operatorname*{div}}^{-1/2}\left(  \Gamma\right)
}\overset{\text{(\ref{extV0probl})}}{=}C_{k}\left\Vert \gamma_{\tau}%
\mathbf{u}\right\Vert _{H_{\operatorname*{div}}^{-1/2}\left(  \Gamma\right)
}\overset{\text{Thm. \ref{traceTHM1}}}{\leq}C_{k}^{\prime}\left\Vert
\mathbf{u}\right\Vert _{\operatorname*{curl},\Omega,1}.
\]
The combination with (\ref{uH1uU}) leads to (\ref{DefCOmegak}) for
$\mathbf{u}\in\mathbf{V}_{0}$ with a constant $C_{\Omega,k}$, possibly
depending on $\Omega$ and $k$. The inclusion $\mathbf{V}_{0}^{\ast}%
\subset\mathbf{H}^{1}\left(  \Omega\right)  $ in (\ref{uinV0uinV0starH1}) and
(\ref{DefCOmegak}) for $\mathbf{u}\in\mathbf{V}_{0}^{\ast}$ follows by the
same reasoning.

Next we prove that the mapping $\mathbf{X}\ni\mathbf{u}\mapsto\left(
%TCIMACRO{\TeXButton{l}{\kern-.1em}}%
%BeginExpansion
\kern-.1em%
%EndExpansion
\left(
%TCIMACRO{\TeXButton{l}{\kern-.1em}}%
%BeginExpansion
\kern-.1em%
%EndExpansion
\Pi^{\operatorname*{curl}}\mathbf{u},\cdot%
%TCIMACRO{\TeXButton{l}{\kern-.1em}}%
%BeginExpansion
\kern-.1em%
%EndExpansion
\right)
%TCIMACRO{\TeXButton{l}{\kern-.1em}}%
%BeginExpansion
\kern-.1em%
%EndExpansion
\right)  \in\mathbf{X}^{\times}$ is compact. The $\mathbf{L}^{2}\left(
\Omega\right)  $ part of this mapping is compact since $\Pi
^{\operatorname*{curl}}\mathbf{u}\in\mathbf{V}_{0}\subset\mathbf{H}^{1}\left(
\Omega\right)  \overset{\operatorname*{comp}}{\hookrightarrow}\mathbf{L}%
^{2}\left(  \Omega\right)  $. Hence, it remains to prove the compactness of
\begin{equation}
\mathbf{X}\ni\mathbf{u}\mapsto\left(  T_{k}\left(  \Pi^{\operatorname*{curl}%
}\mathbf{u}\right)  ^{\nabla},\left(  \cdot\right)  ^{\nabla}\right)
_{\Gamma}\in\mathbf{X}^{\times}. \label{compdoppelbili}%
\end{equation}
We set $\mathbf{u}_{0}:=\Pi^{\operatorname*{curl}}\mathbf{u}$ and write
$\Pi_{T}\mathbf{u}_{0}=:\mathbf{u}_{0}^{\operatorname*{curl}}+\mathbf{u}%
_{0}^{\nabla}$ according to (\ref{utbcHdecomp}). For an element ${\mathbf{v}%
}\in{\mathbf{X}}$, we decompose ${\mathbf{v}}_{T}={\mathbf{v}}%
^{\operatorname{curl}}+\nabla_{\Gamma}\varphi$ for $\varphi\in H^{1/2}%
(\Gamma)/\mathbb{R}$; the mapping ${\mathbf{X}}\ni{\mathbf{v}}\mapsto
\varphi\in H^{1/2}(\Gamma)/\mathbb{R}$ is continuous. Then
\[
\left(  T_{k}{\mathbf{u}}_{0}^{\nabla},{\mathbf{v}}^{\nabla}\right)  _{\Gamma
}\overset{(\ref{orthobk})}{=}\left(  T_{k}{\mathbf{u}}_{0},{\mathbf{v}%
}^{\nabla}\right)  _{\Gamma}=\left(  T_{k}{\mathbf{u}}_{0},\nabla_{\Gamma
}\varphi\right)  _{\Gamma}=-\left(  \operatorname{div}_{\Gamma}T_{k}%
{\mathbf{u}}_{0},\varphi\right)  _{\Gamma}\overset{\text{(\ref{ImpBeda})}%
}{=}\operatorname*{i}k\left(  \langle{\mathbf{u}}_{0}\cdot{\mathbf{n}}%
\rangle,\varphi\right)  _{\Gamma}.
\]
Since, ${\mathbf{u}}_{0}\in{\mathbf{V}}_{0}\subset{\mathbf{H}}^{1}(\Omega)$,
we have $\langle{\mathbf{u}}_{0}\cdot{\mathbf{n}}\rangle\in H^{1/2}(\Gamma)$.
Hence, we arrive at
\[
\left\vert \left(  T_{k}\left(  \Pi^{\operatorname*{curl}}\mathbf{u}\right)
^{\nabla},\left(  {\mathbf{v}}\right)  ^{\nabla}\right)  _{\Gamma}\right\vert
=\left\vert \operatorname*{i}k\left(  \langle{\mathbf{u}}_{0}\cdot{\mathbf{n}%
}\rangle,\varphi\right)  _{\Gamma}\right\vert \leq k\Vert\langle{\mathbf{u}%
}_{0}\cdot{\mathbf{n}}\rangle\Vert_{H^{1/2}(\Gamma)}\Vert\varphi
\Vert_{H^{-1/2}(\Gamma)}.
\]
Since $\varphi\in H^{1/2}\left(  \Gamma\right)  \overset{\operatorname*{comp}%
}{\hookrightarrow}H^{-1/2}\left(  \Gamma\right)  $ the compactness of the
mapping (\ref{compdoppelbili}) follows.

The compactness of the mapping $\mathbf{X}\ni\mathbf{v}\mapsto\left(  \left(
\cdot\right)  ^{\nabla},T_{k}^{\ast}\left(  \Pi^{\operatorname*{curl},\ast
}\mathbf{v}\right)  ^{\nabla}\right)  _{\Gamma}\in\mathbf{X}^{\prime}$ follows
analogously.
\endproof
%EndExpansion

\subsubsection{Abstract Error Estimate}

We have collected all ingredients to prove the quasi-optimal error estimate
for the Galerkin solution in the following Theorem~\ref{LemMonk}. It is the
\textquotedblleft Maxwell generalization\textquotedblright\ of the Galerkin
convergence theory for sesquilinear forms satisfying a G{\aa }rding
inequality, going back to \cite{Monk03}; various generalizations of this
technique can be found in \cite{hiptmair-acta,Buffa2005}. We follow
\cite[Sec.~{7.2}]{Monkbook}. For $\mathbf{w}\in\mathbf{X}\backslash\left\{
0\right\}  $ we introduce the quantity%
\begin{equation}
\delta_{k}\left(  {\mathbf{w}}\right)  :=\sup_{{\mathbf{v}}\in{\mathbf{X}}%
_{h}\backslash\left\{  0\right\}  }\left(  2\frac{\operatorname{Re}\left(
%TCIMACRO{\TeXButton{l}{\kern-.1em}}%
%BeginExpansion
\kern-.1em%
%EndExpansion
\left(
%TCIMACRO{\TeXButton{l}{\kern-.1em}}%
%BeginExpansion
\kern-.1em%
%EndExpansion
\mathbf{w},\mathbf{v}_{h}%
%TCIMACRO{\TeXButton{l}{\kern-.1em}}%
%BeginExpansion
\kern-.1em%
%EndExpansion
\right)
%TCIMACRO{\TeXButton{l}{\kern-.1em}}%
%BeginExpansion
\kern-.1em%
%EndExpansion
\right)  }{\left\Vert \mathbf{w}\right\Vert _{\operatorname{curl},\Omega
,k}\Vert{\mathbf{v}}_{h}\Vert_{\operatorname{curl},\Omega,k}}\right)  .
\label{eq:key-term}%
\end{equation}
We need an adjoint approximation property $\tilde{\eta}_{1}^{\exp}$ defined
via the following dual problem: For given $\mathbf{w}$, $\mathbf{h}%
\in\mathbf{X}$, find $\widehat{\mathcal{N}}\left(  \mathbf{w},\mathbf{h}%
\right)  \in\mathbf{X}$ such that%
\begin{equation}
A_{k}\left(  \mathbf{v},\widehat{\mathcal{N}}(\mathbf{w},\mathbf{h})\right)
=\left(
%TCIMACRO{\TeXButton{l}{\kern-.1em}}%
%BeginExpansion
\kern-.1em%
%EndExpansion
\left(
%TCIMACRO{\TeXButton{l}{\kern-.1em}}%
%BeginExpansion
\kern-.1em%
%EndExpansion
\mathbf{v},\mathbf{w}%
%TCIMACRO{\TeXButton{l}{\kern-.1em}}%
%BeginExpansion
\kern-.1em%
%EndExpansion
\right)
%TCIMACRO{\TeXButton{l}{\kern-.1em}}%
%BeginExpansion
\kern-.1em%
%EndExpansion
\right)  -\operatorname*{i}kb_{k}\left(  \mathbf{v}^{\operatorname*{curl}%
},\mathbf{h}^{\operatorname*{curl}}\right)  \qquad\forall\mathbf{v}%
\in\mathbf{X}. \label{defNmother}%
\end{equation}
In (\ref{solformdualprob}) we will present an explicit solution formula for
this problem, which directly implies existence of a solution. The operator
$\mathcal{N}_{1}^{\mathcal{A}}:\mathbf{X}\rightarrow\mathbf{X}$ then is given
by $\mathcal{N}_{1}^{\mathcal{A}}\left(  \mathbf{w}\right)
:=\widehat{\mathcal{N}}\left(  L_{\Omega}\mathbf{w},L_{\Omega}\mathbf{w}%
\right)  $, i.e.,
\begin{equation}
A_{k}\left(  \mathbf{v},\mathcal{N}_{1}^{\mathcal A}\mathbf{w}\right)  =\left(
%TCIMACRO{\TeXButton{l}{\kern-.1em}}%
%BeginExpansion
\kern-.1em%
%EndExpansion
\left(
%TCIMACRO{\TeXButton{l}{\kern-.1em}}%
%BeginExpansion
\kern-.1em%
%EndExpansion
\mathbf{v},L_{\Omega}\mathbf{w}%
%TCIMACRO{\TeXButton{l}{\kern-.1em}}%
%BeginExpansion
\kern-.1em%
%EndExpansion
\right)
%TCIMACRO{\TeXButton{l}{\kern-.1em}}%
%BeginExpansion
\kern-.1em%
%EndExpansion
\right)  -\operatorname*{i}kb_{k}\left(  \mathbf{v}^{\operatorname*{curl}%
},\left(  L_{\Omega}\mathbf{w}\right)  ^{\operatorname*{curl}}\right)
\qquad\forall\mathbf{v}\in\mathbf{X}. \label{adjproblm0}%
\end{equation}
The adjoint approximation property $\tilde{\eta}_{1}^{\exp}$ is defined by%
\begin{equation}
\tilde{\eta}_{1}^{\exp}:=\tilde{\eta}_{1}^{\exp}\left(  \mathbf{X}_{h}\right)
:=\sup_{\mathbf{w}\in\mathbf{X}\backslash\left\{  0\right\}  }\inf
_{\mathbf{z}_{h}\in\mathbf{X}_{h}}\frac{\left\Vert \mathcal{N}_{1}%
^{\mathcal{A}}\mathbf{w}-\mathbf{z}_{h}\right\Vert _{\operatorname*{curl}%
,\Omega,k}}{\left\Vert \mathbf{w}\right\Vert _{\operatorname*{curl},\Omega,k}%
}. \label{defetatildeexp}%
\end{equation}

\begin{theorem}
\label{LemMonk}{Let (\ref{asdefb}) be satisfied. Let $\mathbf{E}\in\mathbf{X}$
and $\mathbf{E}_{h}\in\mathbf{X}_{h}$ satisfy
\begin{equation}
A_{k}\left(  \mathbf{E}-\mathbf{E}_{h},\mathbf{v}_{h}\right)  =0\qquad
\forall\mathbf{v}_{h}\in\mathbf{X}_{h}. \label{7.12}%
\end{equation}
Assume that }$\delta_{k}\left(  \mathbf{e}_{h}\right)  <1$ for {$\mathbf{e}%
_{h}:=\mathbf{E}-\mathbf{E}_{h}$}. {Then, }$\mathbf{e}_{h}${ satisfies, for
all $\mathbf{w}_{h}\in\mathbf{X}_{h}$,} the quasi-optimal error estimate
%endjmm%
\begin{equation}
\left\Vert \mathbf{e}_{h}\right\Vert _{\operatorname*{curl},\Omega,k}\leq
\frac{C_{k}^{\operatorname*{I}}+\delta_{k}\left(  {\mathbf{e}}_{h}\right)
}{1-\delta_{k}\left(  \mathbf{e}_{h}\right)  }\left\Vert \mathbf{E}%
-\mathbf{w}_{h}\right\Vert _{\operatorname*{curl},\Omega,k} \label{bserest}%
\end{equation}
with%
\begin{equation}
C_{k}^{\operatorname*{I}}:=1+C_{b,k}^{\operatorname*{high}}+C_{b,k}%
^{\operatorname*{curl},\operatorname*{high}}+C_{\operatorname*{cont},k}%
\tilde{\eta}_{1}^{\exp} \label{defrhokneu}%
\end{equation}

\end{theorem}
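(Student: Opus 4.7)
The plan is a G\r{a}rding-type argument combined with a duality trick applied only to the low-frequency component of the consistency error, exactly as outlined in Section~\ref{sec:maxwell-aspect}. The starting point is the coercivity identity
$$
\operatorname{Re}\bigl(A_k(\mathbf{w},\mathbf{w}) + 2((\mathbf{w},\mathbf{w}))\bigr) \geq \|\mathbf{w}\|_{\operatorname{curl},\Omega,k}^{2} \qquad \forall \mathbf{w}\in \mathbf{X},
$$
which follows from (\ref{Akcom2scprod}) on taking real parts: the sign condition (\ref{asdefba}) on $\operatorname{Im} b_k(\mathbf{w}^{\operatorname{curl}},\mathbf{w}^{\operatorname{curl}})$ gives $\operatorname{Re}(-\operatorname{i}k b_k(\mathbf{w}^{\operatorname{curl}},\mathbf{w}^{\operatorname{curl}}))\geq 0$, while (\ref{k+normdblescest}) yields $\operatorname{Re}((\mathbf{w},\mathbf{w}))\geq k^{2}\|\mathbf{w}\|^{2}$. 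Applying this to $\mathbf{w}=\mathbf{e}_h$, splitting $\mathbf{e}_h = \boldsymbol{\phi} + (\mathbf{w}_h-\mathbf{E}_h)$ with $\boldsymbol{\phi}:=\mathbf{E}-\mathbf{w}_h$, and using Galerkin orthogonality (\ref{7.12}) to eliminate $A_k(\mathbf{e}_h,\mathbf{w}_h-\mathbf{E}_h)$, the leftover $2\operatorname{Re}((\mathbf{e}_h,\mathbf{w}_h-\mathbf{E}_h))$ is dominated by $\delta_k(\mathbf{e}_h)\|\mathbf{e}_h\|_{\operatorname{curl},\Omega,k}(\|\mathbf{e}_h\|_{\operatorname{curl},\Omega,k}+\|\boldsymbol{\phi}\|_{\operatorname{curl},\Omega,k})$ via the definition (\ref{eq:key-term}) and the triangle inequality. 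It therefore suffices to bound $T_1:=\operatorname{Re}\bigl(A_k(\mathbf{e}_h,\boldsymbol{\phi})+2((\mathbf{e}_h,\boldsymbol{\phi}))\bigr)$ by $C_{k}^{I}\|\mathbf{e}_h\|_{\operatorname{curl},\Omega,k}\|\boldsymbol{\phi}\|_{\operatorname{curl},\Omega,k}$.

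Expanding $T_1$ via (\ref{Akcom2scprod}) and the $b_k$-orthogonality (\ref{orthobk}) gives
$$
A_k(\mathbf{e}_h,\boldsymbol{\phi}) + 2((\mathbf{e}_h,\boldsymbol{\phi})) = (\operatorname{curl}\mathbf{e}_h,\operatorname{curl}\boldsymbol{\phi}) + k^{2}(\mathbf{e}_h,\boldsymbol{\phi}) + \operatorname{i}k\, b_k(\mathbf{e}_h^{\nabla},\boldsymbol{\phi}^{\nabla}) - \operatorname{i}k\, b_k(\mathbf{e}_h^{\operatorname{curl}},\boldsymbol{\phi}^{\operatorname{curl}}).
$$
Now decompose $\boldsymbol{\phi} = H_\Omega \boldsymbol{\phi} + L_\Omega \boldsymbol{\phi}$ inside the two $b_k$-terms. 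The crucial algebraic identity, read off directly from the defining equation (\ref{adjproblm0}) of $\mathcal{N}_1^{\mathcal{A}}$, is
$$
\operatorname{i}k\, b_k(\mathbf{e}_h^{\nabla},(L_\Omega\boldsymbol{\phi})^{\nabla}) - \operatorname{i}k\, b_k(\mathbf{e}_h^{\operatorname{curl}},(L_\Omega\boldsymbol{\phi})^{\operatorname{curl}}) = A_k(\mathbf{e}_h,\mathcal{N}_1^{\mathcal{A}}\boldsymbol{\phi}) - k^{2}(\mathbf{e}_h,L_\Omega\boldsymbol{\phi}).
$$
The leftover $-k^{2}(\mathbf{e}_h,L_\Omega\boldsymbol{\phi})$ then combines with $k^{2}(\mathbf{e}_h,\boldsymbol{\phi})$ to give $k^{2}(\mathbf{e}_h,H_\Omega\boldsymbol{\phi})$, so that the low-frequency content of the $b_k$-terms is entirely absorbed into a single duality term.

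It remains to bound the pieces. Cauchy--Schwarz together with the $H_\Omega$-stability (\ref{defCkLHOmega}) bounds $|(\operatorname{curl}\mathbf{e}_h,\operatorname{curl}\boldsymbol{\phi})+k^{2}(\mathbf{e}_h,H_\Omega\boldsymbol{\phi})|$ by $(1+C_k^{H,\Omega})\|\mathbf{e}_h\|_{\operatorname{curl},\Omega,k}\|\boldsymbol{\phi}\|_{\operatorname{curl},\Omega,k}$; the high-frequency continuity constants (\ref{defCbkhigh})--(\ref{defCbkhighcurl}) control the $H_\Omega$-parts of the $b_k$-terms by $(C_{b,k}^{\nabla,\operatorname{high}}+C_{b,k}^{\operatorname{curl},\operatorname{high}})\|\mathbf{e}_h\|_{\operatorname{curl},\Omega,k}\|\boldsymbol{\phi}\|_{\operatorname{curl},\Omega,k}$; invoking $C_{b,k}^{\operatorname{high}}=C_k^{H,\Omega}+C_{b,k}^{\nabla,\operatorname{high}}$ from (\ref{DefCconthighk}) collects these into the prefactor $1+C_{b,k}^{\operatorname{high}}+C_{b,k}^{\operatorname{curl},\operatorname{high}}$. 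For the duality term, Galerkin orthogonality allows me to write $A_k(\mathbf{e}_h,\mathcal{N}_1^{\mathcal{A}}\boldsymbol{\phi}) = A_k(\mathbf{e}_h,\mathcal{N}_1^{\mathcal{A}}\boldsymbol{\phi}-\mathbf{z}_h)$ for arbitrary $\mathbf{z}_h\in\mathbf{X}_h$; the continuity estimate (\ref{Cbk}) and the infimum definition (\ref{defetatildeexp}) of $\tilde\eta_1^{\exp}$ supply the final summand $C_{\operatorname{cont},k}\tilde\eta_1^{\exp}$ in $C_k^{I}$. Summing everything yields $|T_1|\leq C_k^{I}\|\mathbf{e}_h\|\|\boldsymbol{\phi}\|$, and substituting back into the Step~1 inequality gives $(1-\delta_k(\mathbf{e}_h))\|\mathbf{e}_h\|_{\operatorname{curl},\Omega,k}^{2}\leq (C_k^{I}+\delta_k(\mathbf{e}_h))\|\mathbf{e}_h\|_{\operatorname{curl},\Omega,k}\|\boldsymbol{\phi}\|_{\operatorname{curl},\Omega,k}$, which is (\ref{bserest}). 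The only subtle point is spotting the algebraic identity in the second step: the sign difference between the gradient and curl pieces of $b_k$ in $A_k$ versus in $((\cdot,\cdot))$ is precisely what makes the $L_\Omega\boldsymbol{\phi}$-content of the two $b_k$-terms reassemble into the right-hand side of (\ref{adjproblm0}), so that duality only ever sees the smooth low-frequency part of the consistency error. All other estimates are routine applications of the continuity constants catalogued in Lemma~\ref{LemCDtN}.
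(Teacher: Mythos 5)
Your proposal is correct and follows essentially the same route as the paper's proof: coercivity from the sign conditions in Assumption~\ref{Assumpsdefb} plus Galerkin orthogonality, then the frequency splitting $\boldsymbol{\phi}=H_\Omega\boldsymbol{\phi}+L_\Omega\boldsymbol{\phi}$ with the low-frequency part of the combined $b_k$-terms absorbed into the duality term $A_k(\mathbf{e}_h,\mathcal{N}_1^{\mathcal{A}}\boldsymbol{\phi})$ via (\ref{adjproblm0}), and the high-frequency part controlled by $C_{b,k}^{\operatorname{high}}$ and $C_{b,k}^{\operatorname{curl},\operatorname{high}}$. The only cosmetic difference is that you unpack $\left(\kern-.1em\left(\kern-.1em\cdot,\cdot\kern-.1em\right)\kern-.1em\right)$ into its $k^2(\cdot,\cdot)$ and $b_k$ pieces before regrouping, whereas the paper keeps it intact; the resulting constant $C_k^{\operatorname*{I}}$ is assembled identically.
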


%

%TCIMACRO{\TeXButton{Proof}{\proof}}%
%BeginExpansion
\proof
%EndExpansion
The assumed sign conditions of $T_{k}$ (cf. (\ref{asdefba})) imply%
\begin{align*}
\left\Vert \mathbf{e}_{h}\right\Vert _{\operatorname*{curl},\Omega,k}^{2}  &
\leq\left(  \operatorname{curl}\mathbf{e}_{h},\operatorname{curl}%
\mathbf{e}_{h}\right)  +k^{2}\left(  \mathbf{e}_{h},\mathbf{e}_{h}\right)
-k\operatorname{Im}b_{k}\left(  \mathbf{e}_{h}^{\nabla},\mathbf{e}_{h}%
^{\nabla}\right)  {+k\operatorname{Im}b_{k}\left(  \mathbf{e}_{h}%
^{\operatorname*{curl}},\mathbf{e}_{h}^{\operatorname*{curl}}\right)  }\\
&  =\operatorname{Re}A_{k}\left(  \mathbf{e}_{h},\mathbf{e}_{h}\right)
+2\operatorname{Re}\left(
%TCIMACRO{\TeXButton{l}{\kern-.1em}}%
%BeginExpansion
\kern-.1em%
%EndExpansion
\left(
%TCIMACRO{\TeXButton{l}{\kern-.1em}}%
%BeginExpansion
\kern-.1em%
%EndExpansion
\mathbf{e}_{h},\mathbf{e}_{h}%
%TCIMACRO{\TeXButton{l}{\kern-.1em}}%
%BeginExpansion
\kern-.1em%
%EndExpansion
\right)
%TCIMACRO{\TeXButton{l}{\kern-.1em}}%
%BeginExpansion
\kern-.1em%
%EndExpansion
\right)  .
\end{align*}
We employ Galerkin orthogonality for the first term to obtain for any
$\mathbf{w}_{h}\in\mathbf{X}_{h}$%
\begin{align*}
\left\Vert \mathbf{e}_{h}\right\Vert _{\operatorname*{curl},\Omega,k}^{2}  &
\leq\operatorname{Re}A_{k}\left(  \mathbf{e}_{h},\mathbf{E}-\mathbf{w}%
_{h}\right)  +2\operatorname{Re}\left(
%TCIMACRO{\TeXButton{l}{\kern-.1em}}%
%BeginExpansion
\kern-.1em%
%EndExpansion
\left(
%TCIMACRO{\TeXButton{l}{\kern-.1em}}%
%BeginExpansion
\kern-.1em%
%EndExpansion
\mathbf{e}_{h},\mathbf{E}-\mathbf{w}_{h}%
%TCIMACRO{\TeXButton{l}{\kern-.1em}}%
%BeginExpansion
\kern-.1em%
%EndExpansion
\right)
%TCIMACRO{\TeXButton{l}{\kern-.1em}}%
%BeginExpansion
\kern-.1em%
%EndExpansion
\right)  +\delta_{k}\left(  {\mathbf{e}}_{h}\right)  \left\Vert \mathbf{e}%
_{h}\right\Vert _{\operatorname*{curl},\Omega,k}\underset{\leq\left\Vert
\mathbf{e}_{h}\right\Vert _{\operatorname*{curl},\Omega,k}+\left\Vert
\mathbf{E}-\mathbf{w}_{h}\right\Vert _{\operatorname*{curl},\Omega
,k}}{\underbrace{\left\Vert \mathbf{E}_{h}-\mathbf{w}_{h}\right\Vert
_{\operatorname*{curl},\Omega,k}}}.
\end{align*}
We write $A_{k}$ in the form (\ref{Akcom2scprod}) so that%
\begin{align}
\left(  1-\delta_{k}\left(  \mathbf{e}_{h}\right)  \right)  \left\Vert
\mathbf{e}_{h}\right\Vert _{\operatorname*{curl},\Omega,k}^{2}  &
\leq\left\vert \left(  \operatorname{curl}\mathbf{e}_{h},\operatorname{curl}%
\left(  \mathbf{E}-\mathbf{w}_{h}\right)  \right)  \right\vert
+\operatorname{Re}\left\{  \left(
%TCIMACRO{\TeXButton{l}{\kern-.1em}}%
%BeginExpansion
\kern-.1em%
%EndExpansion
\left(
%TCIMACRO{\TeXButton{l}{\kern-.1em}}%
%BeginExpansion
\kern-.1em%
%EndExpansion
\mathbf{e}_{h},\mathbf{E}-\mathbf{w}_{h}%
%TCIMACRO{\TeXButton{l}{\kern-.1em}}%
%BeginExpansion
\kern-.1em%
%EndExpansion
\right)
%TCIMACRO{\TeXButton{l}{\kern-.1em}}%
%BeginExpansion
\kern-.1em%
%EndExpansion
\right)  {-}\operatorname*{i}{kb_{k}\left(  \mathbf{e}_{h}%
^{\operatorname*{curl}},\left(  \mathbf{E}-\mathbf{w}_{h}\right)
^{\operatorname*{curl}}\right)  }\right\} \label{errest1mdelta}\\
&  +\delta_{k}\left(  {\mathbf{e}}_{h}\right)  \left\Vert \mathbf{e}%
_{h}\right\Vert _{\operatorname*{curl},\Omega,k}\left\Vert \mathbf{E}%
-\mathbf{w}_{h}\right\Vert _{\operatorname*{curl},\Omega,k}.\nonumber
\end{align}

The sesquilinear forms in $\left\{  \ldots\right\}  \ $ will be seen to allow
for good continuity constants when applied to high frequency functions while
these constants grow with $k$ when being applied to low frequency functions.
For a function $\mathbf{v}\in\mathbf{X}$ we therefore introduce the splitting
into a high-frequency and low-frequency part $\mathbf{v}=\mathbf{v}%
^{\operatorname*{high}}+\mathbf{v}^{\operatorname*{low}}:=H_{\Omega}%
\mathbf{v}+L_{\Omega}\mathbf{v}$ and get by using (\ref{adjproblm0})%
\begin{align}
\left(
%TCIMACRO{\TeXButton{l}{\kern-.1em}}%
%BeginExpansion
\kern-.1em%
%EndExpansion
\left(
%TCIMACRO{\TeXButton{l}{\kern-.1em}}%
%BeginExpansion
\kern-.1em%
%EndExpansion
\mathbf{e}_{h},\mathbf{v}%
%TCIMACRO{\TeXButton{l}{\kern-.1em}}%
%BeginExpansion
\kern-.1em%
%EndExpansion
\right)
%TCIMACRO{\TeXButton{l}{\kern-.1em}}%
%BeginExpansion
\kern-.1em%
%EndExpansion
\right)  {-}\operatorname*{i}{kb_{k}\left(  \mathbf{e}_{h}%
^{\operatorname*{curl}},\mathbf{v}^{\operatorname*{curl}}\right)  }  &
{=}\left(
%TCIMACRO{\TeXButton{l}{\kern-.1em}}%
%BeginExpansion
\kern-.1em%
%EndExpansion
\left(
%TCIMACRO{\TeXButton{l}{\kern-.1em}}%
%BeginExpansion
\kern-.1em%
%EndExpansion
\mathbf{e}_{h},\mathbf{v}^{\operatorname*{high}}%
%TCIMACRO{\TeXButton{l}{\kern-.1em}}%
%BeginExpansion
\kern-.1em%
%EndExpansion
\right)
%TCIMACRO{\TeXButton{l}{\kern-.1em}}%
%BeginExpansion
\kern-.1em%
%EndExpansion
\right)  {-}\operatorname*{i}{kb_{k}\left(  \mathbf{e}_{h}%
^{\operatorname*{curl}},\left(  \mathbf{v}^{\operatorname*{high}}\right)
^{\operatorname*{curl}}\right)  } +\left(
%TCIMACRO{\TeXButton{l}{\kern-.1em}}%
%BeginExpansion
\kern-.1em%
%EndExpansion
\left(
%TCIMACRO{\TeXButton{l}{\kern-.1em}}%
%BeginExpansion
\kern-.1em%
%EndExpansion
\mathbf{e}_{h},\mathbf{v}^{\operatorname*{low}}%
%TCIMACRO{\TeXButton{l}{\kern-.1em}}%
%BeginExpansion
\kern-.1em%
%EndExpansion
\right)
%TCIMACRO{\TeXButton{l}{\kern-.1em}}%
%BeginExpansion
\kern-.1em%
%EndExpansion
\right)  {-}\operatorname*{i}{kb_{k}\left(  \mathbf{e}_{h}%
^{\operatorname*{curl}},\left(  \mathbf{v}^{\operatorname*{low}}\right)
^{\operatorname*{curl}}\right)  }\nonumber\\
&  =\left(
%TCIMACRO{\TeXButton{l}{\kern-.1em}}%
%BeginExpansion
\kern-.1em%
%EndExpansion
\left(
%TCIMACRO{\TeXButton{l}{\kern-.1em}}%
%BeginExpansion
\kern-.1em%
%EndExpansion
\mathbf{e}_{h},\mathbf{v}^{\operatorname*{high}}%
%TCIMACRO{\TeXButton{l}{\kern-.1em}}%
%BeginExpansion
\kern-.1em%
%EndExpansion
\right)
%TCIMACRO{\TeXButton{l}{\kern-.1em}}%
%BeginExpansion
\kern-.1em%
%EndExpansion
\right)  {-}\operatorname*{i}{kb_{k}\left(  \mathbf{e}_{h}%
^{\operatorname*{curl}},\left(  \mathbf{v}^{\operatorname*{high}}\right)
^{\operatorname*{curl}}\right)  }+A_{k}\left(  \mathbf{e}_{h},\mathcal{N}%
_{1}^{\mathcal{A}}\mathbf{v}\right)  . \label{sesquicontsplit}%
\end{align}
We employ the continuity estimate of (\ref{defCbkhigh}) to get%
\begin{align*}
\left\vert \left(
%TCIMACRO{\TeXButton{l}{\kern-.1em}}%
%BeginExpansion
\kern-.1em%
%EndExpansion
\left(
%TCIMACRO{\TeXButton{l}{\kern-.1em}}%
%BeginExpansion
\kern-.1em%
%EndExpansion
\mathbf{e}_{h},\mathbf{v}^{\operatorname*{high}}%
%TCIMACRO{\TeXButton{l}{\kern-.1em}}%
%BeginExpansion
\kern-.1em%
%EndExpansion
\right)
%TCIMACRO{\TeXButton{l}{\kern-.1em}}%
%BeginExpansion
\kern-.1em%
%EndExpansion
\right)  \right\vert  &  \leq\left(  k\left\Vert \mathbf{e}_{h}\right\Vert
\right)  \left(  k\left\Vert \mathbf{v}^{\operatorname*{high}}\right\Vert
\right)  +k\left\vert b_{k}\left(  \mathbf{e}_{h}^{\nabla},\left(
\mathbf{v}^{\operatorname*{high}}\right)  ^{\nabla}\right)  \right\vert \\
&  \leq\left\Vert \mathbf{e}_{h}\right\Vert _{\operatorname*{curl},\Omega
,k}\left\Vert H_{\Omega}\mathbf{v}\right\Vert _{\operatorname*{curl},\Omega
,k}+C_{b,k}^{\nabla,\operatorname*{high}}\left\Vert \mathbf{e}_{h}\right\Vert
_{\operatorname*{curl},\Omega,k}\left\Vert \mathbf{v}\right\Vert
_{\operatorname*{curl},\Omega,k} \leq C_{b,k}^{\operatorname*{high}}\left\Vert
\mathbf{e}_{h}\right\Vert _{\operatorname*{curl},\Omega,k}\left\Vert
\mathbf{v}\right\Vert _{\operatorname*{curl},\Omega,k}.
\end{align*}
For the second term in (\ref{sesquicontsplit}) we use (\ref{defCbkhighcurl})
and obtain in a similar fashion%
\[
\left\vert {kb_{k}\left(  \mathbf{e}_{h}^{\operatorname*{curl}},\left(
\mathbf{v}^{\operatorname*{high}}\right)  ^{\operatorname*{curl}}\right)
}\right\vert \leq C_{b,k}^{\operatorname*{curl},\operatorname*{high}%
}\left\Vert \mathbf{e}_{h}\right\Vert _{\operatorname*{curl},\Omega
,k}\left\Vert \mathbf{v}\right\Vert _{\operatorname*{curl},\Omega,k}.
\]
For the last term in (\ref{sesquicontsplit}), the combination of Galerkin
orthogonality, the continuity estimate (\ref{Cbk}) and the definition of
$\tilde{\eta}_{1}^{\operatorname{exp}}$ in (\ref{defetatildeexp}) gives
\[
\left\vert A_{k}\left(  \mathbf{e}_{h},\mathcal{N}_{1}^{\mathcal{A}}%
\mathbf{v}\right)  \right\vert =\inf_{\mathbf{w}_{h}\in\mathbf{X}_{h}%
}\left\vert A_{k}\left(  \mathbf{e}_{h},\mathcal{N}_{1}^{\mathcal{A}%
}\mathbf{v}-\mathbf{w}_{h}\right)  \right\vert \leq\tilde{\eta}_{1}^{\exp
}C_{\operatorname*{cont},k}\left\Vert \mathbf{e}_{h}\right\Vert
_{\operatorname*{curl},\Omega,k}\left\Vert \mathbf{v}\right\Vert
_{\operatorname*{curl},\Omega,k}.
\]
Thus%
\[
\left\vert \left(
%TCIMACRO{\TeXButton{l}{\kern-.1em}}%
%BeginExpansion
\kern-.1em%
%EndExpansion
\left(
%TCIMACRO{\TeXButton{l}{\kern-.1em}}%
%BeginExpansion
\kern-.1em%
%EndExpansion
\mathbf{e}_{h},\mathbf{v}%
%TCIMACRO{\TeXButton{l}{\kern-.1em}}%
%BeginExpansion
\kern-.1em%
%EndExpansion
\right)
%TCIMACRO{\TeXButton{l}{\kern-.1em}}%
%BeginExpansion
\kern-.1em%
%EndExpansion
\right)  {-}\operatorname*{i}{kb_{k}\left(  \mathbf{e}_{h}%
^{\operatorname*{curl}},\mathbf{v}^{\operatorname*{curl}}\right)  }\right\vert
\leq\left(  C_{b,k}^{\operatorname*{high}}+C_{b,k}^{\operatorname*{curl}%
,\operatorname*{high}}+\tilde{\eta}_{1}^{\exp}C_{\operatorname*{cont}%
,k}\right)  \left\Vert \mathbf{e}_{h}\right\Vert _{\operatorname*{curl}%
,\Omega,k}\left\Vert \mathbf{v}\right\Vert _{\operatorname*{curl},\Omega,k}.
\]
This allows us to continue the error estimation in (\ref{errest1mdelta})
resulting in%
\[
\left(  1-\delta_{k}\left(  \mathbf{e}_{h}\right)  \right)  \left\Vert
\mathbf{e}_{h}\right\Vert _{\operatorname*{curl},\Omega,k}\leq\left(
1+C_{b,k}^{\operatorname*{high}}+C_{b,k}^{\operatorname*{curl}%
,\operatorname*{high}}+\delta_{k}\left(  {\mathbf{e}}_{h}\right)  +\tilde
{\eta}_{1}^{\exp}C_{\operatorname*{cont},k}\right)  \left\Vert \mathbf{E}%
-\mathbf{w}_{h}\right\Vert _{\operatorname*{curl},\Omega,k}.
\]
%

%TCIMACRO{\TeXButton{End Proof}{\endproof}}%
%BeginExpansion
\endproof
%EndExpansion

This theorem implies that quasi-optimality of the Galerkin method is ensured
if $\delta_{k}({\mathbf{e}}_{h})<1$. As will be shown in
Theorem~\ref{thm:quasi-optimal} below, this condition also implies existence
and uniqueness of the Galerkin approximation ${\mathbf{E}}_{h}$. In the
following, we will focus on estimating $\delta({\mathbf{e}}_{h})$, heavily
exploiting the Galerkin orthogonality (\ref{7.12}). For the case $\Omega
=B_{1}(0)$ we will derive $k$-explicit estimates for the constants in
(\ref{bserest}) in Corollary~\ref{CorConstantsSphere}. In this case, the
constants $C_{b,k}^{\operatorname{high}}$, $C_{b,k}^{\operatorname{curl}%
,\operatorname{high}}$ are independent of $k$; $C_{\operatorname*{cont}%
,k}=O(k^{3})$ grows algebraically in $k$, which can be offset by controlling
$\tilde{\eta}_{1}^{\exp}$.
%------------

\subsection{Splittings of $\mathbf{v}_{h}$ for Estimating of $\delta
({\mathbf{e}}_{h})$}

%------------
%It remains to estimate the perturbation term (2nd term in the right-hand side)
%of (\ref{Galestperturbterm}).
It remains to estimate $\delta({\mathbf{e}}_{h})$ in (\ref{eq:key-term}). In
this section, we will introduce some frequency-dependent Helmholtz
decompositions for a splitting of the term $\left(
%TCIMACRO{\TeXButton{l}{\kern-.1em}}%
%BeginExpansion
\kern-.1em%
%EndExpansion
\left(
%TCIMACRO{\TeXButton{l}{\kern-.1em}}%
%BeginExpansion
\kern-.1em%
%EndExpansion
\mathbf{e}_{h},\mathbf{v}_{h}%
%TCIMACRO{\TeXButton{l}{\kern-.1em}}%
%BeginExpansion
\kern-.1em%
%EndExpansion
\right)
%TCIMACRO{\TeXButton{l}{\kern-.1em}}%
%BeginExpansion
\kern-.1em%
%EndExpansion
\right)  $.

For $\mathbf{v}\in\mathbf{X}$ we introduce two decompositions according to
Definition~\ref{DefHelmDecomp}. Let $\mathbf{v}^{\operatorname*{low}%
}:=L_{\Omega}\mathbf{v}$ and $\mathbf{v}^{\operatorname*{high}}:=H_{\Omega
}\mathbf{v}$. Then,%
\begin{equation}%
\begin{array}
[c]{ll}%
\mathbf{v}=\Pi_{h}^{\operatorname*{comp},\ast}\mathbf{v}+\Pi_{h}^{\nabla
,\star}{\mathbf{v}}^{\operatorname*{high}} & \text{with }\Pi_{h}%
^{\operatorname*{comp},\ast}\text{ as in (\ref{HelmDecompDiscrete}),}\\
\mathbf{v}=\Pi^{\operatorname*{comp},\ast}\mathbf{v}+\Pi^{\nabla,\star
}\mathbf{v}^{\operatorname*{high}} & \text{with }\Pi^{\operatorname*{comp}%
,\ast}\text{ as in (\ref{HelmDecompContb}).}%
\end{array}
\label{decoecheck1}%
\end{equation}
An important point to note is that for $\mathbf{v}_{h}\in\mathbf{X}_{h}$ we
have $\Pi_{h}^{\operatorname*{comp},\ast}\mathbf{v}_{h}\in{\mathbf{X}}_{h}$
and, for any $\mathbf{v}\in\mathbf{X}$, we have $\Pi_{h}^{\nabla,\star
}{\mathbf{v}}^{\operatorname*{high}}\in\nabla S_{h}\subset{\mathbf{X}}_{h}$.
However, $\Pi^{\operatorname*{comp}}\mathbf{v}_{h}$ and $\Pi^{\nabla
}{\mathbf{v}}_{h}^{\operatorname*{high}}$ are only in ${\mathbf{X}}$ and
$\nabla H^{1}(\Omega)$. From $\operatorname*{curl}\left(  \Pi_{h}%
^{\nabla,\star}{\mathbf{v}}_{h}^{\operatorname*{high}}\right)  =0$ and
Galerkin orthogonality we conclude that%
\begin{equation}
\left(
%TCIMACRO{\TeXButton{l}{\kern-.1em}}%
%BeginExpansion
\kern-.1em%
%EndExpansion
\left(
%TCIMACRO{\TeXButton{l}{\kern-.1em}}%
%BeginExpansion
\kern-.1em%
%EndExpansion
\mathbf{e}_{h},\Pi_{h}^{\nabla,\star}{\mathbf{v}}_{h}^{\operatorname*{high}}%
%TCIMACRO{\TeXButton{l}{\kern-.1em}}%
%BeginExpansion
\kern-.1em%
%EndExpansion
\right)
%TCIMACRO{\TeXButton{l}{\kern-.1em}}%
%BeginExpansion
\kern-.1em%
%EndExpansion
\right)  \overset{\text{(\ref{Akcom2scprod}),
Rem.~\ref{rem:decomposition-of-nabla-phi}}}{=}-A_{k}\left(  \mathbf{e}_{h}%
,\Pi_{h}^{\nabla,\star}{\mathbf{v}}_{h}^{\operatorname*{high}}\right)  =0
\label{GalOrth2ndBrack}%
\end{equation}
since $\Pi_{h}^{\nabla,\star}{\mathbf{v}}_{h}^{\operatorname*{high}}\in\nabla
S_{h}\subset{\mathbf{X}}_{h}$.
We employ the splitting%
\[
\mathbf{v}_{h}=\Pi^{\operatorname*{comp},\ast}\mathbf{v}_{h}+\left(  \Pi
_{h}^{\operatorname*{comp},\ast}-\Pi^{\operatorname*{comp},\ast}\right)
\mathbf{v}_{h}+\Pi_{h}^{\nabla,\ast}\mathbf{v}_{h}^{\operatorname*{high}}%
\]
and arrive via (\ref{GalOrth2ndBrack}) at our main splitting
%subequationsjmm%
%TCIMACRO{\TeXButton{gammamainsplittot}{\begin{subequations}
%\label{gammamainsplittot}
%\end{subequations}}}%
%BeginExpansion
\begin{subequations}
\label{gammamainsplittot}
\end{subequations}%
%EndExpansion%
\begin{align}
\left(
%TCIMACRO{\TeXButton{l}{\kern-.1em}}%
%BeginExpansion
\kern-.1em%
%EndExpansion
\left(
%TCIMACRO{\TeXButton{l}{\kern-.1em}}%
%BeginExpansion
\kern-.1em%
%EndExpansion
\mathbf{e}_{h},\mathbf{v}_{h}%
%TCIMACRO{\TeXButton{l}{\kern-.1em}}%
%BeginExpansion
\kern-.1em%
%EndExpansion
\right)
%TCIMACRO{\TeXButton{l}{\kern-.1em}}%
%BeginExpansion
\kern-.1em%
%EndExpansion
\right)  =  &  \left(
%TCIMACRO{\TeXButton{l}{\kern-.1em}}%
%BeginExpansion
\kern-.1em%
%EndExpansion
\left(
%TCIMACRO{\TeXButton{l}{\kern-.1em}}%
%BeginExpansion
\kern-.1em%
%EndExpansion
\mathbf{e}_{h},\left(  \Pi_{h}^{\operatorname*{comp},\ast}-\Pi
^{\operatorname*{comp},\ast}\right)  \mathbf{v}_{h}%
%TCIMACRO{\TeXButton{l}{\kern-.1em}}%
%BeginExpansion
\kern-.1em%
%EndExpansion
\right)
%TCIMACRO{\TeXButton{l}{\kern-.1em}}%
%BeginExpansion
\kern-.1em%
%EndExpansion
\right)  +\left(
%TCIMACRO{\TeXButton{l}{\kern-.1em}}%
%BeginExpansion
\kern-.1em%
%EndExpansion
\left(
%TCIMACRO{\TeXButton{l}{\kern-.1em}}%
%BeginExpansion
\kern-.1em%
%EndExpansion
\mathbf{e}_{h},\Pi^{\operatorname*{comp},\ast}\mathbf{v}_{h}%
%TCIMACRO{\TeXButton{l}{\kern-.1em}}%
%BeginExpansion
\kern-.1em%
%EndExpansion
\right)
%TCIMACRO{\TeXButton{l}{\kern-.1em}}%
%BeginExpansion
\kern-.1em%
%EndExpansion
\right) \tag{%
%TCIMACRO{\TeXButton{gammamainsplittot}{\ref{gammamainsplittot}}}%
%BeginExpansion
\ref{gammamainsplittot}%
%EndExpansion
a}\label{gammamainsplit}\\
=  &  \left(
%TCIMACRO{\TeXButton{l}{\kern-.1em}}%
%BeginExpansion
\kern-.1em%
%EndExpansion
\left(
%TCIMACRO{\TeXButton{l}{\kern-.1em}}%
%BeginExpansion
\kern-.1em%
%EndExpansion
\mathbf{e}_{h},\left(  \Pi_{h}^{\operatorname*{comp},\ast}\mathbf{v}_{h}%
-\Pi^{\operatorname*{comp},\ast}\mathbf{v}_{h}\right)  ^{\operatorname*{high}}%
%TCIMACRO{\TeXButton{l}{\kern-.1em}}%
%BeginExpansion
\kern-.1em%
%EndExpansion
\right)
%TCIMACRO{\TeXButton{l}{\kern-.1em}}%
%BeginExpansion
\kern-.1em%
%EndExpansion
\right)  +\left(
%TCIMACRO{\TeXButton{l}{\kern-.1em}}%
%BeginExpansion
\kern-.1em%
%EndExpansion
\left(
%TCIMACRO{\TeXButton{l}{\kern-.1em}}%
%BeginExpansion
\kern-.1em%
%EndExpansion
\mathbf{e}_{h},\left(  \Pi_{h}^{\operatorname*{comp},\ast}\mathbf{v}_{h}%
-\Pi^{\operatorname*{comp},\ast}\mathbf{v}_{h}\right)  ^{\operatorname*{low}}%
%TCIMACRO{\TeXButton{l}{\kern-.1em}}%
%BeginExpansion
\kern-.1em%
%EndExpansion
\right)
%TCIMACRO{\TeXButton{l}{\kern-.1em}}%
%BeginExpansion
\kern-.1em%
%EndExpansion
\right) \tag{%
%TCIMACRO{\TeXButton{gammamainsplittot}{\ref{gammamainsplittot}}}%
%BeginExpansion
\ref{gammamainsplittot}%
%EndExpansion
b}\label{gammamainsplit3}\\
&  +\left(
%TCIMACRO{\TeXButton{l}{\kern-.1em}}%
%BeginExpansion
\kern-.1em%
%EndExpansion
\left(
%TCIMACRO{\TeXButton{l}{\kern-.1em}}%
%BeginExpansion
\kern-.1em%
%EndExpansion
\mathbf{e}_{h},\mathbf{v}_{h}^{\operatorname*{low}}%
%TCIMACRO{\TeXButton{l}{\kern-.1em}}%
%BeginExpansion
\kern-.1em%
%EndExpansion
\right)
%TCIMACRO{\TeXButton{l}{\kern-.1em}}%
%BeginExpansion
\kern-.1em%
%EndExpansion
\right)  +\left(
%TCIMACRO{\TeXButton{l}{\kern-.1em}}%
%BeginExpansion
\kern-.1em%
%EndExpansion
\left(
%TCIMACRO{\TeXButton{l}{\kern-.1em}}%
%BeginExpansion
\kern-.1em%
%EndExpansion
\mathbf{e}_{h},\Pi^{\operatorname*{curl},\ast}\mathbf{v}_{h}%
^{\operatorname*{high}}%
%TCIMACRO{\TeXButton{l}{\kern-.1em}}%
%BeginExpansion
\kern-.1em%
%EndExpansion
\right)
%TCIMACRO{\TeXButton{l}{\kern-.1em}}%
%BeginExpansion
\kern-.1em%
%EndExpansion
\right)  . \tag{%
%TCIMACRO{\TeXButton{gammamainsplittot}{\ref{gammamainsplittot}}}%
%BeginExpansion
\ref{gammamainsplittot}%
%EndExpansion
c}\label{gammamainsplit2}%
\end{align}

\subsection{Adjoint Approximation Properties\label{SecAdjProbl}}

The error analysis involve solution operators for some adjoint problems and we
introduce here corresponding approximation properties that measure how well
these adjoint solutions can be approximated by functions in the Galerkin space
$\mathbf{X}_{h}$ and its companion space $S_{h}$. One of these approximation
properties involve the existence of an interpolating projector that will also
be introduced in this section.

%We define the \textit{companion space }$S_{h}\subset H^{1}\left(
%\Omega\right)  $ for $\mathbf{X}_{h}$ by the condition that the
%\textit{discrete exact sequence property} holds%
%\begin{equation}
%S_{h}%
%\begin{tabular}
%[c]{c}%
%$%
%%TCIMACRO{\QATOP{\underrightarrow{\quad\nabla\quad}}{\quad}}%
%%BeginExpansion
%\genfrac{}{}{0pt}{}{\underrightarrow{\quad\nabla\quad}}{\quad}%
%%EndExpansion
%$%
%\end{tabular}
%\ \ \ \ \mathbf{X}_{h}%
%\begin{tabular}
%[c]{c}%
%$%
%%TCIMACRO{\QATOP{\underrightarrow{\quad\operatorname*{curl}\quad}}{\quad}}%
%%BeginExpansion
%\genfrac{}{}{0pt}{}{\underrightarrow{\quad\operatorname*{curl}\quad}}{\quad}%
%%EndExpansion
%$%
%\end{tabular}
%\ \ \ \ \operatorname*{curl}\mathbf{X}_{h}. \label{esp}%
%\end{equation}
Recall the definition of $\mathbf{V}_{0}^{\ast}$ of (\ref{defVob}). We set%
\begin{equation}
\label{eq:V0h}\mathbf{V}_{0,h}^{\ast}:=\left\{  \mathbf{v}\in\mathbf{V}%
_{0}^{\ast}\mid\operatorname*{curl}\mathbf{v}\in\operatorname*{curl}%
\mathbf{X}_{h}\right\}  .
\end{equation}
The following assumption stipulates the existence of a projector $\Pi_{h}%
^{E}:\mathbf{V}_{0,h}^{\ast}+\operatorname{Range}(L_{\Omega})
%\mathcal{A}_{\lambda k}
+\mathbf{X}_{h}\rightarrow\mathbf{X}_{h}$.
%where $\mathcal{A}_{\lambda k}$ is defined as in (\ref{defcalAkappa})

\begin{assumption}
\label{AdiscSp}
%For $\lambda>1$ and $k\geq1$.
There exists a linear operator $\Pi_{h}^{E}:\mathbf{V}_{0,h}^{\ast} +
\operatorname{Range}(L_{\Omega})
%+\mathcal{A}_{\lambda k}
+\mathbf{X}_{h}\rightarrow{\mathbf{X}}_{h}$ with the following properties:

\begin{enumerate}
\item[a.] $\Pi_{h}^{E}$ is a projection, i.e., the restriction $\left.
\Pi_{h}^{E}\right\vert _{\mathbf{X}_{h}}$ is the identity on $\mathbf{X}_{h}$.

\item[b.] There exists a companion operator $\Pi_{h}^{F}:\operatorname*{curl}%
\mathbf{X}_{h}\rightarrow\operatorname*{curl}\mathbf{X}_{h}$ with the property
$\operatorname*{curl}\Pi_{h}^{E}=\Pi_{h}^{F}\operatorname*{curl}$.
\end{enumerate}
\end{assumption}

Now we formulate the arising adjoint problems along their solution operators:
We introduce the solution operators $\mathcal{N}_{2}$, $\mathcal{N}%
_{3}^{\mathcal{A}}$ for the following adjoint problems%
%TCIMACRO{\TeXButton{smoothdualproblem}{\begin{subequations}
%\label{smoothdualproblem}
%\end{subequations}} }%
%BeginExpansion
\begin{subequations}
\label{smoothdualproblem}
\end{subequations}
%EndExpansion
%
%subequationsjmm%
\begin{align}
A_{k}\left(  \mathbf{w},\mathcal{N}_{2}\mathbf{r}\right)   &  =\left(
%TCIMACRO{\TeXButton{l}{\kern-.1em}}%
%BeginExpansion
\kern-.1em%
%EndExpansion
\left(
%TCIMACRO{\TeXButton{l}{\kern-.1em}}%
%BeginExpansion
\kern-.1em%
%EndExpansion
\mathbf{w},\mathbf{r}%
%TCIMACRO{\TeXButton{l}{\kern-.1em}}%
%BeginExpansion
\kern-.1em%
%EndExpansion
\right)
%TCIMACRO{\TeXButton{l}{\kern-.1em}}%
%BeginExpansion
\kern-.1em%
%EndExpansion
\right)  \qquad\forall\mathbf{w}\in\mathbf{X},\quad\forall\mathbf{r}%
\in\mathbf{V}_{0}^{\ast},\tag{%
%TCIMACRO{\TeXButton{smoothdualproblem}{\ref{smoothdualproblem}}}%
%BeginExpansion
\ref{smoothdualproblem}%
%EndExpansion
a}\label{adjoint3b}\\
A_{k}\left(  \mathbf{w},\mathcal{N}_{3}^{\mathcal{A}}\mathbf{r}\right)   &
=\left(
%TCIMACRO{\TeXButton{l}{\kern-.1em}}%
%BeginExpansion
\kern-.1em%
%EndExpansion
\left(
%TCIMACRO{\TeXButton{l}{\kern-.1em}}%
%BeginExpansion
\kern-.1em%
%EndExpansion
\mathbf{w},L_{\Omega}\mathbf{r}%
%TCIMACRO{\TeXButton{l}{\kern-.1em}}%
%BeginExpansion
\kern-.1em%
%EndExpansion
\right)
%TCIMACRO{\TeXButton{l}{\kern-.1em}}%
%BeginExpansion
\kern-.1em%
%EndExpansion
\right)  \qquad\forall\mathbf{w}\in\mathbf{X},\quad\forall\mathbf{r}%
\in\mathbf{X,} \tag{%
%TCIMACRO{\TeXButton{smoothdualproblem}{\ref{smoothdualproblem}}}%
%BeginExpansion
\ref{smoothdualproblem}%
%EndExpansion
b}\label{smoothdualproblemd}%
\end{align}
i.e.,
\[
\mathcal{N}_{2}\mathbf{r}:=\widehat{\mathcal{N}}\left(  \mathbf{r}%
,\mathbf{0}\right)  \quad\text{and\quad}\mathcal{N}_{3}^{\mathcal{A}%
}\mathbf{r}:=\mathcal{N}_{2}\left(  L_{\Omega}\mathbf{r}\right)
\mathbf{=}\widehat{\mathcal{N}}\left(  L_{\Omega}\mathbf{r},\mathbf{0}\right)
.
\]
The solution operator $\mathcal{N}_{4}^{\mathcal{A}}:\mathbf{X}\rightarrow
H^{1}\left(  \Omega\right)  /\mathbb{R}$ is related to some Poisson problem
and given by%
\begin{equation}
-A_{k}(\nabla {\mathcal{N}}_{4}^{\mathcal{A}}{\mathbf{r}},\nabla\xi
)\overset{(\ref{eq:Ak(unablaphi)})}{=}\left(
%TCIMACRO{\TeXButton{l}{\kern-.1em}}%
%BeginExpansion
\kern-.1em%
%EndExpansion
\left(
%TCIMACRO{\TeXButton{l}{\kern-.1em}}%
%BeginExpansion
\kern-.1em%
%EndExpansion
\nabla\mathcal{N}_{4}^{\mathcal{A}}\mathbf{r},\nabla\xi%
%TCIMACRO{\TeXButton{l}{\kern-.1em}}%
%BeginExpansion
\kern-.1em%
%EndExpansion
\right)
%TCIMACRO{\TeXButton{l}{\kern-.1em}}%
%BeginExpansion
\kern-.1em%
%EndExpansion
\right)  =\left(
%TCIMACRO{\TeXButton{l}{\kern-.1em}}%
%BeginExpansion
\kern-.1em%
%EndExpansion
\left(
%TCIMACRO{\TeXButton{l}{\kern-.1em}}%
%BeginExpansion
\kern-.1em%
%EndExpansion
L_{\Omega}\mathbf{r},\nabla\xi%
%TCIMACRO{\TeXButton{l}{\kern-.1em}}%
%BeginExpansion
\kern-.1em%
%EndExpansion
\right)
%TCIMACRO{\TeXButton{l}{\kern-.1em}}%
%BeginExpansion
\kern-.1em%
%EndExpansion
\right)  \qquad\forall\xi\in H^{1}\left(  \Omega\right)  . \tag{%
%TCIMACRO{\TeXButton{smoothdualproblem}{\ref{smoothdualproblem}}}%
%BeginExpansion
\ref{smoothdualproblem}%
%EndExpansion
c}\label{graddoppelKlammer}%
\end{equation}

We introduce the adjoint approximation properties\footnote{We write
$\tilde{\eta}_{\ell}$ for an approximation property which involves a
\textit{solution operator and }$\eta_{\ell}$ for a \textquotedblleft
pure\textquotedblright\ approximation property for a given space/set of
functions.}%
\begin{align}
\tilde{\eta}_{2}^{\operatorname{alg}}  &  :=\tilde{\eta}_{2}%
^{\operatorname{alg}}\left(  \mathbf{X}_{h}\right)  :=\sup_{\mathbf{v}_{0}%
\in\mathbf{V}_{0}^{\ast}\backslash\left\{  0\right\}  }\inf_{\mathbf{w}_{h}%
\in\mathbf{X}_{h}}\frac{\left\Vert \mathcal{N}_{2}\mathbf{v}_{0}%
-\mathbf{w}_{h}\right\Vert _{\operatorname*{curl},\Omega,k}}{\left\Vert
\mathbf{v}_{0}\right\Vert _{\operatorname*{curl},\Omega,k}}%
,\label{defetatildealg}\\
\tilde{\eta}_{3}^{\exp}  &  :=\tilde{\eta}_{3}^{\exp}\left(  \mathbf{X}%
_{h}\right)  :=\sup_{\mathbf{r}\in\mathbf{X}\backslash\left\{  0\right\}
}\inf_{\mathbf{w}_{h}\in\mathbf{X}_{h}}\frac{\left\Vert \mathcal{N}%
_{3}^{\mathcal{A}}\mathbf{r}-\mathbf{w}_{h}\right\Vert _{\operatorname*{curl}%
,\Omega,k}}{\left\Vert \mathbf{r}\right\Vert _{\operatorname*{curl},\Omega,k}%
},\label{defetatilde3exp}\\
\tilde{\eta}_{4}^{\exp}  &  :=\tilde{\eta}_{4}^{\exp}\left(  S_{h}\right)
:=\sup_{\mathbf{r}\in\mathbf{X}\backslash\left\{  0\right\}  }\inf_{v_{h}\in
S_{h}}\frac{\left\Vert \nabla\left(  \mathcal{N}_{4}^{\mathcal{A}}%
\mathbf{r}-v_{h}\right)  \right\Vert }{\left\Vert \mathbf{r}\right\Vert
_{\operatorname*{curl},\Omega,1}},\label{DefEta5New}\\
\tilde{\eta}_{5}^{\exp}  &  :=\tilde{\eta}_{5}^{\exp}\left(  \mathbf{X}%
_{h}\right)  :=\sup_{\mathbf{r}\in\mathbf{X}\backslash\left\{  0\right\}
}\inf_{\mathbf{w}_{h}\in\mathbf{X}_{h}}\frac{\left\Vert L_{\Omega}%
\mathbf{r}-\mathbf{w}_{h}\right\Vert _{\operatorname*{curl},\Omega,k}%
}{\left\Vert \mathbf{r}\right\Vert _{\operatorname*{curl},\Omega,k}%
},\label{Defeta6}\\
\eta_{6}^{\operatorname{alg}}  &  :=\eta_{6}^{\operatorname{alg}}\left(
\mathbf{X}_{h},\Pi_{h}^{E}\right)  :=\sup_{\substack{\mathbf{w}\in
\mathbf{V}_{0}^{\ast}\backslash\left\{  0\right\}  \colon
\\\operatorname*{curl}{\mathbf{w}}\in\operatorname*{curl}{\mathbf{X}}_{h}%
}}\frac{k\left\Vert \mathbf{w}-\Pi_{h}^{E}\mathbf{w}\right\Vert }{\left\Vert
\mathbf{w}\right\Vert _{\mathbf{H}^{1}\left(  \Omega\right)  }},\label{PiEhc}%
\\
\tilde{\eta}_{7}^{\exp}  &  :=\tilde{\eta}_{7}^{\exp}\left(  \mathbf{X}%
_{h},\Pi_{h}^{E}\right)  :=\sup_{{\mathbf{r}}\in{\mathbf{X}}\setminus
\{0\}}\frac{k\left\Vert L_{\Omega}\mathbf{r}-\Pi_{h}^{E}L_{\Omega}{\mathbf{r}%
}\right\Vert }{\left\Vert \mathbf{r}\right\Vert _{\operatorname*{curl}%
,\Omega,k}}. \label{Defeta7}%
\end{align}

In Section~\ref{SecSplitting}~
%\ref{SecHOmega}, \ref{Seceovo}, and \ref{SecLterms}
we will derive the following estimates for the terms in (\ref{gammamainsplit2}%
). Let $\mathbf{r}:=\Pi_{h}^{\operatorname*{comp},\ast}\mathbf{v}_{h}%
-\Pi^{\operatorname*{comp},\ast}\mathbf{v}_{h}$. Then%
\begin{align*}
\left\vert \left(
%TCIMACRO{\TeXButton{l}{\kern-.1em}}%
%BeginExpansion
\kern-.1em%
%EndExpansion
\left(
%TCIMACRO{\TeXButton{l}{\kern-.1em}}%
%BeginExpansion
\kern-.1em%
%EndExpansion
\mathbf{e}_{h},\mathbf{r}^{\operatorname*{high}}%
%TCIMACRO{\TeXButton{l}{\kern-.1em}}%
%BeginExpansion
\kern-.1em%
%EndExpansion
\right)
%TCIMACRO{\TeXButton{l}{\kern-.1em}}%
%BeginExpansion
\kern-.1em%
%EndExpansion
\right)  \right\vert  &  \overset{\text{Prop. \ref{PropHOmegaSplit}}}{\leq
}C_{b,k}^{\operatorname*{high}}C_{r,k}\left\Vert \mathbf{e}_{h}\right\Vert
_{\operatorname*{curl},\Omega,k}\left\Vert \mathbf{v}_{h}\right\Vert
_{\operatorname*{curl},\Omega,k},\\
\left\vert \left(
%TCIMACRO{\TeXButton{l}{\kern-.1em}}%
%BeginExpansion
\kern-.1em%
%EndExpansion
\left(
%TCIMACRO{\TeXButton{l}{\kern-.1em}}%
%BeginExpansion
\kern-.1em%
%EndExpansion
\mathbf{e}_{h},\Pi^{\operatorname*{curl},\ast}\mathbf{v}_{h}%
^{\operatorname*{high}}%
%TCIMACRO{\TeXButton{l}{\kern-.1em}}%
%BeginExpansion
\kern-.1em%
%EndExpansion
\right)
%TCIMACRO{\TeXButton{l}{\kern-.1em}}%
%BeginExpansion
\kern-.1em%
%EndExpansion
\right)  \right\vert  &  \overset{\text{Prop. \ref{PropehPicurlvhhigh}}}{\leq
}C_{\#\#,k}\left(  C_{\#\#,k}+C_{b,k}^{\operatorname*{curl}%
,\operatorname*{high}}+C_{\operatorname*{cont},k}\tilde{\eta}_{5}^{\exp
}\right)  \tilde{\eta}_{2}^{\operatorname{alg}}\left\Vert \mathbf{e}%
_{h}\right\Vert _{\operatorname*{curl},\Omega,k}\left\Vert \mathbf{v}%
_{h}\right\Vert _{\operatorname*{curl},\Omega,k},\\
\left\vert \left(
%TCIMACRO{\TeXButton{l}{\kern-.1em}}%
%BeginExpansion
\kern-.1em%
%EndExpansion
\left(
%TCIMACRO{\TeXButton{l}{\kern-.1em}}%
%BeginExpansion
\kern-.1em%
%EndExpansion
\mathbf{e}_{h},L_{\Omega}\mathbf{r}%
%TCIMACRO{\TeXButton{l}{\kern-.1em}}%
%BeginExpansion
\kern-.1em%
%EndExpansion
\right)
%TCIMACRO{\TeXButton{l}{\kern-.1em}}%
%BeginExpansion
\kern-.1em%
%EndExpansion
\right)  \right\vert +\left\vert \left(
%TCIMACRO{\TeXButton{l}{\kern-.1em}}%
%BeginExpansion
\kern-.1em%
%EndExpansion
\left(
%TCIMACRO{\TeXButton{l}{\kern-.1em}}%
%BeginExpansion
\kern-.1em%
%EndExpansion
\mathbf{e}_{h},L_{\Omega}\mathbf{v}_{h}%
%TCIMACRO{\TeXButton{l}{\kern-.1em}}%
%BeginExpansion
\kern-.1em%
%EndExpansion
\right)
%TCIMACRO{\TeXButton{l}{\kern-.1em}}%
%BeginExpansion
\kern-.1em%
%EndExpansion
\right)  \right\vert  &  \overset{\text{Prop. \ref{PropLowerOrderTerm}}}{\leq
}C_{\operatorname*{cont},k}\tilde{\eta}_{3}^{\exp}\left(  1+C_{r,k}\right)
\left\Vert \mathbf{e}_{h}\right\Vert _{\operatorname*{curl},\Omega
,k}\left\Vert \mathbf{v}_{h}\right\Vert _{\operatorname*{curl},\Omega,k}.
\end{align*}
We combine this together with (\ref{gammamainsplittot}) and (\ref{eq:key-term}%
) to obtain%
\begin{equation}
\delta_{k}\left(  {\mathbf{e}}_{h}\right)  \leq\delta_{k}^{\operatorname{I}%
}:=2\left(  C_{b,k}^{\operatorname*{high}}C_{r,k}+C_{\#\#,k}\left(
C_{\#\#,k}+C_{b,k}^{\operatorname*{curl},\operatorname*{high}}%
+C_{\operatorname*{cont},k}\tilde{\eta}_{5}^{\exp}\right)  \tilde{\eta}%
_{2}^{\operatorname{alg}}+C_{\operatorname*{cont},k}\tilde{\eta}_{3}^{\exp
}\left(  1+C_{r,k}\right)  \right)  . \label{defdeltakromI}%
\end{equation}

\begin{theorem}
\label{thm:quasi-optimal}Let Assumption~\ref{AssumptionData} be satisfied and
let $\mathbf{E}$ be the solution of Maxwell's equations (\ref{GalDisb}).
Assume that $\delta_{k}^{\operatorname{I}}$ in (\ref{defdeltakromI}) is
smaller than $1$. Then the discrete problem (\ref{GalDiscMaxw}) has a unique
solution $\mathbf{E}_{h}$, which satisfies the quasi-optimal error estimate%
\begin{equation}
\left\Vert \mathbf{e}_{h}\right\Vert _{\operatorname{curl},\Omega,k}\leq
\frac{C_{k}^{\operatorname*{I}}+\delta_{k}^{\operatorname{I}}}{1-\delta
_{k}^{\operatorname{I}}}\inf_{\mathbf{w}_{h}\in\mathbf{X}_{h}}\left\Vert
\mathbf{E}-\mathbf{w}_{h}\right\Vert _{\operatorname*{curl},\Omega,k}.
\label{quasiopt}%
\end{equation}

\end{theorem}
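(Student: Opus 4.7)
The plan is to derive both claims of the theorem, namely well-posedness of the discrete problem and the quasi-optimal estimate (\ref{quasiopt}), from the single a priori inequality
\begin{equation*}
\delta_k(\mathbf{e}_h) \leq \delta_k^{\operatorname{I}}
\end{equation*}
valid for \emph{any} Galerkin error $\mathbf{e}_h$ satisfying (\ref{7.12}), combined with the abstract error bound of Theorem~\ref{LemMonk}. I would first establish this inequality and then feed it into Theorem~\ref{LemMonk}, exploiting the hypothesis $\delta_k^{\operatorname{I}}<1$ at the end.

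To establish the key inequality, fix an arbitrary $\mathbf{v}_h\in\mathbf{X}_h\setminus\{0\}$ and expand $((\mathbf{e}_h,\mathbf{v}_h))$ via the main splitting (\ref{gammamainsplittot}). Writing $\mathbf{r} := (\Pi_h^{\operatorname{comp},\ast}-\Pi^{\operatorname{comp},\ast})\mathbf{v}_h$ and separating its high- and low-frequency parts through (\ref{gammamainsplit3})--(\ref{gammamainsplit2}), one arrives at exactly the four terms addressed, in this order, by Propositions~\ref{PropHOmegaSplit}, \ref{PropLowerOrderTerm} (applied to both $L_\Omega\mathbf{r}$ and $L_\Omega\mathbf{v}_h$), and \ref{PropehPicurlvhhigh}; each of these propositions is where the Galerkin orthogonality (\ref{7.12}) enters (through identities such as (\ref{GalOrth2ndBrack})). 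Summing the four bounds and dividing by $\|\mathbf{e}_h\|_{\operatorname{curl},\Omega,k}\|\mathbf{v}_h\|_{\operatorname{curl},\Omega,k}$ produces precisely the constant $\delta_k^{\operatorname{I}}/2$ of (\ref{defdeltakromI}); taking the supremum over $\mathbf{v}_h$ in the definition (\ref{eq:key-term}) yields $\delta_k(\mathbf{e}_h)\le\delta_k^{\operatorname{I}}$.

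For existence and uniqueness of $\mathbf{E}_h$, I would use that $\mathbf{X}_h$ is finite dimensional, so it suffices to show uniqueness. Let $\mathbf{E}_h^{(1)}$ and $\mathbf{E}_h^{(2)}$ be two discrete solutions, set $\mathbf{d}_h:=\mathbf{E}_h^{(1)}-\mathbf{E}_h^{(2)}$, and view $\mathbf{e}_h:=-\mathbf{d}_h$ as the Galerkin error associated with the continuous datum $F\equiv 0$ and the exact solution $\mathbf{E}=0$ (unique by Theorem~\ref{TheoExUniqCont}); then (\ref{7.12}) holds. The previous paragraph supplies $\delta_k(\mathbf{e}_h)\le\delta_k^{\operatorname{I}}<1$, and Theorem~\ref{LemMonk} with $\mathbf{w}_h=0$ gives
\begin{equation*}
\|\mathbf{e}_h\|_{\operatorname{curl},\Omega,k}\leq \frac{C_k^{\operatorname{I}}+\delta_k^{\operatorname{I}}}{1-\delta_k^{\operatorname{I}}}\,\|\mathbf{E}-0\|_{\operatorname{curl},\Omega,k}=0,
\end{equation*}
so $\mathbf{d}_h=0$. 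For the quasi-optimal estimate in the general case, let $\mathbf{E}_h$ be the now-unique discrete solution; the Galerkin error $\mathbf{e}_h=\mathbf{E}-\mathbf{E}_h$ satisfies (\ref{7.12}), hence $\delta_k(\mathbf{e}_h)\leq\delta_k^{\operatorname{I}}<1$. Since the map $\delta\mapsto (C_k^{\operatorname{I}}+\delta)/(1-\delta)$ is monotonically increasing on $[0,1)$, the bound (\ref{bserest}) of Theorem~\ref{LemMonk} strengthens to (\ref{quasiopt}) upon also taking the infimum over $\mathbf{w}_h\in\mathbf{X}_h$.

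The main obstacle is not in the theorem itself, which is essentially an assembly argument; it sits inside the three propositions that it invokes. Propositions~\ref{PropHOmegaSplit}, \ref{PropehPicurlvhhigh}, and \ref{PropLowerOrderTerm} contain the real analytic work, combining duality arguments, the commuting-diagram projector $\Pi_h^E$, and the frequency splittings $H_\Omega$/$L_\Omega$ as previewed in Sections~\ref{sec:intro-T1T2}--\ref{sec:intro-T3}. Within the proof of Theorem~\ref{thm:quasi-optimal} itself, the only delicate step is verifying that the four summands produced by (\ref{gammamainsplittot}) match the four propositions term by term and that the constants combine into exactly $\delta_k^{\operatorname{I}}$ as defined in (\ref{defdeltakromI}); this is bookkeeping rather than analysis.
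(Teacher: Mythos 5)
Your proposal is correct and follows essentially the same route as the paper: the bound $\delta_k(\mathbf{e}_h)\le\delta_k^{\operatorname{I}}$ is assembled from the splitting (\ref{gammamainsplittot}) and Propositions~\ref{PropHOmegaSplit}, \ref{PropehPicurlvhhigh}, \ref{PropLowerOrderTerm} exactly as done just before the theorem, and uniqueness (hence existence, by finite dimensionality) is obtained by applying Theorem~\ref{LemMonk} to the homogeneous problem, whose continuous solution vanishes by Theorem~\ref{TheoExUniqCont}. The only cosmetic difference is that you phrase uniqueness via the difference of two discrete solutions rather than a single solution of the homogeneous discrete problem, which is equivalent.
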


%

%TCIMACRO{\TeXButton{Proof}{\proof}}%
%BeginExpansion
\proof
%EndExpansion
The proof uses the same arguments as the proof of \cite[Thm.~{3.9}%
]{MelenkLoehndorf}. Under the assumption that a solution exists, the
quasi-optimal error estimate (\ref{quasiopt}) follows from (\ref{bserest}) and
the assumption $\delta_{k}^{\operatorname{I}}<1$. Next, we will prove
uniqueness of problem (\ref{GalDiscMaxw}). We show that if $\mathbf{E}_{h}$
solves%
\[
A_{k}\left(  \mathbf{E}_{h},\mathbf{v}_{h}\right)  =0\quad\forall
\mathbf{v}_{h}\in\mathbf{X}_{h},
\]
then $\mathbf{E}_{h}=0$. This is the Galerkin discretization of the continuous
problem: Find $\mathbf{E}\in\mathbf{X}$ such that $A_{k}\left(  \mathbf{E}%
,\mathbf{v}\right)  =0$ for all $\mathbf{v}\in\mathbf{X}$. Theorem
\ref{TheoExUniqCont} implies that $\mathbf{E}=\mathbf{0}$ is the unique
solution. Hence $\mathbf{e}_{h}=\mathbf{E}-\mathbf{E}_{h}=-\mathbf{E}_{h}$
satisfies the error estimate%
\[
\left\Vert \mathbf{E}_{h}\right\Vert _{\operatorname{curl},\Omega
,k}=\left\Vert \mathbf{e}_{h}\right\Vert _{\operatorname{curl},\Omega,k}%
\leq\frac{C_{k}^{\operatorname{I}}+\delta_{k}^{\operatorname{I}}}{1-\delta
_{k}^{\operatorname{I}}}\inf_{\mathbf{w}_{h}\in\mathbf{X}_{h}}\left\Vert
\mathbf{E}-\mathbf{w}_{h}\right\Vert _{\operatorname*{curl},\Omega,k}=0
\]
since $\mathbf{E}=\mathbf{0}$. Hence $\mathbf{E}_{h}=0$. Since
(\ref{GalDiscMaxw}) is finite dimensional, uniqueness implies existence.%
%TCIMACRO{\TeXButton{End Proof}{\endproof}}%
%BeginExpansion
\endproof
%EndExpansion

\subsection{$k$-explicit $hp$-FEM}

%-----------------------------------------------------------------------------------
In this section, we show that the choice $\left(  {\mathbf{X}}_{h}%
,S_{h}\right)  :=\left(  \boldsymbol{\mathcal{N}}_{p}^{\operatorname*{I}%
}\left(  {\mathcal{T}}_{h}\right)  ,S_{p+1}\left(  \mathcal{T}\right)
\right)  $ for properly chosen mesh size $h$ and $k$-dependent polynomial
degree $p\geq1$ leads to a $k$-\textit{independent} quasi-optimality constant
in (\ref{bserest}). We adopt the setting described in
Section~\ref{SecCurlConfFEM}. That is, we let ${\mathcal{T}}_{h}$ be a mesh
satisfying the assumptions of Section~\ref{SecCurlConfFEM} and
Assumption~\ref{def:element-maps}. 
The 
operators $\Pi_{h}^{E}$ and $\Pi_{h}^{F}$, whose existence is required in
Section~\ref{AbsGalDisc} to Section~\ref{SecApproxOps} may be chosen to be
$\Pi_{p}^{\operatorname*{curl},c}$ and $\Pi_{p}^{\operatorname*{div},c}$ of 
Theorem~\ref{thm:projection-based-interpolation}.
%We have postponed the definition of the
%operators $\Pi_{h}^{E}$ and $\Pi_{h}^{F}$ whose existence is required in
%Section~\ref{AbsGalDisc} to Section~\ref{SecApproxOps} (and chosen to be
%$\Pi_{p}^{\operatorname*{curl},c}$ and $\Pi_{p}^{\operatorname*{div},c}$; cf.
%Theorem~\ref{thm:projection-based-interpolation}).
%
%
%The $hp$-explicit
%approximation properties of these operators are presented in
%\cite{melenk_rojik_2018}.
%-----------------------------------------------------------------------------------

\subsubsection{Applications to the Case $\Omega= B_{1}(0)$}

The adjoint approximation properties $\tilde{\eta}_{\ell}^{\operatorname{alg}%
}$, $\tilde{\eta}_{\ell}^{\operatorname{exp}}$ involve solution operators
whose regularity are investigated in Sections~\ref{SecAnLOmega} and
\ref{sec:dual-problems} for the unit ball $\Omega= B_{1}(0)$.
%\ref{SecRegDualSol}, Proposition~\ref{PropN2Arough}.
In particular, we show in Proposition~\ref{PropN2Arough} that the solution
operator $\mathcal{N}_{2}$ allows for a stable additive splitting
$\mathcal{N}_{2}=\mathcal{N}_{2}^{\operatorname*{rough}}+\mathcal{N}%
_{2}^{\mathcal{A}}$, where $\mathcal{N}_{2}^{\mathcal{A}}$ maps into some
analyticity class and $\mathcal{N}_{2}^{\operatorname*{rough}}:\mathbf{V}%
_{0}^{\ast}\rightarrow\mathbf{H}^{2}\left(  \Omega\right)  $ satisfies the
estimate $\left\Vert \mathcal{N}_{2}^{\operatorname*{rough}}\mathbf{v}%
_{0}\right\Vert _{\mathbf{H}^{2}\left(  \Omega\right)  }\leq
C_{\operatorname*{rough}}k\left\Vert \mathbf{v}_{0}\right\Vert
_{\operatorname*{curl},\Omega,k}$. In Theorem~\ref{TheoLau} and
Propositions~\ref{PropN2Arough}, \ref{PropN3}, \ref{PropN4A}, \ref{PropRegN1A}
we show that all other solution operators map into some analyticity class,
more precisely, for all $\mathbf{r}\in\mathbf{X}$ and $\mathbf{v}_{0}%
\in\mathbf{V}_{0}^{\ast}$, it holds\footnote{For the last relation, we have
estimated $\left\Vert \cdot\right\Vert _{\operatorname*{curl},\Omega,1}%
\leq\left\Vert \cdot\right\Vert _{\operatorname*{curl},\Omega,k}$ in
(\ref{defCuprime}) (using (\ref{loweromega})) to simplify technicalities.}
with $\alpha_{1}=3$, $\alpha_{2}=3$, $\alpha_{3}=3$, $\alpha_{4}=5/2$,
$\alpha_{5}=3/2$
%
%subequationsjmm%
\begin{subequations}
\label{eq:analyticity-classes-d}%
\begin{align}
\mathcal{N}_{j}^{\mathcal{A}}\mathbf{r}  &  \in\mathcal{A}\left(
C_{\mathcal{A},j}k^{\alpha_{j}}\left\Vert \mathbf{r}\right\Vert
_{\operatorname*{curl},\Omega,k},\gamma_{\mathcal{A},j},\Omega\right)  ,\quad
j=1,3,\\
\mathcal{N}_{2}^{\mathcal{A}}\mathbf{v}_{0}  &  \in\mathcal{A}\left(
C_{\mathcal{A},2}k^{\alpha_{2}}\left\Vert \mathbf{v}_{0}\right\Vert
_{\operatorname*{curl},\Omega,k},\gamma_{\mathcal{A},2},\Omega\right)  ,\\
\nabla\mathcal{N}_{4}^{\mathcal{A}}\mathbf{r}  &  \in\mathcal{A}\left(
C_{\mathcal{A},4}k^{\alpha_{4}}\left\Vert \mathbf{r}\right\Vert
_{\operatorname*{curl},\Omega,1},\gamma_{\mathcal{A},4},\Omega\right)  ,\\
L_{\Omega}\mathbf{r}  &  \in\mathcal{A}\left(  C_{\mathcal{A},5}k^{\alpha_{5}%
}\left\Vert \mathbf{r}\right\Vert _{\operatorname*{curl},\Omega,1}%
,\gamma_{\mathcal{A},5},\Omega\right)  \text{,}%
\end{align}
This allows us to estimate those adjoint approximations that involve solution
operators by simpler approximation properties, which we will introduce next.
We set%
\end{subequations}
\begin{align}
\eta_{1}^{\exp}\left(  \gamma,\mathbf{X}_{h}\right)   &  :=\sup_{\mathbf{z}%
\in\mathcal{A}\left(  1,\gamma,\Omega\right)  }\inf_{\mathbf{w}_{h}%
\in\mathbf{X}_{h}}\left\Vert \mathbf{z}-\mathbf{w}_{h}\right\Vert
_{\operatorname*{curl},\Omega,k},\label{defeta1exp}\\
\eta_{2}^{\operatorname{alg}}\left(  \mathbf{X}_{h}\right)   &  :=\sup
_{\substack{{\mathbf{z}}\in{\mathbf{H}}^{2}(\Omega)\\\Vert{\mathbf{z}}%
\Vert_{{\mathbf{H}}^{2}(\Omega)}\leq k}} \inf_{{\mathbf{w}}_{h} \in
{\mathbf{X}}_{h}} \left\Vert {\mathbf{z}}-{\mathbf{w}}_{h} \right\Vert
_{\operatorname{curl},\Omega,k},\label{defeta2alg}\\
\eta_{4}^{\exp}\left(  \gamma,S_{h}\right)   &  :=\sup_{\nabla z\in
\mathcal{A}\left(  1,\gamma,\Omega\right)  }\inf_{v_{h}\in S_{h}}\left\Vert
\nabla\left(  z-v_{h}\right)  \right\Vert ,\label{defeta4exp}\\
\eta_{7}^{\exp}\left(  \gamma,{\mathbf{X}}_{h}\right)   &  :=k\sup
_{\mathbf{z}\in\mathcal{A}\left(  1,\gamma,\Omega\right)  }\left\Vert
\mathbf{z}-\Pi_{h}^{E}{\mathbf{z}}\right\Vert , \label{defeta7exp}%
\end{align}
and obtain
%{\Huge das mit $C_{rough}$ geht an dieser Stelle nicht!}%
\begin{equation}%
\begin{array}
[c]{ll}%
\tilde{\eta}_{1}^{\exp}\leq C_{\mathcal{A},1}k^{\alpha_{1}}\eta_{1}^{\exp
}\left(  \gamma_{\mathcal{A},1},\mathbf{X}_{h}\right)  , & \tilde{\eta}%
_{2}^{\operatorname{alg}}\leq C_{\operatorname*{rough}}\eta_{2}%
^{\operatorname{alg}}\left(  \mathbf{X}_{h}\right)  +C_{\mathcal{A}%
,2}k^{\alpha_{2}}\eta_{1}^{\exp}\left(  \gamma_{\mathcal{A},2},\mathbf{X}%
_{h}\right)  ,\\
\tilde{\eta}_{3}^{\exp}\leq C_{\mathcal{A},3}k^{\alpha_{3}}\eta_{1}^{\exp
}\left(  \gamma_{\mathcal{A},3},\mathbf{X}_{h}\right)  , & \tilde{\eta}%
_{4}^{\exp}\leq C_{\mathcal{A},4}k^{\alpha_{4}}\eta_{4}^{\exp}\left(
\gamma_{\mathcal{A},4},S_{h}\right)  ,\\
\tilde{\eta}_{5}^{\exp}\leq C_{\mathcal{A},5}k^{\alpha_{5}}\eta_{1}^{\exp
}\left(  \gamma_{\mathcal{A},5},\mathbf{X}_{h}\right)  , & \tilde{\eta}%
_{7}^{\exp}\leq C_{\mathcal{A},5}k^{\alpha_{5}}\eta_{7}^{\exp}\left(
\gamma_{\mathcal{A},5},\mathbf{X}_{h}\right)  .
\end{array}
\label{eta06est}%
\end{equation}

\begin{corollary}
\label{cor:cond-for-quasioptimality} Let $\Omega=B_{1}(0)$ and recall the
definition of $\alpha_{\ell}$ before (\ref{eq:analyticity-classes-d}). Define%
\begin{equation}%
\begin{array}
[c]{ll}%
\tilde{\eta}_{1,\ast}^{\exp}:=\max_{j\in\left\{  1,2,3,5\right\}
}C_{\mathcal{A},j}\eta_{1}^{\exp}\left(  \gamma_{\mathcal{A},j},\mathbf{X}%
_{h}\right)  , & \tilde{\eta}_{2,\ast}^{\operatorname{alg}}%
:=C_{\operatorname*{rough}}\eta_{2}^{\operatorname{alg}}\left(  \mathbf{X}%
_{h}\right)  ,\\
\tilde{\eta}_{4,\ast}^{\exp}:=C_{\mathcal{A},4}\eta_{4}^{\exp}\left(
\gamma_{\mathcal{A},4},S_{h}\right)  , & \tilde{\eta}_{7,\ast}^{\exp
}:=C_{\mathcal{A},5}\eta_{7}^{\exp}\left(  \gamma_{\mathcal{A},5}%
,\mathbf{X}_{h}\right)  .
\end{array}
\label{etashort}%
\end{equation}
For $0<\tau\leq1$ sufficiently small but independent of $k$, and any
$0<\varepsilon_{\ell}\leq\tau$, $\ell\in\left\{  1,2,4,6,7\right\}  $, select
the mesh size $h$ and the polynomial degree $p$ for the $hp$-finite element
space $\mathbf{X}_{h}$ such that $\mathbf{X}_{h}$ and its companion space
$S_{h}$ (cf.~(\ref{esp})) satisfy Assumption~\ref{AdiscSp} and the
approximation properties:%
\begin{equation}
k^{\alpha_{3}+3}\tilde{\eta}_{1,\ast}^{\exp}\leq\varepsilon_{1},\quad
\tilde{\eta}_{2,\ast}^{\operatorname{alg}}\leq\varepsilon_{2},\quad
k^{\alpha_{4}+1}\tilde{\eta}_{4,\ast}^{\exp}\leq\varepsilon_{4},\quad\eta
_{6}^{\operatorname{alg}}\leq\varepsilon_{6},\quad k^{\alpha_{5}}\tilde{\eta
}_{7,\ast}^{\exp}\leq\varepsilon_{7}. \label{smallnessconditions}%
\end{equation}
Then, the quantity $\delta_{k}^{\operatorname{I}}$ in (\ref{defdeltakromI}),
(\ref{quasiopt}) can be estimated by $\delta_{k}^{\operatorname{I}}<1/2$, and
the discrete problem (\ref{GalDiscMaxw}) has a unique solution $\mathbf{E}%
_{h}$, which satisfies the quasi-optimal error estimate%
\begin{equation}
\left\Vert \mathbf{e}_{h}\right\Vert _{\operatorname{curl},\Omega,k}\leq
C\inf_{\mathbf{w}_{h}\in\mathbf{X}_{h}}\left\Vert \mathbf{E}-\mathbf{w}%
_{h}\right\Vert _{\operatorname*{curl},\Omega,k} \label{quasiopt2}%
\end{equation}
for a constant $C$ independent of $k$.
\end{corollary}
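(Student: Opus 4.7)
The plan is to substitute the $k$-explicit bounds specific to $\Omega=B_1(0)$ into the definition of $\delta_k^{\operatorname{I}}$ in (\ref{defdeltakromI}) and then invoke the smallness conditions (\ref{smallnessconditions}) to conclude $\delta_k^{\operatorname{I}}<1/2$. Once this is established, the result is an immediate consequence of Theorem~\ref{thm:quasi-optimal}, provided we additionally verify that the prefactor $C_k^{\operatorname{I}}$ of (\ref{defrhokneu}) is also $O(1)$ so that (\ref{quasiopt}) produces a $k$-independent constant in (\ref{quasiopt2}).

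First I would invoke Cor.~\ref{CorConstantsSphere} to obtain the sphere-specific bounds: $C_{b,k}^{\operatorname{high}}$, $C_{b,k}^{\operatorname{curl},\operatorname{high}}$, $C_k^{H,\Omega}$, $C_k^{L,\Omega}$ are all $O(1)$, whereas $C_{\operatorname{cont},k}=O(k^3)$ carries the only algebraic $k$-growth among the classical continuity constants. The auxiliary quantities $C_{r,k}$ and $C_{\#\#,k}$ appearing in (\ref{defdeltakromI}) are composites of $O(1)$ factors and certain $\tilde\eta_\ell$, so they will be absorbed once each $\tilde\eta_\ell$ is made sufficiently small. Next I would apply (\ref{eta06est}) to rewrite every $\tilde\eta_\ell$ in terms of the pure approximation quantities $\eta_1^{\exp}$, $\eta_2^{\operatorname{alg}}$, $\eta_4^{\exp}$, $\eta_6^{\operatorname{alg}}$, $\eta_7^{\exp}$ introduced in (\ref{defeta1exp})--(\ref{defeta7exp}), thereby identifying each summand of $\delta_k^{\operatorname{I}}$ with an expression of the form $k^{\beta_\ell}\,\eta_\ell$ whose exponent $\beta_\ell$ is determined by the analyticity exponent $\alpha_j$ (cf.~(\ref{eq:analyticity-classes-d})) and the factor $k^3$ coming from $C_{\operatorname{cont},k}$. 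For instance, $C_{\operatorname{cont},k}\tilde\eta_3^{\exp}\lesssim k^3\cdot k^{\alpha_3}\eta_1^{\exp}=k^{\alpha_3+3}\eta_1^{\exp}$, which is precisely what the first condition in (\ref{smallnessconditions}) is designed to kill.

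In the same spirit, each remaining condition in (\ref{smallnessconditions}) is calibrated to the worst $k$-power in its corresponding summand of (\ref{defdeltakromI}): the $\tilde\eta_2^{\operatorname{alg}}$-term via $\tilde\eta_{2,*}^{\operatorname{alg}}\leq\varepsilon_2$ (together with $k^{\alpha_5+3}\eta_1^{\exp}\leq k^{\alpha_3+3}\eta_1^{\exp}\leq\varepsilon_1$ inside the factor $C_{\operatorname{cont},k}\tilde\eta_5^{\exp}$); the $\nabla$-lifting contribution via $k^{\alpha_4+1}\tilde\eta_{4,*}^{\exp}\leq\varepsilon_4$; and the $\Pi_h^E$ contributions via $\eta_6^{\operatorname{alg}}\leq\varepsilon_6$ and $k^{\alpha_5}\tilde\eta_{7,*}^{\exp}\leq\varepsilon_7$. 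Choosing the threshold $\tau$ small enough---depending only on the $O(1)$ constants $C_{b,k}^{\operatorname{high}}$, $C_{b,k}^{\operatorname{curl},\operatorname{high}}$, $C_{\mathcal{A},j}$, $C_{\operatorname{rough}}$, and on the $O(1)$ parts of $C_{r,k}$, $C_{\#\#,k}$---then bounds every summand of $\delta_k^{\operatorname{I}}$ by a fixed fraction of $1/2$, giving $\delta_k^{\operatorname{I}}<1/2$. The same argument applied to $C_k^{\operatorname{I}}$ yields $C_{\operatorname{cont},k}\tilde\eta_1^{\exp}\lesssim k^{\alpha_1+3}\eta_1^{\exp}\leq\varepsilon_1$, so $C_k^{\operatorname{I}}=1+C_{b,k}^{\operatorname{high}}+C_{b,k}^{\operatorname{curl},\operatorname{high}}+C_{\operatorname{cont},k}\tilde\eta_1^{\exp}=O(1)$, and Theorem~\ref{thm:quasi-optimal} delivers (\ref{quasiopt2}) with a $k$-independent constant.

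The main obstacle is bookkeeping: for each of the additive pieces of (\ref{defdeltakromI}) one must trace the worst combination of $k$-powers contributed by $C_{\operatorname{cont},k}$, $C_{r,k}$, $C_{\#\#,k}$, and the analyticity exponents $\alpha_j$, and then check that the corresponding entry of (\ref{smallnessconditions}) dominates it. The entire reduction hinges on the sphere-specific fact that $C_{b,k}^{\operatorname{high}}$, $C_{b,k}^{\operatorname{curl},\operatorname{high}}$, $C_k^{H,\Omega}$, $C_k^{L,\Omega}$ are $O(1)$---a property that fails for general Lipschitz domains and is why the statement is restricted to $\Omega=B_1(0)$.
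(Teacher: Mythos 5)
Your proposal is correct and follows essentially the same route as the paper's proof: termwise estimation of $\delta_{k}^{\operatorname{I}}$ using Corollary~\ref{CorConstantsSphere} and (\ref{eta06est}), calibration of each smallness condition in (\ref{smallnessconditions}) against the worst $k$-power in the corresponding summand (including the role of $\varepsilon_{4}$ in making the first factor of $C_{r,k}$ bounded and the inequality $\alpha_{3}+3\geq\max\{\alpha_{1}+3,\alpha_{5}+3,\alpha_{2}\}$), followed by choosing $\tau$ small and verifying $C_{k}^{\operatorname{I}}=O(1)$ to conclude via Theorem~\ref{thm:quasi-optimal}. No gaps.
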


%

%TCIMACRO{\TeXButton{Proof}{\proof}}%
%BeginExpansion
\proof
%EndExpansion
We estimate $\delta_{k}^{\operatorname{I}}$ of (\ref{defdeltakromI}) termwise
by using (\ref{eta06est}), (\ref{etashort}), and the values of $\alpha_{j}$.
From Corollary \ref{CorConstantsSphere}, we deduce that the constants
$C_{\#,k}$, $C_{\#\#,k}$ in (\ref{defCrk}) and (\ref{est2ndtermsplit}) are in
fact bounded uniformly in $k$. Hence%
\[
C_{r,k}\leq C\left(  \varepsilon_{6}+\varepsilon_{7}\right)
\]
for a constant $C$ independent of $k$. Again from Corollary
\ref{CorConstantsSphere} and (\ref{eta06est}), it follows that%
\[
\delta_{k}^{\operatorname{I}}\leq C\left(  \varepsilon_{6}+\varepsilon
_{7}+\left(  1+k^{\alpha_{5}+3}\tilde{\eta}_{1,\ast}^{\exp}\right)  \left(
\tilde{\eta}_{2,\ast}^{\operatorname{alg}}+k^{\alpha_{2}}\tilde{\eta}_{1,\ast
}^{\exp}\right)  +k^{\alpha_{3}+3}\tilde{\eta}_{1,\ast}^{\exp}\left(
1+\varepsilon_{6}+\varepsilon_{7}\right)  \right)
\]
for a constant $C$ independent of $k$. We use $\alpha_{3}+3\geq\max\left\{
\alpha_{1}+3,\alpha_{5}+3,\alpha_{2}\right\}  $ and the conditions in
(\ref{smallnessconditions}) along with $\varepsilon_{\ell}\leq\tau\leq1$ to
obtain%
\[
\delta_{k}^{\operatorname{I}}\leq C\left(  \varepsilon_{1}+\varepsilon
_{2}+\varepsilon_{6}+\varepsilon_{7}\right)  \leq\tilde{C}\tau
\]
for a constant $\tilde{C}$ independent of $k$. Hence, the condition
$0<\tau<\left(  2\tilde{C}\right)  ^{-1}$ implies $\delta_{k}%
^{\operatorname{I}}<1/2$ and existence and uniqueness of the discrete solution
follow from Theorem \ref{thm:quasi-optimal}.

To prove that the quasi-optimality constant $C$ in (\ref{quasiopt2}) is
independent of $k$ we use (\ref{quasiopt}) so that it remains to prove that
$C_{k}^{\operatorname{I}}$ in (\ref{quasiopt}) (cf.~(\ref{defrhokneu})) is
bounded independently of $k$. This, in turn, is a direct consequence of
Corollary~\ref{CorConstantsSphere} and
\[
C_{\operatorname*{cont},k}\tilde{\eta}_{1}^{\exp}\overset{\text{Cor.
\ref{CorConstantsSphere}}}{\leq}Ck^{3}\tilde{\eta}_{1}^{\exp}%
\overset{\text{(\ref{eta06est})}}{\leq}Ck^{\alpha_{1}+3}\eta_{1}^{\exp}\leq
Ck^{\alpha_{3}+3}\eta_{1}^{\exp}\overset{\text{(\ref{smallnessconditions}%
)}}{\leq}C\varepsilon_{1}\leq C\tau\leq C
\]
independent of $k$.%
%TCIMACRO{\TeXButton{End Proof}{\endproof}}%
%BeginExpansion
\endproof
%EndExpansion
%--------------------------------------------

\subsubsection{$hp$-FEM: Results\label{Seceta1-4}}

%--------------------------------------------

\begin{theorem}
\label{thm:hpFEM-quasioptimality}Let $\Omega=B_{1}\left(  0\right)  $ be the
unit ball and let ${\mathbf{E}}$ denote the exact solution of (\ref{GalDisb}).
Let the mesh ${\mathcal{T}}_{h}$ satisfy Assumption~\ref{def:element-maps} and
set $h:=\max_{K\in{\mathcal{T}}}h_{K}$. Let $S_{h}=S_{p+1}({\mathcal{T}}_{h})$
and ${\mathbf{X}}_{h}=\boldsymbol{\mathcal{N}}_{p}^{\operatorname*{I}%
}({\mathcal{T}}_{h})$. Fix $c_{2} > 0$. Then there exist constants $c_{1}$,
$C>0$ depending solely on $R$, $c_2$, 
and the constants $C_{\operatorname{affine}}$,
$C_{\operatorname{metric}}$, $\gamma$ of Assumption~\ref{def:element-maps}
such the following holds: If $k\geq1$, $p\geq1$, $h > 0$ satisfy
\begin{equation}
\frac{kh}{p}\leq c_{1}\quad\mbox{ and }\quad p\geq c_{2}\log k,
\label{eq:hpFEM-scale-res}%
\end{equation}
then the Galerkin approximation ${\mathbf{E}}_{h}\in{\mathbf{X}}_{h}$
(cf.~(\ref{GalDiscMaxw})) exists and satisfies
\begin{equation}
\left\Vert {\mathbf{E}}-{\mathbf{E}}_{h}\right\Vert _{\Omega
,\operatorname{curl},k}\leq C\inf_{{\mathbf{w}}_{h}\in{\mathbf{X}}_{h}%
}\left\Vert {\mathbf{E}}-{\mathbf{w}}_{h}\right\Vert _{\Omega
,\operatorname{curl},k}. \label{eq:thm:hpFEM-quasioptimality-10}%
\end{equation}

\end{theorem}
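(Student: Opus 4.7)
\textbf{Proof proposal for Theorem \ref{thm:hpFEM-quasioptimality}.}

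The plan is to invoke Corollary~\ref{cor:cond-for-quasioptimality} and verify each of the smallness conditions in (\ref{smallnessconditions}) together with Assumption~\ref{AdiscSp}. Assumption~\ref{AdiscSp} is supplied by taking $\Pi^E_h := \Picurlcom$ and $\Pi^F_h := \Pidivcom$, which are the element-wise constructed projection-based interpolants from Theorem~\ref{thm:projection-based-interpolation}; their commuting diagram property on the triangulation follows from the conforming assembly of the reference operators $\hatPicurlcom$, $\hatPidivcom$ across common faces, and the approximation estimate $\eta_6^{\operatorname{alg}} \lesssim kh/p$ is already announced in the remark following (\ref{item:intro-100}). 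Thus by choosing $c_1$ small enough, $\eta_6^{\operatorname{alg}} \leq \varepsilon_6$ holds for any prescribed $\varepsilon_6 \leq \tau$.

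Next I would control $\tilde\eta_{1,\ast}^{\exp}$, $\tilde\eta_{4,\ast}^{\exp}$, $\tilde\eta_{7,\ast}^{\exp}$, and $\tilde\eta_{2,\ast}^{\operatorname{alg}}$ using the $hp$-FEM approximation results for the analyticity classes ${\mathcal A}(1,\gamma,\Omega)$. The standard result (cf.\ Lemmas~\ref{lemma:Picurls-approximation}, \ref{lemma:Picurlcom-approximation}) gives an estimate of the form
\begin{equation*}
\eta_1^{\exp}(\gamma,\mathbf{X}_h) + \eta_4^{\exp}(\gamma,S_h) + \eta_7^{\exp}(\gamma,\mathbf{X}_h)
\;\lesssim\; \left(\frac{h}{h+\sigma}\right)^{p} + \left(\frac{kh}{\sigma p}\right)^{p},
\end{equation*}
where $\sigma>0$ depends only on $\gamma$ and on the mesh-regularity constants. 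Under the scale resolution condition $kh/p\leq c_1$, both terms are bounded by $e^{-b(c_1) p}$, where $b(c_1)\to\infty$ as $c_1\to 0$. The algebraic factor $k^{\alpha_3+3}$ in front of $\tilde\eta_{1,\ast}^{\exp}$ (and the corresponding factors in the other three conditions, all bounded by $k^{\alpha_3+3}$) is then offset by requiring
\begin{equation*}
k^{\alpha_3+3}\,e^{-b(c_1)p} \leq \varepsilon_1,
\end{equation*}
which, using $p\geq c_2\log k$, follows from $b(c_1)\,c_2 \geq \alpha_3+3$. Hence one first fixes $c_2>0$, then chooses $c_1$ small enough that $b(c_1)$ dominates $(\alpha_3+3)/c_2$, and finally shrinks $c_1$ further to also satisfy $\eta_6^{\operatorname{alg}}\leq \varepsilon_6$. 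The remaining quantity $\tilde\eta_{2,\ast}^{\operatorname{alg}}$, arising from the finite-regularity piece ${\mathcal N}_2^{\operatorname*{rough}}$, is estimated by the classical $hp$-approximation bound for $\mathbf{H}^2$-functions, $\eta_2^{\operatorname{alg}}(\mathbf{X}_h)\lesssim (h/p)\,k$, which is $O(c_1)$ under the scale resolution condition and thus is made $\leq \varepsilon_2$ by the same choice of $c_1$.

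With all five smallness conditions of (\ref{smallnessconditions}) verified, Corollary~\ref{cor:cond-for-quasioptimality} delivers existence and uniqueness of ${\mathbf E}_h$ together with the $k$-independent quasi-optimality estimate (\ref{eq:thm:hpFEM-quasioptimality-10}). The main obstacle, and the place where care is required, is the simultaneous calibration of $c_1$: it must be small enough that (i) the exponential rate $b(c_1)$ of the analytic $hp$-approximation is large enough to beat the polynomial-in-$k$ factors $k^{\alpha_j+3}$ given only $p\geq c_2\log k$ with the \emph{prescribed} $c_2$; (ii) the algebraic quantities $\eta_6^{\operatorname{alg}}$ and $\tilde\eta_{2,\ast}^{\operatorname{alg}}$ are below $\tau$; and (iii) the threshold $\tau$ in Corollary~\ref{cor:cond-for-quasioptimality} itself holds. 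All three requirements are monotone in $c_1$, so a single sufficiently small $c_1=c_1(c_2,C_{\operatorname{affine}},C_{\operatorname{metric}},\gamma)$ works, which is precisely the claim.
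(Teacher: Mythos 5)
Your overall strategy coincides with the paper's: verify Assumption~\ref{AdiscSp} with $\Pi_h^E=\Pi_p^{\operatorname{curl},c}$, $\Pi_h^F=\Pi_p^{\operatorname{div},c}$, bound the quantities $\eta_2^{\operatorname{alg}},\eta_6^{\operatorname{alg}},\eta_j^{\exp}$ via Lemmas~\ref{lemma:Picurls-approximation} and \ref{lemma:Picurlcom-approximation}, and feed the result into Corollary~\ref{cor:cond-for-quasioptimality}. However, there is a genuine gap in your final calibration step. The exponential approximation bounds contain \emph{two} distinct terms, $\left(\frac{h}{h+\sigma}\right)^{p}$ and $\left(\frac{kh}{\sigma p}\right)^{p}$. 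For the second term your claim is correct: $kh/p\leq c_1$ gives $\left(\frac{kh}{\sigma p}\right)^p\leq (c_1/\sigma)^p=e^{-b(c_1)p}$ with $b(c_1)\to\infty$ as $c_1\to 0$. But for the first term the claim is false: the base $h/(h+\sigma)$ is controlled only by $h_0/(h_0+\sigma)<1$ (with $h_0$ the maximal admissible mesh width), a constant that does not improve as $c_1\to 0$. Consequently the decay rate $b$ for that term is \emph{fixed}, and since $c_2$ is prescribed and may be arbitrarily small, your condition $b(c_1)c_2\geq\alpha_3+3$ cannot in general be enforced by shrinking $c_1$; the inequality $k^{\alpha_3+3}\left(\frac{h}{h+\sigma}\right)^{c_2\log k}\leq\varepsilon_1$ simply fails for fixed moderate $h$ and small $c_2$.

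The correct argument — this is precisely the content of Lemma~\ref{lemma:resolution-condition-detail}, which the paper invokes at this point — requires a case distinction on the size of $h$. If $h\leq\overline h$ for a suitable threshold $\overline h$ (chosen depending on $c_2$, $\alpha$, $\varepsilon$), then $h/(h+\sigma)$ itself is small and $p\geq c_2\log k$ suffices to beat $k^{\alpha}$. If instead $h>\overline h$, one does not use $p\geq c_2\log k$ at all but rather the other half of the scale resolution condition: $kh/p\leq c_1$ forces $p\geq k\overline h/c_1$, i.e.\ $p$ grows at least linearly in $k$, so $\left(\frac{h}{h+\sigma}\right)^{p}\leq q^{k\overline h/c_1}$ is superpolynomially small in $k$ and absorbs $k^{\alpha}$. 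Your proposal misses this dichotomy, so the step ``both terms are bounded by $e^{-b(c_1)p}$ with $b(c_1)\to\infty$'' must be replaced by the two-case argument; once that is done, the rest of your reasoning (including the treatment of $\eta_6^{\operatorname{alg}}$ and $\tilde\eta_{2,\ast}^{\operatorname{alg}}$, which are genuinely $O(kh/p)$) goes through as in the paper.
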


%

%TCIMACRO{\TeXButton{Proof}{\proof}}%
%BeginExpansion
\proof
%EndExpansion
The proof consists in checking the conditions of
Corollary~\ref{cor:cond-for-quasioptimality}. The infima in $\eta
_{2}^{\operatorname{alg}}$, $\eta_{j}^{\operatorname{exp}}$, $j\in\{1,4\}$ are
estimated with the aid the specific approximation operator $\Pi
^{\operatorname{curl},s}$ analyzed in Lemma~\ref{lemma:Picurls-approximation}%
:
\begin{equation}
\eta_{2}^{\operatorname{alg}}\leq\sup_{\substack{{\mathbf{z}}\in{\mathbf{H}%
}^{2}(\Omega)\\\Vert{\mathbf{z}}\Vert_{{\mathbf{H}}^{2}(\Omega)}\leq
k}}\left\Vert {\mathbf{z}}-\Pi_{p}^{\operatorname*{curl},s}{\mathbf{z}%
}\right\Vert _{\operatorname{curl},\Omega,k}%
\overset{\text{Lemma~\ref{lemma:Picurls-approximation},
(\ref{item:lemma:Picurls-approximation-i})}}{\lesssim}\left(  \frac{h}%
{p}+\frac{h^{2}}{p^{2}}k\right)  k=\frac{kh}{p}+\left(  \frac{kh}{p}\right)
^{2}. \label{esteta2alfhp}%
\end{equation}
The terms $\eta_{j}^{\operatorname{exp}}$, $j\in\left\{  1,4\right\}  $
involve the approximation of analytic functions: The term $\eta_{1}%
^{\operatorname{exp}}$ is an approximation from ${\mathbf{X}}_{h}%
=\boldsymbol{\mathcal{N}}_{p}^{I}({\mathcal{T}}_{h})$ and estimated with
Lemma~\ref{lemma:Picurls-approximation},
(\ref{item:lemma:Picurls-approximation-ii}); the term $\eta_{4}%
^{\operatorname{exp}}$ contains an approximation from $S_{h}=S_{p+1}%
({\mathcal{T}}_{h})$ and is taken from the proof of \cite[Thm.~{5.5}%
]{MelenkSauterMathComp}:
\begin{equation}
\sum_{j\in\left\{  1,4\right\}  }\eta_{j}^{\exp}\lesssim\left(  \frac
{h}{h+\sigma}\right)  ^{p}+k\left(  \frac{kh}{\sigma p}\right)  ^{p}+k\left\{
\left(  \frac{h}{h+\sigma}\right)  ^{p+1}+\left(  \frac{kh}{\sigma p}\right)
^{p+1}\right\}  . \label{edtatilde14est}%
\end{equation}
The terms $\eta_{6}^{\operatorname{alg}}$, $\eta_{7}^{\exp}$ involve the
operator $\Pi_{p}^{\operatorname*{curl},c}$. These are estimated in
Lemma~\ref{lemma:Picurlcom-approximation}. Specifically, $\eta_{6}%
^{\operatorname{alg}}$ is controlled with
Lemma~\ref{lemma:Picurlcom-approximation},
(\ref{item:lemma:Picurlcom-approximation-iii}) and $\eta_{7}^{\exp}$ is
controlled with Lemma~\ref{lemma:Picurlcom-approximation},
(\ref{item:lemma:Picurlcom-approximation-ii}) after the observation
(\ref{eq:analyticity-classes-d}) that $L_{\Omega}{\mathbf{v}}$ is in an
analyticity class:%
\begin{align}
\eta_{6}^{\operatorname{alg}}  &  \lesssim\frac{hk}{p},\label{eta6algest}\\
\eta_{7}^{\exp}  &  \lesssim k\left(  \left(  \frac{h}{h+\sigma}\right)
^{p+1}+\left(  \frac{kh}{\sigma p}\right)  ^{p+1}\right)  . \label{eta7expest}%
\end{align}
%In total, we observe that an exponential convergence in $p$ can absorb
%algebraic growth in $k$, which concludes the proof.%
Selecting $c_{1}$ sufficiently small and using
Lemma~\ref{lemma:resolution-condition-detail} allows us to conclude the
proof.
%TCIMACRO{\TeXButton{End Proof}{\endproof}}%
%BeginExpansion
\endproof
%EndExpansion

\begin{corollary}
\label{CorConvRates}Let Assumption~\ref{AssumptionData} be satisfied, and let
the right-hand side in (\ref{GalDisb}) be defined by $F\left(  \mathbf{v}%
\right)  =\left(  \operatorname*{i}k\mathbf{\tilde{\boldsymbol{\jmath}}%
},\mathbf{v}\right)  $ for some $\mathbf{\tilde{\boldsymbol{\jmath}}}\in\left\{
\mathbf{u}\in\mathbf{L}^{2}\left(  \Omega\right)  \mid\operatorname*{div}%
\mathbf{u}=0\wedge\left\langle \left.  \mathbf{u}\right\vert _{\Gamma
},\mathbf{n}\right\rangle =0\right\}  $. Let the assumptions of
Theorem~\ref{thm:hpFEM-quasioptimality} be satisfied. Then under the scale
resolution condition (\ref{eq:hpFEM-scale-res}), the Galerkin approximation
${\mathbf{E}}_{h}\in{\mathbf{X}}_{h}$ (cf.~(\ref{GalDiscMaxw})) exists and
satisfies
\begin{equation}
\left\Vert {\mathbf{E}}-{\mathbf{E}}_{h}\right\Vert _{\Omega
,\operatorname{curl},k}\leq C\frac{kh}{p}\left\Vert \mathbf{\tilde
{\boldsymbol{\jmath}}}\right\Vert _{L^{2}(\Omega)}.
\end{equation}

\end{corollary}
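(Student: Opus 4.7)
The strategy is to combine the quasi-optimality result of Theorem~\ref{thm:hpFEM-quasioptimality} with a regularity decomposition of the primal solution~$\mathbf{E}$ of exactly the same flavour as the one used for the adjoint in Proposition~\ref{PropN2Arough} (cf.\ (\ref{item:intro-XI})). Under the stated hypotheses on $\tilde{\boldsymbol{\jmath}}$, the Corollary following Lemma~\ref{LemEq2scprodstrong} yields $\mathbf{E}\in\mathbf{V}_0$, so by Lemma~\ref{Lemembed} we have $\mathbf{E}\in\mathbf{H}^1(\Omega)$. The specific structure of $F$ (only a bulk term, no surface data) places $\mathbf{E}$ in the same setting as~$\mathcal{N}_2$: the Helmholtz-type PDE $\operatorname{curl}\operatorname{curl}\mathbf{E}-k^2\mathbf{E}=\mathrm{i}k\tilde{\boldsymbol{\jmath}}$ in $\Omega$ with the transparent boundary condition. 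I therefore expect a splitting
\[
\mathbf{E}=\mathbf{E}_{H^2}+\mathbf{E}_{\mathcal{A}},\qquad
k\,\|\mathbf{E}_{H^2}\|_{L^2(\Omega)}+\|\mathbf{E}_{H^2}\|_{\mathbf{H}^2(\Omega)}\lesssim k\,\|\tilde{\boldsymbol{\jmath}}\|_{L^2(\Omega)},\qquad \mathbf{E}_{\mathcal{A}}\in\mathcal{A}(Ck^{\alpha}\|\tilde{\boldsymbol{\jmath}}\|_{L^2(\Omega)},\gamma,\Omega),
\]
obtained by the solution-formula/Newton-potential construction of Section~\ref{SecRegDualSol} applied to the primal data $\mathrm{i}k\tilde{\boldsymbol{\jmath}}$ (with the same split into a low-regularity Helmholtz component and an entire component as in \cite{MelenkSauterMathComp}).

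\textbf{Main steps.} First, invoke Theorem~\ref{thm:hpFEM-quasioptimality}: under the scale resolution condition (\ref{eq:hpFEM-scale-res}), $\mathbf{E}_h$ exists and
\[
\|\mathbf{E}-\mathbf{E}_h\|_{\operatorname{curl},\Omega,k}\leq C\inf_{\mathbf{w}_h\in\mathbf{X}_h}\|\mathbf{E}-\mathbf{w}_h\|_{\operatorname{curl},\Omega,k}.
\]
It is then enough to exhibit a single $\mathbf{w}_h\in\mathbf{X}_h=\boldsymbol{\mathcal{N}}_p^{\mathrm{I}}(\mathcal{T}_h)$ with the claimed rate. I take $\mathbf{w}_h:=\Pi_p^{\operatorname{curl},s}\mathbf{E}_{H^2}+\Pi_p^{\operatorname{curl},s}\mathbf{E}_{\mathcal{A}}$. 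For the finite-regularity part, Lemma~\ref{lemma:Picurls-approximation}\,(\ref{item:lemma:Picurls-approximation-i}) gives
\[
\|\mathbf{E}_{H^2}-\Pi_p^{\operatorname{curl},s}\mathbf{E}_{H^2}\|_{\operatorname{curl},\Omega,k}
\lesssim\Bigl(\tfrac{h}{p}+\tfrac{h^2}{p^2}k\Bigr)\|\mathbf{E}_{H^2}\|_{\mathbf{H}^2(\Omega)}
\lesssim\tfrac{kh}{p}\Bigl(1+\tfrac{kh}{p}\Bigr)\|\tilde{\boldsymbol{\jmath}}\|_{L^2(\Omega)},
\]
which, by (\ref{eq:hpFEM-scale-res}), is bounded by $C\,\tfrac{kh}{p}\,\|\tilde{\boldsymbol{\jmath}}\|_{L^2(\Omega)}$. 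For the analytic part, Lemma~\ref{lemma:Picurls-approximation}\,(\ref{item:lemma:Picurls-approximation-ii}) provides a bound of the shape
\[
\|\mathbf{E}_{\mathcal{A}}-\Pi_p^{\operatorname{curl},s}\mathbf{E}_{\mathcal{A}}\|_{\operatorname{curl},\Omega,k}
\lesssim k^{\alpha}\Bigl[\Bigl(\tfrac{h}{h+\sigma}\Bigr)^{p}+\Bigl(\tfrac{kh}{\sigma p}\Bigr)^{p}\Bigr]\|\tilde{\boldsymbol{\jmath}}\|_{L^2(\Omega)},
\]
and under the two conditions $kh/p\leq c_1$ (with $c_1$ small enough so $c_1/\sigma<1$) and $p\geq c_2\log k$, the bracketed quantity is exponentially small in~$p$ and, by Lemma~\ref{lemma:resolution-condition-detail}, can be absorbed into the algebraic $kh/p$ term.

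\textbf{Main obstacle.} The only non-routine ingredient is the regularity splitting of the primal solution with $k$-explicit bounds $k\|\mathbf{E}_{H^2}\|_{L^2}+\|\mathbf{E}_{H^2}\|_{\mathbf{H}^2}\lesssim k\|\tilde{\boldsymbol{\jmath}}\|_{L^2}$ and analytic control of $\mathbf{E}_{\mathcal{A}}$ with polynomial growth in~$k$. The analogous statement for the adjoint operator $\mathcal{N}_2$ is Proposition~\ref{PropN2Arough}, which already produces exactly the sort of split needed here by applying Green's-function/Newton-potential machinery (Helmholtz fundamental solution) followed by the decomposition technique of \cite{MelenkSauterMathComp}; the primal source problem differs only in the sign/role of the data (and the right-hand side already lies in $\mathbf{L}^2(\Omega)$ with the divergence-free/zero-normal-trace compatibility, which is what makes the Newton-potential construction go through cleanly). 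Once this splitting is in hand, the remaining steps are bookkeeping with the two approximation estimates of Lemma~\ref{lemma:Picurls-approximation} and the scale resolution condition, exactly as in the proof of Theorem~\ref{thm:hpFEM-quasioptimality}.
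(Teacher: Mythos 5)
Your proposal is correct and follows essentially the same route as the paper: the paper likewise observes that the stated compatibility conditions ($\operatorname{div}\tilde{\boldsymbol{\jmath}}=0$, $\langle\tilde{\boldsymbol{\jmath}},\mathbf{n}\rangle=0$) reduce $\mathbf{E}$ to a pure Newton potential $\mathbf{E}=\operatorname{i}k\int_\Omega g_k(\|\cdot-\mathbf{y}\|)\tilde{\boldsymbol{\jmath}}(\mathbf{y})\,d\mathbf{y}$, invokes the $H^2$/analytic splitting of \cite[Lemma~3.5]{MelenkSauterMathComp} with exactly the bounds you anticipate, and then applies $\Pi_p^{\operatorname{curl},s}$ to each piece together with the scale resolution condition and the quasi-optimality of Theorem~\ref{thm:hpFEM-quasioptimality}.
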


%

%TCIMACRO{\TeXButton{Proof}{\proof}}%
%BeginExpansion
\proof
%EndExpansion
Under the assumption of this corollary the solution $\mathbf{E}$ is the
restriction of the electric field of the full space problem
(\ref{Maxwellfullspace}) (with right-hand side defined as the extension of
$\mathbf{\tilde{\boldsymbol{\jmath}}}$ to $\mathbb{R}^{3}$ by zero). In Section
\ref{SecSolForm}, we will derive a solution formula (\ref{solformdualprob})
for an adjoint Maxwell problem which can be easily adapted to the original
Maxwell problem and to our assumption on the data $\mathbf{\tilde
{\boldsymbol{\jmath}}}$. We obtain%
\[
\mathbf{E}\left(  \mathbf{x}\right)  =\operatorname*{i}k\int_{\Omega}%
g_{k}\left(  \left\Vert \mathbf{x}-\mathbf{y}\right\Vert \right)
\mathbf{\tilde{\boldsymbol{\jmath}}}\left(  \mathbf{y}\right)  d\mathbf{y\quad
\forall x}\in\Omega,
\]
where $g_{k}$ is the fundamental solution of the Helmholtz equation
(\ref{fundsol}). From \cite[Lemma 3.5]{MelenkSauterMathComp}, we know that
there exist constants $c$, $C>0$ independent of $k$ and $\mathbf{\tilde
{\boldsymbol{\jmath}}}$ such that, for every $\mu>1$, there exists a $\mu$- and
$k$-dependent splitting $\mathbf{E}=\mathbf{E}_{H^{2}}+\mathbf{E}%
_{{\mathcal{A}}}$ with
%subequationsjmm%
%TCIMACRO{\TeXButton{formuladecomplemma}{\begin{subequations}
%\label{formuladecomplemma}
%\end{subequations}}}%
%BeginExpansion
\begin{subequations}
\label{formuladecomplemma}
\begin{align}
\Vert\nabla^{m}\mathbf{E}_{H^{2}}\Vert_{L^{2}(\Omega)}  &  \leq C\left(
1+\frac{1}{{\mu^{2}-1}}\right)  \left(  \mu k\right)  ^{m-1}\left\Vert
\mathbf{\tilde{\boldsymbol{\jmath}}}\right\Vert _{L^{2}(\Omega)}\qquad\forall
m\in\{0,1,2\},
\label{formuladecomplemmaa}\\
\Vert\nabla^{n}\mathbf{E}_{{\mathcal{A}}}\Vert_{L^{2}(\Omega)}  &  \leq
C\mu\left(  \gamma\mu k\right)  ^{n}\Vert\mathbf{\tilde{\boldsymbol{\jmath}}}%
\Vert_{L^{2}(\Omega)}\qquad\forall n\in{\mathbb{N}}_{0}. 
\label{formuladecomplemmab}%
\end{align}
\end{subequations}
As in (\ref{esteta2alfhp}) and (\ref{edtatilde14est}) we obtain constants $C$,
$\sigma> 0$ independent of $k$, $h$, $p$, and $\mathbf{\tilde{\boldsymbol{\jmath}%
}}$:
\begin{align*}
\left\Vert \mathbf{E}_{H^{2}}-\Pi_{p}^{\operatorname*{curl},s}\mathbf{E}%
_{H^{2}}\right\Vert _{\operatorname{curl},\Omega,k}  &  \leq C\frac{kh}%
{p}\left\Vert \mathbf{\tilde{\boldsymbol{\jmath}}}\right\Vert _{L^{2}(\Omega)},\\
\left\Vert \mathbf{E}_{\mathcal{A}}-\Pi_{p}^{\operatorname*{curl},s}%
\mathbf{E}_{\mathcal{A}}\right\Vert _{\operatorname{curl},\Omega,k}  &  \leq
C\left(  \left(  \frac{h}{h+\sigma}\right)  ^{p}+k\left(  \frac{kh}{\sigma
p}\right)  ^{p}+k\left(  \frac{h}{h+\sigma}\right)  ^{p+1}+\left(  \frac
{kh}{\sigma p}\right)  ^{p+1}\right)  \left\Vert \mathbf{\tilde{\boldsymbol{\jmath
}}}\right\Vert _{L^{2}(\Omega)}.
\end{align*}
Suitably choosing $c_{1}$, $c_{2}$ in condition (\ref{eq:hpFEM-scale-res})
implies the result.
%\[
%\left(  \frac{h}{h+\sigma}\right)  ^{p}+k\left(  \frac{kh}{\sigma p}\right)
%^{p}+k\left(  \frac{h}{h+\sigma}\right)  ^{p+1}+\left(  \frac{kh}{\sigma
%p}\right)  ^{p+1}\leq C\frac{kh}{p}.
%\]%
%TCIMACRO{\TeXButton{End Proof}{\endproof}}%
%BeginExpansion
\endproof
%EndExpansion

\begin{remark}
Our convergence theory allows also for a $k$-explicit $h$-version analysis:
the combination of Corollary \ref{cor:cond-for-quasioptimality} with the
definition of $\alpha_{\ell}$ (before (\ref{eq:analyticity-classes-d})) and
estimates (\ref{esteta2alfhp})-(\ref{eta7expest}) leads to a scale resolution
condition for fixed polynomial degree $p$ which reads $k^{p+s}h^{p}\lesssim1$
for $s=7$. However, we do not expect that this value of $s$ is sharp since our
goal is to prove quasi-optimality under the scale-resolution condition
$p\gtrsim\log k$ (cf. (\ref{eq:intro-scale-resolution})), which does not
require optimal bounds for the powers $\alpha_{\ell}$ in the stability
estimates.
%-------------------------------------------------------
\hbox{}\hfill\rule{0.8ex}{0.8ex}
\end{remark}

\section{$k$-explicit Analysis of Operators for $\Omega= B_{1}(0)$}

\label{sec:FreqSpOp}
%-------------------------------------------------------

A key ingredient of wavenumber-explicit estimates for the terms in the
splitting (\ref{gammamainsplittot}b,c) of $\left(
%TCIMACRO{\TeXButton{l}{\kern-.1em}}%
%BeginExpansion
\kern-.1em%
%EndExpansion
\left(
%TCIMACRO{\TeXButton{l}{\kern-.1em}}%
%BeginExpansion
\kern-.1em%
%EndExpansion
\mathbf{e}_{h},\mathbf{v}_{h}%
%TCIMACRO{\TeXButton{l}{\kern-.1em}}%
%BeginExpansion
\kern-.1em%
%EndExpansion
\right)
%TCIMACRO{\TeXButton{l}{\kern-.1em}}%
%BeginExpansion
\kern-.1em%
%EndExpansion
\right)  $ are $k$-explicit estimates of the capacity operator $T_{k}$ for the
low- and high-frequency parts of the arguments as these, in turn, allow for a
$k$-explicit analysis of the continuity properties of the sesquilinear form
$\left(
%TCIMACRO{\TeXButton{l}{\kern-.1em}}%
%BeginExpansion
\kern-.1em%
%EndExpansion
\left(
%TCIMACRO{\TeXButton{l}{\kern-.1em}}%
%BeginExpansion
\kern-.1em%
%EndExpansion
\cdot,\cdot%
%TCIMACRO{\TeXButton{l}{\kern-.1em}}%
%BeginExpansion
\kern-.1em%
%EndExpansion
\right)
%TCIMACRO{\TeXButton{l}{\kern-.1em}}%
%BeginExpansion
\kern-.1em%
%EndExpansion
\right)  $, the operators $\Pi^{\nabla,\ast}$, $\Pi^{\operatorname{comp},\ast
}$, and $A_{k}$. Our analysis of the operator $T_{k}$ is based on the explicit
knowledge of the Fourier coefficients and hence we restrict in this section to
the case that $\Omega=B_{1}\left(  0\right)  $ is the ball with radius $1$
centered at the origin. These estimates will be derived in
Section~\ref{SecSplitting} and applied to the different terms of the splitting
of $\left(
%TCIMACRO{\TeXButton{l}{\kern-.1em}}%
%BeginExpansion
\kern-.1em%
%EndExpansion
\left(
%TCIMACRO{\TeXButton{l}{\kern-.1em}}%
%BeginExpansion
\kern-.1em%
%EndExpansion
\mathbf{e}_{h},\mathbf{v}_{h}%
%TCIMACRO{\TeXButton{l}{\kern-.1em}}%
%BeginExpansion
\kern-.1em%
%EndExpansion
\right)
%TCIMACRO{\TeXButton{l}{\kern-.1em}}%
%BeginExpansion
\kern-.1em%
%EndExpansion
\right)  $ in Sections~\ref{SecHOmega}--\ref{SecLterms}.

We also analyze in the present section the operator $L_{\Omega}$ and show that
it maps into an analyticity class. The fact that we consider $\Omega=
B_{1}(0)$ here implies the \textsl{a priori} bound $\|L_{\Omega}{\mathbf{v}}
\|_{\operatorname{curl},\Omega,k} \leq\|{\mathbf{v}}\|_{\operatorname{curl}%
,\Omega,k}$ which, in turn, leads to the quantitative assertion $L_{\Omega
}{\mathbf{v}} \in{\mathcal{A}}(C k^{3/2} \|{\mathbf{v}}\|_{\operatorname{curl}%
,\Omega,1},\gamma,\Omega)$ in Theorem~\ref{TheoLau}.

%\subsection{Preliminaries\label{SecSplitting}}

\subsection{The Capacity Operator $T_{k}$ on the Sphere}

We restrict to the case that $\Omega^{+}:=\mathbb{R}^{3}\backslash
\overline{\Omega}$, where $\Omega=B_{1}\left(  0\right)  $ is the open unit
ball with boundary $\Gamma$. Let $T_{k}:H_{\operatorname{curl}}^{-1/2}\left(
\Gamma\right)  \rightarrow H_{\operatorname{div}}^{-1/2}\left(  \Gamma\right)
$ be the capacity operator that was introduced in the paragraph before
Remark~\ref{ExCapOp}.

In the case of the sphere the eigenfunctions of the negative Laplace-Beltrami
operator \textquotedblleft$-\Delta_{\Gamma}$\textquotedblright\ are given by
the spherical harmonics $Y_{\ell}^{m}$ (cf.~\cite[Sec.~{2.4.1}]{Nedelec01})
with eigenvalues $\lambda_{\ell}=\ell\left(  \ell+1\right)  $. In this case,
the index set $\iota_{\ell}$ in (\ref{eigvalabsLapBelt}) is given by
\begin{equation}
\label{eq:iota-sphere}\iota_{\ell}=\left\{  -\ell,-\ell+1,\ldots,\ell
-1,\ell\right\}  .
\end{equation}

We introduce the decomposition of $\mathbf{E}_{T}$ according to
(\ref{utbcHdecomp}) (cf.~\cite[(5.3.87)]{Nedelec01})
%subequationsjmm%
%TCIMACRO{\TeXButton{hodgesphere}{\begin{subequations}
%\label{hodgesphere}
%\end{subequations}}}%
%BeginExpansion
\begin{subequations}
\label{hodgesphere}
\end{subequations}%
%EndExpansion%
\begin{equation}
\mathbf{E}_{T}=\mathbf{E}^{\operatorname*{curl}}+\mathbf{E}^{\nabla}, \tag{%
%TCIMACRO{\TeXButton{hodgesphere}{\ref{hodgesphere}}}%
%BeginExpansion
\ref{hodgesphere}%
%EndExpansion
a}\label{hodgespherea}%
\end{equation}
where%
\begin{equation}
\mathbf{E}^{\operatorname*{curl}}:=\sum_{\ell=1}^{\infty}\sum_{m\in\iota
_{\ell}}u_{\ell}^{m}\mathbf{T}_{\ell}^{m}\quad\text{and\quad}\mathbf{E}%
^{\nabla}=\nabla_{\Gamma}p\quad\text{with\quad}p:=\sum_{\ell=1}^{\infty}%
\sum_{m\in\iota_{\ell}}U_{\ell}^{m}Y_{\ell}^{m} \tag{%
%TCIMACRO{\TeXButton{hodgesphere}{\ref{hodgesphere}}}%
%BeginExpansion
\ref{hodgesphere}%
%EndExpansion
b}\label{hodgesphereb}%
\end{equation}
with the vectorial spherical harmonics $\mathbf{T}_{\ell}^{m}%
:=\overrightarrow{\operatorname*{curl}\nolimits_{\Gamma}}Y_{\ell}^{m}$
(cf.~\cite[(2.4.152), (2.4.173)]{Nedelec01}). This implies $\operatorname{div}%
_{\Gamma}\mathbf{E}_{T}^{\operatorname*{curl}}=0$.

\begin{remark}
\label{RemRedHodge}For the expansion of a tangential field, e.g.,
$\mathbf{E}_{T}$ the summation starts with $\ell=1$ since $\mathbf{T}%
_{0}=\nabla_{\Gamma}Y_{0}^{0}=\mathbf{0}$, i.e.,%
\begin{equation}
\mathbf{E}_{T}=\sum_{\ell=1}^{\infty}\sum_{m\in\iota_{\ell}}\left(  u_{\ell
}^{m}\mathbf{T}_{\ell}^{m}+U_{\ell}^{m}\nabla Y_{\ell}^{m}\right)
.\label{redhodege}%
\end{equation}
\hbox{}\hfill\rule{0.8ex}{0.8ex}
\end{remark}

\begin{lemma}
\label{Lembsplit}Let $\mathbf{E}_{T}\in H_{\operatorname{curl}}^{-1/2}\left(
\Gamma\right)  $ be decomposed as in (\ref{hodgesphere}). Then (cf.
Assumption~\ref{Assumpsdefb})%
\begin{equation}
\operatorname{div}_{\Gamma}\mathbf{E}^{\operatorname*{curl}}=0\quad
\text{and\quad}\left(  \mathbf{E}^{\operatorname*{curl}},\mathbf{E}^{\nabla
}\right)  _{\Gamma}=0=b_{k}\left(  \mathbf{E}^{\operatorname*{curl}%
},\mathbf{E}^{\nabla}\right)  =b_{k}\left(  \mathbf{E}^{\nabla},\mathbf{E}%
^{\operatorname*{curl}}\right)  . \label{orthorel}%
\end{equation}
Furthermore, we have the definiteness relations: For all $\mathbf{E}%
\in\mathbf{X}$ it holds%
\begin{equation}
\operatorname{Im}b_{k}\left(  \mathbf{E}^{\operatorname*{curl}},\mathbf{E}%
^{\operatorname*{curl}}\right)  \geq0\mathbf{\quad}\text{and\quad
}\operatorname{Im}b_{k}\left(  \mathbf{E}^{\nabla},\mathbf{E}^{\nabla}\right)
\leq0. \label{Imb1}%
\end{equation}

\end{lemma}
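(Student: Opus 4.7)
The plan is to reduce everything to the explicit Fourier series representations on the sphere $\Gamma=\partial B_1(0)$ with respect to the orthogonal basis $\{\mathbf{T}^m_\ell,\nabla_\Gamma Y^m_\ell\}$ of $\mathbf{L}^2_T(\Gamma)$ and then exploit the well-known diagonalization of the capacity operator $T_k$ in that basis. Concretely, if $\mathbf{E}_T$ has the Hodge-type expansion (\ref{hodgesphere}), then by (\ref{divsucurl=0}) the surface divergence only picks up the $V^m_\ell$ coefficients, which proves $\operatorname{div}_\Gamma\mathbf{E}^{\operatorname{curl}}=0$ at once, and the $\mathbf{L}^2_T(\Gamma)$-orthogonality $(\mathbf{E}^{\operatorname{curl}},\mathbf{E}^\nabla)_\Gamma=0$ is exactly (\ref{orthogonality}).

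For the $b_k$-orthogonality in (\ref{orthorel}), I would use the series representation (\ref{DefTk}) of $T_k$ (to be introduced below as a Fourier multiplier whose symbol is given by ratios of spherical Hankel functions, cf.~\cite[Sec.~5.3.2]{Nedelec01}). The crucial structural property is that $T_k$ maps $\mathbf{T}^m_\ell$ to a scalar multiple of $\mathbf{T}^m_\ell$ and $\nabla_\Gamma Y^m_\ell$ to a scalar multiple of $\nabla_\Gamma Y^m_\ell$; hence $T_k$ preserves the splitting into surface-curl and surface-gradient parts. Combined with the $\mathbf{L}^2_T(\Gamma)$-orthogonality between the two families, this gives $b_k(\mathbf{E}^{\operatorname{curl}},\mathbf{E}^\nabla)=(T_k\mathbf{E}^{\operatorname{curl}},\mathbf{E}^\nabla)_\Gamma=0$ and likewise with the arguments exchanged.

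The definiteness statements (\ref{Imb1}) are then obtained by inserting the expansions into $b_k(\mathbf{E}^\nabla,\mathbf{E}^\nabla)=\sum_{\ell,m}|U^m_\ell|^2 \mu^\nabla_\ell$ and $b_k(\mathbf{E}^{\operatorname{curl}},\mathbf{E}^{\operatorname{curl}})=\sum_{\ell,m}|u^m_\ell|^2 \mu^{\operatorname{curl}}_\ell$, where $\mu^\nabla_\ell$ and $\mu^{\operatorname{curl}}_\ell$ are the eigenvalues of $T_k$ on the gradient and curl subspaces respectively. The claim then reduces to showing $\operatorname{Im}\mu^{\operatorname{curl}}_\ell\ge 0$ and $\operatorname{Im}\mu^\nabla_\ell\le 0$ for all $\ell\ge 1$. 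These sign properties are encoded in the symbol of $T_k$ (through a Wronskian-type identity for spherical Hankel functions $h^{(1)}_\ell(k)$) and are the content of \cite[Thm.~5.3.6 and Sec.~5.3.2]{Nedelec01}.

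The main obstacle is really just the bookkeeping for the symbol of $T_k$: producing the explicit formulas $\mu^\nabla_\ell$, $\mu^{\operatorname{curl}}_\ell$ and verifying their sign in the imaginary part from the Wronskian of spherical Bessel functions. Since this computation is standard and fully carried out in N\'ed\'elec's monograph for the unit sphere, I would simply cite it rather than redo it, so that the lemma becomes a direct consequence of the diagonal action of $T_k$ in the basis $\{\mathbf{T}^m_\ell,\nabla_\Gamma Y^m_\ell\}$ together with the sign properties of its symbol.
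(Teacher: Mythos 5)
Your proposal is correct and follows essentially the same route as the paper: both proofs reduce everything to the Fourier diagonalization of $T_{k}$ in the orthogonal basis $\{\mathbf{T}_{\ell}^{m},\nabla_{\Gamma}Y_{\ell}^{m}\}$ (so that $T_{k}$ preserves the curl/gradient splitting, giving (\ref{orthorel})) and then cite N\'ed\'elec's sign results for the imaginary part of the symbol to obtain (\ref{Imb1}). The paper is merely terser, pointing to the specific equations \cite[(5.3.87), (5.3.91), (5.3.106), (5.3.107), (5.3.109)]{Nedelec01} where you spell out the underlying mechanism.
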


%

%TCIMACRO{\TeXButton{Proof}{\proof}}%
%BeginExpansion
\proof
%EndExpansion
It follows from \cite[(5.3.87) and (5.3.91)]{Nedelec01} that the first term in
(\ref{orthorel}) is zero. Integration by parts for the second term in
(\ref{orthorel}) and using $\operatorname{div}_{\Gamma}\mathbf{E}%
^{\operatorname*{curl}}=0$ shows that the second term vanishes. The third term
in (\ref{orthorel}) vanishes as a consequence of $\operatorname*{div}_{\Gamma
}\mathbf{E}^{\operatorname*{curl}}=0$ and \cite[(5.3.109)]{Nedelec01}. The
last term is zero since $T_{k}\mathbf{E}^{\nabla}$ is a linear combination of
$\nabla_{\Gamma}Y_{\ell}^{m}$ (cf.~\cite[(5.3.87) and (5.3.88)]{Nedelec01})
and $\left(  \nabla_{\Gamma}Y_{\ell}^{m},\mathbf{E}^{\operatorname*{curl}%
}\right)  _{\Gamma}=- \left(  Y_{\ell}^{m},\operatorname{div}_{\Gamma}%
\mathbf{E}^{\operatorname*{curl}}\right)  _{\Gamma}=0$.

The first inequality in (\ref{Imb1}) follows from \cite[(5.3.107)]{Nedelec01}
and the second one is a consequence of \cite[(5.3.106)]{Nedelec01}.%
%TCIMACRO{\TeXButton{End Proof}{\endproof}}%
%BeginExpansion
\endproof
%EndExpansion

Any tangential vector field $\mathbf{u}_{T}\in\mathbf{H}_{\operatorname*{curl}%
}^{-1/2}\left(  \Gamma\right)  $ can be expanded in terms of surface gradients
of spherical harmonics $Y_{\ell}^{m}$ and vectorial spherical harmonics
$\mathbf{T}_{\ell}^{m}$ via%
\begin{equation}
\mathbf{u}_{T}=\mathbf{u}^{\operatorname*{curl}}+\mathbf{u}^{\nabla}
\label{uTaddsplit}%
\end{equation}
with\
\[
\mathbf{u}^{\operatorname*{curl}}:=\sum_{\ell=1}^{\infty}\sum_{m\in\iota
_{\ell}}u_{\ell}^{m}\mathbf{T}_{\ell}^{m}\quad\text{and\quad}\mathbf{u}%
^{\nabla}:=\nabla_{\Gamma}p\quad\text{with\quad}p:=\sum_{\ell=1}^{\infty}%
\sum_{m\in\iota_{\ell}}U_{\ell}^{m}Y_{\ell}^{m}.
\]
The application of the capacity operator $T_{k}$ to $\mathbf{u}_{T}$ has the
explicit form (cf.~\cite[(5.3.88)]{Nedelec01})%
\begin{equation}
T_{k}\mathbf{u}_{T}=\sum_{\ell=1}^{\infty}\frac{z_{\ell}\left(  k\right)
+1}{\operatorname*{i}k}\sum_{m\in\iota_{\ell}}u_{\ell}^{m}\mathbf{T}_{\ell
}^{m}+\sum_{\ell=1}^{\infty}\frac{\operatorname*{i}k}{z_{\ell}\left(
k\right)  +1}\sum_{m\in\iota_{\ell}}U_{\ell}^{m}\nabla_{\Gamma}Y_{\ell}^{m},
\label{DefTk}%
\end{equation}
where%
\begin{equation}
z_{\ell}\left(  r\right)  :=r\frac{\left(  h_{\ell}^{\left(  1\right)
}\right)  ^{\prime}\left(  r\right)  }{h_{\ell}^{\left(  1\right)  }\left(
r\right)  }=-\frac{p_{\ell}\left(  r^{-2}\right)  }{q_{\ell}\left(
r^{-2}\right)  }+\operatorname*{i}\frac{r}{q_{\ell}\left(  r^{-2}\right)  },
\label{zlconjcompl}%
\end{equation}
with the spherical Hankel functions $h_{\ell}^{\left(  1\right)  }$, and
$p_{\ell}$, $q_{\ell}$ are polynomials of degree $\ell$ with real coefficients
(cf.~\cite[(2.6.19)-(2.6.22)]{Nedelec01}).

\begin{lemma}
\label{Lemzlest} Let $\lambda_{0}>1$ arbitrary but fixed. Then there exists
$C_{0}$ depending only on $\lambda_{0}$ such that for any $\lambda\geq
\lambda_{0}$:%
\begin{equation}
\frac{k}{\left\vert z_{n}\left(  k\right)  +1\right\vert }\leq\left\{
\begin{array}
[c]{ll}%
{2\sqrt{2}}k & \forall n\in\mathbb{N}_{0},\\
{2\sqrt{2}}\left(  \frac{2}{\lambda_{0}}+1\right)  \dfrac{k}{\left(
n+1\right)  } & n>\lambda k^{2},\\
C_{0}\frac{k}{n+1} & n\geq\lambda k.
\end{array}
\right.  \label{estk/z+1}%
\end{equation}
It holds%
\begin{equation}
\frac{\left\vert z_{n}\left(  k\right)  +1\right\vert }{k}\leq1+\frac{n}{k}.
\label{znestabove}%
\end{equation}

\end{lemma}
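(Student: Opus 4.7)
The plan is to exploit the explicit polynomial representation (\ref{zlconjcompl}) together with the Wronskian identity for spherical Bessel functions. Writing
\[
z_n(k)+1 \;=\; \frac{q_n(k^{-2})-p_n(k^{-2}) + \operatorname*{i} k}{q_n(k^{-2})},
\]
with $p_n, q_n$ real polynomials of degree $n$ and $q_n(k^{-2})>0$, yields the key identity
\[
|z_n(k)+1|^2 \;=\; \frac{(q_n(k^{-2})-p_n(k^{-2}))^2 + k^2}{q_n(k^{-2})^2},
\qquad
\operatorname{Im}(z_n(k)+1) \;=\; \frac{k}{q_n(k^{-2})}.
\]
Combining this with the Wronskian $j_n(r)y_n'(r)-j_n'(r)y_n(r)=1/r^2$ (which gives $\operatorname{Im}(z_n(r))=1/(r|h_n^{(1)}(r)|^2)$) yields the correspondence $q_n(r^{-2}) = r^2|h_n^{(1)}(r)|^2$, and a matching formula $q_n(r^{-2}) - p_n(r^{-2}) = \tfrac{r}{2}\frac{d}{dr}[r^2|h_n^{(1)}(r)|^2]$. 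All four bounds will be derived from these representations, using also the classical monotonicity $|h_{n-1}^{(1)}(r)|\leq|h_n^{(1)}(r)|$ for $r>0$ and the derivative recurrence $(h_n^{(1)})'=h_{n-1}^{(1)}-(n+1)h_n^{(1)}/r$, which rewrites as $z_n(k)+1 = -n + k\,h_{n-1}^{(1)}(k)/h_n^{(1)}(k)$.

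The bound (\ref{znestabove}) follows at once: from the identity just stated and the monotonicity of $|h_n^{(1)}|$ one gets $|k\,h_{n-1}^{(1)}(k)/h_n^{(1)}(k)|\leq k$, whence $|z_n(k)+1|\leq n+k$, which is (\ref{znestabove}).

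For the universal bound $k/|z_n(k)+1|\leq 2\sqrt{2}k$, equivalent to $|z_n(k)+1|\geq 1/(2\sqrt{2})$, the plan is to apply $\sqrt{a^2+b^2}\geq (|a|+|b|)/\sqrt{2}$ to obtain
\[
|z_n(k)+1| \;\geq\; \frac{1}{\sqrt{2}}\,\frac{|q_n(k^{-2})-p_n(k^{-2})|+k}{q_n(k^{-2})},
\]
and then show this lower bound is $\geq 1/(2\sqrt{2})$ uniformly in $n,k\geq 1$. When $n\lesssim k$ the imaginary-part contribution $k/q_n$ dominates; when $n\gg k$ the real part dominates because $p_n(k^{-2})/q_n(k^{-2})\to n+1$ (which is the small-$r$ limit of $-z_n(r)$). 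The transition regime $n\approx k$, where neither asymptotic is sharp, will require an induction on $n$ using the three-term recurrence $h_{n+1}^{(1)}=(2n+1)h_n^{(1)}/r - h_{n-1}^{(1)}$ inherited by $q_n$ and $p_n$; the base cases $n\in\{0,1,2\}$ (in which $|z_n(k)+1|$ admits closed-form expressions) are checked directly.

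For the two sharper bounds in the regimes $n>\lambda k^2$ and $n\geq \lambda k$: using $z_n(k)+1 = -n + k\,h_{n-1}^{(1)}(k)/h_n^{(1)}(k)$, the strategy reduces to controlling the Hankel ratio. Starting from the convergent expansion $h_n^{(1)}(r) = (-\mathrm i)^{n+1}(e^{\mathrm i r}/r)\sum_{j=0}^{n} \mathrm{i}^j (n+j)!/(j!(n-j)!(2r)^j)$ and performing an induction on $n$, one shows the a priori bound
\[
\left|\frac{k\,h_{n-1}^{(1)}(k)}{h_n^{(1)}(k)}\right| \;\leq\; \frac{C_{\lambda_0}\,k^2}{n+1}\qquad\text{for }n\geq\lambda k,\quad\lambda\geq\lambda_0>1,
\]
with $C_{\lambda_0}$ depending only on $\lambda_0$. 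In the tighter regime $n>\lambda k^2$ this simplifies to $|k h_{n-1}^{(1)}/h_n^{(1)}|\leq (2/\lambda_0+1)(n+1)/(2\sqrt{2})$ by using $k^2\leq n/\lambda\leq n/\lambda_0$ and the explicit form of the leading-order ratio $k^2/(2n-1)$; this yields the second bound via $|z_n(k)+1|\geq n - |kh_{n-1}^{(1)}/h_n^{(1)}|$. The third bound follows from the same inductive estimate with a $\lambda_0$-dependent constant $C_0$.

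The main obstacle is the uniform bound $|z_n(k)+1|\geq 1/(2\sqrt{2})$ in the transition regime $n\sim k$, where the polynomials $p_n,q_n$ do not admit a single dominant term; tracking the sharp constant $2\sqrt{2}$ through the recurrence for $p_n,q_n$ is the technical core. The quantitative estimates on the Hankel ratio in the large-$n$ regime are comparatively routine once the induction structure is set up correctly.
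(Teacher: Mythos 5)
Your route through the Hankel-function identities (the Wronskian identification $q_n(r^{-2})=r^2|h_n^{(1)}(r)|^2$, and the recurrence $z_n(k)+1=-n+k\,h_{n-1}^{(1)}(k)/h_n^{(1)}(k)$) is genuinely different from the paper's, which works entirely with the explicit coefficients $a_{m,n}=\frac{(2m)!(n+m)!}{(m!)^2 4^m (n-m)!}$ of $p_n$ and $q_n$ and reduces everything to the ratio $\rho_n(k)=\bigl(\sum_m a_{m,n}k^{-2m}\bigr)/\bigl(\sum_m (m+1)a_{m,n}k^{-2m}\bigr)$. Your derivation of (\ref{znestabove}) from $|h_{n-1}^{(1)}|\leq|h_n^{(1)}|$ is correct. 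But you have misplaced the difficulty. The uniform bound $k/|z_n(k)+1|\leq 2\sqrt2\,k$, which you identify as "the technical core," is in fact nearly trivial once the coefficients are written out: since $p_n-q_n=\sum_{m\geq1}m\,a_{m,n}k^{-2m}\geq 0$ termwise and $k\geq 1=a_{0,n}$, the quantity $|q_n-p_n|+k$ dominates $\tfrac12\sum_m(m+1)a_{m,n}k^{-2m}$ term by term, and no induction or transition-regime analysis is needed. There is no need to track $2\sqrt2$ through the three-term recurrence.

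The genuine gap is in the third bound, $k/|z_n(k)+1|\leq C_0 k/(n+1)$ for $n\geq\lambda k$, which you dismiss as "comparatively routine." Your plan is $|z_n(k)+1|\geq n-|k\,h_{n-1}^{(1)}(k)/h_n^{(1)}(k)|$ together with an a priori bound $|k\,h_{n-1}^{(1)}(k)/h_n^{(1)}(k)|\leq C_{\lambda_0}k^2/(n+1)$. In the regime $n\geq\lambda k$ one only has $k^2\leq n^2/\lambda^2$, so this gives $|z_n(k)+1|\gtrsim n\bigl(1-C_{\lambda_0}/\lambda_0^2\bigr)$, which is useful only if $C_{\lambda_0}<\lambda_0^2$. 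The asymptotic value of the ratio is $k^2/(2n-1)$, i.e.\ the "true" constant is about $1/2$, so there is room — but only barely when $\lambda_0$ is close to $1$, and precisely there the argument sits in the transition region of the Hankel functions where $n\approx k$ and the series for $h_n^{(1)}(k)$ has no single dominant term. You give no argument that a termwise induction on the series delivers a constant below $\lambda_0^2$ uniformly for $n\geq\lambda_0 k$ with $\lambda_0$ arbitrarily close to $1$. This is exactly the hard part: the paper handles it by first proving that $\rho_n$ is monotone increasing in $k$ (a positive-semidefiniteness argument for a quadratic form in the $a_{m,n}$), thereby reducing to the single value $k=n/\lambda_0$, and then running a Stirling/saddle-point analysis of $\sum_m a_{m,n}(\lambda_0/n)^{2m}$ around the dominant index $m\approx n\sqrt{1-\lambda_0^{-2}}$, with the auxiliary Lemma~\ref{LemTaylor} controlling the decay away from that index. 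Without either that reduction or a comparably sharp substitute, your third bound does not close.
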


Estimate (\ref{znestabove}) follows from \cite[(5.3.95)]{Nedelec01}. The proof
of (\ref{estk/z+1}) is rather technical and postponed to the
Appendix~\ref{SecA}.

\begin{remark}
\label{RemA41}{}From (\ref{Imb1}), (\ref{DefTk}), and (\ref{zlconjcompl}) we
conclude that Assumption~\ref{Assumpsdefb} is satisfied for the sphere.
\hbox{}\hfill\rule{0.8ex}{0.8ex}
\end{remark}

%-----------------------------------

\subsection{Analysis of Frequency Splittings $L_{\Gamma}$, $H_{\Gamma}$ on the
Surface of the Sphere\label{SubSecFrequSplit}}

%-----------------------------------
%-----------------------------------------

\subsubsection{Analyticity of $L_{\Gamma}$}

%-----------------------------------------

\begin{lemma}
\label{LemLgammaPitu} Let $\Omega= B_{1}(0)$, and let the frequency filter
$L_{\Gamma}$ be given by Definition~\ref{DefFreqSplit} with a cut-off
parameter $\lambda\geq\lambda_{0}>1$. Then:

\begin{enumerate}
[(i)]

\item \label{item:LemLgammaPitu-ii} There exists a fixed tubular neighborhood
${\mathcal{U}}_{\Gamma}$ of $\Gamma$ and constants $C_{2}$, $\gamma_{2}$
independent of $k$ (but dependent on $\Gamma$, $\lambda$) such that for each
${\mathbf{u}} \in{\mathbf{H}}(\operatorname{curl},\Omega)$ there is an
extension ${\mathbf{U}} \in{\mathcal{A}}(C_{2} k^{3/2} \|{\mathbf{u}%
}\|_{\operatorname{curl},\Omega,1},\gamma_{2},{\mathcal{U}}_{\Gamma})$ of
$L_{\Gamma}{\mathbf{u}}_{T}$ to ${\mathcal{U}}_{\Gamma}$.

\item \label{item:LemLgammaPitu-i} The function $L_{\Gamma}{\mathbf{u}}_T$
belongs to the class $\mathcal{A}\left(  C_{1}k^{3/2}\left\Vert \mathbf{u}%
\right\Vert _{\operatorname*{curl},\Omega,1},\gamma_{1},\Gamma\right)  $,
where $C_{1}$, $\gamma_{1}$ are constants which are independent of $k$ and
$\mathbf{u}$. In particular, $\|L_{\Gamma} {\mathbf{u}}_T \|_{H^{1/2}%
(\Gamma)} \leq C_{1}^{\prime}k^{2} \|{\mathbf{u}}\|_{\operatorname{curl}%
,\Omega,1}$.
\end{enumerate}
\end{lemma}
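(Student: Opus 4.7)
The plan is to exploit the fact that $L_\Gamma \mathbf{u}_T$ is a \emph{finite} sum of spherical-harmonic modes, each of which admits a canonical analytic (in fact entire) extension to $\mathbb{R}^3$, namely the homogeneous harmonic polynomial $H_\ell^m(\mathbf{x}) := |\mathbf{x}|^\ell Y_\ell^m(\mathbf{x}/|\mathbf{x}|)$. First, I would expand $\mathbf{u}_T = \Pi_T \mathbf{u}$ as in \eqref{vFExp} and write
\[
L_\Gamma \mathbf{u}_T \;=\; \sum_{1\le \ell \le \lambda k} \sum_{m\in\iota_\ell}\Bigl(v_\ell^m\,\overrightarrow{\operatorname*{curl}\nolimits_\Gamma}Y_\ell^m + V_\ell^m\,\nabla_\Gamma Y_\ell^m\Bigr),
\]
and record the coefficient bound coming from \eqref{m1/2curldiva} and the continuity of $\Pi_T$ (cf.~\eqref{eq:norm-of-Pi_tau}):
\[
\sum_{\ell,m} \lambda_\ell^{1/2}\bigl((1+\lambda_\ell)|v_\ell^m|^2 + |V_\ell^m|^2\bigr) \;=\; \|\mathbf{u}_T\|_{-1/2,\operatorname*{curl}_\Gamma}^2 \;\lesssim\; \|\mathbf{u}\|_{\operatorname*{curl},\Omega,1}^2.
\]
Next, I would define the extension $\mathbf{U}$ to a fixed tubular neighborhood $\mathcal{U}_\Gamma = \{1/2 < |\mathbf{x}| < 3/2\}$ by using $\nabla H_\ell^m$ and its tangential/normal decomposition with respect to $\mathbf{n}^\ast = \mathbf{x}/|\mathbf{x}|$ to produce analytic extensions of $\nabla_\Gamma Y_\ell^m$ and $\overrightarrow{\operatorname{curl}_\Gamma} Y_\ell^m$; these are well-defined on $\mathcal{U}_\Gamma$ since $\mathbf{n}^\ast$ is analytic there.

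The core technical step will be bounds on the derivatives of these elementary extensions. Since $H_\ell^m$ is harmonic with $\|Y_\ell^m\|_{L^2(\Gamma)}=1$, standard Cauchy-type estimates on a slightly enlarged neighborhood give $\|\nabla^{n}(\nabla_\Gamma Y_\ell^m)^{\mathrm{ext}}\|_{L^2(\mathcal{U}_\Gamma)} \le C\,\gamma^{n}\,\ell\,\max(n{+}1,\ell)^{n}$, with analogous bounds for the $\overrightarrow{\operatorname{curl}_\Gamma} Y_\ell^m$ extension (the extra factor $\ell$ accounts for one surface differentiation). Combining triangle inequality with Cauchy--Schwarz against the weighted coefficient bound, and exploiting $|\iota_\ell| = 2\ell+1$ together with the cut-off $\ell \le \lambda k$, yields
\[
\|\nabla^n \mathbf{U}\|_{L^2(\mathcal{U}_\Gamma)} \;\lesssim\; \Bigl(\sum_{\ell\le\lambda k}\sum_m \lambda_\ell^{1/2}(1{+}\lambda_\ell)|v_\ell^m|^2 + \lambda_\ell^{1/2}|V_\ell^m|^2\Bigr)^{1/2} \Bigl(\sum_{\ell\le\lambda k} \ell^{?}\Bigr)^{1/2} \gamma^n \max(n{+}1,k)^n,
\]
where the geometric sum over $\ell \le \lambda k$ contributes the $k$-dependent prefactor. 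A careful bookkeeping of weights shows that this combinatorial factor scales like $k^{3/2}$, giving the claimed class $\mathcal{A}(C_2 k^{3/2}\|\mathbf{u}\|_{\operatorname*{curl},\Omega,1},\gamma_2,\mathcal{U}_\Gamma)$. Here one uses $\max(n{+}1,\lambda k)^n \le \gamma^n \max(n{+}1,k)^n$ after adjusting $\gamma_2$.

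Part (ii) then follows from (i) by restriction and Lemma~\ref{lemma:A-invariant}: taking the trace of $\mathbf{U}$ back to $\Gamma$ and composing with a chart of the analytic manifold $\Gamma$ preserves membership in the analyticity class $\mathcal{A}(C_1 k^{3/2}\|\mathbf{u}\|_{\operatorname*{curl},\Omega,1},\gamma_1,\Gamma)$. The explicit $H^{1/2}(\Gamma)$ bound is simplest proved directly from the Fourier expansion: using \eqref{DefHsGammaTNorm} with $s=1/2$ and $\lambda_\ell^{3/2} \le (\lambda k)^2 \lambda_\ell^{1/2}$ for $\ell \le \lambda k$ gives $\|L_\Gamma \mathbf{u}_T\|_{H^{1/2}(\Gamma)}^2 \lesssim k^4 \|\mathbf{u}\|_{\operatorname*{curl},\Omega,1}^2$.

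The principal obstacle is the first part: namely, pinning down the exact power of $k$ that emerges from the combinatorial sum. The natural estimate sums $O(\ell^2)$ modes per level over $O(k)$ levels, which threatens an $O(k^3)$ loss; to recover $k^{3/2}$ one must put all powers of $\ell$ into the Cauchy--Schwarz pairing against the $H^{-1/2}_{\operatorname*{curl}}(\Gamma)$-weight and only the cardinality residue $\sum_{\ell \le \lambda k} 1 \sim k$ remains, which (after a square root) gives the $k^{3/2}$. The second delicate issue is that the harmonic extension $H_\ell^m$ grows like $R^\ell$ away from $\Gamma$, so the tubular neighborhood $\mathcal{U}_\Gamma$ must be chosen $k$-independent but small enough that the growth of the basis functions is absorbed by the polynomial weights rather than an exponential in $\ell$.
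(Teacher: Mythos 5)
Your overall architecture --- expand $L_{\Gamma}\mathbf{u}_{T}$ in the finitely many vector spherical harmonics with $\ell\le\lambda k$, extend each mode analytically to a fixed annulus, and pay for the mode count by Cauchy--Schwarz against the $\mathbf{H}_{\operatorname*{curl}}^{-1/2}(\Gamma)$ weights --- is exactly the paper's. The point where your argument breaks is the choice of extension. The homogeneous harmonic polynomial $H_{\ell}^{m}(\mathbf{x})=|\mathbf{x}|^{\ell}Y_{\ell}^{m}(\mathbf{x}/|\mathbf{x}|)$ satisfies $\|H_{\ell}^{m}\|_{L^{2}(\{1<|\mathbf{x}|<1+\delta\})}\gtrsim(1+\delta/2)^{\ell}$ for any fixed $\delta>0$, and since $\ell$ runs up to $\lambda k$ this is exponentially large in $k$: already the $n=0$ requirement for membership in $\mathcal{A}(C_{2}k^{3/2}\|\mathbf{u}\|_{\operatorname*{curl},\Omega,1},\gamma_{2},\mathcal{U}_{\Gamma})$ fails, and no fixed ($k$-independent) tubular neighborhood can ``absorb'' a factor exponential in $\ell$ into polynomial weights, contrary to what you suggest in your closing remarks. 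The paper instead uses the extension $\tilde{Y}_{\ell}^{m}$ of (\ref{estgradYtilde}) (in essence the extension constant along the radial direction), whose decisive property is the $\ell$-uniform bound $\|\nabla^{n}\tilde{Y}_{\ell}^{m}\|_{L^{2}(\mathcal{U}_{\Gamma})}\le C_{S}\gamma_{S}^{n}\max\{\sqrt{\lambda_{\ell}},n\}^{n}$, in particular $\|\tilde{Y}_{\ell}^{m}\|_{L^{2}(\mathcal{U}_{\Gamma})}\le C_{S}$ for $n=0$. With that replacement your Cauchy--Schwarz goes through; note, however, that the surviving combinatorial factor is $\bigl(\sum_{\ell\le\lambda k}\sum_{m\in\iota_{\ell}}\lambda_{\ell}^{1/2}\bigr)^{1/2}\sim k^{3/2}$ (multiplicity $2\ell+1$ times the residual weight $\lambda_{\ell}^{1/2}$ summed over $\ell\le\lambda k$ gives $k^{3}$), not the ``cardinality residue $\sum_{\ell}1\sim k$'' you describe --- your stated bookkeeping is numerically inconsistent with the exponent $3/2$ you claim to recover.

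There is a second gap in your treatment of part (ii). Deducing the surface statement ``by restriction'' of $\mathbf{U}$ to $\Gamma$ requires passing from the $L^{2}(\mathcal{U}_{\Gamma})$-based class to the $L^{2}(\Gamma)$-based class via a (multiplicative) trace inequality, which costs half a derivative, i.e., an extra factor of order $\max\{n,k\}^{1/2}$; this route yields only $L_{\Gamma}\Pi_{T}\mathbf{u}\in\mathcal{A}(Ck^{2}\|\mathbf{u}\|_{\operatorname*{curl},\Omega,1},\gamma,\Gamma)$, not the claimed $k^{3/2}$. The paper obtains the sharper surface bound by rerunning the same Cauchy--Schwarz argument intrinsically on $\Gamma$, starting from $\|\nabla_{\Gamma}^{n}Y_{\ell}^{m}\|_{L^{2}(\Gamma)}\le C_{S}\gamma_{S}^{n}\max\{\sqrt{\lambda_{\ell}},n\}^{n}$. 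Your direct Fourier-expansion computation of the $H^{1/2}(\Gamma)$ bound is fine as it stands.
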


\begin{proof}
Before proving Lemma~\ref{LemLgammaPitu}, we mention that the algebraic growth
rates with respect to $k$ are likely suboptimal. However, sharper estimates
would require more technicalities. We start by noting that the analyticity of
$\Gamma$ provides that the eigenfunctions $Y_{\ell}^{m}$ of the
Laplace-Beltrami operator have analytic extensions $\tilde{Y}_{\ell}^{m}$ to a
tubular neighborhood ${\mathcal{U}}_{\Gamma}$ of $\Gamma$. A quantitative
bound in terms of the eigenvalue $\lambda_{\ell}$ is given in
\cite[Lemma~{C.1}]{Melenk_map_helm_int}
\begin{equation}
\left\Vert \nabla^{n}\tilde{Y}_{\ell}^{m}\right\Vert _{L^{2}\left(
\mathcal{U}_{\Gamma}\right)  }\leq C_{S}\max\left\{  \sqrt{\lambda_{\ell}%
},n\right\}  ^{n}\gamma_{S}^{n}\qquad\forall n\in{\mathbb{N}}_{0}
\label{estgradYtilde}%
\end{equation}
for some $C_{S}$, $\gamma_{S}$ depending solely on $\Gamma$. We recall
specifically that the eigenvalues $\lambda_{\ell}$ of the Laplace-Beltrami
operator on the sphere are $\lambda_{\ell}=\ell(\ell+1)$.

\emph{Proof of (\ref{item:LemLgammaPitu-ii}):} Let $\mathbf{u}_{T}$ denote a
tangential field on the sphere with the representation (cf.~(\ref{redhodege}))%
\[
\mathbf{u}_{T}=\sum_{\ell=1}^{\infty}\sum_{m\in\iota_{\ell}}\left(  u_{\ell
}^{m}\mathbf{T}_{\ell}^{m}+U_{\ell}^{m}\nabla_{\Gamma}Y_{\ell}^{m}\right)
,\qquad{\mathbf{T}}_{\ell}^{m}=\overrightarrow{\operatorname{curl}_{\Gamma}%
}Y_{\ell}^{m}=\nabla_{\Gamma}Y_{\ell}^{m}\times{\mathbf{n}}.
\]
With the extension ${\mathbf{n}}^{\ast}$ of the normal vector ${\mathbf{n}}$
that is constant in normal direction, we may define the extension
${\mathbf{U}}$ of $L_{\Gamma}{\mathbf{u}}_{T}$ as
\[
{\mathbf{U}}=\sum_{1\leq\ell\leq\lambda k}\sum_{m\in\iota_{\ell}}u_{\ell}%
^{m}\nabla\tilde{Y}_{\ell}^{m}\times{\mathbf{n}}^{\ast}+U_{\ell}^{m}%
\nabla\tilde{Y}_{\ell}^{m}.
\]
By the analyticity of ${\mathbf{n}}^{\ast}$ we get from
Lemma~\ref{lemma:A-invariant} and (\ref{estgradYtilde}) that, for some
$C^{\prime}$, $\tilde{\gamma}>0$ depending solely on $\Gamma$,
\begin{equation}
\Vert\nabla^{n}(\nabla\tilde{Y}_{\ell}^{m}\times{\mathbf{n}}^{\ast}%
)\Vert_{L^{2}({\mathcal{U}}_{\Gamma})}+\Vert\nabla^{n}(\nabla\tilde{Y}_{\ell
}^{m})\Vert_{L^{2}({\mathcal{U}}_{\Gamma})}\leq C^{\prime}\sqrt{\lambda_{\ell
}}\,\tilde{\gamma}^{n}\max\{\sqrt{\lambda_{\ell}},n\}^{n}\qquad\forall
n\in{\mathbb{N}}_{0}. \label{eq:analyticity-extension}%
\end{equation}
We take $\ell\leq\lambda k$ into account (and using $\lambda_{\ell}=\ell
(\ell+1)$) which allows us to estimate ${\mathbf{U}}$ by
\begin{align}
\left\Vert \nabla^{n}{\mathbf{U}}\right\Vert _{L^{2}(\mathcal{U}_{\Gamma})}
&  \leq\sum_{1\leq\ell\leq\lambda k}\sum_{m\in\iota_{\ell}}\left(  \left\vert
u_{\ell}^{m}\right\vert \left\Vert \nabla^{n}(\nabla\tilde{Y}_{\ell}^{m}%
\times{\mathbf{n}}^{\ast})\right\Vert _{L^{2}(\mathcal{U}_{\Gamma}%
)}+\left\vert U_{\ell}^{m}\right\vert \left\Vert \nabla^{n+1}\tilde{Y}_{\ell
}^{m}\right\Vert _{L^{2}(\mathcal{U}_{\Gamma})}\right) \nonumber\\
&  \lesssim\tilde{\gamma}^{n}\sum_{1\leq\ell\leq\lambda k}\max\left\{
\sqrt{\lambda_{\ell}},n\right\}  ^{n}\lambda_{\ell}^{1/4}\lambda_{\ell}%
^{1/4}\sum_{m=-\ell}^{\ell}\left(  \left\vert u_{\ell}^{m}\right\vert
+\left\vert U_{\ell}^{m}\right\vert \right) \nonumber\\
&  \lesssim\tilde{\gamma}^{n}\left(  \sum_{1\leq\ell\leq\lambda k}%
\sum_{m=-\ell}^{\ell}\max\left\{  \sqrt{\lambda_{\ell}},n\right\}
^{2n}\lambda_{\ell}^{1/2}\right)  ^{1/2}\left(  \sum_{1\leq\ell\leq\lambda
k}\sum_{m=-\ell}^{\ell}\lambda_{\ell}^{1/2}\left(  |u_{\ell}^{m}|+|U_{\ell
}^{m}|\right)  ^{2}\right)  ^{1/2}\nonumber\\
&  \overset{\text{(\ref{m1/2curldiva})}}{\lesssim}\tilde{\gamma}^{n}\left(
\lambda k+1\right)  ^{3/2}\max\left\{  \lambda k+1,n\right\}  ^{n}\left\Vert
L_{\Gamma}\mathbf{u}_{T}\right\Vert _{-1/2,\operatorname{curl}_{\Gamma}}.
\end{align}
Since 
\begin{equation}
\label{eq:LemLgammaPitu} 
\Vert L_{\Gamma}{\mathbf{u}}_{T}\Vert_{-1/2,\operatorname{curl},\Gamma
}\leq\Vert{\mathbf{u}}_{T}\Vert_{-1/2,\operatorname{curl},\Gamma}\lesssim
\Vert{\mathbf{u}}\Vert_{\operatorname{curl},\Omega,1},
\end{equation}
the proof of
(\ref{item:LemLgammaPitu-ii}) is complete.

\emph{Proof of (\ref{item:LemLgammaPitu-i}):} An application of the
multiplicative trace inequality would allow us to infer from
(\ref{item:LemLgammaPitu-ii}) the assertion $L_{\Gamma}{\mathbf{u}_T}
\in{\mathcal{A}}(C_{1} k^{2} \|{\mathbf{u}}\|_{\operatorname{curl},\Omega
,1},\gamma_{1},\Gamma)$ for suitable $C_{1}$, $\gamma_{1}$. The sharper
statement follows by repeating the arguments of (\ref{item:LemLgammaPitu-ii})
starting with the assertion of \cite[Lemma~{C.1}]{Melenk_map_helm_int} that
\begin{equation}
\|\nabla^{n}_{\Gamma}Y^{m}_{\ell}\|_{L^{2}(\Gamma)} \leq C_{S} \gamma_{S}^{n}
\max\{\sqrt{\lambda_{\ell}},n\}^{n} \qquad\forall n \in{\mathbb{N}}_{0}.
\end{equation}

\end{proof}

%----------------------------------------

\subsubsection{Estimates for High and Low Frequency Parts of the Capacity
Operator}

%----------------------------------------
In this section we derive continuity estimates for the sesquilinear form
$b_{k}$. The $k$-dependence is different for the low- and high-frequency parts
of the tangential fields and for the summands in the splitting $\mathbf{u}%
_{T}=\mathbf{u}^{\operatorname*{curl}}+\mathbf{u}^{\nabla}$. In
Proposition~\ref{PropFrequbest} we derive such estimates for the tangential
fields while these estimates are lifted to the space $\mathbf{X}$ and some
subspaces thereof in Proposition~\ref{Propkbk}.

\begin{remark}
If $\Gamma$ is the surface of the unit ball then there holds for all
$\mathbf{u}_{T},\mathbf{v}_{T}\in\mathbf{H}_{\operatorname*{curl}}%
^{-1/2}\left(  \Gamma\right)  $%
\begin{align*}
b_{k}^{\operatorname*{high}}\left(  \mathbf{u}_{T},\mathbf{v}_{T}\right)   &
\overset{\text{Def. \ref{Deflowhigh}}}{=}\left(  T_{k}\mathbf{u}_{T}%
,H_{\Gamma}\mathbf{v}_{T}\right)  _{\Gamma}\overset{\text{Def.
\ref{DefFreqSplit}}}{=}\left(  H_{\Gamma}T_{k}\mathbf{u}_{T},\mathbf{v}%
_{T}\right)  _{\Gamma}\\
&  \overset{\text{(\ref{DefTk})}}{=}\left(  T_{k}H_{\Gamma}\mathbf{u}%
_{T},\mathbf{v}_{T}\right)  _{\Gamma}\overset{\text{Def. \ref{Deflowhigh}}%
}{=}\left(  T_{k}^{\operatorname*{high}}\mathbf{u}_{T},\mathbf{v}_{T}\right)
_{\Gamma}.
\end{align*}
Analogous relations hold for the low frequency part $b_{k}%
^{\operatorname*{low}}$. \hbox{}\hfill\rule{0.8ex}{0.8ex}
\end{remark}

\begin{proposition}
\label{PropFrequbest}
%The sesquilinear form $b_{k}$ is continuous: for all
With the frequency filters $L_{\Gamma}$, $H_{\Gamma}$ of
Definition~\ref{DefFreqSplit} given by a cut-off parameter $\lambda\geq
\lambda_{0}>1$ the sesquilinear form $b_{k}$ can be written as
\begin{equation}
b_{k}\left(  \mathbf{u}_{T},\mathbf{v}_{T}\right)  =b_{k}\left(
\mathbf{u}^{\operatorname*{curl}},\mathbf{v}^{\operatorname*{curl}}\right)
+b_{k}^{\operatorname*{high}}\left(  \mathbf{u}^{\nabla},\mathbf{v}^{\nabla
}\right)  +b_{k}^{\operatorname*{low}}\left(  \mathbf{u}^{\nabla}%
,\mathbf{v}^{\nabla}\right)  \quad\forall\mathbf{u}_{T},\mathbf{v}_{T}%
\in\mathbf{H}_{\operatorname*{curl}}^{-1/2}\left(  \Gamma\right)  ,
\label{bksplitfirst}%
\end{equation}
and there is $C_{b}>0$ depending solely on $\lambda_{0}$ such that the
following holds:
\begin{align}
\left\vert b_{k}\left(  \mathbf{u}^{\operatorname*{curl}},\mathbf{v}%
^{\operatorname*{curl}}\right)  \right\vert  &  \leq C_{b}\left(  \frac{1}%
{k}\left\Vert \operatorname*{curl}\nolimits_{\Gamma}\mathbf{u}_{T}\right\Vert
_{H^{-1/2}\left(  \Gamma\right)  }\left\Vert \operatorname*{curl}%
\nolimits_{\Gamma}\mathbf{v}_{T}\right\Vert _{H^{-1/2}\left(  \Gamma\right)
}\right. \nonumber\\
&  \qquad\left.  +\left(  1+\lambda\right)  \left\Vert \operatorname*{curl}%
\nolimits_{\Gamma}L_{\Gamma}\mathbf{u}_{T}\right\Vert _{H^{-1}\left(
\Gamma\right)  }\left\Vert \operatorname*{curl}\nolimits_{\Gamma}L_{\Gamma
}\mathbf{v}_{T}\right\Vert _{H^{-1}\left(  \Gamma\right)  }\right)
,\nonumber\\
\left\vert b_{k}^{\operatorname*{high}}\left(  \mathbf{u}%
^{\operatorname*{curl}},\mathbf{v}^{\operatorname*{curl}}\right)  \right\vert
&  \leq C_{b}\frac{1}{k}\left\Vert \operatorname*{curl}\nolimits_{\Gamma
}\mathbf{u}_{T}\right\Vert _{H^{-1/2}\left(  \Gamma\right)  }\left\Vert
\operatorname*{curl}\nolimits_{\Gamma}\mathbf{v}_{T}\right\Vert _{H^{-1/2}%
\left(  \Gamma\right)  },\label{blowest}\\
\left\vert b_{k}^{\operatorname*{low}}\left(  \mathbf{u}^{\nabla}%
,\mathbf{v}^{\nabla}\right)  \right\vert  &  \leq C_{b}\lambda^{\rho}%
k^{\rho+1}\left\Vert \operatorname*{div}\nolimits_{\Gamma}L_{\Gamma}%
\mathbf{u}_{T}\right\Vert _{H^{-1-\rho/2}\left(  \Gamma\right)  }\left\Vert
\operatorname*{div}\nolimits_{\Gamma}L_{\Gamma}\mathbf{v}_{T}\right\Vert
_{H^{-1-\rho/2}\left(  \Gamma\right)  }\nonumber
\end{align}
for $0\leq\rho\leq2$. If $\operatorname*{div}_{\Gamma}\mathbf{u}_{T}\in
H^{\rho_{1}}\left(  \Gamma\right)  $ and $\operatorname*{div}\mathbf{v}_{T}\in
H^{\rho_{2}}\left(  \Gamma\right)  $ for some $\rho_{1}+\rho_{2}+3\geq0$ we
have
\begin{equation}
\left\vert b_{k}^{\operatorname*{high}}\left(  \mathbf{u}^{\nabla}%
,\mathbf{v}^{\nabla}\right)  \right\vert \leq C_{b}\frac{k}{\left(  \lambda
k\right)  ^{\rho_{1}+\rho_{2}+3}}\left\Vert \operatorname*{div}%
\nolimits_{\Gamma}\mathbf{u}_{T}\right\Vert _{H^{\rho_{1}}\left(
\Gamma\right)  }\left\Vert \operatorname*{div}\nolimits_{\Gamma}\mathbf{v}%
_{T}\right\Vert _{H^{\rho_{2}}\left(  \Gamma\right)  }. \label{estbhigh}%
\end{equation}

\end{proposition}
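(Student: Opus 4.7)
The decomposition \eqref{bksplitfirst} is immediate: Lemma~\ref{Lembsplit} provides $b_k(\mathbf{u}^{\operatorname{curl}},\mathbf{v}^\nabla)=b_k(\mathbf{u}^\nabla,\mathbf{v}^{\operatorname{curl}})=0$, so bilinearity collapses $b_k(\mathbf{u}_T,\mathbf{v}_T)$ to $b_k(\mathbf{u}^{\operatorname{curl}},\mathbf{v}^{\operatorname{curl}})+b_k(\mathbf{u}^\nabla,\mathbf{v}^\nabla)$, and the second piece is split into low and high parts by the very definition of $b_k^{\operatorname{low}}$, $b_k^{\operatorname{high}}$ applied to the second argument.

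For the four continuity bounds I will pass to the Fourier series. Expanding $\mathbf{u}_T$, $\mathbf{v}_T$ as in \eqref{redhodege}, using the explicit symbol \eqref{DefTk}, and exploiting the $\mathbf{L}^2_T(\Gamma)$-orthogonality of $\{\mathbf{T}_\ell^m,\nabla_\Gamma Y_{\ell'}^{m'}\}$ together with $\|\mathbf{T}_\ell^m\|_{L^2}^2=\|\nabla_\Gamma Y_\ell^m\|_{L^2}^2=\lambda_\ell$, the two surviving blocks diagonalise:
\begin{equation*}
b_k(\mathbf{u}^{\operatorname{curl}},\mathbf{v}^{\operatorname{curl}}) = \frac{1}{\operatorname{i}k}\sum_{\ell\geq 1}(z_\ell(k)+1)\lambda_\ell\sum_{m\in\iota_\ell} u_\ell^m\overline{v_\ell^m},\qquad b_k(\mathbf{u}^\nabla,\mathbf{v}^\nabla)=\operatorname{i}k\sum_{\ell\geq 1}\frac{\lambda_\ell}{z_\ell(k)+1}\sum_{m\in\iota_\ell}U_\ell^m\overline{V_\ell^m}.
\end{equation*}
The low- and high-frequency variants truncate these sums at $\ell=\lambda k$. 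Each bound reduces to Cauchy--Schwarz in which the $\lambda_\ell$-weights produced by the symbol must be matched to those of the target Sobolev norm described by \eqref{divscalednorm}, \eqref{DefHsGammaNorm}. The analytic heart is Lemma~\ref{Lemzlest}, applied separately on $\ell\leq\lambda k$ and $\ell>\lambda k$.

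Concretely: on $\ell>\lambda k$ the estimate \eqref{znestabove} gives $|z_\ell(k)+1|\leq k+\ell\lesssim \ell\lesssim\sqrt{\lambda_\ell}$, so the weight $\frac{|z_\ell(k)+1|}{k}\lambda_\ell$ behaves like $\lambda_\ell^{3/2}/k$, which is precisely the $H^{-1/2}(\Gamma)$-weight of $\operatorname{curl}_\Gamma\mathbf{u}_T$; this yields both the second estimate in \eqref{blowest} and the leading $1/k$-term of the first. On $\ell\leq\lambda k$ the same inequality gives $\frac{|z_\ell(k)+1|}{k}\leq 1+\lambda$, paired with the $H^{-1}(\Gamma)$-weight $\lambda_\ell$ of $\operatorname{curl}_\Gamma L_\Gamma\mathbf{u}_T$, producing the correction term. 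For the bound on $b_k^{\operatorname{low}}(\mathbf{u}^\nabla,\mathbf{v}^\nabla)$ I use the crude estimate $k/|z_\ell(k)+1|\leq 2\sqrt{2}\,k$ from \eqref{estk/z+1} together with the interpolation inequality $\lambda_\ell^{\rho/2}\leq C(\lambda k)^\rho$, valid for $\ell\leq\lambda k$ and $0\leq\rho\leq 2$; splitting $\lambda_\ell=\lambda_\ell^{1-\rho/2}\cdot\lambda_\ell^{\rho/2}$ between the two Cauchy--Schwarz factors yields exactly the prefactor $\lambda^\rho k^{\rho+1}$ against the $H^{-1-\rho/2}$-weight $\lambda_\ell^{1-\rho/2}$. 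Finally, for $b_k^{\operatorname{high}}(\mathbf{u}^\nabla,\mathbf{v}^\nabla)$ the sharp bound $k/|z_\ell(k)+1|\lesssim k/\sqrt{\lambda_\ell}$ from \eqref{estk/z+1} combined with $\lambda_\ell\gtrsim(\lambda k)^2$ on $\ell>\lambda k$ produces the decay $(\lambda k)^{-(\rho_1+\rho_2+3)}$ once the remaining power of $\lambda_\ell$ is redistributed to match the target weights $\lambda_\ell^{\rho_i+2}$; the hypothesis $\rho_1+\rho_2+3\geq 0$ is exactly what guarantees that this redistribution is admissible.

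The main obstacle is not conceptual but bookkeeping: one must ensure that the powers of $\lambda_\ell$ shuffled by Cauchy--Schwarz match exactly the four different Sobolev weights appearing on the right-hand sides, and that the summation-range split at $\ell=\lambda k$ triggers precisely the regime of Lemma~\ref{Lemzlest} in which $z_\ell(k)+1$ is sharp enough to eliminate all spurious powers of $k$.
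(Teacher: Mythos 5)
Your proposal is correct and follows essentially the same route as the paper: the orthogonality of Lemma~\ref{Lembsplit} for the decomposition (\ref{bksplitfirst}), Fourier diagonalization of $b_k$ via the symbol (\ref{DefTk}), the split of the summation range at $\ell=\lambda k$, the two-sided symbol bounds of Lemma~\ref{Lemzlest} in exactly the regimes you indicate, and Cauchy--Schwarz with the $\lambda_\ell$-weights matched to (\ref{divscalednorm}). The bookkeeping you flag (redistributing powers of $\lambda_\ell$, and using $\rho_1+\rho_2+3\geq 0$ to bound $\ell^{-(\rho_1+\rho_2+3)}$ by $(\lambda k)^{-(\rho_1+\rho_2+3)}$ on the high-frequency range) is precisely what the paper's proof carries out.
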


%

%TCIMACRO{\TeXButton{Proof}{\proof}}%
%BeginExpansion
\proof
%EndExpansion
The equality (\ref{bksplitfirst}) follows from Lemma \ref{Lembsplit}.

By using the orthogonality relations of $\mathbf{T}_{\ell}^{m}$ and
$\nabla_{\Gamma}Y_{\ell}^{m}$, the representations in \cite[Sec.~{5.3.2}%
]{Nedelec01} give us%
\begin{align*}
\left\vert b_{k}\left(  \mathbf{u}^{\operatorname*{curl}},\mathbf{v}%
^{\operatorname*{curl}}\right)  \right\vert  &  =\left\vert \operatorname*{i}%
\sum_{\ell=1}^{\infty}\left(  -\frac{z_{\ell}\left(  k\right)  +1}{k}\right)
\sum_{m=-\ell}^{\ell}u_{\ell}^{m}\overline{v_{\ell}^{m}}\left(  \mathbf{T}%
_{\ell}^{m},\mathbf{T}_{\ell}^{m}\right)  _{\Gamma}\right\vert \\
&  \overset{\text{\cite[(2.4.155)]{Nedelec01}}}{\leq}\left\vert
\operatorname*{i}\sum_{\ell=1}^{\infty}\ell\left(  \ell+1\right)  \left(
-\frac{z_{\ell}\left(  k\right)  +1}{k}\right)  \sum_{m=-\ell}^{\ell}u_{\ell
}^{m}\overline{v_{\ell}^{m}}\right\vert \\
&  \overset{\text{(\ref{znestabove})}}{\leq}\left\vert \sum_{\ell=1}^{\infty
}\ell\left(  \ell+1\right)  \left(  1+\frac{\ell}{k}\right)  \sum_{m\in
\iota_{\ell}}u_{\ell}^{m}\overline{v_{\ell}^{m}}\right\vert \\
&  \overset{\text{(\ref{divscalednorm})}}{\leq}2\left(  \frac{1}{k}\left\Vert
\operatorname*{curl}\nolimits_{\Gamma}\mathbf{u}_{T}\right\Vert _{H^{-1/2}%
\left(  \Gamma\right)  }\left\Vert \operatorname*{curl}\nolimits_{\Gamma
}\mathbf{v}_{T}\right\Vert _{H^{-1/2}\left(  \Gamma\right)  }\right. \\
&  \qquad\left.  +\left(  1+\lambda\right)  \left\Vert \operatorname*{curl}%
\nolimits_{\Gamma}L_{\Gamma}\mathbf{u}_{T}\right\Vert _{H^{-1}\left(
\Gamma\right)  }\left\Vert \operatorname*{curl}\nolimits_{\Gamma}L_{\Gamma
}\mathbf{v}_{T}\right\Vert _{H^{-1}\left(  \Gamma\right)  }\right)  .
\end{align*}
This leads to the first estimate in (\ref{blowest}). In a similar way we
obtain for the high-frequency part%
\begin{align*}
\left\vert b_{k}^{\operatorname*{high}}\left(  \mathbf{u}%
^{\operatorname*{curl}},\mathbf{v}^{\operatorname*{curl}}\right)  \right\vert
&  \leq\left\vert \sum_{\ell\geq\lambda k}^{\infty}\ell\left(  \ell+1\right)
\left(  1+\frac{\ell}{k}\right)  \sum_{m\in\iota_{\ell}}u_{\ell}^{m}%
\overline{v_{\ell}^{m}}\right\vert \\
&  \leq\frac{2}{k}\sum_{\ell\geq\lambda k}^{\infty}\ell^{2}\left(
\ell+1\right)  \sum_{m\in\iota_{\ell}}\left\vert u_{\ell}^{m}\right\vert
\left\vert v_{\ell}^{m}\right\vert \\
&  \overset{\text{(\ref{divscalednorm})}}{\leq}\frac{2}{k}\left\Vert
\operatorname*{curl}\nolimits_{\Gamma}H_{\Gamma}\mathbf{u}_{T}\right\Vert
_{H^{-1/2}\left(  \Gamma\right)  }\left\Vert \operatorname*{curl}%
\nolimits_{\Gamma}H_{\Gamma}\mathbf{v}_{T}\right\Vert _{H^{-1/2}\left(
\Gamma\right)  }%
\end{align*}
so that the second estimate in (\ref{blowest}) follows. For the third one and
(\ref{estbhigh}) we obtain%
\begin{align*}
b_{k}^{\operatorname*{low}}\left(  \mathbf{u}^{\nabla},\mathbf{v}^{\nabla
}\right)   &  =\operatorname*{i}\sum_{1\leq\ell\leq\lambda k}\ell\left(
\ell+1\right)  \sum_{m\in\iota_{\ell}}\left(  \frac{k}{z_{\ell}\left(
k\right)  +1}U_{\ell}^{m}\overline{V_{\ell}^{m}}\right)  ,\\
b_{k}^{\operatorname*{high}}\left(  \mathbf{u}^{\nabla},\mathbf{v}^{\nabla
}\right)   &  =\operatorname*{i}\sum_{\ell>\lambda k}\ell\left(
\ell+1\right)  \sum_{m\in\iota_{\ell}}\left(  \frac{k}{z_{\ell}\left(
k\right)  +1}U_{\ell}^{m}\overline{V_{\ell}^{m}}\right)  .
\end{align*}
By using (\ref{Lemzlest}) and (\ref{estk/z+1}) we get for any $0\leq\rho\leq2$%
\begin{align*}
\left\vert b_{k}^{\operatorname*{low}}\left(  \mathbf{u}^{\nabla}%
,\mathbf{v}^{\nabla}\right)  \right\vert  &  \leq{2\sqrt{2}}k\sum_{1 \leq
\ell\leq\lambda k}\ell\left(  \ell+1\right)  \sum_{m\in\iota_{\ell}}\left\vert
U_{\ell}^{m}\right\vert \left\vert V_{\ell}^{m}\right\vert \\
&  \leq{4\sqrt{2}}k\left(  \lambda k\right)  ^{\rho}\sum_{1 \leq\ell
\leq\lambda k}\ell^{2-\rho}\sum_{m\in\iota_{\ell}}\left\vert U_{\ell}%
^{m}\right\vert \left\vert V_{\ell}^{m}\right\vert \\
&  \leq{16\sqrt{2}}\lambda^{\rho}k^{\rho+1}\sum_{1 \leq\ell\leq\lambda k}%
\ell^{2}\left(  \ell+1\right)  ^{-\rho}\sum_{m\in\iota_{\ell}}\left\vert
U_{\ell}^{m}\right\vert \left\vert V_{\ell}^{m}\right\vert \\
&  \overset{\text{(\ref{divscalednorm})}}{\leq}{16\sqrt{2}}\lambda^{\rho
}k^{\rho+1}\left\Vert \operatorname*{div}\nolimits_{\Gamma}L_{\Gamma
}\mathbf{u}_{T}\right\Vert _{H^{-1-\rho/2}}\left\Vert \operatorname*{div}%
\nolimits_{\Gamma}L_{\Gamma}\mathbf{u}_{T}\right\Vert _{H^{-1-\rho/2}}.
\end{align*}
For $\rho_{1}+\rho_{2}+3\geq0$ we get from (\ref{estk/z+1})%
\begin{align*}
\left\vert b_{k}^{\operatorname*{high}}\left(  \mathbf{u}^{\nabla}%
,\mathbf{v}^{\nabla}\right)  \right\vert  &  \leq C_{0}k\sum_{\ell>\lambda
k}\ell\sum_{m\in\iota_{\ell}}\left\vert U_{\ell}^{m}\right\vert \left\vert
V_{\ell}^{m}\right\vert \\
&  \leq\frac{C_{0}k}{\left(  \lambda k\right)  ^{\rho_{1}+\rho_{2}+3}}%
\sum_{\ell>\lambda k}\ell^{4+\rho_{1}+\rho_{2}}\sum_{m\in\iota_{\ell}%
}\left\vert U_{\ell}^{m}\right\vert \left\vert V_{\ell}^{m}\right\vert \\
&  \overset{\text{(\ref{divscalednorm})}}{\leq}C\frac{k}{\left(  \lambda
k\right)  ^{\rho_{1}+\rho_{2}+3}}\left\Vert \operatorname*{div}%
\nolimits_{\Gamma}\mathbf{u}_{T}\right\Vert _{H^{\rho_{1}}\left(
\Gamma\right)  }\left\Vert \operatorname*{div}\nolimits_{\Gamma}\mathbf{v}%
_{T}\right\Vert _{H^{\rho_{2}}\left(  \Gamma\right)  }.
\end{align*}%
%TCIMACRO{\TeXButton{End Proof}{\endproof}}%
%BeginExpansion
\endproof
%EndExpansion

\begin{proposition}
\label{Propkbk} There is a constant $C_{b}^{\prime}> 0$ depending solely on
$\lambda_{0}$ such that the following holds:

Let $\mathbf{u},\mathbf{v}\in\mathbf{X}$. Then:
\begin{equation}
\left\vert b_{k}\left(  \mathbf{u}^{\nabla},\mathbf{v}^{\nabla}\right)
\right\vert \leq C_{b}^{\prime}k^{2}\left\Vert \mathbf{u}\right\Vert
_{\operatorname*{curl},\Omega,1}\left\Vert \mathbf{v}\right\Vert
_{\operatorname*{curl},\Omega,1}. \label{estdaechle}%
\end{equation}
Let $\mathbf{u}_{0}\in\mathbf{V}_{0}$ and $\mathbf{v}\in\mathbf{X}$. Then:
%subequationsjmm%
%TCIMACRO{\TeXButton{frequency_reg}{\begin{subequations}
%\label{frequency_reg}
%\end{subequations}}}%
%BeginExpansion
\begin{subequations}
\label{frequency_reg}
\end{subequations}%
%EndExpansion%
\begin{align}
\left\vert kb_{k}^{\operatorname*{high}}\left(  \mathbf{u}_{0}^{\nabla
},\mathbf{v}^{\nabla}\right)  \right\vert  &  \leq C_{b}^{\prime}\frac
{k}{\lambda}\left\Vert \mathbf{u}_{0}\right\Vert _{\operatorname*{curl}%
,\Omega,1}\left\Vert \mathbf{v}\right\Vert _{\operatorname*{curl},\Omega
,1},\tag{%
%TCIMACRO{\TeXButton{frequency_{r}eg}{\ref{frequency_reg}}}%
%BeginExpansion
\ref{frequency_reg}%
%EndExpansion
a}\label{frequency_rega}\\
\left\vert kb_{k}^{\operatorname*{low}}\left(  \mathbf{u}_{0}^{\nabla
},\mathbf{v}^{\nabla}\right)  \right\vert  &  \leq C_{b}^{\prime}\lambda
k^{3}\left\Vert \operatorname*{div}\nolimits_{\Gamma}L_{\Gamma}\mathbf{u}%
_{0,T}\right\Vert _{H^{-3/2}\left(  \Gamma\right)  }\left\Vert
\operatorname*{div}\nolimits_{\Gamma}L_{\Gamma}\mathbf{v}_{T}\right\Vert
_{H^{-3/2}\left(  \Gamma\right)  }. \tag{%
%TCIMACRO{\TeXButton{frequency_{r}eg}{\ref{frequency_reg}}}%
%BeginExpansion
\ref{frequency_reg}%
%EndExpansion
b}\label{frequency_regb}%
\end{align}
For $\mathbf{u}\in\mathbf{X}$ and $\mathbf{v}_{0}\in\mathbf{V}_{0}^{\ast}$ it
holds
%subequationsjmm%
%TCIMACRO{\TeXButton{frequency_reg_adj}{\begin{subequations}
%\label{frequency_reg_adj}
%\end{subequations}}}%
%BeginExpansion
\begin{subequations}
\label{frequency_reg_adj}
\end{subequations}%
%EndExpansion%
\begin{align}
\left\vert kb_{k}^{\operatorname*{high}}\left(  \mathbf{u}^{\nabla}%
,\mathbf{v}_{0}^{\nabla}\right)  \right\vert  &  \leq C_{b}^{\prime}\frac
{k}{\lambda}\left\Vert \mathbf{u}\right\Vert _{\operatorname*{curl},\Omega
,1}\left\Vert \mathbf{v}_{0}\right\Vert _{\operatorname*{curl},\Omega,1},\tag{%
%TCIMACRO{\TeXButton{frequency_{r}eg_{a}dj}{\ref{frequency_reg_adj}}}%
%BeginExpansion
\ref{frequency_reg_adj}%
%EndExpansion
a}\label{frequency_reg_adja}\\
\left\vert kb_{k}^{\operatorname*{low}}\left(  \mathbf{u}^{\nabla}%
,\mathbf{v}_{0}^{\nabla}\right)  \right\vert  &  \leq C_{b}^{\prime}\lambda
k^{3}\left\Vert \operatorname*{div}\nolimits_{\Gamma}L_{\Gamma}\mathbf{u}%
_{T}\right\Vert _{H^{-3/2}\left(  \Gamma\right)  }\left\Vert
\operatorname*{div}\nolimits_{\Gamma}L_{\Gamma}\mathbf{v}_{0,T}\right\Vert
_{H^{-3/2}\left(  \Gamma\right)  }. \tag{%
%TCIMACRO{\TeXButton{frequency_{r}eg_{a}dj}{\ref{frequency_reg_adj}}}%
%BeginExpansion
\ref{frequency_reg_adj}%
%EndExpansion
b}\label{frequency_reg_adjb}%
\end{align}
For $\mathbf{u}_{0}\in\mathbf{V}_{0}$, $\mathbf{v}_{0}\in\mathbf{V}_{0}^{\ast
}$ and $p,q\in H^{1}\left(  \Omega\right)  $ it holds
%subequationsjmm%
%TCIMACRO{\TeXButton{bhighgrad}{\begin{subequations}
%\label{bhighgrad}
%\end{subequations}}}%
%BeginExpansion
\begin{subequations}
\label{bhighgrad}
\end{subequations}%
%EndExpansion%
\begin{align}
\left\vert kb_{k}^{\operatorname*{high}}\left(  \mathbf{u}_{0}^{\nabla
},\left(  \nabla p\right)  ^{\nabla}\right)  \right\vert  &  \leq\frac
{C_{b}^{\prime}}{\lambda}\left\Vert \mathbf{u}_{0}\right\Vert
_{\operatorname*{curl},\Omega,1}\left\Vert \nabla p\right\Vert
_{\operatorname*{curl},\Omega,k},\tag{%
%TCIMACRO{\TeXButton{bhighgrad}{\ref{bhighgrad}}}%
%BeginExpansion
\ref{bhighgrad}%
%EndExpansion
a}\label{bhighgrada}\\
\left\vert kb_{k}^{\operatorname*{high}}\left(  \left(  \nabla p\right)
^{\nabla},\mathbf{v}_{0}^{\nabla}\right)  \right\vert  &  \leq\frac
{C_{b}^{\prime}}{\lambda}\left\Vert \nabla p\right\Vert _{\operatorname*{curl}%
,\Omega,k}\left\Vert \mathbf{v}_{0}\right\Vert _{\operatorname*{curl}%
,\Omega,1},\tag{%
%TCIMACRO{\TeXButton{bhighgrad}{\ref{bhighgrad}}}%
%BeginExpansion
\ref{bhighgrad}%
%EndExpansion
b}\label{bhighgradb}\\
\left\vert kb_{k}^{\operatorname*{high}}\left(  \left(  \nabla p\right)
^{\nabla},\left(  \nabla q\right)  ^{\nabla}\right)  \right\vert  &  \leq
C_{b}^{\prime}\left\Vert \nabla p\right\Vert _{\operatorname*{curl},\Omega
,k}\left\Vert \nabla q\right\Vert _{\operatorname*{curl},\Omega,k}. \tag{%
%TCIMACRO{\TeXButton{bhighgrad}{\ref{bhighgrad}}}%
%BeginExpansion
\ref{bhighgrad}%
%EndExpansion
c}\label{bhighgradc}%
\end{align}
For $\mathbf{u}_{0}\in\mathbf{V}_{0}$ and $\mathbf{v}_{0}\in\mathbf{V}%
_{0}^{\ast}$ it holds
%subequationsjmm%
%TCIMACRO{\TeXButton{frequency_reg_2}{\begin{subequations}
%\label{frequency_reg_2}
%\end{subequations}}}%
%BeginExpansion
\begin{subequations}
\label{frequency_reg_2}
\end{subequations}%
%EndExpansion%
\begin{align}
\left\vert kb_{k}^{\operatorname*{high}}\left(  \mathbf{u}_{0}^{\nabla
},\mathbf{v}_{0}^{\nabla}\right)  \right\vert  &  \leq\frac{C_{b}^{\prime}%
}{\lambda^{2}}\left\Vert \mathbf{u}_{0}\right\Vert _{\operatorname*{curl}%
,\Omega,1}\left\Vert \mathbf{v}_{0}\right\Vert _{\operatorname*{curl}%
,\Omega,1},\tag{%
%TCIMACRO{\TeXButton{frequency_{r}eg_{2}}{\ref{frequency_reg_2}}}%
%BeginExpansion
\ref{frequency_reg_2}%
%EndExpansion
a}\label{frequency_reg_2a}\\
\left\vert kb_{k}^{\operatorname*{low}}\left(  \mathbf{u}_{0}^{\nabla
},\mathbf{v}_{0}^{\nabla}\right)  \right\vert  &  \leq C_{b}^{\prime}\lambda
k^{3}\left\Vert \mathbf{u}_{0}\right\Vert _{\operatorname*{curl},\Omega
,1}\left\Vert \mathbf{v}_{0}\right\Vert _{\operatorname*{curl},\Omega,1}.
\tag{%
%TCIMACRO{\TeXButton{frequency_{r}eg_{2}}{\ref{frequency_reg_2}}}%
%BeginExpansion
\ref{frequency_reg_2}%
%EndExpansion
b}\label{frequency_reg_2b}%
\end{align}
For $\mathbf{u},\mathbf{v}\in\mathbf{H}^{1}\left(  \Omega\right)  $ and
$\mathbf{w}\in\mathbf{X}$, it holds
%subequationsjmm%
%TCIMACRO{\TeXButton{freq_reg_H1H1}{\begin{subequations}
%\label{freq_reg_H1H1}
%\end{subequations}}}%
%BeginExpansion
\begin{subequations}
\label{freq_reg_H1H1}
\end{subequations}%
%EndExpansion%
\begin{align}
\left\vert kb_{k}^{\operatorname*{high}}\left(  \mathbf{u}^{\nabla}%
,\mathbf{v}^{\nabla}\right)  \right\vert  &  \leq\frac{C_{b}^{\prime}}%
{\lambda^{2}}\left\Vert \mathbf{u}\right\Vert _{\mathbf{H}^{1}\left(
\Omega\right)  }\left\Vert \mathbf{v}\right\Vert _{\mathbf{H}^{1}\left(
\Omega\right)  },\tag{%
%TCIMACRO{\TeXButton{freq_{r}eg_{H}1H1}{\ref{freq_reg_H1H1}}}%
%BeginExpansion
\ref{freq_reg_H1H1}%
%EndExpansion
a}\label{freq_reg_H1H1a}\\
\left\vert kb_{k}^{\operatorname*{high}}\left(  \mathbf{w}^{\nabla}%
,\mathbf{v}^{\nabla}\right)  \right\vert  &  \leq C_{b}^{\prime}\frac
{k}{\lambda}\left\Vert \mathbf{w}\right\Vert _{\operatorname*{curl},\Omega
,1}\left\Vert \mathbf{v}\right\Vert _{\mathbf{H}^{1}\left(  \Omega\right)  }.
\tag{%
%TCIMACRO{\TeXButton{freq_{r}eg_{H}1H1}{\ref{freq_reg_H1H1}}}%
%BeginExpansion
\ref{freq_reg_H1H1}%
%EndExpansion
b}\label{freq_reg_H1H1b}%
\end{align}

\end{proposition}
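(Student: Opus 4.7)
The plan is to derive every bound in Proposition~\ref{Propkbk} by a weighted Cauchy--Schwarz argument on the spherical-harmonics series, combining four ingredients: (i) the splitting $b_k=b_k^{\mathrm{low}}+b_k^{\mathrm{high}}$ from (\ref{bksplitfirst}) with the sharp $\lambda$-dependent bounds of Proposition~\ref{PropFrequbest}; (ii) the explicit diagonal form (\ref{DefTk}) of $T_k$ in the basis $\{\overrightarrow{\operatorname*{curl}_\Gamma}Y_\ell^m,\nabla_\Gamma Y_\ell^m\}$ together with the bounds on $z_\ell(k)$ from Lemma~\ref{Lemzlest}; (iii) for arguments in $\mathbf{V}_0$ or $\mathbf{V}_0^\ast$, the boundary divergence identity of Lemma~\ref{LemEq2scprodstrong} combined with the $\mathbf{H}^1(\Omega)$-regularity of $\mathbf{V}_0\cup\mathbf{V}_0^\ast$ from Lemma~\ref{Lemembed}; and (iv) the trace theorem on $B_1(0)$ and Remark~\ref{rem:decomposition-of-nabla-phi} for arguments of the form $\nabla p$ with $p\in H^1(\Omega)$ or $\mathbf{u}\in\mathbf{H}^1(\Omega)$.

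The first estimate (\ref{estdaechle}) is obtained by applying the third inequality of (\ref{blowest}) with $\rho=1$ to $b_k^{\mathrm{low}}$ and (\ref{estbhigh}) with $\rho_1=\rho_2=-3/2$ to $b_k^{\mathrm{high}}$; both resulting surface norms collapse via (\ref{divscalednorm}) and (\ref{m1/2curldiva}) to $\|\cdot^{\nabla}\|_{-1/2,\operatorname*{curl}_\Gamma}$ and hence to $\|\cdot\|_{\operatorname*{curl},\Omega,1}$ through (\ref{eq:norm-of-Pi_tau}). For the bounds (\ref{frequency_reg})--(\ref{frequency_reg_adj}) the crucial new ingredient is the observation that Lemma~\ref{LemEq2scprodstrong} combined with (\ref{DefTk}) forces the gradient Fourier coefficients of $\mathbf{u}_{0,T}$ to satisfy $V_\ell^m=\frac{z_\ell(k)+1}{\lambda_\ell}n_\ell^m$, where $n_\ell^m$ are the Fourier coefficients of the normal trace $\langle\mathbf{u}_0,\mathbf{n}\rangle\in H^{1/2}(\Gamma)$ (whose $H^{1/2}$-norm is $\lesssim\|\mathbf{u}_0\|_{\operatorname*{curl},\Omega,1}$ by Lemma~\ref{Lemembed}). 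Substituting this identity into the Fourier expression for $b_k^{\mathrm{high}}(\mathbf{u}_0^\nabla,\mathbf{v}^\nabla)$ cancels the singular denominator of (\ref{DefTk}) and leaves $\mathrm{i}k\sum_{\ell>\lambda k}\sum_m n_\ell^m\overline{V_\ell^{m,v}}$; a weighted Cauchy--Schwarz with weights $\lambda_\ell^{\pm1/2}$ and the high-frequency bound $\lambda_\ell^{-1/2}\leq(\lambda k)^{-1/2}$ on $\{\ell>\lambda k\}$ then pairs the $H^{1/2}(\Gamma)$-norm of $\langle\mathbf{u}_0,\mathbf{n}\rangle$ against $\|\mathbf{v}^\nabla\|_{-1/2,\operatorname*{curl}_\Gamma}$ and delivers (\ref{frequency_rega}). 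The two-sided estimate (\ref{frequency_reg_2a}) applies the substitution in both arguments, gaining the additional factor $\lambda_\ell^{-1}\leq(\lambda k)^{-2}$ responsible for the $\lambda^{-2}$-decay. The low-frequency companions (\ref{frequency_regb}), (\ref{frequency_reg_adjb}), (\ref{frequency_reg_2b}) are immediate from (\ref{blowest}) with $\rho=1$, and the adjoint estimates (\ref{frequency_reg_adj}) are obtained identically using the $\mathbf{V}_0^\ast$-characterization of Lemma~\ref{LemEq2scprodstrong} and the relation $T_k^\ast=T_{-k}$ (\ref{defTomegaast})--(\ref{zlconjcompl}).

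For the gradient arguments in (\ref{bhighgrad})--(\ref{freq_reg_H1H1}), Remark~\ref{rem:decomposition-of-nabla-phi} shows that the gradient Fourier coefficients of $(\nabla p)^\nabla$ coincide with those of $p|_\Gamma$, while for $\mathbf{u}\in\mathbf{H}^1(\Omega)$ the trace theorem on $B_1(0)$ (after the trivial normalization fixing the boundary mean of $p$, which does not affect $\nabla_\Gamma p$) delivers $\|p|_\Gamma\|_{H^{1/2}(\Gamma)}\lesssim\|\nabla p\|_{L^2(\Omega)}$ and $\|\mathbf{u}_T\|_{\mathbf{H}^{1/2}_T(\Gamma)}\lesssim\|\mathbf{u}\|_{\mathbf{H}^1(\Omega)}$. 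Plugging these trace estimates into the Fourier series for $b_k^{\mathrm{high}}$, bounding $k/|z_\ell(k)+1|$ via the third part of (\ref{estk/z+1}), and distributing the polynomial weights $\lambda_\ell^{-\alpha}$ symmetrically between the two factors produces (\ref{bhighgrada})--(\ref{freq_reg_H1H1b}). The main technical obstacle is precisely this final balancing step: one must split the weights $\lambda_\ell^{-\alpha}$ on $\{\ell>\lambda k\}$ so that each factor matches an available trace Sobolev norm (either $H^{1/2}(\Gamma)$ for normal traces of $\mathbf{V}_0$-functions and for traces of $H^1$-potentials, or the dual $H^{-1/2}_{\operatorname*{curl}_\Gamma}$-norm coming from $\mathbf{X}$), leaving enough surplus $(\lambda k)^{-\alpha}$ to offset both the $k^2$ prefactor inherited from multiplying $b_k^{\mathrm{high}}$ by $k$ and the factor of $k$ in (\ref{DefTk}), and to yield the sharp $\lambda^{-1}$ or $\lambda^{-2}$ decay.
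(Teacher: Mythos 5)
Your proposal is correct, and for most of the estimates it coincides with the paper's proof, which consists entirely of invoking Proposition~\ref{PropFrequbest} with suitable Sobolev indices: (\ref{estdaechle}) and all low-frequency bounds from (\ref{blowest}) with $\rho=1$, the purely gradient/$\mathbf{H}^1$ estimates from (\ref{estbhigh}) with $\rho_1=\rho_2=-3/2$ or $-1/2$, and the asymmetric estimates from the choice $\rho_1=-1/2$, $\rho_2=-3/2$. Where you genuinely diverge is in the high-frequency bounds with an argument in $\mathbf{V}_0$ or $\mathbf{V}_0^\ast$, i.e.\ (\ref{frequency_rega}), (\ref{frequency_reg_adja}), (\ref{frequency_reg_2a}): you invoke the boundary identity of Lemma~\ref{LemEq2scprodstrong} to convert the characterization (\ref{ImpBeda})/(\ref{ImpBedb}) into the coefficient relation $U_\ell^m=-\tfrac{z_\ell(k)+1}{\lambda_\ell}\,n_\ell^m$ with $n_\ell^m$ the Fourier coefficients of $\langle\mathbf{u}_0,\mathbf{n}\rangle$, so that the symbol of $T_k$ in (\ref{DefTk}) cancels exactly and the gain of $(\lambda k)^{-1}$ per such argument comes from shifting a weight $\lambda_\ell^{-1}$ onto the high-frequency tail; I checked the bookkeeping and it closes (for the double substitution one needs (\ref{znestabove}) to bound the residual symbol $|z_\ell(k)+1|/\lambda_\ell\lesssim(\ell+1)^{-1}$). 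The paper never touches the boundary condition here: it uses only the embedding $\mathbf{V}_0\cup\mathbf{V}_0^\ast\subset\mathbf{H}^1(\Omega)$ together with the continuity of $\operatorname{div}_\Gamma$ from $\mathbf{H}^{1/2}_T(\Gamma)$ to $H^{-1/2}(\Gamma)$, fed into (\ref{estbhigh}). Both routes extract the extra decay from the same source -- the $\mathbf{H}^1$-regularity of the $\mathbf{V}_0$-argument (you need it anyway to put the normal trace in $H^{1/2}(\Gamma)$) -- so your detour buys an exact cancellation at the price of redoing, for each case, the weighted Cauchy--Schwarz that Proposition~\ref{PropFrequbest} already packages. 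One point to fix: since $C_b'$ must depend solely on $\lambda_0$, you should cite Lemma~\ref{Lemembedspec} (which gives $\Vert\mathbf{u}_0\Vert_{\mathbf{H}^1(\Omega)}\leq\Vert\mathbf{u}_0\Vert_{\operatorname*{curl},\Omega,1}$ with constant $1$ for the unit ball) rather than Lemma~\ref{Lemembed}, whose constant $C_{\Omega,k}$ is not a priori $k$-independent.
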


%

%TCIMACRO{\TeXButton{Proof}{\proof}}%
%BeginExpansion
\proof
%EndExpansion
\emph{Proof of (\ref{estdaechle}):} We combine the last estimate in
(\ref{blowest}) (for $\rho=1$ and $\lambda=\lambda_{0}$) with (\ref{estbhigh})
(for $\rho_{1}=\rho_{2}=-3/2$ and $\lambda=\lambda_{0}$) and obtain%
\begin{align}
\left\vert b_{k}\left(  \mathbf{u}^{\nabla},\mathbf{v}^{\nabla}\right)
\right\vert  &  \leq\left\vert b_{k}^{\operatorname*{low}}\left(
\mathbf{u}^{\nabla},\mathbf{v}^{\nabla}\right)  \right\vert +\left\vert
b_{k}^{\operatorname*{high}}\left(  \mathbf{u}^{\nabla},\mathbf{v}^{\nabla
}\right)  \right\vert \nonumber\\
&  \leq C_{b}\left(  \lambda_{0}k^{2}\left\Vert \operatorname*{div}%
\nolimits_{\Gamma}L_{\Gamma}\mathbf{u}_{T}\right\Vert _{H^{-3/2}\left(
\Gamma\right)  }\left\Vert \operatorname*{div}\nolimits_{\Gamma}L_{\Gamma
}\mathbf{v}_{T}\right\Vert _{H^{-3/2}\left(  \Gamma\right)  }\right.
\nonumber\\
&  \left.  \quad+k\left\Vert \operatorname*{div}\nolimits_{\Gamma}%
\mathbf{u}_{T}\right\Vert _{H^{-3/2}\left(  \Gamma\right)  }\left\Vert
\operatorname*{div}\nolimits_{\Gamma}\mathbf{v}_{T}\right\Vert _{H^{-3/2}%
\left(  \Gamma\right)  }\right) \nonumber\\
&  \leq C\left(  1+\lambda_{0}k\right)  k\left\Vert \operatorname*{div}%
\nolimits_{\Gamma}\mathbf{u}_{T}\right\Vert _{H^{-3/2}\left(  \Gamma\right)
}\left\Vert \operatorname*{div}\nolimits_{\Gamma}\mathbf{v}_{T}\right\Vert
_{H^{-3/2}\left(  \Gamma\right)  }\nonumber\\
&  \leq C\left(  1+\lambda_{0}k\right)  k\left\Vert \mathbf{u}_{T}\right\Vert
_{H^{-1/2}\left(  \Gamma\right)  }\left\Vert \mathbf{v}_{T}\right\Vert
_{H^{-1/2}\left(  \Gamma\right)  }\label{bkdaechleest}\\
&  \leq C\left(  1+\lambda_{0}k\right)  k\left\Vert \mathbf{u}\right\Vert
_{\operatorname*{curl},\Omega,1}\left\Vert \mathbf{v}\right\Vert
_{\operatorname*{curl},\Omega,1}.\nonumber
\end{align}
\emph{Proof of (\ref{frequency_reg}), (\ref{frequency_reg_adj}),
(\ref{freq_reg_H1H1b}):} Let $\mathbf{u}\in\mathbf{H}^{1}\left(
\Omega\right)  $ and $\mathbf{v}\in\mathbf{X}$. Choose $\rho_{1}=-1/2$ and
$\rho_{2}=-3/2$ in (\ref{estbhigh}) to obtain%
\begin{align}
\left\vert kb_{k}^{\operatorname*{high}}\left(  \mathbf{u}^{\nabla}%
,\mathbf{v}^{\nabla}\right)  \right\vert  &  \leq C_{b}\frac{k}{\lambda
}\left\Vert \operatorname*{div}\nolimits_{\Gamma}\mathbf{u}_{T}\right\Vert
_{H^{-1/2}\left(  \Gamma\right)  }\left\Vert \operatorname*{div}%
\nolimits_{\Gamma}\mathbf{v}_{T}\right\Vert _{H^{-3/2}\left(  \Gamma\right)
}\nonumber\\
&  \leq C\frac{k}{\lambda}\left\Vert \mathbf{u}_{T}\right\Vert _{\mathbf{H}%
^{1/2}\left(  \Gamma\right)  }\left\Vert \mathbf{v}_{T}\right\Vert
_{\mathbf{H}^{-1/2}\left(  \Gamma\right)  }\nonumber\\
&  \leq C\frac{k}{\lambda}\left\Vert \mathbf{u}\right\Vert _{\mathbf{H}%
^{1}\left(  \Omega\right)  }\left\Vert \mathbf{v}\right\Vert
_{\operatorname*{curl},\Omega,1}. \label{omegabhighstart}%
\end{align}
This shows (up to interchanging the roles of ${\mathbf{u}}$ and ${\mathbf{v}}%
$) the estimate (\ref{freq_reg_H1H1b}). Since $\mathbf{V}_{0}\subset
\mathbf{H}^{1}\left(  \Omega\right)  $, we may apply estimate
(\ref{omegabhighstart}) to $\mathbf{u}\in\mathbf{V}_{0}$ and $\mathbf{v}%
\in\mathbf{X}$. Lemma~\ref{Lemembedspec} implies the estimate $\Vert
\mathbf{u}\Vert_{\mathbf{H}^{1}(\Omega)}\leq\Vert\mathbf{u}\Vert
_{\operatorname*{curl},\Omega,1}$ so that (\ref{frequency_rega}) follows. For
the low frequency part we get from (\ref{blowest}) for $\rho=1$ the estimate%
\[
\left\vert kb_{k}^{\operatorname*{low}}\left(  \mathbf{u}_{0}^{\nabla
},\mathbf{v}^{\nabla}\right)  \right\vert \leq C_{b}\lambda k^{3}\left\Vert
\operatorname*{div}\nolimits_{\Gamma}L_{\Gamma}\mathbf{u}_{0,T}\right\Vert
_{H^{-3/2}\left(  \Gamma\right)  }\left\Vert \operatorname*{div}%
\nolimits_{\Gamma}L_{\Gamma}\mathbf{v}_{T}\right\Vert _{H^{-3/2}\left(
\Gamma\right)  },
\]
which is (\ref{frequency_regb}). For $\mathbf{u}\in\mathbf{X}$ and
$\mathbf{v}_{0}\in\mathbf{V}_{0}^{\ast}$, estimates (\ref{frequency_reg_adj})
follow by the same arguments and interchanging the roles of $\mathbf{u}$ and
$\mathbf{v}$.
%Similarly we
%conclude (\ref{freq_reg_H1H1b}) from (\ref{omegabhighstart}) by interchanging
%the arguments.

\emph{Proof of (\ref{bhighgrad}):} For $\mathbf{u}_{0}\in\mathbf{V}_{0}$ and
$p\in H^{1}\left(  \Omega\right)  $ we employ (\ref{frequency_rega}) with
$\mathbf{v}=\nabla p$ and $\operatorname*{curl}\nabla p=0$ so that%
\begin{align*}
\left\vert kb_{k}^{\operatorname*{high}}\left(  \mathbf{u}_{0}^{\nabla
},\left(  \nabla p\right)  ^{\nabla}\right)  \right\vert  &  \leq C\frac
{k}{\lambda}\left\Vert \mathbf{u}_{0}\right\Vert _{\operatorname*{curl}%
,\Omega,1}\left\Vert \nabla p\right\Vert _{\operatorname*{curl},\Omega
,1}=\frac{C}{\lambda}\left\Vert \mathbf{u}_{0}\right\Vert
_{\operatorname*{curl},\Omega,1}\left(  k\left\Vert \nabla p\right\Vert
\right) \\
&  \leq\frac{C}{\lambda}\left\Vert \mathbf{u}_{0}\right\Vert
_{\operatorname*{curl},\Omega,1}\left\Vert \nabla p\right\Vert
_{\operatorname*{curl}\Omega,k},
\end{align*}
which shows (\ref{bhighgrada}). The proof of (\ref{bhighgradb}) is just a
repetition of the previous arguments while the proof of (\ref{bhighgradc})
uses (\ref{estbhigh}) with $\rho_{1}=\rho_{2}=-3/2$:%
\begin{align*}
\left\vert kb_{k}^{\operatorname*{high}}\left(  \left(  \nabla p\right)
^{\nabla},\left(  \nabla q\right)  ^{\nabla}\right)  \right\vert  &  \leq
C_{b}k^{2}\left\Vert \operatorname*{div}\nolimits_{\Gamma}\left(  \nabla
p\right)  _{T}\right\Vert _{H^{-3/2}\left(  \Gamma\right)  }\left\Vert
\operatorname*{div}\nolimits_{\Gamma}\left(  \nabla q\right)  _{T}\right\Vert
_{H^{-3/2}\left(  \Gamma\right)  }\\
&  \leq C k^{2}\left\Vert \nabla p\right\Vert _{\operatorname*{curl},\Omega
,1}\left\Vert \nabla q\right\Vert _{\operatorname*{curl},\Omega,1}\\
&  =C k^{2}\left\Vert \nabla p\right\Vert \left\Vert \nabla q\right\Vert
=C\left\Vert \nabla p\right\Vert _{\operatorname*{curl},\Omega,k}\left\Vert
\nabla q\right\Vert _{\operatorname*{curl},\Omega,k},
\end{align*}
where the second step uses the same arguments as in (\ref{bkdaechleest}).
%and the constant $C_{b}$ is, possibly, adjusted.

\emph{Proof of (\ref{frequency_reg_2}), (\ref{freq_reg_H1H1a}):} For any
$\mathbf{u},\mathbf{v}\in\mathbf{H}^{1}\left(  \Omega\right)  $ we may choose
$\rho_{1}=\rho_{2}=-1/2$ in (\ref{estbhigh}) to obtain%
\begin{align}
\left\vert kb_{k}^{\operatorname*{high}}\left(  \mathbf{u}^{\nabla}%
,\mathbf{v}^{\nabla}\right)  \right\vert  &  \leq\frac{C_{b}}{\lambda^{2}%
}\left\Vert \operatorname*{div}\nolimits_{\Gamma}\mathbf{u}_{T}\right\Vert
_{H^{-1/2}\left(  \Gamma\right)  }\left\Vert \operatorname*{div}%
\nolimits_{\Gamma}\mathbf{v}_{T}\right\Vert _{H^{-1/2}\left(  \Gamma\right)
}\label{kbkhighest}\\
&  \leq\frac{C}{\lambda^{2}}\left\Vert \mathbf{u}_{T}\right\Vert
_{\mathbf{H}^{1/2}\left(  \Gamma\right)  }\left\Vert \mathbf{v}_{T}\right\Vert
_{\mathbf{H}^{1/2}\left(  \Gamma\right)  }\leq\frac{C}{\lambda^{2}}\left\Vert
\mathbf{u}\right\Vert _{\mathbf{H}^{1}\left(  \Omega\right)  }\left\Vert
\mathbf{v}\right\Vert _{\mathbf{H}^{1}\left(  \Omega\right)  }.\nonumber
\end{align}
This proves (\ref{freq_reg_H1H1a}). If we assume in addition $\mathbf{u}%
_{0}\in\mathbf{V}_{0}$ and $\mathbf{v}_{0}\in\mathbf{V}_{0}^{\ast}$
%we can employ $\operatorname*{div}\mathbf{u}%
%_{0}=\operatorname*{div}\mathbf{v}_{0}=0$ and obtain%
we can appeal to Lemma~\ref{Lemembedspec} to get
\[
\left\vert kb_{k}^{\operatorname*{high}}\left(  \mathbf{u}_{0}^{\nabla
},\mathbf{v}_{0}^{\nabla}\right)  \right\vert \leq\frac{C}{\lambda^{2}%
}\left\Vert \mathbf{u}_{0}\right\Vert _{\operatorname*{curl},\Omega
,1}\left\Vert \mathbf{v}_{0}\right\Vert _{\operatorname*{curl},\Omega,1},
\]
i.e., (\ref{frequency_reg_2a}). For (\ref{frequency_reg_2b}) we employ the
last equation in (\ref{blowest}) for $\rho=1$ and proceed as for
(\ref{frequency_reg_2a}).%
%TCIMACRO{\TeXButton{End Proof}{\endproof}}%
%BeginExpansion
\endproof
%EndExpansion
%--------------------------------------

\subsection{Analysis of Frequency Splittings $L_{\Omega}$, $H_{\Omega}$ for
the Case $\Omega=B_{1}\left(  0\right)  $\label{SecAnLOmega}}

%--------------------------------------
The operator $L_{\Omega}$ is defined in Definition~\ref{DefFreqSplit} as the
minimum norm extension of $L_{\Gamma}$ with respect to the norm $\Vert
\cdot\Vert_{\operatorname{curl},\Omega,k}$. {}From
Lemma~\ref{lemma:minimization-by-vsh} we have the following stability estimate
for the case $\Omega=B_{1}(0)$:
%subequationsjmm%
%TCIMACRO{\TeXButton{lowhighuest}{\begin{subequations}
%\label{lowhighuest}
%\end{subequations}}}%
%BeginExpansion
\begin{subequations}
\label{lowhighuest}
\end{subequations}%
%EndExpansion%
\begin{equation}
\left\Vert L_{\Omega}\mathbf{u}\right\Vert _{\operatorname*{curl},\Omega
,k}\leq\left\Vert \mathbf{u}\right\Vert _{\operatorname*{curl},\Omega,k}.
\tag{%
%TCIMACRO{\TeXButton{lowhighuest}{\ref{lowhighuest}}}%
%BeginExpansion
\ref{lowhighuest}%
%EndExpansion
a}\label{lowhighuesta}%
\end{equation}
By the triangle inequality we infer that also $H_{\Omega}$ is stable%
\begin{equation}
\left\Vert H_{\Omega}\mathbf{u}\right\Vert _{\operatorname*{curl},\Omega
,k}\leq2\left\Vert \mathbf{u}\right\Vert _{\operatorname*{curl},\Omega,k}.
\tag{%
%TCIMACRO{\TeXButton{lowhighuest}{\ref{lowhighuest}}}%
%BeginExpansion
\ref{lowhighuest}%
%EndExpansion
b}\label{lowhighuestb}%
\end{equation}

\begin{theorem}
\label{TheoLau}Let $\Omega=B_{1}(0)$. Then the low-frequency part $L_{\Omega
}{\mathbf{u}}$ satisfies
\begin{equation}
\Vert L_{\Omega}{\mathbf{u}}\Vert_{\operatorname{curl},\Omega,k}\leq
\Vert{\mathbf{u}}\Vert_{\operatorname{curl},\Omega,k}\quad\mbox{ and }\quad
\operatorname{div}L_{\Omega}{\mathbf{u}}=0. \label{eq:TheoLau-simple}%
\end{equation}
Furthermore, $L_{\Omega}{\mathbf{u}}\in{\mathcal{A}}(C_{\mathcal{A},5}%
C_{u}^{\prime\prime},\gamma_{\mathcal{A},5},\Omega)$ with
\begin{equation}
C_{u}^{\prime\prime}=k^{3/2}\left\Vert \mathbf{u}\right\Vert
_{\operatorname*{curl},\Omega,1}. \label{defCuprime}%
\end{equation}
The constants $C_{\mathcal{A},5}$, $\gamma_{\mathcal{A},5}$ are independent of
$k$ and ${\mathbf{u}}$ but depend on the choice of the cut-off parameter
$\lambda$. Furthermore, there exists a tubular neighborhood ${\mathcal{U}%
}_{\Gamma}$ of $\Gamma$ such that $L_{\Omega}{\mathbf{u}}$ is analytic on
$\Omega\cup{\mathcal{U}}_{\Gamma}$ with $L_{\Omega}{\mathbf{u}}\in
{\mathcal{A}}(C_{\mathcal{A},5}^{\prime}C_{u}^{\prime\prime},\gamma
_{\mathcal{A},5}^{\prime},\Omega\cup{\mathcal{U}}_{\Gamma})$.
\end{theorem}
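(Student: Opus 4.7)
The first assertion (\ref{eq:TheoLau-simple}) is immediate: the stability bound is the minimality property (\ref{charctLaminproblem}), already recorded as (\ref{lowhighuest}a), while $\operatorname{div} L_\Omega\mathbf{u}=0$ was derived in Remark~\ref{rem:LOmega-weak} by taking the divergence of the strong form (\ref{eq:defLOmega-strong-a}).

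For the analyticity, the plan is to use a separation-of-variables representation on the ball. Starting from the expansion
\[
L_\Gamma\mathbf{u}_T=\sum_{1\le\ell\le\lambda k}\sum_{m\in\iota_\ell}\bigl(u_\ell^m\mathbf{T}_\ell^m+U_\ell^m\nabla_\Gamma Y_\ell^m\bigr),
\]
which is a finite sum by construction of $L_\Gamma$, I would match each surface mode to the unique interior divergence-free solution of $\operatorname{curl}\operatorname{curl}\mathbf{v}+k^2\mathbf{v}=\mathbf{0}$. The identity $\operatorname{curl}\operatorname{curl}=-\Delta+\nabla\operatorname{div}$ together with $\operatorname{div}\mathbf{v}=0$ reduces this componentwise to the Yukawa equation $-\Delta v+k^2 v=0$, so the radial factors of the interior Maxwell multipoles are combinations of the modified spherical Bessel functions $i_\ell(kr)$, which are entire in $kr$. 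Each modal basis function therefore extends analytically to any neighborhood of $\overline{\Omega}$, and in particular to a fixed tubular neighborhood $\mathcal{U}_\Gamma$ of $\Gamma$; summing over the finitely many modes gives that $L_\Omega\mathbf{u}$ is analytic on $\Omega\cup\mathcal{U}_\Gamma$, which is the qualitative content of the last assertion.

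The quantitative bound with $C_u''=k^{3/2}\|\mathbf{u}\|_{\operatorname{curl},\Omega,1}$ is obtained by essentially the same pattern as the proof of Lemma~\ref{LemLgammaPitu}(\ref{item:LemLgammaPitu-ii}). Three ingredients combine: (i) the weighted coefficient bound
\[
\sum_{1\le\ell\le\lambda k}\sum_{m\in\iota_\ell}\lambda_\ell^{1/2}\bigl(|u_\ell^m|^2+|U_\ell^m|^2\bigr)\lesssim\|L_\Gamma\mathbf{u}_T\|_{-1/2,\operatorname{curl}_\Gamma}^2\lesssim\|\mathbf{u}\|_{\operatorname{curl},\Omega,1}^2
\]
from (\ref{m1/2curldiva}) and (\ref{eq:norm-of-Pi_tau}); (ii) uniform analytic Cauchy estimates of the form
\[
\|\nabla^n\Phi_{\ell,m}\|_{L^2(\Omega\cup\mathcal{U}_\Gamma)}\lesssim C\,\gamma^n\,\lambda_\ell^{1/2}\max\{\sqrt{\lambda_\ell},n\}^n
\]
for the vectorial interior multipoles $\Phi_{\ell,m}$ (built from the ratios $i_\ell(kr)/i_\ell(k)$ and the extensions $\widetilde Y_\ell^m$, $\nabla\widetilde Y_\ell^m$, $\nabla\widetilde Y_\ell^m\times\mathbf{n}^*$), obtained by merging (\ref{estgradYtilde}) with Cauchy bounds for $r\mapsto i_\ell(kr)/i_\ell(k)$ on a complex strip around $[0,1+\delta]$; and (iii) Cauchy--Schwarz in $(\ell,m)$ using the weighted count $\sum_{1\le\ell\le\lambda k,\,m\in\iota_\ell}\lambda_\ell^{1/2}=O(k^3)$. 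Together these deliver $\|\nabla^n L_\Omega\mathbf{u}\|_{L^2(\Omega\cup\mathcal{U}_\Gamma)}\lesssim C\,\gamma^n\max\{\lambda k+1,n\}^n\,k^{3/2}\|\mathbf{u}\|_{\operatorname{curl},\Omega,1}$, which is exactly the class membership claimed.

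The main obstacle will be the uniform Cauchy estimate for the ratio $i_\ell(kr)/i_\ell(k)$ on a complex neighborhood of $[0,1+\delta]$ in the full range $1\le\ell\le\lambda k$ for $k$ large. Classical Debye-type asymptotics for the modified Bessel functions $I_{\ell+1/2}$ together with the monotonicity $0\le i_\ell(kr)\le i_\ell(k)$ on $[0,1]$ suggest the required $\gamma^n\max\{\sqrt{\lambda_\ell},n\}^n$-type growth, but the detailed book-keeping through the vectorial lift via $\operatorname{curl}\bigl(\mathbf{x}\,i_\ell(kr)\widetilde Y_\ell^m\bigr)$-type constructions (cf.\ \cite[Ch.~2]{Nedelec01}) must be carried out without losing powers of $\ell$ or $k$. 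As a fallback avoiding explicit multipoles, one may first invoke Lemma~\ref{LemLgammaPitu}(\ref{item:LemLgammaPitu-ii}) to lift $L_\Gamma\mathbf{u}_T$ to an analytic function on $\mathcal{U}_\Gamma$, subtract to reduce to an elliptic problem with zero tangential trace and analytic right-hand side, and then propagate analyticity into $\Omega$ by a Caccioppoli iteration in the spirit of \cite{MelenkSauterMathComp}, tracking the $k$-dependence through the radii of analyticity.
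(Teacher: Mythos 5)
The elementary claims in (\ref{eq:TheoLau-simple}) you justify exactly as the paper does (minimality of $L_\Omega$ for the norm bound, divergence of (\ref{eq:defLOmega-strong-a}) for $\operatorname{div}L_\Omega\mathbf{u}=0$), so that part is fine. For the analyticity, note first that your \emph{fallback} is, almost verbatim, the paper's actual proof: interior analytic regularity for $-\Delta+k^{2}$ away from $\Gamma$; near $\Gamma$ the ansatz $L_\Omega\mathbf{u}=\mathbf{U}+\widetilde{\mathbf{u}}$ with $\mathbf{U}$ the analytic lift of $L_\Gamma\mathbf{u}_T$ from Lemma~\ref{LemLgammaPitu}\,(\ref{item:LemLgammaPitu-ii}), so that $\widetilde{\mathbf{u}}$ solves a curl--curl/divergence system with homogeneous tangential trace and analytic data of controlled size; and then, after flattening the boundary, the local analytic regularity theory of Theorem~\ref{thm:local-regularity} with $\varepsilon=k^{-1}$ (a Morrey-type weighted induction on derivatives, in the spirit of the Caccioppoli iteration you mention). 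If you carry out that route you are reproducing the paper.

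Your primary route (explicit interior multipoles with radial factors $i_\ell(kr)/i_\ell(k)$) is genuinely different, but ingredient (ii) as stated has a real gap. The structural reduction is sound: $\operatorname{curl}\operatorname{curl}=-\Delta+\nabla\operatorname{div}$ together with $\operatorname{div}L_\Omega\mathbf{u}=0$ does diagonalize the problem in vector spherical harmonics, and the sum is finite. However, the claimed bound $\Vert\nabla^{n}\Phi_{\ell,m}\Vert_{L^{2}(\Omega\cup\mathcal{U}_\Gamma)}\lesssim\gamma^{n}\lambda_\ell^{1/2}\max\{\sqrt{\lambda_\ell},n\}^{n}$ cannot hold on a fixed, $k$-independent neighborhood: already for $n=0$ one has $z\,i_\ell'(z)/i_\ell(z)=\sqrt{\nu^{2}+z^{2}}\,(1+o(1))\ge z$ with $\nu=\ell+\tfrac12$, hence $i_\ell(k(1+\delta))/i_\ell(k)\gtrsim\operatorname{e}^{k\delta}$ uniformly in $1\le\ell\le\lambda k$ (for $\ell=1$ this is elementary from $i_1(z)\sim\operatorname{e}^{z}/(2z)$). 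The genuine analytic continuation of each normalized multipole beyond $r=1$ is therefore exponentially large in $k$, no cancellation between modes is available, and the same exponential factor destroys any Cauchy estimate taken on a complex strip around $[0,1+\delta]$ — which is precisely the step you flag as the "main obstacle". Restricted to $\overline{\Omega}$, where $0\le i_\ell(kr)/i_\ell(k)\le1$, the program is plausible and would yield the class membership on $\Omega$, but it still requires the nontrivial bookkeeping you defer: derivative bounds for the radial factors that do not lose powers of $\ell$ or $k$ (best obtained from the Bessel ODE rather than from Cauchy estimates), and control of the coupled $\mathbf{Y}_\ell^{m}$/$\mathbf{U}_\ell^{m}$ radial components whose normalization involves the symbols $z_\ell(k)$ of Lemma~\ref{Lemzlest}. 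In short: the first route is incomplete at its decisive quantitative step and cannot produce the assertion on $\Omega\cup\mathcal{U}_\Gamma$; the second route is the paper's proof and should be the one you execute.
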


\begin{proof}
\emph{1.~step (interior regularity):} Using the vector identity
\begin{equation}
\label{eq:rotrot}\operatorname{curl}\operatorname{curl} = -\Delta+
\nabla\operatorname{div}%
\end{equation}
we infer from (\ref{eq:defLOmega-strong}) that $-\Delta L_{\Omega}{\mathbf{u}}
+ k^{2} L_{\Omega}{\mathbf{u}}= 0$ in $\Omega$. Interior regularity in the
form \cite[Prop.~{5.5.1}]{MelenkHabil} then gives $L_{\Omega}{\mathbf{u}}
\in{\mathcal{A}}(C_{R} k^{-1}\|L_{\Omega}{\mathbf{u}}\|_{\operatorname{curl}%
,\Omega,k},\gamma_{R},B_{R})$ for any ball $B_{R} \subset\Omega$, where the
constants $C_{R}$, $\gamma_{R}$ are independent of $k$ and ${\mathbf{u}}$ (but
depend on $R$). Noting (\ref{lowhighuest}) shows the desired analyticity
assertion for the interior of $\Omega$.

\emph{2.~step (smoothness up to the boundary and ${\mathbf{H}}^{1}%
$-estimates):} Let the tubular neighborhood ${\mathcal{U}}_{\Gamma}$ of
$\Gamma$ and the extension ${\mathbf{U}}\in{\mathcal{A}}(Ck^{3/2}%
\Vert{\mathbf{u}}\Vert_{\operatorname{curl},\Omega,1},\gamma_{2},{\mathcal{U}%
}_{\Gamma})$ of $L_{\Gamma}{\mathbf{u}}_{T}$ be given by
Lemma~\ref{LemLgammaPitu} and write $L_{\Omega}{\mathbf{u}}={\mathbf{U}%
}+\widetilde{\mathbf{u}}$. By the triangle inequality we have
%
%subequationsjmm
\begin{subequations}
\label{eq:apriori-utilde}%
\begin{align}
k\Vert\widetilde{\mathbf{u}}\Vert_{{\mathbf{L}}^{2}({\mathcal{U}}_{\Gamma})}
&  \leq\Vert L_{\Omega}{\mathbf{u}}\Vert_{\operatorname{curl},\Omega,k}%
+\Vert{\mathbf{U}}\Vert_{\operatorname{curl},{\mathcal{U}}_{\Gamma},k}\leq
Ck^{5/2}\Vert{\mathbf{u}}\Vert_{\operatorname{curl},\Omega,1}%
,\label{eq:apriori-utilde-a}\\
\Vert\operatorname{curl}\widetilde{\mathbf{u}}\Vert_{{\mathbf{L}}%
^{2}({\mathcal{U}}_{\Gamma})}  &  \leq\Vert L_{\Omega}{\mathbf{u}}%
\Vert_{\operatorname{curl},\Omega,k}+\Vert{\mathbf{U}}\Vert
_{\operatorname{curl},{\mathcal{U}}_{\Gamma},k}\leq Ck^{5/2}\Vert{\mathbf{u}%
}\Vert_{\operatorname{curl},\Omega,1}. \label{eq:apriori-utilde-b}%
\end{align}
In view of (\ref{eq:defLOmega-strong}) the function $\widetilde{\mathbf{u}}$
satisfies
%
%subequationsjmm%
\end{subequations}
\begin{subequations}
\label{eq:TheorLau-10}%
\begin{align}
\operatorname{curl}\operatorname{curl}\widetilde{\mathbf{u}}+k^{2}%
\widetilde{\mathbf{u}}  &  ={\mathbf{f}}:=-\operatorname{curl}%
\operatorname{curl}{\mathbf{U}}-k^{2}{\mathbf{U}}\quad
\mbox{ in ${\mathcal U}_\Gamma \cap\Omega$},\label{eq:TheorLau-10-a}\\
\operatorname{div}\widetilde{\mathbf{u}}  &  =G:=-\operatorname{div}%
{\mathbf{U}}\quad
\mbox{ in ${\mathcal U}_\Gamma\cap\Omega$},\label{eq:TheorLau-10-b}\\
\Pi_{T}\widetilde{\mathbf{u}}  &  =0\quad\mbox{on $\Gamma$.}
\label{eq:TheorLau-10-c}%
\end{align}
We have (suitably adjusting the constants $\gamma_{2}$)
\end{subequations}
\begin{equation}
{\mathbf{f}}\in\mathcal{A}(Ck^{7/2}\Vert{\mathbf{u}}\Vert_{\operatorname{curl}%
,\Omega,1},\gamma_{2},\mathcal{U}_{\Gamma}\cap\Omega),\qquad G\in
\mathcal{A}(Ck^{5/2}\Vert{\mathbf{u}}\Vert_{\operatorname{curl},\Omega
,1},\gamma_{2},\mathcal{U}_{\Gamma}\cap\Omega). \label{eq:TheorLau-100}%
\end{equation}
The analyticity of $\mathbf{U}$, Lemma~\ref{lemma:maxwell-H2-regularity}, and
a simple induction argument (to deal with the presence of the lower order term
$k^{2}\widetilde{\mathbf{u}}$) shows that $\widetilde{\mathbf{u}}$ is in
$C^{\infty}(\mathcal{U}_{\Gamma}\cap\Omega)$. Additionally, by suitably
localizing, Lemma~\ref{lemma:maxwell-H2-regularity},
(\ref{item:lemma:maxwell-H2-regularity-i}) gives for a suitable subset
${\mathcal{U}}_{\Gamma}^{\prime}\subset{\mathcal{U}}_{\Gamma}$
\begin{equation}
\Vert\widetilde{\mathbf{u}}\Vert_{{\mathbf{H}}^{1}({\mathcal{U}}_{\Gamma
}^{\prime})}\leq C\left[  \Vert\widetilde{\mathbf{u}}\Vert_{{\mathbf{L}}%
^{2}({\mathcal{U}}_{\Gamma})}+\Vert\operatorname{curl}\widetilde{\mathbf{u}%
}\Vert_{{\mathbf{L}}^{2}(\Omega\cap{\mathcal{U}}_{\Gamma})}+\Vert
\operatorname{div}\widetilde{\mathbf{u}}\Vert_{L^{2}(\Omega\cap{\mathcal{U}%
}_{\Gamma})}\right]  \overset{(\ref{eq:apriori-utilde}%
),(\ref{eq:TheorLau-10-b})}{\leq}Ck^{5/2}\Vert{\mathbf{u}}\Vert
_{\operatorname{curl},\Omega,1}. \label{eq:TheorLau-42}%
\end{equation}
For notational convenience, we will henceforth denote ${\mathcal{U}}_{\Gamma
}^{\prime}$ again by ${\mathcal{U}}_{\Gamma}$.

\emph{3.~step (analytic regularity of $\widetilde{\mathbf{u}}$):} Quantitative
bounds for higher derivatives of $\widetilde{\mathbf{u}}$ are obtained by
locally flattening the boundary. By the analyticity of $\Gamma$ and the
compactness of $\Gamma$ there are $R_{0}$, $C_{\chi}$, $\gamma_{\chi}>0$ such
that for each ${\mathbf{x}}_{0} \in\Gamma$ we can find a parametrization
$\chi_{{\mathbf{x}}_{0}} \in{\mathcal{A}}^{\infty}(C_{\chi},\gamma_{\chi
},B_{R_{0}}(0))$ with the following properties\footnote{The third condition is
not essential but leads to a significant simplification as the ensuing
(\ref{eq:local-decoupling}) effects a decoupling of the elliptic system
(\ref{eq:transformed-system}) into three scalar problems at $0$.}:

\begin{enumerate}
\item $\chi_{{\mathbf{x}}_{0}}(0) = {\mathbf{x}}_{0}$ and, for $B_{R_{0}}%
^{+}:= \{\widehat{\mathbf{x}} \in B_{R_{0}}(0)\,|\, \widehat{\mathbf{x}}_{3} >
0\}$ and $\widehat{\Gamma}_{R_{0}}:= \{\widehat{\mathbf{x}} \in B_{R_{0}%
}(0)\,|\, {\mathbf{x}}_{3} = 0\}$, we have $V_{{\mathbf{x}}_{0}}:=
\chi_{{\mathbf{x}}_{0}}(B_{R_{0}}^{+}) \subset\Omega$ as well as
$\chi_{{\mathbf{x}}_{0}}( \widehat{\Gamma}_{R_{0}}) \subset\Gamma$.

\item For $\widehat{\mathbf{x}} \in\widehat{\Gamma}_{R_{0}}$ the vectors
${\mathbf{t}}_{{\mathbf{x}}_{0}}^{i}:= \partial_{i} \chi_{{\mathbf{x}}_{0}%
}(\widehat{\mathbf{x}})$, $i\in\{1,2\}$, span the tangent plane of $\Gamma$ at
$\chi_{{\mathbf{x}}_{0}}(\widehat{\mathbf{x}})$ and ${\mathbf{n}}({\mathbf{x}%
}):= - \partial_{3} \chi_{{\mathbf{x}}_{0}}(\widehat{\mathbf{x}})$ is the
outward normal vector.

\item The Jacobian $D\chi_{{\mathbf{x}}_{0}}(0)\in{\mathbb{R}}^{3\times3}$ is
orthogonal, i.e., $(D\chi_{{\mathbf{x}}_{0}}(0))^{\intercal}(D\chi
_{{\mathbf{x}}_{0}}(0))=\operatorname{I}$.
\end{enumerate}

The transformation of the system (\ref{eq:TheorLau-10}) on $V_{{\mathbf{x}%
}_{0}}$ to the half-ball $B_{R_{0}}^{+}$ is effected with a covariant
transformation of the dependent variable $\widetilde{\mathbf{u}}$ by setting
$\widetilde{\mathbf{u}}^{\operatorname{cov}}:=(D\chi_{{\mathbf{x}}_{0}%
})^{\intercal}\widetilde{\mathbf{u}}\circ\chi_{{\mathbf{x}}_{0}}$. We recall
the formula (see, e.g., \cite[Cor.~{3.58}]{Monkbook})%
\[
\frac{1}{\det\left(  D\chi_{{\mathbf{x}}_{0}}\right)  }\left(  D\chi
_{\mathbf{x}_{0}}\right)  \operatorname*{curl}\mathbf{w}^{\operatorname*{cov}%
}=\left(  \operatorname*{curl}\mathbf{w}\right)  \circ\chi_{{\mathbf{x}}_{0}}%
\]
and introduce the two pointwise symmetric positive definite matrices
\begin{equation}
{\mathbf{A}}:=\frac{1}{\det(D\chi_{{\mathbf{x}}_{0}})}(D\chi_{{\mathbf{x}}%
_{0}})^{\intercal}(D\chi_{{\mathbf{x}}_{0}}),\qquad{\mathbf{B}}:=\left(
\det(D\chi_{{\mathbf{x}}_{0}})\right)  (D\chi_{{\mathbf{x}}_{0}})^{-1}%
(D\chi_{{\mathbf{x}}_{0}})^{-T}; \label{eq:TheoLau-10}%
\end{equation}
note that ${\mathbf{A}}$, ${\mathbf{B}}\in{\mathcal{A}}^{\infty}(C^{\prime
},\gamma^{\prime},B_{R_{0}}^{+})$ for some constants $C^{\prime}$,
$\gamma^{\prime}$ that depend solely on $\Gamma$ (which is fixed in our case
$\Gamma:=\partial B_{1}\left(  0\right)  $). We also note that, since
$D\chi_{{\mathbf{x}}_{0}}(0)$ is assumed to be orthogonal, we have
\begin{equation}
{\mathbf{A}}(0)={\mathbf{B}}(0)=\operatorname{I}\in{\mathbb{R}}^{3\times3}.
\label{eq:local-decoupling}%
\end{equation}

{}From (\ref{eq:TheorLau-10-a}) we obtain for all ${\mathbf{V}}\in
C_{0}^{\infty}(B_{R_{0}}^{+})$
\[
\int_{B_{R_{0}}^{+}}\left(  \frac{1}{\det D\chi_{{\mathbf{x}}_{0}}%
}\left\langle \left(  D\chi_{{\mathbf{x}}_{0}}\right)  \operatorname*{curl}%
\widetilde{\mathbf{u}}^{\operatorname*{cov}},\left(  D\chi_{{\mathbf{x}}_{0}%
}\right)  \operatorname*{curl}\mathbf{V}\right\rangle +\left(  \det
D\chi_{{\mathbf{x}}_{0}}\right)  k^{2}\left\langle \widetilde{\mathbf{u}%
}^{\operatorname*{cov}},\left(  D\chi_{{\mathbf{x}}_{0}}\right)  ^{-1}\left(
D\chi_{{\mathbf{x}}_{0}}\right)  ^{-\intercal}\mathbf{V}\right\rangle \right)
=\int_{B_{R_{0}}^{+}}\left\langle \widehat{\mathbf{f}},\mathbf{V}%
\right\rangle
\]
with $\widehat{\mathbf{f}}:=\det(D\chi_{{\mathbf{x}}_{0}})(D\chi_{{\mathbf{x}%
}_{0}})^{-1}{\mathbf{f}}\circ\chi_{{\mathbf{x}}_{0}}$. The strong form of this
equation is
%
%subequationsjmm%
%TCIMACRO{\TeXButton{eq:transformed-system}{\begin{subequations}
%\label{eq:transformed-system}
%\end{subequations}}}%
%BeginExpansion
\begin{subequations}
\label{eq:transformed-system}
\end{subequations}%
%EndExpansion%
\begin{equation}
\operatorname*{curl}\left(  {\mathbf{A}}\operatorname*{curl}%
\widetilde{\mathbf{u}}^{\operatorname{conv}}\right)  +k^{2}{\mathbf{B}%
}\widetilde{\mathbf{u}}^{\operatorname{conv}}=\widehat{\mathbf{f}}%
\quad\mbox{ in $B_{R_0}^+$.} \tag{%
%TCIMACRO{\TeXButton{eq:transformed-system}{\ref{eq:transformed-system}}}%
%BeginExpansion
\ref{eq:transformed-system}%
%EndExpansion
a}\label{eq:transformed-system-a}%
\end{equation}
The transformation of the divergence condition (\ref{eq:TheorLau-10-b}) to
$B_{R_{0}}^{+}$ is:
\begin{equation}
\operatorname{div}\left(  {\mathbf{B}}\widetilde{\mathbf{u}}%
^{\operatorname{cov}}\right)  =\widehat{G}:=\det(D\chi_{{\mathbf{x}}_{0}%
})G\circ\chi_{{\mathbf{x}}_{0}}\quad\mbox{in $B_{R_0}^+$}. \tag{%
%TCIMACRO{\TeXButton{eq:transformed-system}{\ref{eq:transformed-system}}}%
%BeginExpansion
\ref{eq:transformed-system}%
%EndExpansion
b}\label{eq:transformed-system-b}%
\end{equation}
The covariant transformation leaves the homogeneous tangential trace
(\ref{eq:TheorLau-10-c}) invariant:
\begin{equation}
\Pi_{T}\widetilde{\mathbf{u}}^{\operatorname{cov}}=0\quad
\mbox{ on $\widehat \Gamma_{R_0}$.} \tag{%
%TCIMACRO{\TeXButton{eq:transformed-system}{\ref{eq:transformed-system}}}%
%BeginExpansion
\ref{eq:transformed-system}%
%EndExpansion
c}\label{eq:transformed-system-c}%
\end{equation}
We rewrite the equations (\ref{eq:transformed-system}) in the form
(\ref{eq:local-system}). To that end, we note that the solution
$\widetilde{\mathbf{u}}^{\operatorname{cov}}$ is smooth (up to the boundary
$\widehat{\Gamma}_{R_{0}}$) by Step~2 so that the manipulations are
admissible; we also note ${\mathbf{A}}(0)={\mathbf{B}}(0)=\operatorname{I}$ by
(\ref{eq:local-decoupling}). Adding the gradient of equation
(\ref{eq:transformed-system-b}) to equation (\ref{eq:transformed-system-a})
and taking the trace of (\ref{eq:transformed-system-b}) on $\widehat{\Gamma
}_{R_{0}}$ as well as taking note of (\ref{eq:transformed-system-c}) (to
obtain both (\ref{eq:local-transformed-curl-system-b}) and
(\ref{eq:local-transformed-curl-system-c})) gives a system of the following
form:
%
%subequationsjmm%
%TCIMACRO{\TeXButton{eq:local-transformed-curl-system}{\begin{subequations}
%\label{eq:local-transformed-curl-system}
%\end{subequations}}}%
%BeginExpansion
\begin{subequations}
\label{eq:local-transformed-curl-system}
\end{subequations}%
%EndExpansion%
\begin{align}
&  -\sum_{\alpha,\beta,j=1}^{3}\partial_{\alpha}\left(  A_{\alpha\beta}%
^{ij}\partial_{\beta}\widetilde{\mathbf{u}}_{j}^{\operatorname{cov}}\right)
+\sum_{j,\beta=1}^{3}B_{\beta}^{ij}\partial_{\beta}\widetilde{\mathbf{u}}%
_{j}^{cov}+\sum_{j=1}^{3}\left(  C^{ij}+k^{2}{\mathbf{B}}_{ij}\right)
\widetilde{\mathbf{u}}_{j}^{\operatorname{cov}}=\widehat{\mathbf{f}}%
_{i}+\partial_{i}\widehat{G},\quad\mbox{ on $B^+_{R_0}$},\quad i=1,2,3,\tag{%
%TCIMACRO{\TeXButton{eq:local-transformed-curl-system}{\ref
%{eq:local-transformed-curl-system}}}%
%BeginExpansion
\ref{eq:local-transformed-curl-system}%
%EndExpansion
a}\label{eq:local-transformed-curl-system-a}\\
&  \widetilde{\mathbf{u}}_{i}^{\operatorname{cov}}=0\quad
\mbox{ on $\Gamma_{R_0}$},\quad i=1,2,\tag{%
%TCIMACRO{\TeXButton{eq:local-transformed-curl-system}{\ref
%{eq:local-transformed-curl-system}}}%
%BeginExpansion
\ref{eq:local-transformed-curl-system}%
%EndExpansion
b}\label{eq:local-transformed-curl-system-b}\\
&  \partial_{3}\widetilde{\mathbf{u}}_{3}^{\operatorname{cov}}=\widehat{G}%
-(\sum_{i=1}^{3}\partial_{i}{\mathbf{B}}_{i3})\widetilde{\mathbf{u}}%
_{3}^{\operatorname{cov}}-\sum_{i=1}^{2}{\mathbf{B}}_{i3}\partial
_{i}\widetilde{\mathbf{u}}_{3}^{\operatorname{cov}}-\sum_{j=1}^{2}{\mathbf{B}%
}_{3j}\partial_{3}\widetilde{\mathbf{u}}_{j}^{\operatorname{cov}}%
-({\mathbf{B}}_{33}-1)\partial_{3}\widetilde{\mathbf{u}}_{3}%
^{\operatorname{cov}}.\quad\mbox{ on $\widehat \Gamma_{R_0}$.} \tag{%
%TCIMACRO{\TeXButton{eq:local-transformed-curl-system}{\ref
%{eq:local-transformed-curl-system}}}%
%BeginExpansion
\ref{eq:local-transformed-curl-system}%
%EndExpansion
c}\label{eq:local-transformed-curl-system-c}%
\end{align}
The tensors $(A_{\alpha\beta}^{ij})_{i,j,\alpha,\beta}$, $(B_{\beta}%
^{ij})_{i,j,\beta}$, and $(C^{ij})_{i,j}$ are analytic on $B_{R_{0}}^{+}$ and,
with constants $C^{\prime\prime}$, $\gamma^{\prime\prime}$, depending solely
on $\Gamma$ (being fixed in our case by $\Gamma:=\partial B_{1}\left(
0\right)  $), we have $(A_{\alpha\beta}^{ij})_{i,j,\alpha,\beta}$, $(B_{\beta
}^{ij})_{i,j,\beta},(C^{ij})_{i,j}\in{\mathcal{A}}^{\infty}(C^{\prime\prime
},\gamma^{\prime\prime},B_{R_{0}}^{+})$. Additionally, we have the structural
property (cf.~(\ref{eq:rotrot}) and (\ref{eq:local-decoupling}))
\begin{equation}
A_{\alpha\beta}^{ij}(0)=\delta_{ij}\delta_{\alpha\beta},\qquad\forall
i,j,\alpha,\beta,\qquad{\mathbf{B}}_{j3}(0)={\mathbf{B}}_{3j}(0)=0=\mathbf{B}%
_{33}(0)-1\quad\mbox{ for $j \in \{1,2\}$}.\qquad
\label{eq:structural-property}%
\end{equation}
Lemma~\ref{lemma:A-invariant} and (\ref{eq:TheorLau-100}) imply, for suitable
constants $C$, $\gamma_{3}$,
\begin{align}
&  \widehat{f}\in{\mathcal{A}}(Ck^{7/2}\Vert{\mathbf{u}}\Vert
_{\operatorname{curl},\Omega,1},\gamma_{3},B_{R_{0}}^{+}),\qquad\widehat{G}%
\in{\mathcal{A}}(Ck^{5/2}\Vert{\mathbf{u}}\Vert_{\operatorname{curl},\Omega
,1},\gamma_{3},B_{R_{0}}^{+}),\label{eq:TheorLau-112}\\
&  k\Vert\widetilde{\mathbf{u}}^{\operatorname{cov}}\Vert_{{\mathbf{L}}%
^{2}(B_{R_{0}}^{+})}+\Vert\widetilde{\mathbf{u}}^{\operatorname{cov}}%
\Vert_{{\mathbf{H}}^{1}(B_{R_{0}}^{+})}\overset{(\ref{eq:apriori-utilde-a}%
),(\ref{eq:TheorLau-42})}{\leq}Ck^{5/2}\Vert{\mathbf{u}}\Vert
_{\operatorname{curl},\Omega,1}.
\end{align}
Dividing (\ref{eq:local-transformed-curl-system}) by $k^{2}$ makes
Theorem~\ref{thm:local-regularity} applicable with $\varepsilon=k^{-1}$ and
the constants $C_{f}$, $C_{G}$, $C_{C}$ there are of the form $C_{f}%
=O(k^{3/2}\Vert{\mathbf{u}}\Vert_{\operatorname{curl},\Omega,1})$,
$C_{G}=O(k^{3/2}\Vert{\mathbf{u}}\Vert_{\operatorname{curl},\Omega,1})$,
$C_{C}=O(1)$. Theorem~\ref{thm:local-regularity} yields a $R>0$ and constants
$C$, $\gamma$ such that for $B_{R}^{+}:=\{\widehat{\mathbf{x}}\in
B_{R}(0)\,|\,\widehat{\mathbf{x}}_{3}>0\}$ we have $\widetilde{\mathbf{u}%
}^{\operatorname{cov}}\in{\mathcal{A}}(C_{u},\gamma,B_{R}^{+})$, where
\[
C_{u}=k^{3/2}\Vert{\mathbf{u}}\Vert_{\operatorname{curl},\Omega,1}.
\]
Transforming back using again Lemma~\ref{lemma:A-invariant} gives for
$V_{\mathbf{x}_0}=\chi_{{\mathbf{x}}_{0}}(B_{R}^{+})$ the analytic regularity assertion
$\widetilde{\mathbf{u}}\in{\mathcal{A}}(CC_{u},\gamma,V_{\mathbf{x}_0})$ for suitable
constants $C$, $\gamma$. A covering argument completes the estimate of
$\widetilde{\mathbf{u}}$ on ${\mathcal{U}}_{\Gamma}$.\medskip
\end{proof}

%For the proof of Proposition~\ref{PropN4A}, we need a result about the normal trace of $L_\Omega {\mathbf u}$:
The normal trace $\langle L_{\Omega}{\mathbf{v}},{\mathbf{n}}\rangle$ is also
analytic. We have:

\begin{lemma}
\label{lemma:ntimesLomega-analytic} Let $\Omega=B_{1}(0)$. There is a tubular
neighborhood ${\mathcal{U}}_{\Gamma}$ of $\Gamma$ and there are constants
$C_{\mathcal{A},\Gamma}$, $\gamma_{\mathcal{A},\Gamma}$, $C_{\mathcal{A}%
,\Gamma}^{\prime}$, $\gamma_{\mathcal{A},\Gamma}^{\prime}$, $C_{\mathcal{A}%
,\Gamma}^{\prime^{\prime}}$, $\gamma_{\mathcal{A},\Gamma}^{\prime\prime}$,
$b>0$ depending only on the choice of cut-off parameter $\lambda$ such that
for any ${\mathbf{v}}\in{\mathbf{H}}(\operatorname{curl},\Omega)$ the normal
trace $\langle L_{\Omega}{\mathbf{v}},{\mathbf{n}}\rangle$ on $\Gamma$
satisfies the following:

\begin{enumerate}
[(i)]

\item \label{item:lemma:ntimesLomega-analytic-i} $g_{1}:=\langle L_{\Omega
}{\mathbf{v}},{\mathbf{n}}\rangle$ has an analytic extension $g_{1}^{\ast}$ to
${\mathcal{U}}_{\Gamma}$ with $g_{1}^{\ast}\in{\mathcal{A}}(C_{\mathcal{A}%
,\Gamma}k^{3/2}\Vert{\mathbf{v}}\Vert_{\operatorname{curl},\Omega,1}%
,\gamma_{\mathcal{A},\Gamma},{\mathcal{U}}_{\Gamma})$.

\item \label{item:lemma:ntimesLomega-analytic-ii} $\langle L_{\Omega
}{\mathbf{v}},{\mathbf{n}}\rangle\in{\mathcal{A}}(C_{\mathcal{A},\Gamma} k^{2}
\|{\mathbf{v}}\|_{\operatorname{curl},\Omega,1},\gamma_{\mathcal{A},\Gamma
},\Gamma)$.

\item \label{item:lemma:ntimesLomega-analytic-iii} The expansion coefficients
$\kappa_{\ell}^{m}$ of
\begin{equation}
\langle L_{\Omega}{\mathbf{v}},{\mathbf{n}}\rangle=\sum_{\ell=1}^{\infty}%
\sum_{m\in\iota_{\ell}}\kappa_{\ell}^{m}Y_{\ell}^{m}
\label{eq:expansion-nLOmega}%
\end{equation}
satisfy%
\begin{align}
|\kappa_{\ell}^{m}|  &  \leq C_{\mathcal{A},\Gamma}^{\prime}\Vert{\mathbf{v}%
}\Vert_{\operatorname{curl},\Omega,1}%
\begin{cases}
k^{1/2} & \mbox{if $\ell \leq \gamma^\prime_{{\mathcal A},\Gamma} k$}\\
k^{2}e^{-b\ell} & \mbox{if $\ell > \gamma^\prime_{{\mathcal A},\Gamma} k$.}
\end{cases}
\label{eq:lemma:ntimesLomega-analytic-14}\\
\sum_{\ell\leq k\gamma_{\mathcal{A},\Gamma}^{\prime}}\sum_{m\in\iota_{\ell}%
}|\kappa_{\ell}^{m}|  &  \leq C_{\mathcal{A},\Gamma}^{\prime^{\prime}}%
k^{1/2}\Vert{\mathbf{v}}\Vert_{\operatorname{curl},\Omega,1}%
,\label{eq:lemma:ntimesLomega-analytic-10}\\
\sum_{\ell>k\gamma_{\mathcal{A},\Gamma}^{\prime}}\sum_{m\in\iota_{\ell}%
}|\kappa_{\ell}^{m}|(\ell+1)^{\alpha}  &  \leq C_{\mathcal{A},\Gamma}%
^{\prime\prime}k^{2}(\gamma_{\mathcal{A},\Gamma}^{\prime\prime})^{\alpha
+1}(\alpha+1)^{\alpha+1}\Vert{\mathbf{v}}\Vert_{\operatorname{curl},\Omega
,1}\qquad\forall\alpha\geq0. \label{eq:lemma:ntimesLomega-analytic-12}%
\end{align}

\end{enumerate}
\end{lemma}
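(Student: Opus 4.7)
The plan is to derive everything from the analyticity of $L_\Omega\mathbf{v}$ established in Theorem~\ref{TheoLau}, which provides an extension to $\Omega\cup\mathcal{U}_\Gamma$ in the class $\mathcal{A}(C_{\mathcal{A},5}k^{3/2}\|\mathbf{v}\|_{\operatorname{curl},\Omega,1},\gamma_{\mathcal{A},5},\Omega\cup\mathcal{U}_\Gamma)$. For part~(\ref{item:lemma:ntimesLomega-analytic-i}), I would introduce the constant-in-normal-direction extension $\mathbf{n}^\ast$ of $\mathbf{n}$, which is analytic on $\mathcal{U}_\Gamma$ since $\Gamma=\partial B_1(0)$ is, set $g_1^\ast:=\langle L_\Omega\mathbf{v},\mathbf{n}^\ast\rangle$, and invoke Lemma~\ref{lemma:A-invariant} (preservation of analyticity classes under multiplication by analytic functions) to read off membership in $\mathcal{A}(C_{\mathcal{A},\Gamma}k^{3/2}\|\mathbf{v}\|_{\operatorname{curl},\Omega,1},\gamma_{\mathcal{A},\Gamma},\mathcal{U}_\Gamma)$.

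For part~(\ref{item:lemma:ntimesLomega-analytic-ii}), I would transfer volume derivatives of $g_1^\ast$ on $\mathcal{U}_\Gamma$ to tangential derivatives on $\Gamma$ via the multiplicative trace inequality $\|w\|_{L^2(\Gamma)}^2\le C\|w\|_{L^2(\mathcal{U}_\Gamma)}(\|w\|_{L^2(\mathcal{U}_\Gamma)}+\|\nabla w\|_{L^2(\mathcal{U}_\Gamma)})$, applied with $w$ a tangential derivative of $g_1^\ast$. Each such application costs a factor $\max\{n{+}1,k\}^{1/2}$, which under the regime $n\le k$ converts the volume constant $k^{3/2}$ into the surface constant $k^2$, yielding the analytic class on $\Gamma$ with the same radius $\gamma_{\mathcal{A},\Gamma}$ up to rescaling.

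Part~(\ref{item:lemma:ntimesLomega-analytic-iii}) requires splitting according to frequency. For $\ell\le\gamma_{\mathcal{A},\Gamma}'k$ in (\ref{eq:lemma:ntimesLomega-analytic-14}), the bound $|\kappa_\ell^m|\le\|\langle L_\Omega\mathbf{v},\mathbf{n}\rangle\|_{L^2(\Gamma)}$ is combined with the multiplicative trace inequality applied directly to $L_\Omega\mathbf{v}$, using the $k$-stability (\ref{lowhighuesta}) together with the embedding $\mathbf{V}_0\hookrightarrow\mathbf{H}^1(\Omega)$ provided by Lemma~\ref{Lemembed} (with $C_{\Omega,k}=1$ on the ball) applied to the divergence-free field $L_\Omega\mathbf{v}$: this gives a bound independent of $k$, hence a fortiori bounded by $Ck^{1/2}\|\mathbf{v}\|_{\operatorname{curl},\Omega,1}$. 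For $\ell>\gamma_{\mathcal{A},\Gamma}'k$, I would exploit the analytic class from (\ref{item:lemma:ntimesLomega-analytic-ii}): by Parseval, $\lambda_\ell^n\sum_m|\kappa_\ell^m|^2\le\|\nabla_\Gamma^n\langle L_\Omega\mathbf{v},\mathbf{n}\rangle\|_{L^2(\Gamma)}^2\le (Ck^2\|\mathbf{v}\|)^2\gamma^{2n}\max\{n{+}1,k\}^{2n}$, and optimizing over $n$ at $n\approx\ell/(\gamma e)$ yields the exponential decay with appropriate $b>0$ and $\gamma_{\mathcal{A},\Gamma}'=\gamma e$. The weighted tail bound (\ref{eq:lemma:ntimesLomega-analytic-12}) follows by multiplying this pointwise bound by $(\ell+1)^\alpha$, summing the geometric-times-polynomial series $\sum(2\ell+1)(\ell+1)^\alpha e^{-b\ell}$, and optimizing in the standard way.

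The main obstacle will be the summability bound (\ref{eq:lemma:ntimesLomega-analytic-10}): the naive argument that combines $|\kappa_\ell^m|\le C\|\mathbf{v}\|$ with counting the $O(k^2)$ modes in the low-frequency band is too crude, and a plain $H^{1/2}(\Gamma)$ Cauchy--Schwarz applied with weight $(\ell+1)$ controls only $\sum|\kappa_\ell^m|$ in terms of $\|\langle L_\Omega\mathbf{v},\mathbf{n}\rangle\|_{H^{1/2}(\Gamma)}$ which in turn grows with $k$. To obtain the stated $k^{1/2}$ scaling I would exploit the separation-of-variables structure for the modified Maxwell problem satisfied by $L_\Omega\mathbf{v}$ in the ball: expanding each Cartesian component in modified spherical Bessel functions $i_\ell(kr)$ shows that the $(\ell,m)$-mode of the tangential Dirichlet datum $L_\Gamma\mathbf{v}_T$ only couples to the $(\ell,m)$-mode of $\langle L_\Omega\mathbf{v},\mathbf{n}\rangle$, so $\kappa_\ell^m=R_\ell(k)V_\ell^m$ vanishes for $\ell>\lambda k$ (mode preservation), and the Bessel ratio $R_\ell(k)=\ell(\ell+1)i_\ell(k)/(ki_{\ell-1}(k)-\ell i_\ell(k))$ behaves like $\ell^2/(k-\ell)$ for $\ell\le\gamma'k<\lambda k$. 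Combining this explicit decay of $R_\ell$ with the $H^{-1/2}_{\operatorname{curl}}(\Gamma)$-control of $V_\ell^m$ via a weighted Cauchy--Schwarz against $\lambda_\ell^{1/2}(1+\lambda_\ell)$ yields the advertised $k^{1/2}\|\mathbf{v}\|_{\operatorname{curl},\Omega,1}$ bound after the appropriate optimization.
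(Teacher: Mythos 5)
Parts (\ref{item:lemma:ntimesLomega-analytic-i}) and (\ref{item:lemma:ntimesLomega-analytic-ii}) of your proposal coincide with the paper's proof (extension via ${\mathbf n}^\ast$, Lemma~\ref{lemma:A-invariant}, multiplicative trace inequality). For the exponential decay in (\ref{eq:lemma:ntimesLomega-analytic-14}) and the tail bound (\ref{eq:lemma:ntimesLomega-analytic-12}) your optimization over the derivative order is also the paper's idea, with one technical caveat: you work with $\|\nabla_\Gamma^n g_1\|_{L^2(\Gamma)}$ and invoke "Parseval" in the form $\lambda_\ell^n\sum_m|\kappa_\ell^m|^2\le\|\nabla_\Gamma^n g_1\|^2$. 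That identity is exact only for powers of the Laplace--Beltrami operator; for iterated surface gradients the norm equivalence carries $n$-dependent constants that you must control uniformly, since you are about to take $n\sim\ell$. The paper sidesteps this by writing $|\kappa_\ell^m|=\lambda_\ell^{-j}|((-\Delta_\Gamma)^j g_1,Y_\ell^m)_\Gamma|$ and bounding $\|\Delta_\Gamma^j g_1\|_{L^2(\Gamma)}$ through the ambient identity $\Delta u=\Delta_\Gamma u+2H\partial_n u+\partial_n^2 u$ on the sphere together with the volume analyticity of $g_1^\ast$; you should adopt some such device.

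The genuine gap is in your treatment of the low-frequency coefficients, i.e.\ the first case of (\ref{eq:lemma:ntimesLomega-analytic-14}) and the sum (\ref{eq:lemma:ntimesLomega-analytic-10}). First, your appeal to the embedding ${\mathbf V}_0\hookrightarrow{\mathbf H}^1(\Omega)$ of Lemma~\ref{Lemembed}/\ref{Lemembedspec} is not available: $L_\Omega{\mathbf v}$ is divergence-free but does \emph{not} lie in ${\mathbf V}_0$ or ${\mathbf V}_0^\ast$ (those spaces are defined through the sesquilinear form $(\!(\cdot,\cdot)\!)$ and encode a boundary condition involving $T_k$ that $L_\Omega{\mathbf v}$ does not satisfy), and a bound on $\|\langle L_\Omega{\mathbf v},{\mathbf n}\rangle\|_{L^2(\Gamma)}$ "independent of $k$" does not follow. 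Second, the obstacle you then try to circumvent with a separation-of-variables/modified-Bessel argument does not exist: the Cauchy--Schwarz should be run with the weight $\lambda_\ell^{-1/2}$ (an $H^{-1/2}(\Gamma)$ pairing), not $H^{1/2}(\Gamma)$. One has $\sum_{\ell\le\gamma' k}\sum_m|\kappa_\ell^m|\le\bigl(\sum\lambda_\ell^{-1/2}|\kappa_\ell^m|^2\bigr)^{1/2}\bigl(\sum_{\ell\le\gamma' k}\sum_m\lambda_\ell^{1/2}\bigr)^{1/2}\lesssim k^{3/2}\,\|\langle L_\Omega{\mathbf v},{\mathbf n}\rangle\|_{H^{-1/2}(\Gamma)}$, and the normal-trace estimate $\|\langle{\mathbf u},{\mathbf n}\rangle\|_{H^{-1/2}(\Gamma)}\lesssim\|{\mathbf u}\|_{{\mathbf H}(\Omega,\operatorname{div})}$ together with $\operatorname{div}L_\Omega{\mathbf v}=0$ and (\ref{lowhighuesta}) gives $\|L_\Omega{\mathbf v}\|_{{\mathbf H}(\Omega,\operatorname{div})}=\|L_\Omega{\mathbf v}\|_{L^2(\Omega)}\le k^{-1}\|{\mathbf v}\|_{\operatorname{curl},\Omega,k}$, whence the net $k^{1/2}$. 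Your Bessel-ratio route is both unnecessary and unsubstantiated: the claimed symbol asymptotics $\ell^2/(k-\ell)$ suggest a resonance that cannot occur for the coercive operator $\operatorname{curl}\operatorname{curl}+k^2$, and the coupling between the tangential data and the normal trace through the radial profiles is nowhere worked out.
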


\proof\emph{Proof of (\ref{item:lemma:ntimesLomega-analytic-i}):} {}From
Theorem~\ref{TheoLau} we infer for suitable $C$, $\gamma$ that $L_{\Omega
}{\mathbf{v}}$ is in fact analytic on $\Omega\cup{\mathcal{U}}_{\Gamma}$ and
satisfies there
\begin{equation}
L_{\Omega}{\mathbf{v}}\in{\mathcal{A}}(Ck^{3/2}\Vert{\mathbf{v}}%
\Vert_{\operatorname{curl},\Omega,1},\gamma,\Omega\cup{\mathcal{U}}_{\Gamma})
\label{eq:analyticity-LOmegav-II}%
\end{equation}
The extension $g_{1}^{\ast}$ of $g_{1}=\langle L_{\Omega}{\mathbf{v}%
},{\mathbf{n}}\rangle$ into ${\mathcal{U}}_{\Gamma}$ is taken as $g_{1}^{\ast
}:=\left\langle L_{\Omega}\mathbf{v},\mathbf{n}^{\ast}\right\rangle $ where
$\mathbf{n}^{\ast}\left(  \mathbf{x}\right)  :=\mathbf{x}/\left\Vert
\mathbf{x}\right\Vert $ is the extension of the normal vector ${\mathbf{n}}$
to ${\mathcal{U}}_{\Gamma}$. By the analyticity of ${\mathbf{n}}^{\ast}$ and
(\ref{eq:analyticity-LOmegav-II}) we may apply Lemma~\ref{lemma:A-invariant}
to get with suitable constants $\tilde{C}$, $\tilde{\gamma}$ independent of
$k$ and $\mathbf{v}$,
\begin{equation}
g_{1}^{\ast}\in\mathcal{A}\left(  \tilde{C}k^{3/2}\left\Vert \mathbf{v}%
\right\Vert _{\operatorname*{curl},\Omega,1},\tilde{\gamma},\mathcal{U}%
_{\Gamma}\right)  . \label{g1starA-vorn}%
\end{equation}
\emph{Proof of (\ref{item:lemma:ntimesLomega-analytic-ii}):} Since for smooth
$w$ we have the pointwise bound $|\nabla_{\Gamma}w|\leq|(\nabla w)|_{\Gamma}%
|$, we get from a multiplicative trace inequality (see, e.g., \cite[Thm.~{A.2}%
]{melenk_multtrace})
\[
\left\Vert \nabla_{\Gamma}^{n}g_{1}\right\Vert _{\Gamma}\leq C\left(
\left\Vert \nabla^{n}g_{1}^{\ast}\right\Vert _{L^{2}\left(  \mathcal{U}%
_{\Gamma}\right)  }\left\Vert \nabla^{n}g_{1}^{\ast}\right\Vert _{H^{1}\left(
\mathcal{U}_{\Gamma}\right)  }\right)  ^{1/2}\qquad\forall n\in\mathbb{N}_{0}%
\]
so that $\displaystyle g_{1}\in\mathcal{A}\left(  C_{1}k^{2}\left\Vert
\mathbf{v}\right\Vert _{\operatorname*{curl},\Omega,1},\gamma_{1}%
,\Gamma\right)  $ for suitable $C_{1}$, $\gamma_{1}$; this is the second statement.

\emph{Proof of (\ref{item:lemma:ntimesLomega-analytic-iii}):} \emph{1.~step:}
By \cite[(2.5.212)]{Nedelec01}, the Laplace-Beltrami operator can be expressed
in terms of differential operators in ambient space: $\Delta u=\Delta_{\Gamma
}u+2H\partial_{n}u+\partial_{n}^{2}u$, where $H\equiv1$ is the mean curvature
of the unit sphere. Applying this to $u=g_{1}^{\ast}$ implies for some $C$,
$\gamma_{2}>0$ independent of $k$ and $j$ again with the trace inequality
\begin{equation}
\Vert\Delta_{\Gamma}^{j}g_{1}\Vert_{L^{2}(\Gamma)}\leq Ck^{2}\Vert{\mathbf{v}%
}\Vert_{\operatorname{curl},\Omega,1}\gamma_{2}^{2j}\max\{k,2j\}^{2j},
\label{eq:Delta^j}%
\end{equation}
\emph{2.~step:} Recall that by (\ref{eq:iota-sphere}) we have
$\operatorname{card}\iota_{\ell}=2\ell+1$ and that, by
(\ref{eq:TheoLau-simple}), $\Vert L_{\Omega}{\mathbf{v}}\Vert_{{\mathbf{H}%
}(\operatorname{div},\Omega)}=\Vert L_{\Omega}{\mathbf{v}}\Vert_{{L}%
^{2}(\Omega)}\leq k^{-1}\Vert L_{\Omega}{\mathbf{v}}\Vert_{\operatorname{curl}%
,\Omega,k}\leq k^{-1}\Vert{\mathbf{v}}\Vert_{\operatorname{curl},\Omega,k}$.
By orthonormality of the $Y_{\ell}^{m}$, the expansion coefficients
$\kappa_{\ell}^{m}$ are given by $\displaystyle\kappa_{\ell}^{m}%
=(g_{1},Y_{\ell}^{m})_{\Gamma}$. We estimate the low-order coefficients
($\ell\leq k\gamma_{\mathcal{A},\Gamma}^{\prime}$) by%
\begin{align}
\left\vert \kappa_{\ell}^{m}\right\vert  &  \leq\sum_{\ell\leq k\gamma
_{\mathcal{A},\Gamma}^{\prime}}\sum_{m\in\iota_{\ell}}|\kappa_{\ell}^{m}%
|\leq\left(  \sum_{\ell\leq k\gamma_{\mathcal{A},\Gamma}^{\prime}}\sum
_{m\in\iota_{\ell}}|\kappa_{\ell}^{m}|^{2}\lambda_{\ell}^{-1/2}\right)
^{1/2}\left(  \sum_{\ell\leq k\gamma_{\mathcal{A},\Gamma}^{\prime}}\sum
_{m\in\iota_{\ell}}\lambda_{\ell}^{1/2}\right)  ^{1/2}\lesssim\Vert\langle
L_{\Omega}{\mathbf{v}},{\mathbf{n}}\rangle\Vert_{H^{-1/2}(\Gamma)}%
k^{3/2}\nonumber\\
&  \lesssim k^{3/2}\Vert L_{\Omega}{\mathbf{v}}\Vert_{{\mathbf{H}%
}(\operatorname{div},\Omega)}\lesssim k^{1/2}\Vert{\mathbf{v}}\Vert
_{\operatorname{curl},\Omega,k} \label{eq:foo-100}%
\end{align}
which implies the first estimate in (\ref{eq:lemma:ntimesLomega-analytic-14})
and (\ref{eq:lemma:ntimesLomega-analytic-10}).

\noindent\emph{3.~step:} The minimum of $x\mapsto x^{x}$ is attained at $1/e$
with value $\operatorname{e}^{-1/\operatorname{e}}<1$. Hence, there are
$q\in(0,1)$ and $\delta>0$ such that
\begin{equation}
x^{x}\leq q<1\qquad\forall x\in\lbrack1/\operatorname{e}-\delta
,1/\operatorname{e}+\delta]. \label{eq:xhochx}%
\end{equation}
\emph{4.~step:} 
For $\gamma_{3}:=\max\{1,2 \gamma_2/\delta\}>0$ the
following implication holds for $k \ge 1$:%
\begin{equation}
\ell\geq\gamma_{3}k\quad\Longrightarrow\quad j:=\left\lfloor \frac{\ell
}{2\gamma_{2}\operatorname{e}}\right\rfloor \quad\mbox{ satisfies }j\geq
k\quad\mbox{ and }\quad\frac{2j\gamma_{2}}{\ell}\in\lbrack1/\operatorname{e}%
-\delta,1/\operatorname{e}+\delta]. \label{eq:choice-of-j}%
\end{equation}
\emph{5.~step:} Given $\ell\geq\gamma_{3}k$ we select $j$ as in
(\ref{eq:choice-of-j}). Using the orthonormality of the $Y_{\ell}^{m}$ with
the eigenvalues $\lambda_{\ell}=\ell(\ell+1)$ of $-\Delta_{\Gamma}$, we
compute
\begin{align*}
|\kappa_{\ell}^{m}|  &  =\lambda_{\ell}^{-j}\left\vert \left(  g_{1}%
,(-\Delta_{\Gamma})^{j}Y_{\ell}^{m}\right)  _{\Gamma}\right\vert
=\lambda_{\ell}^{-j}\left\vert \left(  (-\Delta_{\Gamma})^{j}g_{1},Y_{\ell
}^{m}\right)  _{\Gamma}\right\vert \overset{(\ref{eq:Delta^j})}{\leq}%
Ck^{2}\Vert{\mathbf{v}}\Vert_{\operatorname{curl},\Omega,1}\gamma_{2}%
^{2j}(\ell(\ell+1))^{-j}\max\{k,2j\}^{2j}\\
&  \leq Ck^{2}\Vert{\mathbf{v}}\Vert_{\operatorname{curl},\Omega,1}\gamma
_{2}^{2j}(\ell(\ell+1))^{-j}(2j)^{2j}\leq Ck^{2}\Vert{\mathbf{v}}%
\Vert_{\operatorname{curl},\Omega,1}\left(  (2j\gamma_{2}/\ell)^{2j\gamma
_{2}/\ell}\right)  ^{\ell/\gamma_{2}}\overset{(\ref{eq:choice-of-j}%
),(\ref{eq:xhochx})}{\leq}Ck^{2}\Vert{\mathbf{v}}\Vert_{\operatorname{curl}%
,\Omega,1}q^{\ell/\gamma_{2}}.
\end{align*}
This and (\ref{eq:foo-100}) shows the bound
(\ref{eq:lemma:ntimesLomega-analytic-14}) for $\kappa_{\ell}^{m}$.

\emph{6.~step:} We show (\ref{eq:lemma:ntimesLomega-analytic-12}). We start
with the observation
\begin{equation}
\sup_{x>0}x^{\alpha}\operatorname{e}^{-x}\leq\alpha^{\alpha}\operatorname{e}%
^{-\alpha}\qquad\forall\alpha\geq0. \label{eq:xalpha}%
\end{equation}
Then,
\begin{align*}
\sum_{\ell\geq k\gamma_{\mathcal{A},\Gamma}^{\prime}}\sum_{m\in\iota_{\ell}%
}|\kappa_{\ell}^{m}|(\ell+1)^{\alpha}  &
\overset{\text{(\ref{eq:lemma:ntimesLomega-analytic-14})}}{\leq}%
C_{\mathcal{A},\Gamma}^{\prime}k^{2}\Vert{\mathbf{v}}\Vert
_{\operatorname{curl},\Omega,1}\sum_{\ell>k\gamma_{\mathcal{A},\Gamma}%
^{\prime}}(\ell+1)^{\alpha}(2\ell+1)e^{-b\ell}\\
&  \lesssim C_{\mathcal{A},\Gamma}^{\prime}k^{2}\Vert{\mathbf{v}}%
\Vert_{\operatorname{curl},\Omega,1}\sum_{\ell=1}^{\infty}(\ell+1)^{\alpha
+1}\operatorname{e}^{-b(\ell+1)}.
\end{align*}
Upon writing
\[
(\ell+1)^{\alpha+1}\operatorname{e}^{-b(\ell+1)}=\left(  \frac{(\ell+1)b}%
{2}\right)  ^{\alpha+1}\operatorname{e}^{-b(\ell+1)/2}\left(  \frac{2}%
{b}\right)  ^{\alpha+1}\operatorname{e}^{-b(\ell+1)/2}%
\overset{(\ref{eq:xalpha})}{\leq}(\alpha+1)^{\alpha+1}\left(  \frac{2}%
{b}\right)  ^{\alpha+1}\operatorname{e}^{-b(\ell+1)/2}%
\]
we see that the infinite sum can be controlled in the desired fashion.
\endproof

%--------------------------

\subsection{Helmholtz Decomposition}

%--------------------------
The stability properties of the operators $\Pi^{\operatorname{comp}}$,
$\Pi^{\nabla}$, $\Pi^{\operatorname{comp}}_{h}$, $\Pi^{\nabla}_{h}$ and the
splittings induced by them in Definition~\ref{DefHelmDecomp} are characterized
in Lemma~\ref{LemNablaCurl} in terms of the constants $C^{\operatorname{high}%
}_{b,k}$, $C^{H,\Omega}_{k}$, $C^{\nabla,\operatorname{high}}_{b,k}$. For the
case of the unit ball $B_{1}(0)$ we show in Lemma~\ref{Lemstablesplit} that
these constants can be bounded uniformly in $k$. We furthermore track the
dependence of these constants on the cut-off parameter $\lambda>1$ that enters
the definition of $L_{\Omega}$ and $H_{\Omega}$
(cf.~Definition~\ref{DefFreqSplit}). We track the $\lambda$-dependence with
the aid of the norm
\begin{equation}
\left\Vert \mathbf{u}\right\Vert _{\operatorname*{curl},\Omega,k,\lambda
}:=\left(  k^{2}\left\Vert \mathbf{u}\right\Vert ^{2}+\frac{1}{\lambda^{2}%
}\left\Vert \operatorname*{curl}\mathbf{u}\right\Vert ^{2}\right)  ^{1/2}.
\label{eq:normcurllambda}%
\end{equation}

\begin{lemma}
[Stability of the splitting]\label{Lemstablesplit} Let $\Omega=B_{1}(0)$ and
$\lambda\geq\lambda_{0}>1$. Then there exists $C>0$ depending solely on
$\lambda_{0}$ such that the decompositions of
%Recall that $\lambda \ge \lambda_0 >  1$ enters the definition of $L_{\Omega}$
%and $H_{\Omega}$ in (\ref{defLaHa}).
$\mathbf{u}$, $\mathbf{v}\in\mathbf{X}$ as
\begin{align*}
\mathbf{u}  &  =\Pi^{\operatorname{comp}}{\mathbf{u}}+\Pi^{\nabla}H_{\Omega
}{\mathbf{u}}=(\mathbf{u}^{\operatorname*{low}}+\Pi^{\operatorname*{curl}%
}\mathbf{u}^{\operatorname*{high}})+\Pi^{\nabla}\mathbf{u}%
^{\operatorname*{high}},\\
\mathbf{v}  &  =\Pi^{\operatorname{comp},\ast}{\mathbf{v}}+\Pi^{\nabla,\ast
}H_{\Omega}{\mathbf{v}}=(\mathbf{v}^{\operatorname*{low}}+\Pi
^{\operatorname*{curl},\ast}\mathbf{v}^{\operatorname*{high}})+\Pi
^{\nabla,\ast}\mathbf{v}^{\operatorname*{high}},
\end{align*}
where $\mathbf{u}^{\operatorname*{low}}:=L_{\Omega}\mathbf{u}$, $\mathbf{u}%
^{\operatorname*{high}}:=H_{\Omega}\mathbf{u}$, $\mathbf{v}%
^{\operatorname*{low}}=L_{\Omega}\mathbf{v}$, and $\mathbf{v}%
^{\operatorname*{high}}:=H_{\Omega}\mathbf{v}$ satisfy: 
%
%subequationsjmm
\begin{subequations}
\label{msplitI}%
\begin{align}
k\left\Vert \Pi^{\operatorname*{curl}}\mathbf{u}^{\operatorname*{high}%
}\right\Vert +k\left\Vert \Pi^{\nabla}\mathbf{u}^{\operatorname*{high}%
}\right\Vert  &  \leq C\left\Vert \mathbf{u}^{\operatorname*{high}}\right\Vert
_{\operatorname*{curl},\Omega,k,\lambda}\leq C\left\Vert \mathbf{u}\right\Vert
_{\operatorname*{curl},\Omega,k}, &  & \tag{%
%TCIMACRO{\TeXButton{msplitI}{\ref{msplitI}}}%
%BeginExpansion
\ref{msplitI}%
%EndExpansion
a}\label{msplitI-a}\\
\left\Vert \operatorname*{curl}\left(  \Pi^{\operatorname*{curl}}%
\mathbf{u}^{\operatorname*{high}}\right)  \right\Vert  &  =\left\Vert
\operatorname*{curl}\mathbf{u}^{\operatorname*{high}}\right\Vert
\leq2\left\Vert \mathbf{u}\right\Vert _{\operatorname*{curl},\Omega,k}, &
\operatorname*{curl}\left(  \Pi^{\nabla}\mathbf{u}^{\operatorname*{high}%
}\right)   &  =0,\tag{%
%TCIMACRO{\TeXButton{msplitI}{\ref{msplitI}}}%
%BeginExpansion
\ref{msplitI}%
%EndExpansion
b}\label{msplitI-b}\\
\Vert\Pi^{\operatorname*{curl}}\mathbf{u}^{\operatorname*{high}}%
\Vert_{{\mathbf{H}}^{1}(\Omega)}  &  \leq C\Vert{\mathbf{u}}\Vert
_{\operatorname{curl},\Omega,k}. &  &  \tag{%
%TCIMACRO{\TeXButton{msplitI}{\ref{msplitI}}}%
%BeginExpansion
\ref{msplitI}%
%EndExpansion
c}\label{msplitI-c}%
\end{align}
Analogous estimates hold for $\Pi^{\operatorname*{curl},\ast}\mathbf{v}%
^{\operatorname*{high}}$ and $\Pi^{\nabla,\ast}\mathbf{v}%
^{\operatorname*{high}}$.
\end{subequations}
\end{lemma}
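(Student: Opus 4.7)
The proof naturally splits into the three assertions (\ref{msplitI-a})--(\ref{msplitI-c}), with only (\ref{msplitI-a}) requiring real work. Property (\ref{msplitI-b}) is immediate from $\operatorname{curl}\nabla=0$ (which kills $\Pi^{\nabla}\mathbf{u}^{\operatorname{high}}$) and the identity $\Pi^{\operatorname{curl}}+\Pi^{\nabla}=I$, combined with the stability bound $\|H_{\Omega}\mathbf{u}\|_{\operatorname{curl},\Omega,k}\le 2\|\mathbf{u}\|_{\operatorname{curl},\Omega,k}$ from (\ref{lowhighuestb}). Property (\ref{msplitI-c}) will follow once (\ref{msplitI-a}) and (\ref{msplitI-b}) are in hand: the function $\Pi^{\operatorname{curl}}\mathbf{u}^{\operatorname{high}}$ belongs to $\mathbf{V}_{0}$, so Lemma~\ref{Lemembed} together with the explicit constant $C_{\Omega,k}=1$ valid for $B_{1}(0)$ (Lemma~\ref{Lemembedspec}) turns the curl-norm bound into an $\mathbf{H}^{1}$-bound.

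The core of the argument is (\ref{msplitI-a}), and I would start from the coercivity (\ref{k+normdblescest}) applied to the gradient field $\Pi^{\nabla}\mathbf{u}^{\operatorname{high}}\in\nabla H^{1}(\Omega)$, followed by the defining orthogonality (\ref{vargammaold}) of $\Pi^{\nabla}$:
\[
\bigl(k\,\|\Pi^{\nabla}\mathbf{u}^{\operatorname{high}}\|\bigr)^{2}
\le\operatorname{Re}\bigl(\!\bigl(\Pi^{\nabla}\mathbf{u}^{\operatorname{high}},\Pi^{\nabla}\mathbf{u}^{\operatorname{high}}\bigr)\!\bigr)
=\operatorname{Re}\bigl(\!\bigl(\Pi^{\nabla}\mathbf{u}^{\operatorname{high}},\mathbf{u}^{\operatorname{high}}\bigr)\!\bigr).
\]
Unfolding $(\!(\cdot,\cdot)\!)$, the $L^{2}$-contribution $k^{2}(\Pi^{\nabla}\mathbf{u}^{\operatorname{high}},\mathbf{u}^{\operatorname{high}})$ is controlled by $(k\|\Pi^{\nabla}\mathbf{u}^{\operatorname{high}}\|)\cdot(k\|\mathbf{u}^{\operatorname{high}}\|)\le(k\|\Pi^{\nabla}\mathbf{u}^{\operatorname{high}}\|)\,\|\mathbf{u}^{\operatorname{high}}\|_{\operatorname{curl},\Omega,k,\lambda}$, as desired. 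For the boundary contribution $k\,b_{k}((\Pi^{\nabla}\mathbf{u}^{\operatorname{high}})^{\nabla},(\mathbf{u}^{\operatorname{high}})^{\nabla})$, the commutation relation (\ref{commpropfreq}) together with the fact that $L_{\Gamma}$ and $H_{\Gamma}$ act diagonally in the Hodge-type basis $\{\nabla_{\Gamma}Y_{\ell}^{m},\mathbf{T}_{\ell}^{m}\}$ gives $(\mathbf{u}^{\operatorname{high}})^{\nabla}=H_{\Gamma}(\Pi_{T}\mathbf{u})^{\nabla}$, so the second argument is purely high-frequency and $b_{k}=b_{k}^{\operatorname{high}}$ on this pair. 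Applying the sharp estimate (\ref{estbhigh}) from Proposition~\ref{Propkbk} with indices chosen to distribute the regularity asymmetrically (one $H^{-1/2}$ factor on the divergence of the gradient field, controlled through $\|\Pi^{\nabla}\mathbf{u}^{\operatorname{high}}\|$ via Poincar\'e on $H^{1}/\mathbb{R}$, and one factor that picks up $\tfrac1\lambda\|\operatorname{curl}\mathbf{u}^{\operatorname{high}}\|$) yields a bound of the form $(k\|\Pi^{\nabla}\mathbf{u}^{\operatorname{high}}\|)\cdot C\|\mathbf{u}^{\operatorname{high}}\|_{\operatorname{curl},\Omega,k,\lambda}$.

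Dividing by $k\|\Pi^{\nabla}\mathbf{u}^{\operatorname{high}}\|$ closes the estimate for the gradient piece. The companion bound on $k\|\Pi^{\operatorname{curl}}\mathbf{u}^{\operatorname{high}}\|$ then follows from the triangle inequality via $\Pi^{\operatorname{curl}}=I-\Pi^{\nabla}$ and the trivial $k\|\mathbf{u}^{\operatorname{high}}\|\le\|\mathbf{u}^{\operatorname{high}}\|_{\operatorname{curl},\Omega,k,\lambda}$. Finally $\|\mathbf{u}^{\operatorname{high}}\|_{\operatorname{curl},\Omega,k,\lambda}\le\|\mathbf{u}^{\operatorname{high}}\|_{\operatorname{curl},\Omega,k}\le 2\|\mathbf{u}\|_{\operatorname{curl},\Omega,k}$ from $\lambda\ge 1$ and (\ref{lowhighuestb}). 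The adjoint statements for $\Pi^{\operatorname{curl},\ast}$ and $\Pi^{\nabla,\ast}$ are obtained by the same argument, interchanging the arguments of $b_{k}$ and using the mirror estimates in Proposition~\ref{Propkbk}.

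The main obstacle is the careful bookkeeping in the boundary term: we must simultaneously exploit that one argument has only high-frequency tangential trace (so that $b_{k}=b_{k}^{\operatorname{high}}$ and the $1/\lambda$ factor becomes accessible via (\ref{estbhigh})), that the other argument is a gradient whose tangential trace lies purely in $\nabla_{\Gamma}H^{1/2}(\Gamma)$ with controlled divergence (Rem.~\ref{rem:decomposition-of-nabla-phi}), and that the Sobolev indices $\rho_{1},\rho_{2}$ in (\ref{estbhigh}) must be chosen so that the estimate precisely lands in the $\|\cdot\|_{\operatorname{curl},\Omega,k,\lambda}$ norm rather than the cruder $\|\cdot\|_{\operatorname{curl},\Omega,k}$ norm -- it is this $\lambda$-dependent refinement over the general bound (\ref{stabnablacurl}) that makes the present lemma delicate.
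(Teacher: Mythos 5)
Your treatment of (\ref{msplitI-b}) and (\ref{msplitI-c}) matches the paper's, and your starting point for (\ref{msplitI-a}) — coercivity of the sesquilinear form on gradient fields plus the defining orthogonality of the projector — is also the paper's. But the core step, the bound on the boundary term, does not go through as you describe. The ``asymmetric choice of $\rho_{1},\rho_{2}$'' in (\ref{estbhigh}) that would produce a $\lambda$-gain requires controlling $\Vert\operatorname{div}_{\Gamma}(\cdot)_{T}\Vert_{H^{-1/2}(\Gamma)}$ for the non-gradient argument, and this is only available when that argument lies in $\mathbf{H}^{1}(\Omega)$; the function $\mathbf{u}^{\operatorname*{high}}=H_{\Omega}\mathbf{u}$ is a generic element of $\mathbf{X}$ and is \emph{not} in $\mathbf{H}^{1}(\Omega)$. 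For a generic $\mathbf{X}$-function the only admissible index is $\rho=-3/2$, which turns the prefactor in (\ref{estbhigh}) into $k$ rather than $\lambda^{-1}$ and destroys the $k$-uniformity. The $\mathbf{H}^{1}$-regularity needed to unlock (\ref{freq_reg_H1H1a})/(\ref{freq_reg_H1H1b}) is available only for the $\mathbf{V}_{0}$-component $\mathbf{u}_{0}=\Pi^{\operatorname*{curl}}\mathbf{u}^{\operatorname*{high}}$ (via Lemma~\ref{Lemembedspec}), so one is forced to substitute $\mathbf{u}^{\operatorname*{high}}=\mathbf{u}_{0}+\nabla p$ \emph{inside} the boundary term — and then the unknown quantities $\Vert\mathbf{u}_{0}\Vert$ and $\Vert\nabla p\Vert$ appear on both sides and must be absorbed.

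Your order of estimation (gradient part first, curl part by triangle inequality) makes that absorption impossible: after the substitution, the diagonal term in your scheme is $k\,b_{k}^{\operatorname*{high}}\bigl((\nabla p)^{\nabla},(\nabla p)^{\nabla}\bigr)$, which by (\ref{bhighgradc}) is bounded only by $C_{b}^{\prime}\,(k\Vert\nabla p\Vert)^{2}$ with an $O(1)$ constant that is not small, so it cannot be absorbed into the coercive left-hand side $(k\Vert\nabla p\Vert)^{2}$. The paper therefore proceeds in the opposite order: it applies coercivity to $\mathbf{u}_{0}\in\mathbf{V}_{0}$ (using $(\!(\mathbf{u}_{0},\nabla p)\!)=0$ from (\ref{defVoa})), so that the diagonal boundary term becomes $k\,b_{k}^{\operatorname*{high}}(\mathbf{u}_{0}^{\nabla},\mathbf{u}_{0}^{\nabla})$, which carries the factor $\lambda^{-2}$ from (\ref{freq_reg_H1H1a}) and can be absorbed after a Cauchy--Schwarz with parameter and a case distinction on the size of $\lambda k$; the bound on $k\Vert\nabla p\Vert$ is then recovered last by the triangle inequality. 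Two smaller points: the orthogonality (\ref{vargammaold}) defining $\Pi^{\nabla}$ replaces the \emph{first} argument of the form, so your identity should read $(\!(\mathbf{u}^{\operatorname*{high}},\Pi^{\nabla}\mathbf{u}^{\operatorname*{high}})\!)$ rather than the transposed version (the form is not Hermitian, so this is not cosmetic); and (\ref{estbhigh}) is part of Proposition~\ref{PropFrequbest}, not Proposition~\ref{Propkbk}.
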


%

%TCIMACRO{\TeXButton{Proof}{\proof}}%
%BeginExpansion
\proof
%EndExpansion
For $\mathbf{u}\in\mathbf{X}$, choose $p\in H^{1}\left(  \Omega\right)
/\mathbb{R}$ such that $\Pi^{\nabla}\mathbf{u}^{\operatorname*{high}}=\nabla
p$, and $\mathbf{u}_{0}:=\Pi^{\operatorname*{curl}}\mathbf{u}%
^{\operatorname*{high}}\in\mathbf{V}_{0}$. A direct consequence is the second
relation in (\ref{msplitI-b}): $\operatorname*{curl}\left(  \Pi^{\nabla
}\mathbf{u}^{\operatorname*{high}}\right)  =0$. For the remaining estimates,
we first collect some simple facts about this splitting.

1) The definition of the space $\mathbf{V}_{0}^{\ast}$ implies $0=\left(
%TCIMACRO{\TeXButton{l}{\kern-.1em}}%
%BeginExpansion
\kern-.1em%
%EndExpansion
\left(
%TCIMACRO{\TeXButton{l}{\kern-.1em}}%
%BeginExpansion
\kern-.1em%
%EndExpansion
\nabla p,\mathbf{v}_{0}%
%TCIMACRO{\TeXButton{l}{\kern-.1em}}%
%BeginExpansion
\kern-.1em%
%EndExpansion
\right)
%TCIMACRO{\TeXButton{l}{\kern-.1em}}%
%BeginExpansion
\kern-.1em%
%EndExpansion
\right)  =\left(
%TCIMACRO{\TeXButton{l}{\kern-.1em}}%
%BeginExpansion
\kern-.1em%
%EndExpansion
\left(
%TCIMACRO{\TeXButton{l}{\kern-.1em}}%
%BeginExpansion
\kern-.1em%
%EndExpansion
\mathbf{u}^{\operatorname*{high}}-\mathbf{u}_{0},\mathbf{v}_{0}%
%TCIMACRO{\TeXButton{l}{\kern-.1em}}%
%BeginExpansion
\kern-.1em%
%EndExpansion
\right)
%TCIMACRO{\TeXButton{l}{\kern-.1em}}%
%BeginExpansion
\kern-.1em%
%EndExpansion
\right)  $ for all $\mathbf{v}_{0}\in\mathbf{V}_{0}^{\ast}$.

2) In Lemma~\ref{Lemembedspec}, we prove for the unit ball%
\begin{equation}
\left\Vert \mathbf{u}_{0}\right\Vert _{\mathbf{H}^{1}\left(  \Omega\right)
}\leq\left\Vert \mathbf{u}_{0}\right\Vert _{\operatorname*{curl},\Omega,1}.
\label{u0H1curl}%
\end{equation}
Together with (\ref{lowhighuest}), we obtain (\ref{msplitI-c}).

3) $\operatorname*{curl}\nabla p=0$ implies%
\begin{equation}
\operatorname*{curl}\mathbf{u}^{\operatorname*{high}}=\operatorname*{curl}%
\mathbf{u}_{0}. \label{curlid}%
\end{equation}
The combination with (\ref{lowhighuest}) leads to the first relation in
(\ref{msplitI-b}) and the subsequent estimate follows from (\ref{lowhighuest}%
). Note that (\ref{lowhighuest}) also implies the second estimate in
(\ref{msplitI-a}).

4) Since $\mathbf{u}_{0}\in\mathbf{V}_{0}$, the definition (\ref{defVo})
implies%
\begin{equation}
\operatorname{Re}\left(
%TCIMACRO{\TeXButton{l}{\kern-.1em}}%
%BeginExpansion
\kern-.1em%
%EndExpansion
\left(
%TCIMACRO{\TeXButton{l}{\kern-.1em}}%
%BeginExpansion
\kern-.1em%
%EndExpansion
\mathbf{u}_{0},\mathbf{u}_{0}%
%TCIMACRO{\TeXButton{l}{\kern-.1em}}%
%BeginExpansion
\kern-.1em%
%EndExpansion
\right)
%TCIMACRO{\TeXButton{l}{\kern-.1em}}%
%BeginExpansion
\kern-.1em%
%EndExpansion
\right)  =\operatorname{Re}\left(
%TCIMACRO{\TeXButton{l}{\kern-.1em}}%
%BeginExpansion
\kern-.1em%
%EndExpansion
\left(
%TCIMACRO{\TeXButton{l}{\kern-.1em}}%
%BeginExpansion
\kern-.1em%
%EndExpansion
\mathbf{u}_{0},\mathbf{u}^{\operatorname*{high}}%
%TCIMACRO{\TeXButton{l}{\kern-.1em}}%
%BeginExpansion
\kern-.1em%
%EndExpansion
\right)
%TCIMACRO{\TeXButton{l}{\kern-.1em}}%
%BeginExpansion
\kern-.1em%
%EndExpansion
\right)  -\operatorname{Re}\left(
%TCIMACRO{\TeXButton{l}{\kern-.1em}}%
%BeginExpansion
\kern-.1em%
%EndExpansion
\left(
%TCIMACRO{\TeXButton{l}{\kern-.1em}}%
%BeginExpansion
\kern-.1em%
%EndExpansion
\mathbf{u}_{0},\nabla p%
%TCIMACRO{\TeXButton{l}{\kern-.1em}}%
%BeginExpansion
\kern-.1em%
%EndExpansion
\right)
%TCIMACRO{\TeXButton{l}{\kern-.1em}}%
%BeginExpansion
\kern-.1em%
%EndExpansion
\right)  \overset{(\ref{defVoa})}{=}\operatorname{Re}\left(
%TCIMACRO{\TeXButton{l}{\kern-.1em}}%
%BeginExpansion
\kern-.1em%
%EndExpansion
\left(
%TCIMACRO{\TeXButton{l}{\kern-.1em}}%
%BeginExpansion
\kern-.1em%
%EndExpansion
\mathbf{u}_{0},\mathbf{u}^{\operatorname*{high}}%
%TCIMACRO{\TeXButton{l}{\kern-.1em}}%
%BeginExpansion
\kern-.1em%
%EndExpansion
\right)
%TCIMACRO{\TeXButton{l}{\kern-.1em}}%
%BeginExpansion
\kern-.1em%
%EndExpansion
\right)  . \label{uu0sub}%
\end{equation}

5) The weighted $\mathbf{L}^{2}\left(  \Omega\right)  $-norm of $\mathbf{u}%
_{0}$ can be estimated via%
\begin{align}
k^{2}\left\Vert \mathbf{u}_{0}\right\Vert ^{2}  &  \overset{\text{(\ref{Imb1}%
)}}{\leq}k^{2}\left\Vert \mathbf{u}_{0}\right\Vert ^{2}-\operatorname*{Im}%
kb_{k}\left(  \mathbf{u}_{0}^{\nabla},\mathbf{u}_{0}^{\nabla}\right)
=\operatorname{Re}\left(
%TCIMACRO{\TeXButton{l}{\kern-.1em}}%
%BeginExpansion
\kern-.1em%
%EndExpansion
\left(
%TCIMACRO{\TeXButton{l}{\kern-.1em}}%
%BeginExpansion
\kern-.1em%
%EndExpansion
\mathbf{u}_{0},\mathbf{u}_{0}%
%TCIMACRO{\TeXButton{l}{\kern-.1em}}%
%BeginExpansion
\kern-.1em%
%EndExpansion
\right)
%TCIMACRO{\TeXButton{l}{\kern-.1em}}%
%BeginExpansion
\kern-.1em%
%EndExpansion
\right)  \overset{\text{(\ref{uu0sub})}}{=}\operatorname{Re}\left(
%TCIMACRO{\TeXButton{l}{\kern-.1em}}%
%BeginExpansion
\kern-.1em%
%EndExpansion
\left(
%TCIMACRO{\TeXButton{l}{\kern-.1em}}%
%BeginExpansion
\kern-.1em%
%EndExpansion
\mathbf{u}_{0},\mathbf{u}^{\operatorname*{high}}%
%TCIMACRO{\TeXButton{l}{\kern-.1em}}%
%BeginExpansion
\kern-.1em%
%EndExpansion
\right)
%TCIMACRO{\TeXButton{l}{\kern-.1em}}%
%BeginExpansion
\kern-.1em%
%EndExpansion
\right) \label{omega2epsmue}\\
&  =\operatorname{Re}\left(  k^{2}\left(  \mathbf{u}_{0},\mathbf{u}%
^{\operatorname*{high}}\right)  +\operatorname*{i}k\left(  b_{k}%
^{\operatorname*{low}}\left(  \mathbf{u}_{0}^{\nabla},\left(  \mathbf{u}%
^{\operatorname*{high}}\right)  ^{\nabla}\right)  +b_{k}^{\operatorname*{high}%
}\left(  \mathbf{u}_{0}^{\nabla},\left(  \mathbf{u}^{\operatorname*{high}%
}\right)  ^{\nabla}\right)  \right)  \right) \nonumber\\
&  \leq\frac{1}{2}\left(  k\left\Vert \mathbf{u}_{0}\right\Vert \right)
^{2}+\frac{1}{2}\left(  k\left\Vert \mathbf{u}^{\operatorname*{high}%
}\right\Vert \right)  ^{2}+k\left\vert b_{k}^{\operatorname*{low}}\left(
\mathbf{u}_{0}^{\nabla},\left(  \mathbf{u}^{\operatorname*{high}}\right)
^{\nabla}\right)  \right\vert +k\left\vert b_{k}^{\operatorname*{high}}\left(
\mathbf{u}_{0}^{\nabla},\left(  \mathbf{u}^{\operatorname*{high}}\right)
^{\nabla}\right)  \right\vert . \label{omega2epsmue3}%
\end{align}
{}From (\ref{commpropfreq}), we conclude that $\left(  \mathbf{u}%
^{\operatorname*{high}}\right)  ^{\nabla}=\sum_{\ell>\lambda k}\sum_{m\in
\iota_{\ell}}U_{\ell}^{m}\nabla_{\Gamma}Y_{\ell}^{m}$ and it follows from the
definition of $b_{k}^{\operatorname*{low}}$ in (\ref{defblowbhigh}) that
$b_{k}^{\operatorname*{low}}\left(  \mathbf{u}_{0}^{\nabla},\left(
\mathbf{u}^{\operatorname*{high}}\right)  ^{\nabla}\right)  =0$.

Next, we estimate the last term in (\ref{omega2epsmue3}). Our decomposition
$\mathbf{u}^{\operatorname*{high}}=\mathbf{u}_{0}+\nabla p$ leads to%
\begin{equation}
k\left\vert b_{k}^{\operatorname*{high}}\left(  \mathbf{u}_{0}^{\nabla
},\left(  \mathbf{u}^{\operatorname*{high}}\right)  ^{\nabla}\right)
\right\vert \leq k\left\vert b_{k}^{\operatorname*{high}}\left(
\mathbf{u}_{0}^{\nabla},\mathbf{u}_{0}^{\nabla}\right)  \right\vert
+k\left\vert b_{k}^{\operatorname*{high}}\left(  \mathbf{u}_{0}^{\nabla
},\nabla_{\Gamma}p\right)  \right\vert . \label{0.8}%
\end{equation}
The first term can be estimated by using (\ref{freq_reg_H1H1a}):%
\begin{equation}
k\left\vert b_{k}^{\operatorname*{high}}\left(  \mathbf{u}_{0}^{\nabla
},\mathbf{u}_{0}^{\nabla}\right)  \right\vert \overset{(\ref{freq_reg_H1H1a}%
)}{\leq}\frac{C_{b}^{\prime}}{\lambda^{2}}\left\Vert \mathbf{u}_{0}\right\Vert
_{\mathbf{H}^{1}\left(  \Omega\right)  }^{2}\overset{\text{(\ref{u0H1curl}%
)}}{\leq}\frac{C_{b}^{\prime}}{\lambda^{2}}\left\Vert \mathbf{u}%
_{0}\right\Vert _{\operatorname*{curl},\Omega,1}^{2}=\frac{C_{b}^{\prime}%
}{\lambda^{2}}\left(  \left\Vert \mathbf{u}_{0}\right\Vert ^{2}+\left\Vert
\operatorname*{curl}\mathbf{u}^{\operatorname*{high}}\right\Vert ^{2}\right)
. \label{0.9}%
\end{equation}
For the second term of the right-hand side of (\ref{0.8}) we assume that $p\in
C^{\infty}\left(  \overline{\Omega}\right)  $ since the result for general
$p\in H^{1}\left(  \Omega\right)  $ follows by a density argument. We obtain%
\begin{align}
k\left\vert b_{k}^{\operatorname*{high}}\left(  \mathbf{u}_{0}^{\nabla}%
,\nabla_{\Gamma}p\right)  \right\vert  &  \overset{\text{(\ref{freq_reg_H1H1b}%
)}}{\leq}\frac{C_{b}^{\prime}}{\lambda}k\left\Vert \nabla p\right\Vert
_{\operatorname*{curl},\Omega,1}\left\Vert \mathbf{u}_{0}\right\Vert
_{\mathbf{H}^{1}\left(  \Omega\right)  }\overset{\substack{
\text{(\ref{u0H1curl})},\\ \operatorname*{curl}\nabla p=0}}{\leq}\frac
{C_{b}^{\prime}}{\lambda}k\left\Vert \nabla p\right\Vert \left(  \left\Vert
\mathbf{u}_{0}\right\Vert ^{2}+\left\Vert \operatorname*{curl}\mathbf{u}%
^{\operatorname*{high}}\right\Vert ^{2}\right)  ^{1/2}\label{0.10}\\
&  \leq\frac{C_{b}^{\prime}}{\lambda}\left(  k\left\Vert \mathbf{u}%
^{\operatorname*{high}}\right\Vert +k\left\Vert \mathbf{u}_{0}\right\Vert
\right)  \left(  \left\Vert \mathbf{u}_{0}\right\Vert +\left\Vert
\operatorname*{curl}\mathbf{u}^{\operatorname*{high}}\right\Vert \right)
.\nonumber
\end{align}
Inserting (\ref{0.9}), (\ref{0.10}) into (\ref{0.8}) and employing
Cauchy-Schwarz inequalities with $\eta>0$ leads to
\[
k\left\vert b_{k}^{\operatorname*{high}}\left(  \mathbf{u}_{0}^{\nabla
},\left(  \mathbf{u}^{\operatorname*{high}}\right)  ^{\nabla}\right)
\right\vert \leq C_{b}^{\prime}\left(  \left(  \frac{3}{2\lambda^{2}k^{2}%
}+\frac{1}{\lambda k}+\frac{\eta}{2}\right)  \left(  k\left\Vert
\mathbf{u}_{0}\right\Vert \right)  ^{2}+\left(  k\left\Vert \mathbf{u}%
^{\operatorname*{high}}\right\Vert \right)  ^{2}+\left(  \frac{3+\eta^{-1}}%
{2}\right)  \left(  \frac{\left\Vert \operatorname*{curl}\mathbf{u}%
^{\operatorname*{high}}\right\Vert }{\lambda}\right)  ^{2}\right)  .
\]
We combine this estimate with (\ref{omega2epsmue}) and absorb the first term
on the right-hand side of (\ref{omega2epsmue3}) into the left-hand side of
(\ref{omega2epsmue}) to obtain (using $\lambda k > 1$)
\[
\frac{k^{2}}{2}\left\Vert \mathbf{u}_{0}\right\Vert ^{2}\leq C_{b}^{\prime
}\left(  \frac{5}{2\lambda k}+\frac{\eta}{2}\right)  k^{2}\left\Vert
\mathbf{u}_{0}\right\Vert ^{2}+\left(  \frac{1}{2}+C_{b}^{\prime}\right)
\left(  k\left\Vert \mathbf{u}^{\operatorname*{high}}\right\Vert \right)
^{2}+C_{b}^{\prime}\left(  \frac{3+\eta^{-1}}{2}\right)  \left(
\frac{\left\Vert \operatorname*{curl}\mathbf{u}^{\operatorname*{high}%
}\right\Vert }{\lambda}\right)  ^{2}.
\]

We first consider the case $k\geq\max\left\{  1,\frac{20C_{b}^{\prime}%
}{\lambda}\right\}  $ and choose $\eta=\frac{1}{4C_{b}^{\prime}}$. This leads
to%
\begin{align}
\frac{k^{2}}{4}\left\Vert \mathbf{u}_{0}\right\Vert ^{2}  &  \leq C_{1}%
k^{2}\left\Vert \mathbf{u}^{\operatorname*{high}}\right\Vert ^{2}+\frac{C_{2}%
}{\lambda^{2}}\left\Vert \operatorname*{curl}\mathbf{u}^{\operatorname*{high}%
}\right\Vert ^{2}\label{estu0stab}\\
\text{with\quad}C_{1}  &  :=\left(  \frac{1}{2}+C_{b}^{\prime}\right)  ,\quad
C_{2}:=C_{b}^{\prime}\left(  \frac{3+4C_{b}^{\prime}}{2}\right)  .\nonumber
\end{align}
This yields the first estimate for the first term in (\ref{msplitI-a}) (for
the considered range of $k$).

For $1\leq k\leq\max\left\{  1,\frac{20C_{b}^{\prime}}{\lambda}\right\}  $, we
estimate the term $k\left\vert b_{k}^{\operatorname*{high}}\left(
\mathbf{u}_{0}^{\nabla},\left(  \mathbf{u}^{\operatorname*{high}}\right)
^{\nabla}\right)  \right\vert $ in (\ref{omega2epsmue3}) by using
(\ref{frequency_rega}) and $\eta>0$%
\begin{align*}
k\left\vert b_{k}^{\operatorname*{high}}\left(  \mathbf{u}_{0}^{\nabla
},\left(  \mathbf{u}^{\operatorname*{high}}\right)  ^{\nabla}\right)
\right\vert  &  \leq C_{b}^{\prime}\frac{k}{\lambda}\left\Vert \mathbf{u}%
_{0}\right\Vert _{\operatorname*{curl},\Omega,1}\left\Vert \mathbf{u}%
^{\operatorname*{high}}\right\Vert _{\operatorname*{curl},\Omega,1}\\
&  \leq C_{b}^{\prime}\frac{k}{2\lambda}\left(  \eta\left\Vert \mathbf{u}%
_{0}\right\Vert _{\operatorname*{curl},\Omega,1}^{2}+\frac{1}{\eta}\left\Vert
\mathbf{u}^{\operatorname*{high}}\right\Vert _{\operatorname*{curl},\Omega
,1}^{2}\right) \\
&  \overset{\text{(\ref{curlid})}}{\leq}C_{b}^{\prime}\frac{k}{2\lambda
}\left(  \eta\left\Vert \mathbf{u}_{0}\right\Vert ^{2}+\left(  \eta+\frac
{1}{\eta}\right)  \left\Vert \mathbf{u}^{\operatorname*{high}}\right\Vert
_{\operatorname*{curl},\Omega,1}^{2}\right)  .
\end{align*}
The combination of this\ estimate with (\ref{omega2epsmue}) (by taking into
account $b_{k}^{\operatorname*{low}}\left(  \mathbf{u}_{0}^{\nabla},\left(
\mathbf{u}^{\operatorname*{high}}\right)  ^{\nabla}\right)  =0$) leads to%
\begin{align*}
k^{2}\left\Vert \mathbf{u}_{0}\right\Vert ^{2}  &  \leq\frac{1}{2}\left(
k\left\Vert \mathbf{u}_{0}\right\Vert \right)  ^{2}+\frac{1}{2}\left(
k\left\Vert \mathbf{u}^{\operatorname*{high}}\right\Vert \right)  ^{2}\\
&  +C_{b}^{\prime}\frac{k}{2\lambda}\left(  \eta\left\Vert \mathbf{u}%
_{0}\right\Vert ^{2}+\left(  \eta+\frac{1}{\eta}\right)  \left\Vert
\mathbf{u}^{\operatorname*{high}}\right\Vert _{\operatorname*{curl},\Omega
,1}^{2}\right) \\
&  =\left(  \frac{1}{2}+C_{b}^{\prime}\frac{\eta}{2\lambda k}\right)  \left(
k\left\Vert \mathbf{u}_{0}\right\Vert \right)  ^{2}+\left(  \frac{1}{2}%
+C_{b}^{\prime}\frac{\eta+\eta^{-1}}{2\lambda k}\right)  k^{2}\left\Vert
\mathbf{u}^{\operatorname*{high}}\right\Vert _{\operatorname*{curl},\Omega
,1}^{2}.
\end{align*}
Recall $\lambda k\geq\lambda>1$. The choice $\eta=\frac{1}{2C_{b}^{\prime}}$
leads to%
\begin{align*}
k^{2}\left\Vert \mathbf{u}_{0}\right\Vert ^{2}  &  \leq\left(  \frac{1}%
{2}+\frac{1}{4\lambda k}\right)  \left(  k\left\Vert \mathbf{u}_{0}\right\Vert
\right)  ^{2}+\left(  \frac{1}{2}+\left(  \frac{1}{4}+\left(  C_{b}^{\prime
}\right)  ^{2}\right)  \frac{1}{\lambda k}\right)  k^{2}\left\Vert
\mathbf{u}^{\operatorname*{high}}\right\Vert _{\operatorname*{curl},\Omega
,1}^{2}\\
&  \leq\frac{3}{4}\left(  k\left\Vert \mathbf{u}_{0}\right\Vert \right)
^{2}+\left(  \frac{1}{2}+\left(  \frac{1}{4}+\left(  C_{b}^{\prime}\right)
^{2}\right)  \right)  k^{2}\left\Vert \mathbf{u}^{\operatorname*{high}%
}\right\Vert _{\operatorname*{curl},\Omega,1}^{2}.
\end{align*}
The first term on the right-hand side can be absorbed into the left-hand side.
Since $k\leq\frac{20C_{b}^{\prime}}{\lambda}$, we get%
\[
k^{2}\left\Vert \mathbf{u}_{0}\right\Vert ^{2}\leq C\left\Vert \mathbf{u}%
^{\operatorname*{high}}\right\Vert _{\operatorname*{curl},\Omega,k,\lambda
}^{2}\leq C\left\Vert \mathbf{u}\right\Vert _{\operatorname*{curl},\Omega
,k}^{2}.
\]

The $L^{2}$ estimate for $\nabla p$ follows by a triangle inequality:%
\[
k\left\Vert \nabla p\right\Vert \leq k\left(  \left\Vert \mathbf{u}%
^{\operatorname*{high}}\right\Vert +\left\Vert \mathbf{u}_{0}\right\Vert
\right)  \overset{\text{(\ref{lowhighuest}),}\ \text{(\ref{estu0stab})}}{\leq
}C_{3}\left(  k\left\Vert \mathbf{u}^{\operatorname*{high}}\right\Vert
+\lambda^{-1}\left\Vert \operatorname*{curl}\mathbf{u}^{\operatorname*{high}%
}\right\Vert \right)  \leq C_{3}^{\prime}\left\Vert \mathbf{u}%
^{\operatorname*{high}}\right\Vert _{\operatorname*{curl},\Omega,k,\lambda
}\leq C_{3}^{\prime\prime}\left\Vert \mathbf{u}\right\Vert
_{\operatorname*{curl},\Omega,k}.
\]

The estimates for $\left\Vert \Pi^{\operatorname*{curl},\ast}\mathbf{v}%
^{\operatorname*{high}}\right\Vert +k\left\Vert \Pi^{\nabla,\ast}\mathbf{v}%
^{\operatorname*{high}}\right\Vert $ are derived by repeating the arguments
above.%
%TCIMACRO{\TeXButton{End Proof}{\endproof}}%
%BeginExpansion
\endproof
%EndExpansion

By similar techniques we will prove next that if one argument in $\left(
%TCIMACRO{\TeXButton{l}{\kern-.1em}}%
%BeginExpansion
\kern-.1em%
%EndExpansion
\left(
%TCIMACRO{\TeXButton{l}{\kern-.1em}}%
%BeginExpansion
\kern-.1em%
%EndExpansion
\cdot,\cdot%
%TCIMACRO{\TeXButton{l}{\kern-.1em}}%
%BeginExpansion
\kern-.1em%
%EndExpansion
\right)
%TCIMACRO{\TeXButton{l}{\kern-.1em}}%
%BeginExpansion
\kern-.1em%
%EndExpansion
\right)  $ has only high-frequency components then we get $k$-independent
continuity estimates (cf.~also (\ref{DefCconthighk}) for the general case):

\begin{proposition}
\label{PropStabCSU} Let $\Omega= B_{1}(0)$ and $\lambda\ge\lambda_{0} > 1$.
Then there exists $\widetilde{C}_{b} > 0$ depending solely on $\lambda_{0}$
such that for all $\mathbf{u}$, $\mathbf{v}\in\mathbf{X}$
\begin{align}
\left\vert \left(
%TCIMACRO{\TeXButton{l}{\kern-.1em}}%
%BeginExpansion
\kern-.1em%
%EndExpansion
\left(
%TCIMACRO{\TeXButton{l}{\kern-.1em}}%
%BeginExpansion
\kern-.1em%
%EndExpansion
H_{\Omega}\mathbf{u},\mathbf{v}%
%TCIMACRO{\TeXButton{l}{\kern-.1em}}%
%BeginExpansion
\kern-.1em%
%EndExpansion
\right)
%TCIMACRO{\TeXButton{l}{\kern-.1em}}%
%BeginExpansion
\kern-.1em%
%EndExpansion
\right)  \right\vert +\left\vert \left(
%TCIMACRO{\TeXButton{l}{\kern-.1em}}%
%BeginExpansion
\kern-.1em%
%EndExpansion
\left(
%TCIMACRO{\TeXButton{l}{\kern-.1em}}%
%BeginExpansion
\kern-.1em%
%EndExpansion
\mathbf{u},H_{\Omega}\mathbf{v}
%TCIMACRO{\TeXButton{l}{\kern-.1em}}%
%BeginExpansion
\kern-.1em%
%EndExpansion
\right)
%TCIMACRO{\TeXButton{l}{\kern-.1em}}%
%BeginExpansion
\kern-.1em%
%EndExpansion
\right)  \right\vert  &  \leq\widetilde{C}_{b}\left\Vert \mathbf{u}\right\Vert
_{\operatorname*{curl},\Omega,k,\lambda}\left\Vert \mathbf{v}\right\Vert
_{\operatorname*{curl},\Omega,k,\lambda},\\
\left\vert \left(
%TCIMACRO{\TeXButton{l}{\kern-.1em}}%
%BeginExpansion
\kern-.1em%
%EndExpansion
\left(
%TCIMACRO{\TeXButton{l}{\kern-.1em}}%
%BeginExpansion
\kern-.1em%
%EndExpansion
\mathbf{u},\mathbf{v}%
%TCIMACRO{\TeXButton{l}{\kern-.1em}}%
%BeginExpansion
\kern-.1em%
%EndExpansion
\right)
%TCIMACRO{\TeXButton{l}{\kern-.1em}}%
%BeginExpansion
\kern-.1em%
%EndExpansion
\right)  \right\vert  &  \leq C_{\operatorname*{cont},k}\left\Vert
\mathbf{u}\right\Vert _{\operatorname*{curl},\Omega,1}\left\Vert
\mathbf{v}\right\Vert _{\operatorname*{curl},\Omega,1},
\label{lowfrequencyCSU}%
\end{align}
where $C_{\operatorname*{cont},k}\leq\widetilde{C}_{b}k^{3}$.
\end{proposition}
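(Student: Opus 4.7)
The bound (\ref{lowfrequencyCSU}) will be obtained as a direct corollary of Lemma~\ref{LemCDtN}. That lemma already gives $|((u,v))|\le C_{\operatorname{cont},k}\|u\|_{\operatorname{curl},\Omega,1}\|v\|_{\operatorname{curl},\Omega,1}$ with $C_{\operatorname{cont},k}=k^2+C_\Gamma^2 C_{\operatorname{DtN},k}k$; all that remains is to verify $C_{\operatorname{DtN},k}=O(k^2)$ for the unit ball. This I would read off from the diagonal Fourier form (\ref{DefTk}) of $T_k$: the pointwise symbol bound (\ref{znestabove}), $|z_\ell(k)+1|/k\le 1+\ell/k$, together with the termwise Cauchy--Schwarz against the norms (\ref{m1/2curldiva}), (\ref{m1/2curldivb}), produces exactly the claimed growth, so that $C_{\operatorname{cont},k}\le\widetilde C_b k^3$ with $\widetilde C_b$ depending solely on $\lambda_0$.

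For the first bound it suffices to treat $|((H_\Omega u,v))|$; the estimate for $((u,H_\Omega v))$ is obtained by interchanging arguments. The decomposition
\[
((H_\Omega u,v))=k^2(H_\Omega u,v)+\operatorname{i}k\,b_k((H_\Omega u)^\nabla,v^\nabla)
\]
will be analysed term by term. The commuting relation (\ref{commpropfreq}) combined with the mode-by-mode orthogonality of the Hodge decomposition (\ref{utbcHdecomp}) on the sphere gives $(H_\Omega u)^\nabla=H_\Gamma u^\nabla$. Since $T_k$ and $H_\Gamma$ are both diagonal in the basis $\{\overrightarrow{\operatorname{curl}_\Gamma}Y_\ell^m,\nabla_\Gamma Y_\ell^m\}$, they commute, which identifies $b_k((H_\Omega u)^\nabla,v^\nabla)=b_k^{\operatorname{high}}(u^\nabla,v^\nabla)$. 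For this high-frequency sesquilinear form I would exploit the sharpened symbol bound $k/|z_\ell(k)+1|\le C_0k/(\ell+1)$ of Lemma~\ref{Lemzlest}, which, on the subset $\ell>\lambda k$ contributing to $b_k^{\operatorname{high}}$, yields $k/(\ell+1)\le 1/\lambda$. Writing out the Fourier series and regrouping the resulting $\ell$-weighted sums by Cauchy--Schwarz allows them to be identified with the trace norms $\|\cdot\|_{-1/2,\operatorname{curl}_\Gamma}$ via (\ref{m1/2curldiva}); the continuity of the tangential trace (\ref{eq:norm-of-Pi_tau}) then gives $\|\cdot\|_{\operatorname{curl},\Omega,1}$, and finally the elementary relation $\|u\|_{\operatorname{curl},\Omega,1}\le\lambda\|u\|_{\operatorname{curl},\Omega,k,\lambda}$ is absorbed by the gain $1/\lambda$ just extracted from the cut-off, leading to a $\lambda_0$-dependent bound.

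The main obstacle will be the $L^2$ contribution $k^2(H_\Omega u,v)$, since the crude bound $k\|H_\Omega u\|\le\|H_\Omega u\|_{\operatorname{curl},\Omega,k}\le 2\|u\|_{\operatorname{curl},\Omega,k}$ loses a factor of $\lambda$ relative to what is needed. To remedy this I would write $H_\Omega u=u-L_\Omega u$, handle $k^2(u,v)$ by Cauchy--Schwarz directly (using $k\|u\|\le\|u\|_{\operatorname{curl},\Omega,k,\lambda}$), and for $k^2(L_\Omega u,v)$ invoke the strong form (\ref{eq:defLOmega-strong-a}) of the Maxwell equation satisfied by $L_\Omega u$ together with integration by parts,
\[
k^2(L_\Omega u,v)=-(\operatorname{curl}L_\Omega u,\operatorname{curl}v)+\langle\gamma_T\operatorname{curl}L_\Omega u,\Pi_T v\rangle_\Gamma .
\]
The volume term will be controlled by the a priori stability (\ref{lowhighuesta}) in combination with the weighted Cauchy--Schwarz $|(\operatorname{curl}L_\Omega u,\operatorname{curl}v)|\le\|\operatorname{curl}L_\Omega u\|\|\operatorname{curl}v\|$ and the observation that $\operatorname{curl}L_\Omega u$ only excites modes $\ell\le\lambda k$, so that its $L^2$-norm can be expanded explicitly via spherical vector waves and bounded by $\lambda\|u\|_{\operatorname{curl},\Omega,k,\lambda}$. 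For the boundary term, the Neumann trace $\gamma_T\operatorname{curl}L_\Omega u$ inherits the low-frequency support of $L_\Gamma u_T$, so that its pairing against $\Pi_T v$ reduces to a pairing with $L_\Gamma v_T$, for which the duality (\ref{eq:duality}) combined with the explicit mode-by-mode symbol estimates (again Lemma~\ref{Lemzlest}) yields the $\lambda_0$-uniform bound. Adding the three contributions closes the estimate.
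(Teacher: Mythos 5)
Your reduction of (\ref{lowfrequencyCSU}) to the bound $C_{\operatorname*{DtN},k}=O(k^{2})$, and the identification $(H_{\Omega}\mathbf{u})^{\nabla}=H_{\Gamma}\mathbf{u}^{\nabla}$ with the ensuing commutation of $T_{k}$ and $H_{\Gamma}$, are both correct and consistent with the paper. The two remaining steps, however, do not close quantitatively, and the reason is the same in both cases: you never introduce the Helmholtz decomposition $H_{\Omega}\mathbf{u}=\Pi^{\operatorname*{curl}}H_{\Omega}\mathbf{u}+\nabla p$, which is the engine of the paper's proof. For the boundary term, the only trace information available for a general $\mathbf{u}\in\mathbf{X}$ is $\mathbf{u}_{T}\in\mathbf{H}^{-1/2}_{\operatorname*{curl}}(\Gamma)$, i.e., control of $\sum_{\ell}\lambda_{\ell}^{1/2}|U_{\ell}^{m}|^{2}$. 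Since $b_{k}^{\operatorname*{high}}(\mathbf{u}^{\nabla},\mathbf{v}^{\nabla})$ is a sum of terms $\lambda_{\ell}\,\bigl|k/(z_{\ell}(k)+1)\bigr|\,|U_{\ell}^{m}||V_{\ell}^{m}|$ with $|k/(z_{\ell}(k)+1)|\sim k/(\ell+1)$ on $\ell>\lambda k$, Cauchy--Schwarz against (\ref{m1/2curldiva}) yields at best $|kb_{k}^{\operatorname*{high}}(\mathbf{u}^{\nabla},\mathbf{v}^{\nabla})|\lesssim k^{2}\|\mathbf{u}_{T}\|_{-1/2,\operatorname*{curl}_{\Gamma}}\|\mathbf{v}_{T}\|_{-1/2,\operatorname*{curl}_{\Gamma}}$ (this is (\ref{estbhigh}) with $\rho_{1}=\rho_{2}=-3/2$, and it is essentially sharp for coefficients concentrated near $\ell\approx\lambda k$). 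The single factor $\lambda^{-1}$ you extract from the cut-off cannot offset the factor $k^{2}$, let alone the additional $\lambda^{2}$ incurred when passing from $\|\cdot\|_{\operatorname*{curl},\Omega,1}$ to $\|\cdot\|_{\operatorname*{curl},\Omega,k,\lambda}$. The paper's resolution is to split $H_{\Omega}\mathbf{u}=\mathbf{u}_{0}+\nabla p$ with $\mathbf{u}_{0}\in\mathbf{V}_{0}$: the embedding $\mathbf{V}_{0}\subset\mathbf{H}^{1}(\Omega)$ (Lemma~\ref{Lemembedspec}) upgrades the trace of $\mathbf{u}_{0}$ to $\mathbf{H}^{1/2}_{T}(\Gamma)$ and buys the factor $(\lambda k)^{-2}$ that kills the $k^{2}$ (estimate (\ref{frequency_reg_2a})), while for the gradient part the natural quantity is $k\|\nabla p\|$ (estimates (\ref{bhighgrad})); the $\lambda$-uniform stability of this splitting is the separate Lemma~\ref{Lemstablesplit}.

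The $L^{2}$ term suffers from the same defect. Your integration by parts replaces $k^{2}(L_{\Omega}\mathbf{u},\mathbf{v})$ by $(\operatorname*{curl}L_{\Omega}\mathbf{u},\operatorname*{curl}\mathbf{v})$ plus a boundary pairing. Even granting your bound $\|\operatorname*{curl}L_{\Omega}\mathbf{u}\|\leq\lambda\|\mathbf{u}\|_{\operatorname*{curl},\Omega,k,\lambda}$, the other factor costs $\|\operatorname*{curl}\mathbf{v}\|\leq\lambda\|\mathbf{v}\|_{\operatorname*{curl},\Omega,k,\lambda}$, so the product is off by $\lambda^{2}$ --- exactly the loss you set out to avoid --- and the boundary term $(\gamma_{T}\operatorname*{curl}L_{\Omega}\mathbf{u},\mathbf{v}_{T})_{\Gamma}$ is a pairing of the same nature as $b_{k}^{\operatorname*{low}}$, hence not easier than what you started with. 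Here too the paper's argument runs through Lemma~\ref{Lemstablesplit}: $k\|H_{\Omega}\mathbf{u}\|\leq k\|\mathbf{u}_{0}\|+k\|\nabla p\|\leq C\|H_{\Omega}\mathbf{u}\|_{\operatorname*{curl},\Omega,k,\lambda}$, whose proof is itself an absorption argument using the coercivity (\ref{k+normdblescest}) together with the refined $b_{k}^{\operatorname*{high}}$ bounds of Proposition~\ref{Propkbk}. In short, the missing idea is the combination of the Helmholtz decomposition, the $\mathbf{H}^{1}$-regularity of $\mathbf{V}_{0}$, and Lemma~\ref{Lemstablesplit}; without these, neither of your two estimates reaches the $\lambda_{0}$-uniform constant $\widetilde{C}_{b}$.
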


%

%TCIMACRO{\TeXButton{Proof}{\proof}}%
%BeginExpansion
\proof
%EndExpansion
For $\mathbf{u}$, $\mathbf{v}\in\mathbf{X}$, write $\mathbf{u}%
^{\operatorname*{high}}:=H_{\Omega}\mathbf{u}$, $\mathbf{v}%
^{\operatorname*{high}}:=H_{\Omega}\mathbf{v}$. Choose $p$, $q\in H^{1}\left(
\Omega\right)  $ such that $\Pi^{\nabla}\mathbf{u}^{\operatorname*{high}%
}=\nabla p$, $\Pi^{\nabla,\ast}\mathbf{v}^{\operatorname*{high}}=\nabla q$ and
set $\mathbf{u}_{0}=\mathbf{u}^{\operatorname*{high}}-\nabla p$,
$\mathbf{v}_{0}=\mathbf{v}^{\operatorname*{high}}-\nabla q$. Since $\Pi
_{T}H_{\Omega}=H_{\Gamma}\Pi_{T}$ (cf.~(\ref{commpropfreq})) we have%
\[
\left\vert \left(
%TCIMACRO{\TeXButton{l}{\kern-.1em}}%
%BeginExpansion
\kern-.1em%
%EndExpansion
\left(
%TCIMACRO{\TeXButton{l}{\kern-.1em}}%
%BeginExpansion
\kern-.1em%
%EndExpansion
\mathbf{u}^{\operatorname*{high}},\mathbf{v}%
%TCIMACRO{\TeXButton{l}{\kern-.1em}}%
%BeginExpansion
\kern-.1em%
%EndExpansion
\right)
%TCIMACRO{\TeXButton{l}{\kern-.1em}}%
%BeginExpansion
\kern-.1em%
%EndExpansion
\right)  \right\vert \leq\left(  k\left\Vert \mathbf{u}^{\operatorname*{high}%
}\right\Vert \right)  \left(  k\left\Vert \mathbf{v}\right\Vert \right)
+\left\vert kb_{k}^{\operatorname*{high}}\left(  \left(  \mathbf{u}%
^{\operatorname*{high}}\right)  ^{\nabla},\mathbf{v}^{\nabla}\right)
\right\vert .
\]
For the boundary term, we get%
\begin{align}
\left\vert kb_{k}^{\operatorname*{high}}\left(  \left(  \mathbf{u}%
^{\operatorname*{high}}\right)  ^{\nabla},\mathbf{v}^{\nabla}\right)
\right\vert  &  \leq\left\vert kb_{k}^{\operatorname*{high}}\left(
\mathbf{u}_{0}^{\nabla},\mathbf{v}_{0}^{\nabla}\right)  \right\vert
+\left\vert kb_{k}^{\operatorname*{high}}\left(  \left(  \nabla p\right)
^{\nabla},\mathbf{v}_{0}^{\nabla}\right)  \right\vert \nonumber\\
&  +\left\vert kb_{k}^{\operatorname*{high}}\left(  \mathbf{u}_{0}^{\nabla
},\left(  \nabla q\right)  ^{\nabla}\right)  \right\vert +\left\vert
kb_{k}^{\operatorname*{high}}\left(  \left(  \nabla p\right)  ^{\nabla
},\left(  \nabla q\right)  ^{\nabla}\right)  \right\vert \nonumber\\
&  \overset{\text{(\ref{bhighgrad}), (\ref{frequency_reg_2a})}}{\leq}%
\frac{C_{b}^{\prime}}{\lambda^{2}}\left\Vert \mathbf{u}_{0}\right\Vert
_{\operatorname*{curl},\Omega,1}\left\Vert \mathbf{v}_{0}\right\Vert
_{\operatorname*{curl},\Omega,1}+\frac{C_{b}^{\prime}}{\lambda}\left(
k\left\Vert \nabla p\right\Vert \right)  \left\Vert \mathbf{v}_{0}\right\Vert
_{\operatorname*{curl},\Omega,1}\nonumber\\
&  \qquad+\frac{C_{b}^{\prime}}{\lambda}\left\Vert \mathbf{u}_{0}\right\Vert
_{\operatorname*{curl},\Omega,1}k\left\Vert \nabla q\right\Vert +C_{b}%
^{\prime}\left(  k\left\Vert \nabla p\right\Vert \right)  \left(  k\left\Vert
\nabla q\right\Vert \right) \nonumber\\
&  \overset{\text{(\ref{msplitI})}}{\leq}\widetilde{C}_{b}\left\Vert
\mathbf{u}\right\Vert _{\operatorname*{curl},\Omega,k,\lambda}\left\Vert
\mathbf{v}\right\Vert _{\operatorname*{curl},\Omega,k,\lambda}.
\label{estCkbk}%
\end{align}
The estimate for $((\mathbf{u},\mathbf{v}^{\operatorname*{high}}))$ follows
from the same arguments.

It remains to prove estimate (\ref{lowfrequencyCSU}). We choose $\lambda
=\lambda_{0}=O\left(  1\right)  $ in all splittings and estimates and start
with%
\[
\left\vert \left(
%TCIMACRO{\TeXButton{l}{\kern-.1em}}%
%BeginExpansion
\kern-.1em%
%EndExpansion
\left(
%TCIMACRO{\TeXButton{l}{\kern-.1em}}%
%BeginExpansion
\kern-.1em%
%EndExpansion
L_{\Omega}\mathbf{u},\mathbf{v}%
%TCIMACRO{\TeXButton{l}{\kern-.1em}}%
%BeginExpansion
\kern-.1em%
%EndExpansion
\right)
%TCIMACRO{\TeXButton{l}{\kern-.1em}}%
%BeginExpansion
\kern-.1em%
%EndExpansion
\right)  \right\vert \leq k^{2}\left\vert \left(  L_{\Omega}\mathbf{u}%
,\mathbf{v}\right)  \right\vert +\left\vert kb_{k}\left(  \left(  L_{\Omega
}\mathbf{u}\right)  ^{\nabla},\mathbf{v}^{\nabla}\right)  \right\vert
\leq\left(  k\left\Vert L_{\Omega}\mathbf{u}\right\Vert \right)  \left(
k\left\Vert \mathbf{v}\right\Vert \right)  +\left\vert kb_{k}%
^{\operatorname*{low}}\left(  \mathbf{u}^{\nabla},\mathbf{v}^{\nabla}\right)
\right\vert .
\]
We employ (\ref{blowest}) with $\rho={+}1$ to obtain%
\begin{align}
\left\vert b_{k}^{\operatorname*{low}}\left(  \mathbf{u}^{\nabla}%
,\mathbf{v}^{\nabla}\right)  \right\vert  &  \leq C_{b}k^{2}\left\Vert
\operatorname*{div}\nolimits_{\Gamma}L_{\Gamma}\mathbf{u}_{T}\right\Vert
_{H^{-3/2}\left(  \Gamma\right)  }\left\Vert \operatorname*{div}%
\nolimits_{\Gamma}L_{\Gamma}\mathbf{v}_{T}\right\Vert _{H^{-3/2}\left(
\Gamma\right)  }\nonumber\\
&  \leq Ck^{2}\left\Vert L_{\Gamma}\mathbf{u}_{T}\right\Vert _{\mathbf{H}%
^{-1/2}\left(  \Gamma\right)  }\left\Vert L_{\Gamma}\mathbf{v}_{T}\right\Vert
_{\mathbf{H}^{-1/2}\left(  \Gamma\right)  }%
\overset{(\ref{eq:LemLgammaPitu})}{\leq
}Ck^{2}\left\Vert \mathbf{u}\right\Vert _{\operatorname*{curl},\Omega
,1}\left\Vert \mathbf{v}\right\Vert _{\operatorname*{curl},\Omega,1}.
\label{estCkbklow2}%
\end{align}
Combining (\ref{estCkbk}) and (\ref{estCkbklow2}) leads to%
\begin{align*}
\left\vert kb_{k}\left(  \mathbf{u}^{\nabla},\mathbf{v}^{\nabla}\right)
\right\vert  &  \leq C\left\Vert \mathbf{u}\right\Vert _{\operatorname*{curl}%
,\Omega,k,\lambda}\left\Vert \mathbf{v}\right\Vert _{\operatorname*{curl}%
,\Omega,k,\lambda}+Ck^{3}\left\Vert \mathbf{u}\right\Vert
_{\operatorname*{curl},\Omega,1}\left\Vert \mathbf{v}\right\Vert
_{\operatorname*{curl},\Omega,1}\\
&  \leq Ck^{3}\left\Vert \mathbf{u}\right\Vert _{\operatorname*{curl}%
,\Omega,1}\left\Vert \mathbf{v}\right\Vert _{\operatorname*{curl},\Omega,1}.
\end{align*}
Taking into account the $L^{2}\left(  \Omega\right)  $ part in $\left(
%TCIMACRO{\TeXButton{l}{\kern-.1em}}%
%BeginExpansion
\kern-.1em%
%EndExpansion
\left(
%TCIMACRO{\TeXButton{l}{\kern-.1em}}%
%BeginExpansion
\kern-.1em%
%EndExpansion
\cdot,\cdot%
%TCIMACRO{\TeXButton{l}{\kern-.1em}}%
%BeginExpansion
\kern-.1em%
%EndExpansion
\right)
%TCIMACRO{\TeXButton{l}{\kern-.1em}}%
%BeginExpansion
\kern-.1em%
%EndExpansion
\right)  $ results in the estimate (\ref{lowfrequencyCSU}).%
%TCIMACRO{\TeXButton{End Proof}{\endproof}}%
%BeginExpansion
\endproof
%EndExpansion

\begin{corollary}
\label{CorConstantsSphere}For $\Omega=B_{1}\left(  0\right)  $, the constants
in (\ref{DefCDtNk}), (\ref{Cbk}), (\ref{Cbkhightot}), (\ref{defCkLHOmega}),
and (\ref{DefCconthighk}) can be estimated by%
\begin{equation}
C_{\operatorname*{DtN},k}\leq Ck^{2},\quad C_{\operatorname*{cont},k}%
\leq\widetilde{C}_{b}k^{3},\quad C_{b,k}^{\nabla,\operatorname*{high}}%
\leq\widetilde{C}_{b},\quad C_{b,k}^{\operatorname*{curl},\operatorname*{high}%
}\leq C_{b}C_{\Gamma}^{2}\text{,\quad}C_{k}^{H,\Omega}\leq2,\quad
C_{b,k}^{\operatorname*{high}}\leq2+\widetilde{C}_{b}
\label{EstConstantsSphere}%
\end{equation}
with $k$-independent constants $C$, $C_{b}$ (cf.~Prop.~\ref{PropFrequbest}),
$\widetilde{C}_{b}$ (cf.~Prop.~\ref{PropStabCSU}), and $C_{\Gamma}$.
\end{corollary}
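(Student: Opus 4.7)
\textbf{Proof plan for Corollary~\ref{CorConstantsSphere}.}
The proof is essentially a bookkeeping exercise: each of the six constants will be controlled by specializing an already-established bound to the case $\Omega=B_1(0)$, with the single nontrivial computation being the DtN bound. I would proceed in the order in which the results compound on each other.

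\emph{Step 1 (the stability constants $C_k^{H,\Omega}$ and $C_{\operatorname{cont},k}$).} The bound $C_k^{H,\Omega}\le 2$ is exactly (\ref{lowhighuest}b), itself a direct consequence of the a priori estimate $\|L_\Omega\mathbf{u}\|_{\operatorname{curl},\Omega,k}\le\|\mathbf{u}\|_{\operatorname{curl},\Omega,k}$ established in Theorem~\ref{TheoLau} together with the triangle inequality. The bound $C_{\operatorname{cont},k}\le\widetilde{C}_b k^3$ is precisely (\ref{lowfrequencyCSU}) from Proposition~\ref{PropStabCSU}.

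\emph{Step 2 (the high-frequency continuity constants $C_{b,k}^{\nabla,\operatorname{high}}$ and $C_{b,k}^{\operatorname{curl},\operatorname{high}}$).} Using $\Pi_T H_\Omega=H_\Gamma\Pi_T$ from (\ref{commpropfreq}), I identify $(H_\Omega\mathbf{v})^\nabla=H_\Gamma\mathbf{v}^\nabla$ so that $k\,b_k(\mathbf{u}^\nabla,(H_\Omega\mathbf{v})^\nabla)=k\,b_k^{\operatorname{high}}(\mathbf{u}^\nabla,\mathbf{v}^\nabla)$, and likewise for the curl-part. For $C_{b,k}^{\nabla,\operatorname{high}}$, I then use the identity $((\mathbf{u},H_\Omega\mathbf{v}))=k^2(\mathbf{u},H_\Omega\mathbf{v})+\operatorname{i}k\,b_k^{\operatorname{high}}(\mathbf{u}^\nabla,\mathbf{v}^\nabla)$ together with the refined continuity (\ref{eq:intro-refined-continuity-(())}) of Proposition~\ref{PropStabCSU} and the bound $\|\cdot\|_{\operatorname{curl},\Omega,k,\lambda}\le\|\cdot\|_{\operatorname{curl},\Omega,k}$ (for $\lambda\ge 1$) to conclude
\[
k|b_k^{\operatorname{high}}(\mathbf{u}^\nabla,\mathbf{v}^\nabla)|\le|((\mathbf{u},H_\Omega\mathbf{v}))|+k^2|(\mathbf{u},H_\Omega\mathbf{v})|\le(\widetilde{C}_b+C_k^{H,\Omega})\|\mathbf{u}\|_{\operatorname{curl},\Omega,k}\|\mathbf{v}\|_{\operatorname{curl},\Omega,k},
\]
which gives the stated $k$-independent bound after possibly enlarging $\widetilde{C}_b$. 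For $C_{b,k}^{\operatorname{curl},\operatorname{high}}$ I apply the second estimate in (\ref{blowest}), note that the seminorm $\|\operatorname{curl}_\Gamma\mathbf{v}_T\|_{H^{-1/2}(\Gamma)}$ is dominated by $\|\mathbf{v}_T\|_{-1/2,\operatorname{curl}_\Gamma}$ by comparison of Fourier expansions (\ref{m1/2curldiva}) and (\ref{divscalednorm}), and finally bound the tangential trace via (\ref{eq:norm-of-Pi_tau}) to pick up the factor $C_\Gamma^2$. The factor $1/k$ on the right of (\ref{blowest}) cancels the $k$ in the numerator of (\ref{defCbkhighcurl}).

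\emph{Step 3 (the DtN bound $C_{\operatorname{DtN},k}\le Ck^2$).} This is the only estimate requiring explicit work; it will proceed by direct Fourier-series computation. Using (\ref{DefTk}), if $\mathbf{u}_T=\sum_{\ell,m}(u_\ell^m\mathbf{T}_\ell^m+U_\ell^m\nabla_\Gamma Y_\ell^m)$, then (\ref{m1/2curldivb}) gives
\[
\|T_k\mathbf{u}_T\|_{-1/2,\operatorname{div}_\Gamma}^2=\sum_{\ell\ge 1}\lambda_\ell^{1/2}\sum_{m\in\iota_\ell}\Bigl(\tfrac{|z_\ell(k)+1|^2}{k^2}|u_\ell^m|^2+(1+\lambda_\ell)\tfrac{k^2}{|z_\ell(k)+1|^2}|U_\ell^m|^2\Bigr),
\]
which I will compare termwise with (\ref{m1/2curldiva}). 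For the $\mathbf{T}_\ell^m$ contribution I use (\ref{znestabove}) to obtain $|z_\ell(k)+1|^2\le(k+\ell)^2\le 2k^2(1+\lambda_\ell)$ since $\ell^2\le\lambda_\ell$, which already yields a $k$-independent ratio. For the gradient contribution I need an upper bound on $1/|z_\ell(k)+1|^2$: the first line of (\ref{estk/z+1}) yields $|z_\ell(k)+1|\ge 1/(2\sqrt{2})$ for all $\ell$, giving $(1+\lambda_\ell)/|z_\ell(k)+1|^2\le 8(1+\lambda_\ell)\le C k^2$ in the regime $\ell\le\lambda k$, while the third line of (\ref{estk/z+1}) yields $(1+\lambda_\ell)/|z_\ell(k)+1|^2\le 2C_0^2$ in the regime $\ell\ge\lambda k$ (using $(1+\ell(\ell+1))/(\ell+1)^2\le 2$). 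Combining the two regimes gives $k^2(1+\lambda_\ell)/|z_\ell(k)+1|^2\le C k^4$, which delivers $\|T_k\mathbf{u}_T\|_{-1/2,\operatorname{div}_\Gamma}\le Ck^2\|\mathbf{u}_T\|_{-1/2,\operatorname{curl}_\Gamma}$.

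\emph{Step 4 (the high-frequency constant $C_{b,k}^{\operatorname{high}}$).} This is immediate from its definition $C_{b,k}^{\operatorname{high}}=C_k^{H,\Omega}+C_{b,k}^{\nabla,\operatorname{high}}$ in (\ref{DefCconthighk}) combined with Steps~1 and 2. The main obstacle, to the extent there is one, is keeping track of the interplay between the upper and lower bounds on $|z_\ell(k)+1|$ in Step~3 across the two frequency regimes $\ell\lessgtr\lambda k$; all remaining estimates are direct applications of Propositions~\ref{PropFrequbest}, \ref{Propkbk}, and \ref{PropStabCSU}.
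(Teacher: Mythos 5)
Your proposal is correct, and four of the six bounds ($C_{\operatorname{cont},k}$, $C_{b,k}^{\operatorname{curl},\operatorname{high}}$, $C_k^{H,\Omega}$, $C_{b,k}^{\operatorname{high}}$) are obtained exactly as in the paper. The two points where you deviate are both legitimate. First, for $C_{\operatorname{DtN},k}\le Ck^2$ the paper does not recompute the symbol: it bounds $|(T_k\mathbf{u}_T,\mathbf{v}_T)_\Gamma|=|b_k(\mathbf{u}_T,\mathbf{v}_T)|$ by combining the curl-part estimate in (\ref{blowest}) with the gradient-part estimate (\ref{bkdaechleest}) and then invokes the duality (\ref{eq:duality}); your direct termwise comparison of $\|T_k\mathbf{u}_T\|_{-1/2,\operatorname{div}_\Gamma}$ with $\|\mathbf{u}_T\|_{-1/2,\operatorname{curl}_\Gamma}$ unfolds the same two-regime symbol estimates of Lemma~\ref{Lemzlest} and arrives at the same $O(k^4)$ worst-case ratio (gradient part, $\ell\le\lambda k$), so the two routes are equivalent in substance; yours is self-contained, the paper's reuses already-proved displays. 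Second, for $C_{b,k}^{\nabla,\operatorname{high}}$ the paper cites the intermediate display (\ref{estCkbk}) from the \emph{proof} of Proposition~\ref{PropStabCSU}, which bounds $k|b_k^{\operatorname{high}}(\mathbf{u}^\nabla,\mathbf{v}^\nabla)|$ directly, whereas you subtract the $k^2(\mathbf{u},H_\Omega\mathbf{v})$ term from the \emph{conclusion} of that proposition. This is worth flagging because of a near-circularity: the constant governing $|((\mathbf{u},H_\Omega\mathbf{v}))|$ in Lemma~\ref{LemCDtN} is \emph{defined} as $C_k^{H,\Omega}+C_{b,k}^{\nabla,\operatorname{high}}$, so one must not derive $C_{b,k}^{\nabla,\operatorname{high}}$ from (\ref{DefCconthighk}); your argument avoids this because Proposition~\ref{PropStabCSU} is proved independently of (\ref{DefCconthighk}), but it costs you an additive $C_k^{H,\Omega}\le 2$ in the constant (harmless, as you note, after enlarging $\widetilde C_b$). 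Everything else — including the need for $\|\cdot\|_{\operatorname{curl},\Omega,k,\lambda}\le\|\cdot\|_{\operatorname{curl},\Omega,k}$ and $\|\cdot\|_{\operatorname{curl},\Omega,1}\le\|\cdot\|_{\operatorname{curl},\Omega,k}$ — is handled correctly.
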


%

%TCIMACRO{\TeXButton{Proof}{\proof}}%
%BeginExpansion
\proof
%EndExpansion
The estimate of $C_{\operatorname*{DtN},k}$ follows by combining
(\ref{blowest}) and (\ref{bkdaechleest}). Proposition~\ref{PropStabCSU}
implies the bound for $C_{\operatorname*{cont},k}$. Estimate (\ref{estCkbk})
implies the estimate of $C_{b,k}^{\nabla,\operatorname*{high}}$ as in
(\ref{defCbkhigh}). For $C_{b,k}^{\operatorname*{curl},\operatorname*{high}}$
we use (\ref{blowest}) to obtain%
\begin{align*}
k\left\vert b_{k}\left(  \mathbf{u}^{\operatorname*{curl}},\left(
\mathbf{v}^{\operatorname*{high}}\right)  ^{\operatorname*{curl}}\right)
\right\vert  &  =k\left\vert b_{k}^{\operatorname*{high}}\left(
\mathbf{u}^{\operatorname*{curl}},\mathbf{v}^{\operatorname*{curl}}\right)
\right\vert \leq C_{b}\left\Vert \operatorname*{curl}\nolimits_{\Gamma
}\mathbf{u}_{T}\right\Vert _{H^{-1/2}\left(  \Gamma\right)  }\left\Vert
\operatorname*{curl}\nolimits_{\Gamma}\mathbf{v}_{T}\right\Vert _{H^{-1/2}%
\left(  \Gamma\right)  }\\
&  \leq C_{b} C_{\Gamma}^{2} \|{\mathbf{u}}\|_{\operatorname{curl},\Omega,1}
\|{\mathbf{v}}\|_{\operatorname{curl},\Omega,1}%
\end{align*}
so that the estimate for $C_{b,k}^{\operatorname*{curl},\operatorname*{high}}$
is shown. Finally, $C_{k}^{H,\Omega}\leq2$ is proved in (\ref{lowhighuestb})
and the estimate of $C_{b}^{\operatorname*{high}}=C_{k}^{H,\Omega}%
+C_{b,k}^{\nabla,\operatorname*{high}}$ follows by combining the previous
estimates.%
%TCIMACRO{\TeXButton{End Proof}{\endproof}}%
%BeginExpansion
\endproof
%EndExpansion

\section{Estimating the Terms in the Splitting (\ref{gammamainsplittot}b,c) of
$\left(
%TCIMACRO{\TeXButton{l}{\kern-.1em}}%
%BeginExpansion
\kern-.1em%
%EndExpansion
\left(
%TCIMACRO{\TeXButton{l}{\kern-.1em}}%
%BeginExpansion
\kern-.1em%
%EndExpansion
\mathbf{e}_{h},\mathbf{v}_{h}%
%TCIMACRO{\TeXButton{l}{\kern-.1em}}%
%BeginExpansion
\kern-.1em%
%EndExpansion
\right)
%TCIMACRO{\TeXButton{l}{\kern-.1em}}%
%BeginExpansion
\kern-.1em%
%EndExpansion
\right)  $\label{SecSplitting}}

\subsection{Estimating $\left(
%TCIMACRO{\TeXButton{l}{\kern-.1em}}%
%BeginExpansion
\kern-.1em%
%EndExpansion
\left(
%TCIMACRO{\TeXButton{l}{\kern-.1em}}%
%BeginExpansion
\kern-.1em%
%EndExpansion
\mathbf{e}_{h},\left(  \left(  \Pi_{h}^{\operatorname*{comp},\ast}%
-\Pi^{\operatorname*{comp},\ast}\right)  \mathbf{v}_{h}\right)
^{\operatorname*{high}}%
%TCIMACRO{\TeXButton{l}{\kern-.1em}}%
%BeginExpansion
\kern-.1em%
%EndExpansion
\right)
%TCIMACRO{\TeXButton{l}{\kern-.1em}}%
%BeginExpansion
\kern-.1em%
%EndExpansion
\right)  $ in (\ref{gammamainsplittot}b,c)\label{SecHOmega}}

In this section, we will prove the following Proposition~\ref{PropHOmegaSplit}%
. Recall the definition of $\tilde{\eta}_{4}^{\exp}$, $\eta_{6}%
^{\operatorname{alg}}$, $\tilde{\eta}_{7}^{\exp}$ in (\ref{DefEta5New}),
(\ref{PiEhc}), (\ref{Defeta7}), which involve the operator $\Pi_{h}^{E}$ as in
Assumption \ref{AdiscSp}.

\begin{proposition}
\label{PropHOmegaSplit}Let $\mathbf{e}_{h}=\mathbf{E}-\mathbf{E}_{h}$ denote
the Galerkin error and for $\mathbf{v}_{h}\in\mathbf{X}_{h}$ let $\Pi
_{h}^{\operatorname*{comp},\ast}$, $\Pi^{\operatorname*{comp},\ast}$ be
defined as in Definition \ref{DefHelmDecomp}. Let Assumption \ref{AdiscSp} be
satisfied. Then%
\begin{equation}
\left\vert \left(
%TCIMACRO{\TeXButton{l}{\kern-.1em}}%
%BeginExpansion
\kern-.1em%
%EndExpansion
\left(
%TCIMACRO{\TeXButton{l}{\kern-.1em}}%
%BeginExpansion
\kern-.1em%
%EndExpansion
\mathbf{e}_{h},\left(  \left(  \Pi^{\operatorname*{comp},\ast}-\Pi
_{h}^{\operatorname*{comp},\ast}\right)  \mathbf{v}_{h}\right)
^{\operatorname*{high}}%
%TCIMACRO{\TeXButton{l}{\kern-.1em}}%
%BeginExpansion
\kern-.1em%
%EndExpansion
\right)
%TCIMACRO{\TeXButton{l}{\kern-.1em}}%
%BeginExpansion
\kern-.1em%
%EndExpansion
\right)  \right\vert \leq C_{b,k}^{\operatorname*{high}}C_{r,k}\left\Vert
\mathbf{e}_{h}\right\Vert _{\operatorname*{curl},\Omega,k}\left\Vert
\mathbf{v}_{h}\right\Vert _{\operatorname*{curl},\Omega,k}.
\end{equation}
with%
\begin{equation}
C_{r,k}:=\left(  C_{b,k}^{\operatorname*{high}}+\frac{C_{\operatorname*{cont}%
,k}}{k^{2}}\tilde{\eta}_{4}^{\exp}\right)  \left(  \tilde{\eta}_{7}^{\exp
}+C_{\#,k}\eta_{6}^{\operatorname{alg}}\right)  \quad\text{and\quad}%
C_{\#,k}:=\left(  C_{k}^{H,\Omega}+C_{b,k}^{\nabla,\operatorname*{high}%
}\right)  C_{\Omega,k}. \label{defCrk}%
\end{equation}
The constant $C_{b,k}^{\operatorname*{high}}$ is as in (\ref{DefCconthighk}),
$C_{\operatorname*{cont},k}$ as in (\ref{Cbk}), and $C_{\Omega,k}$ as in
(\ref{DefCOmegak}).

For the case $\Omega=B_{1}\left(  0\right)  $ we have $C_{\operatorname*{cont}%
,k}\leq Ck^{3}$ while $C_{b,k}^{\operatorname{high}}$, $C_{r,k}$,
$\Omega_{\Omega,k}$ and $C_{\#,k}$ are bounded independently of $k$.
\end{proposition}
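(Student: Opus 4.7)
\textbf{Plan of proof for Proposition~\ref{PropHOmegaSplit}.}
Set $\mathbf{r} := (\Pi^{\operatorname*{comp},\ast} - \Pi_h^{\operatorname*{comp},\ast})\mathbf{v}_h$. I first establish the structural properties of $\mathbf{r}$. Since the two decompositions (\ref{HelmDecompContb}), (\ref{HelmDecompDiscrete}) share the summand $L_\Omega\mathbf{v}_h$, I can write $\mathbf{r} = (\Pi^{\nabla,\ast} - \Pi_h^{\nabla,\ast})H_\Omega\mathbf{v}_h$, which is a gradient; in particular $\operatorname{curl}\mathbf{r}=0$ and hence $\|\mathbf{r}\|_{\operatorname{curl},\Omega,k} = k\|\mathbf{r}\|$. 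Using the refined continuity bound (\ref{DefCconthighk}) with $\mathbf{v} := \mathbf{r}$ gives
\[
\bigl|\left(\kern-.1em\left(\kern-.1em\mathbf{e}_h, H_\Omega\mathbf{r}\kern-.1em\right)\kern-.1em\right)\bigr|
 \leq C_{b,k}^{\operatorname*{high}} \|\mathbf{e}_h\|_{\operatorname*{curl},\Omega,k}\, k\|\mathbf{r}\|,
\]
so it remains to prove $k\|\mathbf{r}\| \leq C_{r,k}\|\mathbf{v}_h\|_{\operatorname*{curl},\Omega,k}$.

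The key algebraic identity is the discrete counterpart of step (\ref{item:intro-step-3}) in the introduction, adapted to $\Pi^{\operatorname{comp},\ast}$: using that $\operatorname{curl}\Pi^{\operatorname{comp},\ast}\mathbf{v}_h = \operatorname{curl}\Pi_h^{\operatorname{comp},\ast}\mathbf{v}_h = \operatorname{curl}\mathbf{v}_h$ together with the commuting diagram property $\operatorname{curl}\Pi_h^E = \Pi_h^F\operatorname{curl}$ and the projector property $\Pi_h^E \mathbf{v}_h = \mathbf{v}_h$, one verifies $\operatorname{curl}(\Pi_h^{\operatorname{comp},\ast}\mathbf{v}_h - \Pi_h^E\Pi^{\operatorname{comp},\ast}\mathbf{v}_h) = 0$, so the exact sequence property (\ref{esp}) yields $\psi_h \in S_h$ with $\Pi_h^{\operatorname{comp},\ast}\mathbf{v}_h - \Pi_h^E\Pi^{\operatorname{comp},\ast}\mathbf{v}_h = \nabla\psi_h$. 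Therefore
\[
\mathbf{r} = \nabla\psi_h - (I-\Pi_h^E)\Pi^{\operatorname{comp},\ast}\mathbf{v}_h.
\]
Invoking (\ref{k+normdblescest}) (applicable because $\mathbf{r}$ is a gradient) and using $\mathbf{r}=(\Pi^{\nabla,\ast}-\Pi_h^{\nabla,\ast})H_\Omega\mathbf{v}_h$ together with the defining properties (\ref{vargammaoldadj}) of $\Pi^{\nabla,\ast}$ and $\Pi_h^{\nabla,\ast}$ tested against $\nabla\psi_h \in \nabla S_h \subset \nabla H^1(\Omega)$, one gets $\left(\kern-.1em\left(\kern-.1em\nabla\psi_h,\mathbf{r}\kern-.1em\right)\kern-.1em\right)=0$, and so
\[
(k\|\mathbf{r}\|)^2 \leq \operatorname{Re}\left(\kern-.1em\left(\kern-.1em\mathbf{r},\mathbf{r}\kern-.1em\right)\kern-.1em\right)
 = -\operatorname{Re}\left(\kern-.1em\left(\kern-.1em(I-\Pi_h^E)\Pi^{\operatorname{comp},\ast}\mathbf{v}_h,\mathbf{r}\kern-.1em\right)\kern-.1em\right).
\]

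To handle this remaining sesquilinear expression I split the first argument as $H_\Omega \bm{\rho} + L_\Omega\bm{\rho}$ with $\bm{\rho}:=(I-\Pi_h^E)\Pi^{\operatorname{comp},\ast}\mathbf{v}_h$. For the high-frequency part I apply (\ref{DefCconthighk}) once more; since a quick computation analogous to (\ref{eq:intro-2000}) shows $\operatorname{curl}\bm{\rho}=0$, this contributes $C_{b,k}^{\operatorname{high}}(k\|\bm{\rho}\|)(k\|\mathbf{r}\|)$. For the low-frequency part I invoke a duality argument based on $\mathcal{N}_4^{\mathcal{A}}$ (cf.~(\ref{graddoppelKlammer})): since $\mathbf{r}$ is a gradient, the orthogonality (\ref{eq:intro-200}) restricted to $\nabla S_h$ allows one to subtract an arbitrary $\psi_h \in S_h$ from the dual solution $\mathcal{N}_4^{\mathcal{A}}\bm{\rho}$, and combining this with the continuity bound $C_{\operatorname{cont},k}$ from (\ref{Cbk}) and the approximation property $\tilde\eta_4^{\exp}$ from (\ref{DefEta5New}) produces the contribution $(C_{\operatorname{cont},k}/k^2)\tilde\eta_4^{\exp}(k\|\bm{\rho}\|)(k\|\mathbf{r}\|)$ (the $k^{-2}$ arising from rewriting $\Vert\cdot\Vert_{\operatorname{curl},\Omega,k}$-norms of gradients as $k\Vert\cdot\Vert$). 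Dividing by $k\|\mathbf{r}\|$, I obtain
\[
k\|\mathbf{r}\| \lesssim \Bigl(C_{b,k}^{\operatorname{high}} + \tfrac{C_{\operatorname{cont},k}}{k^2}\tilde\eta_4^{\exp}\Bigr)\, k\|\bm{\rho}\|.
\]

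The remaining step is to estimate $k\|\bm{\rho}\| = k\|(I-\Pi_h^E)\Pi^{\operatorname{comp},\ast}\mathbf{v}_h\|$, splitting $\Pi^{\operatorname{comp},\ast}\mathbf{v}_h = L_\Omega\mathbf{v}_h + \Pi^{\operatorname{curl},\ast}H_\Omega\mathbf{v}_h$ (cf.~(\ref{HelmDecompContb})). The first contribution $k\|(I-\Pi_h^E)L_\Omega\mathbf{v}_h\|$ is controlled by $\tilde\eta_7^{\exp}\|\mathbf{v}_h\|_{\operatorname{curl},\Omega,k}$ directly from (\ref{Defeta7}). For the second contribution I use that $\Pi^{\operatorname{curl},\ast}H_\Omega\mathbf{v}_h \in \mathbf{V}_0^\ast$ and that its curl equals $\operatorname{curl} H_\Omega\mathbf{v}_h \in \operatorname{curl}\mathbf{X}_h$ (since $\operatorname{curl} L_\Omega\mathbf{v}_h$ can be recovered from it and $\mathbf{v}_h \in \mathbf{X}_h$)—so (\ref{PiEhc}) applies and yields
\[
k\|(I-\Pi_h^E)\Pi^{\operatorname{curl},\ast}H_\Omega\mathbf{v}_h\| \leq \eta_6^{\operatorname{alg}}\|\Pi^{\operatorname{curl},\ast}H_\Omega\mathbf{v}_h\|_{\mathbf{H}^1(\Omega)};
\]
combining the embedding $\mathbf{V}_0^\ast \subset \mathbf{H}^1(\Omega)$ with constant $C_{\Omega,k}$ (Lemma~\ref{Lemembed}) and the stability bound (\ref{stabnablacurlcurl}) for $\Pi^{\operatorname{curl},\ast}H_\Omega$ then gives the factor $C_{\#,k}\eta_6^{\operatorname{alg}}$. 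Collecting all contributions yields the claimed bound with $C_{r,k}$ as in (\ref{defCrk}). The final assertion for $\Omega=B_1(0)$ follows by inserting the $k$-uniform constants from Corollary~\ref{CorConstantsSphere} together with $C_{\operatorname{cont},k}=O(k^3)$ and $\tilde\eta_4^{\exp}$ being offset by this growth when the scale resolution condition is in force. The main technical obstacle is step five—the duality argument for the low-frequency remainder—where one must carefully exploit that only gradients are tested, so that the $\mathcal{N}_4^{\mathcal{A}}$-problem (which is of Poisson type) is the correct adjoint problem and the analyticity of its solution can be used via $\tilde\eta_4^{\exp}$ rather than the weaker $\tilde\eta_3^{\exp}$.
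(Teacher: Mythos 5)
Your argument is correct and follows the paper's own proof essentially step for step: the same curl-free identities for $\mathbf{r}$ and $\mathbf{q}=(I-\Pi_h^E)\Pi^{\operatorname*{comp},\ast}\mathbf{v}_h$, the same coercivity-plus-orthogonality reduction to $k\Vert\mathbf{r}\Vert$, the same high/low frequency split of $\mathbf{q}$ treated by (\ref{DefCconthighk}) and the $\mathcal{N}_4^{\mathcal{A}}$-duality with $\tilde{\eta}_4^{\exp}$, and the same final bound on $k\Vert\mathbf{q}\Vert$ via $\tilde{\eta}_7^{\exp}$ and $C_{\#,k}\eta_6^{\operatorname{alg}}$. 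The only blemishes are an immaterial sign flip in your identity $\mathbf{r}=(\Pi^{\nabla,\ast}-\Pi_h^{\nabla,\ast})H_\Omega\mathbf{v}_h$ and a garbled parenthetical justifying $\operatorname{curl}\Pi^{\operatorname{curl},\ast}H_\Omega\mathbf{v}_h\in\operatorname{curl}\mathbf{X}_h$, a point the paper itself also leaves implicit.
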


%

%TCIMACRO{\TeXButton{Proof}{\proof}}%
%BeginExpansion
\proof
%EndExpansion
From (\ref{decoecheck1}) we conclude
\begin{equation}
\left.
\begin{array}
[c]{ll}
& \operatorname*{curl}\Pi_{h}^{\operatorname*{comp},\ast}\mathbf{v}%
_{h}=\operatorname*{curl}\Pi^{\operatorname*{comp},\ast}\mathbf{v}%
_{h}=\operatorname*{curl}\mathbf{v}_{h}\\
\text{and } & \operatorname*{curl}\Pi_{h}^{\operatorname*{curl},\ast}%
H_{\Omega}\mathbf{v}_{h}=\operatorname*{curl}\Pi^{\operatorname*{curl},\ast
}H_{\Omega}\mathbf{v}_{h}=\operatorname*{curl}H_{\Omega}\mathbf{v}_{h}%
\end{array}
\right\}  \quad\forall\mathbf{v}_{h}\in\mathbf{X}_{h}. \label{curlpropHcomp}%
\end{equation}
Let $\mathbf{r:=}\left(  \Pi^{\operatorname*{comp},\ast}-\Pi_{h}%
^{\operatorname*{comp},\ast}\right)  \mathbf{v}_{h}$ and let $\mathbf{q:=}%
\left(  I-\Pi_{h}^{E}\right)  \Pi^{\operatorname*{comp},\ast}\mathbf{v}_{h}$.
First we prove some curl-free properties. It holds%
\begin{align}
\operatorname*{curl}\left(  \Pi_{h}^{E}\Pi^{\operatorname*{comp},\ast}-\Pi
_{h}^{\operatorname*{comp},\ast}\right)  \mathbf{v}_{h}  &
\overset{\text{(\ref{decoecheck1}), Ass.~\ref{AdiscSp}}}{=}%
\operatorname*{curl}\left(  \Pi_{h}^{E}-I\right)  L_{\Omega}\mathbf{v}%
_{h}+\left(  \Pi_{h}^{F}\operatorname*{curl}\Pi^{\operatorname*{curl},\ast
}-\operatorname*{curl}\Pi_{h}^{\operatorname*{curl},\ast}\right)  H_{\Omega
}\mathbf{v}_{h}\label{curldiffzero}\\
&  =\operatorname*{curl}\left(  \Pi_{h}^{E}-I\right)  L_{\Omega}\mathbf{v}%
_{h}+\Pi_{h}^{F}\operatorname*{curl}H_{\Omega}\mathbf{v}_{h}%
-\operatorname*{curl}H_{\Omega}\mathbf{v}_{h}\nonumber\\
&  =\operatorname*{curl}\left(  \Pi_{h}^{E}-I\right)  L_{\Omega}\mathbf{v}%
_{h}+\operatorname*{curl}\left(  \Pi_{h}^{E}-I\right)  H_{\Omega}%
\mathbf{v}_{h}=\operatorname*{curl}\left(  \Pi_{h}^{E}-I\right)
\mathbf{v}_{h}\nonumber\\
&  \overset{\text{Ass.~\ref{AdiscSp}(a)}}{=}\operatorname*{curl}\left(
\mathbf{v}_{h}-\mathbf{v}_{h}\right)  =0,\nonumber
\end{align}
and also
\begin{subequations}
\label{curlqzero}
\begin{align}
&  \operatorname*{curl}\mathbf{r}\overset{\text{(\ref{curlpropHcomp})}%
}{=}0,\label{curlqzero-a}\\
&  \operatorname*{curl}\mathbf{q=}\operatorname*{curl}\left(  \Pi
^{\operatorname*{comp},\ast}-\Pi_{h}^{E}\Pi^{\operatorname*{comp},\ast
}\right)  \mathbf{v}_{h}\overset{\text{(\ref{curlpropHcomp})}}{=}%
\operatorname*{curl}\left(  \Pi_{h}^{\operatorname*{comp},\ast}-\Pi_{h}^{E}%
\Pi^{\operatorname*{comp},\ast}\right)  \mathbf{v}_{h}%
\overset{\text{(\ref{curldiffzero})}}{=}0. \label{curlqzero-b}%
\end{align}
\end{subequations}

We start our estimate with a continuity bound for the sesquilinear form
$\left(
%TCIMACRO{\TeXButton{l}{\kern-.1em}}%
%BeginExpansion
\kern-.1em%
%EndExpansion
\left(
%TCIMACRO{\TeXButton{l}{\kern-.1em}}%
%BeginExpansion
\kern-.1em%
%EndExpansion
\cdot,H_{\Omega}\cdot%
%TCIMACRO{\TeXButton{l}{\kern-.1em}}%
%BeginExpansion
\kern-.1em%
%EndExpansion
\right)
%TCIMACRO{\TeXButton{l}{\kern-.1em}}%
%BeginExpansion
\kern-.1em%
%EndExpansion
\right)  $ and employ (\ref{DefCconthighk}) to get%
\begin{equation}
\left\vert \left(
%TCIMACRO{\TeXButton{l}{\kern-.1em}}%
%BeginExpansion
\kern-.1em%
%EndExpansion
\left(
%TCIMACRO{\TeXButton{l}{\kern-.1em}}%
%BeginExpansion
\kern-.1em%
%EndExpansion
\mathbf{e}_{h},\mathbf{r}^{\operatorname*{high}}%
%TCIMACRO{\TeXButton{l}{\kern-.1em}}%
%BeginExpansion
\kern-.1em%
%EndExpansion
\right)
%TCIMACRO{\TeXButton{l}{\kern-.1em}}%
%BeginExpansion
\kern-.1em%
%EndExpansion
\right)  \right\vert \leq C_{b,k}^{\operatorname*{high}}\left\Vert
\mathbf{e}_{h}\right\Vert _{\operatorname*{curl},\Omega,k}\left\Vert
\mathbf{r}\right\Vert _{\operatorname*{curl},\Omega,k}%
\overset{\text{(\ref{curlpropHcomp})}}{=}C_{b,k}^{\operatorname*{high}%
}\left\Vert \mathbf{e}_{h}\right\Vert _{\operatorname*{curl},\Omega,k}\left(
k\left\Vert \mathbf{r}\right\Vert \right)  . \label{PropStokes1}%
\end{equation}
The coercivity of $\left(
%TCIMACRO{\TeXButton{l}{\kern-.1em}}%
%BeginExpansion
\kern-.1em%
%EndExpansion
\left(
%TCIMACRO{\TeXButton{l}{\kern-.1em}}%
%BeginExpansion
\kern-.1em%
%EndExpansion
\cdot,\cdot%
%TCIMACRO{\TeXButton{l}{\kern-.1em}}%
%BeginExpansion
\kern-.1em%
%EndExpansion
\right)
%TCIMACRO{\TeXButton{l}{\kern-.1em}}%
%BeginExpansion
\kern-.1em%
%EndExpansion
\right)  $ in the form (\ref{k+normdblescest}) leads to%
\begin{equation}
\left(  k\left\Vert \mathbf{r}\right\Vert \right)  ^{2}\leq\operatorname{Re}%
\left(
%TCIMACRO{\TeXButton{l}{\kern-.1em}}%
%BeginExpansion
\kern-.1em%
%EndExpansion
\left(
%TCIMACRO{\TeXButton{l}{\kern-.1em}}%
%BeginExpansion
\kern-.1em%
%EndExpansion
\mathbf{r},\mathbf{r}%
%TCIMACRO{\TeXButton{l}{\kern-.1em}}%
%BeginExpansion
\kern-.1em%
%EndExpansion
\right)
%TCIMACRO{\TeXButton{l}{\kern-.1em}}%
%BeginExpansion
\kern-.1em%
%EndExpansion
\right)  =\operatorname{Re}\left(
%TCIMACRO{\TeXButton{l}{\kern-.1em}}%
%BeginExpansion
\kern-.1em%
%EndExpansion
\left(
%TCIMACRO{\TeXButton{l}{\kern-.1em}}%
%BeginExpansion
\kern-.1em%
%EndExpansion
\mathbf{q},\mathbf{r}%
%TCIMACRO{\TeXButton{l}{\kern-.1em}}%
%BeginExpansion
\kern-.1em%
%EndExpansion
\right)
%TCIMACRO{\TeXButton{l}{\kern-.1em}}%
%BeginExpansion
\kern-.1em%
%EndExpansion
\right)  +\operatorname{Re}\left(
%TCIMACRO{\TeXButton{l}{\kern-.1em}}%
%BeginExpansion
\kern-.1em%
%EndExpansion
\left(
%TCIMACRO{\TeXButton{l}{\kern-.1em}}%
%BeginExpansion
\kern-.1em%
%EndExpansion
\left(  \Pi_{h}^{E}\Pi^{\operatorname*{comp},\ast}-\Pi_{h}%
^{\operatorname*{comp},\ast}\right)  \mathbf{v}_{h},\mathbf{r}%
%TCIMACRO{\TeXButton{l}{\kern-.1em}}%
%BeginExpansion
\kern-.1em%
%EndExpansion
\right)
%TCIMACRO{\TeXButton{l}{\kern-.1em}}%
%BeginExpansion
\kern-.1em%
%EndExpansion
\right)  . \label{kPiPihest}%
\end{equation}
We use the definition of $\Pi^{\nabla,\ast}$, $\Pi^{\operatorname*{curl},\ast
}$, $\Pi^{\operatorname*{comp},\ast}$ and its discrete versions as in
(\ref{vargammaoldadj}) and Definition \ref{DefHelmDecomp} to get%
\begin{equation}
\left(
%TCIMACRO{\TeXButton{l}{\kern-.1em}}%
%BeginExpansion
\kern-.1em%
%EndExpansion
\left(
%TCIMACRO{\TeXButton{l}{\kern-.1em}}%
%BeginExpansion
\kern-.1em%
%EndExpansion
\mathbf{w}_{h},\mathbf{r}%
%TCIMACRO{\TeXButton{l}{\kern-.1em}}%
%BeginExpansion
\kern-.1em%
%EndExpansion
\right)
%TCIMACRO{\TeXButton{l}{\kern-.1em}}%
%BeginExpansion
\kern-.1em%
%EndExpansion
\right)  =\left(
%TCIMACRO{\TeXButton{l}{\kern-.1em}}%
%BeginExpansion
\kern-.1em%
%EndExpansion
\left(
%TCIMACRO{\TeXButton{l}{\kern-.1em}}%
%BeginExpansion
\kern-.1em%
%EndExpansion
\mathbf{w}_{h},\left(  \Pi^{\operatorname*{curl},\ast}-\Pi_{h}%
^{\operatorname*{curl},\ast}\right)  H_{\Omega}\mathbf{v}_{h}%
%TCIMACRO{\TeXButton{l}{\kern-.1em}}%
%BeginExpansion
\kern-.1em%
%EndExpansion
\right)
%TCIMACRO{\TeXButton{l}{\kern-.1em}}%
%BeginExpansion
\kern-.1em%
%EndExpansion
\right)  =0\quad\forall\mathbf{w}_{h}\in\nabla S_{h}. \label{2ndscprodGalOrth}%
\end{equation}
From (\ref{curldiffzero}) and the exact sequence property (\ref{esp}) we
conclude that $\left(  \Pi_{h}^{E}\Pi^{\operatorname*{comp},\ast}-\Pi
_{h}^{\operatorname*{comp},\ast}\right)  \mathbf{v}_{h}=\nabla\psi_{h}$ for
some $\psi_{h}\in S_{h}$. The combination of this with (\ref{2ndscprodGalOrth}%
) for $\mathbf{w}_{h}=\nabla\psi_{h}$ implies that the last term in
(\ref{kPiPihest}) vanishes. Hence,%
\begin{equation}
\left(  k\left\Vert \mathbf{r}\right\Vert \right)  ^{2}\leq\operatorname{Re}%
\left(
%TCIMACRO{\TeXButton{l}{\kern-.1em}}%
%BeginExpansion
\kern-.1em%
%EndExpansion
\left(
%TCIMACRO{\TeXButton{l}{\kern-.1em}}%
%BeginExpansion
\kern-.1em%
%EndExpansion
\mathbf{q},\mathbf{r}%
%TCIMACRO{\TeXButton{l}{\kern-.1em}}%
%BeginExpansion
\kern-.1em%
%EndExpansion
\right)
%TCIMACRO{\TeXButton{l}{\kern-.1em}}%
%BeginExpansion
\kern-.1em%
%EndExpansion
\right)  =\operatorname{Re}\left(
%TCIMACRO{\TeXButton{l}{\kern-.1em}}%
%BeginExpansion
\kern-.1em%
%EndExpansion
\left(
%TCIMACRO{\TeXButton{l}{\kern-.1em}}%
%BeginExpansion
\kern-.1em%
%EndExpansion
H_{\Omega}\mathbf{q},\mathbf{r}%
%TCIMACRO{\TeXButton{l}{\kern-.1em}}%
%BeginExpansion
\kern-.1em%
%EndExpansion
\right)
%TCIMACRO{\TeXButton{l}{\kern-.1em}}%
%BeginExpansion
\kern-.1em%
%EndExpansion
\right)  +\operatorname{Re}\left(
%TCIMACRO{\TeXButton{l}{\kern-.1em}}%
%BeginExpansion
\kern-.1em%
%EndExpansion
\left(
%TCIMACRO{\TeXButton{l}{\kern-.1em}}%
%BeginExpansion
\kern-.1em%
%EndExpansion
L_{\Omega}\mathbf{q},\mathbf{r}%
%TCIMACRO{\TeXButton{l}{\kern-.1em}}%
%BeginExpansion
\kern-.1em%
%EndExpansion
\right)
%TCIMACRO{\TeXButton{l}{\kern-.1em}}%
%BeginExpansion
\kern-.1em%
%EndExpansion
\right)  . \label{hfp_low}%
\end{equation}
For the high-frequency part on the right-hand side we employ again
(\ref{DefCconthighk}) and obtain%
\begin{equation}
\operatorname{Re}\left(
%TCIMACRO{\TeXButton{l}{\kern-.1em}}%
%BeginExpansion
\kern-.1em%
%EndExpansion
\left(
%TCIMACRO{\TeXButton{l}{\kern-.1em}}%
%BeginExpansion
\kern-.1em%
%EndExpansion
H_{\Omega}\mathbf{q},\mathbf{r}%
%TCIMACRO{\TeXButton{l}{\kern-.1em}}%
%BeginExpansion
\kern-.1em%
%EndExpansion
\right)
%TCIMACRO{\TeXButton{l}{\kern-.1em}}%
%BeginExpansion
\kern-.1em%
%EndExpansion
\right)  \leq C_{b,k}^{\operatorname*{high}}\left\Vert \mathbf{q}\right\Vert
_{\operatorname*{curl},\Omega,k}\left\Vert \mathbf{r}\right\Vert
_{\operatorname*{curl},\Omega,k}\overset{\text{(\ref{curlqzero})}}{=}%
C_{b,k}^{\operatorname*{high}}\left(  k\left\Vert \mathbf{q}\right\Vert
\right)  \left(  k\left\Vert \mathbf{r}\right\Vert \right)  .
\label{PropStokes2}%
\end{equation}
The term $\left\Vert \mathbf{q}\right\Vert $ can be estimated by using the
definition of $\Pi^{\operatorname*{comp},\ast}$ as in Definition
\ref{DefHelmDecomp}%
\begin{align}
k\left\Vert \mathbf{q}\right\Vert  &  \leq k\left\Vert \left(  I-\Pi_{h}%
^{E}\right)  L_{\Omega}\mathbf{v}_{h}\right\Vert +k\left\Vert \left(
I-\Pi_{h}^{E}\right)  \Pi^{\operatorname*{curl},\ast}H_{\Omega}\mathbf{v}%
_{h}\right\Vert \nonumber\\
&  \leq\tilde{\eta}_{7}^{\exp}\left\Vert \mathbf{v}_{h}\right\Vert
_{\operatorname*{curl},\Omega,k}+\eta_{6}^{\operatorname{alg}}\left\Vert
\Pi^{\operatorname*{curl},\ast}H_{\Omega}\mathbf{v}_{h}\right\Vert
_{\mathbf{H}^{1}\left(  \Omega\right)  }\nonumber\\
&  \overset{\text{Lem. \ref{Lemembed}}}{\leq}\tilde{\eta}_{7}^{\exp}\left\Vert
\mathbf{v}_{h}\right\Vert _{\operatorname*{curl},\Omega,k}+C_{\Omega,k}%
\eta_{6}^{\operatorname{alg}}\left\Vert \Pi^{\operatorname*{curl},\ast
}H_{\Omega}\mathbf{v}_{h}\right\Vert _{\operatorname*{curl},\Omega
,1}\nonumber\\
&  \overset{\text{Lem. \ref{LemNablaCurl}}}{\leq}\left(  \tilde{\eta}%
_{7}^{\exp}+C_{\#,k}\eta_{6}^{\operatorname{alg}}\right)  \left\Vert
\mathbf{v}_{h}\right\Vert _{\operatorname*{curl},\Omega,k}.
\label{PropStokes3}%
\end{align}

To estimate the low frequency part in (\ref{hfp_low}) we observe that $%
%TCIMACRO{\TeXButton{boldzeta}{\mbox{\boldmath$ \zeta$}}}%
%BeginExpansion
\mbox{\boldmath$ \zeta$}%
%EndExpansion
:=\Pi^{\nabla}L_{\Omega}\mathbf{q}=\nabla\mathcal{N}_{4}^{\mathcal{A}%
}\mathbf{q}$ (cf. (\ref{graddoppelKlammer})) satisfies%
\[
\left(
%TCIMACRO{\TeXButton{l}{\kern-.1em}}%
%BeginExpansion
\kern-.1em%
%EndExpansion
\left(
%TCIMACRO{\TeXButton{l}{\kern-.1em}}%
%BeginExpansion
\kern-.1em%
%EndExpansion%
%TCIMACRO{\TeXButton{boldzeta}{\mbox{\boldmath$ \zeta$}}}%
%BeginExpansion
\mbox{\boldmath$ \zeta$}%
%EndExpansion
,%
%TCIMACRO{\TeXButton{boldxi}{\mbox{\boldmath$ \xi$}}}%
%BeginExpansion
\mbox{\boldmath$ \xi$}%
%EndExpansion%
%TCIMACRO{\TeXButton{l}{\kern-.1em}}%
%BeginExpansion
\kern-.1em%
%EndExpansion
\right)
%TCIMACRO{\TeXButton{l}{\kern-.1em}}%
%BeginExpansion
\kern-.1em%
%EndExpansion
\right)  =\left(
%TCIMACRO{\TeXButton{l}{\kern-.1em}}%
%BeginExpansion
\kern-.1em%
%EndExpansion
\left(
%TCIMACRO{\TeXButton{l}{\kern-.1em}}%
%BeginExpansion
\kern-.1em%
%EndExpansion
L_{\Omega}\mathbf{q},%
%TCIMACRO{\TeXButton{boldxi}{\mbox{\boldmath$ \xi$}}}%
%BeginExpansion
\mbox{\boldmath$ \xi$}%
%EndExpansion%
%TCIMACRO{\TeXButton{l}{\kern-.1em}}%
%BeginExpansion
\kern-.1em%
%EndExpansion
\right)
%TCIMACRO{\TeXButton{l}{\kern-.1em}}%
%BeginExpansion
\kern-.1em%
%EndExpansion
\right)  \quad\forall%
%TCIMACRO{\TeXButton{boldxi}{\mbox{\boldmath$ \xi$}}}%
%BeginExpansion
\mbox{\boldmath$ \xi$}%
%EndExpansion
\in\nabla H^{1}\left(  \Omega\right)  .
\]
By choosing $%
%TCIMACRO{\TeXButton{boldxi}{\mbox{\boldmath$ \xi$}}}%
%BeginExpansion
\mbox{\boldmath$ \xi$}%
%EndExpansion
=\mathbf{r}$ we can use a Galerkin orthogonality in the form
(\ref{2ndscprodGalOrth}) to obtain for any $\mathbf{w}_{h}\in\nabla S_{h}$%
\[
\operatorname{Re}\left(
%TCIMACRO{\TeXButton{l}{\kern-.1em}}%
%BeginExpansion
\kern-.1em%
%EndExpansion
\left(
%TCIMACRO{\TeXButton{l}{\kern-.1em}}%
%BeginExpansion
\kern-.1em%
%EndExpansion
L_{\Omega}\mathbf{q},\mathbf{r}%
%TCIMACRO{\TeXButton{l}{\kern-.1em}}%
%BeginExpansion
\kern-.1em%
%EndExpansion
\right)
%TCIMACRO{\TeXButton{l}{\kern-.1em}}%
%BeginExpansion
\kern-.1em%
%EndExpansion
\right)  =\operatorname{Re}\left(
%TCIMACRO{\TeXButton{l}{\kern-.1em}}%
%BeginExpansion
\kern-.1em%
%EndExpansion
\left(
%TCIMACRO{\TeXButton{l}{\kern-.1em}}%
%BeginExpansion
\kern-.1em%
%EndExpansion%
%TCIMACRO{\TeXButton{boldzeta}{\mbox{\boldmath$ \zeta$}}}%
%BeginExpansion
\mbox{\boldmath$ \zeta$}%
%EndExpansion
,\mathbf{r}%
%TCIMACRO{\TeXButton{l}{\kern-.1em}}%
%BeginExpansion
\kern-.1em%
%EndExpansion
\right)
%TCIMACRO{\TeXButton{l}{\kern-.1em}}%
%BeginExpansion
\kern-.1em%
%EndExpansion
\right)  =\operatorname{Re}\left(
%TCIMACRO{\TeXButton{l}{\kern-.1em}}%
%BeginExpansion
\kern-.1em%
%EndExpansion
\left(
%TCIMACRO{\TeXButton{l}{\kern-.1em}}%
%BeginExpansion
\kern-.1em%
%EndExpansion%
%TCIMACRO{\TeXButton{boldzeta}{\mbox{\boldmath$ \zeta$}}}%
%BeginExpansion
\mbox{\boldmath$ \zeta$}%
%EndExpansion
-\mathbf{w}_{h},\mathbf{r}%
%TCIMACRO{\TeXButton{l}{\kern-.1em}}%
%BeginExpansion
\kern-.1em%
%EndExpansion
\right)
%TCIMACRO{\TeXButton{l}{\kern-.1em}}%
%BeginExpansion
\kern-.1em%
%EndExpansion
\right)  \leq C_{\operatorname*{cont},k}\left\Vert \mathbf{r}\right\Vert
_{\operatorname*{curl},\Omega,1}\left\Vert
%TCIMACRO{\TeXButton{boldzeta}{\mbox{\boldmath$ \zeta$}}}%
%BeginExpansion
\mbox{\boldmath$ \zeta$}%
%EndExpansion
-\mathbf{w}_{h}\right\Vert _{\operatorname*{curl},\Omega,1}.
\]
The last factor can be estimated by using (\ref{DefEta5New}),
(\ref{PropStokes3}), and the definition of $%
%TCIMACRO{\TeXButton{boldzeta}{\mbox{\boldmath$ \zeta$}}}%
%BeginExpansion
\mbox{\boldmath$ \zeta$}%
%EndExpansion
$:%

\begin{align}
\inf_{v_{h}\in S_{h}}\left\Vert \nabla\left(  \mathcal{N}_{4}^{\mathcal{A}%
}\mathbf{q}-v_{h}\right)  \right\Vert _{\operatorname*{curl},\Omega,1}  &
=\inf_{v_{h}\in S_{h}}\left\Vert \nabla\left(  \mathcal{N}_{4}^{\mathcal{A}%
}\mathbf{q}-v_{h}\right)  \right\Vert \leq\tilde{\eta}_{4}^{\exp}\left\Vert
\mathbf{q}\right\Vert _{\operatorname*{curl},\Omega,1}%
\overset{\text{(\ref{curlqzero})}}{=}\tilde{\eta}_{4}^{\exp}\left\Vert
\mathbf{q}\right\Vert \nonumber\\
&  \overset{\text{(\ref{PropStokes3})}}{\leq}\frac{\tilde{\eta}_{4}^{\exp}}%
{k}\left(  \tilde{\eta}_{7}^{\exp}+C_{\#,k}\eta_{6}^{\operatorname{alg}%
}\right)  \left\Vert \mathbf{v}_{h}\right\Vert _{\operatorname*{curl}%
,\Omega,k}.
\end{align}
Finally, we combine this estimate with (\ref{hfp_low}), (\ref{PropStokes2}),
(\ref{PropStokes3}) to bound the last factor in (\ref{PropStokes1})%
\begin{equation}
k\left\Vert \mathbf{r}\right\Vert \leq C_{r,k}\left\Vert \mathbf{v}%
_{h}\right\Vert _{\operatorname*{curl},\Omega,k}. \label{kPiCompPiComph}%
\end{equation}
We insert (\ref{kPiCompPiComph}) into (\ref{PropStokes1}) and arrive at the assertion.

The bounds for the constants are stated in Corollary \ref{CorConstantsSphere}.%
%TCIMACRO{\TeXButton{End Proof}{\endproof}}%
%BeginExpansion
\endproof
%EndExpansion

\subsection{Estimate of $\left(
%TCIMACRO{\TeXButton{l}{\kern-.1em}}%
%BeginExpansion
\kern-.1em%
%EndExpansion
\left(
%TCIMACRO{\TeXButton{l}{\kern-.1em}}%
%BeginExpansion
\kern-.1em%
%EndExpansion
\mathbf{e}_{h},\Pi^{\operatorname*{curl},\ast}\mathbf{v}_{h}%
^{\operatorname*{high}}%
%TCIMACRO{\TeXButton{l}{\kern-.1em}}%
%BeginExpansion
\kern-.1em%
%EndExpansion
\right)
%TCIMACRO{\TeXButton{l}{\kern-.1em}}%
%BeginExpansion
\kern-.1em%
%EndExpansion
\right)  $ in (\ref{gammamainsplittot}b,c)\label{Seceovo}}

In this section, we investigate the second term of the right-hand side in
(\ref{gammamainsplit2}). Recall the definition of the adjoint solution
operators (\ref{smoothdualproblem}) and the corresponding adjoint
approximation properties (\ref{defetatildealg})--(\ref{Defeta7}).

\begin{proposition}
\label{PropehPicurlvhhigh}Let $\mathbf{e}_{h}=\mathbf{E}-\mathbf{E}_{h}$
denote the Galerkin error with splitting of ${\mathbf{v}}_{h} \in{\mathbf{X}%
}_{h}$ as in (\ref{decoecheck1}). Let Assumption~\ref{AdiscSp} be satisfied.
Then%
\begin{equation}
\left\vert \left(
%TCIMACRO{\TeXButton{l}{\kern-.1em}}%
%BeginExpansion
\kern-.1em%
%EndExpansion
\left(
%TCIMACRO{\TeXButton{l}{\kern-.1em}}%
%BeginExpansion
\kern-.1em%
%EndExpansion
\mathbf{e}_{h},\Pi^{\operatorname*{curl},\ast}\mathbf{v}_{h}%
^{\operatorname*{high}}%
%TCIMACRO{\TeXButton{l}{\kern-.1em}}%
%BeginExpansion
\kern-.1em%
%EndExpansion
\right)
%TCIMACRO{\TeXButton{l}{\kern-.1em}}%
%BeginExpansion
\kern-.1em%
%EndExpansion
\right)  \right\vert \leq C_{\#\#,k}\left(  C_{\#\#,k}+C_{b,k}%
^{\operatorname*{curl},\operatorname*{high}}+C_{\operatorname*{cont},k}%
\tilde{\eta}_{5}^{\exp}\right)  \tilde{\eta}_{2}^{\operatorname{alg}%
}\left\Vert \mathbf{e}_{h}\right\Vert _{\operatorname*{curl},\Omega
,k}\left\Vert \mathbf{v}_{h}\right\Vert _{\operatorname*{curl},\Omega,k}
\label{est2ndtermsplit}%
\end{equation}
with $C_{\#\#,k}:= C_{k}^{H,\Omega}+C_{b,k}^{\operatorname*{high}}$. For
$\Omega=B_{1}\left(  0\right)  $, it holds $C_{\operatorname*{cont},k}\leq
Ck^{3}$ while all other constants are bounded independently of $k$.
\end{proposition}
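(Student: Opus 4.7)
The plan is to apply a duality argument based on the adjoint solution operator $\mathcal{N}_{2}$ of (\ref{adjoint3b}), followed by Galerkin orthogonality and a high/low frequency splitting of the dual solution to separate the pieces that have $k$-uniform continuity constants from those that require exponential approximability.

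First, observe that $\mathbf{r}:=\Pi^{\operatorname*{curl},\ast}\mathbf{v}_{h}^{\operatorname*{high}}$ lies in $\mathbf{V}_{0}^{\ast}$ by construction, so I may set $\boldsymbol{\psi}:=\mathcal{N}_{2}\mathbf{r}$. The defining identity (\ref{adjoint3b}) then yields
\begin{equation*}
\left(\kern-.1em\left(\kern-.1em \mathbf{e}_{h},\Pi^{\operatorname*{curl},\ast}\mathbf{v}_{h}^{\operatorname*{high}} \kern-.1em\right)\kern-.1em\right)=A_{k}(\mathbf{e}_{h},\boldsymbol{\psi}).
\end{equation*}
Galerkin orthogonality (\ref{7.12}) permits subtracting any $\mathbf{w}_{h}\in\mathbf{X}_{h}$. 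I choose $\mathbf{w}_{h}$ to realize the infimum defining $\tilde{\eta}_{2}^{\operatorname{alg}}$ for $\mathcal{N}_{2}\mathbf{r}$, which combined with the stability bound (\ref{stabnablacurlcurl}) from Lemma \ref{LemNablaCurl} and $C_{k}^{H,\Omega}+C_{b,k}^{\nabla,\operatorname*{high}}\leq C_{\#\#,k}$ gives
\begin{equation*}
\|\boldsymbol{\psi}-\mathbf{w}_{h}\|_{\operatorname*{curl},\Omega,k}\leq\tilde{\eta}_{2}^{\operatorname{alg}}\|\mathbf{r}\|_{\operatorname*{curl},\Omega,k}\leq C_{\#\#,k}\,\tilde{\eta}_{2}^{\operatorname{alg}}\|\mathbf{v}_{h}\|_{\operatorname*{curl},\Omega,k}.
\end{equation*}

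Next I split $\boldsymbol{\psi}-\mathbf{w}_{h}=H_{\Omega}(\boldsymbol{\psi}-\mathbf{w}_{h})+L_{\Omega}(\boldsymbol{\psi}-\mathbf{w}_{h})$. The high-frequency piece is controlled directly by the refined continuity bound (\ref{contAkhigh}), yielding
\begin{equation*}
|A_{k}(\mathbf{e}_{h},H_{\Omega}(\boldsymbol{\psi}-\mathbf{w}_{h}))|\leq C_{\operatorname*{cont},k}^{\operatorname*{high}}\|\mathbf{e}_{h}\|_{\operatorname*{curl},\Omega,k}\|\boldsymbol{\psi}-\mathbf{w}_{h}\|_{\operatorname*{curl},\Omega,k},
\end{equation*}
where $C_{\operatorname*{cont},k}^{\operatorname*{high}}=C_{k}^{H,\Omega}+C_{b,k}^{\operatorname*{curl},\operatorname*{high}}+C_{b,k}^{\nabla,\operatorname*{high}}\leq C_{\#\#,k}+C_{b,k}^{\operatorname*{curl},\operatorname*{high}}$. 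For the low-frequency piece I invoke Galerkin orthogonality a second time by subtracting an arbitrary $\boldsymbol{\phi}_{h}\in\mathbf{X}_{h}$, then apply the full continuity bound (\ref{Cbk}) together with the definition of $\tilde{\eta}_{5}^{\exp}$ (noting that $\boldsymbol{\psi}-\mathbf{w}_{h}\in\mathbf{X}$) to obtain
\begin{equation*}
|A_{k}(\mathbf{e}_{h},L_{\Omega}(\boldsymbol{\psi}-\mathbf{w}_{h}))|\leq C_{\operatorname*{cont},k}\,\tilde{\eta}_{5}^{\exp}\|\mathbf{e}_{h}\|_{\operatorname*{curl},\Omega,k}\|\boldsymbol{\psi}-\mathbf{w}_{h}\|_{\operatorname*{curl},\Omega,k}.
\end{equation*}
Adding the two bounds and inserting the estimate for $\|\boldsymbol{\psi}-\mathbf{w}_{h}\|_{\operatorname*{curl},\Omega,k}$ reproduces the claimed inequality. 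The final $k$-uniform assertions for $\Omega=B_{1}(0)$ follow from Corollary \ref{CorConstantsSphere}.

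The main obstacle is the low-frequency contribution: the factor $C_{\operatorname*{cont},k}=O(k^{3})$ in (\ref{Cbk}) can be absorbed only because $L_{\Omega}(\boldsymbol{\psi}-\mathbf{w}_{h})$ is analytic by Theorem \ref{TheoLau}, so that $\tilde{\eta}_{5}^{\exp}$ decays exponentially in $p$ under the scale resolution condition. It is precisely the separation of $A_{k}$ into its high-frequency part (handled by $C_{\operatorname*{cont},k}^{\operatorname*{high}}$) and its low-frequency part (handled by $C_{\operatorname*{cont},k}\tilde{\eta}_{5}^{\exp}$) that produces the constants $C_{\#\#,k}+C_{b,k}^{\operatorname*{curl},\operatorname*{high}}+C_{\operatorname*{cont},k}\tilde{\eta}_{5}^{\exp}$ appearing in the inner bracket of the statement.
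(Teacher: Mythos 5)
Your proposal is correct and follows essentially the same route as the paper's proof: the duality argument via $\mathcal{N}_{2}$, Galerkin orthogonality, the $H_{\Omega}/L_{\Omega}$ splitting of the dual residual with (\ref{contAkhigh})-type bounds for the high-frequency part and a second orthogonality plus $\tilde{\eta}_{5}^{\exp}$ for the low-frequency part, and the stability bound (\ref{stabnablacurlcurl}) to return to $\Vert\mathbf{v}_{h}\Vert_{\operatorname*{curl},\Omega,k}$. The only cosmetic differences are that you invoke the packaged continuity estimate (\ref{contAkhigh}) where the paper writes out the three constituent terms, and that you fix a (near-)minimizing $\mathbf{w}_{h}$ up front rather than taking infima at the end.
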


%

%TCIMACRO{\TeXButton{Proof}{\proof}}%
%BeginExpansion
\proof
%EndExpansion
Let $\mathbf{s}:=\Pi^{\operatorname*{curl},\ast}\mathbf{v}_{h}%
^{\operatorname*{high}}\in\mathbf{V}_{0}^{\ast}$. We consider the adjoint
problem (cf. (\ref{adjoint3b})) with solution operator $\mathcal{N}_{2}\ $and
set $\mathbf{z}:=\mathcal{N}_{2}\mathbf{s}$. Galerkin orthogonality with
arbitrary $\mathbf{z}_{h}\in\mathbf{X}_{h}$ gives
\begin{equation}
\left(
%TCIMACRO{\TeXButton{l}{\kern-.1em}}%
%BeginExpansion
\kern-.1em%
%EndExpansion
\left(
%TCIMACRO{\TeXButton{l}{\kern-.1em}}%
%BeginExpansion
\kern-.1em%
%EndExpansion
\mathbf{e}_{h},\mathbf{s}%
%TCIMACRO{\TeXButton{l}{\kern-.1em}}%
%BeginExpansion
\kern-.1em%
%EndExpansion
\right)
%TCIMACRO{\TeXButton{l}{\kern-.1em}}%
%BeginExpansion
\kern-.1em%
%EndExpansion
\right)  =A_{k}\left(  \mathbf{e}_{h},\mathbf{z}\right)  =A_{k}\left(
\mathbf{e}_{h},\mathbf{z}-\mathbf{z}_{h}\right)  =A_{k}\left(  \mathbf{e}%
_{h},H_{\Omega}\left(  \mathbf{z}-\mathbf{z}_{h}\right)  \right)
+A_{k}\left(  \mathbf{e}_{h},L_{\Omega}\left(  \mathbf{z}-\mathbf{z}%
_{h}\right)  \right)  . \label{cont2}%
\end{equation}
For the first term we obtain%
\[
\left\vert A_{k}\left(  \mathbf{e}_{h},H_{\Omega}\left(  \mathbf{z}%
-\mathbf{z}_{h}\right)  \right)  \right\vert \leq\left\Vert
\operatorname*{curl}\mathbf{e}_{h}\right\Vert \left\Vert \operatorname*{curl}%
\left(  H_{\Omega}\left(  \mathbf{z-z}_{h}\right)  \right)  \right\Vert
+\left\vert \left(
%TCIMACRO{\TeXButton{l}{\kern-.1em}}%
%BeginExpansion
\kern-.1em%
%EndExpansion
\left(
%TCIMACRO{\TeXButton{l}{\kern-.1em}}%
%BeginExpansion
\kern-.1em%
%EndExpansion
\mathbf{e}_{h},H_{\Omega}\left(  \mathbf{z}-\mathbf{z}_{h}\right)
%TCIMACRO{\TeXButton{l}{\kern-.1em}}%
%BeginExpansion
\kern-.1em%
%EndExpansion
\right)
%TCIMACRO{\TeXButton{l}{\kern-.1em}}%
%BeginExpansion
\kern-.1em%
%EndExpansion
\right)  \right\vert {+}\left\vert {kb_{k}}\left(  {\mathbf{e}_{h}%
^{\operatorname*{curl}},}\left(  {H_{\Omega}}\left(  {\mathbf{z}%
-\mathbf{z}_{h}}\right)  \right)  ^{\operatorname*{curl}}\right)  \right\vert
{.}%
\]
The three terms on the right-hand side can be estimated by using the constants
in (\ref{Cbkhightot}), (\ref{defCkLHOmega}), (\ref{DefCconthighk}):
\begin{align*}
\left\Vert \operatorname*{curl}\left(  H_{\Omega}(\mathbf{z-z}_{h})\right)
\right\Vert  &  \leq\left\Vert H_{\Omega}\left(  \mathbf{z-z}_{h}\right)
\right\Vert _{\operatorname*{curl},\Omega,k}\leq C_{k}^{H,\Omega}\left\Vert
\mathbf{z-z}_{h}\right\Vert _{\operatorname*{curl},\Omega,k},\\
\left\vert {kb_{k}}\left(  {\mathbf{e}_{h}^{\operatorname*{curl}},}\left(
{H_{\Omega}}\left(  {\mathbf{z}-\mathbf{z}_{h}}\right)  \right)
^{\operatorname*{curl}}\right)  \right\vert  &  \leq C_{b,k}%
^{\operatorname*{curl},\operatorname*{high}}\left\Vert {\mathbf{e}_{h}%
}\right\Vert _{\operatorname*{curl},\Omega,k}\left\Vert \mathbf{z-z}%
_{h}\right\Vert _{\operatorname*{curl},\Omega,k},\\
\left\vert \left(
%TCIMACRO{\TeXButton{l}{\kern-.1em}}%
%BeginExpansion
\kern-.1em%
%EndExpansion
\left(
%TCIMACRO{\TeXButton{l}{\kern-.1em}}%
%BeginExpansion
\kern-.1em%
%EndExpansion
\mathbf{e}_{h},H_{\Omega}\left(  \mathbf{z}-\mathbf{z}_{h}\right)
%TCIMACRO{\TeXButton{l}{\kern-.1em}}%
%BeginExpansion
\kern-.1em%
%EndExpansion
\right)
%TCIMACRO{\TeXButton{l}{\kern-.1em}}%
%BeginExpansion
\kern-.1em%
%EndExpansion
\right)  \right\vert  &  \leq C_{b,k}^{\operatorname*{high}}\left\Vert
{\mathbf{e}_{h}}\right\Vert _{\operatorname*{curl},\Omega,k}\left\Vert
\mathbf{z-z}_{h}\right\Vert _{\operatorname*{curl},\Omega,k}.
\end{align*}
This leads to%
\[
\left\vert A_{k}\left(  \mathbf{e}_{h},H_{\Omega}\left(  \mathbf{z}%
-\mathbf{z}_{h}\right)  \right)  \right\vert \leq\left(  C_{\#\#,k}%
+C_{b,k}^{\operatorname*{curl},\operatorname*{high}}\right)  \left\Vert
{\mathbf{e}_{h}}\right\Vert _{\operatorname*{curl},\Omega,k}\left\Vert
\mathbf{z-z}_{h}\right\Vert _{\operatorname*{curl},\Omega,k}.
\]
For the second term in (\ref{cont2}) we obtain for arbitrary $\mathbf{\tilde
{z}}_{h}\in\mathbf{X}_{h}$
\begin{align}
\left\vert A_{k}\left(  \mathbf{e}_{h},L_{\Omega}\left(  \mathbf{z}%
-\mathbf{z}_{h}\right)  \right)  \right\vert  &  \leq\left\vert A_{k}\left(
\mathbf{e}_{h},L_{\Omega}\left(  \mathbf{z}-\mathbf{z}_{h}\right)
\mathbf{-\tilde{z}}_{h}\right)  \right\vert \nonumber\\
&  \overset{\text{(\ref{Cbk})}}{\leq}C_{\operatorname*{cont},k}\left\Vert
\mathbf{e}_{h}\right\Vert _{\operatorname*{curl},\Omega,1}\left\Vert
L_{\Omega}\left(  \mathbf{z}-\mathbf{z}_{h}\right)  -\mathbf{\tilde{z}}%
_{h}\right\Vert _{\operatorname*{curl},\Omega,1}. \label{3rdterm}%
\end{align}
This leads to the estimate%
\begin{equation}
\left\vert \left(
%TCIMACRO{\TeXButton{l}{\kern-.1em}}%
%BeginExpansion
\kern-.1em%
%EndExpansion
\left(
%TCIMACRO{\TeXButton{l}{\kern-.1em}}%
%BeginExpansion
\kern-.1em%
%EndExpansion
\mathbf{e}_{h},\mathbf{s}%
%TCIMACRO{\TeXButton{l}{\kern-.1em}}%
%BeginExpansion
\kern-.1em%
%EndExpansion
\right)
%TCIMACRO{\TeXButton{l}{\kern-.1em}}%
%BeginExpansion
\kern-.1em%
%EndExpansion
\right)  \right\vert \leq\left(  C_{\#\#,k}+C_{b,k}^{\operatorname*{curl}%
,\operatorname*{high}}\right)  \left\Vert \mathbf{e}_{h}\right\Vert
_{\operatorname*{curl},\Omega,k}\left\Vert \mathbf{z}-\mathbf{z}%
_{h}\right\Vert _{\operatorname*{curl},\Omega,k}+C_{\operatorname*{cont}%
,k}\left\Vert \mathbf{e}_{h}\right\Vert _{\operatorname*{curl},\Omega
,1}\left\Vert L_{\Omega}\left(  \mathbf{z}-\mathbf{z}_{h}\right)
-\mathbf{\tilde{z}}_{h}\right\Vert _{\operatorname*{curl},\Omega,1}.
\label{estDKe0hv0h}%
\end{equation}
With the definition of the adjoint approximation properties
(cf.~Sec.~\ref{SecAdjProbl}) we arrive at
\begin{align}
\inf_{{\mathbf{z}}_{h}\in{\mathbf{X}}_{h}}\left\Vert \mathbf{z}-\mathbf{z}%
_{h}\right\Vert _{\operatorname*{curl},\Omega,k}  &
\overset{\text{(\ref{defetatildealg})}}{\leq}\tilde{\eta}_{2}%
^{\operatorname{alg}}\left\Vert \Pi^{\operatorname*{curl},\ast}\mathbf{v}%
_{h}^{\operatorname*{high}}\right\Vert _{\operatorname*{curl},\Omega
,k},\label{zzheta1}\\
\inf_{\mathbf{z}_{h}}\inf_{\mathbf{\tilde{z}}_{h}}\left\Vert L_{\Omega}\left(
\mathbf{z}-\mathbf{z}_{h}\right)  -\mathbf{\tilde{z}}_{h}\right\Vert
_{\operatorname*{curl},\Omega,k}  &  \overset{\text{(\ref{Defeta6})}}{\leq
}\tilde{\eta}_{5}^{\exp}\inf_{\mathbf{z}_{h}}\left\Vert \mathbf{z}%
-\mathbf{z}_{h}\right\Vert _{\operatorname*{curl},\Omega,k}%
\overset{\text{(\ref{zzheta1})}}{\leq}\tilde{\eta}_{2}^{\operatorname{alg}%
}\tilde{\eta}_{5}^{\exp}\left\Vert \Pi^{\operatorname*{curl},\ast}%
\mathbf{v}_{h}^{\operatorname*{high}}\right\Vert _{\operatorname*{curl}%
,\Omega,k}.
\end{align}
The combination of these estimates with Lemma~\ref{LemNablaCurl} leads to
(\ref{est2ndtermsplit}).

The estimates of the constants for the case $\Omega=B_{1}\left(  0\right)  $
are stated in Corollary \ref{CorConstantsSphere}.%
%TCIMACRO{\TeXButton{End Proof}{\endproof}}%
%BeginExpansion
\endproof
%EndExpansion

\subsection{Estimating $\left(
%TCIMACRO{\TeXButton{l}{\kern-.1em}}%
%BeginExpansion
\kern-.1em%
%EndExpansion
\left(
%TCIMACRO{\TeXButton{l}{\kern-.1em}}%
%BeginExpansion
\kern-.1em%
%EndExpansion
\mathbf{e}_{h},L_{\Omega}\left(  \Pi_{h}^{\operatorname*{comp},\ast}%
\mathbf{v}_{h}-\Pi^{\operatorname*{comp},\ast}\mathbf{v}_{h}\right)
%TCIMACRO{\TeXButton{l}{\kern-.1em}}%
%BeginExpansion
\kern-.1em%
%EndExpansion
\right)
%TCIMACRO{\TeXButton{l}{\kern-.1em}}%
%BeginExpansion
\kern-.1em%
%EndExpansion
\right)  $ and $\left(
%TCIMACRO{\TeXButton{l}{\kern-.1em}}%
%BeginExpansion
\kern-.1em%
%EndExpansion
\left(
%TCIMACRO{\TeXButton{l}{\kern-.1em}}%
%BeginExpansion
\kern-.1em%
%EndExpansion
\mathbf{e}_{h},L_{\Omega}\mathbf{v}_{h}%
%TCIMACRO{\TeXButton{l}{\kern-.1em}}%
%BeginExpansion
\kern-.1em%
%EndExpansion
\right)
%TCIMACRO{\TeXButton{l}{\kern-.1em}}%
%BeginExpansion
\kern-.1em%
%EndExpansion
\right)  $ in (\ref{gammamainsplittot}b,c)\label{SecLterms}}

Next, we investigate the first and last term of the right-hand side in
(\ref{gammamainsplit2}).

\begin{proposition}
\label{PropLowerOrderTerm}Let $\mathbf{e}_{h}=\mathbf{E}-\mathbf{E}_{h}$
denote the Galerkin error with splitting of ${\mathbf{v}}_{h} \in{\mathbf{X}%
}_{h}$ as in (\ref{decoecheck1}) and let Assumption~\ref{AdiscSp} be
satisfied. Then:
\begin{equation}
\left\vert \left(
%TCIMACRO{\TeXButton{l}{\kern-.1em}}%
%BeginExpansion
\kern-.1em%
%EndExpansion
\left(
%TCIMACRO{\TeXButton{l}{\kern-.1em}}%
%BeginExpansion
\kern-.1em%
%EndExpansion
\mathbf{e}_{h},L_{\Omega}\mathbf{r}%
%TCIMACRO{\TeXButton{l}{\kern-.1em}}%
%BeginExpansion
\kern-.1em%
%EndExpansion
\right)
%TCIMACRO{\TeXButton{l}{\kern-.1em}}%
%BeginExpansion
\kern-.1em%
%EndExpansion
\right)  \right\vert +\left\vert \left(
%TCIMACRO{\TeXButton{l}{\kern-.1em}}%
%BeginExpansion
\kern-.1em%
%EndExpansion
\left(
%TCIMACRO{\TeXButton{l}{\kern-.1em}}%
%BeginExpansion
\kern-.1em%
%EndExpansion
\mathbf{e}_{h},L_{\Omega}\mathbf{v}_{h}%
%TCIMACRO{\TeXButton{l}{\kern-.1em}}%
%BeginExpansion
\kern-.1em%
%EndExpansion
\right)
%TCIMACRO{\TeXButton{l}{\kern-.1em}}%
%BeginExpansion
\kern-.1em%
%EndExpansion
\right)  \right\vert \leq C_{\operatorname*{cont},k}\tilde{\eta}_{3}^{\exp
}\left(  1+C_{r,k}\right)  \left\Vert \mathbf{e}_{h}\right\Vert
_{\operatorname*{curl},\Omega,k}\left\Vert \mathbf{v}_{h}\right\Vert
_{\operatorname*{curl},\Omega,k} \label{PropLowerOrderTermest}%
\end{equation}
with $\mathbf{r}:=\Pi_{h}^{\operatorname*{comp},\ast}\mathbf{v}_{h}%
-\Pi^{\operatorname*{comp},\ast}\mathbf{v}_{h}$ and $C_{r,k}$ as in
(\ref{defCrk}).
\end{proposition}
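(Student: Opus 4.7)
The plan is to treat both summands on the left-hand side of (\ref{PropLowerOrderTermest}) by the same duality argument built on the adjoint solution operator $\mathcal{N}_3^{\mathcal{A}}$ from (\ref{smoothdualproblemd}), and then to recycle the $L^2$-bound on $\mathbf{r}$ already derived in the course of proving Proposition~\ref{PropHOmegaSplit}. Testing the defining identity for $\mathcal{N}_3^{\mathcal{A}}$ with $\mathbf{w}=\mathbf{e}_h$ yields
\[
\left(\kern-.1em\left(\kern-.1em\mathbf{e}_h,L_\Omega\mathbf{v}_h\kern-.1em\right)\kern-.1em\right)=A_k(\mathbf{e}_h,\mathcal{N}_3^{\mathcal{A}}\mathbf{v}_h),\qquad \left(\kern-.1em\left(\kern-.1em\mathbf{e}_h,L_\Omega\mathbf{r}\kern-.1em\right)\kern-.1em\right)=A_k(\mathbf{e}_h,\mathcal{N}_3^{\mathcal{A}}\mathbf{r}),
\]
both of which are legitimate since $\mathbf{v}_h,\mathbf{r}\in\mathbf{X}$. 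Galerkin orthogonality (\ref{7.12}) then lets me subtract an arbitrary $\mathbf{z}_h\in\mathbf{X}_h$ in the second argument of $A_k$ at no cost.

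Applying the continuity bound (\ref{Cbk}) of $A_k$, passing to the infimum over $\mathbf{z}_h\in\mathbf{X}_h$, and inserting the definition (\ref{defetatilde3exp}) of $\tilde\eta_3^{\exp}$ yields
\[
\left|\left(\kern-.1em\left(\kern-.1em\mathbf{e}_h,L_\Omega\mathbf{v}_h\kern-.1em\right)\kern-.1em\right)\right|\leq C_{\operatorname{cont},k}\tilde\eta_3^{\exp}\|\mathbf{e}_h\|_{\operatorname{curl},\Omega,k}\|\mathbf{v}_h\|_{\operatorname{curl},\Omega,k},
\]
and, by the same chain of estimates, the analogous bound with $\mathbf{v}_h$ on the right-hand side replaced by $\mathbf{r}$. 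The mismatch between the $\|\cdot\|_{\operatorname{curl},\Omega,1}$ norm appearing in (\ref{Cbk}) and the $\|\cdot\|_{\operatorname{curl},\Omega,k}$ norm appearing in (\ref{defetatilde3exp}) is absorbed harmlessly by $k\geq 1$.

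It then remains to bound $\|\mathbf{r}\|_{\operatorname{curl},\Omega,k}$ by $C_{r,k}\|\mathbf{v}_h\|_{\operatorname{curl},\Omega,k}$. This is essentially for free: identity (\ref{curlpropHcomp}) gives $\operatorname{curl}\mathbf{r}=0$, so $\|\mathbf{r}\|_{\operatorname{curl},\Omega,k}=k\|\mathbf{r}\|$, and the inequality $k\|\mathbf{r}\|\leq C_{r,k}\|\mathbf{v}_h\|_{\operatorname{curl},\Omega,k}$ is precisely the intermediate estimate (\ref{kPiCompPiComph}) established along the way in the proof of Proposition~\ref{PropHOmegaSplit}. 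Summing the two resulting bounds produces the factor $1+C_{r,k}$ and completes (\ref{PropLowerOrderTermest}).

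No substantive obstacle is present: every analytic ingredient (the operator $\mathcal{N}_3^{\mathcal{A}}$, the continuity constant $C_{\operatorname{cont},k}$ of $A_k$, the approximation property $\tilde\eta_3^{\exp}$, and the curl-free $L^2$-stability (\ref{kPiCompPiComph})) has already been put in place earlier. The only point requiring a moment of attention is that the constant $C_{r,k}$ was introduced \emph{inside} the proof of Proposition~\ref{PropHOmegaSplit} rather than as an independent object, so one must revisit the definition (\ref{defCrk}) and the derivation leading to (\ref{kPiCompPiComph}) to confirm that the constant obtained here is indeed the same $C_{r,k}$.
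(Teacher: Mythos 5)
Your proposal is correct and follows essentially the same route as the paper's proof: duality via $\mathcal{N}_3^{\mathcal{A}}$ applied to $\mathbf{s}\in\{\mathbf{v}_h,\mathbf{r}\}$, Galerkin orthogonality, the continuity bound (\ref{Cbk}) combined with (\ref{defetatilde3exp}), and then $\operatorname{curl}\mathbf{r}=0$ together with (\ref{kPiCompPiComph}) to convert $\|\mathbf{r}\|_{\operatorname{curl},\Omega,k}$ into $C_{r,k}\|\mathbf{v}_h\|_{\operatorname{curl},\Omega,k}$. Your closing remark about $C_{r,k}$ is well taken but harmless, since the paper cites exactly the same intermediate estimate (\ref{kPiCompPiComph}) with the same constant (\ref{defCrk}).
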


%

%TCIMACRO{\TeXButton{Proof}{\proof}}%
%BeginExpansion
\proof
%EndExpansion
Recall the definition of the solution operator $\mathcal{N}_{3}^{\mathcal{A}}$
from (\ref{smoothdualproblemd}) satisfying for given $\mathbf{s}\in\mathbf{X}$%
\[%
\begin{array}
[c]{cc}%
A_{k}\left(  \mathbf{w},\mathcal{N}_{3}^{\mathcal{A}}\mathbf{s}\right)
=\left(
%TCIMACRO{\TeXButton{l}{\kern-.1em}}%
%BeginExpansion
\kern-.1em%
%EndExpansion
\left(
%TCIMACRO{\TeXButton{l}{\kern-.1em}}%
%BeginExpansion
\kern-.1em%
%EndExpansion
\mathbf{w},L_{\Omega}\mathbf{s}%
%TCIMACRO{\TeXButton{l}{\kern-.1em}}%
%BeginExpansion
\kern-.1em%
%EndExpansion
\right)
%TCIMACRO{\TeXButton{l}{\kern-.1em}}%
%BeginExpansion
\kern-.1em%
%EndExpansion
\right)  & \forall\mathbf{w}\in\mathbf{X}.
\end{array}
\]

For the first term in (\ref{PropLowerOrderTermest}) we get in a similar
fashion as in (\ref{3rdterm})
%for arbitrary $\mathbf{z}_{h}\in\mathbf{X}_{h}$%
\begin{align*}
\left\vert \left(
%TCIMACRO{\TeXButton{l}{\kern-.1em}}%
%BeginExpansion
\kern-.1em%
%EndExpansion
\left(
%TCIMACRO{\TeXButton{l}{\kern-.1em}}%
%BeginExpansion
\kern-.1em%
%EndExpansion
\mathbf{e}_{h},L_{\Omega}\mathbf{s}%
%TCIMACRO{\TeXButton{l}{\kern-.1em}}%
%BeginExpansion
\kern-.1em%
%EndExpansion
\right)
%TCIMACRO{\TeXButton{l}{\kern-.1em}}%
%BeginExpansion
\kern-.1em%
%EndExpansion
\right)  \right\vert  &  = \inf_{{\mathbf{z}}_{h} \in{\mathbf{X}}_{h}}
\left\vert A_{k}\left(  \mathbf{e}_{h},\mathcal{N}_{3}^{\mathcal{A}}%
\mathbf{s}-\mathbf{z}_{h}\right)  \right\vert \overset{\text{(\ref{Cbk}%
)}}{\leq} C_{\operatorname*{cont},k}\left\Vert \mathbf{e}_{h}\right\Vert
_{\operatorname*{curl},\Omega,1} \inf_{{\mathbf{z}}_{h} \in{\mathbf{X}}_{h}}
\left\Vert \mathcal{N}_{3}^{\mathcal{A}}\mathbf{s}-\mathbf{z}_{h}\right\Vert
_{\operatorname*{curl},\Omega,1}\\
&  \overset{\text{(\ref{defetatilde3exp})}}{\leq} C_{\operatorname*{cont}%
,k}\tilde{\eta}_{3}^{\exp}\left\Vert \mathbf{e}_{h}\right\Vert
_{\operatorname*{curl},\Omega,k}\left\Vert \mathbf{s}\right\Vert
_{\operatorname*{curl},\Omega,k}.
\end{align*}
This leads directly to the estimate of the second term in
(\ref{PropLowerOrderTermest}) by choosing $\mathbf{s}=\mathbf{v}_{h}$. For the
choice $\mathbf{s}=\mathbf{r}$, we combine (\ref{curlqzero-a}) with
(\ref{kPiCompPiComph}) to get$\displaystyle
\left\Vert \mathbf{r}\right\Vert _{\operatorname*{curl},\Omega,k}=k\left\Vert
\mathbf{r}\right\Vert \leq C_{r,k}\left\Vert \mathbf{v}_{h}\right\Vert
_{\operatorname*{curl},\Omega,k}. $%
%TCIMACRO{\TeXButton{End Proof}{\endproof}}%
%BeginExpansion
\endproof
%EndExpansion

%-------------------------------------

\section{Analysis of the Dual Problems}

\label{sec:dual-problems}
%\section{Estimate of the Approximation Properties\label{SecApproxProp}}
%-------------------------------------

For the stability and convergence analysis, we have introduced various adjoint
approximation properties in Sec.~\ref{SecAdjProbl}. In this section, we
analyze the regularity of the adjoint solutions in Sec.~\ref{SecRegDualSol}
based on a solution formula which we will derive in Sec.~\ref{SecSolForm}. The
quantitative convergence rates require interpolation operators for $hp$ finite
element spaces that will be presented in Sections~\ref{sec:hp-approximation}.

\subsection{Solution Formulae\label{SecSolForm}}

In this section, we will develop a regularity theory to estimate the solutions
of the dual problems which have been introduced in Section \ref{SecAdjProbl}.
They belong to one of the following two types.

\textbf{Type 1:}%
\begin{align}
\text{Given }\mathbf{v}  &  \in\mathbf{H}\left(  \Omega,\operatorname*{div}%
\right)  ,\quad\mathbf{g},\mathbf{h}\in\mathbf{X\quad}\text{find }%
\mathbf{z}\in\mathbf{X}\text{ s.t.}\nonumber\\
A_{k}\left(  \mathbf{w},\mathbf{z}\right)   &  =k^{2}\left(  \mathbf{w}%
,\mathbf{v}\right)  +\operatorname*{i}kb_{k}\left(  \mathbf{w}^{\nabla
},\mathbf{g}^{\nabla}\right)  -\operatorname*{i}kb_{k}\left(  \mathbf{w}%
^{\operatorname*{curl}},\mathbf{h}^{\operatorname*{curl}}\right)  \quad
\forall\mathbf{w}\in\mathbf{X}. \label{dualproto}%
\end{align}
This is problem (\ref{adjoint3b}) with $\mathbf{v:}=\mathbf{g:=r}$ and
$\mathbf{h:}=\mathbf{0}$, problem (\ref{smoothdualproblemd}) with
$\mathbf{v}:=\mathbf{g}:=L_{\Omega}\mathbf{r}$ and $\mathbf{h}:=\mathbf{0}$,
and problem (\ref{adjproblm0}) with $\mathbf{v}=\mathbf{h}=\mathbf{g}%
:=L_{\Omega}\mathbf{w}${.}

\textbf{Type 2:}%
\begin{equation}
\text{Given }\mathbf{r}\in\mathbf{X}\text{\quad find }Z\in H^{1}\left(
\Omega\right)  /\mathbb{R}\text{ s.t.\quad}\left(
%TCIMACRO{\TeXButton{l}{\kern-.1em}}%
%BeginExpansion
\kern-.1em%
%EndExpansion
\left(
%TCIMACRO{\TeXButton{l}{\kern-.1em}}%
%BeginExpansion
\kern-.1em%
%EndExpansion
\nabla Z,\nabla\xi%
%TCIMACRO{\TeXButton{l}{\kern-.1em}}%
%BeginExpansion
\kern-.1em%
%EndExpansion
\right)
%TCIMACRO{\TeXButton{l}{\kern-.1em}}%
%BeginExpansion
\kern-.1em%
%EndExpansion
\right)  =\left(
%TCIMACRO{\TeXButton{l}{\kern-.1em}}%
%BeginExpansion
\kern-.1em%
%EndExpansion
\left(
%TCIMACRO{\TeXButton{l}{\kern-.1em}}%
%BeginExpansion
\kern-.1em%
%EndExpansion
L_{\Omega}\mathbf{r},\nabla\xi%
%TCIMACRO{\TeXButton{l}{\kern-.1em}}%
%BeginExpansion
\kern-.1em%
%EndExpansion
\right)
%TCIMACRO{\TeXButton{l}{\kern-.1em}}%
%BeginExpansion
\kern-.1em%
%EndExpansion
\right)  \qquad\forall\xi\in H^{1}\left(  \Omega\right)  . \label{type3}%
\end{equation}
This is problem (\ref{graddoppelKlammer}).

\subsubsection{Solution Formula for Problems of Type 1}

Integration by parts in the sesquilinear form $A_{k}\left(  \cdot
,\cdot\right)  $ gives%
\begin{align}
A_{k}\left(  \mathbf{w},\mathbf{z}\right)   &  =\left(  \operatorname*{curl}%
\mathbf{w},\operatorname*{curl}\mathbf{z}\right)  -k^{2}\left(  \mathbf{w}%
,\mathbf{z}\right)  -\operatorname*{i}k\left(  T_{k}\mathbf{w}_{T}%
,\mathbf{z}_{T}\right)  _{\Gamma}\nonumber\\
&  =\left(  \mathbf{w},\operatorname*{curl}\operatorname*{curl}\mathbf{z}%
-k^{2}\mathbf{z}\right)  -\left(  \gamma_{T}\mathbf{w},\Pi_{T}%
\operatorname*{curl}\mathbf{z}\right)  _{\Gamma}+\left(  \mathbf{w}%
_{T},\operatorname*{i}kT_{-k}\mathbf{z}_{T}\right)  _{\Gamma}\nonumber\\
&  =\left(  \mathbf{w},\operatorname*{curl}\operatorname*{curl}\mathbf{z}%
-k^{2}\mathbf{z}\right)  +\left(  \mathbf{w}_{T},\gamma_{T}%
\operatorname*{curl}\mathbf{z}\right)  _{\Gamma}+\left(  \mathbf{w}%
_{T},\operatorname*{i}kT_{-k}\mathbf{z}_{T}\right)  _{\Gamma}.
\label{Astrong4}%
\end{align}
In a similar way, we can express the right-hand side in (\ref{dualproto}) by%
\begin{align}
\text{r.h.s.}  &  =k^{2}\left(  \mathbf{w},\mathbf{v}\right)
+\operatorname*{i}k\left(  \left(  T_{k}\mathbf{w}^{\nabla},\mathbf{g}%
^{\nabla}\right)  _{\Gamma}-\left(  T_{k}\mathbf{w}^{\operatorname*{curl}%
},\mathbf{h}^{\operatorname*{curl}}\right)  _{\Gamma}\right) \nonumber\\
&  =k^{2}\left(  \mathbf{w},\mathbf{v}\right)  +\left(  \mathbf{w}^{\nabla
},\left(  \operatorname*{i}kT_{k}\right)  ^{\ast}\mathbf{g}^{\nabla}\right)
_{\Gamma}-\left(  \mathbf{w}^{\operatorname*{curl}},\left(  \operatorname*{i}%
kT_{k}\right)  ^{\ast}\mathbf{h}^{\operatorname*{curl}}\right)  _{\Gamma
}\nonumber\\
&  =k^{2}\left(  \mathbf{w},\mathbf{v}\right)  +\left(  \mathbf{w}%
_{T},-\operatorname*{i}kT_{-k}\left(  \mathbf{g}^{\nabla}-\mathbf{h}%
^{\operatorname*{curl}}\right)  \right)  _{\Gamma}. \label{Astrongrhs}%
\end{align}
The right-hand sides in (\ref{Astrong4}) and (\ref{Astrongrhs}) must be equal,
which leads to%
\begin{equation}%
\begin{array}
[c]{rll}%
\operatorname*{curl}\operatorname*{curl}\mathbf{z}-k^{2}\mathbf{z} &
=k^{2}\mathbf{v} & \text{in }\Omega,\\
\gamma_{T}\operatorname*{curl}\mathbf{z}+\operatorname*{i}kT_{-k}%
\mathbf{z}_{T} & =-\operatorname*{i}kT_{-k}\left(  \mathbf{g}^{\nabla
}-\mathbf{h}^{\operatorname*{curl}}\right)  & \text{on }\Gamma.
\end{array}
\label{fsp0}%
\end{equation}
In the next step, we eliminate the capacity operator $T_{-k}$ by considering a
full space problem with transmission condition. Note that for any given
$\mathbf{q}_{T}\in\mathbf{H}_{\operatorname*{curl}}^{-1/2}\left(
\Gamma\right)  $ the adjoint capacity operator $T_{-k}\mathbf{q}_{T}$ is
computed by first solving the exterior problem%
\begin{equation}%
\begin{array}
[c]{cl}%
-\operatorname*{i}k\mathbf{z}^{+}+\operatorname{curl}\mathbf{\tilde{H}}=0 &
\text{in }\mathbb{R}^{3}\backslash\overline{\Omega},\\
\operatorname*{i}k\mathbf{\tilde{H}}+\operatorname{curl}\mathbf{z}^{+}=0 &
\text{in }\mathbb{R}^{3}\backslash\overline{\Omega},\\
\gamma_{T}^{+}\mathbf{z}^{+}=\mathbf{q}_{T}\times\mathbf{n} & \text{on }%
\Gamma,\\
\left.
\begin{array}
[c]{c}%
\left\vert \mathbf{z}^{+}\left(  \mathbf{x}\right)  \right\vert \leq c/r\\
\left\vert \mathbf{\tilde{H}}\left(  \mathbf{x}\right)  \right\vert \leq c/r\\
\left\vert \mathbf{z}^{+}-\mathbf{\tilde{H}}\times\frac{\mathbf{x}}%
{r}\right\vert \leq c/r^{2}%
\end{array}
\right\}  & \text{as }r=\left\Vert \mathbf{x}\right\Vert \rightarrow\infty
\end{array}
\label{actionTmo}%
\end{equation}
so that $\gamma_{T}^{+}\mathbf{\tilde{H}}=T_{-k}\mathbf{q}_{T}$. In the
following we always choose $\mathbf{q}_{T}=\Pi_{T}\mathbf{z}$ in
(\ref{actionTmo}) with $\mathbf{z}$ being the solution of (\ref{dualproto}).

From the third equation in (\ref{actionTmo}) we obtain $\left[  \left(
\mathbf{z,z}^{+}\right)  \right]  _{0,\Gamma}=0$ and from the second equation
in (\ref{actionTmo})%
\begin{equation}
\gamma_{T}^{+}\operatorname{curl}\mathbf{z}^{+}=-\operatorname*{i}k\gamma
_{T}^{+}\mathbf{\tilde{H}}=-\operatorname*{i}kT_{-k}\mathbf{z}_{T}.
\label{fspb}%
\end{equation}
Hence,%
\[
\left[  \left(  \mathbf{z},\mathbf{z}^{+}\right)  \right]  _{1,\Gamma
}\overset{\text{(\ref{defjumps})}}{=}\gamma_{T}\operatorname{curl}%
\mathbf{z-}\gamma_{T}^{+}\operatorname{curl}\mathbf{z}^{+}%
\overset{\text{(\ref{fsp0}), (\ref{fspb})}}{=}-\operatorname*{i}kT_{-k}\left(
\mathbf{g}^{\nabla}-\mathbf{h}^{\operatorname*{curl}}\right)  .
\]
Let $\mathbf{v}_{\operatorname*{zero}}$ denote the extension of $\mathbf{v}$
to the full space by $0$ and define $\mathbf{Z}\in\mathbf{H}%
_{\operatorname*{loc}}\left(  \mathbb{R}^{3},\operatorname*{curl}\right)  $ by
$\left.  \mathbf{Z}\right\vert _{\Omega}=\mathbf{z}$ and $\left.
\mathbf{Z}\right\vert _{\Omega^{+}}=\mathbf{z}^{+}$. The combination with
(\ref{fsp0}) leads to (see \cite[(5.2.22)]{Nedelec01} for the radiation
condition)%
\begin{equation}%
\begin{array}
[c]{rll}%
\operatorname*{curl}\operatorname*{curl}\mathbf{Z}-k^{2}\mathbf{Z} &
=k^{2}\mathbf{v}_{\operatorname*{zero}} & \text{in }\mathbb{R}^{3}%
\backslash\Gamma,\\
\left[  \left(  \mathbf{z},\mathbf{z}^{+}\right)  \right]  _{0,\Gamma} & =0, &
\\
\left[  \left(  \mathbf{z},\mathbf{z}^{+}\right)  \right]  _{1,\Gamma} &
=-\operatorname*{i}kT_{-k}\left(  \mathbf{g}^{\nabla}-\mathbf{h}%
^{\operatorname*{curl}}\right)  , & \\
\left\vert \partial_{r}\mathbf{z}^{+}\left(  \mathbf{x}\right)
+\operatorname*{i}k\mathbf{z}^{+}\left(  \mathbf{x}\right)  \right\vert  &
\leq c/r^{2}, & \text{as }r=\left\Vert \mathbf{x}\right\Vert \rightarrow
\infty.
\end{array}
\label{fsprobl}%
\end{equation}
We first construct a particular solution for the corresponding full space
problem by ignoring the transmission conditions. Then we adjust this solution
to satisfy the transmission condition.

For this purpose we need the fundamental solution for the electric part of the
Maxwell problem in the full space:%
\[%
\begin{array}
[c]{rll}%
\operatorname*{curl}\operatorname*{curl}\mathbf{G}_{k}-k^{2}\mathbf{G}_{k} &
=\delta\mathbf{I} & \text{in }\mathbb{R}^{3},\\
\left\vert \partial_{r}\mathbf{G}_{k}\left(  \mathbf{x}\right)
-\operatorname*{i}k\mathbf{G}_{k}\left(  \mathbf{x}\right)  \right\vert  &
\leq c/r^{2} & \text{as }r=\left\Vert \mathbf{x}\right\Vert \rightarrow\infty.
\end{array}
\]
We eliminate in \cite[(5.2.1)]{Nedelec01} the magnetic field to get the
equations%
\[%
\begin{array}
[c]{rll}%
\operatorname*{curl}\operatorname*{curl}\mathbf{E}-k^{2}\mathbf{E} &
=\delta\mathbf{I} & \text{in }\mathbb{R}^{3},\\
\left\vert \partial_{r}\mathbf{E}\left(  \mathbf{x}\right)  -\operatorname*{i}%
k\mathbf{E}\left(  \mathbf{x}\right)  \right\vert  & \leq c/r^{2} & \text{as
}r=\left\Vert \mathbf{x}\right\Vert \rightarrow\infty.
\end{array}
\]
Hence, the fundamental solution is obtained by dividing the one in
\cite[(5.2.8)]{Nedelec01} by $\left(  \operatorname*{i}\omega\mu\right)  $ to obtain%

\begin{equation}
\mathbf{G}_{k}\left(  \mathbf{x}\right)  =g_{k}\left(  \left\Vert
\mathbf{x}\right\Vert \right)  \mathbf{I}+\frac{1}{k^{2}}\nabla\nabla
^{\intercal}g_{k}\left(  \left\Vert \mathbf{x}\right\Vert \right)
\quad\text{with\quad}g_{k}\left(  r\right)  :=\frac{\operatorname*{e}%
^{\operatorname*{i}kr}}{4\pi r}. \label{fundsol}%
\end{equation}
The second term in the sum is understood as a distribution, i.e., the
convolution with a function $\mathbf{f}\in C_{\operatorname{comp}}^{\infty
}\left(  \mathbb{R}^{3},\mathbb{C}^{3}\right)  $ is defined by%
\begin{equation}
\left(  \mathbf{G}_{k}\star\mathbf{f}\right)  \left(  x\right)  =\int%
_{\mathbb{R}^{3}}g_{k}\left(  \left\Vert x-y\right\Vert \right)
\mathbf{f}\left(  y\right)  dy+\frac{1}{k^{2}}\nabla\int_{\mathbb{R}^{3}}%
g_{k}\left(  \left\Vert \mathbf{x}-\mathbf{y}\right\Vert \right)
\operatorname*{div}\mathbf{f}\left(  \mathbf{y}\right)  d\mathbf{y}.
\label{fundconv}%
\end{equation}

{}From (\ref{fundconv}) we conclude that
\[
\mathbf{z}_{1}=k^{2}\int_{\Omega}g_{-k}\left(  \left\Vert \mathbf{\cdot
}-\mathbf{y}\right\Vert \right)  \mathbf{v}\left(  \mathbf{y}\right)
d\mathbf{y}+\nabla\int_{\mathbb{R}^{3}}g_{-k}\left(  \left\Vert \mathbf{x}%
-\mathbf{y}\right\Vert \right)  \left(  \operatorname*{div}\mathbf{v}%
_{\operatorname*{zero}}\right)  \left(  \mathbf{y}\right)  d\mathbf{y}%
\quad\text{in }\mathbb{R}^{3}%
\]
solves the differential equation (first line in (\ref{fsprobl})) in
$\mathbb{R}^{3}\backslash\Gamma$ and the radiation condition. The function
$\mathbf{v}_{\operatorname*{zero}}$ has a jump across $\Gamma$ and it is easy
to verify that the distributional divergence is given by
\[
\left(  \operatorname*{div}\nolimits_{\mathbb{R}^{3}}\mathbf{v}%
_{\operatorname*{zero}}\right)  \left(  \psi\right)  =\int_{\Omega}\left(
\operatorname*{div}\mathbf{v}\right)  \psi-\int_{\Gamma}\left\langle
\mathbf{v},\mathbf{n}\right\rangle \psi\qquad\forall\psi\in
C_{\operatorname*{comp}}^{\infty}\left(  \mathbb{R}^{3}\right)  .
\]
Hence,%
\[
\mathbf{z}_{1}=k^{2}\mathcal{N}_{-k}^{\operatorname*{Hh}}\left(
\mathbf{v}\right)  +\nabla\mathcal{N}_{-k}^{\operatorname*{Hh}}\left(
\operatorname*{div}\mathbf{v}\right)  -\nabla\mathcal{S}_{-k}%
^{\operatorname*{Hh}}\left(  \left\langle \mathbf{v},\mathbf{n}\right\rangle
\right)  =:\mathbf{z}_{1,1}+\mathbf{z}_{1,2}+\mathbf{z}_{1,3}%
\]
with the acoustic single layer potential%
\begin{equation}
\mathcal{S}_{k}^{\operatorname*{Hh}}\phi:=\int_{\Gamma}g_{k}\left(  \left\Vert
\mathbf{\cdot}-\mathbf{y}\right\Vert \right)  \phi\left(  \mathbf{y}\right)
d\Gamma_{\mathbf{y}} \label{eq:helmholtz-single-layer}%
\end{equation}
and the acoustic Newton potential
\begin{equation}
\mathcal{N}_{k}^{\operatorname*{Hh}}w:=\int_{\Omega}g_{k}\left(  \left\Vert
\mathbf{\cdot}-\mathbf{y}\right\Vert \right)  w\left(  \mathbf{y}\right)
d\Gamma_{\mathbf{y}}. \label{eq:helmholtz-newton-potential}%
\end{equation}
We assumed $\mathbf{v}\in\mathbf{H}\left(  \Omega,\operatorname*{div}\right)
$. Well-known mapping properties of $\mathcal{S}_{k}^{\operatorname*{Hh}}$ and
$\mathcal{N}_{k}^{\operatorname*{Hh}}$ (cf. \cite{SauterSchwab2010}) imply
that
\[
\mathbf{z}_{1,1}\in\mathbf{H}_{\operatorname*{loc}}^{2}\left(  \mathbb{R}%
^{3}\right)  \text{ so that }\left[  \mathbf{z}_{1,1}\right]  _{0,\Gamma
}=0\quad\text{and\quad}\left[  \mathbf{z}_{1,1}\right]  _{1,\Gamma}=0.
\]
By the same reasoning we know that $\mathbf{z}_{1,2}\in\mathbf{H}%
_{\operatorname*{loc}}^{1}\left(  \mathbb{R}^{3}\right)  $ and also
$\operatorname*{curl}\mathbf{z}_{1,2}=0$. Hence $\left[  \mathbf{z}%
_{1,2}\right]  _{0,\Gamma}=\left[  \mathbf{z}_{1,2}\right]  _{1,\Gamma}=0$.
Since $\left\langle \mathbf{v},\mathbf{n}\right\rangle \in H^{-1/2}\left(
\Gamma\right)  $ we know that $\mathcal{S}_{-k}^{\operatorname*{Hh}}\left(
\left\langle \mathbf{v},\mathbf{n}\right\rangle \right)  \in\mathbf{H}%
_{\operatorname*{loc}}^{1}\left(  \mathbb{R}^{3}\right)  $ and
$\operatorname*{curl}\nabla\mathcal{S}_{-k}^{\operatorname*{Hh}}\left(
\left\langle \mathbf{v},\mathbf{n}\right\rangle \right)  =0$ so that $\left[
\mathbf{z}_{1,3}\right]  _{1,\Gamma}=0$. Since $\gamma_{\tau}\nabla$ is a
tangential differential operator its jump vanishes on functions in
$\mathbf{H}_{\operatorname*{loc}}^{1}\left(  \mathbb{R}^{3}\right)  $. This
implies that%
\begin{equation}
\left[  \mathbf{z}_{1}\right]  _{0,\Gamma}=0\quad\text{and\quad}\left[
\mathbf{z}_{1}\right]  _{1,\Gamma}=0. \label{gammastau0}%
\end{equation}
To obtain the full solution we introduce the single layer operator for the
Maxwell problem (cf. \cite[(3.11)]{BHP01}) by%
\begin{equation}
\mathcal{S}_{k}^{\operatorname*{Mw}}\left(
%TCIMACRO{\TeXButton{boldphi}{\mbox{\boldmath$ \phi$}}}%
%BeginExpansion
\mbox{\boldmath$ \phi$}%
%EndExpansion
\right)  =\mathcal{S}_{k}^{\operatorname*{Hh}}\left(
%TCIMACRO{\TeXButton{boldphi}{\mbox{\boldmath$ \phi$}}}%
%BeginExpansion
\mbox{\boldmath$ \phi$}%
%EndExpansion
\right)  +\frac{1}{k^{2}}\nabla\mathcal{S}_{k}^{\operatorname*{Hh}}\left(
\operatorname{div}_{\Gamma}%
%TCIMACRO{\TeXButton{boldphi}{\mbox{\boldmath$ \phi$}}}%
%BeginExpansion
\mbox{\boldmath$ \phi$}%
%EndExpansion
\right)  . \label{eq:maxwell-single-layer}%
\end{equation}

From \cite[(5.5.29)]{Nedelec01} we get that%
\[
\left[  \mathcal{S}_{k}^{\operatorname*{Mw}}%
%TCIMACRO{\TeXButton{boldphi}{\mbox{\boldmath$ \phi$}}}%
%BeginExpansion
\mbox{\boldmath$ \phi$}%
%EndExpansion
\right]  _{1,\Gamma}=-%
%TCIMACRO{\TeXButton{boldphi}{\mbox{\boldmath$ \phi$}}}%
%BeginExpansion
\mbox{\boldmath$ \phi$}%
%EndExpansion
.
\]
The combination of this, the third equation in (\ref{fsprobl}), and
(\ref{gammastau0}) show that
\[
\mathbf{z}_{2}:=\mathcal{S}_{-k}^{\operatorname*{Mw}}\left(  \operatorname*{i}%
kT_{-k}\left(  \mathbf{g}^{\nabla}-\mathbf{h}^{\operatorname*{curl}}\right)
\right)
\]
satisfies $\operatorname{curl}\operatorname{curl}\mathbf{z}_{2}-k^{2}%
\mathbf{z}_{2}=0$ in $\mathbb{R}^{3}\backslash\Gamma$, the transmission
condition (2nd and 3rd equation in (\ref{fsprobl})), and the Silver-M\"{u}ller
radiation conditions for the dual problem. Next we give a formula for the full
solution of (\ref{fsprobl})%
\begin{align}
\mathbf{Z}  &  =k^{2}\int_{\Omega}g_{-k}\left(  \left\Vert \mathbf{\cdot
}-\mathbf{y}\right\Vert \right)  \mathbf{v}\left(  \mathbf{y}\right)
d\mathbf{y}+\nabla\int_{\Omega}g_{-k}\left(  \left\Vert \cdot-\mathbf{y}%
\right\Vert \right)  \left(  \operatorname{div}\mathbf{v}\right)  \left(
\mathbf{y}\right)  d\mathbf{y}\nonumber\\
&  -\nabla\int_{\Gamma}g_{-k}\left(  \left\Vert \cdot-\mathbf{y}\right\Vert
\right)  \left\langle \mathbf{v},\mathbf{n}\right\rangle \left(
\mathbf{y}\right)  d\mathbf{y}\label{solformdualprob}\\
&  +\left(  \operatorname*{i}k\right)  \int_{\Gamma}g_{-k}\left(  \left\Vert
\mathbf{\cdot}-\mathbf{y}\right\Vert \right)  T_{-k}\left(  \mathbf{g}%
^{\nabla}-\mathbf{h}^{\operatorname*{curl}}\right)  \left(  \mathbf{y}\right)
d\Gamma_{\mathbf{y}}-\frac{1}{\operatorname*{i}k}\nabla\int_{\Gamma}%
g_{-k}\left(  \left\Vert \mathbf{\cdot}-\mathbf{y}\right\Vert \right)
\operatorname{div}_{\Gamma}T_{-k}\mathbf{g}^{\nabla}\left(  \mathbf{y}\right)
d\Gamma_{\mathbf{y}},\nonumber
\end{align}
where we used $\operatorname{div}_{\Gamma}T_{-k}\mathbf{h}%
^{\operatorname*{curl}}=0$ (cf. (\ref{divsucurl=0})).

\begin{theorem}
\quad

\begin{enumerate}
\item For $\mathbf{v}\in\mathbf{V}_{0}^{\ast}$, $\mathbf{g}=\mathbf{v}$, and
$\mathbf{h}=\mathbf{0}$, the solution of (\ref{dualproto}) is given by%
\begin{equation}
\mathbf{z}=k^{2}\int_{\Omega}g_{-k}\left(  \left\Vert \mathbf{\cdot
}-\mathbf{y}\right\Vert \right)  \mathbf{v}\left(  \mathbf{y}\right)
d\mathbf{y}+\operatorname*{i}k\int_{\Gamma}g_{-k}\left(  \left\Vert
\mathbf{\cdot}-\mathbf{y}\right\Vert \right)  T_{-k}\mathbf{v}^{\nabla}\left(
\mathbf{y}\right)  d\Gamma_{\mathbf{y}}. \label{zvv}%
\end{equation}

\item For $\mathbf{v}=\mathbf{0}$, formula (\ref{solformdualprob}) simplifies
to a combined layer potential%
\begin{equation}
\mathbf{z}=\operatorname*{i}k\int_{\Gamma}g_{-k}\left(  \left\Vert
\mathbf{\cdot}-\mathbf{y}\right\Vert \right)  T_{-k}\left(  \mathbf{g}%
^{\nabla}-\mathbf{h}^{\operatorname*{curl}}\right)  \left(  \mathbf{y}\right)
d\Gamma_{\mathbf{y}}-\frac{1}{\operatorname*{i}k}\nabla\int_{\Gamma}%
g_{-k}\left(  \left\Vert \mathbf{\cdot}-\mathbf{y}\right\Vert \right)
\operatorname{div}_{\Gamma}T_{-k}\mathbf{g}^{\nabla}\left(  \mathbf{y}\right)
d\Gamma_{\mathbf{y}}. \label{v=0}%
\end{equation}

\end{enumerate}
\end{theorem}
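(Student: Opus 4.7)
The plan is to derive both statements directly from the general representation (\ref{solformdualprob}) by identifying which of its five summands survive under the specialised hypotheses, using only the characterisation (\ref{ImpBedb}) of $\mathbf{V}_{0}^{\ast}$ and the mapping properties of $T_{-k}$ with respect to the Hodge splitting (\ref{utbcHdecomp}) on $\Gamma$.

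For part~1, I would fix $\mathbf{v}\in\mathbf{V}_{0}^{\ast}$ and set $\mathbf{g}=\mathbf{v}$, $\mathbf{h}=\mathbf{0}$ in (\ref{solformdualprob}). The first condition in (\ref{ImpBedb}), $\operatorname{div}\mathbf{v}=0$, immediately annihilates the second summand, while setting $\mathbf{h}=\mathbf{0}$ reduces the fourth summand to $\operatorname{i}k\int_{\Gamma}g_{-k}T_{-k}\mathbf{v}^{\nabla}\,d\Gamma_{\mathbf{y}}$, which is exactly the second term of (\ref{zvv}). The essential work is to show that the third and fifth summands, $-\nabla\mathcal{S}_{-k}^{\operatorname{Hh}}(\langle\mathbf{v},\mathbf{n}\rangle)$ and $-\frac{1}{\operatorname{i}k}\nabla\mathcal{S}_{-k}^{\operatorname{Hh}}(\operatorname{div}_{\Gamma}T_{-k}\mathbf{v}^{\nabla})$, cancel each other. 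To see this I would first use the explicit series (\ref{DefTk}) for $T_{-k}$ together with (\ref{divsucurl=0}) to note that $T_{-k}$ preserves the splitting $\mathbf{v}_{T}=\mathbf{v}^{\operatorname{curl}}+\mathbf{v}^{\nabla}$ and that $\operatorname{div}_{\Gamma}$ annihilates the surface-curl component, whence $\operatorname{div}_{\Gamma}T_{-k}\Pi_{T}\mathbf{v}=\operatorname{div}_{\Gamma}T_{-k}\mathbf{v}^{\nabla}$. The second condition in (\ref{ImpBedb}) then allows me to substitute $\operatorname{div}_{\Gamma}T_{-k}\mathbf{v}^{\nabla}=\operatorname{i}k\langle\mathbf{v},\mathbf{n}\rangle$ inside the fifth summand; after the collapse of the scalar prefactor and using linearity of $\mathcal{S}_{-k}^{\operatorname{Hh}}$, the fifth summand becomes the negative of the third, and the two cancel. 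Only the first summand and the reduced fourth summand survive, yielding (\ref{zvv}).

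Part~2 is then immediate: setting $\mathbf{v}=\mathbf{0}$ in (\ref{solformdualprob}) kills the first three summands (each of which carries $\mathbf{v}$, its divergence, or its normal trace), and the surviving fourth and fifth summands assemble verbatim into the combined layer-potential expression (\ref{v=0}); no further identity is required.

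The main obstacle I anticipate is purely a bookkeeping one: the complex scalar factors $\pm\operatorname{i}k$ and $\pm 1/(\operatorname{i}k)$ appearing in (\ref{solformdualprob}) must be tracked in tandem with the sign convention of (\ref{ImpBedb}), and a single misplaced factor of $\operatorname{i}$ would destroy the cancellation in part~1. The only non-book-keeping ingredient is the identity $\operatorname{div}_{\Gamma}T_{-k}\mathbf{v}^{\operatorname{curl}}=0$, which follows at once from the Fourier representation (\ref{DefTk}) and (\ref{divsucurl=0}) and can simply be cited.
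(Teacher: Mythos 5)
Your strategy is exactly the one the paper intends: its own proof of this theorem is a one-line appeal to (\ref{ImpBedb}) for part~1 and to setting $\mathbf{v}=\mathbf{0}$ for part~2, and you have correctly identified every ingredient, in particular the identity $\operatorname*{div}_{\Gamma}T_{-k}\mathbf{v}^{\operatorname*{curl}}=0$ (from (\ref{DefTk}) and (\ref{divsucurl=0})) that lets you replace $\operatorname*{div}_{\Gamma}T_{-k}\Pi_{T}\mathbf{v}$ by $\operatorname*{div}_{\Gamma}T_{-k}\mathbf{v}^{\nabla}$. Part~2 is indeed immediate.

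However, the sign bookkeeping that you yourself flag as the danger point does not close as you describe it. Substituting the second condition of (\ref{ImpBedb}), namely $\operatorname*{div}_{\Gamma}T_{-k}\mathbf{v}^{\nabla}=\operatorname*{i}k\left\langle \mathbf{v},\mathbf{n}\right\rangle $, into the fifth summand of (\ref{solformdualprob}) produces the scalar prefactor $-\tfrac{1}{\operatorname*{i}k}\cdot\operatorname*{i}k=-1$, so that summand becomes $-\nabla\mathcal{S}_{-k}^{\operatorname*{Hh}}\left(  \left\langle \mathbf{v},\mathbf{n}\right\rangle \right)  $, which is \emph{equal} to the third summand, not its negative. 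Taken literally, the two terms therefore double rather than cancel, leaving a residual $-2\nabla\mathcal{S}_{-k}^{\operatorname*{Hh}}\left(  \left\langle \mathbf{v},\mathbf{n}\right\rangle \right)  $ that does not appear in (\ref{zvv}). The cancellation requires $\operatorname*{div}_{\Gamma}T_{-k}\mathbf{v}^{\nabla}=-\operatorname*{i}k\left\langle \mathbf{v},\mathbf{n}\right\rangle $. Since (\ref{zvv}) is certainly the formula that is intended and used later, the missing sign must be hunted down upstream, most plausibly in the derivation of (\ref{ImpBedb}) (where the sesquilinear pairing and the identification $T_{k}^{\ast}=T_{-k}$ enter) or in the sign of the surface contribution to the distributional divergence of $\mathbf{v}_{\operatorname*{zero}}$ that generates the third summand of (\ref{solformdualprob}). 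You cannot simply assert that "the fifth summand becomes the negative of the third"; you must either exhibit the compensating sign explicitly or record that one of the quoted formulas carries a sign typo that your argument corrects.
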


%

%TCIMACRO{\TeXButton{Proof}{\proof}}%
%BeginExpansion
\proof
%EndExpansion
For the choices as in (\ref{zvv}), the properties (\ref{ImpBedb}) allow us to
simplify (\ref{solformdualprob}) and to obtain (\ref{zvv}). Formula
(\ref{v=0}) follows simply by setting $\mathbf{v}=\mathbf{0}$ in
(\ref{solformdualprob}).%
%TCIMACRO{\TeXButton{End Proof}{\endproof}}%
%BeginExpansion
\endproof
%EndExpansion

\subsubsection{Solution Formula for Type 2 Problems in the Unit Ball
$B_{1}(0)$ \label{SolFormType3}}

The problem of Type 2 (cf.~(\ref{type3})) is a Poisson-type problem.
Integration by parts leads to its strong formulation. We recall
$\operatorname{div}L_{\Omega}{\mathbf{r}}=0$ by (\ref{eq:defLOmega-strong-b})
so that
\begin{equation}%
\begin{array}
[c]{cl}%
-\Delta Z=0 & \text{in }\Omega,\\
\frac{\partial Z}{\partial\mathbf{n}}-\frac{\operatorname*{i}}{k}%
\operatorname*{div}_{\Gamma}T_{k}\nabla_{\Gamma}Z=\left\langle L_{\Omega
}\mathbf{r},\mathbf{n}\right\rangle -\frac{\operatorname*{i}}{k}%
\operatorname*{div}_{\Gamma}T_{k}^{\operatorname*{low}}\mathbf{r}_{T} &
\text{on }\Gamma.
\end{array}
\label{doppelklammerstrong}%
\end{equation}
To analyze problem, we introduce the Dirichlet-to-Neumann operator $T_{\Delta
}:H^{1/2}(\Gamma)\rightarrow H^{-1/2}(\Gamma)$ that maps $g\in H^{1/2}%
(\Gamma)$ to $\partial_{n}u$, where $u$ is the (weak) solution of%
\[
\Delta u=0\text{ in }\Omega,\qquad\qquad u=g\quad\text{on }\Gamma.
\]
This allows us to formulate (\ref{doppelklammerstrong}) as follows (with
$L_{\Gamma}$ as in Def.~\ref{DefFreqSplit})%
\begin{equation}%
\begin{array}
[c]{cl}%
-\Delta Z=0 & \text{in }\Omega,\\
T_{\Delta}Z-\frac{\operatorname*{i}}{k}\operatorname*{div}_{\Gamma}T_{k}%
\nabla_{\Gamma}Z=\left\langle L_{\Omega}\mathbf{r},\mathbf{n}\right\rangle
-\frac{\operatorname*{i}}{k}\operatorname*{div}_{\Gamma}T_{k}%
^{\operatorname*{low}}\mathbf{r}_{T} & \text{on }\Gamma.
\end{array}
\label{doppelklammerstrong2}%
\end{equation}
We employ expansions of $\left\langle L_{\Omega}\mathbf{r},\mathbf{n}%
\right\rangle $ and $\mathbf{r}_{T}$ in the forms%
\begin{equation}
\left\langle L_{\Omega}\mathbf{r},\mathbf{n}\right\rangle =\sum_{\ell
=0}^{\infty}\sum_{m\in\iota_{\ell}}\kappa_{\ell}^{m}Y_{\ell}^{m}%
\quad\text{and\quad}\mathbf{r}_{T}=\sum_{\ell=1}^{\infty}\sum_{m\in\iota
_{\ell}}\left(  r_{\ell}^{m}\mathbf{T}_{\ell}^{m.}+R_{\ell}^{m}\nabla_{\Gamma
}Y_{\ell}^{m}\right)  \label{eq:kappa-R}%
\end{equation}
so that the right-hand side in the second equation of
(\ref{doppelklammerstrong2}) is%
\begin{align}
\left\langle L_{\Omega}\mathbf{r},\mathbf{n}\right\rangle -\frac
{\operatorname*{i}}{k}\operatorname*{div}\nolimits_{\Gamma}T_{k}%
^{\operatorname*{low}}\mathbf{r}_{T}  &  =\sum_{\ell=0}^{\infty}\sum
_{m\in\iota_{\ell}}\left(  \kappa_{\ell}^{m}Y_{\ell}^{m}-\frac
{\operatorname*{i}}{k}\operatorname*{div}\nolimits_{\Gamma}T_{k}%
^{\operatorname*{low}}\left(  r_{\ell}^{m}\mathbf{T}_{\ell}^{m}+R_{\ell}%
^{m}\nabla_{\Gamma}Y_{\ell}^{m}\right)  \right) \nonumber\\
&  \overset{\text{\cite[(2.4.173), (5.3.93)]{Nedelec01}}}{=}\sum_{\ell>\lambda
k}^{\infty}\sum_{m\in\iota_{\ell}}\kappa_{\ell}^{m}Y_{\ell}^{m}+\sum_{\ell
\leq\lambda k}\sum_{m\in\iota_{\ell}}\left(  \kappa_{\ell}^{m}-\frac
{\ell\left(  \ell+1\right)  }{z_{\ell}\left(  k\right)  +1}R_{\ell}%
^{m}\right)  Y_{\ell}^{m}. \label{doppelklammerstrong2rhs}%
\end{align}
Note that $Y_{0}^{0}=\frac{1}{\sqrt{4\pi}}$ is constant and hence $\kappa
_{0}^{0}=\left(  \left\langle L_{\Omega}\mathbf{r},\mathbf{n}\right\rangle
,Y_{0}^{0}\right)  _{\Gamma}=\left(  \operatorname*{div}L_{\Omega}%
\mathbf{r},\frac{1}{\sqrt{4\pi}}\right)  _{\Omega}=0$. Hence the summation
index for the second sum in (\ref{doppelklammerstrong2rhs}) can be restricted
to $1\leq\ell\leq\lambda k$. The representation (\ref{doppelklammerstrong2rhs}%
) motivates the ansatz for the trace of $Z$%
\[
\left.  Z\right\vert _{\Gamma}=\sum_{\ell=0}^{\infty}\sum_{m\in\iota_{\ell}%
}Z_{\ell}^{m}Y_{\ell}^{m}.
\]
The left-hand side in the second equation of (\ref{doppelklammerstrong2})
becomes
\begin{align}
T_{\Delta}Z-\frac{\operatorname*{i}}{k}\operatorname*{div}\nolimits_{\Gamma
}T_{k}\nabla_{\Gamma}Z  &  =\sum_{\ell=0}^{\infty}\sum_{m\in\iota_{\ell}%
}Z_{\ell}^{m}\left(  T_{\Delta}Y_{\ell}^{m}-\frac{\operatorname*{i}}%
{k}\operatorname*{div}\nolimits_{\Gamma}T_{k}\nabla_{\Gamma}Y_{\ell}%
^{m}\right) \nonumber\\
&  \overset{\text{\cite[(2.5.22), (5.3.93)]{Nedelec01}}}{=}\sum_{\ell
=1}^{\infty}\ell\left(  1{-}\frac{\ell+1}{z_{\ell}\left(  k\right)
+1}\right)  \sum_{m\in\iota_{\ell}}Z_{\ell}^{m}Y_{\ell}^{m}.
\label{doppelklammerstrong2lhs}%
\end{align}
The right-hand sides in (\ref{doppelklammerstrong2rhs}) and
(\ref{doppelklammerstrong2lhs}) must be equal. Thus%
\begin{equation}
Z_{\ell}^{m}=\left\{
\begin{array}
[c]{ll}%
\varphi_{\ell}^{m}:=\dfrac{1}{\ell}\left(  \dfrac{z_{\ell}\left(  k\right)
+1}{z_{\ell}\left(  k\right)  {-}\ell}\right)  \kappa_{\ell}^{m}-\dfrac
{\ell+1}{z_{\ell}\left(  k\right)  {-}\ell}R_{\ell}^{m} & \ell\leq\lambda k,\\
\Phi_{\ell}^{m}:=\dfrac{1}{\ell}\left(  \dfrac{z_{\ell}\left(  k\right)
+1}{z_{\ell}\left(  k\right)  {-}\ell}\right)  \kappa_{\ell}^{m} &
\ell>\lambda k.
\end{array}
\right.  \label{defphilmPhilm}%
\end{equation}
Hence, the solution $Z$ of (\ref{doppelklammerstrong2}) is the solution of the
following Laplace equation with non-homogeneous Dirichlet boundary conditions%
\begin{equation}%
\begin{array}
[c]{crll}
& -\Delta Z & =0 & \text{in }\Omega,\\
& Z & =g_{D} & \text{on }\Gamma,\\
\text{with} & g_{D} & :=%
%TCIMACRO{\dsum \limits_{\ell\leq\lambda k}}%
%BeginExpansion
{\displaystyle\sum\limits_{\ell\leq\lambda k}}
%EndExpansion%
%TCIMACRO{\dsum \limits_{m\in\iota_{\ell}}}%
%BeginExpansion
{\displaystyle\sum\limits_{m\in\iota_{\ell}}}
%EndExpansion
\varphi_{\ell}^{m}Y_{\ell}^{m}+%
%TCIMACRO{\dsum \limits_{\ell>\lambda k}^{\infty}}%
%BeginExpansion
{\displaystyle\sum\limits_{\ell>\lambda k}^{\infty}}
%EndExpansion%
%TCIMACRO{\dsum \limits_{m\in\iota_{\ell}}}%
%BeginExpansion
{\displaystyle\sum\limits_{m\in\iota_{\ell}}}
%EndExpansion
\Phi_{\ell}^{m}Y_{\ell}^{m}. &
\end{array}
\label{recastDiri}%
\end{equation}

%-------------------------------------------------------

\subsection{Regularity of the Dual Problems\label{SecRegDualSol} in $\Omega = B_1(0)$}

%-------------------------------------------------------

%-------------------------------------------------------

\subsubsection{The High-Frequency Case\label{SecRegHighFreq}}

%-------------------------------------------------------

We consider the regularity of the solution in (\ref{adjoint3b}) for a
right-hand side $\mathbf{r}\leftarrow\mathbf{v}_{0}\in\mathbf{V}_{0}%
^{\ast}$. Recall the definition of $\nabla^{p}$ in (\ref{defLaplhochn}).

\begin{proposition}
\label{PropN2Arough}
Let $\Omega = B_1(0)$. 
Let $\mathbf{v}_{0}\in\mathbf{V}_{0}^{\ast}$ and
$\mathbf{z}=\mathcal{N}_{2}\mathbf{v}_{0}$ with $\mathcal{N}_{2}$ given by
(\ref{adjoint3b}). There exists a $k$-dependent splitting $\mathcal{N}%
_{2}{\mathbf v}_0=\mathcal{N}_{2}^{\operatorname*{rough}}{\mathbf v}_0+\mathcal{N}_{2}^{\mathcal{A}}{\mathbf v}_0$
such that
\begin{equation}%
\begin{array}
[c]{rll}%
\left\Vert \mathcal{N}_{2}^{\operatorname*{rough}}\mathbf{v}_{0}\right\Vert
_{\mathbf{H}^{2}\left(  \Omega\right)  } & \leq C_{\operatorname*{rough}%
}k\left\Vert \mathbf{v}_{0}\right\Vert _{\operatorname*{curl},\Omega,k}, & \\
\left\Vert \nabla^{p}\mathcal{N}_{2}^{\mathcal{A}}\mathbf{v}_{0}\right\Vert  &
\leq C_{\mathcal{A},2}k^{3}\gamma_{\mathcal{A},2}^p\left(  \max\left\{
p+1,k\right\}  \right)  ^{p}\left\Vert \mathbf{v}_{0}\right\Vert
_{\operatorname*{curl},\Omega,k} & \forall p\in\mathbb{N}_{0},
\end{array}
\label{PropFullsNewton1}%
\end{equation}
where $C_{\operatorname{rough}}$, $C_{\mathcal{A},2}$, $\gamma_{\mathcal{A}%
,2}>0$ are constants independent of $p$, $k$ and $\mathbf{v}_0$.
\end{proposition}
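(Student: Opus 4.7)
The plan is to use the explicit solution formula (\ref{zvv}) for the case $\mathbf{v}=\mathbf{g}=\mathbf{v}_0$, $\mathbf{h}=\mathbf{0}$, namely
\[
\mathbf{z}=\mathcal{N}_2\mathbf{v}_0
= k^2\int_{\Omega}g_{-k}(\|\cdot-\mathbf{y}\|)\mathbf{v}_0(\mathbf{y})\,d\mathbf{y}
+\operatorname*{i}k\int_{\Gamma}g_{-k}(\|\cdot-\mathbf{y}\|)\,T_{-k}\mathbf{v}_0^{\nabla}(\mathbf{y})\,d\Gamma_{\mathbf{y}}
=:\mathbf{z}_{\mathrm{vol}}+\mathbf{z}_{\mathrm{bd}}.
\]
Since $\mathbf{v}_0\in\mathbf{V}_0^\ast$, Lemma~\ref{Lemembed} gives $\mathbf{v}_0\in\mathbf{H}^1(\Omega)$ with $\|\mathbf{v}_0\|_{\mathbf{H}^1(\Omega)}\lesssim\|\mathbf{v}_0\|_{\operatorname{curl},\Omega,1}$, while $\|\mathbf{v}_0\|\leq k^{-1}\|\mathbf{v}_0\|_{\operatorname{curl},\Omega,k}$ is immediate. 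Each component of $\mathbf{z}$ solves a scalar Helmholtz equation $-\Delta z_i - k^2 z_i = k^2 (\mathbf{v}_0)_i$, since $\operatorname*{curl}\operatorname*{curl}=-\Delta+\nabla\operatorname{div}$ and $\operatorname{div}\mathbf{z}=0$ (by applying $\operatorname{div}$ to the Maxwell equation, using $\operatorname{div}\mathbf{v}_0=0$ from Lemma~\ref{LemEq2scprodstrong}). This componentwise reduction is what makes \cite{MelenkSauterMathComp} directly applicable.

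For the volume term $\mathbf{z}_{\mathrm{vol}}$, I would apply \cite[Lemma~3.5]{MelenkSauterMathComp} componentwise to the Newton potential $\mathcal{N}_{-k}^{\operatorname{Hh}}(\mathbf{v}_0)_i$. That lemma yields, for any fixed $\mu>1$, a splitting $\mathcal{N}_{-k}^{\operatorname{Hh}}(\mathbf{v}_0)_i = u_i^{H^2}+u_i^{\mathcal A}$ with $\|u_i^{H^2}\|_{H^2(\Omega)}\lesssim\|(\mathbf{v}_0)_i\|_{L^2(\Omega)}$ and $u_i^{\mathcal A}$ analytic with $k$-explicit derivative bounds. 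Multiplying by $k^2$ and using $\|\mathbf{v}_0\|\leq k^{-1}\|\mathbf{v}_0\|_{\operatorname{curl},\Omega,k}$ produces the $k$-prefactor in the $H^2$-bound and an analytic part contributing $O(k)$ to $C_{\mathcal{A},2}k^3$ (sharper than needed).

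For the boundary term $\mathbf{z}_{\mathrm{bd}}$, I would first perform the spherical-harmonic frequency splitting $\mathbf{v}_0^\nabla = L_\Gamma\mathbf{v}_0^\nabla + H_\Gamma\mathbf{v}_0^\nabla$ using Definition~\ref{DefFreqSplit}, and correspondingly write $T_{-k}\mathbf{v}_0^\nabla = T_{-k}^{\operatorname{low}}\mathbf{v}_0^\nabla + T_{-k}^{\operatorname{high}}\mathbf{v}_0^\nabla$. The low-frequency density $T_{-k}^{\operatorname{low}}\mathbf{v}_0^\nabla$ is analytic on $\Gamma$ with controlled $k$-dependence, following the argument in Lemma~\ref{LemLgammaPitu}\,(i) combined with the $\|T_k^{\operatorname{low}}\|\lesssim k$ bound from (\ref{frequency_regb}); hence $\mathcal{S}_{-k}^{\operatorname{Hh}}$ of it is analytic in $\overline{\Omega}$ via a decomposition of $g_{-k}$ into analytic and low-regularity parts (again as in \cite[Lemma~3.5]{MelenkSauterMathComp}, applied to the surface potential through lifting to a tubular neighborhood). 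For the high-frequency part, Lemma~\ref{Lemzlest} gives $\|T_{-k}^{\operatorname{high}}\mathbf{v}_0^\nabla\|_{H^{-1/2}(\Gamma)}\lesssim\|\mathbf{v}_0^\nabla\|_{H^{1/2}(\Gamma)}\lesssim\|\mathbf{v}_0\|_{\operatorname{curl},\Omega,1}$ with constants independent of $k$, so $\mathcal{S}_{-k}^{\operatorname{Hh}}(T_{-k}^{\operatorname{high}}\mathbf{v}_0^\nabla)$ inherits $H^1(\Omega)$-control; its $H^2$-control on $\Omega$ then follows from standard elliptic shift regularity for the Helmholtz equation on the smooth domain $B_1(0)$, producing the $k\cdot 1 = O(k)$ dependence for $\|\mathcal{N}_2^{\operatorname{rough}}\mathbf{v}_0\|_{\mathbf{H}^2}$.

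The main obstacle is the boundary term, and specifically the interaction of the capacity operator $T_{-k}$ with the single layer potential. Controlling $\mathcal{S}_{-k}^{\operatorname{Hh}}$ in $H^2(\Omega)$ is standard, but splitting it into an analytic and a rough part while simultaneously tracking that the analytic part only loses $k^3$ (not worse) requires using that the $T_{-k}^{\operatorname{low}}$-symbols $\ell(\ell+1)k/(z_\ell(k)+1)$ grow at most like $k^3$ on $\ell\leq\lambda k$ (via $|z_\ell(k)+1|\gtrsim k/\ell$ from Lemma~\ref{Lemzlest}), combined with the analyticity of the spherical harmonics up to order $\ell\lesssim k$. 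The remaining technical point is to verify that, in the splitting of $g_{-k}$ into an analytic kernel and a finite-regularity kernel, the analytic kernel applied to a layer density produces a function in $\mathcal{A}(Ck^{\alpha_2}\|\mathbf{v}_0\|_{\operatorname{curl},\Omega,k},\gamma_{\mathcal{A},2},\Omega)$ with the correct $\alpha_2=3$; this is where I expect to have to combine Lemma~\ref{lemma:A-invariant} with the analytic lifting of surface quantities to a tubular neighborhood used already in the proof of Theorem~\ref{TheoLau}.
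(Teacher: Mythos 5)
Your overall strategy coincides with the paper's: the same solution formula (\ref{zvv}), the same split into the volume Newton potential and the boundary single--layer term, and the same low/high frequency splitting of $T_{-k}\mathbf{v}_0^{\nabla}$ on $\Gamma$. The volume term and the low--frequency boundary term are handled exactly as in the paper (componentwise application of \cite[Lemma~3.5]{MelenkSauterMathComp}, resp.\ the analytic--lifting argument of Section~\ref{SecRegLowFreq}).

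There is, however, a genuine gap in your treatment of the high--frequency boundary term $\mathcal{S}_{-k}^{\operatorname{Hh}}\bigl(T_{-k}^{\operatorname{high}}\mathbf{v}_0^{\nabla}\bigr)$. First, the density bound you invoke is in the wrong norm: an estimate $\|T_{-k}^{\operatorname{high}}\mathbf{v}_0^{\nabla}\|_{H^{-1/2}(\Gamma)}\lesssim \|\mathbf{v}_0\|_{\operatorname{curl},\Omega,1}$ only gives the potential $H^1$--regularity; to reach $H^2(\Omega)$ you need the density in $\mathbf{H}_T^{1/2}(\Gamma)$. That stronger bound, $\|T_{-k}H_{\Gamma}\mathbf{v}_0^{\nabla}\|_{\mathbf{H}_T^{1/2}(\Gamma)}\lesssim\|\mathbf{v}_0\|_{\operatorname{curl},\Omega,k}$ with a $k$--independent constant, is true but must be derived (it is (\ref{Tmk})/(\ref{Est1/2normw}) in the paper, and it uses both $\mathbf{V}_0^{\ast}\subset\mathbf{H}^1(\Omega)$ with $\operatorname{div}\mathbf{v}_0=0$ and the symbol decay $|k/(z_\ell(k)+1)|\lesssim k/(\ell+1)$ for $\ell\geq\lambda k$ from Lemma~\ref{Lemzlest}). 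Second, and more seriously, ``standard elliptic shift regularity'' cannot produce the required $k$--uniform $H^2$ bound: the interior restriction $u$ of the single--layer potential solves $-\Delta u=k^2u$, so shift regularity yields $\|u\|_{H^2(\Omega)}\lesssim k^2\|u\|_{L^2(\Omega)}+\|u|_{\Gamma}\|_{H^{3/2}(\Gamma)}$, and the term $k^2\|u\|_{L^2(\Omega)}$ already costs an extra factor of $k$ relative to the target (after multiplication by the outer factor $\operatorname{i}k$ in (\ref{zvv}) one lands at $k^2\|\mathbf{v}_0\|_{\operatorname{curl},\Omega,1}$, which is not dominated by $k\|\mathbf{v}_0\|_{\operatorname{curl},\Omega,k}$). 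The paper avoids this by invoking the $k$--explicit decomposition of the single--layer potential from \cite[Lem.~3.4, Thm.~5.3]{Melenk_map_helm_int}, which splits off an oscillatory \emph{analytic} remainder (measured in $\mathbf{H}_T^{-3/2}(\Gamma)$) and leaves a rough part whose $H^2$ norm is bounded by $\|\cdot\|_{\mathbf{H}_T^{1/2}(\Gamma)}$ of the density with a constant independent of $k$. In particular, your assignment of the entire high--frequency boundary contribution to $\mathcal{N}_2^{\operatorname{rough}}$ cannot work; a further rough/analytic splitting of that term is unavoidable.
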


%

%TCIMACRO{\TeXButton{Proof}{\proof}}%
%BeginExpansion
\proof
%EndExpansion
The solution of the dual problem (\ref{adjoint3b}) is given (cf.~(\ref{zvv}),
(\ref{orthobk})) by
\begin{align*}
\mathbf{z}  &  =\left(  -\operatorname*{i}k\mathbf{z}_{1}+\mathbf{z}%
_{2}\right)  \operatorname*{i}k\\
\text{with\quad}\mathbf{z}_{1}  &  :=\int_{\Omega}g_{-k}\left(  \left\Vert
\mathbf{\cdot}-\mathbf{y}\right\Vert \right)  \mathbf{v}_{0}\left(
\mathbf{y}\right)  d\mathbf{y\quad}\text{and\quad}\mathbf{z}_{2}:=\int%
_{\Gamma}g_{-k}\left(  \left\Vert \mathbf{\cdot}-\mathbf{y}\right\Vert
\right)  T_{-k}\mathbf{v}_{0}^{\nabla}\left(  \mathbf{y}\right)
d\Gamma_{\mathbf{y}}.
\end{align*}

{}From the decomposition lemma in \cite[Lemma~{3.5}]{MelenkSauterMathComp} we
get a $k$-dependent additive splitting $\mathbf{z}_{1}:=\mathbf{z}%
_{1}^{\operatorname*{rough}}+\mathbf{z}_{1}^{\mathcal{A}}$ such that%
\begin{equation}%
\begin{array}
[c]{rll}%
\left\Vert \nabla^{m}\mathbf{z}_{1}^{\operatorname*{rough}}\right\Vert  & \leq
Ck^{m-2}\left\Vert \mathbf{v}_{0}\right\Vert  & \forall m\in\left\{
0,1,2\right\}  ,\\
\left\Vert \nabla^{p}\mathbf{z}_{1}^{\mathcal{A}}\right\Vert  & \leq
Ck^{p-1}\left\Vert \mathbf{v}_{0}\right\Vert  & \forall p\in\mathbb{N}_{0}%
\end{array}
\label{z1roughA0}%
\end{equation}
for a constant $C$ independent of $k$ and $\mathbf{v}_{0}$. For the function
$\mathbf{z}_{2}$ we employ the splitting%
\[
\mathbf{v}_{0}^{\nabla,\operatorname*{low}}:=L_{\Gamma}\left(  \mathbf{v}%
_{0}^{\nabla}\right)  \quad\text{and\quad}\mathbf{v}_{0}^{\nabla
,\operatorname*{high}}:=H_{\Gamma}\left(  \mathbf{v}_{0}^{\nabla}\right)
\]
and define $\mathbf{z}_{2}^{\operatorname*{low}}:=\mathcal{S}_{-k}%
^{\operatorname*{Hh}}\left(  T_{-k}\mathbf{v}_{0}^{\nabla,\operatorname*{low}%
}\right)  $ and $\mathbf{z}_{2}^{\operatorname*{high}}:=\mathbf{z}%
_{2}-\mathbf{z}_{2}^{\operatorname*{low}}$. From \cite[Lem.~{3.4}, Thm.~{5.3}%
]{Melenk_map_helm_int} we conclude that there exists a splitting
$\mathbf{z}_{2}^{\operatorname*{high}}=\mathbf{z}_{2}^{\operatorname*{rough}%
}+\mathbf{z}_{2}^{\mathcal{A}}$ such that, for $\mathbf{w}:=T_{-k}%
\mathbf{v}_{0}^{\nabla,\operatorname*{high}}$,%
\begin{equation}%
\begin{array}
[c]{rll}%
\left\Vert \nabla^{m}\mathbf{z}_{2}^{\operatorname*{rough}}\right\Vert  & \leq
C\left\Vert \mathbf{w}\right\Vert _{\mathbf{H}_{T}^{1/2}\left(  \Gamma\right)
} & \forall m\in\{0,1,2\},\\
\left\Vert \nabla^{p}\mathbf{z}_{2}^{\mathcal{A}}\right\Vert  & \leq\tilde
{C}\tilde{\gamma}^{p}\max\left\{  p+1,k\right\}  ^{p+1}\left\Vert
\mathbf{w}\right\Vert _{\mathbf{H}_{T}^{-3/2}\left(  \Gamma\right)  } &
\forall p\in\mathbb{N}_{0}.
\end{array}
\label{z2roughAfirst}%
\end{equation}
Here the constants $C$, $\tilde{C}$, $\tilde{\gamma}$ are independent of $p$,
$k$ and $\mathbf{w}$. This motivates the definition of the operator
$\mathcal{N}_{2}^{\operatorname*{rough}}:\mathbf{V}_{0}^{\ast}\rightarrow
\mathbf{H}^{2}\left(  \Omega\right)  $ by%
\begin{equation}
\mathcal{N}_{2}^{\operatorname*{rough}}\mathbf{v}_{0}:=\mathbf{z}%
_{1}^{\operatorname*{rough}}+\mathbf{z}_{2}^{\operatorname*{rough}}.
\label{defN1rough}%
\end{equation}

To estimate the norms of $\mathbf{w}$ in (\ref{z2roughAfirst}) we employ the
third estimate in Lemma~\ref{Lemzlest} for $s\leq3/2$ (we also use that
(\ref{zlconjcompl}) gives $z_{\ell}\left(  -k\right)  =\overline{z_{\ell
}\left(  k\right)  }$ and $\left\vert \frac{-k}{z_{\ell}\left(  -k\right)
+1}\right\vert =\left\vert \frac{k}{z_{\ell}\left(  k\right)  +1}\right\vert
$): From the definition of $\mathbf{w}$ and (\ref{uTaddsplit}) --
(\ref{DefTk}) we conclude that $\mathbf{w}$ has the representation%
\[
\mathbf{v}_0^\nabla =\sum_{\ell=1}^{\infty}\sum_{m\in\iota_{\ell}}V_{\ell}^{m}%
\nabla_{\Gamma}Y_{\ell}^{m}%
\]
for some coefficients $V_{\ell}^{m}$. Hence%
\begin{align}
\left\Vert \mathbf{w}\right\Vert _{\mathbf{H}_{T}^{s}\left(  \Gamma\right)
}^{2}  &  \leq\sum_{\ell>\lambda k}\sum_{m\in\iota_{\ell}}\left(  \ell\left(
\ell+1\right)  \right)  ^{s+1}\left\vert \frac{k}{z_{\ell}\left(  k\right)
+1}\right\vert ^{2}\left\vert V_{\ell}^{m}\right\vert ^{2}\leq Ck^{2}%
\sum_{\ell>\lambda k}\sum_{m\in\iota_{\ell}}\left(  \ell+1\right)
^{2s-3}\left(  \ell\left(  \ell+1\right)  \right)  ^{3/2}\left\vert V_{\ell
}^{m}\right\vert ^{2}\nonumber\\
&  \leq Ck^{2s-1}\sum_{\ell>\lambda k}\sum_{m\in\iota_{\ell}}\left(
\ell\left(  \ell+1\right)  \right)  ^{3/2}\left\vert V_{\ell}^{m}\right\vert
^{2}\overset{\text{(\ref{divscalednorm})}}{\leq}Ck^{2s-1}\left\Vert
\operatorname*{div}\nolimits_{\Gamma}\mathbf{v}_{0}^{\nabla}\right\Vert
_{H^{-1/2}\left(  \Gamma\right)  }^{2}\leq Ck^{2s-1}\left\Vert \mathbf{v}%
_{0}^{\nabla}\right\Vert _{\mathbf{H}_{T}^{1/2}\left(  \Gamma\right)  }%
^{2}\nonumber\\
&  \leq Ck^{2s-1}\left\Vert \mathbf{v}_{0}\right\Vert _{\mathbf{H}%
^{1}\left(  \Omega\right)  }^{2}\overset{\text{Lemma~\ref{Lemembedspec}}}
{\leq}Ck^{2s-1}\left\Vert \mathbf{v}_{0}\right\Vert
_{\operatorname*{curl},\Omega,1}^{2}. \label{Tmk}%
\end{align}
We set $s=1/2$ in (\ref{Tmk}) to derive%
\begin{equation}
\left\Vert \mathbf{w}\right\Vert _{\mathbf{H}_{T}^{1/2}\left(  \Gamma\right)
}\leq C\left\Vert \mathbf{v}_{0}\right\Vert _{\operatorname*{curl},\Omega,k}.
\label{Est1/2normw}%
\end{equation}

The combination of the first lines in (\ref{z1roughA0}) and
(\ref{z2roughAfirst}) with (\ref{Est1/2normw}) leads to the first estimate in
(\ref{PropFullsNewton1}).

To estimate $\left\Vert \mathbf{w}\right\Vert _{\mathbf{H}_{T}^{-3/2}\left(
\Gamma\right)  }$ we employ (\ref{Tmk}) for $s=-3/2$ and obtain%
\begin{equation}
\left\Vert \mathbf{w}\right\Vert _{\mathbf{H}_{T}^{-3/2}\left(  \Gamma\right)
}\leq Ck^{-2}\left\Vert \mathbf{v}_{0}\right\Vert _{\operatorname*{curl}%
,\Omega,1}. \label{w-3/2est}%
\end{equation}
Taking into account the second estimate in (\ref{z2roughAfirst}) results in%
\[
\left\Vert \nabla^{p}\mathbf{z}_{2}^{\mathcal{A}}\right\Vert \leq C\gamma
^{p}\max\left\{  p+1,k\right\}  ^{p-1}\left\Vert \mathbf{v}_{0}\right\Vert
_{\operatorname*{curl},\Omega,1}.
\]

The term $\mathbf{z}_{2}^{\operatorname*{low}}$ is defined as the acoustic
single layer potential applied to the function $T_{-k}\mathbf{v}_{0}%
^{\nabla,\operatorname*{low}}$. The analysis of such a term will be carried
out in Section \ref{SecRegLowFreq} and it follows from (\ref{nablacde}) (where
the function $c$ corresponds to $\mathbf{z}_{2}^{\operatorname*{low}}$) that%
\begin{equation}
\mathbf{z}_{2}^{\operatorname*{low}}\in\mathcal{A}\left(  Ck^{2}\left\Vert
\mathbf{v}_{0}\right\Vert _{\operatorname*{curl},\Omega,1},\gamma
,\Omega\right)  , \label{z2lowest}%
\end{equation}
where $C$ and $\gamma$ are positive constants independent of $k$ and
$\mathbf{v}_{0}$. The combination of the second estimates in (\ref{z1roughA0}%
), (\ref{z2roughAfirst}) with (\ref{w-3/2est}) and (\ref{z2lowest}) leads to
the second estimate in (\ref{PropFullsNewton1}).%
%TCIMACRO{\TeXButton{End Proof}{\endproof}}%
%BeginExpansion
\endproof
%EndExpansion

%----------------------------------------------------------------

\subsubsection{The Low-Frequency Cases\label{SecRegLowFreq}}

%----------------------------------------------------------------

First, we study the regularity of the solution operator $\mathcal{N}%
_{3}^{\mathcal{A}}$ as in (\ref{smoothdualproblemd}) which is of Type 1 with
$\mathbf{r}=\mathbf{g}=L_{\Omega}\mathbf{v}$ and $\mathbf{h}=0$. Since
$L_{\Omega}\mathbf{v}$ is, in general, not in $\mathbf{V}_{0}^{\ast}$ we have
to employ the solution formula (\ref{solformdualprob}), where the second
summand can be dropped due to $\operatorname{div}L_{\Omega}\mathbf{v}=0$
(cf.~(\ref{eq:defLOmega-strong-b})). We set%
\begin{equation}%
\begin{array}
[c]{ll}%
a:=\mathcal{N}_{-k}^{\operatorname*{Hh}}\left(  L_{\Omega}\mathbf{v}\right)
, & b:=\mathcal{S}_{-k}^{\operatorname*{Hh}}\left(  \left\langle L_{\Omega
}\mathbf{v},\mathbf{n}\right\rangle \right)  ,\\
c:=\mathcal{S}_{-k}^{\operatorname*{Hh}}\left(  T_{-k}\left(  L_{\Omega
}\mathbf{v}\right)  ^{\nabla}\right)  , & d:=\mathcal{S}_{-k}%
^{\operatorname*{Hh}}\left(  \operatorname{div}_{\Gamma}T_{-k}\left(
L_{\Omega}\mathbf{v}\right)  ^{\nabla}\right)  
\end{array}
\label{abcde}%
\end{equation}
so that
\begin{equation}
\mathbf{z}:=\mathcal{N}_{3}^{\mathcal{A}}\mathbf{v}=k^{2}a-\nabla
b+\operatorname*{i}kc+\frac{\operatorname*{i}}{k}\nabla d. \label{zabcde}%
\end{equation}

\begin{proposition}
\label{PropN3} 
Let $\Omega = B_1(0)$. 
There exist positive
constants $C_{\mathcal{A},3}$ and $\gamma_{\mathcal{A},3}$ independent of $k$
such that for any $\mathbf{v} \in{\mathbf{X}}$
\[
\mathcal{N}_{3}^{\mathcal{A}}\mathbf{v}\in\mathcal{A}\left(  C_{\mathcal{A}%
,3}k^{3}\left\Vert \mathbf{v}\right\Vert _{\operatorname*{curl},\Omega
,k},\gamma_{\mathcal{A},3},\Omega\right)  .
\]

\end{proposition}

%

%TCIMACRO{\TeXButton{Proof}{\proof}}%
%BeginExpansion
\proof
%EndExpansion
We determine the analyticity classes for the functions in the splitting
(\ref{zabcde}), distinguishing between the terms related to the acoustic
Newton potential $\mathcal{N}_{-k}^{\operatorname*{Hh}}$ and the acoustic
single layer operator.

\textbf{a) Newton potential}. We start by writing a function $q=\mathcal{N}%
_{-k}^{\operatorname*{Hh}}\left(  g\right)  $ as a solution of a transmission
problem: Let
\begin{align*}
-\Delta q-k^{2}q  &  =g_{\operatorname*{zero}}\text{ in }\mathbb{R}%
^{3}\backslash\Gamma\text{\quad with\quad}g_{\operatorname*{zero}}:=\left\{
\begin{array}
[c]{ll}%
g & \text{in }\Omega,\\
0 & \text{in }\mathbb{R}^{3}\backslash\overline{\Omega}.
\end{array}
\right. \\
\left[  q\right]  _{\Gamma}  &  =\left[  \frac{\partial q}{\partial\mathbf{n}%
}\right]  _{\Gamma}=0,\\
\left\vert \frac{\partial q}{\partial r}+\operatorname*{i}kq\right\vert  &
=o\left(  \left\Vert \mathbf{x}\right\Vert ^{-1}\right)  \quad\text{as
}\left\Vert \mathbf{x}\right\Vert \rightarrow\infty\text{.}%
\end{align*}
Next, we will determine the class of analyticity for the function $q$ by using
the results in \cite{Melenk_map_helm_int}. For this, we have to investigate
the analyticity class of $g=L_{\Omega}\mathbf{v}$. From Theorem~\ref{TheoLau}
we conclude with $C_{1}$, $\gamma_{1}$ independent of $k$ and ${\mathbf{v}}$
\begin{equation}
L_{\Omega}\mathbf{v}\in\mathcal{A}\left(  C_{\mathbf{v},1},\gamma_{1}%
,\Omega\right)  \quad\text{with\quad}C_{\mathbf{v},1}:=C_{1}k^{3/2}\left\Vert
\mathbf{v}\right\Vert _{\operatorname*{curl},\Omega,k}.
\label{eq:analyticity-LOmegav}%
\end{equation}
This allows us to use \cite[Thm.~{B.4}]{Melenk_map_helm_int} to deduce the
analyticity class for $\mathcal{N}_{-k}^{\operatorname*{Hh}}\left(  L_{\Omega
}\mathbf{v}\right)  $. We introduce the weighted $H^{1}$-norm by $\left\Vert
v\right\Vert _{\mathcal{H},\Omega}:=\left(  \left\Vert \nabla v\right\Vert
^{2}+k^{2}\left\Vert v\right\Vert ^{2}\right)  ^{1/2}$ and obtain%
\[
\left.  \mathcal{N}_{-k}^{\operatorname*{Hh}}\left(  L_{\Omega}\mathbf{v}%
\right)  \right\vert _{\Omega}\in\mathcal{A}\left(  C_{\mathbf{v},2}%
,\gamma_{3},\Omega\right)  ,
\]
with%
\[
C_{\mathbf{v},2}:=C_{3}\left(  k^{-2}C_{\mathbf{v},1}+k^{-1}\left\Vert
\mathcal{N}_{-k}^{\operatorname*{Hh}}\left(  L_{\Omega}\mathbf{v}\right)
\right\Vert _{\mathcal{H},B_{R}(0)}\right)  ;
\]
here, $B_{R}(0)$ is an (arbitrarily chosen) ball containing $\overline{\Omega
}$. {}From \cite[Lemma 3.5]{MelenkSauterMathComp}, we get $\left\Vert
\mathcal{N}_{-k}^{\operatorname*{Hh}}\left(  L_{\Omega}\mathbf{v}\right)
\right\Vert _{\mathcal{H},B_{R}(0)}\leq C\left\Vert L_{\Omega}\mathbf{v}%
\right\Vert $ so that%
\[
C_{\mathbf{v},2}\leq CC_{3}\left(  k^{-2}C_{\mathbf{v},1}+k^{-1}\left\Vert
L_{\Omega}\mathbf{v}\right\Vert \right)  \leq CC_{3}\left(  C_{1}%
k^{-1/2}\left\Vert \mathbf{v}\right\Vert _{\operatorname*{curl},\Omega
,k}+k^{-2}\left\Vert \mathbf{v}\right\Vert _{\operatorname*{curl},\Omega
,k}\right)  \leq C_{4}k^{-1/2}\left\Vert \mathbf{v}\right\Vert
_{\operatorname*{curl},\Omega,k}.
\]
Hence
\begin{equation}
k^{2}a\in\mathcal{A}\left(  C_{4}k^{3/2}\left\Vert \mathbf{v}\right\Vert
_{\operatorname*{curl},\Omega,k},\gamma_{4},\Omega\right)  . \label{k2a}%
\end{equation}

\textbf{b) Single layer potential:} We write a function $q=\mathcal{S}%
_{-k}^{\operatorname*{Hh}}\left(  g\right)  $ as a solution of a transmission
problem: Let $\gamma_{0}$ denote the standard, one-sided trace operator for
$\Gamma$ from the interior and $\gamma_{0}^{+}$ the one from the exterior. The
one-sided normal trace (from the interior) is denoted by $\gamma_{1}%
:=\partial/\partial\mathbf{n}$ and by $\gamma_{1}^{+}$ from the exterior. The
respective jumps are $\left[  u\right]  _{\Gamma}=\gamma_{0}^{+}u-\gamma_{0}u$
and $\left[  u\right]  _{\mathbf{n},\Gamma}=\gamma_{1}^{+}u-\gamma_{1}u$. The
well-known jump relations for the single layer potential yield for the
potential $q=\mathcal{S}_{-k}^{\operatorname*{Hh}}\left(  g\right)  $
\begin{align*}
-\Delta q-k^{2}q  &  =0\text{ in }\mathbb{R}^{3}\backslash\Gamma,\\
\left[  q\right]  _{\Gamma}  &  =0\quad\text{and\quad}\left[  q\right]
_{\mathbf{n},\Gamma}=-g,\\
\left\vert \frac{\partial q}{\partial r}+\operatorname*{i}kq\right\vert  &
=o\left(  \left\Vert \mathbf{x}\right\Vert ^{-1}\right)  \quad\text{as
}\left\Vert \mathbf{x}\right\Vert \rightarrow\infty\text{.}%
\end{align*}
The essential part of the regularity estimates are those near the
boundary/interface $\Gamma$, where the analyticity of the jump $g$ and the
geometry $\Gamma$ come into play. We follow the standard procedure of locally
flattening $\Gamma$ so that \cite[Thm.~{5.5.4}]{MelenkHabil} becomes
applicable.
%local extension of $g$ to balls $B_{R}\left(  x\right)  $ with radius $0<R<1$
%and center $x\in\Gamma$.
In view of (\ref{abcde}) we have to analyze the transmission problem for 3
different choices of $g$:
\begin{equation}
g\in\left\{  g_{1},g_{2},\mathbf{g}_{3}\right\}  \quad\text{with\quad}%
g_{1}:=\left\langle L_{\Omega}\mathbf{v},\mathbf{n}\right\rangle \text{,\quad
}g_{2}:=\operatorname{div}_{\Gamma}T_{-k}\left(  L_{\Omega}\mathbf{v}\right)
^{\nabla}\text{,\quad}\mathbf{g}_{3}:=T_{-k}\left(  L_{\Omega}\mathbf{v}%
\right)  ^{\nabla}. \label{g1g2g3}%
\end{equation}
\emph{1.~step (analyticity classes of $g$):}
%For doing so, one has to determine the analyticity class for some
In the following ${\mathcal{U}}_{\Gamma}$ is a sufficiently small neighborhood
of $\Gamma$ whose size depends solely on $\Gamma$.
Lemma~\ref{lemma:ntimesLomega-analytic} directly implies the existence of an
extension $g_{1}^{\ast}$ of $g_{1}$ into ${\mathcal{U}}_{\Gamma}$
\begin{equation}
g_{1}^{\ast}\in\mathcal{A}\left(  \tilde{C}k^{3/2}\left\Vert \mathbf{v}%
\right\Vert _{\operatorname*{curl},\Omega,k},\tilde{\gamma},\mathcal{U}%
_{\Gamma}\right)  . \label{g1starA}%
\end{equation}

To define extensions of $g_{2}$, $\mathbf{g}_{3}$ we repeat the arguments of
Lemma~\ref{LemLgammaPitu}. From the expansion
\[
\Pi_{T}\mathbf{v}:=\sum_{\ell=1}^{\infty}\sum_{m\in\iota_{\ell}}\left(
v_{\ell}^{m}\overrightarrow{\operatorname*{curl}\nolimits_{\Gamma}}Y_{\ell
}^{m}+V_{\ell}^{m}\nabla_{\Gamma}Y_{\ell}^{m}\right)
\]
we get
\begin{align*}
\mathbf{g}_{3}  &  =T_{-k}\left(  L_{\Omega}\mathbf{v}\right)  ^{\nabla
}\overset{\text{(\ref{DefTk})}}{=}\sum_{1\leq\ell\leq\lambda k}\overline
{\left(  \frac{\operatorname*{i}k}{z_{\ell}\left(  k\right)  +1}\right)  }%
\sum_{m\in\iota_{\ell}}V_{\ell}^{m}\nabla_{\Gamma}Y_{\ell}^{m},\\
g_{2}  &  =\operatorname{div}_{\Gamma}T_{-k}\left(  L_{\Omega}\mathbf{v}%
\right)  ^{\nabla}\overset{\text{(\ref{divsucurl=0})}}{=}\sum_{1\leq\ell
\leq\lambda k}\overline{\left(  \frac{\operatorname*{i}k\ell\left(
\ell+1\right)  }{z_{\ell}\left(  k\right)  +1}\right)  }\sum_{m\in\iota_{\ell
}}V_{\ell}^{m}Y_{\ell}^{m}.
\end{align*}
Recall the analytic extension $\tilde Y^{m}_{\ell}$ of $Y^{m}_{\ell}$ with the
property (\ref{estgradYtilde}) from the proof of Lemma~\ref{LemLgammaPitu}. We
define the analytic extensions of $g_{2}$, $\mathbf{g}_{3}$ by
\[
g_{2}^{\ast}:=\sum_{1\leq\ell\leq\lambda k}\overline{\left(  \frac
{\operatorname*{i}k\ell\left(  \ell+1\right)  }{z_{\ell}\left(  k\right)
+1}\right)  }\sum_{m\in\iota_{\ell}}V_{\ell}^{m}\tilde{Y}_{\ell}^{m}%
\quad\text{and\quad}\mathbf{g}_{3}^{\ast}:=\sum_{1\leq\ell\leq\lambda
k}\overline{\left(  \frac{\operatorname*{i}k}{z_{\ell}\left(  k\right)
+1}\right)  }\sum_{m\in\iota_{\ell}}V_{\ell}^{m}\nabla\tilde{Y}_{\ell}^{m}.
\]
We obtain by using Cauchy-Schwarz inequalities
\begin{align*}
\left\Vert \nabla^{n}g_{2}^{\ast}\right\Vert _{L^{2}\left(  \mathcal{U}%
_{\Gamma}\right)  }  &  \leq\sum_{1\leq\ell\leq\lambda k}\left\vert
\frac{\operatorname*{i}k\ell\left(  \ell+1\right)  }{z_{\ell}\left(  k\right)
+1}\right\vert \sum_{m\in\iota_{\ell}}\left\vert V_{\ell}^{m}\right\vert
\left\Vert \nabla^{n}\tilde{Y}_{\ell}^{m}\right\Vert _{L^{2}\left(
\mathcal{U}_{\Gamma}\right)  }\overset{\text{(\ref{estk/z+1}),
(\ref{m1/2curldiva})}}{\leq}Ck^{7/2}\gamma^{n}\max\left\{  k,n\right\}
^{n}\left\Vert \mathbf{v}_{T}\right\Vert _{-1/2,\operatorname*{curl}_{\Gamma}%
},\\
\left\Vert \nabla^{n}\mathbf{g}_{3}^{\ast}\right\Vert _{L^{2}\left(
\mathcal{U}_{\Gamma}\right)  }  &  \leq\sum_{1\leq\ell\leq\lambda k}\left\vert
\frac{\operatorname*{i}k}{z_{\ell}\left(  k\right)  +1}\right\vert \sum
_{m\in\iota_{\ell}}\left\vert V_{\ell}^{m}\right\vert \left\Vert \nabla
^{n+1}\tilde{Y}_{\ell}^{m}\right\Vert _{L^{2}\left(  \mathcal{U}_{\Gamma
}\right)  }\\
&  \leq\tilde{C}\tilde{\gamma}^{n+1}k\max\left\{  k,n+1\right\}  ^{n}%
\sum_{1\leq\ell\leq\lambda k}\sqrt{\ell\left(  \ell+1\right)  }\sum_{m\in
\iota_{\ell}}\left\vert V_{\ell}^{m}\right\vert \\
&  \leq\hat{C}\hat{\gamma}^{n}k^{5/2}\max\left\{  k,n+1\right\}
^{n}\left\Vert \mathbf{v}_{T}\right\Vert _{-1/2,\operatorname*{curl}_{\Gamma}%
}.
\end{align*}
We combine this with Theorem~\ref{traceTHM1} and have proved that the
extensions $g_{1}^{\ast}$, $g_{2}^{\ast}$, $\mathbf{g}_{3}^{\ast}$ belong to
the analyticity classes%
\[
g_{1}^{\ast}\in\mathcal{A}\left(  C_{1}k^{3/2}\left\Vert \mathbf{v}\right\Vert
_{\operatorname*{curl},\Omega,k},\gamma_{1},\mathcal{U}_{\Gamma}\right)
,\quad g_{2}^{\ast}\in\mathcal{A}\left(  C_{2}k^{7/2}\left\Vert \mathbf{v}%
\right\Vert _{\operatorname*{curl},\Omega,1},\gamma_{2},\mathcal{U}_{\Gamma
}\right)  ,\quad\mathbf{g}_{3}^{\ast}\in\mathcal{A}\left(  C_{3}%
k^{5/2}\left\Vert \mathbf{v}\right\Vert _{\operatorname*{curl},\Omega
,1},\gamma_{3},\mathcal{U}_{\Gamma}\right)  ,
\]
where $C_{j}$, $\gamma_{j}$ are independent of $k$ and $\mathbf{v}$.

\emph{2.~step (a priori bounds for potential $q$):} Note that, for an
(arbitrary) fixed ball $B_{R}(0)$ with $\overline{\Omega}\subset B_{R}%
(0)$,\cite[Lemma~{3.4}, Thm.~{5.3}]{Melenk_map_helm_int} imply
\begin{equation}
\left\Vert \mathcal{S}_{-k}^{\operatorname*{Hh}}\left(  \mathbf{g}_{3}\right)
\right\Vert _{\mathcal{H},B_{R}(0)}\leq C\sum_{\ell=0}^{1}k^{2-\ell}\left\Vert
\mathbf{g}_{3}\right\Vert _{\mathbf{H}_{T}^{-3/2+\ell}\left(  \Gamma\right)
}\quad\text{and\quad}\left\Vert \mathcal{S}_{-k}^{\operatorname*{Hh}}\left(
g_{i}\right)  \right\Vert _{\mathcal{H},B_{R}(0)}\leq C\sum_{\ell=0}%
^{1}k^{2-\ell}\left\Vert g_{i}\right\Vert _{H^{-3/2+\ell}\left(
\Gamma\right)  },\quad i=1,2. \label{S-kHHg}%
\end{equation}
The $\left\Vert \cdot\right\Vert _{H^{-3/2+\ell}\left(  \Gamma\right)  }$ norm
of $g_{i}$ can be estimated for $\ell=0,1$ as follows:

\textbf{Estimating }$g_{1}:$%
\begin{align*}
\left\Vert \left\langle L_{\Omega}\mathbf{v},\mathbf{n}\right\rangle
\right\Vert _{H^{-3/2+\ell}\left(  \Gamma\right)  }  &  \leq C\left\Vert
\left\langle L_{\Omega}\mathbf{v},\mathbf{n}\right\rangle \right\Vert
_{H^{-1/2}\left(  \Gamma\right)  }\leq C\left\Vert L_{\Omega}\mathbf{v}%
\right\Vert _{\mathbf{H}\left(  \Omega,\operatorname*{div}\right)  }\\
&  \overset{\text{(\ref{eq:defLOmega-strong-b})}}{=}C\left\Vert L_{\Omega
}\mathbf{v}\right\Vert \leq\frac{C}{k}\left\Vert L_{\Omega}\mathbf{v}%
\right\Vert _{\operatorname*{curl},\Omega,k}\leq Ck^{-1}\left\Vert
\mathbf{v}\right\Vert _{\operatorname*{curl},\Omega,k}.
\end{align*}

\textbf{Estimating }$g_{2}:$%
\begin{align*}
\left\Vert g_{2}\right\Vert _{H^{-3/2+n}\left(  \Gamma\right)  }^{2}  &
\overset{\text{(\ref{DefHsGammaNorm})}}{\leq}\sum_{1\leq\ell\leq\lambda
k}\left(  \ell\left(  \ell+1\right)  \right)  ^{-3/2+n}\left\vert
\frac{\operatorname*{i}k\ell\left(  \ell+1\right)  }{z_{\ell}\left(  k\right)
+1}\right\vert ^{2}\sum_{m\in\iota_{\ell}}\left\vert V_{\ell}^{m}\right\vert
^{2}\\
&  \overset{\text{(\ref{estk/z+1})}}{\leq}Ck^{2}\sum_{1\leq\ell\leq\lambda
k}\left(  \ell\left(  \ell+1\right)  \right)  ^{1/2+n}\sum_{m\in\iota
_{\ell}}\left\vert V_{\ell}^{m}\right\vert ^{2}\\
&  \overset{\text{(\ref{m1/2curldiva})}}{\leq}Ck^{2+2n}\left\Vert
\mathbf{v}_{T}\right\Vert _{-1/2,\operatorname*{curl}_{\Gamma}}^{2}%
\overset{\text{Thm. \ref{traceTHM1}}}{\leq}Ck^{2+2n}\left\Vert
\mathbf{v}\right\Vert _{\operatorname*{curl},\Omega,1}^{2}.
\end{align*}

\textbf{Estimating }$g_{3}:$
\begin{align*}
\left\Vert \mathbf{g}_{3}\right\Vert _{\mathbf{H}_{T}^{-3/2+n}\left(
\Gamma\right)  }^{2}  &  \overset{\text{(\ref{DefHsGammaTNorm})}}{\leq}%
\sum_{1\leq\ell\leq\lambda k}\left(  \ell\left(  \ell+1\right)  \right)
^{-1/2+n}\left\vert \frac{\operatorname*{i}k}{z_{\ell}\left(  k\right)
+1}\right\vert ^{2}\sum_{m\in\iota_{\ell}}\left\vert V_{\ell}^{m}\right\vert
^{2}\\
&  \leq Ck^{2+2n}\sum_{1\leq\ell\leq\lambda k}\left(  \ell\left(
\ell+1\right)  \right)  ^{-1/2}\sum_{m\in\iota_{\ell}}\left\vert V_{\ell}%
^{m}\right\vert ^{2}\leq Ck^{2+2n}\left\Vert \mathbf{v}\right\Vert
_{\operatorname*{curl},\Omega,1}^{2}.
\end{align*}
The combination with (\ref{S-kHHg}) leads to%
\[
\left\Vert \mathcal{S}_{-k}^{\operatorname*{Hh}}\left(  g_{1}\right)
\right\Vert _{\mathcal{H},B_{R}(0)}\leq Ck\left\Vert \mathbf{v}\right\Vert
_{\operatorname*{curl},\Omega,k},\quad\left\Vert \mathcal{S}_{-k}%
^{\operatorname*{Hh}}\left(  g_{2}\right)  \right\Vert _{\mathcal{H},B_{R}%
(0)}\leq Ck^{3}\left\Vert \mathbf{v}\right\Vert _{\operatorname*{curl}%
,\Omega,1},\quad\left\Vert \mathcal{S}_{-k}^{\operatorname*{Hh}}\left(
\mathbf{g}_{3}\right)  \right\Vert _{\mathcal{H},B_{R}(0)}\leq Ck^{3}%
\left\Vert \mathbf{v}\right\Vert _{\operatorname*{curl},\Omega,1}.
\]
\emph{3.~step (analyticity of potential $q$):} The above steps and
\cite[Thm.~5.5.4]{MelenkHabil} give
\begin{equation}
\nabla b\in\mathcal{A}\left(  Ck^{3/2}\left\Vert \mathbf{v}\right\Vert
_{\operatorname*{curl},\Omega,k},\gamma,\Omega\right)  ,\quad kc\in
\mathcal{A}\left(  Ck^{3}\left\Vert \mathbf{v}\right\Vert
_{\operatorname*{curl},\Omega,1},\gamma,\Omega\right)  ,\quad\frac{1}{k}\nabla
d\in\mathcal{A}\left(  Ck^{5/2}\left\Vert \mathbf{v}\right\Vert
_{\operatorname*{curl},\Omega,1},\gamma,\Omega\right)  . \label{nablacde}%
\end{equation}
{}From the decomposition (\ref{zabcde}) and the assertions (\ref{k2a}),
(\ref{nablacde}) we conclude that%
\[
\mathcal{N}_{3}^{\mathcal{A}}\mathbf{v}\in\mathcal{A}\left(  C_{\mathcal{A}%
,3}k^{3}\left\Vert \mathbf{v}\right\Vert _{\operatorname*{curl},\Omega
,k},\gamma_{\mathcal{A},3},\Omega\right)
\]
for constants $C_{\mathcal{A},3}$, $\gamma_{\mathcal{A},3}$ independent of $k$
and $\mathbf{v}$.%
%TCIMACRO{\TeXButton{End Proof}{\endproof}}%
%BeginExpansion
\endproof
%EndExpansion

Next, we analyze the regularity of the solution operator $\mathcal{N}%
_{4}^{\mathcal{A}}$ of (\ref{graddoppelKlammer}).
%applied in the proof of
%Proposition~\ref{PropHOmegaSplit} to some function $\mathbf{v}\in\mathbf{X}$.

\begin{proposition}
\label{PropN4A} Let $\Omega= B_{1}(0)$. There exist positive constants
$C_{\mathcal{A},4}$ and $\gamma_{\mathcal{A},4}$ depending only on $\Gamma$
and the cut-off parameter $\lambda$ such that for any $\mathbf{r}\in
\mathbf{X}$
\[
\nabla\mathcal{N}_{4}^{\mathcal{A}}\mathbf{r}\in\mathcal{A}\left(
C_{\mathcal{A},4}k^{5/2}\left\Vert \mathbf{r}\right\Vert
_{\operatorname*{curl},\Omega,1},\gamma_{\mathcal{A},4},\Omega\right)  .
\]

\end{proposition}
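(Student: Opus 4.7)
The plan is to exploit the recasting of the Type 2 problem (\ref{type3}) as a Dirichlet Laplace problem carried out in Section~\ref{SolFormType3}. Since $\Omega=B_{1}(0)$, the solution $Z=\mathcal{N}_{4}^{\mathcal{A}}\mathbf{r}$ of (\ref{recastDiri}) admits the explicit harmonic expansion
\begin{equation*}
Z(\mathbf{x}) = \sum_{\ell\ge 1}\sum_{m\in\iota_{\ell}} \zeta_{\ell}^{m}\, r^{\ell}\,Y_{\ell}^{m}(\mathbf{x}/r), \qquad
\zeta_{\ell}^{m} = \varphi_{\ell}^{m}\ \ (\ell\le\lambda k),\quad \zeta_{\ell}^{m}=\Phi_{\ell}^{m}\ \ (\ell>\lambda k),
\end{equation*}
with $\varphi_{\ell}^{m}$, $\Phi_{\ell}^{m}$ given by (\ref{defphilmPhilm}). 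So the task is reduced to an explicit harmonic polynomial / harmonic series analysis with coefficients that have to be controlled sharply in $k$.

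First I would estimate the coefficients $\zeta_{\ell}^{m}$. The coefficients $\kappa_{\ell}^{m}$ of $\langle L_{\Omega}\mathbf{r},\mathbf{n}\rangle$ are controlled by Lemma~\ref{lemma:ntimesLomega-analytic}\,(\ref{item:lemma:ntimesLomega-analytic-iii}): a bounded $\ell^{1}$-sum with $k^{1/2}$ growth in the range $\ell\le\gamma'_{\mathcal{A},\Gamma}k$ and exponential decay with prefactor $k^{2}$ for $\ell>\gamma'_{\mathcal{A},\Gamma}k$. The coefficients $R_{\ell}^{m}$ of the gradient part of $\mathbf{r}_{T}$ are handled by (\ref{m1/2curldiva}) together with Theorem~\ref{traceTHM1}, which gives $\sum_{\ell\ge 1}\lambda_{\ell}^{1/2}|R_{\ell}^{m}|^{2}\lesssim \|\mathbf{r}\|_{\operatorname{curl},\Omega,1}^{2}$, so by Cauchy--Schwarz $\sum_{\ell\le\lambda k}\sum_{m}|R_{\ell}^{m}|\lesssim k^{1/2}\|\mathbf{r}\|_{\operatorname{curl},\Omega,1}$. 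The rational factors $\frac{z_{\ell}(k)+1}{z_{\ell}(k)-\ell}$ and $\frac{\ell+1}{z_{\ell}(k)-\ell}$ appearing in (\ref{defphilmPhilm}) will be bounded by combining Lemma~\ref{Lemzlest} with the asymptotic identity $z_{\ell}(k)\sim -\ell-1$ (derivable from (\ref{zlconjcompl}) and the leading behaviour of $h_{\ell}^{(1)}$ for $\ell\gg 1$), giving $|\zeta_{\ell}^{m}|$-bounds that are uniform of order $1$ for $\ell\gg k$ and at worst of order $k$ in the resonant regime $\ell\sim k$.

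Second I would split $Z=Z^{\mathrm{low}}+Z^{\mathrm{high}}$ at the cutoff $\ell=\gamma'_{\mathcal{A},\Gamma}k$ used in Lemma~\ref{lemma:ntimesLomega-analytic}, and treat the two pieces separately.
For $Z^{\mathrm{high}}=\sum_{\ell>\gamma'_{\mathcal{A},\Gamma}k}\sum_{m}\Phi_{\ell}^{m}r^{\ell}Y_{\ell}^{m}$, the exponential decay of $\Phi_{\ell}^{m}$ inherited from Lemma~\ref{lemma:ntimesLomega-analytic}\,(\ref{item:lemma:ntimesLomega-analytic-iii}) makes the series converge in a slightly larger ball $B_{1+\varepsilon}(0)$; interior regularity of harmonic functions on $B_{1+\varepsilon}(0)$ together with (\ref{eq:lemma:ntimesLomega-analytic-12}) (applied with $\alpha=n$ to absorb the factor $\ell^{n}$ produced by differentiating $r^{\ell}Y_{\ell}^{m}$) then yields $\|\nabla^{n+1}Z^{\mathrm{high}}\|_{L^{2}(\Omega)}\le C k^{2}\|\mathbf{r}\|_{\operatorname{curl},\Omega,1}\gamma^{n}\max\{n+1,k\}^{n}$.
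For $Z^{\mathrm{low}}=\sum_{\ell\le\gamma'_{\mathcal{A},\Gamma}k}\sum_{m}\varphi_{\ell}^{m}r^{\ell}Y_{\ell}^{m}$, one uses that each $r^{\ell}Y_{\ell}^{m}$ is a harmonic polynomial of degree $\ell$, so Bernstein-type bounds on the ball together with the orthonormality of the $Y_{\ell}^{m}$ give $\|\nabla^{n+1}(r^{\ell}Y_{\ell}^{m})\|_{L^{2}(B_{1})}\lesssim (\ell+1)^{n+1}$. Summing against the $\ell^{1}$-bound on $\varphi_{\ell}^{m}$ from Step~1 produces the desired $\max\{n+1,k\}^{n}$ growth, with a prefactor controlled by $k^{5/2}\|\mathbf{r}\|_{\operatorname{curl},\Omega,1}$ (the $k^{1/2}$ from $\kappa_{\ell}^{m}$ and $R_{\ell}^{m}$ times at most $k^{2}$ from the rational factors in (\ref{defphilmPhilm}) and the explicit $r^{\ell}$-differentiation). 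Combining both parts with the definition of $\mathcal{A}$ yields Proposition~\ref{PropN4A}.

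The main obstacle I anticipate is the \emph{resonant regime} $\ell\sim k$, where simultaneous precise bounds on $z_{\ell}(k)+1$ and $z_{\ell}(k)-\ell$ are needed; a naive use of Lemma~\ref{Lemzlest} only bounds $1/|z_{\ell}(k)+1|$, so a separate asymptotic study of the spherical Hankel quotient near $\ell=k$ is required to show that $\frac{|z_{\ell}(k)+1|}{|\ell(z_{\ell}(k)-\ell)|}\lesssim 1$ and $\frac{\ell+1}{|z_{\ell}(k)-\ell|}\lesssim k$ uniformly in this regime. Once these sharp $z_{\ell}$-estimates are in hand, the remaining bookkeeping --- summing the weighted contributions over $\ell$ and matching the $\max\{n+1,k\}^{n}$ template of the class $\mathcal{A}$ --- is routine and parallels the analysis already carried out for $\mathcal{N}_{3}^{\mathcal{A}}$ in Proposition~\ref{PropN3}.
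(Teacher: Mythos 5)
Your route is genuinely different from the paper's. You solve the recast Dirichlet problem (\ref{recastDiri}) by the explicit solid-harmonic series $Z=\sum_{\ell,m}\zeta_\ell^m r^\ell Y_\ell^m$ and differentiate termwise, whereas the paper never expands $Z$ in the interior: it extends the Dirichlet datum $g_D$ into a tubular neighborhood ${\mathcal U}_\Gamma$ via the analytic extensions $\tilde Y_\ell^m$ of (\ref{estgradYtilde}), uses interior regularity of harmonic functions on $\Omega\setminus{\mathcal U}_\Gamma$, and near $\Gamma$ writes $Z=Z_0-g_D^\ast$ and invokes the analytic-regularity theorem for the Dirichlet Poisson problem (Theorem~\ref{thm:poisson}, proved by boundary flattening). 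Your coefficient bookkeeping ($k^{1/2}$ from the $\kappa_\ell^m$ and an extra $k$ from the factor $(\ell+1)/|z_\ell(k)-\ell|$ against the $R_\ell^m$, plus one power of $\ell\lesssim k$ from differentiation) does land on the correct exponent $5/2$. Also, the ``resonant regime'' you flag as the main obstacle is in fact a non-issue: by \cite[(2.6.23)]{Nedelec01} one has $\operatorname{Re}z_\ell(k)\leq -1$, whence $|z_\ell(k)+1|\leq|z_\ell(k)-\ell|$ directly, and the remaining factors are covered by Lemma~\ref{Lemzlest}; no new asymptotics of the Hankel quotient are needed.

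The genuine gap is the ``Bernstein-type bound'' $\|\nabla^{n+1}(r^\ell Y_\ell^m)\|_{L^2(B_1)}\lesssim\gamma^n(\ell+1)^{n+1}$ \emph{up to the boundary}, which carries the entire low-frequency part and is asserted rather than proved. The standard Markov inequality for polynomials of degree $\ell$ on the ball costs a factor $\ell^2$ per derivative, which would give $(\ell+1)^{2(n+1)}$ and destroy the membership in ${\mathcal A}$; iterating interior estimates on shrinking balls $B_{1+j/(n\ell)}$ gives $\max\{n,\ell\}^{2n}$, which is equally fatal. The one-power-of-$\ell$-per-derivative bound is specific to \emph{harmonic} polynomials and requires its own argument (e.g., that each component of $\nabla(r^\ell Y_\ell^m)$ is again a solid harmonic of degree $\ell-1$ whose expansion coefficients in the orthogonal basis $\{r^{\ell-1}Y_{\ell-1}^{m'}\}$ are controlled by $O(\ell)$, so the estimate can be iterated; or an explicit computation via the structure of solid harmonics). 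Until that lemma is supplied, the low-frequency sum is not controlled. A second, smaller point: for the high-frequency part you do not need (\ref{eq:lemma:ntimesLomega-analytic-12}) with $\alpha=n$ at all --- once the exponential decay $|\Phi_\ell^m|\lesssim k^2e^{-b\ell}$ gives convergence of the series on $B_{1+\varepsilon}$, interior analytic regularity of harmonic functions on the larger ball already yields $\|\nabla^nZ^{\operatorname{high}}\|_{L^2(B_1)}\lesssim k^2\gamma^n n!\,\|{\mathbf r}\|_{\operatorname{curl},\Omega,1}$ --- but if you do go the termwise route there, the same missing solid-harmonic derivative bound is needed again.
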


%

%TCIMACRO{\TeXButton{Proof}{\proof}}%
%BeginExpansion
\proof
%EndExpansion
We first analyze $g_{D}$ of (\ref{recastDiri}) (in Steps~1--3) and
subsequently the solution $Z$ of (\ref{recastDiri}) in Step~4. As in the proof
of Proposition~\ref{PropN3}, we let $\mathcal{U}_{\Gamma}$ be a sufficiently
small neighborhood of $\Gamma$.

\emph{1.~step (analyticity class of $g_{D}^{\ast}$):} With the analytic
extensions $\tilde{Y}_{\ell}^{m}$ of the eigenfunctions $Y_{\ell}^{m}$
(cf.~(\ref{estgradYtilde})) we extend $g_{D}$ to ${\mathcal{U}}_{\Gamma}$ by
\[
g_{D}^{\ast}:=\sum_{\ell=1}^{\infty}\sum_{m\in\iota_{\ell}}Z_{\ell}^{m}%
\tilde{Y}_{\ell}^{m}\overset{(\ref{defphilmPhilm})}{=}\sum_{\ell=1}^{\infty
}\sum_{m\in\iota_{\ell}}\frac{1}{\ell}\left(  \frac{z_{\ell}(k)+1}{z_{\ell
}(k)-\ell}\right)  \kappa_{\ell}^{m}\tilde{Y}_{\ell}^{m}-\sum_{\ell\leq\lambda
k}\sum_{m\in\iota_{\ell}}\frac{\ell+1}{z_{\ell}(k)-\ell}R_{\ell}^{m}\tilde
{Y}_{\ell}^{m},
\]
where $\kappa_{\ell}^{m}$ and $R_{\ell}^{m}$ are given by (\ref{eq:kappa-R}).
We note that the coefficients $\kappa_{\ell}^{m}$ are controlled by
Lemma~\ref{lemma:ntimesLomega-analytic}. For the coefficients $R_{\ell}^{m}$
we estimate
\[
\sum_{\ell\leq\lambda k}\sum_{m\in\iota_{\ell}}\ell|R_{\ell}^{m}|\lesssim
k^{3/2}\left(  \sum_{\ell\leq\lambda k}\sum_{m\in\iota_{\ell}}\ell|R_{\ell
}^{m}|^{2}\right)  ^{1/2}\overset{(\ref{m1/2curldiva})}{\lesssim}k^{3/2}%
\Vert{\mathbf{r}}_{T}\Vert_{-1/2,\operatorname{curl}_{\Gamma}}{\lesssim
}k^{3/2}\Vert{\mathbf{r}}\Vert_{\operatorname{curl},\Omega,1}.
\]
\emph{2.~step (symbol estimates):} We have%
\[
\left\vert z_{\ell}\left(  k\right)  +1\right\vert ^{2}=\left(
1+\operatorname{Re}\left(  z_{\ell}\left(  k\right)  \right)  \right)
^{2}+\left(  \operatorname{Im}z_{\ell}\left(  k\right)  \right)  ^{2}.
\]
From (\cite[(2.6.23)]{Nedelec01}) we know that 
$1 \leq - \operatorname{Re} z_\ell(k) \leq \ell+1$ so that 
$\left(  1+\operatorname{Re}%
\left(  z_{\ell}\left(  k\right)  \right)  \right)  \leq0$ and hence
$\left\vert 1+\operatorname{Re}\left(  z_{\ell}\left(  k\right)  \right)
\right\vert \leq\left\vert -\ell+\operatorname{Re}\left(  z_{\ell}\left(
k\right)  \right)  \right\vert $. Thus%
\[
\left\vert \dfrac{1}{\ell}\left(  \dfrac{z_{\ell}\left(  k\right)  +1}%
{z_{\ell}\left(  k\right)  {-}\ell}\right)  \right\vert \leq\frac{1}{\ell}%
\leq\frac{2}{\ell+1}%
\]
and also we have%
\[
\left\vert \dfrac{\ell}{z_{\ell}\left(  k\right)  {-}\ell}\right\vert
\leq\frac{\ell}{\left\vert z_{\ell}\left(  k\right)  +1\right\vert }\leq
C\left\{
\begin{array}
[c]{ll}%
\ell & \ell\leq\lambda k,\\
1 & \ell\geq\lambda k.
\end{array}
\right.
\]

\emph{3.~step (analyticity classes of $g_{D}^{\ast}$):} We claim: there are
$C$, $\gamma^{\prime}>0$ independent of $k$ and ${\mathbf{r}}$ such that
\begin{equation}
g_{D}^{\ast}\in{\mathcal{A}}(Ck^{3/2}\Vert{\mathbf{r}}\Vert
_{\operatorname{curl},\Omega,1},\gamma^{\prime},{\mathcal{U}}_{\Gamma})
\label{eq:analyticity-gD}%
\end{equation}
Using (\ref{estgradYtilde}) and the symbol estimates of Step~2 we estimate
with the abbreviation $\lambda_{\ell}=\ell(\ell+1)$
and the constant $\gamma_{\mathcal{A},\Gamma}^\prime$ of 
Lemma~\ref{lemma:ntimesLomega-analytic} 
\begin{align}
\left\Vert \nabla^{n}g_{D}^{\ast}\right\Vert _{L^{2}\left(  \mathcal{U}%
_{\Gamma}\right)  }  &  \lesssim\gamma^{n}\biggl\{\sum_{\ell\leq
k\gamma_{\mathcal{A},\Gamma}^{\prime}}\sum_{m\in\iota_{\ell}}|\kappa_{\ell
}^{m}|\max\left\{  \sqrt{\lambda_{\ell}},n\right\}  ^{n}+\sum_{\ell
>k\gamma_{\mathcal{A},\Gamma}^{\prime}}\sum_{m\in\iota_{\ell}}\frac{1}{\ell
+1}|\kappa_{\ell}^{m}|\max\left\{  \sqrt{\lambda_{\ell}},n\right\}
^{n}\label{nabladiriext}\\
&  \quad\mbox{}+\sum_{\ell\leq\lambda k}\sum_{m \in \iota_{\ell}} \ell|R_{\ell}^{m}|\max\left\{
\sqrt{\lambda_{\ell}},n\right\}  ^{n}\biggr\}=:\gamma^{n}\biggl\{\cdots
\biggr\}.\nonumber
\end{align}
We estimate the expression $\{\cdot\}$ in curly braces further with
Lemma~\ref{lemma:ntimesLomega-analytic} and suitable $\tilde \gamma$: 
\begin{align*}
\biggl\{\cdots\biggr\}  &  \lesssim k^{3/2}\Vert{\mathbf{r}}\Vert
_{\operatorname{curl},\Omega,1}\tilde \gamma^n \max\{k,n\}^n 
+\sum_{\ell>k\gamma_{\mathcal{A},\Gamma
}^{\prime}}\sum_{m\in\iota_{\ell}}\frac{1}{\ell+1}|\kappa_{\ell}^{m}|\left[
\lambda_{\ell}^{n/2}+n^{n}\right] \\
&  \lesssim k^{3/2}\Vert{\mathbf{r}}\Vert_{\operatorname{curl},\Omega
,1}\tilde \gamma^n \max\{k,n\}^n 
+k^{-1}\sum_{\ell>k\gamma_{\mathcal{A},\Gamma}^{\prime}}\sum_{m\in
\iota_{\ell}}|\kappa_{\ell}^{m}|\left[  \lambda_{\ell}^{n/2}+n^{n}\right] \\
&  \lesssim k^{3/2}\Vert{\mathbf{r}}\Vert_{\operatorname{curl},\Omega
,1}\tilde \gamma^n \max\{k,n\}^n +k\tilde{\gamma}^{n}n^{n}\Vert{\mathbf{r}}\Vert_{\operatorname{curl}%
,\Omega,1};
\end{align*}
in the last step, we employed
(\ref{eq:lemma:ntimesLomega-analytic-12}) once with $\alpha=n$ and once with
$\alpha=0$. This shows (\ref{eq:analyticity-gD}). 
We also note
\begin{equation}
\Vert g_{D}\Vert_{H^{1/2}(\Gamma)}\lesssim\Vert g_{D}^{\ast}\Vert
_{H^{1}({\mathcal{U}})}\overset{(\ref{eq:analyticity-gD})}{\lesssim}%
k^{5/2}\Vert{\mathbf{r}}\Vert_{\operatorname{curl},\Omega,1}.
\label{eq:H1/2-gD}%
\end{equation}
\emph{3.~step (interior regularity):} Given $\mathbf{r}\in\mathbf{X}$, the
function $Z=\mathcal{N}_{4}^{\mathcal{A}}\mathbf{r}$ solves
(\ref{recastDiri}). First, interior regularity as derived in
\cite[Prop.~{5.5.1}]{MelenkHabil} gives%
\begin{equation}
\Vert\nabla^{n+1}Z\Vert_{L^{2}(\Omega\setminus\mathcal{U}_{\Gamma})}\leq
C\gamma^{n}\left(  n+1\right)  ^{n}\|Z\|_{H^1(\Omega)}  \leq
C\gamma^{n}\left(  n+1\right)  ^{n}\left\Vert g_{D}\right\Vert _{H^{1/2}%
\left(  \Gamma\right)  }\qquad\forall n\in\mathbb{N}_{0}.
\label{interiorestimateZ}%
\end{equation}
This is the desired bound away from $\Gamma$ in view of (\ref{eq:H1/2-gD}).

\emph{4.~step:} For the behavior of $Z$ near $\Gamma$, we write $Z=Z_{0}%
-g_{D}^{\ast}$. Near $\Gamma$, the function $Z_{0}$ satisfies
\begin{equation}
-\Delta Z_{0}=-\Delta g_{D}^{\ast}\ \text{in }\mathcal{U}_{\Gamma}%
\quad\text{and\ }Z_{0}|_{\Gamma}=0. \label{eqforZ0}%
\end{equation}
{}From (\ref{eq:analyticity-gD}) we get $\Delta g_{D}^{\ast}\in{\mathcal{A}}(C
k^{7/2},\gamma,{\mathcal{U}}_{\Gamma})$ for suitably adjusted constants $C$,
$\gamma> 0$. Also we have%
\begin{align}
\left\Vert \nabla Z_{0}\right\Vert _{L^{2}\left(  \mathcal{U}_{\Gamma}\right)
}  &  \leq\left\Vert \nabla Z\right\Vert +\left\Vert \nabla g_{D}^{\ast
}\right\Vert _{L^{2}\left(  \mathcal{U}_{\Gamma}\right)  }\lesssim\left\Vert
g_{D}\right\Vert _{H^{1/2}\left(  \Gamma\right)  }+Ck^{5/2}\left\Vert
\mathbf{r}\right\Vert _{\operatorname*{curl},\Omega,1}
\overset{(\ref{eq:H1/2-gD})}{\lesssim} k^{5/2} \|{\mathbf{r}}%
\|_{\operatorname{curl},\Omega,1}. \label{gdstarinline}%
\end{align}
One concludes with the aid of Theorem~\ref{thm:poisson} (and suitable
localization as well as flattening of the boundary) that $Z_{0}$ in
(\ref{eqforZ0}) satisfies%
\[
\nabla Z_{0}\in\mathcal{A}\left(  Ck^{5/2}\left\Vert \mathbf{r}\right\Vert
_{\operatorname*{curl},\Omega,1},\gamma,\mathcal{U}_{\Gamma}\right)  ,
\]
again with adjusted constants $C$, $\gamma$. This in turn implies $\left.
\nabla Z\right\vert _{\mathcal{U}_{\Gamma}}\in\mathcal{A}\left(
Ck^{5/2}\left\Vert \mathbf{r}\right\Vert _{\operatorname*{curl},\Omega
,1},\gamma,\mathcal{U}_{\Gamma}\right)  $.
%We combine (\ref{interiorestimateZ}) with the estimate
%for $\left\Vert g_{D}\right\Vert _{H^{1/2}\left(  \Gamma\right)  }$ as in
%(\ref{gdstarinline}) and obtain the interior estimate%
%\[
%\left.  Z\right\vert _{\Omega\setminus\mathcal{U}_{\Gamma}}\in\mathcal{A}%
%\left(  Ck^{2}\left\Vert \mathbf{r}\right\Vert _{\operatorname*{curl}%
%,\Omega,1},\gamma,\Omega\backslash\mathcal{U}_{\Gamma}\right)  .
%\]%
%TCIMACRO{\TeXButton{End Proof}{\endproof}}%
%BeginExpansion
\endproof
%EndExpansion

\begin{proposition}
\label{PropRegN1A} Let $\Omega= B_{1}(0)$. There exist positive constants
$C_{\mathcal{A},1}$ and $\gamma_{\mathcal{A},1}$ depending only on $\Gamma$
and the cut-off parameter $\lambda$ such that for any $\mathbf{v}\in
\mathbf{X}$
\[
\mathcal{N}_{1}^{\mathcal{A}}\mathbf{v}\in\mathcal{A}\left(  C_{\mathcal{A}%
,1}k^{3}\left\Vert \mathbf{v}\right\Vert _{\operatorname*{curl},\Omega
,k},\gamma_{\mathcal{A},1},\Omega\right)  .
\]

\end{proposition}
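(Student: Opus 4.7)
The problem (\ref{adjproblm0}) defining $\mathcal{N}_{1}^{\mathcal{A}}$ fits the Type~1 template (\ref{dualproto}) with the specific choices $\mathbf{v}=\mathbf{g}=\mathbf{h}=L_{\Omega}\mathbf{w}$. Since $\operatorname{div}L_{\Omega}\mathbf{w}=0$ by (\ref{eq:defLOmega-strong-b}), the solution formula (\ref{solformdualprob}) collapses to
\begin{equation*}
\mathcal{N}_{1}^{\mathcal{A}}\mathbf{w}=k^{2}a-\nabla b+\mathrm{i}kc+\tfrac{1}{\mathrm{i}k}\nabla d+\mathrm{i}k\,e,
\end{equation*}
where $a,b,c,d$ are exactly the objects (\ref{abcde}) (with $\mathbf{v}$ replaced by $\mathbf{w}$), and the new contribution stemming from the $\mathbf{h}^{\operatorname{curl}}$-term is
\begin{equation*}
e:=-\mathcal{S}_{-k}^{\operatorname{Hh}}\!\bigl(T_{-k}(L_{\Omega}\mathbf{w})^{\operatorname{curl}}\bigr).
\end{equation*}
For the contributions $k^{2}a-\nabla b+\mathrm{i}kc+\tfrac{1}{\mathrm{i}k}\nabla d$, the proof of Proposition~\ref{PropN3} transfers verbatim and places this sum in $\mathcal{A}(Ck^{3}\|\mathbf{w}\|_{\operatorname{curl},\Omega,k},\gamma,\Omega)$; it remains to establish an analogous bound for $\mathrm{i}k\,e$.

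To analyze $\mathrm{i}k\,e$, I will mimic the treatment of $\mathrm{i}kc$ in the proof of Proposition~\ref{PropN3}. By (\ref{commpropfreq}), $(L_{\Omega}\mathbf{w})_{T}=L_{\Gamma}\mathbf{w}_{T}$ is a low-frequency tangential field, which via (\ref{DefTk}) gives
\begin{equation*}
\mathbf{g}_{4}:=T_{-k}(L_{\Omega}\mathbf{w})^{\operatorname{curl}}=\sum_{1\le\ell\le\lambda k}\frac{\overline{z_{\ell}(k)+1}}{-\mathrm{i}k}\sum_{m\in\iota_{\ell}}w_{\ell}^{m}\mathbf{T}_{\ell}^{m}.
\end{equation*}
First, using the analytic extensions $\tilde{Y}_{\ell}^{m}$ satisfying (\ref{estgradYtilde}) together with the analytic normal extension $\mathbf{n}^{\ast}$, I define the analytic extension $\mathbf{g}_{4}^{\ast}:=\sum\frac{\overline{z_{\ell}(k)+1}}{-\mathrm{i}k}w_{\ell}^{m}\bigl(\nabla\tilde{Y}_{\ell}^{m}\times\mathbf{n}^{\ast}\bigr)$ of $\mathbf{g}_{4}$ into a tubular neighborhood $\mathcal{U}_{\Gamma}$. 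The key symbol bound here is (\ref{znestabove}), which gives $|z_{\ell}(k)+1|/k\le1+\lambda$ for $\ell\le\lambda k$; combining this with the exponential-type estimate (\ref{estgradYtilde}) and Cauchy--Schwarz (exactly as for $\mathbf{g}_{3}^{\ast}$ in the proof of Proposition~\ref{PropN3}) yields $\mathbf{g}_{4}^{\ast}\in\mathcal{A}(Ck^{5/2}\|\mathbf{w}\|_{\operatorname{curl},\Omega,1},\gamma,\mathcal{U}_{\Gamma})$. Second, the $\mathbf{H}_{T}^{-3/2+t}(\Gamma)$-norms are controlled via (\ref{DefHsGammaTNorm}) and (\ref{znestabove}), giving $\|\mathbf{g}_{4}\|_{\mathbf{H}_{T}^{-3/2+t}(\Gamma)}\le Ck^{1+t}\|\mathbf{w}\|_{\operatorname{curl},\Omega,1}$ for $t\in\{0,1\}$. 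Feeding these into (\ref{S-kHHg}) produces the a priori bound $\|e\|_{\mathcal{H},B_{R}(0)}\le Ck^{3}\|\mathbf{w}\|_{\operatorname{curl},\Omega,1}$.

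Finally, combining the analyticity class of the extension $\mathbf{g}_{4}^{\ast}$ with the a priori $\mathcal{H}$-bound via \cite[Thm.~5.5.4]{MelenkHabil} (after flattening the boundary and localizing, as in Step~3 of the proof of Proposition~\ref{PropN3}) places $\mathrm{i}k\,e$ in the analyticity class $\mathcal{A}(C'k^{3}\|\mathbf{w}\|_{\operatorname{curl},\Omega,1},\gamma',\Omega)$, matching (\ref{nablacde}). Adding the five contributions and using $\|\mathbf{w}\|_{\operatorname{curl},\Omega,1}\le\|\mathbf{w}\|_{\operatorname{curl},\Omega,k}$ gives the asserted class $\mathcal{A}(C_{\mathcal{A},1}k^{3}\|\mathbf{w}\|_{\operatorname{curl},\Omega,k},\gamma_{\mathcal{A},1},\Omega)$. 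The principal obstacle is the careful $k$-bookkeeping for the new term: one must verify that the improvement provided by \cite[Thm.~5.5.4]{MelenkHabil} absorbs the extra factor of $k$ from the prefactor $\mathrm{i}k$ into the overall $k^{3}$ bound, just as it does in the existing treatment of $\mathrm{i}kc$, so that $\mathrm{i}k\,e$ does not inflate the exponent beyond $3$.
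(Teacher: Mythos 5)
Your proposal is correct and is essentially the paper's own proof: the paper likewise splits $\mathcal{N}_{1}^{\mathcal{A}}\mathbf{v}=\mathcal{N}_{3}^{\mathcal{A}}\mathbf{v}+\tilde{\mathbf{z}}$ with $\tilde{\mathbf{z}}=-\operatorname*{i}k\,\mathcal{S}_{-k}^{\operatorname*{Hh}}\bigl(T_{-k}(L_{\Omega}\mathbf{v})^{\operatorname*{curl}}\bigr)$ (your $\operatorname*{i}k\,e$) and treats this term exactly as you do, via the Fourier expansion, the extension through $\tilde{Y}_{\ell}^{m}\times\mathbf{n}^{\ast}$, the symbol bound (\ref{znestabove}), the estimate (\ref{S-kHHg}), and \cite[Thm.~5.5.4]{MelenkHabil}. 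Your intermediate bound $\Vert\mathbf{g}_{4}\Vert_{\mathbf{H}_{T}^{-3/2+t}(\Gamma)}\leq Ck^{1+t}\Vert\mathbf{w}\Vert_{\operatorname*{curl},\Omega,1}$ is cruder than what the symbol actually gives (the factor $|z_{\ell}(k)+1|/k\leq1+\lambda$ is $O(1)$ for $\ell\leq\lambda k$, so the paper obtains a $k$-independent bound and hence $\Vert e\Vert_{\mathcal{H}}\lesssim k^{2}$), but, as you correctly observe, the $k^{-1}$ gain in the analyticity estimate still absorbs the prefactor $\operatorname*{i}k$ and yields the stated $k^{3}$ class.
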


%

%TCIMACRO{\TeXButton{Proof}{\proof}}%
%BeginExpansion
\proof
%EndExpansion
For given $\mathbf{v}\in\mathbf{X}$, the solution $\mathbf{z}:=\mathcal{N}%
_{1}^{\mathcal{A}}\mathbf{v}$ can be split into%
\[
\mathbf{z}=\mathcal{N}_{3}^{\mathcal{A}}\mathbf{v}+\mathbf{\tilde{z}}%
\]
with the solution $\mathbf{\tilde{z}}\in\mathbf{X}$ of%
\[
A_{k}\left(  \mathbf{w},\mathbf{\tilde{z}}\right)  =-\operatorname*{i}%
kb_{k}\left(  \mathbf{w}^{\operatorname*{curl}},\left(  L_{\Omega}%
\mathbf{v}\right)  ^{\operatorname*{curl}}\right)  \qquad\forall\mathbf{w}%
\in\mathbf{X}.
\]
{}From (\ref{solformdualprob}) we get the following representation of the
solution%
\begin{equation}
\mathbf{\tilde{z}}=-\operatorname*{i}k\int_{\Gamma}g_{-k}\left(  \left\Vert
\mathbf{\cdot}-\mathbf{y}\right\Vert \right)  T_{-k}\left(  \left(  L_{\Omega
}\mathbf{v}\right)  ^{\operatorname*{curl}}\right)  \left(  \mathbf{y}\right)
d\Gamma_{\mathbf{y}}.\nonumber
\end{equation}
Fourier expansion of $\left(  L_{\Gamma}\mathbf{v}_{T}\right)
^{\operatorname*{curl}}$ leads to (cf. (\ref{DefTk}))
\[%
%TCIMACRO{\TeXButton{boldmu}{\mbox{\boldmath$ \mu$}}}%
%BeginExpansion
\mbox{\boldmath$ \mu$}%
%EndExpansion
:=T_{-k}\left(  L_{\Omega}\mathbf{v}\right)  ^{\operatorname*{curl}}%
=\sum_{1\leq\ell\leq\lambda k}\overline{\left(  \frac{z_{\ell}\left(
k\right)  +1}{\operatorname*{i}k}\right)  }\sum_{m\in\iota_{\ell}}v_{\ell}%
^{m}\mathbf{T}_{\ell}^{m}.
\]
An extension of $%
%TCIMACRO{\TeXButton{boldmu}{\mbox{\boldmath$ \mu$}}}%
%BeginExpansion
\mbox{\boldmath$ \mu$}%
%EndExpansion
^{\ast}$ is given by%
\[%
%TCIMACRO{\TeXButton{boldmu}{\mbox{\boldmath$ \mu$}}}%
%BeginExpansion
\mbox{\boldmath$ \mu$}%
%EndExpansion
^{\ast}:=\sum_{1\leq\ell\leq\lambda k}\overline{\left(  \frac{z_{\ell}\left(
k\right)  +1}{\operatorname*{i}k}\right)  }\sum_{m\in\iota_{\ell}}v_{\ell}%
^{m}\mathbf{\tilde{T}}_{\ell}^{m},
\]
where $\mathbf{\tilde{T}}_{\ell}^{m}:=\nabla\tilde{Y}_{\ell}^{m}%
\times\mathbf{n}^{\ast}$ with $\mathbf{n}^{\ast}\left(  \mathbf{x}\right)
:=\mathbf{x}/\left\Vert \mathbf{x}\right\Vert $ and $\tilde{Y}_{\ell}^{m}$ as
in (\ref{estgradYtilde}). Now we proceed as in the proof of Proposition
\ref{PropN3}. First, we derive the estimates%
\begin{align*}
\left\Vert
%TCIMACRO{\TeXButton{boldmu}{\mbox{\boldmath$ \mu$}}}%
%BeginExpansion
\mbox{\boldmath$ \mu$}%
%EndExpansion
\right\Vert _{\mathbf{H}_{T}^{-3/2}\left(  \Gamma\right)  }^{2}  &
=\sum_{1\leq\ell\leq\lambda k}\left(  \ell\left(  \ell+1\right)  \right)
^{-1/2}\left\vert \frac{z_{\ell}\left(  k\right)  +1}{\operatorname*{i}%
k}\right\vert ^{2}\sum_{m\in\iota_{\ell}}\left\vert v_{\ell}^{m}\right\vert
^{2}\\
&  \overset{\text{Lem. \ref{Lemzlest}}}{\leq}\sum_{1\leq\ell\leq\lambda
k}\left(  \ell\left(  \ell+1\right)  \right)  ^{-1/2}\left(  1+\frac{\ell}%
{k}\right)  ^{2}\sum_{m\in\iota_{\ell}}\left\vert v_{\ell}^{m}\right\vert ^{2}
\leq C\left\Vert \mathbf{v}_{T}\right\Vert _{\mathbf{H}_{\operatorname*{curl}%
}^{-1/2}\left(  \Gamma\right)  }^{2}\leq C\left\Vert \mathbf{v}\right\Vert
_{\operatorname*{curl},\Omega,1}^{2}%
\end{align*}
and%
\begin{align*}
\left\Vert \nabla^{n}%
%TCIMACRO{\TeXButton{boldmu}{\mbox{\boldmath$ \mu$}}}%
%BeginExpansion
\mbox{\boldmath$ \mu$}%
%EndExpansion
^{\ast}\right\Vert _{L^{2}\left(  \mathcal{U}_{\Gamma}\right)  }  &  \leq
\sum_{1\leq\ell\leq\lambda k}\left\vert \left(  \frac{z_{\ell}\left(
k\right)  +1}{\operatorname*{i}k}\right)  \right\vert \sum_{m\in\iota_{\ell}%
}\left\vert v_{\ell}^{m}\right\vert \left\Vert \nabla^{n}\mathbf{\tilde{T}%
}_{\ell}^{m}\right\Vert _{L^{2}\left(  \mathcal{U}_{\Gamma}\right)  }\\
&  \overset{\text{(\ref{eq:analyticity-extension})}}{\leq}C\max\left\{
k,n+1\right\}  ^{n+1}\gamma^{n}\sum_{1\leq\ell\leq\lambda k}\left(
1+\frac{\ell}{k}\right)  \sum_{m\in\iota_{\ell}}\left\vert v_{\ell}%
^{m}\right\vert \leq\tilde{C}k^{2}\max\left\{  k,n+1\right\}  ^{n}%
\tilde{\gamma}^{n}\left\Vert \mathbf{v}\right\Vert _{\operatorname*{curl}%
,\Omega,1}.
\end{align*}
The application of $\mathcal{S}_{-k}^{\operatorname*{Hh}}$ to $%
%TCIMACRO{\TeXButton{boldmu}{\mbox{\boldmath$ \mu$}}}%
%BeginExpansion
\mbox{\boldmath$ \mu$}%
%EndExpansion
$ can then be estimated by%
\[
\left\Vert k\mathcal{S}_{-k}^{\operatorname*{Hh}}\left(  T_{-k}\left(
L_{\Omega}\mathbf{v}\right)  ^{\operatorname*{curl}}\right)  \right\Vert
_{\mathcal{H},\Omega}\overset{\text{(\ref{S-kHHg})}}{\leq}Ck^{3}\left\Vert
%TCIMACRO{\TeXButton{boldmu}{\mbox{\boldmath$ \mu$}}}%
%BeginExpansion
\mbox{\boldmath$ \mu$}%
%EndExpansion
\right\Vert _{\mathbf{H}_{T}^{-3/2}\left(  \Gamma\right)  }\leq Ck^{3}%
\left\Vert \mathbf{v}\right\Vert _{\operatorname*{curl},\Omega,1}%
\]
and $k\mathcal{S}_{-k}^{\operatorname*{Hh}}\left(  T_{-k}\left(  L_{\Omega
}\mathbf{v}\right)  ^{\operatorname*{curl}}\right)  \in\mathcal{A}\left(
Ck^{2}\left\Vert \mathbf{v}\right\Vert _{\operatorname*{curl},\Omega,k}%
,\gamma,\Omega\right)  $. The combination with Proposition \ref{PropN3} leads
to the assertion.%
%TCIMACRO{\TeXButton{End Proof}{\endproof}}%
%BeginExpansion
\endproof
%EndExpansion

%---------------------------------------------------------

\section{Approximation Operators for $S_{p+1}({\mathcal{T}}_{h})$ and
$\boldsymbol{\mathcal{N}}_{p}^{\operatorname*{I}}({\mathcal{T}}_{h}%
)$\label{SecApproxOps}}

%---------------------------------------------------------
%\subsubsection{Approximation Operators for $S_{p+1}({\mathcal{T}}_{h})$ and
%$\boldsymbol{\mathcal{N}}_{p}^{\operatorname*{I}}({\mathcal{T}}_{h}%
%)$\label{SecApproxOps}}

The relevant $hp$ finite element spaces have been introduced in
Section~\ref{SecCurlConfFEM}. A key property of these spaces is that both
lines in the diagram in Fig.~\ref{fig:commuting-diagram-allgemein} are exact
sequences, \cite{nedelec80,hiptmair-acta,Monkbook}. In particular, therefore,
(\ref{esp}) is satisfied for the pair $(S_{h},{\mathbf{X}}_{h})=(S_{p+1}%
({\mathcal{T}}_{h}),\boldsymbol{\mathcal{N}}_{p}^{\operatorname*{I}%
}({\mathcal{T}}_{h}))$. The operators $\Pi_{h}^{E}$ and $\Pi_{h}^{F}$ of
Assumption~\ref{AdiscSp} are constructed to satisfy the stronger
\textquotedblleft commuting diagram property\textquotedblright\ that make the
diagram in Fig.~\ref{fig:commuting-diagram} commute. In that case, the
operator $\Pi_{h}^{E}$ is defined on the space $\prod_{K\in{\mathcal{T}}_{h}%
}{\mathbf{H}}^{1}(K,\operatorname*{curl})\cap{\mathbf{X}}\supset\{{\mathbf{u}%
}\in{\mathbf{H}}^{1}(\Omega)\,|\,\operatorname{curl}{\mathbf{u}}%
\in\operatorname{curl}{\mathbf{X}}_{h}\}$. \begin{figure}[ptb]%
\[
\begin{CD} \mathbb{R} @> >> H^1(\Omega) @> \nabla >> H(\Omega, \operatorname*{curl}) @> \operatorname*{curl} >> H(\Omega,\operatorname*{div}) @> \operatorname*{div} >> L^2(\Omega)\\ @AA {\iota}A @AA {\iota} A @AA {\iota} A @AA {\iota} A @AA {\iota} A \\ \mathbb{R} @> >> S_{p+1}({\mathcal{T}}_{h}) @> \nabla >> \boldsymbol{\mathcal N}^I_p({\mathcal{T}}_{h}) @> \operatorname*{curl} >> \mathbf{RT}_p({\mathcal{T}}_{h}) @> \operatorname*{div} >> Z_p({\mathcal T}_h) \end{CD}
\]
\caption{Continuous and discrete exact sequences.}%
\label{fig:commuting-diagram-allgemein}%
\end{figure}

%---------------------------------------------------------

\subsection{Optimal Simultaneous $hp$-Approximation in $L^{2}$ and
${\mathbf{H}}(\operatorname{curl})$}

%---------------------------------------------------------
We restrict our attention to approximation operators that are constructed element-by-element.

\begin{definition}
[element-by-element construction]\label{def:element-by-element} An operator
$\widehat{\Pi}^{\operatorname*{grad}}:H^{2}(\widehat{K})\rightarrow
{{\mathcal{P}}}_{p+1}$ is said to admit an \emph{element-by-element
construction} if the operator $\Pi^{\operatorname*{grad}}$ defined 
elementwise on $H^{1}(\Omega
)\cap\prod_{K\in{\mathcal{T}}_{h}}H^{2}(K)$ by
$(\Pi^{\operatorname*{grad}}u)|_{K}:=(\widehat{\Pi}^{\operatorname*{grad}%
}(u\circ F_{K}))\circ F_{K}^{-1}$ maps into the conforming subspace
$S^{p+1}({\mathcal{T}}_{h})\subset H^{1}(\Omega)$.

An operator $\widehat{\Pi}^{\operatorname*{curl}}:{\mathbf{H}}^{1}%
(\widehat{K},\operatorname*{curl})\rightarrow\boldsymbol{\mathcal{N}}%
_{p}^{\operatorname*{I}}(\widehat{K})$ is said to admit an
\emph{element-by-element construction} if the operator $\Pi
^{\operatorname*{curl}}$ defined elementwise on ${\mathbf{H}}(\Omega,\operatorname*{curl})\cap
\prod_{K\in{\mathcal{T}}_{h}}{\mathbf{H}}^{1}(K,\operatorname*{curl})$ 
by $(\Pi^{\operatorname*{curl}}u)|_{K}:=(F_{K}^{\prime}%
)^{-T}(\widehat{\Pi}^{\operatorname*{curl}}((F_{K}^{\prime})^{\intercal
}\mathbf{u}\circ F_{K}))\circ F_{K}^{-1}$ maps into the conforming subspace
$\boldsymbol{\mathcal{N}}_{p}^{\operatorname*{I}}({\mathcal{T}}_{h}%
)\subset{\mathbf{H}}(\Omega,\operatorname*{curl})$.

An operator $\widehat{\Pi}^{\operatorname*{div}}:{\mathbf{H}}^{1}%
(\widehat{K},\operatorname*{div})\rightarrow\mathbf{RT}_{p}(\widehat{K})$ is
said to admit an \emph{element-by-element construction} if the operator
$\Pi^{\operatorname*{div}}$ defined elementwise on ${\mathbf{H}}(\Omega,\operatorname*{div})\cap
\prod_{K\in{\mathcal{T}}_{h}}{\mathbf{H}}^{1}(K,\operatorname*{div})$ 
by
\[
(\Pi^{\operatorname*{div}}u)|_{K}:=(\operatorname*{det}(F_{K}^{\prime}%
))^{-1}F_{K}^{\prime}(\widehat{\Pi}^{\operatorname*{div}}(\operatorname*{det}%
F_{K}^{\prime})(F_{K}^{\prime})^{-1}\mathbf{u}\circ F_{K}))\circ F_{K}^{-1}%
\]
maps into the conforming subspace $\mathbf{RT}_{p}({\mathcal{T}}_{h}%
)\subset{\mathbf{H}}(\Omega,\operatorname*{div})$. Finally, any operator
$\widehat{\Pi}^{L^{2}}:{L}^{2}(\widehat{K})\rightarrow{\mathcal{P}}%
_{p}(\widehat{K})$ leads to a globally defined $L^{2}(\Omega)$-conforming
operator by the following \emph{element-by-element} construction: $(\Pi
^{L^{2}}u)|_{K}:=(\widehat{\Pi}^{L^{2}}(\mathbf{u}\circ F_{K}))\circ
F_{K}^{-1}$.
\end{definition}

As it is typical, we will construct such operators on the reference
tetrahedron $\widehat{K}$ in such a way that the value of the operator
restricted to a lower-dimensional entity (i.e., a vertex, an edge, or a face)
is completely determined by the value of the function on that entity. For
scalar functions the operator $\Pi_{p}$ of \cite[Def.~{5.3}, Thm.~{B.4}%
]{MelenkSauterMathComp} is an example that we will build on; it can be viewed
as a variant of the projection-based interpolation technique of
\cite{demkowicz08} that also underlies the construction of the operator
$\Pi_{h}^{E}$. Important features of the construction of $\Pi_{p}$ are:
$(\Pi_{p}u)(V)=u(V)$ for all vertices $V$; it has the property that $(\Pi
_{p}u)|_{e}$ is the projection of $u|_{e}$ onto a space of polynomials of
degree $p$ on each edge $e$ under the constraint that $\Pi_{p}u$ has already
been fixed in the vertices; it has the property that $(\Pi_{p}u)|_{f}$ is the
(constrained) projection of $u|_{f}$ onto a space of polynomials of degree $p$
on each face $f$ under the constraint that $\Pi_{p}u$ has already been fixed
on edges. We note that the fact that $\Pi_{p}$ is a (constrained) projection
on polynomial spaces for the edges and faces makes the definition independent
of the parametrization of the edges and faces of the reference tetrahedron.

%%-----------------------------------------------------
%\subsubsection{An operator for the approximation in $\|\cdot\|_{\operatorname*{curl},\Omega,k}$}
%%-----------------------------------------------------
We need approximation operators suitable for the approximation in the norm
$\|\cdot\|_{\operatorname*{curl},\Omega,k}$. Such an operator can be defined
in an element-by-element fashion on the reference tetrahedron:

\begin{lemma}
\label{lemma:Hcurl-interpolation} Let $s>3/2$. There exist operators
$\widehat{\Pi}_{p}^{\operatorname*{curl},s}:{\mathbf{H}}^{s}(\widehat{K}%
)\rightarrow\boldsymbol{\mathcal{N}}_{p}^{\operatorname*{I}}(\widehat{K})$
with the following properties:

\begin{enumerate}
[(i)]

\item $\widehat{\Pi}_{p}^{\operatorname*{curl},s}$ admits an
element-by-element construction as in Definition~\ref{def:element-by-element}.

\item For $p\geq s-1$ we have
\begin{equation}
(p+1)\Vert\mathbf{u}-\widehat{\Pi}_{p}^{\operatorname*{curl},s}\mathbf{u}%
\Vert_{\mathbf{L}^{2}(\widehat{K})}+\Vert\mathbf{u}-\widehat{\Pi}%
_{p}^{\operatorname*{curl},s}\mathbf{u}\Vert_{{\mathbf{H}}^{1}(\widehat{K}%
)}\leq Cp^{-(s-1)}|\mathbf{u}|_{{\mathbf{H}}^{s}(\widehat{K})}.
\label{eq:lemma:Hcurl-interpolation-10}%
\end{equation}

\item Let $\mathbf{u}$ satisfy, for some $C_{\mathbf{u}}$, $\overline{\gamma}%
$, $h>0$, and $\kappa\geq1$
\begin{equation}
\Vert\nabla^{n}\mathbf{u}\Vert_{\mathbf{L}^{2}(\widehat{K})}\leq
C_{\mathbf{u}}(\overline{\gamma}h)^{n}\max\{n,\kappa\}^{n}\qquad\forall
n\in\mathbb{N},\quad n\geq2.
\end{equation}
Assume furthermore
\begin{equation}
h+\kappa h/p\leq\widetilde{C}.
\end{equation}
Then there exist constants $C$, $\sigma>0$ depending solely on $\widetilde{C}$
and $\overline{\gamma}$ such that
\begin{equation}
\Vert\mathbf{u}-\widehat{\Pi}_{p}^{\operatorname*{curl},s}\mathbf{u}%
\Vert_{W^{2,\infty}(\widehat{K})}\leq CC_{\mathbf{u}}\left[  \left(  \frac
{h}{\sigma+h}\right)  ^{p+1}+\left(  \frac{\kappa h}{\sigma p}\right)
^{p+1}\right]  . \label{eq:lemma:Hcurl-interpolation-20}%
\end{equation}

\end{enumerate}
\end{lemma}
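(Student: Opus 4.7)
The plan is to construct $\widehat{\Pi}_p^{\operatorname{curl},s}$ in the spirit of the projection-based interpolation of Demkowicz, adapted so that it simultaneously delivers the $p$-algebraic estimate~\eqref{eq:lemma:Hcurl-interpolation-10} in Sobolev norms and the exponential estimate~\eqref{eq:lemma:Hcurl-interpolation-20} in $W^{2,\infty}$ for analytic data. The key design goal is that the restriction of $\widehat{\Pi}_p^{\operatorname{curl},s}\mathbf{u}$ to each lower-dimensional entity (edge or face) depends only on the corresponding tangential trace of $\mathbf{u}$, which is precisely the requirement for the element-by-element assembly of Definition~\ref{def:element-by-element} to produce an $\mathbf{H}(\Omega,\operatorname{curl})$-conforming global operator into $\boldsymbol{\mathcal{N}}_p^{\operatorname{I}}(\mathcal{T}_h)$. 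The condition $s>3/2$ is needed precisely so that the edge tangential traces are well-defined.

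I would build the operator hierarchically. First, on each edge $e$ of $\widehat{K}$, define the tangential component $(\widehat{\Pi}_p^{\operatorname{curl},s}\mathbf{u})\cdot \mathbf{t}_e$ as a scalar $H^1$-projection of $\mathbf{u}\cdot\mathbf{t}_e$ onto $\mathcal{P}_p(e)$, matching the values at the endpoints via the scalar projector $\Pi_p$ of \cite[Def.~5.3]{MelenkSauterMathComp}. Second, on each face $f$, lift the already-defined edge data and project the tangential trace of $\mathbf{u}$ onto the local Nédélec polynomial space on $f$ by a constrained minimization. Third, in the interior, solve a constrained $\mathbf{H}(\operatorname{curl})$-projection onto a bubble space of $\boldsymbol{\mathcal{N}}_p^{\operatorname{I}}(\widehat{K})$ to obtain the remaining degrees of freedom. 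By construction, the resulting operator is a projection onto $\boldsymbol{\mathcal{N}}_p^{\operatorname{I}}(\widehat{K})$, and its edge/face contributions depend only on the corresponding traces, which yields the element-by-element property.

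To establish (ii), I would combine the stability of the projection-based interpolation operator in $H^s(\widehat{K})$ with standard $p$-best-approximation results on the reference tetrahedron: the factor $p^{-(s-1)}$ comes from polynomial approximation of $H^s$-functions, and the additional factor $(p+1)$ in front of $\|\mathbf{u}-\widehat{\Pi}_p^{\operatorname{curl},s}\mathbf{u}\|_{\mathbf{L}^2(\widehat{K})}$ is standard for such projections (one order in $p$ better than the $H^1$-error). To establish (iii), I would exploit that for $\mathbf{u}$ in the given analyticity class and under $h+\kappa h/p \le \widetilde C$, the scalar projector $\Pi_p$ of \cite[Thm.~B.4]{MelenkSauterMathComp} applied componentwise (together with its edge/face analogues) delivers an error bound of the form $(h/(\sigma+h))^{p+1} + (\kappa h/(\sigma p))^{p+1}$ in the natural $H^s$-based norms; transferring this bound to $W^{2,\infty}(\widehat{K})$ requires the triangle inequality against a best $W^{2,\infty}$-polynomial approximant of analytic functions (which enjoys the same exponential decay) followed by an inverse inequality on $\boldsymbol{\mathcal{N}}_p^{\operatorname{I}}(\widehat{K})$ on the \emph{fixed} reference element to absorb the polynomial loss into the constants $C$ and $\sigma$.

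The main obstacle is (iii): projection-based interpolation operators are naturally analyzed in $L^2$- or $H^1$-based norms, not in $W^{2,\infty}$. One must therefore either (a) use a hybrid argument—subtract a good $W^{2,\infty}$-polynomial approximant $\boldsymbol{\pi}\mathbf{u}\in\boldsymbol{\mathcal{N}}_p^{\operatorname{I}}(\widehat{K})$, estimate $\mathbf{u}-\boldsymbol{\pi}\mathbf{u}$ directly in $W^{2,\infty}$, and control $\widehat{\Pi}_p^{\operatorname{curl},s}(\mathbf{u}-\boldsymbol{\pi}\mathbf{u})=\widehat{\Pi}_p^{\operatorname{curl},s}\mathbf{u}-\boldsymbol{\pi}\mathbf{u}$ via an inverse estimate from the energy norm to $W^{2,\infty}$—or (b) perform a direct stability analysis of each building block of the construction in weighted $L^\infty$ norms. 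Approach (a) seems more economical: the polynomial degree loss from the inverse estimate is absorbed by the exponential factors $(h/(\sigma+h))^{p+1}$ and $(\kappa h/(\sigma p))^{p+1}$ upon suitably shrinking $\sigma$, so the final bound retains the form stated in~\eqref{eq:lemma:Hcurl-interpolation-20}.
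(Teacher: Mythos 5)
Your proposal takes a genuinely different and much heavier route than the paper. The paper's construction is essentially one line: take the scalar, trace-local $H^1$-conforming operator $\Pi_p$ of \cite[Def.~{5.3}, Thm.~{B.4}]{MelenkSauterMathComp} and apply it \emph{componentwise}, so that $\widehat{\Pi}_p^{\operatorname*{curl},s}\mathbf{u}=(\Pi_p u_i)_{i=1}^3\in(\mathcal{P}_p(\widehat K))^3\subset\boldsymbol{\mathcal{N}}_p^{\operatorname*{I}}(\widehat K)$. The element-by-element property then follows because $\Pi_p$ restricted to a face is determined by the trace on that face and because the faces of the reference tetrahedron are flat, so the normal ${\mathbf n}_f$ is constant and $\Pi_T$ commutes with componentwise application: $\Pi_T(\widehat{\Pi}_p^{\operatorname*{curl},s}\mathbf{u})|_f=\Pi_p(\Pi_T\mathbf{u}|_f)$. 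Parts (ii) and (iii) are then direct citations of \cite[Thm.~{B.4}]{MelenkSauterMathComp} and \cite[Lemma~{C.2}]{MelenkSauterMathComp} for the scalar operator; no new analysis in $W^{2,\infty}$ and no N\'ed\'elec-specific machinery is needed. Note also that the lemma does not ask for a projection or a commuting diagram, and the operator constructed in the paper is deliberately \emph{not} one; that role is played by the separate operator $\widehat{\Pi}_p^{\operatorname*{curl},c}$ of Theorem~\ref{thm:projection-based-interpolation}.

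Beyond being more complicated than necessary, your route has a genuine gap at part (ii). You assert that the extra factor $(p+1)$ in front of the $\mathbf{L}^2$-error is ``standard for such projections.'' It is not: for ${\mathbf H}(\operatorname{curl})$-conforming projection-based interpolants the simultaneous $p$-optimality in $L^2$ and $H^1$ is precisely the delicate point, and the commuting projector of Theorem~\ref{thm:projection-based-interpolation} only achieves the improved $L^2$-rate under the additional hypothesis $\operatorname{curl}\widehat{\mathbf u}\in(\mathcal{P}_p(\widehat K))^3$ (item (\ref{item:thm:projection-based-interpolation-v})). Obtaining the gain of one power of $p$ in $L^2$ for a hierarchical edge/face/interior construction would require a separate duality argument that you do not supply; the componentwise scalar construction sidesteps this entirely because the scalar operator already has the property. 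Similarly, your approach (a) for the $W^{2,\infty}$ bound needs a $p$-explicit stability estimate for your constrained-minimization operator in a norm from which the inverse estimate can start; this is plausible but is asserted rather than proved. Both gaps disappear if you adopt the paper's componentwise definition.
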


%

%TCIMACRO{\TeXButton{Proof}{\proof}}%
%BeginExpansion
\proof
%EndExpansion
Let $\Pi_{p}:H^{s}(\widehat{K})\rightarrow{\mathcal{P}}_{p}$, $s>3/2$, be the
scalar polynomial approximation operator\footnote{In \cite[Def.~{5.3},
Thm.~{B.4}]{MelenkSauterMathComp} the element-by-element construction of the
polynomial approximation on the reference element only fixes $\Pi_{p}$ on
$\partial\widehat{K}$. The operator $\Pi_{p}$ is fully determined by adding a
final minimization step to fix the interior degrees of freedom on the
reference element.} of \cite[Def.~{5.3}, Thm.~{B.4}]{MelenkSauterMathComp}. A
key property of $\Pi_{p}$ is that, as described above, one has that the
restriction of $\Pi_{p}u$ to a vertex, edge, or face is completely determined
by $u$ restricted to that entity. We write, e.g., for a face $f$: $\Pi
_{p}\left(  \left.  u\right\vert _{f}\right)  :=\left.  \left(  \Pi
_{p}u\right)  \right\vert _{f}$. We define the operator $\widehat{\Pi}%
_{p}^{\operatorname*{curl},s}:\mathbf{H}^{s}(\widehat{K})\rightarrow\left(
{\mathcal{P}}_{p}\right)  ^{3}\subset\boldsymbol{\mathcal{N}}_{p}%
^{\operatorname*{I}}(\widehat{K})$ by componentwise application to
$\mathbf{u}=\left(  u_{i}\right)  _{i=1}^{3}$, i.e.,
\[
\Pi^{\operatorname*{curl},s}{\mathbf{u}}:=\left(  \Pi_{p}u_{i}\right)
_{i=1}^{3}.
\]
\emph{1.~step:} We show that $\widehat{\Pi}_{p}^{\operatorname*{curl},s}$
admits an element-by-element construction. We show this by asserting that the
tangential component $\Pi_{T}\widehat{\Pi}_{p}^{\operatorname*{curl}%
,s}\mathbf{u}$ depends solely on the tangential component $\Pi_{T}\mathbf{u}$.
Fix a face $f$ of $\widehat{K}$ with normal $\mathbf{n}$. Note that
$\mathbf{n}$ is constant on $f$. The tangential component of
$\Pi^{\operatorname*{curl},s}{\mathbf{u}}$ on $f$ is
\[
\left.  \left(  \Pi_{T}\left(  \Pi^{\operatorname*{curl},s}{\mathbf{u}%
}\right)  \right)  \right\vert _{f}=\left.  \left(  \left(  \Pi_{p}%
u_{i}\right)  _{i=1}^{3}\right)  \right\vert _{f}-\left.  \left(  \sum
_{j=1}^{3}n_{j}\Pi_{p}u_{j}\right)  \mathbf{n}\right\vert _{f}.
\]
Using that $(\Pi_{p}{\mathbf{u}})|_{f}$ is completely determined by the values
of ${\mathbf{u}}$ on $f$ and using that the normal vector ${\mathbf{n}}$ is
constant on $f$, we infer with the understanding that $\Pi_{p}$ acts
componentwise on a vector-valued object
\[
\left.  \left(  \Pi_{T}\left(  \Pi^{\operatorname*{curl},s}{\mathbf{u}%
}\right)  \right)  \right\vert _{f}=\Pi_{p}\left(  {\mathbf{u}}|_{f}\right)
-\Pi_{p}\left(  \left(  {\mathbf{n}}\cdot{\mathbf{u}}|_{f}\right)
{\mathbf{n}}\right)  =\Pi_{p}\left(  {\mathbf{u}}|_{f}-\left(  \left(
{\mathbf{n}}\cdot{\mathbf{u}}|_{f}\right)  {\mathbf{n}}\right)  \right)
=\Pi_{p}(\Pi_{T}{\mathbf{u}})|_{f},
\]
which is the desired claim.

\emph{2.~step:} Estimate (\ref{eq:lemma:Hcurl-interpolation-10}) then follows
from \cite[Thm.~{B.4}]{MelenkSauterMathComp}.

\emph{3.~step:} From \cite[Lemma~{C.2}]{MelenkSauterMathComp}, we conclude
that (\ref{eq:lemma:Hcurl-interpolation-20}) holds.%
%TCIMACRO{\TeXButton{End Proof}{\endproof}}%
%BeginExpansion
\endproof
%EndExpansion

%---------------------------------------------------------

\subsection{Projection Operators with Commuting Diagram Property}

%---------------------------------------------------------

%%-------------------------------------------------------------------
%\subsubsection{Approximation operator $\Pi_{h}^{E}$}
%%-------------------------------------------------------------------
The operator $\Pi_{p}^{\operatorname*{curl},s}$, which is obtained by an
elementwise use of $\widehat{\Pi}_{p}^{\operatorname*{curl},s}$ of
Lemma~\ref{lemma:Hcurl-interpolation}
(cf.~Definition~\ref{def:element-by-element} for the transformation rule) has
($p$-optimal) approximation properties in $\Vert\cdot\Vert
_{\operatorname*{curl},\Omega,k}$ as it has simultaneously $p$-optimal
approximation properties in $L^{2}$ and $H^{1}$. However, it is not a
projection and does not have the commuting diagram property. We therefore
present a second operator, $\widehat{\Pi}^{\operatorname{curl},c}$, in
Theorem~\ref{thm:projection-based-interpolation} with this property. The
construction is given in \cite{melenk_rojik_2018} and similar to that in
\cite{demkowicz-buffa05,demkowicz08}. We point out that the difference between
Theorem~\ref{thm:projection-based-interpolation} from \cite{melenk_rojik_2018}
and the works \cite{demkowicz-buffa05,demkowicz08} is that, by assuming
$H^{2}(\widehat{K})$- and ${\mathbf{H}}^{1}(\widehat{K},\operatorname{curl}%
)$-regularity, Theorem~\ref{thm:projection-based-interpolation} features the
optimal $p$-dependence, thus avoiding the factors of $\log p$ present in
\cite{demkowicz-buffa05,demkowicz08}. \begin{figure}[ptb]%
\[
\begin{CD} \mathbb{R} @> >> H^2(\widehat K) @> \nabla >> {\mathbf H}^1(\widehat K, \operatorname*{curl}) @> \operatorname*{curl} >> {\mathbf H}^1(\widehat K, \operatorname*{div}) @> \operatorname*{div} >> H^1(\widehat K)\\ @VV V @VV \widehat\Pi^{\operatorname{grad},c}_{p+1} V @VV \widehat \Pi^{\operatorname{curl},c}_p V @VV \widehat\Pi^{\operatorname{div},c}_{p} V @VV \widehat \Pi^{L^2}_p V \\ \mathbb{R} @> >> {\mathcal P}_{p+1}(\widehat K) @> \nabla >> \boldsymbol{\mathcal N}^I_p(\widehat K) @> \operatorname*{curl} >> \mathbf{RT}_p(\widehat K) @> \operatorname*{div} >> {\mathcal P}_p(\widehat K) \end{CD}
\]
\caption{Commuting diagram on reference element $\protect\widehat{K}$.}%
\label{fig:commuting-diagram}%
\end{figure}\begin{figure}[ptb]%
\[
\begin{CD} \mathbb{R} @> >> H^2(\Omega) @> \nabla >> {\mathbf H}^1(\Omega, \operatorname*{curl}) @> \operatorname*{curl} >> {\mathbf H}^1(\Omega, \operatorname*{div}) @> \operatorname*{div} >> H^1(\Omega)\\ @VV V @VV \Pi^{\operatorname{grad},c}_{p+1} V @VV \Pi^{\operatorname{curl},c}_p V @VV \Pi^{\operatorname{div},c}_{p} V @VV \Pi^{L^2}_p V \\ \mathbb{R} @> >> S_{p+1}({\mathcal{T}}_{h}) @> \nabla >> \boldsymbol{\mathcal N}^I_p({\mathcal{T}}_{h}) @> \operatorname*{curl} >> \mathbf{RT}_p({\mathcal{T}}_{h}) @> \operatorname*{div} >> Z_p({\mathcal{T}}_{h}) \end{CD}
\]
\caption{ Commuting diagram on mesh ${\mathcal{T}}_{h}$}%
\label{fig:commuting-diagram-global}%
\end{figure}
%The following theorem collects $p$-explicit approximation results for the
%projection operators in Fig.~\ref{fig:commuting-diagram-global} on the reference
%tetrahedron, which are proved in \cite{melenk_rojik_2018}.

\begin{theorem}
[\!\!\cite{melenk_rojik_2018}]\label{thm:projection-based-interpolation} There
are linear \emph{projection} operators $\widehat{\Pi}_{p+1}%
^{\operatorname*{grad},c}$, $\widehat{\Pi}_{p}^{\operatorname*{curl},c}$,
$\widehat{\Pi}_{p}^{\operatorname*{div},c}$, $\widehat{\Pi}_{p}^{L^{2}}$ such
that the following holds:

\begin{enumerate}
[(i)]

\item \label{item:thm:projection-based-interpolation-i} The diagram in
Fig.~\ref{fig:commuting-diagram} commutes.

\item \label{item:thm:projection-based-interpolation-ii} The operators
$\widehat{\Pi}_{p+1}^{\operatorname*{grad},c}$, $\widehat{\Pi}_{p}%
^{\operatorname*{curl},c}$, $\widehat{\Pi}_{p}^{\operatorname*{div},c}$,
$\widehat{\Pi}_{p}^{L^{2}}$ admit element-by-element constructions as in
Definition~\ref{def:element-by-element}. The global operators $\Pi
_{p+1}^{\operatorname*{grad},c}$, $\Pi_{p}^{\operatorname*{curl},c}$, $\Pi
_{p}^{\operatorname*{div},c}$, $\Pi_{p}^{L^{2}}$ obtained from the operators
$\widehat{\Pi}_{p+1}^{\operatorname*{grad},c}$, $\widehat{\Pi}_{p}%
^{\operatorname*{curl},c}$, $\widehat{\Pi}_{p}^{\operatorname*{div},c}$,
$\widehat{\Pi}_{p}^{L^{2}}$ by an element-by-element construction are also
linear projection operators and the diagram in
Fig.~\ref{fig:commuting-diagram-global} commutes.

\item \label{item:thm:projection-based-interpolation-iii} For all $\varphi\in
H^{2}(\widehat{K})$ there holds
\[
\Vert\varphi-\widehat{\Pi}_{p+1}^{\operatorname*{grad},c}\varphi\Vert
_{H^{s}(\widehat{K})}\leq C_{s}p^{-1-(1-s)}\inf_{v\in{\mathcal{P}}%
_{p+1}(\widehat{K})}\Vert\varphi-v\Vert_{H^{2}(\widehat{K})},\qquad
s\in\lbrack0,1].
\]

\item \label{item:thm:projection-based-interpolation-iv} For all ${\mathbf{u}%
}\in{\mathbf{H}}^{1}(\widehat{K},\operatorname{curl})$ there holds
\[
\Vert{\mathbf{u}}-\widehat{\Pi}_{p}^{\operatorname*{curl},c}{\mathbf{u}}%
\Vert_{{\mathbf{H}}(\widehat{K},\operatorname{curl})}\leq Cp^{-1}%
\inf_{{\mathbf{v}}\in\boldsymbol{\mathcal{N}}_{p}^{\operatorname*{I}%
}(\widehat{K})}\Vert{\mathbf{u}}-{\mathbf{v}}\Vert_{{\mathbf{H}}%
^{1}(\widehat{K},\operatorname{curl})}.
\]

\item \label{item:thm:projection-based-interpolation-v} For all $k\geq1$ and
all ${\mathbf{u}}\in{\mathbf{H}}^{k}(\widehat{K})$ with $\operatorname*{curl}%
{\mathbf{u}}\in\boldsymbol{\mathcal{P}}_{p}$ there holds
\begin{equation}
\Vert{\mathbf{u}}-\widehat{\Pi}_{p}^{\operatorname*{curl},c}{\mathbf{u}}%
\Vert_{\mathbf{L}^{2}(\widehat{K})}\leq C_{k}p^{-k}\Vert{\mathbf{u}}%
\Vert_{\mathbf{H}^{k}(\widehat{K})}.
\label{eq:lemma:projection-based-interpolation-approximation-10}%
\end{equation}
If $p\geq k-1$, then the full norm $\Vert{\mathbf{u}}\Vert_{{\mathbf{H}}%
^{k}(\widehat{K})}$ can be replaced with the seminorm $|{\mathbf{u}%
}|_{{\mathbf{H}}^{k}(\widehat{K})}$.
\end{enumerate}
\end{theorem}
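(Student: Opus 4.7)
\medskip
\noindent\textbf{Proof plan.} The construction follows the projection-based interpolation paradigm of Demkowicz and coauthors (see \cite{demkowicz-buffa05,demkowicz08}), but with a careful choice of the local projections so that the optimal polynomial degree rates (without the logarithmic factors present in \cite{demkowicz-buffa05,demkowicz08}) are attained. On the reference tetrahedron $\widehat K$ I would define the four operators vertex-by-vertex, then edge-by-edge, then face-by-face, and finally on the interior, using lifts from lower-dimensional entities. For $\widehat\Pi^{\operatorname{grad},c}_{p+1}\varphi$, start by interpolating $\varphi$ at the vertices (valid since $H^2(\widehat K)\hookrightarrow C^0(\widehat K)$), then solve edge problems of the form $(( \widehat\Pi^{\operatorname{grad},c}_{p+1}\varphi)'-\varphi',v')_{L^2(e)}=0$ for polynomial bubbles $v$ of degree $\le p+1$ on $e$, then analogous Dirichlet face problems $(\nabla_f \cdot,\nabla_f \cdot)_{L^2(f)}$ with face bubbles, and finally an interior Dirichlet problem. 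The operators $\widehat\Pi^{\operatorname{curl},c}_p$, $\widehat\Pi^{\operatorname{div},c}_p$, $\widehat\Pi^{L^2}_p$ are built in exactly the same layered fashion, but using the appropriate trace functionals: for curl, the tangential edge trace is fixed by edge $L^2$-projection, the tangential face trace by a 2D projection that splits into a surface-gradient part and a surface-curl part (mirroring the Helmholtz decomposition), and the interior part by a 3D analogue; for div one uses normal face traces and a similar decomposition.

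\medskip
The commuting diagram property in Figure~\ref{fig:commuting-diagram} is then built into the construction: for each layer the local projection is defined so that $\operatorname{curl}$ and $\operatorname{div}$ applied to the higher-degree operator coincide with the next operator applied to $\operatorname{curl}$ or $\operatorname{div}$. Concretely, this is enforced by choosing the bubble test spaces for $\widehat\Pi^{\operatorname{curl},c}_p$ compatibly with $\nabla \widehat\Pi^{\operatorname{grad},c}_{p+1}$ on edges/faces/interior, and similarly for the other arrows; the exact sequence on $\widehat K$ (\ref{exdiscseq_rm}) then guarantees that the subproblems are well posed and that the diagram commutes. Assertion (\ref{item:thm:projection-based-interpolation-i}) follows by construction, and (\ref{item:thm:projection-based-interpolation-ii}) follows from the element-by-element character of the construction together with the fact that values on shared entities are completely determined by data on those entities, so that conformity across element interfaces is automatic and the lifted global operators agree on overlaps.

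\medskip
For the approximation estimates (\ref{item:thm:projection-based-interpolation-iii})--(\ref{item:thm:projection-based-interpolation-v}) I would follow a standard extension-by-layers argument: on each entity, compare $\varphi$ (resp.\ $\mathbf u$) with a suitable polynomial best approximation, and control the error introduced by lifting from lower-dimensional skeleta with $p$-explicit polynomial extension operators. The crucial ingredient is the existence of such extension operators on the tetrahedron with $p$-optimal (not log-lossy) bounds in the relevant trace norms; this is exactly where the improvement over \cite{demkowicz-buffa05,demkowicz08} enters. For (\ref{item:thm:projection-based-interpolation-v}) one additionally uses that $\operatorname{curl}\mathbf u\in\boldsymbol{\mathcal P}_p$, which via the commuting diagram forces $\operatorname{curl}(\mathbf u-\widehat\Pi^{\operatorname{curl},c}_p\mathbf u)=0$, so that the error is a gradient and can be estimated by scalar $H^1$-approximation, improving one power of $p$.

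\medskip
The main obstacle is precisely the construction and analysis of these $p$-optimal polynomial lifting/extension operators on the reference tetrahedron, together with a sufficiently sharp stability analysis of the edge and face projection problems to absorb their norms cleanly into the overall error. In 2D these tools go back to Babu\v{s}ka--Suri and were refined by Demkowicz et al.; in 3D one needs the more delicate constructions developed in \cite{melenk_rojik_2018} that simultaneously give $H^1$-, $\mathbf H(\operatorname{curl})$- and $\mathbf H(\operatorname{div})$-stability of the liftings. Once these lifting results are in place, the estimates (\ref{item:thm:projection-based-interpolation-iii})--(\ref{item:thm:projection-based-interpolation-v}) follow by the standard layered stability/approximation argument outlined above.
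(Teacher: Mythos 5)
The paper does not actually prove Theorem~\ref{thm:projection-based-interpolation}; it imports it verbatim from \cite{melenk_rojik_2018}, and your layered projection-based-interpolation plan (vertex/edge/face/interior construction, commuting property enforced by compatible bubble test spaces, parametrization-independent traces giving global conformity, and the polynomial-curl condition in (\ref{item:thm:projection-based-interpolation-v}) buying one power of $p$ via $\operatorname{curl}(\mathbf{u}-\widehat\Pi^{\operatorname{curl},c}_p\mathbf{u})=0$) is exactly the strategy of that reference and of \cite{demkowicz-buffa05,demkowicz08}. Be aware, though, that what you defer — the $p$-optimal (log-free) polynomial lifting/extension operators on the tetrahedron simultaneously stable in $H^1$, $\mathbf{H}(\operatorname{curl})$ and $\mathbf{H}(\operatorname{div})$, together with the stability of the constrained edge/face/interior minimization problems — is the entire mathematical content of the cited theorem, so your proposal is an accurate roadmap to the proof in \cite{melenk_rojik_2018} rather than a self-contained argument.
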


\subsection{$hp$-FEM Approximation\label{sec:hp-approximation}}

%---------------------------------------------------------
Our $hp$-FEM convergence result will be formulated for the specific class of
meshes which have been introduced in Section \ref{SecCurlConfFEM}. For such
meshes, we can formulate approximation results for both, the operators
$\Pi_{p}^{\operatorname*{curl},s}$ and $\Pi_{p}^{\operatorname*{curl},c}$. In
both cases, we will need to relate functions defined on $K$ to their pull-back
to the reference tetrahedron $\widehat{K}$. The appropriate transformations
are described in Definition~\ref{def:element-maps}: For scalar functions
$\varphi$ defined on $K$ and vector-valued functions ${\mathbf{u}}$ defined on
$K$, we let
\begin{equation}
\widehat{\varphi}=\varphi\circ F_{K},\qquad\widehat{\mathbf{u}}=(F_{K}%
^{\prime})^{\top}({\mathbf{u}}\circ F_{K}).
\label{eq:transformation-convention}%
\end{equation}

\begin{lemma}
\label{lemma:scaling} Let the regular mesh ${\mathcal{T}}_{h}$ satisfy
Assumption~\ref{def:element-maps}.
%Adopt the convention that for scalar valued
%functions $\varphi \in H^1(K)$ the transformation rule $\widehat \varphi:= \varphi \circ F_K$ whereas for
%vector-valued ${\mathbf u} \in {\mathbf H}(K,\operatorname{curl})$ the transformation
%is $\widehat {\mathbf u}:= (F_K^\prime)^{T} {\mathbf u} \circ F_K$.

\begin{enumerate}
[(i)]

\item With implied constants depending only on $C_{\operatorname{affine}}$,
$C_{\operatorname{metric}}$, $\gamma$ there holds for all $K \in{\mathcal{T}%
}_{h}$
\begin{align}
&  |\widehat{\varphi}|_{H^{j}(\widehat{K})}\sim h_{K}^{j-3/2}|\varphi
|_{H^{j}(K)},\quad j\in\{0,1\},\qquad|\widehat{\varphi}|_{H^{2}(\widehat{K}%
)}\lesssim h_{K}^{2-3/2}\Vert\varphi\Vert_{H^{2}(K)}%
,\label{eq:lemma:scaling-10}\\
&  \left\Vert \widehat{\mathbf{u}}\right\Vert _{\mathbf{L}^{2}(\widehat{K}%
)}\sim h_{K}^{1-3/2}\left\Vert {\mathbf{u}}\right\Vert _{\mathbf{L}^{2}%
(K)},\quad\left\Vert \operatorname{curl}\widehat{\mathbf{u}}\right\Vert
_{\mathbf{L}^{2}(\widehat{K})}\sim h_{K}^{2-3/2}\left\Vert \operatorname{curl}%
{\mathbf{u}}\right\Vert _{\mathbf{L}^{2}(K)},\quad|\widehat{\mathbf{u}%
}|_{{\mathbf{H}}^{2}(\widehat{K})}\lesssim h_{K}^{3-3/2}\Vert{\mathbf{u}}%
\Vert_{{\mathbf{H}}^{2}(K)}. \label{eq:lemma:scaling-20}%
\end{align}

\item Let $\overline{\gamma}>0$. Then there exist $\gamma^{\prime}$, $C>0$
depending only on $\overline{\gamma}$ and the constants of
Assumption~\ref{def:element-maps} such that%
\begin{align}
\Vert\nabla^{n}\varphi\Vert_{\mathbf{L}^{2}(K)}  &  \leq C_{\varphi}%
\overline{\gamma}^{n}\max\{n,k\}^{n}\quad\forall n\in{\mathbb{N}}_{0}%
\quad\Longrightarrow\quad\Vert\nabla^{n}\widehat{\varphi}\Vert_{\mathbf{L}%
^{2}(\widehat{K})}\leq CC_{\varphi}h_{K}^{-3/2}\left(  h_{k}\gamma^{\prime
}\right)  ^{n}\max\{n,k\}^{n}\quad\forall n\in{\mathbb{N}}_{0}%
,\label{eq:lemma:scaling-30}\\
\Vert\nabla^{n}{\mathbf{u}}\Vert_{\mathbf{L}^{2}(K)}  &  \leq C_{\mathbf{u}%
}\overline{\gamma}^{n}\max\{n,k\}^{n}\quad\forall n\in{\mathbb{N}}_{0}%
\quad\Longrightarrow\quad\Vert\nabla^{n}\widehat{\mathbf{u}}\Vert
_{\mathbf{L}^{2}(\widehat{K})}\leq Ch_{K}^{1-3/2}C_{\mathbf{u}}\left(
h_{K}\gamma^{\prime}\right)  ^{n}\max\{n,k\}^{n}\quad\forall n\in{\mathbb{N}%
}_{0}. \label{eq:lemma:scaling-40}%
\end{align}

\end{enumerate}
\end{lemma}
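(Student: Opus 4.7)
\textbf{Plan of proof for Lemma~\ref{lemma:scaling}.} The whole lemma is a standard scaling/change-of-variables exercise, but all bounds need to flow from the very specific decomposition $F_K = R_K \circ A_K$ supplied by Assumption~\ref{def:element-maps}. So I would open by recording, once and for all, the pointwise bounds that follow from that assumption: since $A_K$ is affine with $\|A_K'\|_\infty \sim h_K$ and $\|(A_K')^{-1}\|_\infty \sim h_K^{-1}$, and since $R_K$ has only $h_K$-independent derivatives with $\|R_K'\|_\infty,\,\|(R_K')^{-1}\|_\infty \lesssim 1$ and $\|\nabla^n R_K\|_\infty \lesssim C_{\operatorname{metric}} \gamma^n n!$, the chain rule yields
\[
\|F_K'\|_\infty \sim h_K,\qquad \|(F_K')^{-1}\|_\infty \sim h_K^{-1},\qquad |\det F_K'| \sim h_K^{3},\qquad \|\nabla^n F_K\|_\infty \le C\,h_K\,\tilde\gamma^{\,n} n!\quad\forall n\ge 1,
\]
for constants $C,\tilde\gamma$ depending only on $C_{\operatorname{affine}},C_{\operatorname{metric}},\gamma$. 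The crucial point exploited throughout is that because $A_K$ is affine, \emph{every} derivative of $F_K$ of order $n\ge 1$ carries exactly one affine factor $A_K'$, hence exactly one power of $h_K$.

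For part (i) I would just chain these estimates into the change-of-variables formula. The $L^2$ and $H^1$ equivalences for $\widehat\varphi$ follow from $\nabla\widehat\varphi = (F_K')^\top (\nabla\varphi)\circ F_K$ together with the Jacobian $|\det F_K'|^{-1}\sim h_K^{-3}$. The $H^2$-bound uses $\nabla^2\widehat\varphi = (F_K')^\top (\nabla^2\varphi)\circ F_K\, F_K' + F_K''\cdot(\nabla\varphi)\circ F_K$; the first summand yields the advertised $h_K^{2-3/2}|\varphi|_{H^2(K)}$ factor, and the second is of lower order $h_K^{1-3/2}|\varphi|_{H^1(K)}$ and is absorbed into $\|\varphi\|_{H^2(K)}$. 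The vector estimates are entirely analogous once we recall that under the covariant pullback $\widehat{\mathbf u} = (F_K')^\top({\mathbf u}\circ F_K)$ one has the Piola-type identity $\operatorname{curl}\widehat{\mathbf u} = (\det F_K')\,(F_K')^{-1}(\operatorname{curl}{\mathbf u})\circ F_K$ (cited, e.g., from \cite[Cor.~3.58]{Monkbook} in the proof of Theorem~\ref{TheoLau}); substituting the pointwise bounds above gives the listed $h_K$-powers.

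For part (ii) the strategy is a multivariate Faà di Bruno expansion for $\widehat\varphi = \varphi\circ F_K$. Every term in the expansion of $\nabla^n\widehat\varphi$ is a sum, over partitions of $\{1,\dots,n\}$ into $\ell$ blocks of sizes $n_1+\dots+n_\ell = n$, of products $(\nabla^\ell \varphi)\circ F_K \cdot \nabla^{n_1}F_K \cdots \nabla^{n_\ell}F_K$. Plugging in the analytic bound on $F_K$ contributes $(Ch_K)^\ell \tilde\gamma^n\,(n_1!\cdots n_\ell!)$, while the hypothesis on $\varphi$ yields $C_\varphi\overline\gamma^{\,\ell}\max\{\ell,k\}^\ell$ in $L^2(K)$; after the Jacobian $h_K^{-3/2}$, a standard Faà di Bruno combinatorial estimate (essentially $\sum_{\text{partitions}} \binom{n}{n_1,\dots,n_\ell}\tfrac{1}{\ell!}$ bounded by a geometric factor in $n$) collapses the double sum into $C C_\varphi h_K^{-3/2}(h_K\gamma')^n\max\{n,k\}^n$ with a new $\gamma'$ depending only on $\tilde\gamma,\overline\gamma$. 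Here the magical cancellation is exactly that each $\nabla^{n_i}F_K$ gives a single $h_K$, so the total $h_K$-power is $h_K^\ell \le h_K^n$, giving the announced $(h_K\gamma')^n$ scaling. For the vectorial implication I would apply the Leibniz rule to $\widehat{\mathbf u} = (F_K')^\top({\mathbf u}\circ F_K)$ and use the same analytic bound on $F_K'$, which is itself analytic with the same rates by the first paragraph.

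The only real obstacle is the combinatorial bookkeeping in part (ii): verifying that the multi-index sums produced by Faà di Bruno do not degrade the constant $\gamma'$ in a $k$- or $n$-dependent way. This is exactly the computation already carried out in \cite[Lemma~C.2]{MelenkSauterMathComp} (and implicit in Lemma~\ref{lemma:A-invariant} of the present paper, whose hypotheses we can in fact invoke directly for fixed $h_K$ by a rescaling argument); so I would cite those results rather than redo the partition count from scratch.
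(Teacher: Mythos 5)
Your part (i) and the overall architecture of part (ii) are sound, but the central scaling claim on which your part (ii) rests is stated incorrectly, and the error propagates into the bookkeeping. You assert that ``every derivative of $F_K$ of order $n\ge 1$ carries exactly one affine factor $A_K'$, hence exactly one power of $h_K$'', i.e., $\|\nabla^n F_K\|_{L^\infty}\le C\,h_K\,\tilde\gamma^{\,n} n!$. This is true only for $n=1$. Since $F_K=R_K\circ A_K$ with $A_K$ affine, the chain rule gives $\nabla^n F_K=\bigl((\nabla^n R_K)\circ A_K\bigr)[A_K',\dots,A_K']$ with $n$ copies of $A_K'$, so the correct bound is $\|\nabla^n F_K\|_{L^\infty}\le C(\tilde\gamma h_K)^{n} n!$ --- $n$ powers of $h_K$, not one. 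With your bound, a Fa\`a di Bruno term with $\ell$ blocks carries only $h_K^{\ell}$, and your closing step ``$h_K^{\ell}\le h_K^{n}$'' is the wrong-way inequality: for $h_K\le 1$ and $\ell\le n$ one has $h_K^{\ell}\ge h_K^{n}$, so the argument as written does not deliver the claimed $(h_K\gamma')^{n}$. With the corrected bound the total power in each term is $h_K^{n_1+\cdots+n_\ell}=h_K^{n}$ exactly and no comparison between $h_K^{\ell}$ and $h_K^{n}$ is needed; the conclusion is then restored, but the ``magical cancellation'' you identify is not the right one.

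It is also worth contrasting your route with the paper's, which sidesteps the $h_K$-bookkeeping inside Fa\`a di Bruno altogether by factoring the pullback: one sets $\widetilde{\mathbf u}:=(R_K')^{\top}{\mathbf u}\circ R_K$ on $\widetilde K=A_K(\widehat K)$, applies the $h$-independent analyticity-invariance result (Lemma~\ref{lemma:A-invariant}, resp.\ \cite[Lemma~{4.3.1}]{MelenkHabil} and \cite[Lemma~{C.1}]{MelenkSauterMathComp}) to obtain $\|\nabla^n\widetilde{\mathbf u}\|_{{\mathbf L}^2(\widetilde K)}\le CC_{\mathbf u}\widetilde\gamma^{\,n}\max\{n,k\}^n$, and then uses $\widehat{\mathbf u}=(A_K')^{\top}\widetilde{\mathbf u}\circ A_K$, where the affine composition contributes the factor $(Ch_K)^n$ per $n$-th derivative together with the Jacobian power. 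Your closing suggestion to invoke Lemma~\ref{lemma:A-invariant} ``directly for fixed $h_K$'' would not work as stated: that lemma's output constants depend on the analyticity constants of the composing map, and for $F_K$ itself these degenerate with $h_K$; the entire point of the factorization $F_K=R_K\circ A_K$ is that $R_K$ has $h$-uniform analyticity constants while $A_K$, being affine, yields the $h_K^n$ scaling for free.
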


\begin{proof}
We will not show (\ref{eq:lemma:scaling-10}). For (\ref{eq:lemma:scaling-20}),
the first and third estimate in (\ref{eq:lemma:scaling-20}) follow by
inspection, the second equivalence follows from (cf., e.g., \cite[Cor.~{3.58}%
]{Monkbook})
\[
F_{K}^{\prime}\operatorname{curl}\widehat{\mathbf{u}}=(\operatorname{det}%
F_{K}^{\prime})(\operatorname{curl}{\mathbf{u}})\circ F_{K}.
\]
The implications (\ref{eq:lemma:scaling-30}), (\ref{eq:lemma:scaling-40}) are
obtained by similar arguments. We will therefore focus on
(\ref{eq:lemma:scaling-40}). Recalling that the element map $F_{K}$ has the
form $F_{K}=R_{K}\circ A_{K}$, we introduce the function
$\widetilde{\mathbf{u}}:=(R_{K}^{\prime})^{\top}{\mathbf{u}}\circ R_{K}$,
which is defined on $\widetilde{K}:=A_{K}(\widehat{K})$. Using
\cite[Lemma~{4.3.1}]{MelenkHabil} (and noting as in the proof
\cite[Lemma~{C.1}]{MelenkSauterMathComp} that the original 2d arguments
extends to 3d), we get the existence of $C$, $\widetilde{\gamma}$, which
depend solely on the constants of Assumption~\ref{def:element-maps} and on
$\overline{\gamma}$, such that
\[
\Vert\nabla^{n}\widetilde{\mathbf{u}}\Vert_{\mathbf{L}^{2}(\widetilde{K})}\leq
CC_{\mathbf{u}}(\widetilde{\gamma})^{n}\max\{n,k\}^{n}\quad\forall
n\in{\mathbb{N}}_{0}.
\]
Next, we observe $\widehat{\mathbf{u}}=(A_{K}^{\prime})^{\top}%
\widetilde{\mathbf{u}}\circ A_{K}$. Using that $A_{K}$ is affine, it is easy
to deduce
\[
\Vert\nabla^{n}\widehat{\mathbf{u}}\Vert_{\mathbf{L}^{2}(\widehat{K})}\leq
CC_{\mathbf{u}} h_K^{1-3/2} \max\{n,k\}^{n}\left(  h_{K}\gamma^{\prime}\right)  ^{n}%
\quad\forall n\in{\mathbb{N}}_{0},
\]
which is the desired estimate.
\end{proof}

\begin{lemma}
\label{lemma:Picurls-approximation} Let ${\mathcal{T}}_{h}$ be a regular mesh
satisfying Assumption~\ref{def:element-maps} and assume $p\geq1$.

\begin{enumerate}
[(i)]

\item \label{item:lemma:Picurls-approximation-i}
\[
\Vert{\mathbf{u}}-\Pi_{p}^{\operatorname*{curl},s}{\mathbf{u}}\Vert
_{\mathbf{L}^{2}(K)}+h_{K}p^{-1}\Vert\operatorname{curl}({\mathbf{u}}-\Pi
_{p}^{\operatorname*{curl},s}{\mathbf{u}})\Vert_{\mathbf{L}^{2}(K)}\lesssim
h_{K}^{2}p^{-2}\Vert{\mathbf{u}}\Vert_{{\mathbf{H}}^{2}(K)}.
\]

\item \label{item:lemma:Picurls-approximation-ii} Let $\widetilde{C}>0$ be
given. If ${\mathbf{u}}$ satisfies (\ref{eq:lemma:scaling-40}), then there
exist $C$, $\sigma>0$ depending only on $\widetilde{C}$ and $\overline{\gamma
}$ and the constants of Assumption~\ref{def:element-maps} such that under the
side constraint
\begin{equation}
h_{K}+\frac{kh_{K}}{p}\leq\widetilde{C}%
\end{equation}
the following approximation result holds:
\begin{equation}
\Vert{\mathbf{u}}-\Pi_{p}^{\operatorname*{curl},s}{\mathbf{u}}\Vert
_{\mathbf{L}^{2}(K)}+h_{K}p^{-1}\Vert\operatorname{curl}({\mathbf{u}}-\Pi
_{p}^{\operatorname*{curl},s}{\mathbf{u}})\Vert_{\mathbf{L}^{2}(K)}%
\lesssim\left(  \left(  \frac{h_{K}}{h_{K}+\sigma}\right)  ^{p+1}+\left(
\frac{kh_{K}}{\sigma p}\right)  ^{p+1}\right)  . \label{approxpropPpcurlsL2}%
\end{equation}

\end{enumerate}
\end{lemma}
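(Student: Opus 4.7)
The plan is to prove both estimates by pulling back to the reference tetrahedron, applying Lemma~\ref{lemma:Hcurl-interpolation} there, and scaling back via Lemma~\ref{lemma:scaling}. The key structural observation is that the element-by-element construction of $\Pi_p^{\operatorname{curl},s}$ (cf.\ Def.~\ref{def:element-by-element}) means that on each $K$ we have $\widehat{\Pi_p^{\operatorname{curl},s}{\mathbf u}} = \widehat\Pi_p^{\operatorname{curl},s}\widehat{\mathbf u}$ with the transformation convention (\ref{eq:transformation-convention}), so the error on $K$ is the pullback (in the covariant sense) of the error of $\widehat\Pi_p^{\operatorname{curl},s}$ on $\widehat K$.

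For part~(\ref{item:lemma:Picurls-approximation-i}), I would apply Lemma~\ref{lemma:Hcurl-interpolation}(\ref{eq:lemma:Hcurl-interpolation-10}) on $\widehat K$ with $s=2$ (note $p\ge 1 \ge s-1$) to $\widehat{\mathbf u}$, obtaining
\[
(p+1)\Vert\widehat{\mathbf u}-\widehat\Pi_p^{\operatorname{curl},s}\widehat{\mathbf u}\Vert_{\mathbf L^2(\widehat K)}
+\Vert\widehat{\mathbf u}-\widehat\Pi_p^{\operatorname{curl},s}\widehat{\mathbf u}\Vert_{{\mathbf H}^1(\widehat K)}
\lesssim p^{-1}|\widehat{\mathbf u}|_{{\mathbf H}^2(\widehat K)}.
\]
Then I would invoke the scaling relations (\ref{eq:lemma:scaling-20}): the $L^2$-part scales as $h_K^{1-3/2}$ in both directions, the curl as $h_K^{2-3/2}$, and $|\widehat{\mathbf u}|_{{\mathbf H}^2(\widehat K)}\lesssim h_K^{3-3/2}\Vert{\mathbf u}\Vert_{{\mathbf H}^2(K)}$. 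Balancing the $h_K$-powers in both summands yields the factor $h_K^2 p^{-2}$, as claimed. This is routine; the only mild care needed is that the curl is transformed via the Piola transform and hence its scaling factor differs from that of ${\mathbf u}$ itself by one power of $h_K$, which is exactly what allows the $h_K p^{-1}$ weight in front of the curl term.

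For part~(\ref{item:lemma:Picurls-approximation-ii}) I would first convert the analyticity hypothesis (\ref{eq:lemma:scaling-40}) on $K$ into an analogous hypothesis on $\widehat K$ using Lemma~\ref{lemma:scaling}(\ref{eq:lemma:scaling-40}), which yields, for all $n\in{\mathbb N}_0$,
\[
\Vert\nabla^n\widehat{\mathbf u}\Vert_{\mathbf L^2(\widehat K)} \le C\, h_K^{-1/2} C_{\mathbf u}\, (h_K\gamma')^n\max\{n,k\}^n,
\]
with $\gamma'$ independent of $k$, $h_K$, $p$. This puts $\widehat{\mathbf u}$ into the analyticity class required by Lemma~\ref{lemma:Hcurl-interpolation}(\ref{eq:lemma:Hcurl-interpolation-20}) with $\overline\gamma\leftarrow\gamma'$, $h\leftarrow h_K$, $\kappa\leftarrow k$, and effective constant $h_K^{-1/2}C_{\mathbf u}$. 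The side constraint $h_K+kh_K/p\le\widetilde C$ of that lemma is precisely (up to constants) the side constraint assumed here. Applying it produces the exponential bound in $W^{2,\infty}(\widehat K)$, and the continuous embedding $W^{2,\infty}(\widehat K)\hookrightarrow{\mathbf H}^1(\widehat K)\cap{\mathbf H}(\widehat K,\operatorname{curl})$ on the fixed reference element gives the same bound for $\Vert\widehat{\mathbf u}-\widehat\Pi_p^{\operatorname{curl},s}\widehat{\mathbf u}\Vert_{\mathbf L^2(\widehat K)}+\Vert\operatorname{curl}(\widehat{\mathbf u}-\widehat\Pi_p^{\operatorname{curl},s}\widehat{\mathbf u})\Vert_{\mathbf L^2(\widehat K)}$. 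Scaling back via (\ref{eq:lemma:scaling-20}) then produces the claimed estimate on $K$; the $h_K$-powers coming from the two scaled quantities balance with the $h_K^{-1/2}$ absorbed into $C_{\mathbf u}$, leaving the two exponential factors displayed in (\ref{approxpropPpcurlsL2}).

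The only non-routine step is verifying that the element-by-element construction of $\Pi_p^{\operatorname{curl},s}$ genuinely commutes with the covariant pullback so that the reference element estimate can be transferred; this is exactly the content of the element-by-element construction property of Definition~\ref{def:element-by-element}, which $\widehat\Pi_p^{\operatorname{curl},s}$ is shown to satisfy in Step~1 of the proof of Lemma~\ref{lemma:Hcurl-interpolation}. Everything else is bookkeeping of $h_K$-powers and the mild distortion of the analyticity constants under the decomposition $F_K=R_K\circ A_K$, which is already encoded in Lemma~\ref{lemma:scaling}.
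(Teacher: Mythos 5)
Your proposal is correct and follows essentially the same route as the paper: pull back to $\widehat K$ via the covariant transform, apply Lemma~\ref{lemma:Hcurl-interpolation} (part (\ref{eq:lemma:Hcurl-interpolation-10}) with $s=2$ for (i), part (\ref{eq:lemma:Hcurl-interpolation-20}) with $\kappa=k$ and the $W^{2,\infty}(\widehat K)$ bound for (ii)), and scale back with Lemma~\ref{lemma:scaling}, with the $h_K$-power bookkeeping exactly as you describe.
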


\begin{proof}
\emph{Proof of (\ref{item:lemma:Picurls-approximation-i}):} From
Lemma~\ref{lemma:Hcurl-interpolation} with $s=2$ we have on the reference
tetrahedron
\[
p\Vert\widehat{\mathbf{u}}-\widehat{\Pi}_{p}^{\operatorname*{curl}%
,s}\widehat{\mathbf{u}}\Vert_{\mathbf{L}^{2}(\widehat{K})}+\Vert
\widehat{\mathbf{u}}-\widehat{\Pi}_{p}^{\operatorname*{curl},s}%
\widehat{\mathbf{u}}\Vert_{{\mathbf{H}}^{1}(\widehat{K})}\lesssim
p^{-1}|\widehat{\mathbf{u}}|_{{\mathbf{H}}^{2}(\widehat{K})}.
\]
Hence, using (\ref{eq:lemma:scaling-20}) we infer
\[
ph_{K}^{1-3/2}\Vert\mathbf{u}-\Pi_{p}^{\operatorname*{curl},s}\mathbf{u}%
\Vert_{\mathbf{L}^{2}(K)}+h_{K}^{2-3/2}\Vert\operatorname{curl}({\mathbf{u}%
}-\Pi_{p}^{\operatorname*{curl},s}{\mathbf{u}})\Vert_{\mathbf{L}^{2}%
(K)}\lesssim h_{K}^{3-3/2}p^{-1}\Vert{\mathbf{u}}\Vert_{{\mathbf{H}}^{2}(K)}.
\]
\emph{Proof of (\ref{item:lemma:Picurls-approximation-ii}):} We proceed as
above. The transformation rules of Lemma~\ref{lemma:scaling} and
Lemma~\ref{lemma:Hcurl-interpolation} give
\begin{equation}
\left\Vert \widehat{\mathbf{u}}-\widehat{\Pi}_{p}^{\operatorname*{curl}%
,s}\widehat{\mathbf{u}}\right\Vert _{W^{2,\infty}(\widehat{K})}\leq
CC_{\mathbf{u}}h_{K}^{1-3/2}\left(  \left(  \frac{h_{K}}{h_{K}+\sigma}\right)
^{p+1}+\left(  \frac{kh_{K}}{\sigma p}\right)  ^{p+1}\right)  .
\label{W2infrefel}%
\end{equation}
Since the norm $\left\Vert \cdot\right\Vert _{W^{2,\infty}\left(
\widehat{K}\right)  }$ is stronger than $\left\Vert \cdot\right\Vert
_{\mathbf{L}^{2}\left(  \widehat{K}\right)  }$ and $\left\Vert
\operatorname*{curl}\cdot\right\Vert _{\mathbf{H}^{1}\left(  \widehat{K}%
\right)  }$ the result follows by transforming back to $K$ using
Lemma~\ref{lemma:scaling}.
\end{proof}

For the operator $\Pi_{p}^{\operatorname*{curl},c}$ we have the following
approximation results:

\begin{lemma}
\label{lemma:Picurlcom-approximation} Let ${\mathcal{T}}_{h}$ be a regular
mesh satisfying Assumption~\ref{def:element-maps}. Then for $p\geq1$:

\begin{enumerate}
[(i)]

\item \label{item:lemma:Picurlcom-approximation-i}
\begin{equation}
h_{K}^{-1}\Vert{\mathbf{u}}-\Pi_{p}^{\operatorname*{curl},c}{\mathbf{u}}%
\Vert_{\mathbf{L}^{2}(K)}+\Vert\operatorname{curl}({\mathbf{u}}-\Pi
_{p}^{\operatorname*{curl},c}{\mathbf{u}})\Vert_{\mathbf{L}^{2}(K)}\leq
Ch_{K}(p+1)^{-1}\Vert{\mathbf{u}}\Vert_{{\mathbf{H}}^{2}(K)}.
\end{equation}

\item \label{item:lemma:Picurlcom-approximation-ii} Assume the hypotheses of
Lemma~\ref{lemma:Picurls-approximation},
(\ref{item:lemma:Picurls-approximation-ii}). Then%
\[
h_{K}^{-1}\Vert{\mathbf{u}}-\Pi_{p}^{\operatorname*{curl},c}{\mathbf{u}}%
\Vert_{\mathbf{L}^{2}(K)}+\Vert\operatorname{curl}({\mathbf{u}}-\Pi
_{p}^{\operatorname*{curl},c}{\mathbf{u}})\Vert_{\mathbf{L}^{2}(K)}%
\lesssim\left(  \left(  \frac{h_{K}}{h_{K}+\sigma}\right)  ^{p}+\frac{k}%
{p}\left(  \frac{kh_{K}}{\sigma p}\right)  ^{p}\right)  .
\]

\item \label{item:lemma:Picurlcom-approximation-iii} For ${\mathbf{u}}%
\in{\mathbf{H}}^{1}(K,\operatorname{curl})$ with $\operatorname{curl}%
\widehat{\mathbf{u}}\in({\mathcal{P}}_{p}(\widehat{K}))^{3}$ there holds
\[
\Vert{\mathbf{u}}-\Pi_{p}^{\operatorname*{curl},c}{\mathbf{u}}\Vert
_{\mathbf{L}^{2}(K)}\leq Ch_{K}p^{-1}\Vert{\mathbf{u}}\Vert_{{\mathbf{H}}%
^{1}(K)}.
\]

\end{enumerate}
\end{lemma}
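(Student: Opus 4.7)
The overall strategy for all three parts is to pull back to the reference tetrahedron $\widehat K$ via the covariant transformation (\ref{eq:transformation-convention}), exploit the approximation properties of $\widehat\Pi_p^{\operatorname{curl},c}$ collected in Theorem~\ref{thm:projection-based-interpolation}, and rescale back via Lemma~\ref{lemma:scaling}. The element-by-element construction (Definition~\ref{def:element-by-element}) ensures that the error on $K$ is the covariant push-forward of the corresponding error on $\widehat K$, so it suffices to establish the bounds on the reference element.

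For part (\ref{item:lemma:Picurlcom-approximation-i}), since $\widehat\Pi_p^{\operatorname{curl},c}$ is a projector onto $\boldsymbol{\mathcal N}_p^I(\widehat K)$ and $(\mathcal P_p(\widehat K))^3 \subset \boldsymbol{\mathcal N}_p^I(\widehat K)$, Theorem~\ref{thm:projection-based-interpolation}, (\ref{item:thm:projection-based-interpolation-iv}) combined with a standard polynomial best-approximation argument on $\widehat K$ yields
\[
\|\widehat{\mathbf u}-\widehat\Pi_p^{\operatorname{curl},c}\widehat{\mathbf u}\|_{{\mathbf H}(\widehat K,\operatorname{curl})} \lesssim p^{-1}\|\widehat{\mathbf u}\|_{{\mathbf H}^2(\widehat K)}.
\]
Translating back via (\ref{eq:lemma:scaling-20}) gives the claim.

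For part (\ref{item:lemma:Picurlcom-approximation-ii}), the idea is to use that $\widehat\Pi_p^{\operatorname{curl},c}$ fixes elements of $\boldsymbol{\mathcal N}_p^I(\widehat K)$, so for any ${\mathbf v}_p\in\boldsymbol{\mathcal N}_p^I(\widehat K)$ one has $\widehat{\mathbf u}-\widehat\Pi_p^{\operatorname{curl},c}\widehat{\mathbf u} = (I-\widehat\Pi_p^{\operatorname{curl},c})(\widehat{\mathbf u}-{\mathbf v}_p)$. Selecting ${\mathbf v}_p:=\widehat\Pi_p^{\operatorname{curl},s}\widehat{\mathbf u}\in(\mathcal P_p(\widehat K))^3$ and invoking Theorem~\ref{thm:projection-based-interpolation}, (\ref{item:thm:projection-based-interpolation-iv}) reduces the task to estimating $\|\widehat{\mathbf u}-\widehat\Pi_p^{\operatorname{curl},s}\widehat{\mathbf u}\|_{{\mathbf H}^1(\widehat K,\operatorname{curl})}$, which is controlled by the $W^{2,\infty}(\widehat K)$-norm and hence by the exponentially small bound (\ref{eq:lemma:Hcurl-interpolation-20}). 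The analyticity hypothesis on ${\mathbf u}$ is transferred to $\widehat{\mathbf u}$ via (\ref{eq:lemma:scaling-40}), and the smallness condition $h_K+kh_K/p\leq\widetilde C$ is precisely what is needed to enter Lemma~\ref{lemma:Hcurl-interpolation}, (iii).

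For part (\ref{item:lemma:Picurlcom-approximation-iii}), the key observation is that $\operatorname{curl}\widehat{\mathbf u}\in(\mathcal P_p(\widehat K))^3\subset\mathbf{RT}_p(\widehat K)$, so by the commuting diagram in Fig.~\ref{fig:commuting-diagram} together with the projector property of $\widehat\Pi_p^{\operatorname{div},c}$,
\[
\operatorname{curl}(\widehat\Pi_p^{\operatorname{curl},c}\widehat{\mathbf u}) = \widehat\Pi_p^{\operatorname{div},c}(\operatorname{curl}\widehat{\mathbf u}) = \operatorname{curl}\widehat{\mathbf u}.
\]
Hence $\widehat{\mathbf u}$ satisfies the hypothesis of Theorem~\ref{thm:projection-based-interpolation}, (\ref{item:thm:projection-based-interpolation-v}) with $k=1$, which (using $p\geq k-1=0$) yields $\|\widehat{\mathbf u}-\widehat\Pi_p^{\operatorname{curl},c}\widehat{\mathbf u}\|_{{\mathbf L}^2(\widehat K)} \lesssim p^{-1}|\widehat{\mathbf u}|_{{\mathbf H}^1(\widehat K)}$. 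A final application of Lemma~\ref{lemma:scaling} gives the claimed factor $h_Kp^{-1}\|{\mathbf u}\|_{{\mathbf H}^1(K)}$. The main technical obstacle lies in part (\ref{item:lemma:Picurlcom-approximation-ii}): ensuring that the exponential rate $\sigma^{-1}$ produced on $\widehat K$ survives the pull-back of the analyticity hypothesis and is compatible with the smallness threshold required by Lemma~\ref{lemma:Hcurl-interpolation}.
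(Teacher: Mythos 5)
Your proposal is correct and follows essentially the same route as the paper: all three parts are proved by pulling back to $\widehat K$, invoking Theorem~\ref{thm:projection-based-interpolation}, (\ref{item:thm:projection-based-interpolation-iv}) together with a best-approximation bound (polynomial/Bramble--Hilbert for part (\ref{item:lemma:Picurlcom-approximation-i}), the $W^{2,\infty}$ bound (\ref{eq:lemma:Hcurl-interpolation-20}) realized by $\widehat\Pi_p^{\operatorname{curl},s}\widehat{\mathbf u}$ for part (\ref{item:lemma:Picurlcom-approximation-ii})), respectively Theorem~\ref{thm:projection-based-interpolation}, (\ref{item:thm:projection-based-interpolation-v}) with $k=1$ for part (\ref{item:lemma:Picurlcom-approximation-iii}), and rescaling with Lemma~\ref{lemma:scaling}. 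The commuting-diagram aside in part (\ref{item:lemma:Picurlcom-approximation-iii}) is superfluous but harmless, since the hypothesis $\operatorname{curl}\widehat{\mathbf u}\in({\mathcal P}_p(\widehat K))^3$ already places you directly in the setting of Theorem~\ref{thm:projection-based-interpolation}, (\ref{item:thm:projection-based-interpolation-v}).
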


\begin{proof}
\emph{Proof of (\ref{item:lemma:Picurlcom-approximation-i}):} Using
Lemma~\ref{lemma:scaling}, we get from
Theorem~\ref{thm:projection-based-interpolation} and the assumption $p\geq1$
\begin{align*}
&  h_{K}^{-1}\Vert{\mathbf{u}}-\Pi_{p}^{\operatorname*{curl},c}{\mathbf{u}%
}\Vert_{\mathbf{L}^{2}(K)}+\Vert\operatorname{curl}({\mathbf{u}}-\Pi
_{p}^{\operatorname*{curl},c}{\mathbf{u}})\Vert_{\mathbf{L}^{2}(K)}\sim
h_{K}^{-2+3/2}\Vert\widehat{\mathbf{u}}-\widehat{\Pi}_{p}%
^{\operatorname*{curl},c}\widehat{\mathbf{u}}\Vert_{{\mathbf{H}}%
(\widehat{K},\operatorname{curl})}\\
&  \lesssim p^{-1}h_{K}^{-2+3/2}\inf_{{\mathbf{v}}\in{\mathcal{P}}_{p}^{3}%
}\Vert\widehat{\mathbf{u}}-{\mathbf{v}}\Vert_{{\mathbf{H}}^{1}(\widehat{K}%
,\operatorname{curl})}\lesssim p^{-1}h_{K}^{-2+3/2}|\widehat{\mathbf{u}%
}|_{{\mathbf{H}}^{2}(\widehat{K})}\lesssim p^{-1}h_{K}\Vert{\mathbf{u}}%
\Vert_{{\mathbf{H}}^{2}(K)}.
\end{align*}
\emph{Proof of (\ref{item:lemma:Picurlcom-approximation-ii}):} We start as
above. The novel aspect is that $\inf_{{\mathbf{v}}\in\boldsymbol{\mathcal{N}%
}_{p}^{\operatorname*{I}}(\widehat{K})}\Vert\widehat{\mathbf{u}}-{\mathbf{v}%
}\Vert_{{\mathbf{H}}^{1}(\widehat{K},\operatorname{curl})}$ can be estimated
as in the proof of Lemma~\ref{lemma:Picurls-approximation}%
\begin{align*}
h_{K}^{-1}\left\Vert {\mathbf{u}}-\Pi_{p}^{\operatorname*{curl},c}{\mathbf{u}%
}\right\Vert _{\mathbf{L}^{2}(K)}+\left\Vert \operatorname{curl}\left(
{\mathbf{u}}-\Pi_{p}^{\operatorname*{curl},c}{\mathbf{u}}\right)  \right\Vert
_{\mathbf{L}^{2}(K)}  &  \lesssim p^{-1}h_{K}^{-2+3/2}\inf_{{\mathbf{v}}%
\in{\mathcal{P}}_{p}^{3}}\Vert\widehat{\mathbf{u}}-{\mathbf{v}}\Vert
_{{\mathbf{H}}^{1}(\widehat{K},\operatorname{curl})}\\
&  \overset{\text{(\ref{W2infrefel})}}{\lesssim}CC_{\mathbf{u}}\left(  \left(
\frac{h_{K}}{h_{K}+\sigma}\right)  ^{p}+\frac{k}{p}\left(  \frac{kh_{K}%
}{\sigma p}\right)  ^{p}\right)  .
\end{align*}

\emph{Proof of (\ref{item:lemma:Picurlcom-approximation-iii}):} With
Lemma~\ref{lemma:scaling} and Theorem~\ref{thm:projection-based-interpolation}%
, (\ref{item:thm:projection-based-interpolation-v}) we estimate
\[
\Vert{\mathbf{u}}-\Pi_{p}^{\operatorname*{curl},c}{\mathbf{u}}\Vert
_{\mathbf{L}^{2}(K)}\sim h^{-1+3/2}\Vert\widehat{\mathbf{u}}-\widehat{\Pi}%
_{p}^{\operatorname*{curl},c}\widehat{\mathbf{u}}\Vert_{\mathbf{L}%
^{2}(\widehat{K})}\lesssim h^{-1+3/2}p^{-1}|\widehat{\mathbf{u}}%
|_{{\mathbf{H}}^{1}(\widehat{K})}\lesssim h^{-1+3/2}p^{-1}h_{K}^{2-3/2}%
\Vert{\mathbf{u}}\Vert_{{\mathbf{H}}^{1}(K)},
\]
which completes the proof.
\end{proof}

\begin{lemma}
\label{lemma:resolution-condition-detail} Let $h_{0}$, $\sigma$, $c_{2}>0$,
$\alpha\ge0$, $k \ge1$. Then for every $\varepsilon>0$ there is $c_{1} > 0$
(depending only on $h_{0}$, $\sigma$, $c_{2}$, $\alpha$, $\varepsilon$) such
that for any $h \in(0,h_{0}]$ and $p \ge \max\{1,c_{2} \log k\}$ with $kh/p \leq c_{1}$
there holds
\[
k^{\alpha}\left(  \frac{h}{h+\sigma}\right)  ^{p} \leq\varepsilon.
\]

\end{lemma}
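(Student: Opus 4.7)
The plan is to exploit the two monotone bounds available for the function $h/(h+\sigma)$: the uniform bound $q_0 := h_0/(h_0+\sigma) < 1$ (from $h \leq h_0$), and the bound $h/(h+\sigma) \leq h/\sigma \leq c_1 p/(k\sigma)$ (from the resolution condition $kh/p \leq c_1$, valid when $h \leq \sigma$). The challenge is that the exponential decay in $p$ of $(h/(h+\sigma))^p$ must beat the algebraic growth $k^\alpha$ uniformly over the admissible triples $(h,k,p)$; since the hypothesis $p \geq c_2 \log k$ becomes vacuous at $k=1$, smallness of $c_1$ must be used to compensate.

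I would split into two cases according to the size of $h$. If $h > \sigma$, the resolution condition forces $p > \sigma k/c_1$, so
\[
k^\alpha (h/(h+\sigma))^p \;\leq\; k^\alpha q_0^p \;\leq\; k^\alpha \exp\!\bigl(-\sigma k |\log q_0|/c_1\bigr),
\]
and since the linear-in-$k$ exponential decay dominates $k^\alpha$, this is $\leq \varepsilon$ for all $k \geq 1$ as soon as $c_1$ is small enough (depending only on $h_0,\sigma,\alpha,\varepsilon$).

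For $h \leq \sigma$, I would fix $j := \lceil \alpha \rceil + 1$ so that $k^{\alpha - j} \leq 1$ for all $k \geq 1$. For $p \geq j$, I would combine the two bounds via
\[
(h/(h+\sigma))^p \;\leq\; (h/\sigma)^j\,(h/(h+\sigma))^{p-j} \;\leq\; (c_1 p/(k\sigma))^j\, q_0^{p-j},
\]
giving $k^\alpha (h/(h+\sigma))^p \leq (c_1/\sigma)^j \, p^j q_0^{p-j}$. An elementary one-variable optimization shows that $p \mapsto p^j q_0^{p-j}$ is uniformly bounded on $[j,\infty)$ by a constant depending only on $j$ and $q_0$ (the critical point is $p^* = j/|\log q_0|$), so $c_1$ small makes this $\leq \varepsilon$. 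In the remaining range $1 \leq p < j$, the hypothesis $p \geq c_2 \log k$ restricts $k$ to a set of the form $\{1 \leq k \leq e^{(j-1)/c_2}\}$, on which $k^\alpha$ is bounded by a constant depending only on $\alpha, c_2, j$; combining this with $(h/(h+\sigma))^p \leq c_1 p/(k\sigma) \leq c_1 j/\sigma$ and taking $c_1$ small closes the argument.

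Taking $c_1$ to be the minimum of the finitely many smallness thresholds arising in the cases yields a single constant depending only on $h_0,\sigma,c_2,\alpha,\varepsilon$. The main technical point is the elementary optimization of $p \mapsto p^j q_0^{p-j}$ in the case $h \leq \sigma$, $p \geq j$; the other sub-cases are essentially direct consequences of the two monotone bounds.
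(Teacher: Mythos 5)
Your proof is correct, but the decomposition is genuinely different from the paper's. The paper first disposes of $k\in[1,2]$, then for $k\ge 2$ splits on $h\le\overline h$ versus $h>\overline h$, where $\overline h$ is chosen so small (depending on $\varepsilon,\alpha,c_2,\sigma$) that in the first regime the hypothesis $p\ge c_2\log k$ alone gives $k^{\alpha}\bigl(\overline h/(\overline h+\sigma)\bigr)^{c_2\log k}\le\varepsilon$, while in the second regime $p\ge k\overline h/c_1$ yields exponential decay in $k$ that beats $k^{\alpha}$. You instead split at the fixed threshold $h=\sigma$: for $h>\sigma$ you use the same exponential-in-$k$ mechanism as the paper's second case, but for $h\le\sigma$ your main device is to peel off $j=\lceil\alpha\rceil+1$ factors of $h/(h+\sigma)\le c_1p/(k\sigma)$, so that $k^{\alpha}$ is absorbed by $k^{-j}$ and only the elementary bound $\sup_{p\ge j}p^{j}q_0^{p-j}\le (j/|\log q_0|)^{j}$ remains; the hypothesis $p\ge c_2\log k$ is then needed only in the residual range $1\le p<j$, where it confines $k$ to a bounded set. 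What your route buys is that the logarithmic condition is used only for bounded $p$, so your main case in fact shows the conclusion holds without it whenever $h\le\sigma$ and $p\ge\lceil\alpha\rceil+1$; what the paper's route buys is brevity. Both arguments (and the lemma as applied in Theorem~\ref{thm:hpFEM-quasioptimality}) implicitly require $p\ge1$ — the statement fails as $p\to0^{+}$ with $k=1$ — and you are right to invoke this explicitly in your last sub-case.
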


\begin{proof}
Without loss of generality, we restrict to $\varepsilon \in (0,1]$.
The case $k \in[1,\operatorname*{e}]$ is easily seen. For $k \ge \operatorname*{e}$,
we note that $h \mapsto h/(h+\sigma)$ is monotone increasing and we consider the two cases, ``$h$
small'' and ``$h$ large''. For the first case we
select $\overline{h} \in(0,h_{0}]$ such that
\begin{equation}
\label{eq:lemma:resolution-condition-details-10}
\alpha + c_{2}  \ln \bigl({\overline{h}}/({\overline{h}+\sigma})\bigr)
\leq \ln\varepsilon \leq 0. 
\end{equation}
We emphasize that $\overline{h}$ depends
only on $\varepsilon$, $\alpha$, and $c_2$ but is independent of $k$.
By the monotonicity of $h \mapsto h/(h+\sigma)$
and the constraint $p \ge c_2 \ln k$
we have for $0 < h \leq\overline{h}$ and in view of
$\ln k \ge \ln \operatorname*{e} = 1$ the estimate
\begin{align*}
\ln \left( k^\alpha \left(\frac{h}{h+\sigma}\right)^p \right)
\leq
\ln \left( k^\alpha \left(\frac{\overline{h}}{\overline{h}+\sigma}\right)^{c_2 \log k} \right)
 = \underbrace{\ln k}_{\ge 1}
\underbrace{ \left[ \alpha + c_2 \ln \frac{\overline{h}}{\overline{h}+\sigma} \right]
           }_{\leq \ln \varepsilon \leq 0}
\leq \ln \varepsilon.
\end{align*}
For the second case, i.e., $h \in(\overline{h},h_{0}]$, we
fix $c_1$ such that
\begin{equation}
\label{eq:lemma:resolution-condition-details-20}
\alpha + \frac{\overline{h}}{c_1}  \ln \frac{h_0}{h_0+\sigma} \leq \ln\varepsilon \leq 0. 
\end{equation}
We note that $c_1$ depends only on $\varepsilon$, $\alpha$, $h_0$, and $c_2$. (Recall that
$\overline{h}$ depends only on $\varepsilon$, $\alpha$, $c_2$).
For $h \in (\overline{h} , h_0]$ we get from $p \ge k h/c_{1}\ge k \overline{h}/c_{1}$ and the monotonicity
of $h \mapsto h/(h+\sigma)$
\begin{align*}
\ln \left(
k^{\alpha}\left(  \frac{h}{h+\sigma}\right)  ^{p}
     \right)
& \leq \ln \left( k^{\alpha}\left(
\frac{h_{0}}{h_{0}+\sigma}\right)  ^{k \overline{h}/c_{1}}
         \right)
 = \alpha \ln k + \frac{k \overline{h}}{c_1} \ln \frac{h_0}{h_0 + \sigma}  \\
& \!\!\!\!\stackrel{\ln k \leq k}{\leq} \underbrace{ k}_{\ge \operatorname{e} \ge 1}
\underbrace{ \left[ \alpha + \frac{\overline{h}}{c_1} \ln \frac{h_0}{h_0 + \sigma}\right]
           }_{\leq \ln \varepsilon \leq 0}
\leq \ln \varepsilon.
\end{align*}
\end{proof}

%---------------------------------------------------------

\appendix

%-----------------------------------------------------------------------------

\section{Proof of Lemma~\ref{Lemzlest}\label{SecA}}

In this appendix we prove Lemma~\ref{Lemzlest}. The first two estimates in
(\ref{estk/z+1}) are proved in the following lemma.

\begin{lemma}
For any $\lambda>1$ there holds%
\[
\frac{k}{\left\vert z_{n}\left(  k\right)  +1\right\vert }\leq\left\{
\begin{array}
[c]{ll}%
{2\sqrt{2}} k & n\in{\mathbb{N}}_{0},\\
{2\sqrt{2}} \left(  \frac{2}{\lambda}+1\right)  \dfrac{k}{\left(  n+1\right)
} & n>\lambda k^{2}.
\end{array}
\right.
\]

\end{lemma}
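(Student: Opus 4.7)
The starting point is the representation (\ref{zlconjcompl}),
\[
z_n(k) = -\frac{p_n(k^{-2})}{q_n(k^{-2})} + \operatorname{i}\frac{k}{q_n(k^{-2})},
\]
which gives the exact identity
\[
|z_n(k)+1|^{2} = \frac{(q_n(k^{-2}) - p_n(k^{-2}))^{2} + k^{2}}{q_n(k^{-2})^{2}},
\qquad
\frac{k}{|z_n(k)+1|} = \frac{k\,|q_n(k^{-2})|}{\sqrt{(q_n - p_n)^{2}(k^{-2}) + k^{2}}}.
\]
All further work reduces to estimating the polynomial quantities $q_n(k^{-2})$ and $(q_n - p_n)(k^{-2})$; here I will use the explicit coefficient formulas of $p_n$, $q_n$ coming from the Rayleigh-type expansion of $h_n^{(1)}$ (cf.~\cite[(2.6.20)--(2.6.22)]{Nedelec01}) together with the sign property $1+\operatorname{Re} z_n(k)\leq 0$ from \cite[(2.6.23)]{Nedelec01}, which tells us that $q_n(k^{-2})$ and $p_n(k^{-2})$ have the same sign.

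For the first, uniform-in-$n$ bound $k/|z_n(k)+1|\leq 2\sqrt{2}\,k$, I would split according to the size of $|q_n(k^{-2})|$. In the regime $|q_n(k^{-2})|\leq 2\sqrt{2}\,k$ the denominator in the identity above is at least $k$, so the inequality follows by dropping $(q_n-p_n)^{2}$. In the complementary regime $|q_n(k^{-2})|> 2\sqrt{2}\,k$, the claim is equivalent to the lower bound $|q_n(x)-p_n(x)|\geq c\,|q_n(x)|$ at $x=k^{-2}$ for some absolute $c>0$; this I would extract from the explicit coefficients, observing that the leading (highest-degree) terms of $q_n-p_n$ essentially double up rather than cancel (since the coefficients of $p_n$ carry an extra factor of order $(n+1)$ compared with those of $q_n$), so that the top-order contribution of $q_n-p_n$ dominates once $q_n(k^{-2})$ itself is large.

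For the second bound ($n>\lambda k^{2}$), the same coefficient analysis actually yields a quantitative improvement: in this regime the ratio $(q_n-p_n)(k^{-2})/q_n(k^{-2})$ is of order $(n+1)/k^{2}$ rather than a constant, because the large-$n$ term $n(n+1)\cdot k^{-2}$ dominates both the $k^{-2}$-expansion of $p_n/q_n$ and the offset $1$ used to form $1+\operatorname{Re} z_n$. Combined with $\sqrt{(q_n-p_n)^{2}+k^{2}}\geq |q_n-p_n|$, this yields a bound of the form $k/|z_n(k)+1|\leq C\,k/(n+1)$, and keeping track of the constants (using that $n>\lambda k^{2}$ gives $k^{2}/(n+1)\leq 1/\lambda$) produces the explicit prefactor $2\sqrt{2}(2/\lambda+1)$ stated in the lemma.

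The main obstacle is the delicate polynomial analysis: I need a uniform lower bound on $|q_n-p_n|/|q_n|$ at $x=k^{-2}$ valid for every $n\geq 0$ and every $k\geq 1$, together with the sharpened asymptotic version in the regime $n>\lambda k^{2}$ that recovers the factor $(n+1)$. Balancing the two contributions $(q_n-p_n)^{2}$ and $k^{2}$ under the denominator—so that neither is wasted in either regime—is the most technical part and essentially dictates the numerical constants $2\sqrt{2}$ and $2/\lambda+1$ appearing in the statement.
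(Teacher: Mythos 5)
Your setup coincides with the paper's: writing $z_n(k)+1=\frac{q_n-p_n}{q_n}+\operatorname{i}\frac{k}{q_n}$ at $x=k^{-2}$ and reducing everything to the two polynomial sums $q_n(k^{-2})=\sum_{m=0}^n a_{m,n}k^{-2m}$ and $p_n(k^{-2})=\sum_{m=0}^n(m+1)a_{m,n}k^{-2m}$ with $a_{m,n}=\frac{(2m)!(n+m)!}{(m!)^2 4^m(n-m)!}\ge 0$ is exactly the paper's reduction to the ratio $\rho_n(k)=q_n/p_n$. Your first bound is essentially complete: in the regime $|q_n|>2\sqrt2\,k$ one uses $p_n-q_n=\sum_{m\ge1}m\,a_{m,n}k^{-2m}\ge q_n-a_{0,n}=q_n-1$, which gives $|q_n-p_n|\ge(1-\tfrac{1}{2\sqrt2})|q_n|$; this is a slightly cleaner statement of your "no cancellation" observation and closes that case.

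The genuine gap is in the second bound. To obtain $k/|z_n(k)+1|\le C\,k/(n+1)$ you need $(n+1)\,q_n\le C\sqrt{(q_n-p_n)^2+k^2}$, and since $q_n\ge 1$ while $n+1>\lambda k^2>k$, the $k^2$ under the root cannot help: you must prove $|p_n-q_n|\ge c\,(n+1)\,q_n$, i.e.\ the ratio $|p_n-q_n|/q_n$ must be of order $n+1$, not of order $(n+1)/k^2$ as you assert. With your claimed order the chain $k q_n/|q_n-p_n|\sim k\cdot k^2/(n+1)=k^3/(n+1)$ would not give the stated bound. Your heuristic (the single term $n(n+1)k^{-2}$ dominating the offset $1$) also misidentifies where the mass of the sums sits: for $n>\lambda k^2$ every consecutive ratio $a_{m+1,n}k^{-2(m+1)}/(a_{m,n}k^{-2m})=\frac{(2m+1)(n+m+1)(n-m)}{2(m+1)k^2}\ge n/k^2>\lambda$, so both sums are dominated by indices $m$ near $n$, which is precisely why the weighted average of $m$ is comparable to $n$ and the ratio is of order $n+1$. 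This lower bound is the entire technical content of the lemma; the paper establishes it by the ansatz $\rho_n(k)\le(k^2+\beta)/(k^2+(n+1)\beta)$ with $\beta=(n+1)/2$, verified coefficient by coefficient using the explicit $a_{m,n}$, which yields the uniform estimate $k/|z_n(k)+1|\le 2\sqrt2\,k\,\frac{2k^2+n+1}{2k^2+(n+1)^2}$ from which both cases follow. As written, your proposal defers exactly this step, and the asymptotic claim you substitute for it is incorrect.
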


%

%TCIMACRO{\TeXButton{Proof}{\proof}}%
%BeginExpansion
\proof
%EndExpansion
We follow the reasoning in \cite[Thm.~{2.6.1}]{Nedelec01}. The coefficient
$z_{n}\left(  k\right)  $ can be expressed by%
\[
z_{n}\left(  k\right)  =-\frac{\left(  m_{n}^{2}\right)  ^{\prime}}{m_{n}^{2}%
}+k\frac{\operatorname*{i}}{m_{n}^{2}},
\]
where%
\[
\mu=\left(  2n+1\right)  ^{2}\quad\text{and\quad}\left\{
\begin{array}
[c]{ll}%
m_{n}^{2}=%
%TCIMACRO{\dsum \limits_{m=0}^{n}}%
%BeginExpansion
{\displaystyle\sum\limits_{m=0}^{n}}
%EndExpansion
\dfrac{\delta_{m}\left(  \mu\right)  }{k^{2m}}, & \left(  m_{n}^{2}\right)
^{\prime}=%
%TCIMACRO{\dsum _{m=0}^{n}}%
%BeginExpansion
{\displaystyle\sum_{m=0}^{n}}
%EndExpansion
\left(  m+1\right)  \dfrac{\delta_{m}\left(  \mu\right)  }{k^{2m}},\\
\delta_{m}\left(  \mu\right)  =\dfrac{\left(  2m\right)  !}{\left(  m!\right)
^{2}16^{m}}\gamma_{m}\left(  \mu\right)  , & \gamma_{m}\left(  \mu\right)  :=%
%TCIMACRO{\dprod \limits_{s=1}^{m}}%
%BeginExpansion
{\displaystyle\prod\limits_{s=1}^{m}}
%EndExpansion
\left(  \mu-\left(  2s-1\right)  ^{2}\right)  .
\end{array}
\right.
\]
Define%
\[
a_{m,n}:=\delta_{m}\left(  \left(  2n+1\right)  ^{2}\right)  =\frac{\left(
2m\right)  !\left(  n+m\right)  !}{\left(  m!\right)  ^{2}4^{m}\left(
n-m\right)  !}.
\]
With the function%
\begin{equation}
\rho_{n}\left(  k\right)  :=\frac{%
%TCIMACRO{\dsum \limits_{m=0}^{n}}%
%BeginExpansion
{\displaystyle\sum\limits_{m=0}^{n}}
%EndExpansion
\dfrac{a_{m,n}}{k^{2m}}}{%
%TCIMACRO{\dsum \limits_{m=0}^{n}}%
%BeginExpansion
{\displaystyle\sum\limits_{m=0}^{n}}
%EndExpansion
\left(  m+1\right)  \dfrac{a_{m,n}}{k^{2m}}} \label{defrhon}%
\end{equation}
we estimate%
\begin{align}
\frac{k}{\left\vert z_{n}\left(  k\right)  +1\right\vert }  &  =\frac
{km_{n}^{2}}{\left\vert m_{n}^{2}-\left(  m_{n}^{2}\right)  ^{\prime
}+k\operatorname*{i}\right\vert }\overset{\mu=\left(  2n+1\right)  ^{2}}{\leq
}\sqrt{2}k\frac{%
%TCIMACRO{\dsum \limits_{m=0}^{n}}%
%BeginExpansion
{\displaystyle\sum\limits_{m=0}^{n}}
%EndExpansion
\dfrac{a_{m,n}}{k^{2m}}}{k+%
%TCIMACRO{\dsum \limits_{m=1}^{n}}%
%BeginExpansion
{\displaystyle\sum\limits_{m=1}^{n}}
%EndExpansion
m\dfrac{a_{m,n}}{k^{2m}}}\label{middleomegaoverz}\\
&  \leq{2\sqrt{2}}k\rho_{n}\left(  k\right)  \overset{\text{ansatz}}{\leq
}{2\sqrt{2}}k\left(  \frac{k^{2}+\beta}{k^{2}+C_{n}\beta}\right)  .
\label{eq:ansatz-for-rho_n}%
\end{align}
The ansatz (\ref{eq:ansatz-for-rho_n}) is equivalent to%
\[
\left(  k^{2}+C_{n}\beta\right)
%TCIMACRO{\dsum \limits_{m=0}^{n}}%
%BeginExpansion
{\displaystyle\sum\limits_{m=0}^{n}}
%EndExpansion
\dfrac{a_{m,n}}{k^{2m}}\leq\left(  k^{2}+\beta\right)
%TCIMACRO{\dsum \limits_{m=0}^{n}}%
%BeginExpansion
{\displaystyle\sum\limits_{m=0}^{n}}
%EndExpansion
\left(  m+1\right)  \dfrac{a_{m,n}}{k^{2m}},
\]
which, by multiplying out and rearranging terms, is equivalent to%
\begin{align*}
&  k^{2}a_{0,n}+%
%TCIMACRO{\dsum \limits_{m=0}^{n-1}}%
%BeginExpansion
{\displaystyle\sum\limits_{m=0}^{n-1}}
%EndExpansion
\left(  a_{m+1,n}+C_{n}\beta a_{m,n}\right)  \dfrac{1}{k^{2m}}+C_{n}%
\beta\dfrac{a_{n,n}}{k^{2n}}\\
&  \leq k^{2}a_{0,n}+%
%TCIMACRO{\dsum \limits_{m=0}^{n-1}}%
%BeginExpansion
{\displaystyle\sum\limits_{m=0}^{n-1}}
%EndExpansion
\left(  \left(  m+2\right)  a_{m+1,n}+\beta\left(  m+1\right)  a_{m,n}\right)
\dfrac{1}{k^{2m}}+\beta\left(  n+1\right)  \dfrac{a_{n,n}}{k^{2n}}.
\end{align*}
Hence, we have to stipulate%
\begin{align*}
\left(  a_{m+1,n}+C_{n}\beta a_{m,n}\right)   &  \leq\left(  \left(
m+2\right)  a_{m+1,n}+\beta\left(  m+1\right)  a_{m,n}\right)  ,\qquad
m=0,\ldots,n-1,\\
C_{n}  &  \leq n+1.
\end{align*}
We select $C_{n}:=(n+1)$ and insert this in the left-hand side of the first
condition to obtain%
\[
0\leq\left(  m+1\right)  a_{m+1,n}+\beta\left(  m+1-\left(  n+1\right)
\right)  a_{m,n}.
\]
Inserting the definitions of $a_{m,n}$ leads to%
\[
0\leq\left(  m+1\right)  \frac{\left(  2m+2\right)  !\left(  n+m+1\right)
!}{\left(  \left(  m+1\right)  !\right)  ^{2}4^{m+1}\left(  n-\left(
m+1\right)  \right)  !}+\beta\left(  m+1-\left(  n+1\right)  \right)
\frac{\left(  2m\right)  !\left(  n+m\right)  !}{\left(  m!\right)  ^{2}%
4^{m}\left(  n-m\right)  !}.
\]
This is equivalent to
\[
\beta\left(  n-m\right)  \frac{\left(  2m\right)  !\left(  n+m\right)
!}{\left(  m!\right)  ^{2}4^{m}\left(  n-m\right)  !}\leq\left(  m+1\right)
\frac{\left(  2m+2\right)  !\left(  n+m+1\right)  !}{\left(  \left(
m+1\right)  !\right)  ^{2}4^{m+1}\left(  n-\left(  m+1\right)  \right)  !}%
\]
and in turn leads to the condition
\[
\beta\leq\left(  m+1\right)  \frac{\left(  2m+1\right)  \left(  2m+2\right)
\left(  n+m+1\right)  }{\left(  m+1\right)  ^{2}4}=\left(  m+\frac{1}%
{2}\right)  \left(  n+m+1\right)  ,\qquad m=0,\ldots,n-1.
\]
We select $\beta=\frac{n+1}{2}$, which finally leads to
\[
\frac{k}{\left\vert z_{n}\left(  k\right)  +1\right\vert }\leq{2\sqrt{2}%
}k\left(  \frac{2k^{2}+n+1}{2k^{2}+\left(  n+1\right)  ^{2}}\right)
\leq\left\{
\begin{array}
[c]{ll}%
{2\sqrt{2}}k & \forall n\in{\mathbb{N}}_{0}\\
{2\sqrt{2}}\left(  \frac{2}{\lambda}+1\right)  \dfrac{k}{\left(  n+1\right)
}, & n+1>\lambda k^{2}.
\end{array}
\right.
\]%
%TCIMACRO{\TeXButton{End Proof}{\endproof}}%
%BeginExpansion
\endproof
%EndExpansion

The proof of the third estimate in (\ref{estk/z+1}) is more technical and is
the assertion of the next lemma.

\begin{lemma}
For every $\lambda_{0} > 1$ there is $C_{0} > 0$ depending only on
$\lambda_{0}$ such that
\[
\frac{n+1}{\left\vert z_{n}\left(  k\right)  +1\right\vert }\leq C_{0}%
\qquad\forall n\geq\lambda_{0} k.
\]

\end{lemma}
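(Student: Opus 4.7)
The strategy is to extend the symbol analysis of the preceding proof. Using the expressions $m_n^2 = \sum_{m=0}^n b_m$ and $(m_n^2)' = \sum_{m=0}^n (m+1)b_m$ from that proof, with $b_m := a_{m,n}/k^{2m}$, the representation $z_n(k) = -(m_n^2)'/m_n^2 + \operatorname*{i} k/m_n^2$ yields the exact identity
\[
|z_n(k)+1|\,m_n^2 \;=\; \sqrt{D^2 + k^2}, \qquad D := \sum_{m=1}^{n} m\,b_m,
\]
so that proving the lemma reduces to establishing $(n+1)\,m_n^2 \leq C_0 \sqrt{D^2+k^2}$. Moreover, since the second estimate in (\ref{estk/z+1}) (applied with any fixed $\lambda>0$) already gives the bound $(n+1)/|z_n(k)+1| \leq 2\sqrt{2}(2/\lambda+1)$ whenever $n \geq \lambda k^2$, it suffices to treat the intermediate range $\lambda_0 k \leq n \leq \lambda k^2$, for $\lambda=\lambda(\lambda_0)$ suitably large. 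In that range the plan is to prove the sharper inequality $(n+1)\,m_n^2 \leq C_0 D$.

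The key input is the ratio formula
\[
\frac{b_{m+1}}{b_m} \;=\; \frac{(2m+1)(n+m+1)(n-m)}{2(m+1)\,k^2},
\]
which, under $n \geq \lambda_0 k$ with $\lambda_0 > 1$ and $m \leq \beta n$, is bounded below by $\tfrac{1}{2}(1-\beta)\lambda_0^2$ times a prefactor $(2m+1)/(2(m+1))$ tending to $1$ as $m\to\infty$. I will choose $\beta=\beta(\lambda_0)\in(0,1)$ together with an integer $m_0=m_0(\lambda_0)$ so as to obtain a multiplicative gap $\gamma=\gamma(\lambda_0) > 1$ with $b_{m+1} \geq \gamma\,b_m$ for $m_0 \leq m \leq M := \lfloor \beta n\rfloor$. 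Splitting $m_n^2 = \sum_{m \leq M} b_m + \sum_{m > M} b_m$, the initial block is controlled by the geometric growth via
\[
\sum_{m_0 \leq m \leq M} b_m \;\leq\; \frac{\gamma}{\gamma-1}\,b_M \;\leq\; \frac{\gamma}{(\gamma-1)\,M}\,D,
\]
while the tail is treated by the elementary bound $(n+1)/m \leq (n+1)/M \leq 2/\beta$, yielding $(n+1)\sum_{m>M} b_m \leq (2/\beta) D$. Combining these two estimates produces $(n+1) m_n^2 \leq C_0(\lambda_0) D$ up to the contribution of the finitely many initial indices $m < m_0$.

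The main obstacle I anticipate is the borderline regime $\lambda_0$ close to $1$, where the crude bound $(2m+1)/(2(m+1)) \geq 1/2$ is insufficient (it would only give $\gamma>1$ for $\lambda_0>\sqrt{2}$). To handle it, I will use that $(2m+1)/(2(m+1))\to 1$, so that for any $\lambda_0>1$ one can fix $m_0$ large enough and $\beta>0$ small enough that $\gamma(\lambda_0)>1$ on $[m_0,M]$. The remaining finitely many initial indices $m \in \{0,1,\dots,m_0-1\}$ contribute a sum $\sum_{m<m_0} b_m$ that is bounded explicitly in terms of $n/k$ and then absorbed either into $D$ (using $b_{m_0} \leq D/m_0$ together with the geometric relation $b_m \leq \gamma^{-(m_0-m)}b_{m_0}$ extended to small $m$ by the ratio formula) or into the $k$-contribution of $\sqrt{D^2+k^2}$ via the trivial estimate $b_0=1$ and $(n+1)\leq (n+1)/k\cdot k$, which is harmless because in the regime $n \leq \lambda k^2$ the factor $(n+1)/k$ is controlled.
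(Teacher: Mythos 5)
Your proposal is correct in substance but takes a genuinely different route from the paper's. The paper bounds $\frac{k}{|z_n(k)+1|}$ by $2\sqrt{2}\,k\rho_n(k)$ with $\rho_n$ as in (\ref{defrhon}), proves that $\rho_n$ is monotone increasing in $k$ (via a quadratic-form argument), substitutes $k=n/\lambda_0$, and then runs a Stirling/saddle-point analysis of the coefficients $a_{m,n}(\lambda_0/n)^{2m}$, locating the dominant index near $m=n\sqrt{1-\lambda_0^{-2}}$ with the help of the auxiliary Lemma~\ref{LemTaylor}. You instead exploit the exact identity $|z_n(k)+1|=\sqrt{D^2+k^2}/m_n^2$ and show directly, via the ratio $b_{m+1}/b_m=\frac{(2m+1)(n+m+1)(n-m)}{2(m+1)k^2}$, that the $b_m$ grow geometrically for $m_0\leq m\leq \beta n$ whenever $n\geq\lambda_0 k$; this pushes the mass of $m_n^2$ onto indices $m\gtrsim n$, where the weight $m$ inside $D$ supplies the needed factor $n+1$. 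This avoids both the monotonicity step and the Stirling analysis, and your reduction of the regime $n>\lambda k^2$ to the already proven second bound in (\ref{estk/z+1}) is a clean shortcut the paper does not use; what the paper's heavier machinery buys is a precise localization of where the sum concentrates, which is not needed for the lemma.

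Three points need repair. First, the stated lower bound $\tfrac12(1-\beta)\lambda_0^2\cdot\frac{2m+1}{2(m+1)}$ for the ratio double-counts the $2$ in the denominator: the correct bound is $\frac{2m+1}{2(m+1)}(1-\beta)\lambda_0^2$, and it is this version that your later discussion (threshold $\lambda_0>\sqrt2$ for the crude bound, rescue via $\frac{2m+1}{2(m+1)}\to1$) actually relies on. Second, your alternative (b) for the initial block $m<m_0$ fails: in the intermediate regime $\lambda_0 k\leq n\leq\lambda k^2$ the quantity $(n+1)/k$ can be of order $\lambda k$ and is not controlled, so you must use route (a), namely $b_m\leq C(\lambda_0,m_0)\,b_{m_0}$ for $m<m_0$ (the ratio formula still gives a positive lower bound $\tfrac12(1-\beta)\lambda_0^2$ there) combined with $(n+1)b_{m_0}\leq \frac{n+1}{M\gamma^{M-m_0}}D\leq C D$. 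Third, the case of small $n$, where $\lfloor\beta n\rfloor<m_0$, must be treated separately; since then $n$ ranges over a finite set and $k$ over the compact interval $[1,n/\lambda_0]$ on which $z_n(k)+1\neq0$ (its imaginary part is positive by (\ref{zlconjcompl})), a continuity argument suffices, exactly as in the paper's Case 1.
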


%

%TCIMACRO{\TeXButton{Proof}{\proof}}%
%BeginExpansion
\proof
%EndExpansion
Recall the definition of the function $\rho_{n}$ in (\ref{defrhon}). We will
prove%
\[
\left(  n+1\right)  \rho_{n}\left(  k\right)  \leq\tilde{C}_{0}\qquad\forall
n\geq\lambda_{0}k
\]
{}from which the statement follows in view of (\ref{middleomegaoverz}).

\textbf{Step 1: } We claim that $\rho_{n}$ is monotone increasing with respect
to $k$. To see this, we compute
\[
\rho_{n}^{\prime}\left(  k\right)  =\frac{-\left(
%TCIMACRO{\dsum \limits_{m=0}^{n}}%
%BeginExpansion
{\displaystyle\sum\limits_{m=0}^{n}}
%EndExpansion
\left(  m+1\right)  \dfrac{a_{m,n}}{k^{2m}}\right)  \left(
%TCIMACRO{\dsum \limits_{m=0}^{n}}%
%BeginExpansion
{\displaystyle\sum\limits_{m=0}^{n}}
%EndExpansion
2m\dfrac{a_{m,n}}{k^{2m+1}}\right)  +%
%TCIMACRO{\dsum \limits_{m=0}^{n}}%
%BeginExpansion
{\displaystyle\sum\limits_{m=0}^{n}}
%EndExpansion
\left(  \dfrac{a_{m,n}}{k^{2m}}\right)
%TCIMACRO{\dsum \limits_{m=0}^{n}}%
%BeginExpansion
{\displaystyle\sum\limits_{m=0}^{n}}
%EndExpansion
\left(  2m\right)  \left(  m+1\right)  \dfrac{a_{m,n}}{k^{2m+1}}}{\left(
%TCIMACRO{\dsum \limits_{m=0}^{n}}%
%BeginExpansion
{\displaystyle\sum\limits_{m=0}^{n}}
%EndExpansion
\left(  m+1\right)  \dfrac{a_{m,n}}{k^{2m}}\right)  ^{2}}.
\]
Thus, it is sufficient to prove that the numerator (denoted by $d_{n}\left(
k\right)  $) is positive. We write{%
\[
d_{n}\left(  k\right)  =2{\displaystyle\sum\limits_{m=0}^{n}}%
{\displaystyle\sum\limits_{\ell=0}^{n}\ell}\left(  \ell-m\right)
\dfrac{a_{\ell,n}a_{m,n}}{k^{2m+2\ell+1}}.
\]
We now exploit the fact that the coefficients $a_{\ell,n}$ are non-negative.
The double sum on the right-hand side can be interpreted as a quadratic form.
Note that we have, for vectors $\mathbf{x}$ and matrices ${\mathbf{B}}$,
\[
2\mathbf{x}^{\intercal}{\mathbf{B}}\mathbf{x}=\mathbf{x}^{\intercal
}({\mathbf{B}}^{\intercal}+{\mathbf{B}})\mathbf{x}\geq0
\]
if the vector $\mathbf{x}$ has non-negative entries and the symmetric part
$1/2({\mathbf{B}}^{\intercal}+{\mathbf{B}})$ of the matrix ${\mathbf{B}}$ has
non-negative entries. For ${\mathbf{B}}_{\ell,m}:=\ell\left(  \ell-m\right)  $
we compute
\[
{\mathbf{B}}_{\ell,m}+{\mathbf{B}}_{m,\ell}=(\ell-m)^{2}\geq0.
\]
} {\textbf{Step 2: }{}The monotonicity of $\rho_{n}$ shown in Step~1 implies
for $n\geq\lambda_{0}k$%
\begin{equation}
\rho_{n}\left(  k\right)  \leq\rho_{n}\left(  n/\lambda_{0}\right)  =\frac{%
%TCIMACRO{\dsum \limits_{m=0}^{n}}%
%BeginExpansion
{\displaystyle\sum\limits_{m=0}^{n}}
%EndExpansion
a_{m,n}\left(  \frac{\lambda_{0}}{n}\right)  ^{2m}}{%
%TCIMACRO{\dsum \limits_{m=0}^{n}}%
%BeginExpansion
{\displaystyle\sum\limits_{m=0}^{n}}
%EndExpansion
\left(  m+1\right)  a_{m,n}\left(  \frac{\lambda_{0}}{n}\right)  ^{2m}}%
=:\rho_{n}^{\operatorname{I}}. \label{defrhonI}%
\end{equation}
We next show that the dominant contribution to the sums in (\ref{defrhonI})
arises from few coefficients with index $m$ close to $n\sqrt{1-\lambda
_{0}^{-2}}$. To that end, we analyze the coefficients $a_{m,n}$ with
Stirling's formula in the form
\[
\sqrt{2\pi}\exp\left(  \frac{1}{12}\right)  \sqrt{n+1}\left(  \frac
{n}{\operatorname*{e}}\right)  ^{n}\geq n!\geq\sqrt{n+1}\left(  \frac
{n}{\operatorname*{e}}\right)  ^{n}.
\]
Upon setting $C_{1}:=2\pi\exp(1/6)$ and $C_{2}:=(2\pi)^{-3/2}\exp(-1/4)$, we
get%
\begin{align}
a_{m,n}  &  =\frac{\left(  2m\right)  !\left(  n+m\right)  !}{\left(
m!\right)  ^{2}4^{m}\left(  n-m\right)  !}\leq C_{1} \underbrace{ \frac
{\sqrt{n+m+1}}{\sqrt{n-m+1}} }_{\leq\sqrt{2m+1} \text{ for $m \leq n$}}
\frac{\sqrt{2m+1}}{m+1}\frac{\left(  n+m\right)  ^{n+m}}{\left(  n-m\right)
^{n-m}\operatorname*{e}^{2m}}\label{eq:a_mn-20}\\
&  \leq2C_{1}\frac{\left(  n+m\right)  ^{n+m}}{\left(  n-m\right)
^{n-m}\operatorname*{e}^{2m}},\\
a_{m,n}  &  \geq C_{2}\frac{\sqrt{2m+1}\sqrt{n+m+1}}{(m+1)\sqrt{n-m+1}}%
\frac{\left(  n+m\right)  ^{n+m}}{\left(  n-m\right)  ^{n-m}\operatorname*{e}%
^{2m}}. \label{amnbelow}%
\end{align}
The dominant contribution of $a_{m,n}(\lambda_{0}/n)^{2m}$ is
\[
b_{m,n}:=\frac{\left(  n+m\right)  ^{n+m}}{\left(  n-m\right)  ^{n-m}%
\operatorname*{e}^{2m}}\left(  \frac{\lambda_{0}}{n}\right)  ^{2m}.
\]
The maximum of $m\mapsto b_{m,n}$ in the interval $\left[  0,n\right]
\subset\mathbb{R}$ is attained at $\tilde{m}=n\mu_{0}$ with $\mu_{0}%
=\sqrt{1-\lambda_{0}^{-2}}$ and value%
\[
\tilde{b}_{n}=c_{\mu_{0}}^{n}\quad\text{with }c_{\mu_{0}}=\frac{\left(
1+\mu_{0}\right)  ^{1+\mu_{0}}}{\left(  1-\mu_{0}\right)  ^{1-\mu_{0}}}\left(
\frac{\lambda_{0}}{\operatorname*{e}}\right)  ^{2\mu_{0}}.
\]
We also introduce the factor
\[
f_{m,n}:=\frac{\sqrt{n+m+1}}{\sqrt{n-m+1}}\frac{\sqrt{2m+1}}{m+1},
\]
so as to be able to describe $a_{m,n}\left(  \frac{\lambda_{0}}{n}\right)
^{2m}\sim f_{m,n}b_{m,n}$ uniformly in $m$, $n$. }

\textbf{Case 1:} We consider the range
\[
0\leq n\leq\max\{\frac{2}{1-\mu_{0}},\frac{2}{\mu_{0}},\frac{\lambda_{0}^{2}%
}{\mu_{0}c_{0}},c_{5}\},
\]
where the parameter $c_{0}$ is given by Lemma~\ref{LemTaylor} (with
$\lambda=\lambda_{0}$ there) and $c_{5}$ is defined in (\ref{eq:c_5}); both
constants depend solely on $\lambda_{0}$. This is a finite set so
\[
\sup_{0\leq n\leq\max\{\frac{2}{1-\mu_{0}},\frac{2}{\mu_{0}},\frac{\lambda
_{0}^{2}}{\mu_{0}c_{0}},c_{5}\}}(n+1)\rho_{n}^{\operatorname*{I}}=:\tilde
{C}_{1}<\infty
\]
depends solely on $\lambda_{0}$.

\textbf{Case 2:} We assume
\begin{equation}
n\geq\max\{\frac{2}{1-\mu_{0}},\frac{2}{\mu_{0}},\frac{\lambda_{0}^{2}}%
{\mu_{0}c_{0}},c_{5}\}. \label{eq:condition-on-n}%
\end{equation}
We split the summations $\sum_{m=0}^{n}$ in the representation of $\rho
_{n}^{\operatorname{I}}$ (cf. (\ref{defrhonI})) as $S_{n}^{\operatorname{I}%
}+S_{n}^{\operatorname{II}}$ with
\[
S_{n}^{\operatorname{I}}:=\sum_{n\tilde{\delta}_{0}\leq m\leq n}\frac{a_{m,n}%
}{\tilde{b}_{n}}\left(  \frac{\lambda_{0}}{n}\right)  ^{2m},\qquad
S_{n}^{\operatorname{II}}:=\sum_{{0}\leq m<n\tilde{\delta}_{0}}\frac{a_{m,n}%
}{\tilde{b}_{n}}\left(  \frac{\lambda_{0}}{n}\right)  ^{2m},
\]
where
\begin{equation}
\tilde{\delta}_{0}:=\mu_{0}^{3}.
\end{equation}
In view of%
\[
\min\left\{  m+1:m\geq n\tilde{\delta}_{0}\right\}  \geq1+n\tilde{\delta}_{0}%
\]
we have
\[
\rho_{n}^{\operatorname{I}}\leq\frac{S_{n}^{\operatorname{I}}+S_{n}%
^{\operatorname{II}}}{\left(  1+n\tilde{\delta}_{0}\right)  S_{n}%
^{\operatorname{I}}}=:\rho_{n}^{\operatorname{II}}.
\]
In order to estimate the terms $S_{n}^{\operatorname*{I}}$, $S_{n}%
^{\operatorname*{II}}$, we have to investigate the behavior of $a_{m,n}\left(
\frac{\lambda_{0}}{n}\right)  ^{2m}/\tilde{b}_{n}$ depending on the distance
of $m$ from $\tilde{m}$. We write $m=n\mu_{0}\left(  1+\varepsilon\right)  $
for some $\varepsilon\in\mathbb{R}$ that satisfies $0<\mu_{0}\left(
1+\varepsilon\right)  <1$. This gives
\begin{align}
C_{2}f_{m,n}\left(  \gamma_{\lambda_{0}}\left(  \varepsilon\right)  \right)
^{n}  &  \leq\frac{a_{m,n}\left(  \frac{\lambda_{0}}{n}\right)  ^{2m}}%
{\tilde{b}_{n}}\leq2C_{1}\left(  \gamma_{\lambda_{0}}\left(  \varepsilon
\right)  \right)  ^{n}\label{defgammalambda}\\
\text{with\quad}\gamma_{\lambda_{0}}\left(  \varepsilon\right)   &
:=\frac{\left(  1+\mu_{0}\left(  1+\varepsilon\right)  \right)  ^{1+\mu
_{0}\left(  1+\varepsilon\right)  }}{\left(  1-\mu_{0}\left(  1+\varepsilon
\right)  \right)  ^{1-\mu_{0}\left(  1+\varepsilon\right)  }}\frac{\left(
1-\mu_{0}\right)  ^{1-\mu_{0}}}{\left(  1+\mu_{0}\right)  ^{1+\mu_{0}}}\left(
\frac{\lambda_{0}}{\operatorname*{e}}\right)  ^{2\mu_{0}\varepsilon}.\nonumber
\end{align}
\textbf{Estimate of $S_{n}^{\operatorname{I}}$:} The dominant contribution in
the numerator of $\rho_{n}^{\operatorname*{II}}$ will be seen to be
$S_{n}^{\operatorname*{I}}$, for which we therefore need a lower bound. Our
strategy is to estimate this sum by a single summand, namely, the summand
corresponding to an integer $m$ close to $\tilde{m}=n\mu_{0}$. For
$m\in\left\{  \left\lfloor n\mu_{0}\right\rfloor ,\left\lceil n\mu
_{0}\right\rceil \right\}  $ we have%
\[
m-n\mu_{0}=n\mu_{0}\varepsilon_{m}\text{ with }\varepsilon_{m}\in\left\{
-\frac{n\mu_{0}-\left\lfloor n\mu_{0}\right\rfloor }{n\mu_{0}},\frac
{\left\lceil n\mu_{0}\right\rceil -n\mu_{0}}{n\mu_{0}}\right\}  .
\]
For these two values of $m$ (in fact, we will only need the one with $m\leq
\mu_{0}n$), we have $m=n\mu_{0}(1+\varepsilon_{m})$ with $|\varepsilon
_{m}|\leq(n\mu_{0})^{-1}$ and (cf. (\ref{eq:condition-on-n}))
\begin{align}
\frac{\mu_{0}}{2}n  &  \leq n\mu_{0}-1\leq m\leq n\mu_{0}+1\leq\frac{1+\mu
_{0}}{2}n,\label{eq:estimate_for_m-10}\\
\lambda_{0}^{2}\left\vert \varepsilon_{m}\right\vert  &  \leq\frac{\lambda
_{0}^{2}}{n\mu_{0}}\overset{\text{(\ref{eq:c_5})}}{\leq}c_{0}.
\label{eq:estimate_for_m-11}%
\end{align}
The estimates (\ref{eq:estimate_for_m-10}), (\ref{eq:estimate_for_m-11}) make
Lemma~\ref{LemTaylor} applicable, which gives%
\begin{equation}
1\geq\gamma_{\lambda_{0}}\left(  \varepsilon_{m}\right)  \geq1-c_{2}%
\lambda_{0}^{2}\varepsilon_{m}^{2}\geq1-c_{2}c_{0}|\varepsilon_{m}|\geq
1-\frac{c_{6}}{n}\quad\text{with }c_{6}=\frac{c_{2}c_{0}}{\mu_{0}}.
\label{eq:c_6}%
\end{equation}
The estimate (\ref{eq:estimate_for_m-10}) leads to two-sided bounds for
$f_{m,n}$:
\begin{align*}
f_{m,n}  &  \leq\frac{2n+1}{n\mu_{0}/2\sqrt{n(1-\mu_{0})/2}}\overset{n\geq
1}{\leq}\frac{6\sqrt{2}}{\mu_{0}\sqrt{1-\mu_{0}}}n^{-1/2}=:c_{7}n^{-1/2},\\
f_{m,n}  &  \geq\frac{\sqrt{n+\mu_{0}n}\sqrt{\mu_{0}n}}{\sqrt{n-\mu_{0}%
n}(1+\mu_{0})n}=:c_{8}n^{-1/2}.
\end{align*}
Define $c_{5}>0$ such that, with $c_{0}$ given by Lemma~\ref{LemTaylor},
\begin{equation}
n\geq c_{5}\quad\Longrightarrow\left(  (1-c_{6}/n)^{n}\geq\frac{1}%
{2}\operatorname{e}^{-c_{6}}\quad\mbox{ and }\quad\frac{\lambda_{0}^{2}}%
{n\mu_{0}}\leq c_{0}\right)  \label{eq:c_5}%
\end{equation}
This leads to%
\begin{equation}
S_{n}^{\operatorname{I}}\geq C_{2}f_{m,n}\left(  \gamma_{\lambda_{0}}\left(
\varepsilon_{m}\right)  \right)  ^{n}\geq C_{2}c_{8}n^{-1/2}\left(
1-\frac{c_{6}}{n}\right)  ^{n}\geq\frac{1}{2}C_{2}c_{8}\operatorname{e}%
\nolimits^{-c_{6}}n^{-1/2}. \label{eq:SI}%
\end{equation}
\textbf{Estimate of $S_{n}^{\operatorname{II}}$:} Let $c_{0}\in\left(
0,1\right)  $ be the constant in Lemma~\ref{LemTaylor} (note that we may
assume, without loss of generally, $c_{0}<1$). Upon writing $m\in
\{0,\ldots,\lfloor n\tilde{\delta}_{0}\rfloor\}$ in the form $m=\mu
_{0}n(1+\varepsilon_{m})$, we find in view of $\tilde{\delta}_{0}=\mu_{0}^{3}$
that $|\varepsilon_{m}|\geq\lambda_{0}^{-2}$. Hence, the monotonicity
properties of the function $\gamma_{\lambda_{0}}$ of Lemma~\ref{LemTaylor}
imply
\begin{equation}
\gamma_{\lambda_{0}}(\varepsilon_{m})\leq1-\frac{c_{2}}{c_{0}}\lambda_{0}%
^{-2}.
\end{equation}
We therefore get
\begin{equation}
S_{n}^{\operatorname{II}}=\sum_{0\leq m\leq\lfloor\tilde{\delta}_{0}n\rfloor
}\frac{a_{m,n}}{\tilde{b}_{n}}\left(  \frac{\lambda_{0}}{n}\right)  ^{2m}%
\leq2C_{1}\sum_{0\leq m\leq\lfloor\tilde{\delta}_{0}n\rfloor}\left(
1-\frac{c_{2}}{c_{0}}\lambda_{0}^{-2}\right)  ^{n}\leq2C_{1}(n+1)\left(
1-\frac{c_{2}}{c_{0}}\lambda_{0}^{-2}\right)  ^{n}. \label{eq:SII}%
\end{equation}
The combination of (\ref{eq:SII}) and (\ref{eq:SI}) shows $S_{n}%
^{\operatorname{I}}+S_{n}^{\operatorname{II}}\leq CS_{n}^{\operatorname{I}}$
for some constant $C>0$ that depends solely on $\lambda_{0}$. This concludes
the proof.
%TCIMACRO{\TeXButton{End Proof}{\endproof} }%
%BeginExpansion
\endproof

\begin{lemma}
\label{LemTaylor} For $\lambda> 1$ and $\mu:= \sqrt{1 - \lambda^{-2}}$
introduce the function
\[
(-1,\mu^{-1}-1) \ni\varepsilon\mapsto\gamma_{\lambda}\left(  \varepsilon
\right)  :=\frac{\left(  1+\mu\left(  1+\varepsilon\right)  \right)
^{1+\mu\left(  1+\varepsilon\right)  }}{\left(  1-\mu\left(  1+\varepsilon
\right)  \right)  ^{1-\mu\left(  1+\varepsilon\right)  }}\frac{\left(
1-\mu\right)  ^{1-\mu}}{\left(  1+\mu\right)  ^{1+\mu}}\left(  \frac{\lambda
}{\operatorname*{e}}\right)  ^{2\mu\varepsilon}.
\]
Let $\lambda_{0} > 1$. Then there are constants $c_{0}$, $c_{1}$, $c_{2} > 0$
depending solely on $\lambda_{0}$ such that the following holds for every
$\lambda\ge\lambda_{0}$: For every $\varepsilon$ satisfying
\begin{equation}
\label{eq:lemma:Taylor-5}|\varepsilon| \lambda^{2} \leq c_{0}%
\end{equation}
the function $\gamma_{\lambda}$ satisfies
\begin{equation}
\label{eq:lemma:Taylor-6}1 - c_{1} \lambda^{2} \varepsilon^{2} \leq
\gamma_{\lambda}(\varepsilon) \leq1 - c_{2} \lambda^{2} \varepsilon^{2}.
\end{equation}
Furthermore, the function $\gamma_{\lambda}$ is monotone increasing on
$(-1,0)$ and monotone decreasing on $(0,\mu^{-1}-1)$. In particular,
therefore,
\begin{equation}
\label{eq:lemma:Taylor-7}0 < \gamma_{\lambda}(\varepsilon) \leq1 - \frac
{c_{2}}{c_{0}} \lambda^{-2} \qquad\forall\varepsilon\in(-1,\mu^{-1}%
-1)\setminus(-c_{0} \lambda^{-2}, c_{0} \lambda^{-2}).
\end{equation}

\end{lemma}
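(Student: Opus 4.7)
I would work with $\phi(\varepsilon):=\log\gamma_\lambda(\varepsilon)$, introducing the abbreviation $u=u(\varepsilon):=\mu(1+\varepsilon)$ so that $du/d\varepsilon=\mu$. A direct differentiation of
\[
\phi(\varepsilon)=(1+u)\log(1+u)-(1-u)\log(1-u)+(1-\mu)\log(1-\mu)-(1+\mu)\log(1+\mu)+2\mu\varepsilon(\log\lambda-1)
\]
gives the compact expressions
\[
\phi'(\varepsilon)=\mu\,\log\!\bigl(\lambda^{2}(1-u^{2})\bigr),\qquad \phi''(\varepsilon)=-\frac{2\mu^{2}u}{1-u^{2}}.
\]
The crucial normalization is that at $\varepsilon=0$ one has $u=\mu$ and, by the definition $\mu^{2}=1-\lambda^{-2}$, $\lambda^{2}(1-\mu^{2})=1$. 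Hence $\phi(0)=0$ and $\phi'(0)=0$, and in particular $\gamma_\lambda(0)=1$.

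Monotonicity is immediate from the closed form for $\phi'$: the sign of $\phi'(\varepsilon)$ equals the sign of $\log\bigl(\lambda^{2}(1-u^{2})\bigr)$, which in turn equals the sign of $\mu^{2}-u^{2}=-\mu^{2}\varepsilon(2+\varepsilon)$. For $\varepsilon\in(-1,0)$ this is positive, so $\phi$ (and hence $\gamma_\lambda$) is strictly increasing, and for $\varepsilon\in(0,\mu^{-1}-1)$ it is negative, so $\phi$ is strictly decreasing. This proves the monotonicity claims.

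For the quantitative Taylor bounds I would use $\phi(\varepsilon)=\int_{0}^{\varepsilon}(\varepsilon-s)\phi''(s)\,ds$. The key step is to show that under the smallness hypothesis $|\varepsilon|\lambda^{2}\leq c_{0}$, and with $c_{0}$ chosen sufficiently small (depending only on $\lambda_{0}$, so that $\mu\geq\mu_{0}:=\sqrt{1-\lambda_{0}^{-2}}$), $u(s)$ stays within a fixed fraction of $\mu$ for all $s$ between $0$ and $\varepsilon$; concretely, $|u-\mu|\leq\mu c_{0}\lambda^{-2}\leq\tfrac12(1-\mu)$ if $c_{0}$ is small enough. On this range $1-u^{2}$ is comparable to $\lambda^{-2}$ with $\lambda_{0}$-dependent constants, so
\[
-C_{1}\lambda^{2}\;\leq\;\phi''(s)\;\leq\;-C_{2}\lambda^{2}
\]
for constants $C_{1},C_{2}>0$ depending only on $\lambda_{0}$. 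Integrating twice yields $-\tfrac12 C_{1}\lambda^{2}\varepsilon^{2}\leq\phi(\varepsilon)\leq-\tfrac12 C_{2}\lambda^{2}\varepsilon^{2}$, and exponentiating with the elementary inequalities $1-t\leq e^{-t}\leq 1-t/2$ valid for $t\in[0,1]$ (which applies since $c_{0}$ can be chosen so that $\tfrac12 C_{1}\lambda^{2}\varepsilon^{2}\leq \tfrac12 C_{1}c_{0}^{2}\lambda^{-2}\leq 1$) delivers (\ref{eq:lemma:Taylor-6}) after renaming $c_{1},c_{2}$. The only mild subtlety I expect is tracking the $\lambda_{0}$-dependencies carefully so that all constants emerge independent of $\lambda\geq\lambda_{0}$; the geometric point that $\mu$ is bounded away from both $0$ and $1$ uniformly in $\lambda\geq\lambda_{0}$ is what makes this work.

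Finally, for (\ref{eq:lemma:Taylor-7}), given any $\varepsilon\in(-1,\mu^{-1}-1)\setminus(-c_{0}\lambda^{-2},c_{0}\lambda^{-2})$, the monotonicity established above shows that $\gamma_\lambda(\varepsilon)\leq\max\{\gamma_\lambda(c_{0}\lambda^{-2}),\gamma_\lambda(-c_{0}\lambda^{-2})\}$. Applying the upper bound in (\ref{eq:lemma:Taylor-6}) at the two points $\pm c_{0}\lambda^{-2}$ gives $\gamma_\lambda(\pm c_{0}\lambda^{-2})\leq 1-c_{2}c_{0}^{2}\lambda^{-2}$; after a final relabeling of the constant $c_{2}$ (absorbing a factor of $c_{0}^{3}$) this is the claimed bound $1-(c_{2}/c_{0})\lambda^{-2}$. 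Positivity of $\gamma_\lambda$ is built into its defining formula on the stated interval. The main obstacle I anticipate is purely bookkeeping: ensuring that the same triple $(c_{0},c_{1},c_{2})$ can be used to simultaneously validate the two-sided quadratic bound (\ref{eq:lemma:Taylor-6}) and the tail bound (\ref{eq:lemma:Taylor-7}); this is achieved by first fixing $c_{0}$ small enough for the Taylor argument, then choosing $c_{1},c_{2}$ to match the resulting bounds on $\phi''$, and finally, if necessary, shrinking $c_{2}$ so that $c_{2}/c_{0}\leq c_{2}c_{0}^{2}$ (equivalently, shrinking $c_{0}$ so that $c_{0}^{3}\leq 1$).
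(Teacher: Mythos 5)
Your proposal is correct and follows essentially the same route as the paper: both arguments read off the monotonicity from the sign of the logarithmic derivative $\mu\log\bigl(\lambda^{2}(1-u^{2})\bigr)$, obtain the two-sided quadratic bound by a Taylor expansion about $\varepsilon=0$ under the smallness condition (\ref{eq:lemma:Taylor-5}), and transfer it to the tail via monotonicity; your second-order integral-remainder expansion of $\log\gamma_{\lambda}$ with two-sided bounds on $\phi''$ is only a mild variation of the paper's third-order Lagrange expansion of $\gamma_{\lambda}$ itself. One bookkeeping slip at the very end: the inequality $c_{2}/c_{0}\leq c_{2}c_{0}^{2}$ cannot be achieved by shrinking $c_{2}$ (the $c_{2}$'s cancel, and the parenthetical condition would read $c_{0}^{3}\geq 1$, not $\leq 1$); the correct fix is the one you state a sentence earlier, namely replacing $c_{2}$ by the smaller constant $c_{2}c_{0}^{3}$, which only weakens the upper bound in (\ref{eq:lemma:Taylor-6}) and is therefore harmless.
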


%

%TCIMACRO{\TeXButton{Proof}{\proof}}%
%BeginExpansion
\proof
%EndExpansion
Define the function
\begin{equation}
g_{\lambda}\left(  \varepsilon\right)  :=\ln\left(  \left(  1-\mu^{2}\left(
1+\varepsilon\right)  ^{2}\right)  \lambda^{2}\right)  \label{eq:g}%
\end{equation}
and observe
\begin{align}
&  g_{\lambda}^{\prime}\left(  \varepsilon\right)  =-2\mu^{2}\frac
{1+\varepsilon}{1-\mu^{2}\left(  1+\varepsilon\right)  ^{2}},\qquad
g_{\lambda}^{\prime\prime}\left(  \varepsilon\right)  =-2\mu^{2}\frac
{1+\mu^{2}\left(  \varepsilon+1\right)  ^{2}}{\left(  1-\mu^{2}\left(
1+\varepsilon\right)  ^{2}\right)  ^{2}},\label{derofg}\\
&  \gamma_{\lambda}^{\prime}=\mu\gamma_{\lambda}g_{\lambda},\qquad
\gamma_{\lambda}^{\prime\prime}=\mu\gamma_{\lambda}\left(  \mu g_{\lambda}%
^{2}+g_{\lambda}^{\prime}\right)  ,\qquad\gamma_{\lambda}^{\prime\prime\prime
}=\mu\gamma_{\lambda}\left(  \mu^{2}g_{\lambda}^{3}+3\mu g_{\lambda}%
g_{\lambda}^{\prime}+g_{\lambda}^{\prime\prime}\right)  .
\end{align}
\textbf{Step 1:} (monotonicity properties of $\gamma_{\lambda}$) The function
$\gamma_{\lambda}$ is defined in the interval $(-1,\mu^{-1}-1)$.

\emph{Claim:} $\gamma_{\lambda}$ is strictly increasing on $(-1,0)$, strictly
decreasing on $(0,\mu^{-1}-1)$, and thus has a proper maximum at
$\varepsilon=0$. To see these monotonicity properties, we note that
$\gamma_{\lambda}\geq0$ and that $g_{\lambda}(\varepsilon)>0$ for
$\varepsilon<0$ and $g_{\lambda}(\varepsilon)<0$ for $\varepsilon>0$. We
calculate%
\begin{equation}
\gamma_{\lambda}(0)=1,\qquad\gamma_{\lambda}^{\prime}(0)=0,\qquad
\gamma_{\lambda}^{\prime\prime}(0)=-2(\lambda^{2}-1)^{3/2}\lambda^{-1}.
\label{eq:lemma:Taylor-100}%
\end{equation}

\textbf{Step 2:} Use $\mu=\sqrt{1-\lambda^{-2}}$ to write
\begin{equation}
\left(  1-\mu^{2}(1+\varepsilon)^{2}\right)  \lambda^{2}=1-(\lambda
^{2}-1)(2\varepsilon+\varepsilon^{2}). \label{eq:lemma:Taylor-150}%
\end{equation}
Fix $q\in(0,1)$ and consider $\varepsilon$ satisfying
\begin{equation}
0<\mu(1+\varepsilon)<1\quad\mbox{ and }\quad(\lambda^{2}-1)|2\varepsilon
+\varepsilon^{2}|\leq q<1. \label{eq:lemma:Taylor-200}%
\end{equation}
{}From (\ref{eq:lemma:Taylor-150}) and (\ref{eq:lemma:Taylor-200}) we infer
\[
(1-q)\lambda^{-2}\leq1-\mu^{2}(1+\varepsilon)^{2}\leq(1+q)\lambda^{-2}.
\]
This, together with $0<\mu(1+\varepsilon)<1$ and $\mu\in(0,1)$ implies
\begin{equation}
|g_{\lambda}(\varepsilon)|\leq\max\{|\ln(1-q)|,\ln(1+q)\},\qquad|g_{\lambda
}^{\prime}(\varepsilon)|\leq\frac{2\lambda^{2}}{1-q},\qquad|g_{\lambda
}^{\prime\prime}(\varepsilon)|\leq\frac{4\lambda^{4}}{(1-q)^{2}}.
\label{eq:lemma:Taylor-500}%
\end{equation}
Taylor's theorem now implies for every $\varepsilon$ satisfying
(\ref{eq:lemma:Taylor-200}) the existence of an $\varepsilon^{\prime}$ in the
interval $\left(  0,\varepsilon\right)  $ with endpoints $0$ and $\varepsilon$
such that
\begin{equation}
\gamma_{\lambda}(\varepsilon)=\gamma_{\lambda}(0)+\gamma_{\lambda}^{\prime
}(0)\varepsilon+\frac{1}{2}\gamma_{\lambda}^{\prime\prime}(0)\varepsilon
^{2}+\frac{1}{3!}\gamma_{\lambda}^{\prime\prime\prime}(\varepsilon^{\prime
})\varepsilon^{3}=1-2(\lambda^{2}-1)^{3/2}\lambda^{-1}\varepsilon^{2}+\frac
{1}{3!}\gamma_{\lambda}^{\prime\prime\prime}(\varepsilon^{\prime}%
)\varepsilon^{3}.
\end{equation}
The remainder term $\gamma_{\lambda}^{\prime\prime\prime}(\varepsilon^{\prime
})$ is estimated using (\ref{eq:lemma:Taylor-500}) as follows (note that
$\gamma_{\lambda}\geq0$ and has maximum $1$) as%
\[
|\gamma_{\lambda}^{\prime\prime\prime}(\varepsilon^{\prime})|\leq\max
\{\ln(1+q),|\ln(1-q)|\}^{3}+6\lambda^{2}\max\{\ln(1+q),|\ln(1-q)|\}(1-q)^{-1}%
+4\lambda^{4}(1-q)^{-2}\leq C_{1}\lambda^{4}%
\]
for a constant $C_{1}$ that depends solely on $\lambda_{0}>1$ and the chosen
$q$. Finally, there are constants $C_{2}$, $C_{3}>0$ depending solely on
$\lambda_{0}>1$ such that
\begin{equation}
C_{2}\lambda^{2}\leq2(\lambda^{2}-1)^{3/2}\lambda^{-1}\leq C_{3}\lambda^{2}.
\end{equation}
We conclude for $\varepsilon$ satisfying (\ref{eq:lemma:Taylor-200})
\[
1-C_{2}\lambda^{2}\varepsilon^{2}-\frac{C_{1}}{3!}\lambda^{4}\varepsilon
^{3}\leq\gamma_{\lambda}(\varepsilon)\leq1-C_{3}\lambda^{2}\varepsilon
^{2}+\frac{C_{1}}{3!}\lambda^{4}\varepsilon^{3}.
\]
The two-sided bound (\ref{eq:lemma:Taylor-6}) now follows if we assume
(\ref{eq:lemma:Taylor-5}) for $c_{0}$ sufficiently small so that the terms
$\lambda^{4}\varepsilon^{3}$ are small compared to the terms involving
$\lambda^{2}\varepsilon^{2}$. We note that the condition
(\ref{eq:lemma:Taylor-5}) for sufficiently small $c_{0}$ also implies
(\ref{eq:lemma:Taylor-200}). Finally, the estimate (\ref{eq:lemma:Taylor-7})
is a consequence of (\ref{eq:lemma:Taylor-6}) and the monotonicity properties
of $\gamma_{\lambda}$.
%TCIMACRO{\TeXButton{End Proof}{\endproof}}%
%BeginExpansion
\endproof
%EndExpansion

%---------------------------------------------------------------------------

\section{Equivalence of $\left\Vert \cdot\right\Vert _{\mathbf{H}^{1}\left(
\Omega\right)  }$ and $\left\Vert \cdot\right\Vert _{\operatorname*{curl}%
,\Omega,1}$ in $\mathbf{V}_{0}$ and $\mathbf{V}_{0}^{\ast}$}

The spaces $\mathbf{V}_{0}$ and $\mathbf{V}_{0}^{\ast}$ as in (\ref{defVo})
involve the capacity operator (cf.~Lemma~\ref{LemEq2scprodstrong}). For the
case that $\Gamma$ is the surface of the ball, they are subspaces of
$\mathbf{H}^{1}\left(  \Omega\right)  $ as shown in the following lemma. In
contrast to Lemma~\ref{Lemembed} we obtain $k$-explicit bounds for the norm estimates.

\begin{lemma}
\label{Lemembedspec}Let $\Omega= B_{1}(0)$ and let $\mathbf{V}_{0}$,
$\mathbf{V}_{0}^{\ast}$ be defined as in (\ref{defVo}). Then, $\mathbf{V}%
_{0}\cup\mathbf{V}_{0}^{\ast}\subset\mathbf{H}^{1}\left(  \Omega\right)  $
and
\begin{equation}
\left\Vert \mathbf{u}\right\Vert _{\mathbf{H}^{1}\left(  \Omega\right)  }%
\leq\left\Vert \mathbf{u}\right\Vert _{\operatorname*{curl},\Omega,1}%
\qquad\forall\mathbf{u\in V}_{0}\cup\mathbf{V}_{0}^{\ast}, \label{normestV0}%
\end{equation}
i.e., the constant $C_{\Omega,k}$ in Lemma \ref{Lemembed} equals $1$ for
$\Omega=B_{1}\left(  0\right)  $.
\end{lemma}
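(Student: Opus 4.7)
The plan is to exploit the spherical symmetry of $\Omega=B_1(0)$ by expanding $\mathbf{u}$ in vector spherical harmonics. In spherical coordinates $(r,\hat{\mathbf{x}})$ I would write
\[
\mathbf{u}(r,\hat{\mathbf{x}}) = \sum_{\ell,m}\bigl[u_\ell^m(r)\,Y_\ell^m(\hat{\mathbf{x}})\,\hat{\mathbf{x}} + v_\ell^m(r)\,\nabla_\Gamma Y_\ell^m(\hat{\mathbf{x}}) + w_\ell^m(r)\,\mathbf{T}_\ell^m(\hat{\mathbf{x}})\bigr],
\]
so that the three vector families are $\mathbf{L}^2$-orthogonal on each sphere of radius $r$. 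Standard formulae (cf.~\cite[Sec.~{2.4}, {2.5}]{Nedelec01}) give $\operatorname{curl}\mathbf{u}$, $\operatorname{div}\mathbf{u}$ and $\nabla\mathbf{u}$ explicitly in terms of the radial coefficients $u_\ell^m, v_\ell^m, w_\ell^m$ and their $r$-derivatives; consequently $\|\mathbf{u}\|_{L^2(\Omega)}^2$, $\|\operatorname{curl}\mathbf{u}\|_{L^2(\Omega)}^2$ and $\|\nabla\mathbf{u}\|_{L^2(\Omega)}^2$ all diagonalise over the modes $(\ell,m)$. The inclusion $\mathbf{V}_0\cup\mathbf{V}_0^\ast\subset\mathbf{H}^1(\Omega)$ (already granted by Lemma~\ref{Lemembed}) then amounts to finiteness of each modewise radial integral, while the sharp constant $1$ is the true content of the lemma.

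For $\mathbf{u}\in\mathbf{V}_0$, Lemma~\ref{LemEq2scprodstrong} gives $\operatorname{div}\mathbf{u}=0$ in $\Omega$, which modewise reduces to the single ODE $(r^2u_\ell^m)'(r)=\ell(\ell+1)\,v_\ell^m(r)$; the toroidal ($w$-) modes are automatically divergence-free and tangential, and are therefore unconstrained by the second relation in (\ref{ImpBeda}). That boundary relation, combined with the explicit expansion (\ref{DefTk}) of $T_k$ and the identity (\ref{divsucurl=0}), couples $u_\ell^m(1)$ and $v_\ell^m(1)$ through an explicit factor involving $z_\ell(k)+1$. Plugging the divergence-free constraint into the modewise expression for $\|\nabla\mathbf{u}\|^2$ and comparing with $\|\operatorname{curl}\mathbf{u}\|^2+\|\mathbf{u}\|^2$ then reduces the proof to a scalar inequality per mode whose only non-trivial ingredient is a boundary contribution at $r=1$, controlled by the capacity-operator relation. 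The case $\mathbf{u}\in\mathbf{V}_0^\ast$ is handled identically after substituting $T_k$ by $T_{-k}=T_k^\ast$, cf.~(\ref{ImpBedb}), (\ref{defTomegaast}); the sign structure is insensitive to this substitution.

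The main obstacle is extracting the sharp constant $1$ uniformly in $k$: the proof has to proceed through an \emph{identity} rather than through lossy estimates, since any Young- or Cauchy--Schwarz-type bound on the $r=1$ boundary term would spoil sharpness. A conceptually cleaner alternative, which avoids the radial ODE bookkeeping altogether, is to start from the Rellich-type identity
\[
\|\nabla\mathbf{u}\|^2 = \|\operatorname{curl}\mathbf{u}\|^2 + \|\operatorname{div}\mathbf{u}\|^2 + 2\!\int_\Gamma(\operatorname{div}_\Gamma\mathbf{u}_T)(\mathbf{u}\cdot\mathbf{n}) - 2\!\int_\Gamma\mathbf{u}_T\cdot(\mathcal{W}\mathbf{u}_T),
\]
valid for smooth $\mathbf{u}$ on a smooth domain, where $\mathcal{W}$ is the Weingarten map of $\Gamma$. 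On the unit sphere $\mathcal{W}=\mathrm{Id}$ on the tangent plane, so the last integral simplifies to $-2\|\mathbf{u}_T\|_{L^2(\Gamma)}^2$; combined with $\operatorname{div}\mathbf{u}=0$ and the $\mathbf{V}_0$-boundary relation, the two surface integrals should cancel up to a non-positive contribution and a term absorbed into $\|\mathbf{u}\|_{L^2(\Omega)}^2$ via the standard radial trace identity on the ball, which yields (\ref{normestV0}).
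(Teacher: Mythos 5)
Your second, ``alternative'' route is essentially the route the paper takes: the proof starts from exactly the Rellich-type identity you quote (in the form of \cite[(2.5.151), (2.5.152), Lemma 5.4.2]{Nedelec01}, specialized to the unit sphere), tests with $\mathbf{v}=\mathbf{u}$, uses $\operatorname{div}\mathbf{u}=0$ together with the boundary relation (\ref{ImpBeda}) of Lemma~\ref{LemEq2scprodstrong}, and discards the non-positive boundary terms; the vector-spherical-harmonics bookkeeping of your first route is not needed. One small correction: on the unit sphere the curvature contribution is $-2\int_\Gamma|\langle\mathbf{u},\mathbf{n}\rangle|^2-\int_\Gamma|\mathbf{u}_T|^2$ rather than $-2\int_\Gamma|\mathbf{u}_T|^2$; both are non-positive, so this does not affect the argument.

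The gap is at the decisive step. The two surface integrals do \emph{not} cancel. Inserting $\operatorname{i}k\langle\mathbf{u},\mathbf{n}\rangle=-\operatorname{div}_\Gamma T_k\Pi_T\mathbf{u}$ turns the cross term into
\begin{equation*}
-2\operatorname{Re}\bigl(\operatorname{div}\nolimits_\Gamma\mathbf{u}_T,\langle\mathbf{u},\mathbf{n}\rangle\bigr)_\Gamma
=\frac{2}{k}\operatorname{Im}\bigl(\operatorname{div}\nolimits_\Gamma T_k\mathbf{u}_T,\operatorname{div}\nolimits_\Gamma\mathbf{u}_T\bigr)_\Gamma ,
\end{equation*}
and its non-positivity is not a formal cancellation but a sign property of the symbol of $T_k$: by (\ref{DefTk}) and (\ref{divsucurl=0}) this quantity equals $2\sum_{\ell,m}\ell^2(\ell+1)^2\,\operatorname{Re}(z_\ell(k)+1)\,|U_\ell^m|^2/|z_\ell(k)+1|^2$, which is $\leq 0$ only because of the Hankel-function property $1+\operatorname{Re}z_\ell(k)\leq 0$ (\cite[(2.6.23)]{Nedelec01}). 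Your proposal asserts this outcome (``should cancel up to a non-positive contribution'') without supplying the symbol computation, which is the only non-trivial content of the lemma. Moreover, your fallback of absorbing a leftover boundary term into $\|\mathbf{u}\|^2_{L^2(\Omega)}$ cannot work here: since $\|\mathbf{u}\|^2$ appears identically on both sides of (\ref{normestV0}), the claimed constant $1$ forces the exact inequality $\|\nabla\mathbf{u}\|\leq\|\operatorname{curl}\mathbf{u}\|$, and any trace-type absorption would reintroduce $\|\nabla\mathbf{u}\|$ on the right and destroy sharpness. The case $\mathbf{u}\in\mathbf{V}_0^\ast$ is indeed handled the same way via (\ref{ImpBedb}), as you say.
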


%

%TCIMACRO{\TeXButton{Proof}{\proof}}%
%BeginExpansion
\proof
%EndExpansion
The inclusion $\mathbf{V}_{0}\cup\mathbf{V}_{0}^{\ast}\subset\mathbf{H}%
^{1}\left(  \Omega\right)  $ follows from Lemma \ref{Lemembed} and it remains
to prove the norm estimates. Let $\mathbf{u\in V}_{0}$. Then, from
\cite[(2.5.151), (2.5.152), Lemma 5.4.2]{Nedelec01} we have%
\begin{align*}
&  \left(  \nabla\mathbf{u},\nabla\mathbf{v}\right)  -\left(
\operatorname*{curl}\mathbf{u},\operatorname*{curl}\mathbf{v}\right)  -\left(
\operatorname*{div}\mathbf{u},\operatorname*{div}\mathbf{v}\right) \\
&  \qquad=-\left(  \operatorname*{div}\nolimits_{\Gamma}\mathbf{u}%
_{T},\left\langle \mathbf{v},\mathbf{n}\right\rangle \right)  _{\Gamma
}-\left(  \left\langle \mathbf{u},\mathbf{n}\right\rangle ,\operatorname*{div}%
\nolimits_{\Gamma}\mathbf{v}_{T}\right)  _{\Gamma}-2\left(  \left\langle
\mathbf{u},\mathbf{n}\right\rangle ,\left\langle \mathbf{v},\mathbf{n}%
\right\rangle \right)  _{\Gamma}-\left(  \mathbf{u}_{T},\mathbf{v}_{T}\right)
_{\Gamma}.
\end{align*}
We choose $\mathbf{v}=\mathbf{u}$ and employ (\ref{ImpBeda}) to obtain after
rearranging terms%
\begin{align}
\left\Vert \nabla\mathbf{u}\right\Vert ^{2}  &  =\left\Vert
\operatorname*{curl}\mathbf{u}\right\Vert ^{2}-2\operatorname{Re}\left(
\operatorname*{div}\nolimits_{\Gamma}\mathbf{u}_{T},\left\langle
\mathbf{u},\mathbf{n}\right\rangle \right)  _{\Gamma}-2\left\Vert \left\langle
\mathbf{u},\mathbf{n}\right\rangle \right\Vert _{\Gamma}^{2}-\left\Vert
\mathbf{u}_{T}\right\Vert _{\Gamma}^{2}\nonumber\\
&  \overset{\text{(\ref{ImpBeda})}}{=}\left\Vert \operatorname*{curl}%
\mathbf{u}\right\Vert ^{2}+\frac{2}{k}\operatorname{Im}\left(
\operatorname{div}_{\Gamma}T_{k}\mathbf{u}_{T},\operatorname*{div}%
\nolimits_{\Gamma}\mathbf{u}_{T}\right)  _{\Gamma}-2\left\Vert \left\langle
\mathbf{u},\mathbf{n}\right\rangle \right\Vert _{\Gamma}^{2}-\left\Vert
\mathbf{u}_{T}\right\Vert _{\Gamma}^{2}\nonumber\\
&  \leq\left\Vert \operatorname*{curl}\mathbf{u}\right\Vert ^{2}+\frac{2}%
{k}\operatorname{Im}\left(  \operatorname{div}_{\Gamma}T_{k}\mathbf{u}%
_{T},\operatorname*{div}\nolimits_{\Gamma}\mathbf{u}_{T}\right)  _{\Gamma}.
\label{Voest}%
\end{align}
{}From \cite[(5.3.91), (5.3.93)]{Nedelec01} we conclude that%
\[
\left(  \left(  \operatorname{div}_{\Gamma}T_{k}\mathbf{u}_{T}\right)
,\operatorname*{div}\nolimits_{\Gamma}{\mathbf{u}}_{T}\right)
_{\Gamma}=\sum_{\ell=1}^{\infty}\sum_{m\in\iota_{\ell}}\operatorname*{i}%
\ell^{2}\left(  \ell+1\right)  ^{2}\frac{k}{z_{\ell}\left(  k\right)
+1}\left\vert U_{\ell}^{m}\right\vert ^{2}.
\]
Since
\[
\operatorname*{Im}\left(  \frac{\operatorname*{i}}{z_{\ell}\left(  k\right)
+1}\right)  =\frac{\operatorname*{Im}\left(  \operatorname*{i}\left(
\overline{z_{\ell}\left(  k\right)  }+1\right)  \right)  }{\left\vert z_{\ell
}\left(  k\right)  +1\right\vert ^{2}}=\frac{\operatorname{Re}\left(  z_{\ell
}\left(  k\right)  +1\right)  }{\left\vert z_{\ell}\left(  k\right)
+1\right\vert ^{2}}\overset{\text{\cite[(2.6.23)]{Nedelec01}}}{\leq}0,
\]
the second summand in (\ref{Voest}) is non-positive so
that$\displaystyle \left\Vert \nabla\mathbf{u}\right\Vert \leq\left\Vert
\operatorname*{curl}\mathbf{u}\right\Vert .
%\label{gradlecurl}%
$ This implies the first estimate in (\ref{normestV0}) while the statement
about ${\mathbf{u}} \in{\mathbf{V}}^{\ast}_{0}$ is simply a repetition of
these arguments.%
%TCIMACRO{\TeXButton{End Proof}{\endproof}}%
%BeginExpansion
\endproof
%EndExpansion
%---------------------------------------------------------------------------

\section{Vector Spherical Harmonics}

For $\mathbf{x}\in\mathbb{R}^{3}$, $r=\left\Vert \mathbf{x}\right\Vert $, and
$\mathbf{\hat{x}}:=\mathbf{x}/r$ we introduce the vectorial spherical
harmonics (VSH) as in \cite[Thm.~{2.46}]{Kirsch_Hettlich_2015} (with a
different scaling)%
\[
\mathbf{Y}_{\ell}^{m}\left(  \mathbf{\hat{x}}\right)  :=\mathbf{\hat{x}%
}Y_{\ell}^{m}\left(  \mathbf{\hat{x}}\right)  ,\quad\mathbf{U}_{\ell}%
^{m}\left(  \mathbf{\hat{x}}\right)  :=\nabla_{\Gamma}Y_{\ell}^{m}\left(
\mathbf{\hat{x}}\right)  ,\quad\mathbf{V}_{\ell}^{m}\left(  \mathbf{\hat{x}%
}\right)  :=\nabla_{\Gamma}Y_{\ell}^{m}\left(  \mathbf{\hat{x}}\right)
\times\mathbf{\hat{x}}.
\]
{}From \cite[Thm.~{5.36}]{Kirsch_Hettlich_2015} we conclude that any
$\mathbf{u}\in\mathbf{X}$ has an expansion of the form%
\begin{equation}
\mathbf{u}\left(  r\mathbf{\hat{x}}\right)  =\sum_{\ell=0}^{\infty}\sum
_{m\in\iota_{\ell}}\left(  u_{\ell}^{m}\left(  r\right)  \mathbf{Y}_{\ell}%
^{m}\left(  \mathbf{\hat{x}}\right)  +v_{\ell}^{m}\left(  r\right)
\mathbf{U}_{\ell}^{m}\left(  \mathbf{\hat{x}}\right)  +w_{\ell}^{m}\left(
r\right)  \mathbf{V}_{\ell}^{m}\left(  \mathbf{\hat{x}}\right)  \right)  .
\label{urepvsh}%
\end{equation}
We use the relations (cf. \cite[p.271]{Kirsch_Hettlich_2015})\footnote{There
is a sign error in the second last relation on \cite[p.271]%
{Kirsch_Hettlich_2015}.}%
\begin{align*}
\operatorname*{curl}\left(  u_{\ell}^{m}\left(  r\right)  \mathbf{Y}_{\ell
}^{m}\left(  \mathbf{\hat{x}}\right)  \right)   &  =\frac{u_{\ell}^{m}\left(
r\right)  }{r}\mathbf{V}_{\ell}^{m}\left(  \mathbf{\hat{x}}\right)
,\quad\operatorname*{curl}\left(  v_{\ell}^{m}\left(  r\right)  \mathbf{U}%
_{\ell}^{m}\left(  \mathbf{\hat{x}}\right)  \right)  =-\frac{1}{r}\left(
rv_{\ell}^{m}\left(  r\right)  \right)  ^{\prime}\mathbf{V}_{\ell}^{m}\left(
\mathbf{\hat{x}}\right)  ,\\
\operatorname*{curl}\left(  w_{\ell}^{m}\left(  r\right)  \mathbf{V}_{\ell
}^{m}\left(  \mathbf{\hat{x}}\right)  \right)   &  =\frac{1}{r}\left(
rw_{\ell}^{m}\left(  r\right)  \right)  ^{\prime}\mathbf{U}_{\ell}^{m}\left(
\mathbf{\hat{x}}\right)  +w_{\ell}^{m}\left(  r\right)  \frac{\ell\left(
\ell+1\right)  }{r}\mathbf{Y}_{\ell}^{m}\left(  \mathbf{\hat{x}}\right)  ,
\end{align*}
so that $\operatorname*{curl}\mathbf{u}$ is given by%
\[
\operatorname*{curl}\mathbf{u}\left(  r\mathbf{\hat{x}}\right)  =\sum_{\ell
=0}^{\infty}\sum_{m\in\iota_{\ell}}\frac{1}{r}\left(  \left(  u_{\ell}%
^{m}\left(  r\right)  -\left(  rv_{\ell}^{m}\left(  r\right)  \right)
^{\prime}\right)  \mathbf{V}_{\ell}^{m}\left(  \mathbf{\hat{x}}\right)
+\left(  rw_{\ell}^{m}\left(  r\right)  \right)  ^{\prime}\mathbf{U}_{\ell
}^{m}\left(  \mathbf{\hat{x}}\right)  +w_{\ell}^{m}\left(  r\right)
\ell\left(  \ell+1\right)  \mathbf{Y}_{\ell}^{m}\left(  \mathbf{\hat{x}%
}\right)  \right)  .
\]
Using the orthogonality relations of the vectorial spherical harmonics we get%
\begin{align}
\left\Vert \mathbf{u}\right\Vert ^{2}  &  =\sum_{\ell=0}^{\infty}\sum
_{m\in\iota_{\ell}}\int_{\mathbb{R}}r^{2}\left(  \left\vert u_{\ell}%
^{m}\left(  r\right)  \right\vert ^{2}+\ell\left(  \ell+1\right)  \left(
\left\vert v_{\ell}^{m}\left(  r\right)  \right\vert ^{2}+\left\vert w_{\ell
}^{m}\left(  r\right)  \right\vert ^{2}\right)  \right)  dr,\label{vshL2Curla}%
\\
\left\Vert \operatorname*{curl}\mathbf{u}\right\Vert ^{2}  &  =\sum_{m\in
\iota_{\ell}}\int_{\mathbb{R}}\ell\left(  \ell+1\right)  \left(  \sum_{\ell
=0}^{\infty}\left\vert u_{\ell}^{m}\left(  r\right)  -\left(  rv_{\ell}%
^{m}\left(  r\right)  \right)  ^{\prime}\right\vert ^{2}+\left\vert \left(
rw_{\ell}^{m}\left(  r\right)  \right)  ^{\prime}\right\vert ^{2}+\ell\left(
\ell+1\right)  \left\vert w_{\ell}^{m}\left(  r\right)  \right\vert
^{2}\right)  dr. \label{vshL2Curlb}%
\end{align}
For $a>0$, we introduce operators $L_{a}^{\operatorname*{VSH}}%
:\mathbf{X}\rightarrow\mathbf{X}$ and $H_{a}^{\operatorname*{VSH}}%
:\mathbf{X}\rightarrow\mathbf{X}$ for functions $\mathbf{u}$ as in
(\ref{urepvsh}) by%
\begin{equation}
L_{a}^{\operatorname*{VSH}}\mathbf{u}=\sum_{\ell:0\leq\ell\leq a}\sum
_{m\in\iota_{\ell}}\left(  u_{\ell}^{m}\left(  r\right)  \mathbf{Y}_{\ell}%
^{m}\left(  \mathbf{\hat{x}}\right)  +v_{\ell}^{m}\left(  r\right)
\mathbf{U}_{\ell}^{m}\left(  \mathbf{\hat{x}}\right)  +w_{\ell}^{m}\left(
r\right)  \mathbf{V}_{\ell}^{m}\left(  \mathbf{\hat{x}}\right)  \right)
,\quad H_{a}^{\operatorname*{VSH}}\mathbf{u}=\mathbf{u}-L_{a}%
^{\operatorname*{VSH}}\mathbf{u}. \label{defLaHaVSH}%
\end{equation}
{}From (\ref{vshL2Curla}), (\ref{vshL2Curlb}) we conclude the stability of the
splitting%
\begin{equation}%
\begin{array}
[c]{cc}%
\left\Vert L_{a}^{\operatorname*{VSH}}\mathbf{u}\right\Vert \leq\left\Vert
\mathbf{u}\right\Vert , & \left\Vert \operatorname*{curl}L_{a}%
^{\operatorname*{VSH}}\mathbf{u}\right\Vert \leq\left\Vert
\operatorname*{curl}\mathbf{u}\right\Vert ,\\
\left\Vert H_{a}^{\operatorname*{VSH}}\mathbf{u}\right\Vert \leq\left\Vert
\mathbf{u}\right\Vert , & \left\Vert \operatorname*{curl}H_{a}%
^{\operatorname*{VSH}}\mathbf{u}\right\Vert \leq\left\Vert
\operatorname*{curl}\mathbf{u}\right\Vert .
\end{array}
\label{lowhighuestVSH}%
\end{equation}
In addition the splitting is orthogonal:%
\[
\left(  L_{a}^{\operatorname*{VSH}}\mathbf{u},H_{a}^{\operatorname*{VSH}%
}\mathbf{u}\right)  =\left(  \operatorname*{curl}L_{a}^{\operatorname*{VSH}%
}\mathbf{u,}\operatorname*{curl}H_{a}^{\operatorname*{VSH}}\mathbf{u}\right)
=0.
\]
Note that on the unit sphere, it holds%
\begin{align*}
\Pi_{T}\mathbf{Y}_{\ell}^{m}\left(  \mathbf{\hat{x}}\right)   &
=\mathbf{\hat{x}}\times\left(  Y_{\ell,m}\left(  \mathbf{\hat{x}}\right)
\mathbf{\hat{x}}\times\mathbf{\hat{x}}\right)  =0,\\
\Pi_{T}\mathbf{U}_{\ell}^{m}\left(  \mathbf{\hat{x}}\right)   &
=\mathbf{\hat{x}}\times\left(  \nabla_{\Gamma}Y_{\ell}^{m}\left(
\mathbf{\hat{x}}\right)  \mathbf{\times\hat{x}}\right)  =\nabla_{\Gamma
}Y_{\ell}^{m}\left(  \mathbf{\hat{x}}\right)  ,\\
\Pi_{T}\mathbf{V}_{\ell}^{m}\left(  \mathbf{\hat{x}}\right)   &
=\mathbf{\hat{x}}\times\left(  \left(  \nabla_{\Gamma}Y_{\ell}^{m}\left(
\mathbf{\hat{x}}\right)  \times\mathbf{\hat{x}}\right)  \mathbf{\times\hat{x}%
}\right)  =-\mathbf{\hat{x}}\times\nabla_{\Gamma}Y_{\ell}^{m}\left(
\mathbf{\hat{x}}\right)  =\mathbf{T}_{\ell}^{m},
\end{align*}
where $\mathbf{T}_{\ell}^{m}$ is as in \cite[(2.4.173)]{Nedelec01}. Hence, the
application of the trace map $\Pi_{T}$ yields%
\begin{equation}
\mathbf{u}_{T}=\Pi_{T}\mathbf{u}=\sum_{\ell=0}^{\infty}\sum_{m\in\iota_{\ell}%
}\left(  v_{\ell}^{m}\nabla_{\Gamma}Y_{\ell}^{m}+w_{\ell}^{m}\mathbf{T}_{\ell
}^{m}\right)  , \label{defutexpmm}%
\end{equation}
where $v_{\ell}^{m}=v_{\ell}^{m}\left(  1\right)  $, $w_{\ell}^{m}:=w_{\ell
}^{m}\left(  1\right)  $. A key observation for the case of the unit sphere is
that for any $\mathbf{u}\in\mathbf{X}$, the function $L_{\lambda
k}^{\operatorname*{VSH}}\mathbf{u}$ satisfies $\Pi_{T}L_{\lambda
k}^{\operatorname*{VSH}}\mathbf{u}=L_{\Gamma}\Pi_{T}\mathbf{u}$, where
$L_{\Gamma}$ was introduced in Definition~\ref{DefFreqSplit}.

\begin{lemma}
\label{lemma:minimization-by-vsh} Let $\Omega= B_{1}(0)$ and $L_{\Omega}$ be
as in Definition~\ref{DefFreqSplit}. Then: $\Pi_{T}L_{\lambda k}%
^{\operatorname*{VSH}}=L_{\Gamma}\Pi_{T}$ and
%$L_{\Omega }$ satisfies the stability estimate%
\[
\left\Vert L_{\Omega}\mathbf{u}\right\Vert _{\operatorname*{curl},\Omega
,k}\leq\Vert\mathbf{u}\Vert_{\operatorname*{curl},\Omega,k}\qquad
\forall\mathbf{u}\in\mathbf{X}.
\]
Furthermore, the stability constants in (\ref{defCkLHOmega}) satisfy
$C_{k}^{L,\Omega}\leq1$ and $C_{k}^{H,\Omega}\leq2$.
\end{lemma}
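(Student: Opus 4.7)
The plan is to use $L_{\lambda k}^{\operatorname{VSH}}\mathbf{u}$ as an explicit competitor in the minimization problem (\ref{charctLaminproblem}) that defines $L_\Omega\mathbf{u}$. This requires two ingredients: (a) the trace identity $\Pi_T L_{\lambda k}^{\operatorname{VSH}} = L_\Gamma \Pi_T$, which shows that $L_{\lambda k}^{\operatorname{VSH}}\mathbf{u}$ is admissible in the minimization, and (b) the stability bound $\|L_{\lambda k}^{\operatorname{VSH}}\mathbf{u}\|_{\operatorname{curl},\Omega,k} \leq \|\mathbf{u}\|_{\operatorname{curl},\Omega,k}$ in the energy norm, which yields an upper bound on the minimum.

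For (a), I would start from the general VSH expansion (\ref{urepvsh}) of $\mathbf{u} \in \mathbf{X}$ and apply the pointwise identities $\Pi_T \mathbf{Y}_\ell^m = 0$, $\Pi_T \mathbf{U}_\ell^m = \nabla_\Gamma Y_\ell^m$, $\Pi_T \mathbf{V}_\ell^m = \mathbf{T}_\ell^m$ on $\Gamma = \partial B_1(0)$, which were recorded just before (\ref{defutexpmm}). Since the definition (\ref{defLaHaVSH}) of $L_{\lambda k}^{\operatorname{VSH}}$ truncates at index level $\ell \leq \lambda k$ and Definition~\ref{DefFreqSplit} of $L_\Gamma$ uses the very same cutoff on the $\ell$-th Fourier mode of a tangential field (see (\ref{redhodege})), applying $\Pi_T$ to $L_{\lambda k}^{\operatorname{VSH}}\mathbf{u}$ yields exactly the truncated tangential expansion (\ref{defutexpmm}) of $\Pi_T \mathbf{u}$, i.e., $L_\Gamma \Pi_T \mathbf{u}$.

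For (b), I would exploit the orthogonality inherent in the VSH expansion: formulae (\ref{vshL2Curla}) and (\ref{vshL2Curlb}) show that both $\|\cdot\|^2$ and $\|\operatorname{curl}\cdot\|^2$ decouple into sums over $\ell$ (within a fixed $\ell$ the radial coefficients $u_\ell^m$ and $v_\ell^m$ may couple in the curl integrand, but all coupling stays inside one $\ell$-block). Since $L_{\lambda k}^{\operatorname{VSH}}$ retains the entire $\ell$-block for $\ell \leq \lambda k$ and discards the rest, the truncation is simultaneously orthogonal for both norms, which is precisely the content of (\ref{lowhighuestVSH}). Adding the two estimates in the weighted $k$-norm gives $\|L_{\lambda k}^{\operatorname{VSH}}\mathbf{u}\|_{\operatorname{curl},\Omega,k} \leq \|\mathbf{u}\|_{\operatorname{curl},\Omega,k}$.

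Combining (a) and (b) with the minimization characterization (\ref{charctLaminproblem}) then gives
\[
\|L_\Omega \mathbf{u}\|_{\operatorname{curl},\Omega,k} \leq \|L_{\lambda k}^{\operatorname{VSH}}\mathbf{u}\|_{\operatorname{curl},\Omega,k} \leq \|\mathbf{u}\|_{\operatorname{curl},\Omega,k},
\]
which is the claimed bound and yields $C_k^{L,\Omega} \leq 1$ by (\ref{defCkLHOmega}). The bound $C_k^{H,\Omega} \leq 2$ then follows immediately from the triangle inequality applied to $H_\Omega = I - L_\Omega$. No serious obstacle arises; the result rests on the compatibility between the boundary cutoff defining $L_\Gamma$ and the volumetric cutoff $L_{\lambda k}^{\operatorname{VSH}}$, which is a feature specific to the ball because the eigenfunctions of $-\Delta_\Gamma$ lift naturally via VSH to an orthogonal decomposition of $\mathbf{H}(\Omega,\operatorname{curl})$.
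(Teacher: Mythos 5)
Your proposal is correct and follows essentially the same route as the paper: the paper's proof also takes $L_{\lambda k}^{\operatorname*{VSH}}\mathbf{u}$ as an admissible competitor in the minimization (\ref{charctLaminproblem}) — admissibility resting on the trace identity $\Pi_{T}L_{\lambda k}^{\operatorname*{VSH}}=L_{\Gamma}\Pi_{T}$ established from the pointwise action of $\Pi_T$ on the vector spherical harmonics — and then invokes the orthogonal-truncation stability (\ref{lowhighuestVSH}) derived from (\ref{vshL2Curla})--(\ref{vshL2Curlb}). Your write-up simply makes explicit the ingredients that the paper's terse proof leaves to the surrounding discussion.
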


%

%TCIMACRO{\TeXButton{Proof}{\proof}}%
%BeginExpansion
\proof
%EndExpansion
Since $L_{\Omega}$ is the minimum norm extension
(cf.~Definition~\ref{DefFreqSplit}) the bound (\ref{lowhighuestVSH}) lead to%
\[
\left\Vert L_{\Omega}\mathbf{u}\right\Vert _{\operatorname*{curl},\Omega
,k}^{2}\leq\Vert L_{\lambda k}^{\operatorname*{VSH}}\mathbf{u}\Vert
_{\operatorname*{curl},\Omega,k}^{2}=k^{2}\Vert L_{\lambda k}%
^{\operatorname*{VSH}}\mathbf{u}\Vert^{2}+\Vert\operatorname*{curl}L_{\lambda
k}^{\operatorname*{VSH}}\mathbf{u}\Vert^{2}\leq k^{2}\Vert\mathbf{u}\Vert
^{2}+\Vert\operatorname*{curl}\mathbf{u}\Vert^{2}\leq\Vert\mathbf{u}%
\Vert_{\operatorname*{curl},\Omega,k}^{2}.
\]%
%TCIMACRO{\TeXButton{End Proof}{\endproof}}%
%BeginExpansion
\endproof
%EndExpansion
%---------------------------------------------------------------------------

\section{Analytic regularity of Maxwell and Maxwell-like Problems}

%---------------------------------------------------------------------------
%---------------------------------------------------------------------------

\subsection{Local Smoothness}

%---------------------------------------------------------------------------
Consider for a bounded Lipschitz domain $\omega\subset{\mathbb{R}}^{3}$
%TCIMACRO{\TeXButton{eq:maxwell-variable-coeff}{\begin{subequations}
%\label{eq:maxwell-variable-coeff}
%\end{subequations}}}%
%BeginExpansion
\begin{subequations}
\label{eq:maxwell-variable-coeff}
\end{subequations}%
%EndExpansion%
\begin{align}
\operatorname*{curl}\left(  \mathbf{A}\left(  x\right)  \operatorname*{curl}%
\mathbf{u}\right)   &  =\mathbf{f\quad}\text{in }\omega,\tag{%
%TCIMACRO{\TeXButton{eq:maxwell-variable-coeff}{\ref{eq:maxwell-variable-coeff}%
%BeginExpansion
\ref{eq:maxwell-variable-coeff}%
%
%EndExpansion
a}\label{eq:maxwell-variable-coeff-a}\\
\operatorname*{div}\left(  \mathbf{B}\left(  x\right)  \mathbf{u}\right)   &
=g\quad\text{in }\omega,\tag{%
%TCIMACRO{\TeXButton{eq:maxwell-variable-coeff}{\ref{eq:maxwell-variable-coeff}%
%BeginExpansion
\ref{eq:maxwell-variable-coeff}%
%
%EndExpansion
b}\label{eq:maxwell-variable-coeff-b}\\
\Pi_{T}\mathbf{u}  &  =\mathbf{0}\quad\text{on }\partial\omega. \tag{%
%TCIMACRO{\TeXButton{eq:maxwell-variable-coeff}{\ref{eq:maxwell-variable-coeff}%
%BeginExpansion
\ref{eq:maxwell-variable-coeff}%
%
%EndExpansion
c}\label{eq:maxwell-variable-coeff-c}%
\end{align}

We have smoothness of ${\mathbf{u}}$ under regularity assumptions on the
right-hand sides:

\begin{lemma}
\label{lemma:maxwell-H2-regularity} Let $\partial\omega$ be a smooth bounded
Lipschitz domain that is star-shaped with respect to a ball. Let ${\mathbf{A}%
}$, ${\mathbf{B}}\in C^{\infty}(\overline{\omega})$ be pointwise symmetric
positive definite. Then:

\begin{enumerate}
[(i)]

\item \label{item:lemma:maxwell-H2-regularity-i} If ${\mathbf{u}}
\in{\mathbf{H}}_{0}(\omega,\operatorname{curl})$ and $\operatorname{div}
({\mathbf{B}} {\mathbf{u}}) \in L^{2}(\omega)$, then ${\mathbf{u}}
\in{\mathbf{H}}^{1}(\omega)$ with
\[
\|{\mathbf{u}}\|_{{\mathbf{H}}^{1}(\omega)} \leq C \left[
\|\operatorname{div} ({\mathbf{B}} {\mathbf{u}})\|_{L^{2}(\omega)} +
\|\operatorname{curl} {\mathbf{u}}\|_{{\mathbf{L}}^{2}(\omega)}\right]  .
\]

\item \label{item:lemma:maxwell-H2-regularity-ii} If ${\mathbf{u}}
\in{\mathbf{H}}_{0}(\omega,\operatorname{curl})$ satisfies
(\ref{eq:maxwell-variable-coeff}) for some ${\mathbf{f}} \in{\mathbf{H}}%
^{s}(\omega)$, $g \in H^{s+1}(\omega)$, $s \in{\mathbb{N}}_{0}$, then
${\mathbf{u}} \in{\mathbf{H}}^{s+2}(\omega)$ and
\[
\|{\mathbf{u}}\|_{{\mathbf{H}}^{s+2}(\omega)} \leq C_{s}\left[  \|{\mathbf{f}%
}\|_{{\mathbf{H}}^{s}(\omega)} + \|g\|_{H^{s+1}(\omega)} \right]  .
\]

\end{enumerate}
\end{lemma}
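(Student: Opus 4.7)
\textbf{Part (i) (the $H^{1}$ embedding).} My plan is to reduce the claim to the classical div-curl embedding $\mathbf{H}_{0}(\omega,\operatorname{curl}) \cap \mathbf{H}(\omega,\operatorname{div}) \hookrightarrow \mathbf{H}^{1}(\omega)$ for smooth domains and then absorb the variable coefficient $\mathbf{B}$ by a Helmholtz-type correction. Specifically, I would first solve the variable-coefficient Poisson problem
\[
\operatorname{div}(\mathbf{B}\nabla p) = \operatorname{div}(\mathbf{B}\mathbf{u}) \text{ in } \omega, \qquad p=0 \text{ on } \partial\omega,
\]
obtaining $p \in H^{2}(\omega)$ by smooth-coefficient elliptic regularity with $\|p\|_{H^{2}(\omega)} \lesssim \|\operatorname{div}(\mathbf{B}\mathbf{u})\|_{L^{2}(\omega)}$. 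The corrected field $\mathbf{w} := \mathbf{u} - \nabla p$ then satisfies $\Pi_{T}\mathbf{w} = 0$ on $\partial\omega$, $\operatorname{curl}\mathbf{w} = \operatorname{curl}\mathbf{u}$, and $\operatorname{div}(\mathbf{B}\mathbf{w}) = 0$. For a smooth, uniformly positive definite $\mathbf{B}$ and smooth boundary, a variable-coefficient Gaffney-type inequality---proved either by a compactness/contradiction argument or by local flattening to the constant-coefficient case---gives $\|\mathbf{w}\|_{\mathbf{H}^{1}(\omega)} \lesssim \|\operatorname{curl}\mathbf{w}\|_{\mathbf{L}^{2}(\omega)}$. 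Combining with the $H^{2}$-bound for $p$ and the triangle inequality yields the desired estimate for $\mathbf{u} = \mathbf{w} + \nabla p$.

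\textbf{Part (ii) (higher regularity).} I would proceed by induction on $s$, with the base case $s=0$ the substantive step. Part (i) already gives $\mathbf{u}\in \mathbf{H}^{1}(\omega)$, because $g\in H^{1}(\omega) \subset L^{2}(\omega)$. Using the identity $\operatorname{curl}\operatorname{curl} = -\Delta + \nabla\operatorname{div}$ and expanding both $\operatorname{curl}(\mathbf{A}\operatorname{curl}\mathbf{u})$ and $\operatorname{div}(\mathbf{B}\mathbf{u})$ by the product rule, the pair (\ref{eq:maxwell-variable-coeff-a}), (\ref{eq:maxwell-variable-coeff-b}) can be recast as a second-order system for $\mathbf{u}$ whose principal symbol is, up to a symmetric positive definite multiplicative factor, the symbol of $-\Delta$. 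Combined with the three scalar boundary conditions---two from $\Pi_{T}\mathbf{u}=0$ and the third from the scalar trace of $\operatorname{div}(\mathbf{B}\mathbf{u})=g$---the resulting boundary value problem is elliptic in the Agmon--Douglis--Nirenberg sense and the boundary conditions satisfy the Lopatinskii complementing condition (a classical feature of the Maxwell boundary value problem with smooth coefficients). ADN regularity then yields $\mathbf{u}\in \mathbf{H}^{2}(\omega)$ with the claimed bound. The step from $s$ to $s+1$ is a standard elliptic shift argument: tangential differentiation (legitimate by the smoothness of $\mathbf{A}$, $\mathbf{B}$, and $\partial\omega$), invocation of the inductive hypothesis for the derivatives, and algebraic recovery of the missing normal derivative from the two equations.

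\textbf{Main obstacle and alternative route.} The principal technical point is the explicit verification of the Lopatinskii condition in Part (ii). A self-contained alternative that matches the style of the rest of this paper is to localize via a partition of unity, flatten $\partial\omega$ in each chart, and apply Theorem~\ref{thm:local-regularity} (the half-ball regularity result already invoked in the proof of Theorem~\ref{TheoLau}) with the $k$-dependent terms switched off; the local system obtained after flattening is of exactly the form treated there, and the tangential-Dirichlet plus scalar-trace boundary conditions correspond to $\Pi_{T}\mathbf{u}=0$ together with the scalar trace of $\operatorname{div}(\mathbf{B}\mathbf{u})=g$. This bypasses an explicit complementing-condition check at the cost of a covering argument.
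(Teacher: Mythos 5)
Your argument takes a genuinely different route from the paper's. For part (\ref{item:lemma:maxwell-H2-regularity-i}) the paper does not use a Gaffney-type inequality at all: it writes ${\mathbf{u}}=\nabla\varphi+R^{\operatorname{curl}}(\operatorname{curl}{\mathbf{u}})$ using the regularized right inverse $R^{\operatorname{curl}}$ of Costabel--McIntosh (which maps ${\mathbf{L}}^2\to{\mathbf{H}}^1$), reads off Dirichlet data for $\varphi$ from $\Pi_T{\mathbf{u}}=0$, and reduces everything to $H^2$-regularity of a scalar Poisson problem. Your Helmholtz correction ${\mathbf{w}}={\mathbf{u}}-\nabla p$ is valid (the identities $\Pi_T{\mathbf{w}}=0$, $\operatorname{curl}{\mathbf{w}}=\operatorname{curl}{\mathbf{u}}$, $\operatorname{div}({\mathbf{B}}{\mathbf{w}})=0$ all check out, and the kernel of the curl on $\{\Pi_T{\mathbf{w}}=0,\ \operatorname{div}({\mathbf{B}}{\mathbf{w}})=0\}$ is trivial for a domain star-shaped with respect to a ball), but it outsources the essential content to a variable-coefficient Gaffney inequality with the $L^2$-term removed by compactness, which is exactly the kind of black box the paper's $R^{\operatorname{curl}}$-based argument is designed to avoid. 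For part (\ref{item:lemma:maxwell-H2-regularity-ii}) the paper again bootstraps through scalar problems: first a Neumann problem for the potential of ${\mathbf{A}}\operatorname{curl}{\mathbf{u}}$ to get $\operatorname{curl}{\mathbf{u}}\in{\mathbf{H}}^{s+1}$, then a Dirichlet problem for the potential of ${\mathbf{u}}$ itself. This requires no system-ADN theory and no complementing-condition check.

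There are two soft spots in your part (\ref{item:lemma:maxwell-H2-regularity-ii}). First, the primary (ADN) route hinges precisely on the Lopatinskii condition for the mixed boundary conditions (two tangential Dirichlet components plus the scalar trace of the divergence equation), which you explicitly do not verify; that verification is the whole substance of the claim. Second, and more seriously, the proposed fallback via Theorem~\ref{thm:local-regularity} is circular in the logic of this paper: that theorem is an \emph{a priori} analytic estimate whose proof implicitly assumes ${\mathbf{u}}\in C^{\infty}$ up to the flat boundary, and in the proof of Theorem~\ref{TheoLau} it is Lemma~\ref{lemma:maxwell-H2-regularity} (together with an induction) that supplies exactly this smoothness before Theorem~\ref{thm:local-regularity} is invoked. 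Using Theorem~\ref{thm:local-regularity} to prove the present lemma would therefore require first redoing its proof by difference quotients without the smoothness assumption, which is not a cost-free "covering argument". If you want a self-contained argument in the spirit of the paper, the reduction to two scalar Poisson problems via $R^{\operatorname{curl}}$ is the cleaner path.
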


\begin{proof}
We use the right inverse $R^{\operatorname{curl}}$ of the $\operatorname{curl}%
$-operator and use its mapping properties due to \cite{costabel-mcintosh10} as
formulated in \cite[Lemma~{6.4}]{melenk_rojik_2018}; specifically, we employ
$R^{\operatorname{curl}}:{\mathbf{H}}^{s}(\omega) \rightarrow{\mathbf{H}%
}^{s+1}(\omega)$ for any $s \in{\mathbb{N}}_{0}$. We will also repeatedly use
decompositions formulated in \cite[Lemma~{6.5}]{melenk_rojik_2018}, i.e., for
$s \in{\mathbb{N}}_{0}$ and ${\mathbf{v}} \in{\mathbf{H}}^{s}(\omega
,\operatorname{curl})$ there is $\varphi\in H^{s+1}(\omega)$ such that
\begin{equation}
\label{eq:lemma:maxwell-H2-regularity-1}{\mathbf{v}} = \nabla\varphi+
R^{\operatorname{curl}}(\operatorname{curl} {\mathbf{v}}).
\end{equation}

\emph{Proof of (\ref{item:lemma:maxwell-H2-regularity-i}):} Using
(\ref{eq:lemma:maxwell-H2-regularity-1}), we write
\begin{equation}
{\mathbf{u}}=\nabla\varphi+R^{\operatorname{curl}}(\operatorname{curl}%
{\mathbf{u}}). \label{eq:lemma:maxwell-H2-regularity-5}%
\end{equation}
The mapping property $R^{\operatorname{curl}}:{\mathbf{L}}^{2}(\omega
)\rightarrow{\mathbf{H}}^{1}(\omega)$ implies $R^{\operatorname{curl}%
}(\operatorname{curl}{\mathbf{u}})\in{\mathbf{H}}^{1}(\omega)$. Using $\Pi
_{T}{\mathbf{u}}=0$, we infer $\nabla_{\partial\omega}\varphi=-\Pi
_{T}R^{\operatorname{curl}}(\operatorname{curl}{\mathbf{u}})\in{\mathbf{H}%
}_{T}^{1/2}(\partial\omega)$ so that, by the smoothness of $\partial\omega$,
we have $g_{D}:=\varphi|_{\partial\omega}\in H^{3/2}(\partial\omega)$.
Multiplying (\ref{eq:lemma:maxwell-H2-regularity-5}) by ${\mathbf{B}}$ and
applying the divergence reveals that $\varphi$ solves
\begin{equation}
g=\operatorname{div}\left(  \mathbf{B}{\mathbf{u}}\right)  =\operatorname{div}%
\left(  {\mathbf{B}}\nabla\varphi\right)  +\operatorname{div}\left(
\mathbf{B}R^{\operatorname{curl}}(\operatorname{curl}{\mathbf{u}})\right)
\quad\mbox{ in $\omega$},\qquad\varphi=g_{D}\quad\mbox{ on $\partial\omega$}.
\label{eq:lemma:maxwell-H2-regularity-7}%
\end{equation}
This is a standard Poisson type problem for $\varphi$, and the smoothness of
$\partial\omega$ and ${\mathbf{B}}$ then imply $\varphi\in H^{2}(\omega)$
with
\begin{equation}
\Vert\varphi\Vert_{H^{2}(\omega)}\lesssim\Vert g-\operatorname{div}\left(
\mathbf{B}R^{\operatorname{curl}}(\operatorname{curl}{\mathbf{u}})\right)
\Vert_{L^{2}(\omega)}+\Vert g_{D}\Vert_{H^{3/2}(\partial\omega)}\lesssim\Vert
g\Vert_{L^{2}(\omega)}+\Vert\operatorname{curl}{\mathbf{u}}\Vert_{L^{2}%
(\omega)}. \label{eq:lemma:maxwell-H2-regularity-9}%
\end{equation}
\emph{Proof of (\ref{item:lemma:maxwell-H2-regularity-ii}):} We set
${\mathbf{w}}:=\operatorname{curl}{\mathbf{u}}$ and note
\begin{equation}
\operatorname{div}{\mathbf{w}}=0,\qquad{\mathbf{n}}\cdot{\mathbf{w}%
}={\mathbf{n}}\cdot\operatorname{curl}{\mathbf{u}}=\operatorname{curl}%
_{\partial\omega}\Pi_{T}{\mathbf{u}}=0.
\label{eq:lemma:maxwell-H2-regularity-10}%
\end{equation}
\emph{1.~step:} {} From (\ref{eq:lemma:maxwell-H2-regularity-1}) we see that
we can we write, for some $\varphi\in H^{1}(\omega)$,
\begin{equation}
{\mathbf{A}}{\mathbf{w}}=\nabla\varphi+R^{\operatorname{curl}}%
(\operatorname{curl}({\mathbf{A}}{\mathbf{w}}%
))\overset{\text{(\ref{eq:maxwell-variable-coeff-a})}}{=}\nabla\varphi
+R^{\operatorname{curl}}({\mathbf{f}}).
\label{eq:lemma:maxwell-H2-regularity-15}%
\end{equation}
Hence, ${\mathbf{w}}={\mathbf{A}}^{-1}\left(  R^{\operatorname{curl}%
}({\mathbf{f})}+\nabla\varphi\right)  $ and we get from
(\ref{eq:lemma:maxwell-H2-regularity-10}) that $\varphi$ satisfies
\begin{equation}
-\operatorname{div}\left(  {\mathbf{A}}^{-1}\nabla\varphi\right)
=\operatorname{div}\left(  {\mathbf{A}}^{-1}R^{\operatorname{curl}}%
(\mathbf{f})\right)  \quad\mbox{ in $\omega$},\qquad{\mathbf{n}}%
\cdot{\mathbf{A}}^{-1}\nabla\varphi=-{\mathbf{n}}\cdot{\mathbf{A}}%
^{-1}R^{\operatorname{curl}}({\mathbf{f}})\quad\mbox{ on $\partial\omega$.}
\label{eq:lemma:maxwell-H2-regularity-20}%
\end{equation}
The mapping properties of $R^{\operatorname{curl}}:{\mathbf{H}}^{s}%
(\omega)\rightarrow{\mathbf{H}}^{s+1}(\omega)$ give $R^{\operatorname{curl}%
}({\mathbf{f}})\in{\mathbf{H}}^{s+1}(\omega)$ so that the scalar shift theorem
for Poisson type problems gives in fact $\varphi\in H^{s+2}(\omega)$ with
%\begin{equation}
%\label{eq:lemma:maxwell-H2-regularity-20}
$\Vert\varphi\Vert_{H^{s+2}(\omega)}\leq C\Vert{\mathbf{f}}\Vert_{{\mathbf{H}%
}^{s}(\omega)}.$
%\end{equation}
Inserting this regularity information in
(\ref{eq:lemma:maxwell-H2-regularity-15}) provides ${\mathbf{w}}\in
{\mathbf{H}}^{s+1}(\omega)$ with
\begin{equation}
\Vert{\mathbf{w}}\Vert_{{\mathbf{H}}^{s+1}(\omega)}\leq C\Vert{\mathbf{f}%
}\Vert_{{\mathbf{H}}^{s}(\omega)}. \label{eq:lemma:maxwell-H2-regularity-25}%
\end{equation}
\emph{2.~step:} From (\ref{item:lemma:maxwell-H2-regularity-i}) we have
${\mathbf{u}}\in{\mathbf{H}}^{1}(\omega)$ and from the first~step we get
$\operatorname{curl}{\mathbf{u}}\in{\mathbf{H}}^{s+1}(\omega)$. In particular,
${\mathbf{u}}\in{\mathbf{H}}^{1}(\omega,\operatorname{curl})$. Hence,
(\ref{eq:lemma:maxwell-H2-regularity-1}) allows us to write, for some
$\varphi\in H^{2}(\omega)$
\begin{equation}
{\mathbf{u}}=\nabla\varphi+R^{\operatorname{curl}}%
(\underbrace{\operatorname{curl}{\mathbf{u}}}_{\in{\mathbf{H}}^{s+1}(\omega
)}). \label{eq:lemma:maxwell-H2-regularity-35}%
\end{equation}
\emph{3.~step:} An equation for $\varphi$ is obtained in two steps: using
$\Pi_{T}{\mathbf{u}}=0$, we see again that
\[
\nabla_{\partial\omega}\varphi=-\Pi_{T}R^{\operatorname{curl}}%
(\operatorname{curl}{\mathbf{u}})\in{\mathbf{H}}^{s+3/2}(\partial\omega),
\]
where we used the trace estimate and the mapping properties of
$R^{\operatorname{curl}}$. We conclude $g_{D}:=\varphi|_{\partial\omega}\in
H^{s+5/2}(\partial\omega)$. Multiplying
(\ref{eq:lemma:maxwell-H2-regularity-35}) with ${\mathbf{B}}$ and applying the
divergence operator reveals a Poisson type problem for $\varphi$:
\begin{equation}
g=\operatorname{div}{\mathbf{B}}{\mathbf{u}}=\operatorname{div}\left(
{\mathbf{B}}\nabla\varphi\right)  +\operatorname{div}\left(  {\mathbf{B}%
}R^{\operatorname{curl}}(\operatorname{curl}{\mathbf{u}})\right)
\quad\mbox{ in $\omega$},\qquad\varphi=g_{D}\quad\mbox{ on $\partial\omega$.}
\label{eq:lemma:maxwell-H2-regularity-45}%
\end{equation}
By standard elliptic regularity in view of the smoothness of $\partial\omega$
and ${\mathbf{B}}$, we get $\varphi\in H^{s+3}(\omega)$ with
\begin{align}
\Vert\varphi\Vert_{H^{s+3}(\omega)}  &  \lesssim\Vert g-\operatorname{div}%
\left(  {\mathbf{B}}R^{\operatorname{curl}}(\operatorname{curl}{\mathbf{u}%
})\right)  \Vert_{H^{s+1}(\omega)}+\Vert g_{D}\Vert_{H^{s+5/2}(\partial
\omega)}\lesssim\Vert g\Vert_{H^{s+1}(\omega)}+\Vert\operatorname{curl}%
{\mathbf{u}}\Vert_{{\mathbf{H}}^{s+1}(\omega)}%
\nonumber\label{eq:lemma:maxwell-H2-regularity-65}\\
&  \lesssim\Vert g\Vert_{H^{s+1}(\omega)}+\Vert{\mathbf{f}}\Vert_{{\mathbf{H}%
}^{s}(\omega)}.
\end{align}
\emph{4.~step:} Inserting the information
(\ref{eq:lemma:maxwell-H2-regularity-65}) in
(\ref{eq:lemma:maxwell-H2-regularity-35}) implies ${\mathbf{u}}\in{\mathbf{H}%
}^{s+2}(\omega)$ together with $\Vert{\mathbf{u}}\Vert_{{\mathbf{H}}%
^{s+2}(\omega)}\lesssim\Vert g\Vert_{H^{s+1}(\omega)}+\Vert{\mathbf{f}}%
\Vert_{{\mathbf{H}}^{s}(\omega)}.$

\end{proof}

%-----------------------------------

\subsection{Local Analytic Regularity}

%-----------------------------------
We show analytic regularity of solutions of elliptic systems of the form
(\ref{eq:local-system}) on half-balls $B_{r}^{+}:= \{{\mathbf{x}}=
(x_{1},x_{2},x_{3})\in{\mathbb{R}}^{3}\,|\, |{\mathbf{x}}| < r, x_{3} > 0\}$.
We denote $\Gamma_{R}:= \{{\mathbf{x}} \in B_{R}(0)\,|\, {\mathbf{x}}_{3} =
0\}$.

On $B_{R}^{+}$ with $R\leq1$ we consider smooth functions ${\mathbf{u}}$ that
satisfy the following equations for some $\varepsilon>0$:
%
%subequationsjmm%
\begin{subequations}
\label{eq:local-system}%
\begin{align}
&  -\varepsilon^{2}\sum_{\alpha,\beta,i=1}^{3}\partial_{\alpha}\left(
A_{\alpha\beta}^{ij}\partial_{\beta}{\mathbf{u}}_{j}\right)  +\varepsilon
\sum_{\beta,j=1}^{3}B_{\beta}^{ij}\partial_{\beta}{\mathbf{u}}_{j}+\sum
_{j=1}^{3}C^{ij}{\mathbf{u}}_{j}={\mathbf{f}}_{i},\qquad i=1,2,3,\\
&  {\mathbf{u}}_{1}={\mathbf{u}}_{2}=0\quad\mbox {on $\Gamma_R$},\\
&  \partial_{3}{\mathbf{u}}_{3}=\varepsilon^{-1}(G+b{\mathbf{u}}_{3}%
)+\sum_{j=1}^{3}d_{j}\partial_{j}{\mathbf{u}}_{3}+\sum_{j=1}^{3}e_{j}%
\partial_{3}{\mathbf{u}}_{j}\quad\mbox{ on $\Gamma_R$}.
\end{align}
We assume that the coefficients are analytic, i.e., (cf.~Def.~\ref{DefClAnFct}%
)
%
%subequationsjmm%
\end{subequations}
\begin{subequations}
\label{eq:analyticity-data}%
\begin{align}
(A_{\alpha\beta}^{ij})_{i,j,\alpha,\beta}  &  \in{\mathcal{A}}^{\infty}%
(C_{A},\gamma_{A},B_{R}^{+}), & (B_{\beta}^{ij})_{i,j,\beta}  &
\in{\mathcal{A}}^{\infty}(C_{B},\gamma_{B},B_{R}^{+}), & (C^{ij})_{i,j}  &
\in{\mathcal{A}}^{\infty}(C_{C},\gamma_{C},B_{R}^{+}),\\
b  &  \in{\mathcal{A}}^{\infty}(C_{b},\gamma_{b},B_{R}^{+}), & (d_{j})_{j}  &
\in{\mathcal{A}}^{\infty}(C_{d},\gamma_{d},B_{R}^{+}), & (e_{j})_{j}  &
\in{\mathcal{A}}^{\infty}(C_{e},\gamma_{e},B_{R}^{+})
\end{align}
%
%\begin{align}
%\|\nabla^p (A^{ij}_{\alpha\beta})_{i,j,\alpha,\beta}\|_{L^\infty(B^+_R)} &\leq C_A \gamma_A^n p!
%\qquad \forall p \in {\mathbb N}_0, \\
%\|\nabla^p (B^{ij}_{\beta})_{i,j,\beta}\|_{L^\infty(B^+_R)} &\leq C_B \gamma_B^n p!
%\qquad \forall p \in {\mathbb N}_0, \\
%\|\nabla^p (C^{ij})_{i,j}\|_{L^\infty(B^+_R)} &\leq C_B \gamma_B^n p!
%\qquad \forall p \in {\mathbb N}_0, \\
%\|\nabla^p b\|_{L^\infty(B^+_R)} &\leq C_b \gamma_b^n p!
%\qquad \forall p \in {\mathbb N}_0, \\
%\|\nabla^p (d_j)_{j}\|_{L^\infty(B^+_R)} &\leq C_d \gamma_d^n p!
%\qquad \forall p \in {\mathbb N}_0;  \\
%\|\nabla^p (e_j)_{j}\|_{L^\infty(B^+_R)} &\leq C_e \gamma_e^n p!
%\qquad \forall p \in {\mathbb N}_0;
%\end{align}
here, we have written, e.g., $(d_{j})_{j}$ to emphasize that the objects are
tensor-valued and the multiindex notation is understood as in
(\ref{defLaplhochn}).
%$$
%|\nabla^p (C^{jj})_{ij}|^2 = \sum_{i,j} \sum_{|\boldsymbol{\alpha}| = p} \binom {p} {\boldsymbol{\alpha}}
%|\partial_{\boldsymbol{\alpha}} C^{ij}|^2
%$$
Concerning the tensor $A_{\alpha\beta}^{ij}$ and the coefficients $d_{j}$,
$e_{j}$ we will furthermore make the following structural assumption:
\end{subequations}
\begin{equation}
\label{eq:structural-assumption}A_{\alpha\beta}^{ij}(0)=\delta_{ij}%
\delta_{\alpha\beta},\qquad d_{j}(0)=0,\qquad e_{j}(0)=0.
\end{equation}
This structural assumption implies that the leading order differential
operator in (\ref{eq:local-system}) reduces to a block Laplace operator at the
origin and that the boundary conditions for the third component ${\mathbf{u}%
}_{3}$ reduce to Neumann boundary conditions. In other words: the system
decouples at the origin. Hence, for sufficiently small $R$, we can reduce the
regularity analysis of the system to that of scalar problems, and this is the
avenue taken in the remainder of this appendix.

\begin{remark}
The structural assumption on $A_{\alpha\beta}^{ij}$ implies the
\textquotedblleft very strong ellipticity\textquotedblright/Legendre condition
for the leading order differential operator (near the origin). No sign
conditions are imposed on the coefficients $B_{\alpha}^{ij}$, $C^{ij}$,
$b_{j}$, $d_{j}$, which could even by complex. The condition $\varepsilon>0$
can always be enforced by a scaling so that \textsl{mutatis mutandis} the
ensuing theory is also valid for complex $\varepsilon$. \hbox{}\hfill
\rule{0.8ex}{0.8ex}
\end{remark}

It is convenient to introduce ${\mathcal{E}} \in(0,1]$ by
\begin{equation}
\label{eq:peclet}{\mathcal{E}}^{-1} := \frac{C_{B}}{\varepsilon} + \frac
{\sqrt{C_{C}}}{\varepsilon} + \frac{C_{b}}{\varepsilon} + 1,
\end{equation}
which implies the estimates
\begin{equation}
\label{eq:peclet-estimates}\frac{C_{C}}{\varepsilon^{2}} \leq{\mathcal{E}%
}^{-2}, \qquad\frac{C_{B}}{\varepsilon} \leq{\mathcal{E}}^{-1}, \qquad
\frac{C_{b}}{\varepsilon} \leq{\mathcal{E}}^{-1}, \qquad\frac{\mathcal{E}%
}{\varepsilon} \leq\frac{1}{C_{B}+ \sqrt{C_{c}} + C_{b}}.
\end{equation}
We will make the following assumptions on the right-hand sides
%
%subequationsjmm%
\begin{subequations}
\label{eq:analyticity-rhs}%
\begin{align}
\|\nabla^{p} {\mathbf{f}}\|_{L^{2}(B_{R})}  &  \leq C_{f} \gamma_{f}^{p}
\max\{p/R,{\mathcal{E}}^{-1}\}^{p} \qquad\forall p \in{\mathbb{N}}_{0},\\
\|\nabla^{p} G\|_{L^{2}(B_{R})}  &  \leq C_{G} \gamma_{G}^{p} \max
\{p/R,{\mathcal{E}}^{-1}\}^{p} \qquad\forall p \in{\mathbb{N}}_{0}.
\end{align}
Given the special role of the variable $x_{3}$, we will interchangeably use
the notation ${\mathbf{x}}=(x,y)$ with $x=({\mathbf{x}}_{1},{\mathbf{x}}_{2})$
and $y={\mathbf{x}}_{3}$. Analytic regularity of the solution of
(\ref{eq:local-system}) will be characterized in
Theorem~\ref{thm:local-regularity} by the following seminorms:
\end{subequations}
\begin{equation}
N_{R,p,q}^{\prime}(v)=\frac{1}{[p+q]!}\sup_{R/2\leq r<R}(R-r)^{p+q+2}%
\Vert\partial_{y}^{q+2}\nabla_{x}^{p}v\Vert_{L^{2}(B_{r}^{+})},\quad
p\geq0,q\geq-2. \label{eq:Nprime}%
\end{equation}
Our procedure to control $N_{R,p,q}^{\prime}({\mathbf{u}})$ is the standard
one by first controlling tangential derivatives and then using the
differential equation to control normal derivatives. We follow
\cite[Sec.~{5.5}]{MelenkHabil}. In the proofs, we implicitly assume that the
solution ${\mathbf{u}}\in C^{\infty}(B_{R}^{+})$. This could be proved by
carefully arguing with the difference quotient method or, alternatively, by
asserting the smoothness of the solution by a separate argument (this is how
we proceed in the present application of Theorem~\ref{thm:local-regularity}).

%-----------------------------------

\subsubsection{Control of Tangential Derivatives}

%-----------------------------------

We introduce the following auxiliary notation suitable for controlling
tangential derivatives (cf.~\cite[Sec.~{5.5}]{MelenkHabil}) 
\begin{subequations}
\label{eq:Mprime}
\begin{align}
[p]  &  := \max\{1,p\},\\
M^{\prime}_{R,p}(v)  &  = \frac{1}{p!} \sup_{R/2 \leq r < R} (R-r)^{p+2}
\|\nabla_{x}^{p} v\|_{L^{2}(B^{+}_{r})},\\
N^{\prime}_{R,p}(v)  &  =
\begin{cases}
\displaystyle \frac{1}{p!} \sup_{R/2 \leq r < R} (R-r)^{p+2} \|\nabla^{2}
\nabla_{x}^{p} v\|_{L^{2}(B^{+}_{r})} & \mbox{ if } p \ge0\\
\displaystyle \sup_{R/2 \leq r < R} (R-r)^{p+2} \|\nabla^{2+p} v\|_{L^{2}%
(B^{+}_{r})} & \mbox{ if } p =-2, -1,
\end{cases}
\\
H_{R,p}(v)  &  := \frac{1}{[p-1]!} \sup_{R/2 \leq r < R} (R-r)^{p+1}
\!\!\left[  \|\nabla_{x}^{p} v\|_{L^{2}(B^{+}_{r})} + \frac{R-r}{[p]}
\|\nabla^{p}_{x} \nabla v\|_{L^{2}(B^{+}_{r})}\right]  .
\end{align}
\end{subequations}

\begin{lemma}
\label{lemma:Nprimep} There exists a universal constant $C_{I}> 0$ such that
for $f$, $G$ sufficiently smooth, there holds:

\begin{enumerate}
[(i)]

\item \label{item:lemma:Nprimep-i} Let $u$ solve $-\Delta u =f$ on $B_{R}^{+}$
and $u|_{\Gamma_{R}} = 0$.
\begin{equation}
N^{\prime}_{R,p}(u) \leq C_{I}\left[  M^{\prime}_{R,p}(f) + N^{\prime}%
_{R,p-1}(u) + N^{\prime}_{R,p-2}(u)\right]  \qquad\forall p \ge0.
\end{equation}
For $p = 0$, we have the sharper estimate $N^{\prime}_{R,0}(u) \leq C_{I}
\left[  M^{\prime}_{R,0}(f) + N^{\prime}_{R,-1}(u)\right]  $.

\item \label{item:lemma:Nprimep-ii} Let $u$ solve $-\Delta u =f$ on $B_{R}%
^{+}$ and $\partial_{y} u|_{\Gamma_{R}} = G$. Then
\begin{equation}
N^{\prime}_{R,p}(u) \leq C_{I}\left[  M^{\prime}_{R,p}(f) + H_{R,p}(G) +
N^{\prime}_{R,p-1}(u) + N^{\prime}_{R,p-2}(u)\right]  \qquad\forall p \ge0.
\end{equation}
For $p = 0$, we have the sharper estimate $N^{\prime}_{R,0}(u) \leq
C_{I}\left[  M^{\prime}_{R,0}(f) + H_{R,0}(G) + N^{\prime}_{R,-1}(u) \right]
$.
\end{enumerate}
\end{lemma}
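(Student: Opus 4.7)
The plan is to follow the standard strategy for local $H^2$-regularity of Poisson problems: apply the tangential derivative $\nabla_x^p$ to the equation and boundary condition, then invoke a cutoff--Caccioppoli estimate on a nested pair of half-balls $B_r^+ \subset B_{r'}^+ \subset B_R^+$. In both (i) and (ii), setting $v_p := \nabla_x^p u$ yields $-\Delta v_p = \nabla_x^p f$ in $B_R^+$; in (i) we retain $v_p|_{\Gamma_R} = 0$, whereas in (ii) we obtain $\partial_y v_p = \nabla_x^p G =: G_p$ on $\Gamma_R$, the point being that tangential differentiation commutes both with $\Delta$ and with the trace operators on the flat piece $\Gamma_R$.

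The core analytic step is a half-space local $H^2$-estimate: picking a cutoff $\eta \in C_c^\infty(B_{r'})$ with $\eta \equiv 1$ on $B_r$ and $\|\nabla^k \eta\|_\infty \lesssim (r'-r)^{-k}$ for $k \leq 2$, applying global $H^2$-regularity on the upper half-space to (an extension of) $\eta v_p$, and expanding $\Delta(\eta v_p) = \eta \Delta v_p + 2\nabla\eta\cdot\nabla v_p + v_p \Delta\eta$ produces
\begin{equation*}
\|\nabla^2 v_p\|_{L^2(B_r^+)} \leq C \Big[ \|\nabla_x^p f\|_{L^2(B_{r'}^+)} + \mathcal{B}(G_p) + \tfrac{\|\nabla v_p\|_{L^2(B_{r'}^+)}}{r'-r} + \tfrac{\|v_p\|_{L^2(B_{r'}^+)}}{(r'-r)^2}\Big],
\end{equation*}
where $\mathcal{B}(G_p)$ is absent in (i) and, in (ii), arises from a lifting of $G_p$ from $\Gamma_{r'}$ into a boundary strip whose $L^2$-norm is controlled by $\|G_p\|_{L^2(\Gamma_{r'})}$ and whose $H^1$-norm is controlled by $\|\nabla G_p\|_{L^2(B_{r'}^+)}$, i.e., by the two ingredients of $H_{R,p}(G)$.

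The next step is bookkeeping. Choosing $r' = (R+r)/2$ so that $r'-r = (R-r)/2$, multiplying through by $(R-r)^{p+2}/p!$, and taking the supremum over $r \in [R/2,R)$ translates each term to the corresponding seminorm: $\|\nabla_x^p f\|$ produces $M'_{R,p}(f)$; the boundary term produces $H_{R,p}(G)$; the first-order term $(r'-r)^{-1}\|\nabla v_p\|$ produces $N'_{R,p-1}(u)$ upon observing that $\nabla \nabla_x^p u$ sits inside $\nabla^2 \nabla_x^{p-1} u$ for $p\geq 1$ and absorbing the factorial gap $1/p! = (1/p)\cdot 1/(p-1)!$ into constants; and $(r'-r)^{-2}\|v_p\|$ produces $N'_{R,p-2}(u)$ for $p\geq 2$, while for $p\in\{0,1\}$ it matches $N'_{R,p-2}(u)$ via the low-order definitions of $N'_{R,-1}$ and $N'_{R,-2}$. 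The sharper $p=0$ bound is obtained by trading the $(r'-r)^{-2}\|u\|_{L^2}$ contribution for a $(r'-r)^{-1}\|\nabla u\|_{L^2}$ contribution: in (i) one uses that $u$ vanishes on $\Gamma_R$ together with a one-dimensional Poincar\'e--Friedrichs inequality along normals to $\Gamma_R$; in (ii) one subtracts the mean of $u$ over $B_{r'}^+$ and invokes the analogous Poincar\'e inequality, both yielding the desired reduction to $N'_{R,-1}(u)$ alone.

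The main technical obstacle I foresee is the precise identification $\mathcal{B}(G_p) \lesssim H_{R,p}(G)$: one has to pick the lifting of $G_p$ so that the two summands in $H_{R,p}(G)$ --- namely $\|\nabla_x^p G\|_{L^2(\Gamma)}$ weighted by $(R-r)^{p+1}/[p-1]!$ and $\|\nabla_x^p \nabla G\|_{L^2(B^+)}$ weighted by $(R-r)^{p+2}/([p]\,[p-1]!)$ --- both appear with the correct factorial and $(R-r)$-powers, uniformly in $p$, $R$, and $u$. This requires a careful choice of a lifting operator (for instance, a polynomial extension localized to a strip of width $\sim R-r$) whose stability constants are $p$-independent, which is the main place where the specific form of the norm $H_{R,p}$ --- in particular the $(R-r)/[p]$ weighting between its two summands --- is forced upon us.
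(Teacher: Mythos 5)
Your overall architecture --- differentiate tangentially (which commutes with $\Delta$ and with the trace conditions on the flat part $\Gamma_R$), apply a cutoff-localized half-ball $H^2$ estimate on nested radii $r<r'$, and then translate into the weighted seminorms --- is exactly the strategy behind the references the paper cites for this lemma (Morrey's Lemma 5.7.3$'$ and Lemmas 5.5.15/5.5.23 of Melenk's habilitation; the paper itself gives no proof beyond these citations and the Poincar\'e remark for $p=0$). However, there is one concrete error in your bookkeeping that destroys the statement you are trying to prove.

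The choice $r'=(R+r)/2$ does not work. After multiplying the local estimate by $(R-r)^{p+2}/p!$ you must compare $(R-r)^{p+2}$ with $(R-r')^{p+2}$ in order to recognize $M'_{R,p}(f)$, $N'_{R,p-1}(u)$, $N'_{R,p-2}(u)$ evaluated at the radius $r'$; with your choice the ratio is $\bigl((R-r)/(R-r')\bigr)^{p+2}=2^{p+2}$, so the resulting ``constant'' is of order $2^{p+2}/p$ and is \emph{not} uniform in $p$. Since the entire purpose of Lemma~\ref{lemma:Nprimep} is to feed a $p$-independent $C_I$ into the inductions of Lemma~\ref{lemma:tangential-derivatives} and Theorem~\ref{thm:local-regularity}, this is fatal as written. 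The standard fix is to take $r'=r+(R-r)/(p+2)$: then $\bigl((R-r)/(R-r')\bigr)^{p+2}=\bigl(1+\tfrac{1}{p+1}\bigr)^{p+2}\le e^2$ is universal, while the prices $(r'-r)^{-1}=(p+2)/(R-r)$ and $(r'-r)^{-2}=(p+2)^2/(R-r)^2$ paid by the cutoff are exactly absorbed by the factorial normalizations, since $(p+2)\,(p-1)!/p!$ and $(p+2)^2\,(p-2)!/p!$ are bounded. With that single correction (and it is also what makes the $(R-r)/[p]$ weighting inside $H_{R,p}$ come out right in part (ii)), your argument goes through; the $p=0$ refinements via the first and second Poincar\'e inequalities are as you describe and are unaffected, since for $p=0$ the factor $2^{p+2}$ is harmless.
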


\begin{proof}
For the proof of (\ref{item:lemma:Nprimep-i}), see \cite[Lemma~{5.5.15}%
]{MelenkHabil} or \cite[Lemma~{5.7.3'}]{morrey66}. Statement
(\ref{item:lemma:Nprimep-ii}) is essentially taken from \cite[Lemma~{5.5.23}%
]{MelenkHabil}. The special cases $p = 0$ follow from the general case and the
first Poincar\'e inequality in the case (\ref{item:lemma:Nprimep-i}) and the
second Poincar\'e inequality in the case (\ref{item:lemma:Nprimep-ii}).
\end{proof}

\begin{lemma}
\label{lemma:tangential-derivatives} Let ${\mathbf{u}}$ satisfy
(\ref{eq:local-system}) with coefficients and data satisfying
(\ref{eq:analyticity-data}), (\ref{eq:structural-assumption}), and
(\ref{eq:analyticity-rhs}). Let $C_{I}$ be given by Lemma~\ref{lemma:Nprimep}.
Let $R\leq1$ be such that
\begin{equation}
\label{eq:Rsmall}3 C_{I} \left(  C_{A} \gamma_{A} + C_{d} \gamma_{d} + C_{e}
\gamma_{e} \right)  R \leq\frac{1}{2}.
\end{equation}
Then there is $K > 1$ depending only on the constants appearing in
(\ref{eq:analyticity-data}) and on $\gamma_{f}$, $\gamma_{G}$ such that
\begin{align}
\label{eq:lemma:tangential-derivatives}N^{\prime}_{R,p}({\mathbf{u}})  &  \leq
C_{\mathbf{u}} K^{p+2} \frac{\max\{R/{\mathcal{E}},p+3\}^{p+2}}{p!}, \qquad p
\ge-1,\\
C_{\mathbf{u}}  &  = \min\{1,R/{\mathcal{E}}\} (1 + {\mathcal{E}} C_{A}
\gamma_{A}) {\mathcal{E}}\|\nabla{\mathbf{u}}\|_{L^{2}(B^{+}_{R})} +
\min\{1,R/{\mathcal{E}}\}^{2} ({\mathcal{E}}/\varepsilon)^{2} \left[  C_{f} +
C_{C} \|{\mathbf{u}}\|_{L^{2}(B_{R}^{+})} \right] \nonumber\\
&  \quad\mbox{} + C_{G} (1+\gamma_{G}) \min\{1,R/{\mathcal{E}}\}
({\mathcal{E}}/\varepsilon)\nonumber\\
&  \quad\mbox{} + C_{b} (1+\gamma_{b} R) \min\{1,R/{\mathcal{E}}%
\}({\mathcal{E}}/\varepsilon) \|{\mathbf{u}}\|_{L^{2}(B^{+}_{R})} + C_{b}
\min\{1,R/{\mathcal{E}}\}^{2} ({\mathcal{E}}/\varepsilon) {\mathcal{E}}
\|\nabla{\mathbf{u}}\|_{L^{2}(B^{+}_{R})}\nonumber\\
&  \quad\mbox{} + (C_{d} \gamma_{d} + C_{e} \gamma_{e} ) \min
\{1,R/{\mathcal{E}}\}^{2} {\mathcal{E}}^{2} \|\nabla{\mathbf{u}}%
\|_{L^{2}(B^{+}_{R})}.\nonumber
\end{align}

\end{lemma}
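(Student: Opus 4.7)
The strategy is to reduce the elliptic system to three scalar Poisson-type problems with perturbation terms that can be absorbed using the smallness condition (\ref{eq:Rsmall}), and then to close the analytic estimate by induction on $p$ using the tangential regularity results of Lemma~\ref{lemma:Nprimep}. Exploiting the structural assumption (\ref{eq:structural-assumption}), I would split the leading order diffusion tensor as
\begin{equation*}
A^{ij}_{\alpha\beta}(x) = \delta_{ij}\delta_{\alpha\beta} + \widetilde A^{ij}_{\alpha\beta}(x), \qquad \widetilde A^{ij}_{\alpha\beta}(0) = 0,
\end{equation*}
so that $\|\widetilde A^{ij}_{\alpha\beta}\|_{L^{\infty}(B_R^+)} \lesssim C_A \gamma_A R$ by the mean value theorem; analogous splittings apply to $d_j$ and $e_j$. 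The system (\ref{eq:local-system}) then takes the form $-\varepsilon^2 \Delta \mathbf{u}_i = F_i$ in $B_R^+$ with $\mathbf{u}_1 = \mathbf{u}_2 = 0$ on $\Gamma_R$ and $\partial_y \mathbf{u}_3 = \widetilde G$ on $\Gamma_R$, where $F_i$ collects the forcing $\mathbf{f}_i$, the non-constant diffusion contribution involving $\widetilde A$, the convection term with $B$, and the reaction term with $C\mathbf{u}$, while $\widetilde G$ collects $\varepsilon^{-1}(G+b\mathbf{u}_3)$ and the perturbative boundary terms involving $\widetilde d_j$, $\widetilde e_j$.

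Next I would apply Lemma~\ref{lemma:Nprimep}, (\ref{item:lemma:Nprimep-i}) to $\mathbf{u}_1,\mathbf{u}_2$ and (\ref{item:lemma:Nprimep-ii}) to $\mathbf{u}_3$, combined with the Leibniz-type estimates for the seminorms $M'_{R,p}$ and $H_{R,p}$ (cf.~the machinery developed in \cite[Sec.~{5.5}]{MelenkHabil}) to control products of analytic coefficients with derivatives of $\mathbf{u}$. This yields a recursion of the schematic form
\begin{equation*}
N'_{R,p}(\mathbf{u}) \leq C_I \Bigl[\, 3 (C_A\gamma_A + C_d\gamma_d + C_e\gamma_e) R \cdot N'_{R,p}(\mathbf{u}) + \mathcal{R}_p + N'_{R,p-1}(\mathbf{u}) + N'_{R,p-2}(\mathbf{u}) \,\Bigr],
\end{equation*}
where $\mathcal{R}_p$ denotes the contributions from $\mathbf{f}$, $G$, the lower-order operators $B$, $C$, $b$ (rescaled via $\varepsilon^{-1}$), and commutators obtained by distributing $\nabla^p_x$ over the non-constant coefficients. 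The smallness condition (\ref{eq:Rsmall}) is precisely what allows the first summand on the right to be absorbed into the left, giving
\begin{equation*}
\tfrac12 N'_{R,p}(\mathbf{u}) \leq C_I \bigl[\mathcal{R}_p + N'_{R,p-1}(\mathbf{u}) + N'_{R,p-2}(\mathbf{u})\bigr].
\end{equation*}

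I would then prove the bound (\ref{eq:lemma:tangential-derivatives}) by induction on $p \geq -1$. The base cases $p=-1$ and $p=0$ follow from the $L^2$ and $H^1$ a priori bounds on $\mathbf{u}$ and the definitions of the seminorms, with the explicit dependencies on $C_f, C_G, C_b, C_C$ and the Peclet-type parameter ${\mathcal{E}}$ traced through (\ref{eq:peclet-estimates}); the factors of $\min\{1,R/{\mathcal{E}}\}$ are produced by the geometric scaling $(R-r)^{p+2}$ in $N'_{R,p}$. For the inductive step, I would use the Leibniz-type bounds on products of functions in $\mathcal{A}^{\infty}$ to estimate each commutator term by a geometric sum $\sum_{q=0}^{p} \binom{p}{q} \gamma^{p-q} N'_{R,q}(\mathbf{u})$, then invoke the inductive hypothesis and perform the standard combinatorial estimate that $\sum_{q} \binom{p}{q}\gamma^{p-q} \max\{R/{\mathcal{E}}, q+3\}^{q+2}/q! \lesssim K^{p+1} \max\{R/{\mathcal{E}}, p+3\}^{p+2}/p!$ for $K$ chosen sufficiently large relative to $\gamma_A,\gamma_B,\gamma_C,\gamma_b,\gamma_d,\gamma_e,\gamma_f,\gamma_G$.

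The main obstacle will be the bookkeeping required to produce the \emph{exact} constant $C_{\mathbf{u}}$ stated in the lemma, in particular the correct powers of $\varepsilon^{-1}$ and $\min\{1,R/{\mathcal{E}}\}$ multiplying each contribution: the convection/reaction/boundary terms $B,C,b$ are absorbed by borrowing factors of ${\mathcal{E}}/\varepsilon$ from (\ref{eq:peclet-estimates}), while the perturbative tensor terms involving $\widetilde A, \widetilde d, \widetilde e$ must be handled via their vanishing at the origin to produce additional factors of $R$ (or equivalently $R-r$ inside the weighted seminorms). The structural assumption (\ref{eq:structural-assumption}) is what makes this scheme work: without $A^{ij}_{\alpha\beta}(0) = \delta_{ij}\delta_{\alpha\beta}$ and $d_j(0)=e_j(0)=0$, the system would not decouple into three scalar problems amenable to Lemma~\ref{lemma:Nprimep} after localization, and the smallness condition (\ref{eq:Rsmall}) would not suffice to absorb the leading perturbation.
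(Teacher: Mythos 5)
Your proposal follows essentially the same route as the paper's proof: rewrite the system via the structural assumption (\ref{eq:structural-assumption}) so that the leading part is the block Laplacian with the non-constant parts $\widetilde A^{ij}_{\alpha\beta}$, $d_j$, $e_j$ (all $O(C\gamma R)$ in $L^\infty$) moved to the right-hand side, apply Lemma~\ref{lemma:Nprimep} componentwise (Dirichlet for $\mathbf{u}_1,\mathbf{u}_2$, Neumann for $\mathbf{u}_3$), absorb the $3C_I(C_A\gamma_A+C_d\gamma_d+C_e\gamma_e)R\,N'_{R,p}(\mathbf{u})$ term via (\ref{eq:Rsmall}), and close by induction on $p$ with Leibniz-type estimates and a sufficiently large $K$. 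This is exactly the paper's argument, including the treatment of the base cases $p=-1,0$ and the origin of the $\min\{1,R/\mathcal{E}\}$ and $\mathcal{E}/\varepsilon$ factors.
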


\begin{proof}
We start with the observation
\begin{equation}
\min\{1,R/{\mathcal{E}}\}\max\{1,R/{\mathcal{E}}\}=R/{\mathcal{E}}.
\label{eq:minmax}%
\end{equation}
The proof will be by induction on $p$ and we will employ
Lemma~\ref{lemma:Nprimep}. To that end, recall $A_{\alpha\beta}^{ij}%
(0)=\delta_{\alpha\beta}\delta_{ij}$ from (\ref{eq:structural-assumption}). We
write (\ref{eq:local-system}) as
%
%subequationsjmm
\begin{subequations}
\label{eq:local-system-rewritten}%
\begin{align}
-\Delta{\mathbf{u}}_{i}  &  =\varepsilon^{-2}{\mathbf{f}}_{i}-\varepsilon
^{-2}\sum_{j=1}^{3}C^{ij}{\mathbf{u}}_{j}-\varepsilon^{-1}\sum_{\beta,j=1}%
^{3}\widetilde{B}_{\beta}^{ij}\partial_{\beta}{\mathbf{u}}_{j}+\sum
_{\alpha,\beta,j=1}^{3}\Bigl(\underbrace{A_{\alpha\beta}^{ij}-A_{\alpha\beta
}^{ij}(0)}_{=:\widetilde{A}_{\alpha\beta}^{ij}}\Bigr)\partial_{\alpha}%
\partial_{\beta}{\mathbf{u}}_{j},\label{eq:local-system-rewritten-a}\\
{\mathbf{u}_{1}}  &  ={\mathbf{u}}_{2}=0\quad
\mbox{ on $\Gamma$},\label{eq:local-system-rewritten-b}\\
\partial_{3}{\mathbf{u}}_{3}  &  =\varepsilon^{-1}(G+b{\mathbf{u}}_{3}%
)+\sum_{j=1}^{3}d_{j}\partial_{j}{\mathbf{u}}_{3}+\sum_{j=1}^{3}e_{j}%
\partial_{3}{\mathbf{u}}_{j}\quad\mbox{ on $\Gamma$},
\label{eq:local-system-rewritten-c}%
\end{align}
where the coefficient
\end{subequations}
\[
\widetilde{B}_{\beta}^{ij}:=B_{\beta}^{ij}+\varepsilon\sum_{\alpha=1}%
^{3}\partial_{\alpha}\left(  A_{\alpha\beta}^{ij}-A_{\alpha\beta}%
^{ij}(0)\right)  =B_{\beta}^{ij}+\varepsilon\sum_{\alpha=1}^{3}\partial
_{\alpha}A_{\alpha\beta}^{ij}%
\]
is again an analytic function with $(\widetilde{B}_{\beta}^{ij})_{i,j,\beta
}\in{\mathcal{A}}^{\infty}(C_{\widetilde{B}},\gamma_{\widetilde{B}},B_{R}%
^{+})$ with
%\begin{equation}
%\label{eq:CtildeB}
$C_{\widetilde{B}}:=C_{B}+C_{A}\gamma_{A}\varepsilon$ and $\gamma
_{\widetilde{B}}:=\gamma_{B}+2\gamma_{A}$. (Note: $C_{B}\gamma_{B}%
^{p}+\varepsilon C_{A}\gamma_{A}^{p+1}(p+1)\leq C_{B}\gamma_{B}^{p}%
+\varepsilon C_{A}\gamma_{A}(2\gamma_{A})^{p}$). The system
(\ref{eq:local-system-rewritten}) is of the form analyzed in
Lemma~\ref{lemma:Nprimep}. We therefore get
\begin{align}
N_{R,p}^{\prime}({\mathbf{u}})  &  \leq C_{I}\Bigl[\varepsilon^{-2}%
M_{R,p}^{\prime}({\mathbf{f}})+\sum_{i=1}^{3}M_{R,p}^{\prime}\Bigl(\varepsilon
^{-2}\sum_{j}C^{ij}{\mathbf{u}}_{j}+\varepsilon^{-1}\sum_{\beta,j}%
\widetilde{B}_{\beta}^{ij}\partial_{\beta}{\mathbf{u}}_{j}-\sum_{\alpha
,\beta,j}\widetilde{A}_{\alpha\beta}^{ij}\partial_{\alpha}\partial_{\beta
}{\mathbf{u}}_{j}\Bigr)\label{eq:lemma:tangential-derivatives-10}\\
&  \quad\mbox{}+\varepsilon^{-1}H_{R,p}(G)+\varepsilon^{-1}H_{R,p}%
\bigl(b{\mathbf{u}}_{3}\bigr)+H_{R,p}\bigl(\sum_{j=1}^{3}d_{j}\partial
_{j}{\mathbf{u}}_{3}\bigr)+H_{R,p}\bigl(\sum_{j=1}^{3}e_{j}\partial
_{3}{\mathbf{u}}_{j}\bigr)+N_{R,p-1}^{\prime}({\mathbf{u}})+N_{R,p-2}^{\prime
}({\mathbf{u}})\Bigr].\nonumber
\end{align}
\emph{1.~step:} For $p=-1$, the assertion
(\ref{eq:lemma:tangential-derivatives}) follows directly from $K\geq1$, the
definition of $C_{\mathbf{u}}$, and (\ref{eq:minmax}) since
\begin{equation}
\label{eq:N-1}N^{\prime}_{R,-1}({\mathbf{u}}) \leq R \|\nabla{\mathbf{u}%
}\|_{L^{2}(B^{+}_{R})} \overset{(\ref{eq:minmax})}{=} \min
\{1,R/\mathcal{E}\} \mathcal{E} \|\nabla{\mathbf{u}}\|_{L^{2}(B^{+}_{R})}
\max\{1,R/\mathcal{E}\}.
\end{equation}

\emph{2.~step:} For $p=0$, we employ the sharpened versions of
Lemma~\ref{lemma:Nprimep} which leads to
(\ref{eq:lemma:tangential-derivatives-10}) for $p=0$ where the last term,
$N_{p-2}({\mathbf{u}})$, is dropped. In view of
(\ref{eq:structural-assumption}), we have
\begin{equation}
\label{eq:estimate-dj-ej}\|(d_{j})_j\|_{L^{\infty}(B_{R}^{+})} \leq C_{d}
\gamma_{d} R, \quad\|(e_{j})_j\|_{L^{\infty}(B_{R}^{+})} 
\leq C_{e} \gamma_{e} R,
\quad\|(\widetilde{A}^{ij}_{\alpha\beta})_{i,j,\alpha,\beta}\|_{L^{\infty}(B_{R}^{+})} \leq C_{A}
\gamma_{A} R.
\end{equation}
We estimate with the sharpened version of Lemma~\ref{lemma:Nprimep}:
\begin{align*}
N_{R,0}^{\prime}({\mathbf{u}})  &  \leq C_{I}\Bigl[\varepsilon^{-2}%
M_{R,0}^{\prime}({\mathbf{f}})+\sum_{i=1}^{3}M_{R,0}^{\prime}\Bigl(\varepsilon
^{-2}\sum_{j}C^{ij}{\mathbf{u}}_{j}+\varepsilon^{-1}\sum_{\beta,j}%
\widetilde{B}_{\beta}^{ij}\partial_{\beta}{\mathbf{u}}_{j}+\sum_{\alpha
,\beta,j}\widetilde{A}_{\alpha\beta}^{ij}\partial_{\alpha}\partial_{\beta
}{\mathbf{u}}_{j}\Bigr)\\
&  \quad\mbox{}+\varepsilon^{-1}H_{R,0}(G)+\varepsilon^{-1}H_{R,0}%
\bigl(b{\mathbf{u}}_{3}\bigr)+H_{R,0}\bigl(\sum_{j=1}^{3}d_{j}\partial
_{j}{\mathbf{u}}_{3}\bigr)+H_{R,0}\bigl(\sum_{j=1}^{3}e_{j}\partial
_{3}{\mathbf{u}}_{j}\bigr)+N_{R,-1}^{\prime}({\mathbf{u}})\Bigr]\\
&  \overset{(\ref{eq:estimate-dj-ej})}{\leq}3C_{I}\Bigl[(R/2)^{2}%
\varepsilon^{-2}C_{f}+(R/2)^{2}\varepsilon^{-2}C_{C}\Vert{\mathbf{u}}%
\Vert_{L^{2}(B_{R}^{+})}+C_{\widetilde{B}}(R/2)^{2}\varepsilon^{-1}\Vert
\nabla{\mathbf{u}}\Vert_{L^{2}(B_{R}^{+})}+C_{A}\gamma_{A}RN_{R,0}^{\prime
}({\mathbf{u}})\\
&  \quad\mbox{}+C_{G}R/2\varepsilon^{-1}+C_{G}\gamma_{G}(R/2)^{2}%
\varepsilon^{-1}\max\{1/R,{\mathcal{E}}^{-1}\}\\
&  \quad\mbox{}+{C_{b}}(1+\gamma_{b}R)R/2\varepsilon^{-1}\Vert{\mathbf{u}%
}\Vert_{L^{2}(B_{R}^{+})}+C_{b}(R/2)^{2}\varepsilon^{-1}\Vert\nabla
{\mathbf{u}}\Vert_{L^{2}(B_{R}^{+})}\\
&  \quad\mbox{}+3C_{d}\gamma_{d}(R/2)^{2}\Vert\nabla{\mathbf{u}}\Vert
_{L^{2}(B_{R}^{+})}+C_{d}\gamma_{d}RN_{R,0}^{\prime}({\mathbf{u}})+3
C_{e}\gamma_{e}(R/2)^{2}\Vert\nabla{\mathbf{u}}\Vert_{L^{2}(B_{R}^{+})}%
+C_{e}\gamma_{e}RN_{R,0}^{\prime}({\mathbf{u}})+N_{R,-1}^{\prime}({\mathbf{u}%
})\Bigr]\\
&  \leq3C_{I}\biggl[\frac{1}{4}(R/{\mathcal{E}})^{2}({\mathcal{E}}%
/\varepsilon)^{2}\left\{  C_{f}+C_{C}\Vert{\mathbf{u}}\Vert_{L^{2}(B_{R}^{+}%
)}\right\}  +\frac{1}{4}(R/{\mathcal{E}})^{2}C_{\widetilde{B}}({\mathcal{E}%
}/\varepsilon){\mathcal{E}}\Vert\nabla{\mathbf{u}}\Vert_{L^{2}(B_{R}^{+}%
)}+C_{A}\gamma_{A}RN_{R,0}^{\prime}({\mathbf{u}})\\
&  \quad\mbox{}+\frac{C_{G}}{2}R/{\mathcal{E}}({\mathcal{E}}/\varepsilon
)+\frac{C_{G}}{4}\gamma_{G}R/{\mathcal{E}}({\mathcal{E}}/\varepsilon
)\max\{1,R/{\mathcal{E}}\}\\
&  \quad\mbox{}+\frac{C_{b}}{2}(1+\gamma_{b}R)R/{\mathcal{E}}({\mathcal{E}%
}/\varepsilon)\Vert{\mathbf{u}}\Vert_{L^{2}(B_{R}^{+})}+\frac{C_{b}}%
{4}(R/{\mathcal{E}})^{2}({\mathcal{E}}/\varepsilon)\mathcal{E}\Vert
\nabla{\mathbf{u}}\Vert_{L^{2}(B_{R}^{+})}\\
&  \quad\mbox{}+\frac{3}{4} C_{d}\gamma_{d}(R/{\mathcal{E}})^{2}{\mathcal{E}%
}^{2}\Vert\nabla{\mathbf{u}}\Vert_{L^{2}(B_{R}^{+})}+C_{d}\gamma_{d}%
RN_{R,0}^{\prime}({\mathbf{u}})+\frac{3}{4} C_{e}\gamma_{e}(R/{\mathcal{E}%
})^{2}{\mathcal{E}}^{2}\Vert\nabla{\mathbf{u}}\Vert_{L^{2}(B_{R}^{+})}\\
&  \quad\mbox{}+C_{e}\gamma_{e}RN_{R,0}^{\prime}({\mathbf{u}})+N_{R,-1}%
^{\prime}({\mathbf{u}})\biggr].
\end{align*}
The condition (\ref{eq:Rsmall}) allows us to absorb the three terms $C_{A}
\gamma_{A} R N^{\prime}_{R,0}({\mathbf{u}})$, $C_{d} \gamma_{d} R N^{\prime
}_{R,0}({\mathbf{u}})$, and $C_{e} \gamma_{e} R N^{\prime}_{R,0}({\mathbf{u}%
})$ of the right-hand side in the left-hand side at the expense of a factor
$2$. We next use (\ref{eq:minmax}), the trivial estimates $1 \leq
\max\{1,R/\mathcal{E}\}$, $\min\{1,R/\mathcal{E}\}^{2} \leq\min
\{1,R/\mathcal{E}\}$, the observation $(\mathcal{E}/\varepsilon)
C_{\widetilde{B}} = (\mathcal{E}/\varepsilon) C_{B} + C_{A} \gamma
_{A}\mathcal{E} \leq1 + C_{A} \gamma_{A} \mathcal{E}$ (by
(\ref{eq:peclet-estimates})), and (\ref{eq:N-1}) to see that we have arrived
at $N^{\prime}_{R,0}({\mathbf{u}}) \leq C C_{\mathbf{u}} \max\{1,R/\mathcal{E}%
\}^{2}$. Thus, by selecting $K$ sufficiently large, we have shown the case
$p=0$.

\emph{3.~step:} For $p\geq1$, we proceed by induction, assuming that
(\ref{eq:lemma:tangential-derivatives}) is valid up to $p-1$, which implies,
\begin{equation}
N_{R,p-q}^{\prime}({\mathbf{u}})\leq C_{\mathbf{u}}K^{p+2-q}\frac
{\max\{p+3,R/\mathcal{E}\}^{p+2}}{p!},\qquad q=1,\ldots,p+1.
\label{eq:induction-hyp}%
\end{equation}
We need to estimate the terms in (\ref{eq:lemma:tangential-derivatives-10}).
To bound the terms $M_{R,p}^{\prime}(\sum_{j,\beta}\widetilde{B}_{\beta}%
^{ij}\partial_{\beta}{\mathbf{u}})$ in terms of $N_{R,p-q-1}^{\prime
}({\mathbf{u}})$, it is useful to note the simple facts (cf.~also
\cite[(5.7.19)]{morrey66})
\begin{equation}
|\nabla\nabla_{x}^{p}{\mathbf{u}}|^{2}\leq|\nabla^{2}\nabla_{x}^{p-1}%
{\mathbf{u}}|^{2},\quad p\geq1,\qquad|\nabla\nabla_{x}^{p}{\mathbf{u}}%
|^{2}=|\nabla{\mathbf{u}}|^{2},\quad p=0. \label{eq:morrey-5.7.19}%
\end{equation}
To estimate these terms, we compute (cf.~\cite[Lemma~{5.5.13}]{MelenkHabil}
for similar calculations) with (\ref{eq:estimate-dj-ej}) for the third
estimate:
\begin{align*}
\varepsilon^{-2}M_{R,p}^{\prime}\Bigl(\bigl(\sum_{j}C^{ij}{\mathbf{u}}_{j}\bigr)_i\Bigr)  &
\leq\frac{C_{c}}{4}\sum_{q=0}^{p}\left(  \gamma_{c}\frac{R}{2}\right)
^{q}\left(  \frac{R}{\varepsilon}\right)  ^{2}\frac{[p-q-2]!}{(p-q)!}%
N_{R,p-q-2}^{\prime}({\mathbf{u}}),\\
\varepsilon^{-1}M_{R,p}^{\prime}\Bigl(\bigl(\sum_{j,\beta}\widetilde{B}_{\beta}%
^{ij}\partial_{\beta}{\mathbf{u}}_{j}\bigr)_i\Bigr)  &  \leq\frac{C_{\widetilde{B}}%
}{2}\sum_{q=0}^{p}\left(  \gamma_{\widetilde{B}}\frac{R}{2}\right)  ^{q}%
\frac{R}{\varepsilon}\frac{[p-q-1]!}{(p-q)!}N_{R,p-q-1}^{\prime}({\mathbf{u}%
}),\\
M_{R,p}^{\prime}\Bigl(\bigl(\sum_{j,\alpha,\beta}\widetilde{A}_{\alpha\beta}%
^{ij}\partial_{\alpha}\partial_{\beta}{\mathbf{u}}_{j}\bigr)_i\Bigr)  &  \leq
C_{A}\gamma_{A}RN_{R,p}^{\prime}({\mathbf{u}})+C_{A}\sum_{q=1}^{p}\left(
\gamma_{A}\frac{R}{2}\right)  ^{q}N_{R,p-q}^{\prime}({\mathbf{u}}),\\
\varepsilon^{-1}\frac{1}{[p-1]!}\sup_{R/2\leq r<R}(R-r)^{p+1}\Vert\nabla
_{x}^{p}(b{\mathbf{u}}_{3})\Vert_{L^{2}(B_{r}^{+})}  &  \leq\frac{C_{b}}%
{2}\frac{R[p]}{\varepsilon}\sum_{q=0}^{p}\left(  \frac{\gamma_{b}R}{2}\right)
^{q}\frac{[p-q-2]!}{(p-q)!}N_{R,p-q-2}^{\prime}({\mathbf{u}}_{3}),\\
\varepsilon^{-1}\frac{1}{[p]!}\sup_{R/2\leq r<R}(R-r)^{p+2}\Vert\nabla_{x}%
^{p}\nabla(b{\mathbf{u}}_{3})\Vert_{L^{2}(B_{r}^{+})}  &  \leq\frac{C_{b}}%
{2}\frac{R(p+1)}{\varepsilon}\sum_{q=0}^{p+1}\left(  \frac{\gamma_{b}R}%
{2}\right)  ^{q}\frac{[p-q-1]!}{(p-q+1)!}N_{R,p-q-1}^{\prime}({\mathbf{u}}%
_{3}),\\
\frac{1}{[p-1]!}\sup_{R/2\leq r<R}(R-r)^{p+1}\Vert\nabla_{x}^{p}(\sum
_{j=1}^{3}d_{j}\partial_{j}{\mathbf{u}}_{3})\Vert_{L^{2}(B_{r}^{+})}  &  \leq
C_{d}\gamma_{d}RN_{R,p-1}^{\prime}({\mathbf{u}}_{3})+C_{d}\sum_{q=1}%
^{p}\left(  \frac{\gamma_{d}R}{2}\right)  ^{q}\frac{[p]}{[p-q]}N_{R,p-q-1}%
^{\prime}({\mathbf{u}}_{3}),\\
\frac{1}{[p]!}\sup_{R/2\leq r<R}(R-r)^{p+2}\Vert\nabla_{x}^{p}\nabla
(\sum_{j=1}^{3}d_{j}\partial_{j}{\mathbf{u}}_{3})\Vert_{L^{2}(B_{r}^{+})}  &
\leq C_{d}R\gamma_{d}N_{R,p}^{\prime}({\mathbf{u}}_{3})+C_{d}\sum_{q=1}%
^{p+1}\left(  \frac{\gamma_{d}R}{2}\right)  ^{q}\frac{p+1}{[p-q+1]}%
N_{R,p-q}^{\prime}({\mathbf{u}}_{3}),\\
\frac{1}{[p-1]!}\sup_{R/2\leq r<R}(R-r)^{p+1}\Vert\nabla_{x}^{p}(\sum
_{j=1}^{3}e_{j}\partial_{3}{\mathbf{u}}_{j})\Vert_{L^{2}(B_{r}^{+})}  &  \leq
C_{e}\gamma_{e}RN_{R,p-1}^{\prime}({\mathbf{u}})+C_{e}\sum_{q=1}^{p}\left(
\frac{\gamma_{e}R}{2}\right)  ^{q}\frac{[p]}{[p-q]}N_{R,p-q-1}^{\prime
}({\mathbf{u}}),\\
\frac{1}{[p]!}\sup_{R/2\leq r<R}(R-r)^{p+2}\Vert\nabla_{x}^{p}\nabla
(\sum_{j=1}^{3}e_{j}\partial_{3}{\mathbf{u}}_{j})\Vert_{L^{2}(B_{r}^{+})}  &
\leq C_{e}R\gamma_{e}N_{R,p}^{\prime}({\mathbf{u}})+C_{e}\sum_{q=1}%
^{p+1}\left(  \frac{\gamma_{e}R}{2}\right)  ^{q}\frac{p+1}{[p-q+1]}%
N_{R,p-q}^{\prime}({\mathbf{u}}).
\end{align*}
We choose
\[
K>\max\{1,\gamma_{f}/2,\gamma_{G}/2,\gamma_{A}/2,\gamma_{\widetilde{B}%
}/2,\gamma_{C}/2,\gamma_{b}/2,\gamma_{d}/2,\gamma_{e}/2\}
\]
such that the expression in brackets $[\cdots]$ in
(\ref{eq:lemma:tangential-derivatives-100}) is smaller than $1/(6C_{I})$ (and,
of course, such that the case $p=0$ is proved); note that $\mathcal{E}%
/\varepsilon$ is controlled in view of (\ref{eq:peclet-estimates}). The
calculation in \cite[p.~{206}, bottom]{MelenkHabil} for $M_{R,p}^{\prime
}({\mathbf{f}})$ and similar calculations for $H_{R,p}(G)$ give
\begin{align}
\varepsilon^{-2}M_{R,p}^{\prime}({\mathbf{f}})  &  \leq C_{f}\min
\{1,R/{\mathcal{E}}\}^{2}({\mathcal{E}}/\varepsilon)^{2}K^{p+2}\frac
{\max\{p+3,R/\mathcal{E}\}^{p+2}}{p!}\frac{1}{4}K^{-2}\left(  \frac{\gamma
_{f}}{2K}\right)  ^{p},\\
\varepsilon^{-1}H_{R,p}(G)  &  \leq C_{G}\min\{1,R/{\mathcal{E}}%
\}({\mathcal{E}}/\varepsilon)K^{p+2}\frac{\max\{p+3,R/\mathcal{E}\}^{p+2}}%
{p!}\left[  \frac{1}{2}K^{-2}\left(  \frac{\gamma_{G}}{2K}\right)  ^{p}%
+\frac{1}{2K}\left(  \frac{\gamma_{G}}{2K}\right)  ^{p+1}\right]  .
\end{align}
We use the induction assumption (\ref{eq:induction-hyp})
%recall (\ref{eq:peclet-estimates})
and (\ref{eq:minmax}) (to deal with the cases where $N_{R,-2}^{\prime
}({\mathbf{u}})$ is involved) to estimate
\begin{align*}
R^{2}\varepsilon^{-2}\frac{[p-q-2]!}{(p-q)!}N_{R,p-q-2}^{\prime}({\mathbf{u}%
})  &  \leq(\mathcal{E}/\varepsilon)^{2}C_{\mathbf{u}}K^{p+2-q-2}\frac
{\max\{p+3,R/\mathcal{E}\}^{p+2}}{p!},\qquad q=0,\ldots,p,\\
\varepsilon^{-1}\frac{[p-q-1]!}{(p-q)!}N_{R,p-q-1}^{\prime}({\mathbf{u}})  &
\leq(\mathcal{E}/\varepsilon)C_{\mathbf{u}}K^{p+2-q-1}\frac{\max
\{p+3,R/\mathcal{E}\}^{p+2}}{p!},\qquad q=0,\ldots,p,\\
\lbrack p]R/\varepsilon\frac{\lbrack p-q-2]!}{(p-q)!}N_{R,p-q-2}^{\prime
}({\mathbf{u}})  &  \leq(\mathcal{E}/\varepsilon)C_{\mathbf{u}}K^{p+2-q-2}%
\frac{\max\{p+3,R/\mathcal{E}\}^{p+2}}{p!},\qquad q=0,\ldots,p,\\
(p+1)R/\varepsilon\frac{\lbrack p-q-1]!}{(p-q+1)!}N_{R,p-q-1}^{\prime
}({\mathbf{u}})  &  \leq(\mathcal{E}/\varepsilon)C_{\mathbf{u}}K^{p+2-q-1}%
\frac{\max\{p+3,R/\mathcal{E}\}^{p+2}}{p!},\qquad q=0,\ldots,p+1,\\
\frac{\lbrack p]}{[p-q]}N_{R,p-q-1}^{\prime}({\mathbf{u}})  &  \leq
C_{\mathbf{u}}K^{p+2-q-1}\frac{\max\{p+3,R/\mathcal{E}\}^{p+2}}{p!},\qquad
q=1,\ldots,p,\\
\frac{\lbrack p+1]}{[p-q+1]}N_{R,p-q}^{\prime}({\mathbf{u}})  &  \leq
C_{\mathbf{u}}K^{p+2-q}\frac{\max\{p+3,R/\mathcal{E}\}^{p+2}}{p!},\qquad
q=1,\ldots,p+1,\\
&
\end{align*}
We note that (\ref{eq:peclet-estimates}) gives
\begin{equation}
C_{C}(\mathcal{E}/\varepsilon)^{2}\leq1,\quad C_{\widetilde{B}}(\mathcal{E}%
/\varepsilon)\leq1+C_{A}\gamma_{A}\mathcal{E},\quad C_{b}(\mathcal{E}%
/\varepsilon)\leq1. \label{eq:peclet-further}%
\end{equation}
Inserting all of the above in (\ref{eq:lemma:tangential-derivatives-10})
yields\footnote{the factor $3$ in $3C_{I}$ is due to the summation over $i$
and likely suboptimal} together with the geometric series
\begin{align}
N_{R,p}^{\prime}({\mathbf{u}})  &  \leq3C_{I}\left(  C_{A}\gamma_{A}%
R+C_{d}\gamma_{d}R+C_{e}\gamma_{e}R\right)  N_{R,p}^{\prime}({\mathbf{u}%
})+C_{\mathbf{u}}K^{p+2}\frac{\max\{p+3,R/\mathcal{E}\}^{p+2}}{p!}\nonumber\\
&  \ \times3C_{I}\Bigl[\frac{1}{4K}\left(  \frac{\gamma_{f}}{2K}\right)
^{p}+\frac{1}{2}2K^{-2}\left(  \frac{\gamma_{G}}{2K}\right)  ^{p}+\frac
{1}{4K^{2}}\frac{1}{1-(\gamma_{c}R/(2K))}+K^{-1}\frac{1+C_{A}\gamma
_{A}\mathcal{E}}{1-R\gamma_{\widetilde{B}}/(2K)}\nonumber\\
&  \qquad+\frac{C_{A}\gamma_{A}R}{2K}\frac{1}{1-\gamma_{A}R/(2K)}+\frac
{1}{2K^{2}}\frac{1}{1-\gamma_{b}R/(2K)}+\frac{1}{2K}\frac{1}{1-\gamma
_{b}R/(2K)}\nonumber\\
&  \qquad+(1+K^{-1})\frac{C_{d}\gamma_{d}R}{2K}\frac{1}{1-\gamma_{d}%
R/(2K)}+(1+K^{-1})\frac{C_{e}\gamma_{e}R}{2K}\frac{1}{1-\gamma_{e}%
R/(2K)}+K^{-1}+K^{-2}\Bigr] \label{eq:lemma:tangential-derivatives-100}%
\end{align}
By the choice of $K$, the expression in brackets, $[\cdots]$, is smaller than
$1/(6C_{I})$ and by (\ref{eq:Rsmall}) the expression $3C_{I}(C_{A}\gamma
_{A}R+C_{d}\gamma_{d}R+C_{e}\gamma_{e}R)\leq1/2$. Hence, the induction step is completed.
\end{proof}

%-----------------------------------

\subsubsection{Control of Normal Derivatives}

%-----------------------------------
Recall $N^{\prime}_{R,p,q}$ from (\ref{eq:Nprime}) and define
\begin{equation}
\label{eq:Mprimepq}M^{\prime}_{R,p,q}({\mathbf{v}}) := \frac{1}{[p+q]!}
\sup_{R/2 \leq r < R} (R - r)^{p+q+2} \|\partial_{y}^{q} \nabla_{x}^{p}
{\mathbf{v}}\|_{L^{2}(B_{r}^{+})}.
\end{equation}

\begin{theorem}
\label{thm:local-regularity} Let ${\mathbf{u}}$ satisfy (\ref{eq:local-system}%
) with coefficients and data satisfying (\ref{eq:analyticity-data}),
(\ref{eq:structural-assumption}), and (\ref{eq:analyticity-rhs}). Let $R\leq1$
be such that, with the universal constant $C_{I}$ given by
Lemma~\ref{lemma:Nprimep},
\[
(3C_{I}+6)\left(  C_{A}\gamma_{A}+C_{d}\gamma_{d}+C_{e}\gamma_{e}\right)
R\leq\frac{1}{2}.
\]
Then there are $K_{1}$, $K_{2}>1$ depending only on the constants appearing in
(\ref{eq:analyticity-data}) and on $\gamma_{f}$, $\gamma_{G}$ such that with
$C_{\mathbf{u}}$ given by (\ref{eq:lemma:tangential-derivatives})
\begin{equation}
N_{R,p,q}^{\prime}({\mathbf{u}})\leq C_{\mathbf{u}}K_{1}^{p+2}K_{2}^{q+2}%
\frac{\max\{R/\mathcal{E},p+q+3\}^{p+q+2}}{[p+q]!} \qquad\forall p\geq
0,q\geq-2\mbox{ with $(p,q) \ne (0,-2)$.} \label{eq:local-regularity-1}%
\end{equation}

\end{theorem}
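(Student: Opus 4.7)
The plan is to prove Theorem~\ref{thm:local-regularity} by induction on $q$, the number of normal derivatives (shifted by $2$), using Lemma~\ref{lemma:tangential-derivatives} as the base and exploiting the structural assumption (\ref{eq:structural-assumption}) together with the PDE to trade two normal derivatives for two tangential derivatives at each step. This is the standard strategy for analytic regularity up to an analytic boundary in the spirit of \cite[Sec.~5.5]{MelenkHabil}, and the structural condition $A_{\alpha\beta}^{ij}(0)=\delta_{ij}\delta_{\alpha\beta}$ is exactly what makes the leading symbol invertible in the normal direction near the origin so that the induction step is clean.

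First I would establish the base cases $q\in\{-2,-1,0\}$. For $q=0$ the bound $N'_{R,p,0}(\mathbf{u})\le N'_{R,p}(\mathbf{u})$ is immediate from the definitions since $\|\partial_y^2\nabla_x^p\mathbf{u}\|\le\|\nabla^2\nabla_x^p\mathbf{u}\|$, so (\ref{eq:local-regularity-1}) follows directly from Lemma~\ref{lemma:tangential-derivatives} with $K_1$ chosen $\ge K$ from that lemma. For $q=-1$, the pointwise estimate $\|\partial_y\nabla_x^p\mathbf{u}\|\le\|\nabla^2\nabla_x^{p-1}\mathbf{u}\|$ (applied at interior points via an obvious commutation) reduces $N'_{R,p,-1}(\mathbf{u})$ to a constant multiple of $N'_{R,p-1}(\mathbf{u})$; the same argument handles $q=-2$ with $p\ge 1$, giving $N'_{R,p,-2}(\mathbf{u})$ in terms of $N'_{R,p-2}(\mathbf{u})$ (or, when $p=1$, by elementary Caccioppoli). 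Here the exclusion $(p,q)\ne(0,-2)$ becomes relevant.

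For the inductive step, assume (\ref{eq:local-regularity-1}) holds for all admissible pairs with strictly smaller second index. Rewriting (\ref{eq:local-system}) as in (\ref{eq:local-system-rewritten-a}) and isolating the $\partial_y^2$ term, I use (\ref{eq:structural-assumption}) together with the smallness of $R$ (implied by the hypothesis on $R$) to see that $A_{33}^{ii}(x)$ is bounded away from $0$ on $B_R^+$ and $|A_{33}^{ij}(x)|\le C_A\gamma_A R$ for $i\ne j$. Solving for $\partial_y^2 u_i$ gives a representation of $\partial_y^{q+2}\nabla_x^p u_i$ (obtained by applying $\partial_y^q\nabla_x^p$ to the equation) as a linear combination of: (i) $\partial_y^q\nabla_x^p(\mathbf{f}/\varepsilon^2)$, whose $M'_{R,p,q}$-seminorm is controlled by (\ref{eq:analyticity-rhs}); (ii) lower-order terms $\varepsilon^{-2}C\mathbf{u}$, $\varepsilon^{-1}\widetilde B\nabla\mathbf{u}$, handled via Leibniz and the induction hypothesis; (iii) mixed second derivatives $\widetilde A_{\alpha\beta}^{ij}\partial_\alpha\partial_\beta u_j$ with $(\alpha,\beta)\ne(3,3)$, which carry at most $q$ normal derivatives on $u$ and are therefore covered by $N'_{R,p',q'}$ with $q'<q$, $p'+q'\le p+q$; and (iv) the off-diagonal/perturbative $(3,3)$ contribution $(A_{33}^{ij}-\delta_{ij})\partial_y^{q+2}\nabla_x^p u_j$, whose coefficient has $L^\infty$-norm at most $C_A\gamma_A R$, so under the smallness assumption on $R$ it can be absorbed into the left-hand side. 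Bounding each term with the Leibniz-type arithmetic used in the proof of Lemma~\ref{lemma:tangential-derivatives} (in particular the computations of \cite[Lemma~5.5.13]{MelenkHabil}), one arrives at an inequality of the form
\[
N'_{R,p,q}(\mathbf{u})\le \tfrac12 N'_{R,p,q}(\mathbf{u})+ C\,C_{\mathbf u} K_1^{p+2}K_2^{q+2}\,\frac{\max\{R/\mathcal{E},p+q+3\}^{p+q+2}}{[p+q]!}\,\Big[\text{geometric series in }1/K_1,1/K_2\Big],
\]
and the main technical obstacle is precisely choosing $K_1$ and $K_2$ large enough, in the correct order (first fix $K_1$ after completing the $q$-base using Lemma~\ref{lemma:tangential-derivatives}, then fix $K_2$ so that the bracketed series is $\le 1/(2C)$) to close the induction while keeping the dependence on $\mathcal{E}$ and $\varepsilon$ in $C_{\mathbf u}$ unchanged.

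The delicate point is the combinatorial bookkeeping: because the definition of $N'_{R,p,q}$ is normalized by $[p+q]!$ and by the power $(R-r)^{p+q+2}$, the standard estimates used in the tangential case (which exploit Cauchy's integral in the form $\sup_{R/2\le r<R}(R-r)^{m}$) must be applied carefully so that every Leibniz sum telescopes against a geometric series in $R\gamma_A/(2K_1),\ldots$ and against factors $K_2^{-1}$ coming from trading one normal derivative for lower-order ones, exactly as in \cite[Prop.~5.5.1 and Thm.~5.5.2]{MelenkHabil}. I do not expect any new analytic difficulty beyond this careful accounting; the structural condition (\ref{eq:structural-assumption}), the smallness of $R$, and the already proven Lemma~\ref{lemma:tangential-derivatives} provide all the ingredients needed.
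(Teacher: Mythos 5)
Your proposal is correct and follows essentially the same route as the paper's proof: induction on $q$ with base cases $q\in\{-2,-1,0\}$ supplied by Lemma~\ref{lemma:tangential-derivatives}, isolating $\partial_y^2\mathbf{u}_i$ via the structural assumption so that the perturbative $(3,3)$ coefficient $\widetilde{A}^{ij}_{33}$ of size $O(C_A\gamma_A R)$ is absorbed into the left-hand side, converting the remaining terms into $N'_{R,p',q'}$ with $q'<q$ via Leibniz sums, and finally fixing $K_1$ then $K_2$ to make the resulting geometric-series bracket small.
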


\begin{proof}
Let $K$ be given by Lemma~\ref{lemma:tangential-derivatives}. We select $K_{1}
\ge K$, $K_{2}$ such that
\[
K_{2}>\max\{1,\gamma_{f}/2,\gamma_{A}/2,\gamma_{\widetilde{B}}/2,\gamma
_{C}/2\}
\]
and such that the expression in brackets, $[\cdots]$, in
(\ref{eq:thm:local-regularity-bracket}) is smaller than $1/12$. This is indeed
possible by first selecting $K_{1} \ge K$ sufficiently large (e.g., such that
$C_{A} \gamma_{A} /(2K_{1})/(1-(\gamma_{A}/(2K_{1})) < 1/12$) and then
selecting $K_{2}$ sufficiently large. The proof is by induction on $q$. For
$q\in\{-2,-1,0\}$ and all $p\in{\mathbb{N}}_{0}$ (with the exception of the
excluded case $(q,p)=(-2,0)$) the result follows directly from
Lemma~\ref{lemma:tangential-derivatives}. Let us assume that
(\ref{eq:local-regularity-1}) holds (for all $p\in{\mathbb{N}}_{0}$) up to
$q-1$ for some $q\geq1$.

Starting point is the observation that for a smooth solution
$\widetilde{\mathbf{u}}$ of
\begin{equation}
\label{eq:thm:local-regularity-5}-\partial_{y}^{2} \widetilde{\mathbf{u}} =
\Delta_{x} \widetilde{\mathbf{u}} + \widetilde{\mathbf{f}} \quad
\mbox{on $B_R^+$}
\end{equation}
we have by the definition of the seminorms $N^{\prime}_{R,r,s}$, $M^{\prime
}_{R,r,s}$ the estimate
\begin{equation}
\label{eq:thm:local-regularity-10}N^{\prime}_{R,p,q}(\widetilde{\mathbf{u}})
\leq2\left[  N^{\prime}_{R,p+2,q-2}(\widetilde{\mathbf{u}}) + M^{\prime}%
_{p,q}(\widetilde{\mathbf{f}}) \right]  , \qquad p \ge0, q \ge0.
\end{equation}
The system (\ref{eq:local-system-rewritten}) is of the form
(\ref{eq:thm:local-regularity-5}) with
\begin{align}
\label{eq:thm:local-regularity-20}\widetilde{\mathbf{f}}_{i}  &  = \sum
_{j=1}^{3} \widetilde{A}^{ij}_{33} \partial_{y}^{2} {\mathbf{u}}_{j} +
\varepsilon^{-2} {\mathbf{f}}_{i} - \varepsilon^{-2} \sum_{j} C^{ij}
{\mathbf{u}}_{j} - \varepsilon^{-1} \sum_{j,\beta} \widetilde{B}^{ij}_{\beta
}\partial_{\beta}{\mathbf{u}}_{j} + \sum_{\substack{j,\alpha,\beta
\\(\alpha,\beta) \ne(3,3)}} \widetilde{A}^{ij}_{\alpha\beta} \partial_{\alpha
}\partial_{\beta}{\mathbf{u}}_{j}.
\end{align}
We estimate
\begin{align*}
&  \varepsilon^{-2} M^{\prime}_{R,p,q}({\mathbf{f}}) \leq\frac{C_{f}}{4}
\min\{1,R/\mathcal{E}\}^{2} ({\mathcal{E}}/\varepsilon)^{2} \left(
\frac{\gamma_{f}}{2}\right)  ^{p+q} \frac{\max\{p+q+3,R/{\mathcal{E}%
}\}^{p+q+2}}{[p+q]!},\\
&  \varepsilon^{-2} M^{\prime}_{R,p,q}((\sum_{j} C^{ij} {\mathbf{u}}_{j})_i) \leq
C_{C} \varepsilon^{-2} \sum_{r=0}^{p}\sum_{s=0}^{q} \binom{p}{r}\binom{q}{s}
\gamma_{C}^{r+s} \frac{s!r!}{(p+q)!} \sup_{R/2 \leq r < R} (R - r)^{p+q+2}
\|\partial_{y}^{q-s} \nabla_{x}^{p-r} {\mathbf{u}}\|_{L^{2}(B^{+}_{r})}\\
&  \qquad\qquad\qquad\leq\frac{C_{C}}{4} (R/\varepsilon)^{2} \sum_{r=0}%
^{p}\sum_{s=0}^{q} \binom{p}{r}\binom{q}{s} \left(  \frac{ \gamma_{C} R}%
{2}\right)  ^{r+s} \frac{s!r! [p+q-r-s-2]!}{(p+q)!} N^{\prime}_{R,p-r,q-s-2}%
({\mathbf{u}}),\\
&  \varepsilon^{-1} M^{\prime}_{R,p,q}((\sum_{j,\beta} \widetilde{B}%
^{ij}_{\beta}\partial_{\beta}{\mathbf{u}}_{j})_i) \leq\\
&  \quad\frac{C_{\widetilde{B}} }{2} (R/\varepsilon) \sum_{r=0}^{p}\sum
_{s=0}^{q} \binom{p}{r}\binom{q}{s} \left(  \frac{ \gamma_{\widetilde{B}}
R}{2}\right)  ^{r+s} \frac{s!r! [p+q-r-s-1]!}{(p+q)!} \left[  N^{\prime
}_{R,p-r,q-s-1}({\mathbf{u}}) + N^{\prime}_{R,p+1-r,q-s-2}({\mathbf{u}%
})\right]  ,\\
&  M^{\prime}_{R,p,q}((\sum_{\substack{j,\alpha,\beta\\(\alpha,\beta) \ne
(3,3)}} \widetilde{A}^{ij}_{\alpha\beta} \partial_{\alpha}\partial_{\beta
}{\mathbf{u}}_{j})_i) \leq\\
&  \quad C_{A} \sum_{r=0}^{p}\sum_{s=0}^{q} \binom{p}{r}\binom{q}{s} \left(
\frac{ \gamma_{A} R}{2}\right)  ^{r+s} \frac{s!r! [p+q-r-s]!}{(p+q)!} \left[
N^{\prime}_{R,p+2-r,q-s-2}({\mathbf{u}}) + N^{\prime}_{R,p+1-r,q-s-1}%
({\mathbf{u}})\right]  ,\\
&  M^{\prime}_{R,p,q}((\sum_{j} \widetilde{A}^{ij}_{33} \partial^{2}_{3}
{\mathbf{u}}_{j})_{i}) \leq C_{A} \gamma_{A} R N^{\prime}_{R,p,q}({\mathbf{u}%
}) +\\
&  \quad C_{A} \sum_{(r,s) \ne(0,0)} \binom{p}{r}\binom{q}{s} \left(  \frac{
\gamma_{A} R}{2}\right)  ^{r+s} \frac{s!r! [p+q-r-s]!}{(p+q)!} N^{\prime
}_{R,p-r,q-s}({\mathbf{u}}).
\end{align*}
We introduce the abbreviation
\begin{equation}
m_{p,q}:= \max\{p+q+3,R/{\mathcal{E}}\}^{p+q+2}.
\end{equation}
so that, for $q^{\prime}\leq q-1$ the induction hypothesis reads ${[}%
p^{\prime}+q^{\prime}{]}! N^{\prime}_{R,p^{\prime},q^{\prime}}({\mathbf{u}})
\leq C_{\mathbf{u}} K_{1}^{p^{\prime}+2} K_{2}^{q^{\prime}+2} m_{p^{\prime
},q^{\prime}}$. We have
\begin{align*}
\varepsilon^{-2} M^{\prime}_{R,p,q}({\mathbf{f}})  &  \leq\left[
\min\{1,R/\mathcal{E}\}^{2} \frac{C_{f} ({\mathcal{E}}/\varepsilon)^{2}}{4
K_{1}^{2} K_{2}^{2}} \left(  \frac{\gamma_{f}}{2 K_{1}} \right)  ^{p} \left(
\frac{\gamma_{f}}{K_{2}} \right)  ^{q}\right]  K_{1}^{p+2} K_{2}^{q+2}
\frac{m_{p,q}}{(p+q)!}.
\end{align*}
We recall the elementary estimates
\begin{equation}
\binom{p}{r} r! \leq p^{r} , \qquad0 \leq r \leq p.
\end{equation}
Recalling (\ref{eq:peclet-estimates}) (cf.\ also (\ref{eq:peclet-further})) we
get from the induction hypothesis
\begin{align*}
&  \varepsilon^{-2} M^{\prime}_{R,p,q}\Bigl(\sum_{j} C^{ij} {\mathbf{u}}%
_{j}\Bigr) \leq C_{\mathbf{u}} \frac{C_{C} (\mathcal{E}/\varepsilon)^{2}}{4}
\sum_{r,s} \binom{p}{r}\binom{q}{s} \left(  \frac{\gamma_{C} R}{2} \right)
^{r+s} \frac{r!s! (R/\mathcal{E})^{2}}{[p+q]!} K_{1}^{p-r+2} K_{2}^{q-s}
m_{p-r,q-s-2}\\
&  \quad\leq C_{\mathbf{u}} \frac{1}{4} \sum_{r,s} \left(  \frac{\gamma_{C}
R}{2} \right)  ^{r+s} K_{1}^{p-r+2} K_{2}^{q-s} \frac{m_{p,q}}{[p+q]!},\\
&  \varepsilon^{-1} M^{\prime}_{R,p,q}\Bigl(\sum_{j,\beta} \partial_{\beta
}\widetilde{B}^{ij}_{\beta}{\mathbf{u}}_{j}\Bigr)\\
&  \quad\leq C_{\mathbf{u}} {C_{\widetilde{B}}} (\mathcal{E}/\varepsilon)
\sum_{r,s} \binom{p}{r}\binom{q}{s} \left(  \frac{\gamma_{\widetilde{B}} R}{2}
\right)  ^{r+s} \frac{r!s! R/\mathcal{E}}{[p+q]!} \left[  K_{1}^{p-r+2}
K_{2}^{q-s+1} m_{p-r,q-s-1} + K_{1}^{p-r+3} K_{2}^{q-s} m_{p+1-r,q-s-2}
\right] \\
&  \quad\leq C_{\mathbf{u}} (1+C_{A} \gamma_{A} \mathcal{E}) \sum_{r,s}
\left(  \frac{\gamma_{\widetilde{B}} R}{2} \right)  ^{r+s} K_{1}^{p-r+2}
K_{2}^{q+2-s} \frac{m_{p,q}}{[p+q]!} \left[  K_{2}^{-1} + K_{1} K_{2}%
^{-2}\right]  ,\\
&  M^{\prime}_{R,p,q}\Bigl(\sum_{\substack{j,\alpha,\beta\\(\alpha,\beta)
\ne(3,3)}} \widetilde{A}^{ij}_{\alpha\beta} \partial_{\alpha}\partial_{\beta
}{\mathbf{u}}_{j}\Bigr)\\
&  \quad\leq C_{\mathbf{u}} {C_{A}} \sum_{r,s} \binom{p}{r}\binom{q}{s}
\left(  \frac{\gamma_{A} R}{2} \right)  ^{r+s} \frac{r!s!}{[p+q]!} \left[
K_{1}^{p-r+4} K_{2}^{q-s} m_{p+2-r,q-s-2} + K_{1}^{p-r+3} K_{2}^{q-s+1}
m_{p+1-r,q-s-1} \right] \\
&  \leq C_{\mathbf{u}} {C_{A}} \sum_{r,s} \left(  \frac{\gamma_{A} R}{2}
\right)  ^{r+s} K_{1}^{p-r+2} K_{2}^{q+2-s} \frac{m_{p,q}}{[p+q]!} \left[
K_{1}^{2} K_{2}^{-2} + K_{1} K_{2}^{-1}\right]  ,\\
&  M^{\prime}_{R,p,q}\Bigl(\sum_{j} \widetilde{A}^{ij}_{33} \partial^{2}_{3}
{\mathbf{u}}_{j}\Bigr) \leq C_{A} \gamma_{A} R N^{\prime}_{R,p,q}({\mathbf{u}%
}) + C_{\mathbf{u}} {C_{A}} \sum_{(r,s) \ne(0,0)} \left(  \frac{\gamma_{A}
R}{2} \right)  ^{r+s} K_{1}^{p-r+2} K_{2}^{q+2-s} \frac{m_{p,q}}{[p+q]!}.
\end{align*}
We note for $\delta_{1}$, $\delta_{2} \in(0,1)$ that
\begin{align*}
\sum_{(r,s) \ne(0,0)} \delta_{1}^{r} \delta_{2}^{s} = \frac{1}{(1-\delta
_{1})(1-\delta_{2})} -1 = \frac{\delta_{1}+\delta_{2} - \delta_{1}\delta_{2}%
}{(1-\delta_{1})(1-\delta_{2})} \leq\frac{\delta_{1}+\delta_{2}}{(1-\delta
_{1})(1-\delta_{2})}.
\end{align*}
Inserting these estimates in (\ref{eq:thm:local-regularity-10}) gives (recall
that $j$ from $1$ to $3$ which accounts for a generous factor $3$):
\begin{align}
\label{eq:thm:local-regularity-bracket}N^{\prime}_{R,p,q}({\mathbf{u}})  &
\leq6 C_{A} \gamma_{A} R N^{\prime}_{R,p,q}({\mathbf{u}}) + C_{\mathbf{u}}
K_{1}^{p+2} K_{2}^{q+2} \frac{m_{p,q}}{(p+q)!} 6 \Bigl[\\
&  \frac{1}{4 K_{1}^{2} K_{2}^{2}} \left(  \frac{\gamma_{f}}{2 K_{1}}\right)
^{p} \left(  \frac{\gamma_{f}}{ K_{2}}\right)  ^{q} + \frac{1 }{4 K_{2}^{2}}
\frac{1}{1 - \gamma_{C} R/(2 K_{1})} \frac{1}{1 - \gamma_{C} R/(2 K_{2}%
)}\nonumber\\
&  \mbox{} + (1+C_{A} \gamma_{A}\mathcal{E}) (K_{2}^{-1} + K_{1} K_{2}^{-2})
\frac{1}{1 - \gamma_{\widetilde{B}} R/(2 K_{1})} \frac{1}{1 - \gamma
_{\widetilde{B}} R/(2 K_{2})}\nonumber\\
&  \mbox{} + {C_{A}} (K_{1}^{2} K_{2}^{-2} + K_{1} K_{2}^{-1} + \frac
{\gamma_{A} R}{2 K_{1}} + \frac{\gamma_{A} R}{2 K_{2}} ) \frac{1}{1 -
\gamma_{A} R/(2 K_{1})} \frac{1}{1 - \gamma_{A} R/(2 K_{2})} + K_{1}^{2}
K_{2}^{-2} \Bigr]\nonumber
\end{align}
By the choice of $K_{2}$, the expression in brackets, $[\cdots]$, is smaller
than $1/12$ and by assumption on $R$, the term $C_{A} \gamma_{A} R \leq1/2$.
Hence, the induction step is completed.
\end{proof}

%%---------------------------------------
%---------------------------------------------------------------------------

\section{Analytic regularity for Poisson Problems}

%---------------------------------------------------------------------------
We consider, on the half-ball $B_{R}^{+}$, solutions $u$ of
\begin{equation}
-\operatorname{div}\left(  A(x)\nabla u\right)  =f\quad
\mbox{ in $B^+_R$},\qquad u|_{\Gamma_{R}}=0. \label{eq:poisson-dirichlet}%
\end{equation}
Here, the matrix $A$ is pointwise symmetric positive definite and satisfies
\begin{equation}
A\in{\mathcal{A}}^{\infty}(C_{A},\gamma_{A},B_{R}^{+}),\qquad A\geq
\lambda_{min}>0. \label{eq:cond-on-A}%
\end{equation}
The data $f$ is assumed to satisfy, for some $\varepsilon\in(0,1]$
\begin{equation}
\Vert\nabla^{p}f\Vert_{L^{2}(B_{R}^{+})}\leq C_{f}\gamma_{f}^{p}%
\max\{p/R,\varepsilon^{-1}\}^{p}\qquad\forall p\in{\mathbb{N}}_{0}.
\label{eq:cond-f}%
\end{equation}
Note that this problem has been considered in \cite[Lemma~{5.5.15}%
]{MelenkHabil} where a recursion for the tangential derivatives, i.e., for the
seminorm $N_{R,p}^{\prime}(u)$, is derived. We use this result here to derive
the following estimate.

\begin{lemma}
\label{lemma:poisson-tangential} Assume (\ref{eq:cond-on-A}) and
(\ref{eq:cond-f}) and $R \leq1$. Then there exists $K >0$ depending solely on
$\lambda_{min}$, $C_{A}$, $\gamma_{A}$, $\gamma_{f}$ such that a solution $u$
of (\ref{eq:poisson-dirichlet}) satisfies
\begin{align}
\label{eq:lemma:poisson-tangential-10}N^{\prime}_{R,p}(u) \leq K^{p+2} \left[
C_{f} R^{2} \frac{\max\{p+1,R/\varepsilon\}^{p}}{p!} + (p+1) R \|\nabla
u\|_{L^{2}(B^{+}_{R})}\right]  \qquad\forall p \ge0.
\end{align}
Additionally, $N^{\prime}_{R,-1}(u) \leq R/2 \|\nabla u\|_{L^{2}(B^{+}_{R})}$.
\end{lemma}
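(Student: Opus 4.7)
The case $p = -1$ is immediate from the definition: for $r \in [R/2, R)$ one has $R - r \leq R/2$, hence $N'_{R,-1}(u) = \sup_{R/2 \leq r < R}(R-r)\|\nabla u\|_{L^2(B_r^+)} \leq (R/2)\|\nabla u\|_{L^2(B_R^+)}$. For $p \geq 0$ I would argue by induction on $p$, using Lemma~\ref{lemma:Nprimep}(i) as the core recursion. To put the equation in the form required by that lemma, I would first reduce to the case $A(0) = I$ by a linear change of coordinates, admissible because $A(0)$ is symmetric positive definite (by the uniform ellipticity assumption), with $R, C_f, \gamma_f, C_A, \gamma_A$ adjusted only by factors depending on $\lambda_{\min}$. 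In the new coordinates the equation reads
\[
-\Delta u = \tilde f := f + \sum_{i,j}(A_{ij}-\delta_{ij})\partial_i\partial_j u + \sum_{i,j}(\partial_i A_{ij})\partial_j u \quad \text{in } B_R^+, \qquad u|_{\Gamma_R} = 0,
\]
and Lemma~\ref{lemma:Nprimep}(i) then yields
\[
N'_{R,p}(u) \leq C_I\bigl[M'_{R,p}(\tilde f) + N'_{R,p-1}(u) + N'_{R,p-2}(u)\bigr],
\]
sharpened to drop the $N'_{R,-2}(u)$ term when $p = 0$.

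The bulk of the work is to bound $M'_{R,p}(\tilde f)$. The forcing contribution is controlled by $\tfrac{C_f R^2}{4}(\gamma_f/2)^p \max\{p+1, R/\varepsilon\}^p / p!$ via a direct computation from~(\ref{eq:cond-f}), mirroring the analogous step in the proof of Lemma~\ref{lemma:tangential-derivatives}. For each variable-coefficient perturbation of the form $c(x)\partial^\alpha u$ with $c$ analytic and $|\alpha| \in \{1,2\}$, Leibniz's rule expands $\nabla_x^p(c\,\partial^\alpha u) = \sum_{r=0}^{p}\binom{p}{r}\nabla_x^r c \cdot \nabla_x^{p-r}\partial^\alpha u$; the analytic bound $\|\nabla_x^r c\|_\infty \leq C_c\gamma_c^r r!$ from~(\ref{eq:cond-on-A}) combined with the induction hypothesis recasts the $u$-factors for $r \geq 1$ in terms of $N'_{R,q}(u)$ with $q < p$, producing a convergent geometric series in $\gamma_A R/(2K)$ provided $K$ is chosen sufficiently large.

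The principal obstacle is the $r = 0$ summand in the expansion of $\nabla_x^p\bigl((A-I)\partial_i\partial_j u\bigr)$: because $A(0) = I$, one has $\|A-I\|_{L^\infty(B_R^+)} \leq C_A\gamma_A R$, and this summand contributes a self-referential term $C_I C_A\gamma_A R \cdot N'_{R,p}(u)$ on the right-hand side. I would handle this by first establishing the estimate on a smaller half-ball $B_{R_0}^+$ with $R_0 := \min\{R, 1/(4 C_I C_A\gamma_A)\}$, where the smallness of $R_0$ permits absorption into the left-hand side at the cost of a factor $2$, and then extending to $B_R^+$ by a finite covering argument whose cardinality depends only on $C_A,\gamma_A$ and is absorbed into $K$. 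The base case $p = 0$ is dispatched directly from the sharpened recursion together with the $p = -1$ estimate; choosing $K$ large depending on $\lambda_{\min}, C_I, C_A, \gamma_A, \gamma_f$ then closes the induction and delivers~(\ref{eq:lemma:poisson-tangential-10}).
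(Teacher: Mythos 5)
Your strategy is genuinely different from the paper's. The paper does not perturb off the constant-coefficient Laplacian at all: it invokes the recursion of \cite[Lemma~{5.5.15}]{MelenkHabil}, which is formulated directly for the variable-coefficient operator $-\operatorname{div}(A\nabla\cdot)$ and --- this is the decisive structural point --- produces a right-hand side in which every coefficient contribution carries at least one derivative of $A$, so the coefficient sum starts at $q=1$ and there is \emph{no} self-referential term $N^{\prime}_{R,p}(u)$. Consequently the geometric series $\sum_{q\geq1}(\gamma_A R/(2K))^q$ is tamed simply by choosing $K$ large (one only needs $\gamma_AR/(2K)<1$), and neither a normalization $A(0)=\operatorname{I}$ nor any smallness of $R$ is required; that is exactly why Lemma~\ref{lemma:poisson-tangential} is stated under the sole hypothesis $R\leq1$, in contrast to Lemma~\ref{lemma:tangential-derivatives} and Theorem~\ref{thm:local-regularity}, which do assume the smallness condition (\ref{eq:Rsmall}). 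Your perturbative route is the one the paper uses for the Maxwell system, so it is viable in spirit, but it necessarily creates the $r=0$ self-referential term and therefore forces the two extra reduction steps, and those are where the gaps lie.

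First, the linear change of variables achieving $A(0)=\operatorname{I}$ does not preserve the geometry on which the seminorms $N^{\prime}_{R,p}$ of (\ref{eq:Nprime}) are built: a map $T$ with $T^{\top}A(0)T=\operatorname{I}$ can be chosen to preserve the half-space $\{x_3>0\}$ (so that $\Gamma_R$ stays flat), but it sends the nested concentric half-balls $B_r^+$ to half-ellipsoids, so the supremum over $R/2\leq r<R$ and the weights $(R-r)^{p+2}$ do not transform into the corresponding quantities on the new domain; repairing this costs a further comparison or covering step that you have not accounted for, beyond the bland claim that only constants depending on $\lambda_{\min}$ change. Second, the covering argument meant to remove the restriction to $R_0$ is asserted but not executed, and it is not routine for these boundary-weighted analytic seminorms: for every $r\in[R/2,R)$ you must cover $B_r^+$ by boundary half-balls \emph{and} interior balls (the latter requiring a separate interior analytic-regularity estimate without boundary condition, which you do not mention), track how the per-patch weights $(R_0-r^{\prime})^{p+2}$ reassemble into the global weight $(R-r)^{p+2}$, and verify that the constants degrade only like $c^{-(p+2)}$ so that they can be absorbed into $K^{p+2}$ while preserving the specific right-hand side $C_fR^2\max\{p+1,R/\varepsilon\}^p/p!+(p+1)R\|\nabla u\|_{L^2(B_R^+)}$. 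None of this is fatal, but as written the proof has two unclosed reductions where the paper's argument has none.
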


\begin{proof}
The estimate $N_{R,-1}^{\prime}(u)\leq R/2\Vert\nabla u\Vert_{L^{2}(B_{R}%
^{+})}$ is a direct consequence of the definition. The case $p=0$ follows
directly from \cite[Lemma~{5.5.15}]{MelenkHabil}. The case $p=1$ follows from
an inspection of the arguments below. For $p\geq2$, the proof is by induction
on $p$, assuming that (\ref{eq:lemma:poisson-tangential-10}) holds for all
$p^{\prime}\leq p-1$ for some $p\geq2$. From \cite[Lemma~{5.5.15}%
]{MelenkHabil} we get
\begin{align}
N_{R,p}^{\prime}(u)  &  \leq C_{B}^{\prime}\bigl[C_{f}\left(  \frac{R}%
{2}\right)  ^{2}\left(  \frac{\gamma_{f}}{2}\right)  ^{p}\frac{\max
\{p,R/\varepsilon\}^{p}}{p!}+C_{A}(p+1)\left(  \frac{\gamma_{A}R}{2}\right)
^{p+1}N_{R,-1}^{\prime}(u)\label{eq:lemma:poisson-tangential-20}\\
&  +C_{A}\sum_{q=1}^{p}\frac{(p+1)!}{(p+1-q)!}\left(  \frac{\gamma_{A}R}%
{2}\right)  ^{q}\frac{(p-q)!}{p!}N_{R,p-q}^{\prime}(u)+N_{R,p-1}^{\prime
}(u)+N_{R,p-2}^{\prime}(u)\bigr].\nonumber
\end{align}
The induction hypothesis gives for $q=1,\ldots,p$
\[
\frac{(p+1)!}{(p+1-q)!}\frac{(p-q)!}{p!}N_{R,p-q}^{\prime}(u)\leq
K^{p+2-q}\left[  C_{f}R^{2}\frac{\max\{p+1,R/\varepsilon\}^{p}}{p!}%
+(p+1)R\Vert\nabla u\Vert_{L^{2}(B_{R}^{+})}\right]  =:K^{p+2-q}B_{p},
\]
where $B_{p}$ abbreviates the expression in brackets, $\bigl[\cdots\bigr]$.
Inserting the above and the induction hypothesis in
(\ref{eq:lemma:poisson-tangential-20}) gives, assuming $\gamma_{A}R/(2K)<1$,
\[
N_{R,p}^{\prime}(u)\leq K^{p+2}C_{B}^{\prime}B_{p}\Bigl[\frac{1}{4K^{2}%
}\left(  \frac{\gamma_{f}}{2K}\right)  ^{p}+C_{A}K^{-1}\left(  \frac
{\gamma_{A}R}{2K}\right)  ^{p+1}+C_{A}\frac{\gamma_{A}R}{2K}\frac{1}%
{1-\gamma_{A}R/(2K)}+K^{-1}+K^{-2}\Bigr].
\]
Selecting $K$ sufficiently large shows that the factor $C_{B}^{\prime}%
[\cdots]$ can be made smaller than $1$, which concludes the induction argument.
\end{proof}

The following theorem generalizes \cite[Prop. 5.5.2]{MelenkHabil} from
homogeneous Dirichlet boundary condition on the whole boundary of $B_{R}^{+}$
to the condition $\left.  u\right\vert _{\Gamma_{R}}=0$.

\begin{theorem}
\label{thm:poisson} Assume (\ref{eq:cond-on-A}) and (\ref{eq:cond-f}) and
$R\leq1$. Then there exist $K_{1}$, $K_{2}\geq1$ depending solely on
$\lambda_{min}$, $C_{A}$, $\gamma_{A}$, $\gamma_{f}$ such that a solution $u$
of (\ref{eq:poisson-dirichlet}) satisfies, for all $p\geq0$, $q\geq-2$ with
$(p,q)\neq(0,-2)$
\begin{equation}
N_{R,p,q}^{\prime}(u)\leq K_{1}^{p+2}K_{2}^{q+2}\left[  C_{f}R^{2}\frac
{\max\{p+q+3,R/\varepsilon\}^{p+q}}{(p+q)!}+(p+q+3)R\Vert\nabla u\Vert
_{L^{2}(B_{R}^{+})}\right]  . \label{eq:thm:poisson-tangential-10}%
\end{equation}

\end{theorem}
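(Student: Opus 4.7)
\textbf{Proof proposal for Theorem~\ref{thm:poisson}.} My plan mirrors the proof of Theorem~\ref{thm:local-regularity}: control tangential derivatives first (which is essentially done for us by Lemma~\ref{lemma:poisson-tangential}), then perform an induction on the number $q$ of normal derivatives, using the differential equation to trade two normal derivatives for two tangential derivatives plus lower-order normal-derivative terms. The base cases $q\in\{-2,-1,0\}$ follow directly from Lemma~\ref{lemma:poisson-tangential}, after noting that $N'_{R,p,-2}(u)$, $N'_{R,p,-1}(u)$, $N'_{R,p,0}(u)$ are bounded by (multiples of) $N'_{R,p-2}(u)$, $N'_{R,p-1}(u)$, $N'_{R,p}(u)$ respectively, and that the right-hand side of (\ref{eq:lemma:poisson-tangential-10}) can be absorbed into the right-hand side of (\ref{eq:thm:poisson-tangential-10}) by choosing $K_1$ large compared to the $K$ of Lemma~\ref{lemma:poisson-tangential} and enlarging $K_2$ to dominate $K_1^{-q-2}$ uniformly for $q\in\{-2,-1,0\}$.

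For the inductive step ($q\ge 1$), I would rewrite the equation in the scalar analog of the form (\ref{eq:local-system-rewritten-a}): since $A$ is pointwise symmetric positive definite with $A_{33}\ge\lambda_{\min}>0$, the function $1/A_{33}$ is analytic on $B_R^+$ (by Lemma~\ref{lemma:A-invariant} applied to $z\mapsto 1/z$ composed with $A_{33}$), and hence $1/A_{33}\in\mathcal A^{\infty}(C',\gamma',B_R^+)$ with constants depending only on $\lambda_{\min}$, $C_A$, $\gamma_A$. Dividing the strong form by $A_{33}$ yields
\begin{equation}
-\partial_y^2 u \;=\; \Delta_x u + \widetilde f, \qquad
\widetilde f \;:=\; \tfrac{1}{A_{33}} f \;+\; \sum_{\alpha,\beta}\tfrac{1}{A_{33}}(\partial_\alpha A_{\alpha\beta})\,\partial_\beta u \;+\; \sum_{\substack{\alpha,\beta\\(\alpha,\beta)\ne(3,3)}}\!\!\bigl(\tfrac{A_{\alpha\beta}}{A_{33}}-\delta_{\alpha\beta}\bigr)\,\partial_\alpha\partial_\beta u,
\end{equation}
so that the identity $N'_{R,p,q}(u)\le 2\bigl[N'_{R,p+2,q-2}(u)+M'_{R,p,q}(\widetilde f)\bigr]$ (cf.~(\ref{eq:thm:local-regularity-10})) reduces the control of $N'_{R,p,q}(u)$ to controlling $M'_{R,p,q}(\widetilde f)$. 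The three contributions to $\widetilde f$ are estimated by Leibniz-rule expansions as in the proof of Theorem~\ref{thm:local-regularity}: the $f$-term gives the $C_f R^2 \max\{p+q+3,R/\varepsilon\}^{p+q}/(p+q)!$ contribution, the first-order term in $u$ is bounded using the induction hypothesis at $(p',q')=(p-r,q-s-1)$ or $(p+1-r,q-s-2)$, and the second-order term using the hypothesis at $(p+2-r,q-s-2)$, $(p+1-r,q-s-1)$, and $(p-r,q-s)$; in the last group the term $(r,s)=(0,0)$ produces an $R$-small multiple of $N'_{R,p,q}(u)$ which is absorbed into the left-hand side.

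Summing the geometric series in $r$ and $s$ (using $R\le 1$ so that $\gamma\, R/(2K_1)$, $\gamma\, R/(2K_2)<1$), each contribution takes the form $K_1^{p+2}K_2^{q+2}\,B_{p,q}\cdot\Phi(K_1,K_2)$ with $B_{p,q}$ the bracket on the right of (\ref{eq:thm:poisson-tangential-10}) and $\Phi(K_1,K_2)$ a finite sum of terms of type $K_1^{-j}K_2^{-\ell}$, $K_1 K_2^{-2}$, $K_1^{2}K_2^{-2}$, $K_1 K_2^{-1}$, exactly as in (\ref{eq:thm:local-regularity-bracket}). I choose $K_1$ large enough to make the $K_1$-dependent pieces small, then $K_2$ large enough to dominate the $K_1 K_2^{-1}$ and $K_1^{2} K_2^{-2}$ terms, so that the total multiplier $\Phi(K_1,K_2)\le 1$, closing the induction.

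The main obstacle, as in Theorem~\ref{thm:local-regularity}, is the combinatorial bookkeeping: one must verify that the weighted binomial sums $\binom{p}{r}\binom{q}{s}r!\,s!\,[p+q-r-s-j]!/[p+q]!$ combined with the factor $\max\{p+q+3,R/\varepsilon\}^{p+q}$ produced by the induction hypothesis at index $(p-r,q-s-j)$ indeed collapse back to the bracket $B_{p,q}$ on the right of (\ref{eq:thm:poisson-tangential-10}); this relies on the monotonicity $m_{p-r,q-s-j}\le m_{p,q}$ of the auxiliary quantity $m_{p,q}=\max\{p+q+3,R/\varepsilon\}^{p+q+2}/R^{\,2}$, together with the simple bound $\binom{p}{r}r!\le p^{r}$ already invoked in the proof of Theorem~\ref{thm:local-regularity}. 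Since the scalar situation here is strictly simpler than the system analyzed there (no boundary term $G$, no $\varepsilon$-scaling in the differential operator, no ${\mathcal E}$-parameter), the argument goes through with only cosmetic changes.
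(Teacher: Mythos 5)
Your proposal is correct and follows essentially the same route as the paper: induction on $q$ with base cases supplied by Lemma~\ref{lemma:poisson-tangential}, the equation rewritten so that $-\partial_y^2 u$ equals a perturbed tangential Laplacian plus analytic lower-order terms (the paper's form $-\partial_y^2 u=\widetilde f+\widetilde A\nabla u+B\!:\!\nabla^2 u$ with $B_{33}\equiv 0$ is exactly your division by $A_{33}$), and the same Leibniz/geometric-series bookkeeping with the monotonicity of the binomial quotients to handle the $(p+q+3)R\Vert\nabla u\Vert$ part of the bracket. One cosmetic remark: since the $(\alpha,\beta)=(3,3)$ block has been eliminated, the $(r,s)=(0,0)$ term you propose to absorb into the left-hand side never actually appears here, so no absorption step is needed in this scalar case.
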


\begin{proof}
We control the normal derivatives as in the proof of
Theorem~\ref{thm:local-regularity}. Inspection of the arguments leading to
\cite[(5.5.30)]{MelenkHabil} shows that we have
\begin{equation}
-\partial_{y}^{2}u=\widetilde{f}+\widetilde{A}\nabla u+B:\nabla^{2}u,
\label{eq:5.5.30}%
\end{equation}
where, for $C^{\prime}$, $\gamma>0$ depending solely on $\lambda_{min}$,
$C_{A}$, $\gamma_{A}$, $\gamma_{f}$,
\begin{align}
&  \Vert\nabla^{p}\widetilde{f}\Vert_{L^{2}(B_{R}^{+})}\leq C^{\prime}%
C_{f}\gamma^{p}\max\{p/R,\varepsilon^{-1}\}^{p}\qquad\forall p\in{\mathbb{N}%
}_{0},\\
&  \widetilde{A},B\in{\mathcal{A}}^{\infty}(C^{\prime},\gamma,B_{R}^{+}),\quad
B_{33}\equiv0.
\end{align}
We abbreviate
\begin{equation}
M_{p,q}:=\left[  C_{f}R^{2}\frac{\max\{p+q+3,R/\varepsilon\}^{p+q}}%
{(p+q)!}+(p+q+3)R\Vert\nabla u\Vert_{L^{2}(B_{R}^{+})}\right]  =:M_{p,q}%
^{(1)}+M_{p,q}^{(2)}. \label{eq:Mpq}%
\end{equation}
The proof is by induction on $q$, the cases $q\in\{-2,-1,0\}$ being shown in
Lemma~\ref{lemma:poisson-tangential} if we select $K_{1}=K$ with $K$ given by
Lemma~\ref{lemma:poisson-tangential}. Assume that
(\ref{eq:thm:poisson-tangential-10}) holds for all $q^{\prime}\leq q-1$ for
some $q\geq0$ and all $p$. {}From (\ref{eq:5.5.30}) we get
\begin{align}
N_{R,p,q}^{\prime}(u)  &  \leq M_{R,p,q}^{\prime}(\widetilde{f})+M_{R,p,q}%
^{\prime}(\widetilde{A}\nabla u)+M_{R,p,q}^{\prime}(B\colon\nabla^{2}u),\\
M_{R,p,q}^{\prime}(\widetilde{f})  &  \leq\frac{C^{\prime}}{4}\left(
\frac{\gamma}{2}\right)  ^{p+q}M_{p,q},
\end{align}
where the estimate for $M_{R,p,q}^{\prime}(\widetilde{f})$ follows from a
direct calculation. The terms $M_{R,p,q}^{\prime}(\widetilde{A}\nabla u)$ and
$M_{R,p,q}^{\prime}(B\colon\nabla^{2}u)$ are treated as in the proof of
Theorem~\ref{thm:local-regularity}. This is done in the following steps.

\emph{1.~step:} We note that for $p$, $\alpha\in{\mathbb{N}}_{0}$ the
function
\begin{equation}
\label{eq:binomial-monotonicity}r \mapsto\frac{(p-r+\alpha)!}{(p-r)!} =
(p-r+1)\cdots(p-r+\alpha) \qquad\mbox{ is monotone decreasing.}
\end{equation}

\emph{2.~step:} The induction hypothesis and the monotonicity assertion
(\ref{eq:binomial-monotonicity}) imply
\begin{align}
\label{eq:foo-1} &  \binom{p}{r}\binom{q}{s} \frac{s! r! (p+q-r-s)!}{(p+q)!}
\left[  N^{\prime}_{R,p-r+2,q-s-2}(u) + N^{\prime}_{R,p+1-r,q-s-1}(u)\right]
\\
&  \qquad\leq K_{1}^{p-r+2} K_{2}^{q-s+2} M_{p,q} \left[  K_{1}^{2} K_{2}^{-2}
+ K_{1} K_{2}^{-1}\right]  .\nonumber
\end{align}
To see this, one write $M_{p,q} = M_{p,q}^{(1)} + M_{p,q}^{(2)}$ as in
(\ref{eq:Mpq}). The terms involving $M_{p,q}^{(1)}$ are treated exactly as in
the treatment of $M^{\prime}_{R,p,q}(\sum_{j,\alpha,\beta\colon(\alpha,\beta)
\ne(3,3)} \widetilde{A}^{ij}_{\alpha\beta} \partial_{\alpha}\partial_{\beta
}{\mathbf{u}})$ in the proof of Theorem~\ref{thm:local-regularity}. The terms
involving $M_{p,q}^{(2)}$ are handled by noting that a two-fold application of
the monotonicity assertion (\ref{eq:binomial-monotonicity}) implies
\begin{equation}
\label{eq:foo-2}\binom{p}{r}\binom{q}{s} r!s! \frac{(p-r+q-s)!}{(p+q)!} \leq1.
\end{equation}
\emph{3.~step:} Analogously, as in the treating of the term $M^{\prime
}_{R,p,q}(\sum_{j,\alpha,\beta: (\alpha,\beta) \ne(3,3)} \widetilde{A}%
^{ij}_{\alpha\beta} \partial_{\alpha}\partial_{\beta}{\mathbf{u}}_{j})$ in the
proof of Theorem~\ref{thm:local-regularity} we get for $M^{\prime}%
_{R,p,q}(B:\nabla^{2} u)$ by observing that the assumption $B_{33} \equiv0$
leads to a sum of terms of the form (\ref{eq:foo-1})
\begin{align}
M^{\prime}_{R,p,q}(B : \nabla^{2} u)  &  \leq K_{1}^{p+2} K_{2}^{q+2} M_{p,q}
\left[  K_{1}^{2} K_{2}^{-2} + K_{1} K_{2}^{-1} \right]  C^{\prime}\sum_{r,s}
\left(  \frac{\gamma R}{2 K_{1}}\right)  ^{r} \left(  \frac{\gamma R}{2 K_{2}%
}\right)  ^{s}.
\end{align}
%First, we note that
%\begin{equation}
%\label{eq:thm:poisson-monotone}r \mapsto\frac{[p-r-\alpha]!}{(p-r)!}
%\qquad
%\mbox{ is monotone decreasing for $r \in \{0,\ldots,p\}$ and $\alpha \ge 0$.}
%\end{equation}
\emph{4.~step:} The induction hypothesis gives
\begin{align}
\label{eq:foo-10} &  \binom{p}{r}\binom{q}{s} \frac{s! r! [p+q-r-s-1]!}%
{(p+q)!} \left[  N^{\prime}_{R,p-r,q-s-1}(u) + N^{\prime}_{R,p+1-r,q-s-2}%
(u)\right] \nonumber\\
&  \qquad\leq3 K_{1}^{p-r+2} K_{2}^{q-s+2} \frac{M_{p,q}}{p+q+3} \left[
K_{2}^{-1} + K_{1} K_{2}^{-2}\right]  ;
\end{align}
in this estimate, for the contribution $M_{p,q}^{(1)}$ of $M_{p,q}$, the
calculation is as in the treatment of $M^{\prime}_{R,p,q}((\widetilde{B}%
^{ij}_{\beta} \partial_{\beta}\mathbf{u}_{j})_i)$; for the contribution
$M_{p,q}^{(2)}$ of $M_{p,q}$ one has to consider
\begin{align*}
\frac{p!}{(p-r)!} \frac{q!}{(q-s)!} \frac{[p-r+q-s-1]!}{(p+q)!} (p-r+q-s+3)
\leq3 \frac{p!}{(p-r)!} \frac{q!}{(q-s)!} \frac{(p-r+q-s)!}{(p+q)!}
\overset{(\ref{eq:foo-2})}{\leq} 3,
\end{align*}
where the term $(p-r+q-s+3)$ arises from $M_{p-r,q-s}^{(2)}$. Since
$M_{p,q}^{(2)} = (p+q+3) R \|\nabla u\|_{L^{2}(B^{+}_{R})}$, the result
(\ref{eq:foo-10}) follows.

\emph{5.~step:} As in the treatment of the terms $M_{R,p,q}^{\prime
}(\widetilde{B}_{\beta}^{ij}\partial_{\beta}{\mathbf{u}}_{j})$ in the proof of
Theorem~\ref{thm:local-regularity}, we get from (\ref{eq:foo-10}) the bound
\[
M_{R,p,q}^{\prime}(\widetilde{A}\nabla u)\leq3K_{1}^{p+2}K_{2}^{q+2}%
M_{p,q}\left[  K_{2}^{-1}+K_{1}K_{2}^{-2}\right]  \frac{C^{\prime}}{p+q+3}%
\sum_{r,s}\left(  \frac{\gamma R}{2K_{1}}\right)  ^{r}\left(  \frac{\gamma
R}{2K_{2}}\right)  ^{s}.
\]
\emph{6.~step:} Selecting $K_{2}$ sufficiently large depending solely on
$C^{\prime}$ and $\gamma$ completes the induction argument.
\end{proof}

%%---------------------------------------

%-----------------------------------------------------

\subsection*{Acknowledgements}

The authors cordially thank Joachim Sch\"{o}berl (TU Wien) for interesting
discussions on the topic.

The second author also wishes to thank the \textsl{Institut f\"{u}r Analysis
und Scientific Computing}, TU Wien, for its support during his sabbatical,
where parts of this work were carried out. He is also very thankful to family
Huber-Pieslinger for their warm hospitality during his stay in Vienna.
%%---------------------------------------
%\bibliography{nlailu}
\bibliographystyle{abbrv}
\bibliography{maxwell}

\end{document}